\documentclass[leqno,a4paper,12pt]{book}
\usepackage{amssymb,amsmath,amsthm}
\usepackage[all]{xy}
\usepackage{color}
\usepackage{enumerate}
\usepackage{mathrsfs}


\hyphenation{Gro-then-dieck}
\hyphenation{micro-differential}

\newcommand{\symplecto}{\omega}
\newcommand{\nc}{\newcommand}
\nc{\on}{\operatorname}



\newtheorem{theorem}{Theorem}[section]
\newtheorem{proposition}[theorem]{Proposition}
\newtheorem{lemma}[theorem]{Lemma}
\newtheorem{corollary}[theorem]{Corollary}

\theoremstyle{definition}

\newtheorem{definition}[theorem]{Definition}
\newtheorem{notation}[theorem]{Notation}
\newtheorem{example}[theorem]{Example}

\newtheorem{remark}[theorem]{Remark}

\newtheorem{conjecture}[theorem]{Conjecture}
\newtheorem{convention}[theorem]{Convention}

\nc{\Prop}{\begin{proposition}}
\nc{\enprop}{\end{proposition}}
\nc{\Lemma}{\begin{lemma}}
\nc{\enlemma}{\end{lemma}}
\nc{\Cor}{\begin{corollary}}
\nc{\encor}{\end{corollary}}
\nc{\Def}{\begin{definition}}
\nc{\enDef}{\end{definition}}
\nc{\Th}{\begin{theorem}}
\nc{\entheorem}{\end{theorem}}

\newcommand{\C}{{\mathbb{C}}}
\newcommand{\N}{{\mathbb{N}}}
\newcommand{\R}{{\mathbb{R}}}

\newcommand{\Z}{{\mathbb{Z}}}

\newcommand{\cor}{\C^{\hbar,{\rm loc}}}
\newcommand{\coro}{\C^\hbar}
\newcommand{\cora}{{\BBK}}

\def\BBK{\mathbb{K}}

\def\phi{{\varphi}}
\def\epsilon{\varepsilon}

\newcommand{\BB}{\mathfrak B}
\newcommand{\M}{\mathscr{M}}

\def\sha{\mathscr{A}}
\def\shao{{\mathscr{A}}_0}
\def\shro{{\mathscr{R}}_0}
\def\shb{\mathscr{B}}
\def\shc{\mathscr{C}}
\def\shd{\mathscr{D}}
\def\she{\mathscr{E}}
\def\shf{\mathscr{F}}
\def\shg{\mathscr{G}}

\def\shi{\mathscr{I}}
\def\shj{\mathscr{J}}
\def\shk{\mathscr{K}}
\def\shl{\mathscr{L}}
\def\shm{\mathscr{M}}
\def\shn{\mathscr{N}}
\def\sho{\mathscr{O}}
\def\shp{\mathscr{P}}
\def\shq{\mathscr{Q}}
\def\shr{\mathscr{R}}
\def\shs{\mathscr{S}}

\def\shu{\mathscr{U}}

\def\shw{\mathscr{W}}

\newcommand{\ol}{\overline}
\newcommand{\bl}{\bigl(}
\newcommand{\br}{\bigr)}
\newcommand{\ro}{{\rm(}}
\newcommand{\rf}{\,{\rm)}}

\newcommand{\lp}{{\rm(}}
\newcommand{\rp}{{\rm)}}

\nc{\RR}{\mathrm{R}}
\nc{\LL}{\mathrm{L}}

\newcommand{\db}[1]{\raisebox{-.5ex}[2ex][1.8ex]{$#1$}}

\newcommand{\bN}{\bar{\shn}}
\newcommand{\tN}{\widetilde{\shn}}

\newcommand{\Ad}{\mathrm{Ad}}

\newcommand{\D}[1][]{\mathscr{D}_{#1}}
\newcommand{\Dl}[1][]{\mathscr{D}_{#1}^\loc}

\nc{\CH}{\on{H}}

\newcommand{\HE}[1][X]{\widehat{\mathscr{E}}_{#1}}
\newcommand{\HEo}[1][X]{\widehat{\mathscr{E}}_{#1}(0)}
\newcommand{\HW}[1][X]{\widehat{\mathscr{W}}_{#1}}
\newcommand{\HWo}[1][X]{\widehat{\mathscr{W}}_{#1}(0)}

\newcommand{\A}[1][X]{\mathscr{A}_{{#1}}}
\newcommand{\Ah}[1][X]{\mathscr{A}_{{#1}}^{\rm loc}}
\nc{\DA}[1][X]{\mathscr{D}^{\mathscr{A}}_{#1}}
\nc{\Dh}[1][X]{\mathscr{D}_{#1}[[\hbar]]}
\nc{\Dhh}[1][X]{\mathscr{D}_{#1}[\hbar]}
\nc{\Dhhh}[1][X]{\mathscr{D}_{#1}[\hbar,\opb{\hbar}]}

\nc{\Dhl}[1][X]{\shd_{#1}\Ls}
\nc{\Dho}[1][X]{\mathscr{D}_{#1}^\hbar(0)}
\nc{\Oh}[1][X]{\sho_{#1}^\hbar}
\nc{\OOh}[1][X]{\sho_{#1}[[\hbar]]}
\nc{\oh}[1][X]{\Omega_{#1}[[\hbar]]}

\nc{\Ohl}[1][X]{\sho_{#1}^{\hbar,\loc}}
\nc{\ohh}[1][X]{\Omega_{#1}^\hbar}

\newcommand{\OO}[1][X]{\mathscr{O}_{#1}}

\newcommand{\W}[1][]{\mathscr{W}_{#1}}

\newcommand{\gre}{\operatorname{\mathrm{gr}_{{\she}}}}
\newcommand{\gr}{\mathrm{gr}_\hbar}

\newcommand{\Lie}[1][]{\operatorname{\mathsf{L}}\def\temp{#1}
\ifx\temp\empty\else^{(#1)}\fi}

\newcommand{\rmo}{\mathscr{O}}
\newcommand{\rma}{\mathscr{A}}
\newcommand{\rmW}{\mathrm{W}}

\newcommand{\DQ}{\ensuremath{\mathrm{DQ}}}
\newcommand{\sts}{{\mathfrak{S}}}

\newcommand{\sta}{{\mathfrak{A}}}
\newcommand{\stx}{X}

\newcommand{\stkHom}[1][]{\mathfrak{Hom}_{\raise1.5ex\hbox to.1em{}#1}}
\newcommand{\stkMod}{\mathfrak{Mod}}



\renewcommand{\ker}{\operatorname{Ker}}
\DeclareMathOperator{\coker}{Coker}
\DeclareMathOperator{\im}{Im}

\newcommand{\rmpt}{{\rm pt}}
\newcommand{\rmptt}{{\{\rm pt\}}}
\newcommand{\eu}{{\rm eu}}

\newcommand{\hh}{{\rm hh}}
\newcommand{\thh}{{\rm thh}}

\newcommand{\ch}{{\rm ch}}
\newcommand{\hhg}{\rm hh^{gr}}
\newcommand{\hhhg}{\widehat{\rm hh}^{\rm gr}}
\newcommand{\hhl}{\rm hh}

\newcommand{\td}{{\rm td}}
\newcommand{\htd}{{\rm td}}
\newcommand{\Int}{{\rm Int}}
\newcommand{\loc}{{\rm loc}}
\newcommand{\rmK}{{\mathrm K}}
\newcommand{\rmhK}{\widehat{\mathrm K}}
\newcommand{\lc}{{\rm lc}}
\newcommand{\rmb}{{\mathrm b}}

\def\monoto{{\rightarrowtail}}
\def\epito{{\twoheadrightarrow}}
\renewcommand{\to}[1][]{\xrightarrow[]{#1}}

\newcommand{\isoto}[1][]{\xrightarrow[#1]%
{{\raisebox{-.6ex}[0ex][-.6ex]{$\mspace{1mu}\sim\mspace{2mu}$}}}}
\newcommand{\isofrom}[1][]{\xleftarrow[#1]{\sim}}

\newcommand{\To}[1][]{\xrightarrow[]{\mspace{10mu}{#1}\mspace{10mu}}}

\DeclareMathOperator{\id}{id}


\newcommand{\Hom}[1][]{\mathrm{Hom}_{\raise1.5ex\hbox to.1em{}#1}}
\newcommand{\RHom}[1][]{\RR\mathrm{Hom}_{\raise1.5ex\hbox to.1em{}#1}}
\newcommand{\Ext}[2][]{\mathrm{Ext}_{\raise1.5ex\hbox to.1em{}#1}^{#2}}
\renewcommand{\hom}[1][]{{\mathscr{H}\mspace{-4mu}om}_{\raise1.5ex\hbox to.1em{}#1}}
\newcommand{\rhom}[1][]{{\RR\mathscr{H}\mspace{-3mu}om}_{\raise1.5ex\hbox to.1em{}#1}}
\newcommand{\ext}[2][]{{\mathscr{E}\mspace{-2mu}xt}_{%
\raise1.5ex\hbox to.1em{}#1}^{#2}}
\newcommand{\BHom}[1][]{\mathrm{Bhom}_{\raise1.5ex\hbox to.1em{}#1}}

\newcommand{\Tens}[1][]{\mathbin{\otimes_{\raise1.5ex\hbox to-.1em{}{#1}}}}
\newcommand{\LTens}[1][]{\mathbin{\otimes_{\raise1.5ex\hbox to-.1em{}#1}^{L}}}
\newcommand{\Tor}[2][]{\mathrm{Tor}^{\raise1.5ex\hbox to.1em{}#1}_{#2}}
\newcommand{\tens}[1][]{\mathbin{\otimes_{\raise1.5ex\hbox to-.1em{}{#1}}}}
\newcommand{\detens}{\underline{\etens}}
\newcommand{\ldetens}{\overset{\mathrm{L}}{\underline{\etens}}}
\newcommand{\dtens}[1][]%
{{\overset{\mathrm{L}}{\underline{\otimes}}}_{#1}}
\newcommand{\tor}{\mathscr{T}\mspace{-3mu}or}

\newcommand{\etens}{\mathbin{\boxtimes}}
\newcommand{\letens}{\overset{\mathrm{L}}{\etens}}

\newcommand{\lltens}[1][]{{\mathop{\tens}\limits^{\rm L}}_{#1}}
\newcommand{\ltens}[1][]{{\mathop{\tens}\limits^{\rm L}}_{#1}}

\nc{\aut}{{\sha\mspace{-1mu}\mit{ut}}\,}
\newcommand{\End}{{\operatorname{End}}}
\newcommand{\shend}{\operatorname{{\she\mspace{-2mu}\mathit{nd}}}}
\newcommand{\Endo}[1][]{\mathrm{End}_{\raise1.5ex\hbox to.1em{}#1}}

\newcommand{\Aut}[1][]{\mathrm{Aut}_{\raise1.5ex\hbox to.1em{}#1}}
\newcommand{\sect}{\Gamma}
\newcommand{\rsect}{\mathrm{R}\Gamma}

\newcommand{\RC}{{\rm C}}

\newcommand{\Rb}{{\rm b}}
\newcommand{\coh}{{\rm coh}}
\newcommand{\grad}{{\rm grad}}
\newcommand{\gd}{{\rm gd}}

\newcommand{\Fl}{{\rm F}}
\newcommand{\SSi}{\on{SS}}


\newcommand{\oim}[1]{{#1}_*}
\newcommand{\eim}[1]{{#1}_!}
\newcommand{\roim}[1]{\RR{#1}_*}
\newcommand{\reim}[1]{\RR{#1}_!}
\newcommand{\opb}[1]{#1^{-1}}
\newcommand{\epb}[1]{#1^{!}}
\newcommand{\spb}[1]{#1^{*}}
\newcommand{\lspb}[1]{\LL{#1}^{*}}

\newcommand{\tw}[1]{\widetilde{#1}}
\newcommand{\twh}[1]{\widehat{#1}}


\DeclareMathOperator{\Ob}{Ob}
\DeclareMathOperator{\supp}{supp}

\DeclareMathOperator{\chv}{char}

\newcommand{\Fct}{\operatorname{Fct}}
\newcommand{\for}{\mathit{for}}

\newcommand{\Set}{{\bf Set}}

\def\rop{{\rm op}}
\def\op{{\rm op}}

\newcommand{\Xan}{{X_{\rm an}}}

\newcommand{\hol}{{\rm hol}}
\newcommand{\reg}{{\rm reg}}

\newcommand{\Cc}{{\C\rm c}}
\newcommand{\wCc}{{{\rm w}\C\rm c}}
\newcommand{\Rc}{{\R\rm c}}


\newcommand{\Mod}{\mathrm{Mod}}


\newcommand{\indlim}[1][]{\mathop{\varinjlim}\limits_{#1}}

\newcommand{\Pro}{\mathrm{Pro}}

\newcommand{\prolim}[1][]{\mathop{\varprojlim}\limits_{#1}}
\newcommand{\sprolim}[1][]{\smash{\mathop{\varprojlim}\limits_{#1}}\,}

\newcommand{\eqdot}{\mathbin{:=}}
\newcommand{\seteq}{\mathbin{:=}}

\newcommand{\cl}{\colon}
\newcommand{\scbul}{{\,\raise.4ex\hbox{$\scriptscriptstyle\bullet$}\,}}

\def\suba#1{{\raise-.2em\hbox{${\raise .2em\hbox{$\alpha$}}_{#1}$}}}


\newcommand{\ba}{\begin{array}}
\newcommand{\ea}{\end{array}}

\newcommand{\bnum}{\begin{enumerate}[{\rm(i)}]}
\newcommand{\enum}{\end{enumerate}}
\newcommand{\banum}{\begin{enumerate}[{\rm(a)}]}
\newcommand{\eanum}{\end{enumerate}}

\newcommand{\eq}{\begin{eqnarray}}
\newcommand{\eneq}{\end{eqnarray}}
\newcommand{\eqn}{\begin{eqnarray*}}
\newcommand{\eneqn}{\end{eqnarray*}}

\newcommand{\set}[2]{\left\{#1 \mathbin{;} #2 \right\}}

\nc{\Der}{\on{D}}

\nc{\Proof}{\begin{proof}}
\nc{\QED}{\end{proof}}
\nc{\bA}[1][X]{\gr({\sha}_{#1})}
\nc{\OA}[1][X]{\Omega_{#1}^{\,\mathscr{A}}}
\nc{\oA}[1][X]{\omega_{#1}^{\,\mathscr{A}}}
\nc{\omA}[1][X]{\omega_{#1}}
\nc{\oAI}[1][X]{\omega_{#1}^{\,\mathscr{A}{\otimes-1}}}
\nc{\omAI}[1][X]{\omega_{#1}^{{\otimes-1}}}
\nc{\oAh}[1][X]{\omega_{#1}^{\,\Ah[]}}
\nc{\oo}[1][X]{\omega_{#1}}
\nc{\Derb}{\mathrm{D}^{\mathrm{b}}}
\nc{\hs}{\hspace*}
\nc{\Supp}{\on{Supp}}
\nc{\tr}{\on{tr}}

\newcommand{\HHA}[1][X]{\mathcal{HH}({\sha}_{#1})}

\newcommand{\HHH}[1][]{\mathcal{HH}({#1})}
\newcommand{\RHH}[2][]{\mathrm{HH}_{#1}({\sha}_{#2})}
\newcommand{\RHHl}[2][]{\mathrm{HH}_{#1}(\sha^{\rm loc}_{#2})}
\newcommand{\RHHg}[2][]{\mathrm{HH}_{#1}(\gr\sha_{#2})}
\newcommand{\RHHhg}[2][]{\widehat{\mathrm{HH}}_{#1}(\gr\sha_{#2})}
\newcommand{\RHHO}[2][]{\mathrm{HH}_{#1}(\sho_{#2})}

\newcommand{\HHAh}[1][X]{\mathcal{HH}(\sha_{#1}^{\rm loc})}
\newcommand{\HHGA}[1][X]{\mathcal{HH}({\gr(\sha}_{#1}))}
\newcommand{\HHO}[1][X]{\mathcal{HH}({\sho}_{#1})}
\newcommand{\HHE}[1][X]{\mathcal{HH}(\widehat{\she}_{#1})}

\newcommand{\HHW}[1][X]{\mathcal{HH}(\widehat{\shw}_{#1})}
\newcommand{\HHWo}[1][X]{\mathcal{HH}(\HWo[])}

\newcommand{\HOD}[1][X]{\mathcal{HD}({\sho}_{#1})}

\newcommand{\RD}{{\rm D}}
\nc{\RDAA}[1][X]{\mathrm{D}^\prime_{\mathscr{A}_{#1}}}
\nc{\RDA}{\mathrm{D}^\prime_{\mathscr{A}}}
\nc{\RDAh}{\mathrm{D}^\prime_{\mathscr{A}^{\rm loc}}}
\nc{\RDO}{\mathrm{D}^\prime_{\mathscr{O}}}
\nc{\RDOl}{\mathrm{D}^\prime_{\mathscr{O}^{\hbar,\rm loc}}}
\nc{\RDDO}{\mathrm{D}_{\mathscr{O}}}
\nc{\RDDA}{\mathrm{D}_{\mathscr{A}}}
\nc{\RDD}{\mathrm{D}^\prime}
\nc{\conv}[1][]{\mathop{\circ}\limits_{#1}}
\nc{\sconv}[1][]{\mathop{\ast}\limits_{#1}}

\nc{\ssum}{\mathop{\mbox{\normalsize$\sum$}}}
\nc{\de}[1][X]{\delta_{#1}} 
\nc{\vs}{\vspace}
\nc{\dA}[1][X]{\mathscr{C}_{{#1}}}
\nc{\dO}[1][X]{{{\delta_{{#1}}}_*\mspace{1mu}\mathscr{O}_{{#1}}}}
\nc{\dGA}[1][X]{\gr(\mathscr{C}_{{#1}})}
\newcommand{\mdaf}[1][\sha]{{\rm Mod^{af}}({#1})}
\newcommand{\mdafd}[1][\sha]{{\rm Mod_{af}}({#1})}
\nc{\OS}[1][X]{\sho_{#1}}
\nc{\soplus}{\mathop{\text{\scriptsize\raisebox{.5ex}{$\displaystyle\bigoplus$}}}}
\nc{\Inv}{\on{Inv}}
\nc{\stkInv}{\mathfrak{Inv}}
\nc{\bwr}{{\mbox{\large$\wr$}}}
\nc{\forl}{[\mspace{-.3mu}[\hbar]\mspace{-.3mu}]}
\nc{\Ls}{(\mspace{-.3mu}(\hbar)\mspace{-.3mu})}

\nc{\be}{\begin{enumerate}}
\nc{\ee}{\end{enumerate}}
\nc{\stan}{\mathrm{stan}}
\nc{\Db}{\RD^\Rb}
\nc{\pt}{\mathrm{pt}}
\nc{\BBD}{\mathbb{D}}
\nc{\rC}{\mathrm{C}}
\nc{\scup}{\mathop{\text{\scriptsize\raisebox{.5ex}{$\displaystyle\bigcup$}}}}
\nc{\tX}{{\widetilde{X}}}
\nc{\AL}{\A[\Lambda/X]}
\nc{\ALa}{\A[\Lambda^a]}
\nc{\Gr}{\on{Gr}}
\nc{\CL}{\on{\mathrm{C}_\Lambda}}
\nc{\codim}{\on{codim}}
\nc{\Chl}{\on{char_{\Lambda}}}
\nc{\Chlo}{\on{char_{\Lambda_0}}}
\nc{\ChM}{\on{char_{M}}}
\nc{\DL}{\shd_\shl}
\nc{\DLl}{\shd_\shl^\loc}
\nc{\rmC}{\rm C}

\nc{\Zh}{\Z\forl}
\nc{\PZ}{\Pro(\Zh)}
\nc{\coco}{{cohomologically complete}}
\nc{\rpi}{{{\rm R}\pi}}
\nc{\shal}{{\sha^\loc}}

\newcommand{\proolim}[1][]{\mathop{``{\varprojlim}"}\limits_{#1}}
\newcommand{\sproolim}[1][]{\smash{\mathop{``{\varprojlim}"}\limits_{#1}}}

\newcommand{\inddlim}[1][]{\mathop{``{\varinjlim}"}\limits_{#1}}

\newcommand{\bbigsqcup}{\mathop{``\bigsqcup"}\limits}

\nc{\cc}{cohomologically complete}
\nc{\shrl}{{\shr^\loc}}

\usepackage{enumerate}
\usepackage{mathrsfs}

\numberwithin{equation}{section}


\begin{document}
\author{Masaki Kashiwara and Pierre Schapira}
\title{Deformation quantization modules}
\date{}
\maketitle
\tableofcontents

\chapter*{Introduction}\label{chapter:In}
In a few words these Notes could be considered both as an introduction to 
non commutative complex analytic geometry and to the study of
microdifferential systems. Indeed, on a complex
manifold $X$, we  replace the structure sheaf $\sho_X$ with a formal
deformation of it, that is, a $\DQ$-algebra, or better, a
$\DQ$-algebroid, and study modules over this ring, extending to this
framework classical results of Cartan-Serre and Grauert, and also  
classical results on Hochschild classes and the index theorem.  
Here, $\DQ$ stands for ``deformation quantization''. But 
 the theory of modules over  $\DQ$-algebroids is also 
a natural generalization of that of  $\shd$-modules. Indeed, when 
the Poisson structure underlying the deformation is
symplectic, the study of $\DQ$-modules naturally
generalizes that of microdifferential modules, 
and sometimes makes it easier (see Theorem~\ref{th:hol1}).

The notion of a star product is now a classical subject studied by
many authors and naturally appearing in various contexts. Two
cornerstones of its history are 
the paper \cite{BFFLS} (see also \cite{Be1, Be2})
who defines $\star$-products and the fundamental result of
\cite{Ko1} which, roughly speaking, asserts that any real Poisson
manifold may be ``quantized'', that is, endowed with a star algebra to 
which the Poisson structure is associated. 
It is now a well-known fact (see \cite{Ka1,Ko2}) 
that, in order to quantize complex Poisson
manifolds, sheaves of algebras are not well-suited and have to be
replaced by algebroid stacks. 
We refer to \cite{C-H,Ye2} for further developments.

In this paper, we consider complex manifolds endowed with
$\DQ$-algebroids, that is, algebroid stacks locally associated to
sheaves of star-algebras, and study modules over such algebroids. The
main results of this paper are:
\begin{itemize}
\item
a finiteness theorem, which asserts that the convolution
of two coherent kernels, satisfying a suitable properness assumption,
is coherent  (a kind of Grauert's theorem), 
\item
the construction of the dualizing complex and 
a duality theorem, which asserts that duality commutes with
convolution, 
\item
the construction of the Hochschild class of coherent
$\DQ$-modules and the theorem which asserts that Hochschild  
class commutes with convolution,
\item
the link between Hochschild classes and Chern
classes and also with Euler classes, in the 
 commutative case,
\item
the constructibility of the complex of solutions of an holonomic 
module into another one in the  the symplectic case.
\end{itemize}

Let us describe this paper with some details.

\medskip
In {\bf Chapter~\ref{chapter:FD}}, we systematically study rings ({\em i.e.,}
sheaves of rings) which are formal deformations of rings, and modules
over such deformed rings.
More precisely, consider a topological space $X$, a commutative unital ring  $\cora$  and
a sheaf  $\sha$ of $\cora\forl$-algebras on $X$ which is $\hbar$-complete and
without $\hbar$-torsion.
We also assume that there exists a
base of open subsets of $X$,  acyclic for coherent modules
over $\shao\eqdot\sha/\hbar\sha$.
 
We first show how to deduce various properties of the ring $\sha$ 
from the corresponding properties on $\shao$. 
For example, $\sha$ is a Noetherian ring as soon as $\shao$ is a 
Noetherian ring, and 
an $\sha$-module $\shm$ is coherent as soon as it is locally
finitely generated and 
$\hbar^n\shm/\hbar^{n+1}\shm$ is
$\shao$-coherent for all $n\geq 0$.
Then,  we introduce the property of being cohomologically complete 
for an object of the derived category $\Der(\sha)$. We prove that this notion is
local, stable by direct images and an object $\shm$ with bounded coherent
cohomology is cohomologically complete. Conversely, if $\shm$ is 
cohomologically complete, it has coherent cohomology objects as soon
as its graded
module $\shao\lltens[\sha]\shm$  has coherent cohomology over $\shao$
(see Theorem~\ref{th:formalfini2}). We also give a similar criterion
which ensures that an $\sha$-module is flat.

\medskip
In {\bf Chapter~\ref{chapter:DQ}} we consider the case where $X$ is a 
complex manifold, $\cora=\C$, $\shao=\sho_X$
 and $\sha$ is  locally isomorphic to an algebra 
$(\OO[X]\forl,\star)$ where $\star$ is a star-product. 
It is an algebra over $\coro\eqdot\C\forl$.
We call such an algebra
$\sha$ a $\DQ$-algebra.
We also consider $\DQ$-algebroids, that is, $\coro$-algebroids 
(in the sense of stacks)
locally equivalent to the 
algebroid associated with a $\DQ$-algebra. 
Remark that a 
$\DQ$-algebroid on a manifold $X$ defines a Poisson structure on it.
Conversely, a famous theorem of Kontsevich \cite{Ko1} asserts that 
on a real Poisson manifold there exists a $\DQ$-algebra to
which this Poisson structure is associated. In the complex case, there
is a similar result using  $\DQ$-algebroids.
This is a theorem of \cite{Ko2}
after a related result of \cite{Ka1} in the contact case. 

If $(X,\A[X])$ is a complex manifold $X$
endowed with a $\DQ$-algebroid $\A[X]$, we
denote by $X^a$ the manifold $X$ endowed with the $\DQ$-algebroid
$\A[X]^\rop$ opposite to $\A[X]$. 

We define the external product 
$\A[X_1\times X_2]$ of two 
$\DQ$-algebroids $\A[X_1]$ and $\A[X_2]$ on manifolds $X_1$ and $X_2$.
There exists a canonical $\A[X\times X^a]$-module
$\dA$ on $X\times X^a$ supported by the diagonal,
which corresponds to the $\A$-bimodule $\A$. 

On a complex manifold $X$ endowed with a $\DQ$-algebroid, we
construct the $\coro$-algebroid $\DA$, a deformation quantization of the ring
$\shd_X$ of differential operators. It is a 
$\coro$-subalgebroid of $\shend_{\coro}(\A)$. It turns out that
$\DA$ is equivalent to $\shd_X\forl$. 
This new algebroid allows us to construct the 
dualizing complex $\oA$ associated to a $\DQ$-algebroid $\A[X]$. 
This complex is the dual over $\DA$ of
$\A[X]$, similarly to the case of $\OO[X]$-modules. Note that 
the dualizing complex for $\DQ$-algebras has already been considered
in a more particular situation by~\cite{Do,Do-Ru}.

We also  adapt to algebroids a results of~\cite{K-S2} which allows us to
replace a coherent 
$\A[X]$-module by a complex of ``almost free'' modules, such an object being a
locally finite sum $\oplus_{i\in I}(L_i)_{U_i}$, the $L_i$'s 
being free $\A[X]$-modules of finite rank defined on a neighborhood of 
$\overline{U_i}$. We give a similar result for algebraic manifolds.

\medskip
{\bf Chapter~\ref{chapter:Kern}}. Consider three complex manifolds $X_i$  
endowed with $\DQ$-algebroids $\A[X_i]$ ($i=1,2,3$).
Let $\shk_i\in\RD^\Rb_{\coh}(\A[X_i\times X_{i+1}^a])$ ($i=1,2$)
be two coherent kernels and define their convolution by
setting
\eqn
&&\shk_1\circ\shk_2\eqdot 
\reim{p_{14}}
\bigl((\shk_1\detens\shk_2)\lltens[{\A[X_2\times X^a_2]}]\dA[X_2]\bigr).
\eneqn
Here $p_{14}$ denotes the projection of the product 
$X_1\times X_2^a\times X_2\times X_3^a$ to $X_1\times X_3^a$.

We prove in Theorem~\ref{th:kernel1} that, 
under a natural properness hypothesis, the convolution $\shk_1\circ\shk_2$
belongs to $\RD^\Rb_{\coh}(\A[X_1\times X^a_3])$ and 
 in Theorem~\ref{th:duality}
that the convolution of kernels commutes with duality.

For further applications, it is also interesting to consider the
localized algebroid $\Ah=\cor\tens[\coro]\A$, where $\cor=\C\Ls$. 
An $\Ah$-module $\shm$ is good if for
any relatively compact open subset $U$ of $X$, there exists a coherent
$\A[U]$-module which generates $\shm\vert_U$. Then we prove
that there is a natural map of the Grothendieck groups 
 $\rmK_{\gd}(\Ah)\to \rmK_{\coh}(\gr\A[U])$ and that
 this map is compatible to the composition of kernels. 

Note that these theorems extend classical results of
Cartan, Serre and Grauert on finiteness and duality for coherent
$\sho$-modules on complex manifolds to $\DQ$-algebroids. 

For  papers related to $\DQ$-algebras and 
$\DQ$-algebroids on complex Poisson manifolds, and particularly to
their classification, we refer to 
\cite{B-K,Bo,B-G-N-T,C-H,Po,P-S,VdB,Ye}.

\medskip
{\bf Chapter~\ref{chapter:HH}.} We introduce the 
Hochschild homology $\HHA[X]$ of the algebroid
$\A[X]$: 
\eqn
&&\HHA[X]\eqdot\dA[X^a]\lltens[{\A[X\times X^a]}]\dA,
\text{ an object of }\RD^\Rb(\coro_X),
\eneqn
and, using  the dualizing complex, we construct a natural convolution morphism
\eqn
&&
\conv[X_2]:\reim{p_{13}}(\opb{p_{12}}\HHA[X_1\times X^a_2]
\lltens\opb{p_{23}}\HHA[X_2\times X^a_3])\to \HHA[X_1\times X^a_3].
\eneqn
To an object $\shm$ of
$\RD^\Rb_{\coh}(\A[X])$,
we naturally associate its Hochschild  class $\hh_X(\shm)$, an element of 
$H^{0}_{\Supp(\shm)}(X;\HHA[X])$. The main result of this chapter is 
Theorem~\ref{th:HH1} which asserts that taking 
the Hochschild  class commutes with the convolution:
\eq\label{eq:hh0}
&&\hh_{X_1\times X_3^a}(\shk_1\circ\shk_2)
=\hh_{X_{1}\times X_2^a}(\shk_1)\conv[X_2]\hh_{X_{2}\times X_{3}^a}(\shk_2).
\eneq

In {\bf Chapter~\ref{chapter:Com}}, we consider the case
where the deformation is trivial. In this case, there is no need of
the parameter $\hbar$ and we are in the well-known field of complex analytic
geometry. Although the results of this chapter are considered
as well-known (see in particular \cite{Hu}), 
at least from the specialists, we have decided to 
include this chapter. Indeed, to our opinion, there is no satisfactory
proof in the literature of the fact that the 
Hochschild  class of coherent $\OO[X]$-modules is functorial with
respect to convolution. We recall in particular the formula, in which the Todd class appears,
which makes the  link between Hochschild  class and
Chern classes. This formula was conjecturally stated by the first named author
around 1991 and has only been proved very recently by Ramadoss~\cite{Ra1} in the algebraic setting and 
by Grivaux~\cite{Gri} in the general case. For other papers closely related to this
chapter, see \cite{Ca2,Ca-W,Hu,Ma,Sh}. 

\medskip
In {\bf Chapter~\ref{chapter:Symp}}
we study Hochschild homology and Hochschild classes in 
the case where the Poisson structure associated to the
deformation is symplectic.  We prove then that the 
dualizing complex $\oA$ is isomorphic to $\dA$ shifted by $d_X$, 
the complex dimension of $X$, 
and we construct canonical morphisms 
\eq\label{eq:hhatok}
&& \hbar^{d_X/2}\coro_X\,[d_X]\to \HHA[X]\to\hbar^{-d_X/2}\coro_X\,[d_X] 
\eneq
whose composition is the canonical inclusion.
The morphisms in \eqref{eq:hhatok} induce an isomorphism
\eq\label{eq:hhatokloc}
&& \cor_X\,[d_X]\simeq\HHAh[X].
\eneq
The first morphism in \eqref{eq:hhatok} 
gives an intrinsic construction of the canonical class 
in $H^{-d_X}(X;\HHA[X])$ studied and used by several authors 
(see \cite{B-G,B-N-T,F-T}). 
The isomorphism \eqref{eq:hhatokloc} allows us to associate an Euler class 
$\eu_X(\shm)\in H^{d_X}_\Lambda(X;\cor_X)$ to any coherent $\A$-module
$\shm$ supported by a closed set $\Lambda$.

Then we show how our results apply to $\shd$-modules. We recover in
particular the Riemann-Roch theorem for $\shd$-modules 
of \cite{La} as well as the functoriality of the Euler class
of $\shd$-modules of \cite{S-Sn}.

\medskip
Finally, in {\bf Chapter~\ref{chapter:Hol}}, 
we study holonomic $\Ah$-modules on complex symplectic manifolds.
We prove that if $\shl$ and $\shm$ are two  holonomic $\Ah$-modules,
then the complex $\rhom[{\Ah}](\shm,\shl)$ is 
perverse (hence, in particular, $\C$-constructible) over the field
$\cor$. 

If the intersection of the supports of the holonomic modules 
$\shl$ and $\shm$ is compact, formula ~\eqref{eq:hh0} gives in
particular
\eqn
&&\chi(\RHom[{\Ah}](\shm,\shl))=\int_X(\eu_X(\shm)\cdot\eu_X(\shl)).
\eneqn
The Euler class of a holonomic module may be
interpreted as a Lagrangian cycle, which makes its calculation quite easy.

If  the modules $\shl$ and $\shm$ are simple  along smooth
Lagrangian submanifolds, then one can estimate the microsupport of
this complex. This particular case had been already treated in
 \cite{KS08} in the analytic framework,
that is, using analytic deformations (in the sense of \cite{S-K-K}), 
not formal deformations, and the proof given here is much simpler. 

We also prove (Theorem~\ref{th:hol2}) that if $\shl_a$ is family 
of holonomic modules indexed by a holomorphic parameter $a$, then,
under suitable geometrical hypotheses, the complex of global sections 
$\RHom[{\Ah}](\shm,\shl_a)$, which  belongs to  $\Derb_f(\cor)$, does
not depend on $a$. This is a kind of
 invariance by Hamiltonian symplectomorphism of this complex. 

\vspace{0.5cm}
\noindent
We have developed the theory in the framework of complex
analytic manifolds. 
However, all along the manuscript, we explain how the results extend
(and sometimes simplify) 
in the algebraic setting, that is 
on  quasi-compact and separated  smooth varieties 
over $\C$. 

\vspace{0.5cm}
\noindent
The main results of this paper, with the exception of Chapter 7,
 have been announced in \cite{K-S4,K-S4b}.

\medskip\noindent
{\bf Acknowledgments} We would like to thank Andrea D'Agnolo,
Pietro Polesello, St{\'e}phane Guillermou, Jean-Pierre Schneiders and
Boris Tsygan  for useful comments and remarks.

\chapter{Modules over formal deformations}\label{chapter:FD}  

\section{Preliminary}
\subsubsection*{Some notations}
Throughout this paper, $\cora$
\index{k0@$\cora$}%
 denotes a commutative unital ring.

We shall mainly follow the notations of \cite{K-S3}.
In particular, if $\shc$ is  a category, 
we denote by $\shc^\rop$ the opposite category.
If $\shc$ is an additive category, we denote by 
 $\RC(\shc)$ the category of complexes of $\shc$
\index{Cminusa@$\RC(\shc)$}%
and by $\RC^*(\shc)$ ($*=+,-,\rmb$) the full subcategory consisting of
complexes bounded from below (resp.\ bounded from above, resp.\ bounded).
\index{Cminusb@$\RC^+(\shc)$}%
\index{Cminusc@$\RC^-(\shc)$}%
\index{Cminusd@$\RC^\rmb(\shc)$}%
If $\shc$ is an abelian category, we denote by 
$\Der(\shc)$ the derived category of $\shc$ 
\index{DerCa@$\Der(\shc)$}%
and by $\Der^*(\shc)$ ($*=+,-,\rmb$) the full triangulated subcategory consisting of
objects with  bounded from below (resp.\ bounded from above, resp.\ bounded)
cohomology.
\index{DerCb@$\Der^+(\shc)$}%
\index{DerCc@$\Der^-(\shc)$}%
\index{DerCd@$\Der^\rmb(\shc)$}%
We denote as usual by $\tau^{\ge n}$, $\tau^{\le n}$ etc.\ the truncation
functors in $\Der(\shc)$.
\index{tau@$\tau^\ge n$}%

If $A$ is a ring (or a sheaf of rings on a topological space $X$), 
an $A$-module means a 
left $A$-module. We denote by $A^\rop$ the opposite ring of $A$. Hence
an $A^\rop$-module is nothing but a right $A$-module.
We denote by $\md[A]$ 
\index{Mod@$\md[A]$}%
the category of $A$-modules. We set for short $\Der(A)\eqdot\Der(\md[A])$
\index{D0@$\RD(A)$}%
and we write similarly $\Der^*(A)$ instead of  $\Der^*(\md[A])$.
\index{D1@$\Derb(A)$}%
We denote by $\Derb_\coh(A)$
\index{D1@$\Derb_\coh(\sha)$}%
the full triangulated subcategory of $\Derb(A)$
consisting of objects with coherent cohomology. If $\cora$ is  Noetherian,
one denotes simply by 
$\Derb_f(\cora)$ the full subcategory of $\Derb(\cora)$ consisting of objects
with finitely generated cohomology.

We denote by $\RDD_X$ the duality functor for $\cora_X$-modules:
\eq\label{eq:rdddual}
&&\RDD_X(\scbul)\eqdot\rhom[\cora_X](\scbul,\cora_X).
\eneq
and we simply denote by $(\scbul)^\star$ the duality functor on $\RD^\Rb(\cora)$: 
\eq\label{eq:star=dual}
&&(\scbul)^\star=\RHom[\cora](\scbul,\cora).
\eneq
If $\cora$ is  Noetherian and with finite global dimension, 
$(\scbul)^\star$  sends 
$(\Derb_{f}(\cora))^\rop$ to $\Derb_f(\cora)$. 

We denote by $\{\pt\}$ 
\index{point@$\{\pt\}$}%
the set with a single element.

\subsubsection*{Finiteness conditions}
Let $X$ be a topological space and let $\sha$ be a $\cora_X$-algebra
({\em i.e.,} a sheaf of $\cora$-algebras) on
$X$. Let us recall a few classical definitions. 
\begin{itemize}
\item
An $\sha$-module $\shm$ is
locally finitely generated 
\glossary{locally finitely generated}\glossary{module!locally finitely generated}%
if there locally exists an exact sequence
\eq\label{eq:seqfp1}
&&\shl_0\to\shm\to0
\eneq
such that $\shl_0$ is locally free of finite rank over
$\sha$.
\item
An $\sha$-module $\shm$ is
locally of finite presentation
\glossary{locally of finite presentation}%
\glossary{module!locally of finite presentation}%
if there locally exists an exact sequence
\eq
&&\shl_1\to\shl_0\to\shm\to0\label{eq:fpr}
\eneq
such that $\shl_1$ and $\shl_0$ are locally free of finite rank over
$\sha$. This is equivalent to saying that 
there  locally exists an exact sequence
\eq\label{eq:seqfp2}
&&0\to\shk\to[u]\shn\to\shm\to0
\eneq
where $\shn$ is  locally free of finite rank and $\shk$ is 
locally finitely generated.
This is also equivalent to saying that 
there  locally exists an exact sequence
\eq\label{eq:seqfp3}
&&\shk\to\shn\to\shm\to0
\eneq
where $\shn$ is locally of finite presentation and $\shk$ is 
locally finitely generated.
\item
An $\sha$-module $\shm$
is pseudo-coherent
\glossary{pseudo-coherent}%
\glossary{module!pseudo-coherent}%
if for any locally defined 
morphism $u\cl\shn\to\shm$ with $\shn$
of finite presentation, $\ker u$ is locally  finitely generated. 
This is also equivalent to saying that 
any locally defined $\sha$-submodule of $\shm$ is locally
of finite presentation as soon as it is locally finitely generated.
\item
An $\sha$-module $\shm$ is coherent 
\glossary{coherent}%
\glossary{module!coherent}%
if it is locally finitely
generated and pseudo-coherent. A ring
is a coherent ring if it is so as a module over itself. 
One denotes by $\mdcoh[\sha]$ the
full additive subcategory of $\md[\sha]$ consisting of coherent
modules. Note that $\mdcoh[\sha]$ is a full  abelian subcategory of 
 $\md[\sha]$, stable by extension, and the natural functor 
$\mdcoh[\sha]\to\md[\sha]$ is exact (see \cite[Exe.~8.23]{K-S3}).
\item
An $\sha$-module $\shm$
 is Noetherian (see \cite[Def.~A.7]{Ka2}) 
\glossary{Noetherian}\glossary{module!Noetherian}\glossary{ring!Noetherian}%
if it is coherent, $\shm_x$ is a Noetherian $\sha_x$-module for any
$x\in X$, and for any open subset $U\subset X$, any filtrant family
of coherent submodules of $\shm\vert_U$ is locally stationary. 
(This means that given a family $\{\shm_i\}_{i\in I}$ of coherent submodules of 
$\shm\vert_U$ indexed by a filtrant ordered set $I$, with 
$\shm_i\subset\shm_j$ for $i\leq j$, 
there locally exists $i_0\in I$ such that $\shm_{i_0}\isoto\shm_j$ for any $j\geq i_0$.)
A ring
is a Noetherian ring if it is so as a left module over itself. 
\end{itemize}

\subsubsection*{Mittag-Leffler condition and pro-objects}
We refer to \cite{SGA} for the notions of ind-object and pro-object as well as to 
 \cite{K-S3} for an exposition.
To an abelian category $\shc$, 
one associates the abelian category $\Pro(\shc)$ of its pro-objects.
Then $\shc$ is a full abelian subcategory of $\Pro(\shc)$ stable by
kernel, cokernel and extension, the natural functor 
$\shc\hookrightarrow\Pro(\shc)$ is exact, and 
the functor $\sproolim\cl \Fct(I^\rop,\shc)\to\Pro(\shc)$ is exact
for any small filtrant  category $I$. In the sequel, we identify $\shc$ 
with a full subcategory of $\Pro(\shc)$.
If $\shc$ admits small projective
limits, we denote by  $\pi$ the left exact functor
\eqn
&&\pi\cl \Pro(\shc)\to\shc,\quad \sproolim[i]X_i\mapsto\sprolim[i]X_i.
\eneqn
If $\shc$ has enough injectives, then $\pi$ admits a right derived
functor (\loc.\ cit.):
\eqn
&&\rpi\cl \Der^+\bl\Pro(\shc)\br\to\Der^+(\shc).
\eneqn

\begin{definition}\label{def:ML}
We say that an object $M\in\Pro(\shc)$
satisfies the Mittag-Leffler condition 
\glossary{Mittag-Leffler condition}%
if, for any $N\in\shc$ and any morphism $M\to N$ in $\Pro(\shc)$,
$\im(M\to N)$ is representable by an object of $\shc$.
\end{definition}

By the definition, any quotient of 
an object which satisfies the Mittag-Leffler condition 
also satisfies the Mittag-Leffler condition.

\begin{lemma}\label{lem:ML}
Let $\{M_n\}_{n\in\Z_{\ge1}}$ be a projective system 
in an abelian category $\shc$, and set $M=\sproolim[n]M_n\in\Pro(\shc)$.
Then the following conditions are equivalent:
\bnum
\item 
$M$ satisfies the Mittag-Leffler condition,
\item
$\{M_n\}_{n\in\Z_{\ge1}}$ satisfies the Mittag-Leffler condition \ro that is, 
for any $p\ge1$, the sequence $\{\im(M_n\to M_p)\}_{n\ge p}$ is
stationary\rf,
\item
there exists 
a projective system
$\{M'_n\}_{n\in\Z_{\ge1}}$ in $\shc$ such that
the morphism $M'_{n+1}\to M'_n$ is an epimorphism for any $n\ge1$
and we have an isomorphism $M\simeq\proolim[n]M'_n$ in $\Pro(\shc)$.
\enum
\end{lemma}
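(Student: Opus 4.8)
The plan is to prove the equivalences $(i)\Leftrightarrow(ii)\Leftrightarrow(iii)$ in the cycle $(iii)\Rightarrow(i)\Rightarrow(ii)\Rightarrow(iii)$, unwinding the definition of the Mittag-Leffler condition in $\Pro(\shc)$ from Definition~\ref{def:ML} into the concrete ``stationary image'' statement of a projective system. The main point is that morphisms in $\Pro(\shc)$ out of $M=\sproolim[n]M_n$ into an object $N\in\shc$ are computed by $\Hom[\Pro(\shc)](\sproolim[n]M_n,N)\simeq\sindlim[n]\Hom[\shc](M_n,N)$, so a morphism $M\to N$ is represented by some $M_p\to N$, and the image of $M\to N$ in $\Pro(\shc)$ is the pro-object $\sproolim[n\ge p]\im(M_n\to N)$, where the transition maps are the induced epimorphisms among the images; such a pro-object is representable by an object of $\shc$ precisely when this decreasing sequence of subobjects of $N$ is stationary.

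For $(iii)\Rightarrow(i)$: if $M\simeq\proolim[n]M'_n$ with all transition maps $M'_{n+1}\epito M'_n$ epimorphisms, then for any $N\in\shc$ and morphism $M\to N$, represented by some $M'_p\to N$, the image $\sproolim[n\ge p]\im(M'_n\to N)$ has transition maps which are epimorphisms (since each $M'_{n+1}\epito M'_n$ forces $\im(M'_{n+1}\to N)=\im(M'_n\to N)$ as subobjects of $N$), hence the sequence is constant equal to $\im(M'_p\to N)$, an object of $\shc$; so $M$ satisfies Mittag-Leffler. For $(i)\Rightarrow(ii)$: fix $p\ge1$ and apply the hypothesis to the canonical morphism $M\to M_p$ in $\Pro(\shc)$; its image $\sproolim[n\ge p]\im(M_n\to M_p)$ is representable by an object of $\shc$, which, for a countable filtered (i.e.\ cofinal-in-$\Z_{\ge 1}$) pro-system of subobjects of the fixed object $M_p\in\shc$, forces the decreasing sequence $\{\im(M_n\to M_p)\}_{n\ge p}$ to stabilize — this is where one uses that $\shc$ is abelian and that a representable pro-object built from a decreasing sequence of subobjects of a fixed object must be that sequence's eventual value. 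For $(ii)\Rightarrow(iii)$: set $M'_n\eqdot\im(M_m\to M_n)$ for $m\gg n$ (the stationary value, well-defined by $(ii)$); then $M'_n\subset M_n$, the maps $M_{n+1}\to M_n$ restrict to maps $M'_{n+1}\to M'_n$ which are epimorphisms onto $M'_n$ by the very definition of $M'_n$ as an eventual image, and the inclusions $M'_n\into M_n$ induce an isomorphism $\proolim[n]M'_n\isoto\sproolim[n]M_n=M$ in $\Pro(\shc)$ because $\{M'_n\}$ is cofinal among the subobjects appearing in the defining system (indeed each $M'_n$ receives a map from $M_m$ for large $m$, giving an inverse pro-morphism).

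The hard part will be the implication $(i)\Rightarrow(ii)$, namely extracting a genuinely stationary sequence of subobjects of $M_p$ from the mere representability of the pro-object $\sproolim[n\ge p]\im(M_n\to M_p)$ by some object of $\shc$. The subtlety is that a pro-object can be representable without the ``obvious'' presentation being essentially constant, so one must argue that for a projective system of \emph{subobjects} of a fixed object — where all transition morphisms are monomorphisms — representability of the limit pro-object does force eventual constancy: if $\sproolim[n\ge p] L_n\simeq L$ with $L\in\shc$ and $L_{n+1}\into L_n\into M_p$, then the structure morphism $L\to L_p$ in $\shc$ composed with the canonical $\sproolim[n] L_n\to L$ being an isomorphism in $\Pro(\shc)$ yields, by the computation of $\Hom$ in $\Pro(\shc)$, that $L\to L_n$ factors compatibly and that for $n$ large enough $L_n\isoto L$, hence $L_n=L_{n+1}=\cdots$ inside $M_p$. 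I would prove this as a small separate observation about pro-systems of subobjects. Everything else is formal manipulation with the pro-category, using the exactness of $\shc\into\Pro(\shc)$ and the adjunction-type description of hom-sets recalled just before Definition~\ref{def:ML}; no nontrivial homological input beyond that is needed.
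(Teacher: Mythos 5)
Your proof is correct and follows exactly the same cycle $(iii)\Rightarrow(i)\Rightarrow(ii)\Rightarrow(iii)$ and the same constructions (images $\im(M_n\to M_p)$, eventual images $M'_n$) as the paper's proof. The one point you flag as needing a small separate argument — that a decreasing sequence of subobjects of a fixed object whose pro-limit is representable must stabilize — is also the tacit step in the paper's ``Hence, the sequence $\{\im(M_n\to M_p)\}_{n\ge p}$ is stationary,'' so you have if anything been slightly more explicit; the argument you sketch (splitting $\beta\circ\alpha_m=\id_L$, passing to the idempotent $\alpha_m\circ\beta$, and sandwiching $\im(\alpha_m)$ between $L_k$ for $k$ large) does close that gap.
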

\begin{proof}
(i) $\Rightarrow$ (ii).
For any $p\ge 1$, $\im(M\to M_p)\simeq\proolim[n\ge p]\im(M_n\to M_p)$
is representable by an object of $\shc$.
Hence, the sequence $\{\im(M_n\to M_p)\}_{n\ge p}$ is stationary.

\vspace{0.4em}
\noindent
(ii) $\Rightarrow$ (iii).
Set $M'_n=\im(M_k\to M_n)$ for $k\gg n$.
Then the morphisms $M'_n\to M_n$ induce a morphism 
$f\cl \proolim[n] M'_n\to\proolim[n] M_n$.
On the other hand, for each $n$, 
$M\to M_n$ decomposes as $M\to M'_n\monoto M_n$,
since taking $k\gg n$ such that
$M'_n=\im(M_k\to M_n)$, we have a morphism
$M\to M_k\to M'_n$. These morphisms  induce a morphism
$g\cl \proolim[n] M_n=M\to\proolim[n] M'_n$.
It is easy to see that $f$ and $g$ are inverse to each other.

\vspace{0.4em}
\noindent
(iii) $\Rightarrow$ (i).
For any $N\in\shc$ and any morphism $f\cl M\to N$ in $\Pro(\shc)$,
there exists $p$ such that $f$ decomposes into
$M\to M'_p\to N$. Then
$\im(M\to N)\simeq\proolim[n\ge p]\im (M'_n\to N)\simeq \im(M'_p\to N)$.
\end{proof}

Note that the following lemma is well known.
\Lemma\label{lem:MLcoh}
Let $\{M_n\}_{n\ge1}$ be a projective system of $\Z$-modules.
Then 
$\RR^i\pi(\proolim[n]M_n)\simeq 0$ for $i\not=0,1$.
If $\{M_n\}_{n\ge1}$ satisfies the Mittag-Leffler condition,
then 
$H^1\bl\RR\pi\proolim[n]M_n\br\simeq 0$.
\enlemma
Here and in the sequel, we make the following convention.
\begin{convention}
When we have a left exact functor $\shc\to[F]\shc'$ 
of abelian categories and $X\in\Der(\shc)$, the notation  
$\RR^i F(X)$ stands for $H^i\bl\RR F(X)\br$. 
For example, $\RR^i\pi\rsect(U;\shm)$ means $H^i\bl\rpi\rsect(U;\shm)\br$.
\end{convention}
\begin{lemma}\label{le:cohandprolim} 
Let $\shr$ be an algebra over a topological space $X$, and let
$\{\shm_n\}_{n\ge0}$ be a projective system of $\shr$-modules.
Set $\shm=\proolim[n]\shm_n\in\Pro(\Mod(\shr))$. Let $U$ be an open 
subset of $X$ and let $i\in\Z$. Then we have an exact sequence
\eqn
&&0\To\RR^1\pi\bl\proolim[n]H^{i-1}(U;\shm_n)\br
\To H^i(U;\rpi\shm)\To\prolim[n]H^i(U;\shm_n)\to0.
\eneqn
\end{lemma}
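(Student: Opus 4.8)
The plan is to derive the exact sequence from the spectral sequence (or, more elementarily, from the two short exact sequences) attached to the composition of the two derived functors $\rsect(U;-)$ and $\rpi$. The key observation is that taking sections over $U$ and the functor $\pi\colon\Pro(\shc)\to\shc$ commute at the level of the underlying pro-objects: applying $\rsect(U;-)$ termwise to $\{\shm_n\}$ gives a projective system of complexes of $\Z$-modules, and $\rpi$ of its ``$\proolim$'' computes $\rsect(U;\rpi\shm)$. More precisely, one writes $\rpi\shm$ as (an object represented by) a suitable projective system and uses that $\rsect(U;-)$, being computed by flabby resolutions which may be chosen compatibly across the system, commutes with $\proolim$ inside $\Der^+(\Pro(\catMod(\Z)))$.

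The main step: combine this with Lemma~\ref{lem:MLcoh}, which says that for a projective system $\{N_n\}$ of $\Z$-modules one has $\RR^i\pi(\proolim[n]N_n)=0$ for $i\neq 0,1$; hence $\rpi$ applied to a projective system has cohomological amplitude $[0,1]$. Applying this to $N_n^{k}=H^{k}(U;\shm_n)$ (the cohomology groups of the termwise sections), the hyper-derived-functor spectral sequence
\eqn
&&E_2^{p,q}=\RR^p\pi\bl\proolim[n]H^q(U;\shm_n)\br\To H^{p+q}(U;\rpi\shm)
\eneqn
degenerates: only the rows $p=0$ and $p=1$ survive, so for each $i$ it collapses to the short exact sequence
\eqn
&&0\To E_2^{1,i-1}\To H^i(U;\rpi\shm)\To E_2^{0,i}\To0,
\eneqn
which is exactly the asserted sequence once we note $E_2^{0,i}=\pi\bl\proolim[n]H^i(U;\shm_n)\br=\prolim[n]H^i(U;\shm_n)$.

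Concretely, rather than invoking a spectral sequence one can argue by hand: choose for each $n$ a flabby (or injective) resolution of $\shm_n$ compatibly in $n$, obtaining a projective system $\{I_n^\bullet\}$ of complexes; then $H^i(U;\rpi\shm)=H^i\bl\rpi\,\proolim[n]\rsect(U;I_n^\bullet)\br$. Filtering the double complex $\rpi\,\proolim[n]\rsect(U;I_n^\bullet)$ by the $\pi$-degree and using $\RR^{\ge2}\pi=0$ on projective systems of $\Z$-modules (Lemma~\ref{lem:MLcoh}) gives a two-step filtration on $H^i$ with graded pieces $\RR^1\pi\bl\proolim[n]H^{i-1}(U;\shm_n)\br$ and $\prolim[n]H^i(U;\shm_n)$; the connecting maps vanish for the same amplitude reason, yielding the short exact sequence and its functoriality.

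The step I expect to be the main obstacle is the first one: justifying carefully that $\rsect(U;-)$ commutes with the pro-object formation, i.e.\ that $\rsect(U;\rpi\shm)\simeq\rpi\bl\proolim[n]\rsect(U;\shm_n)\br$ in $\Der^+(\Pro(\catMod(\Z)))$. This requires a compatible (functorial in $n$) choice of acyclic resolutions and the exactness of $\sproolim$ on projective systems of complexes; once that identification is in place, everything else is a formal consequence of the vanishing $\RR^{i}\pi=0$ for $i\neq 0,1$ provided by Lemma~\ref{lem:MLcoh}.
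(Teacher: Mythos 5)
Your proof is correct and is, at heart, the same argument as the paper's. The paper does not invoke a spectral sequence by name; instead it runs the standard degeneration argument by hand using truncation triangles: it applies $\rpi$ to $\tau^{<i}\rsect(U;\shm)\to\rsect(U;\shm)\to\tau^{\ge i}\rsect(U;\shm)\to[+1]$, uses $\RR^k\pi=0$ for $k\neq0,1$ (Lemma~\ref{lem:MLcoh}) to kill the connecting term, then peels off the top cohomology of $\tau^{<i}$ via a second triangle to identify $\RR^i\pi\tau^{<i}\rsect(U;\shm)\simeq\RR^1\pi\proolim[n]H^{i-1}(U;\shm_n)$, and finally identifies $\RR^i\pi\tau^{\ge i}\rsect(U;\shm)\simeq\prolim[n]H^i(U;\shm_n)$. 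This is exactly the proof that your hyper-derived-functor spectral sequence $E_2^{p,q}=\RR^p\pi\bl\proolim[n]H^q(U;\shm_n)\br\Rightarrow H^{p+q}(U;\rpi\shm)$ has only two nonzero rows and collapses to the asserted short exact sequence; the two presentations buy you nothing different, though the explicit triangle argument avoids having to set up the spectral sequence machinery in $\Pro(\Mod(\shr))$. On the point you flag as the ``main obstacle,'' namely $\rsect(U;\rpi\shm)\simeq\rpi\rsect(U;\shm)$ and $H^i(U;\shm)\simeq\proolim[n]H^i(U;\shm_n)$, the paper simply records both as facts in the first line without elaboration (the second follows from exactness of $\sproolim$, the first from compatibility of the two derived functors), so your instinct that this is the step requiring care is reasonable but it is not treated as a difficulty in the paper.
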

\begin{proof}
We have $\rsect(U;\rpi\shm)\simeq\rpi\rsect(U;\shm)$ and we also have 
$H^i(U;\shm)\simeq \proolim[n]H^i(U;\shm_n)$.
Consider the distinguished triangle
\eqn
&&\rpi\tau^{<i}\rsect(U;\shm)\To\rpi\rsect(U;\shm)
\To\rpi\tau^{\ge i}\rsect(U;\shm)\To[+1].
\eneqn
It gives rise to the exact sequence
 \eqn
&&0\to\RR^i\pi\tau^{<i}\rsect(U;\shm)\to\RR^i\pi\rsect(U;\shm)
\to\RR^i\pi\tau^{\ge i}\rsect(U;\shm)\\
&&\hs{45ex}\to\RR^{i+1}\pi\tau^{<i}\rsect(U;\shm).
\eneqn
Since $\RR^k\pi\proolim[n] M_n=0$ for $k\not=0,1$ and any projective
system $\{M_n\}_n$, we obtain
$\RR^{i+1}\pi\tau^{<i}\rsect(U;\shm)=0$.

Consider the distinguished triangle
\eqn
&&\tau^{<i-1}\rsect(U;\shm)\to\tau^{<i}\rsect(U;\shm)
\to H^{i-1}(U;\shm)[1-i]\To[+1].
\eneqn
Using the isomorphism $H^{i-1}(U;\shm)\simeq \proolim[n]H^{i-1}(U;\shm_n)$
and applying the functor $\rpi$, we get the distinguished triangle
\eqn
&&\rpi\tau^{<i-1}\rsect(U;\shm)\to\rpi\tau^{<i}\rsect(U;\shm)\\
&&\hs{25ex}\to\rpi\bl\proolim[n]H^{i-1}(U;\shm_n)[1-i]\br\To[+1].
\eneqn
We obtain
$\RR^i\pi\tau^{<i}\rsect(U;\shm)\simeq\RR^1\pi\proolim[n]H^{i-1}(U;\shm_n)$.
Finally, we have
$\RR^i\pi\tau^{\ge i}\rsect(U;\shm)\simeq\prolim[n]H^i(U;\shm_n)$.
\end{proof}

As a corollary of this lemma, we obtain the following
lemma, a slight modified version of \cite[Pr{\'e}liminaires,
Prop. (13.3.1)]{EGA}.

\begin{lemma} \label{lem:cohpro}
Let $X$ be a topological space,
$\{\shf_n\}_{n\in\Z_{>0}}$ a projective system of abelian sheaves on
$X$ and $\shf\seteq\prolim[n]\shf_n$. Assume the following conditions:
\banum
\item
for any $x\in X$ and any integer $i$, we have 
$$\indlim[x\in
U]\RR^1\pi\proolim[n]H^i(U;\shf_n)
\simeq0,$$
where $U$ ranges over an open neighborhood system of $x$,
\item for any $x\in X$ and $i>0$,
$\indlim[x\in U]\bl\prolim[n]H^i(U;\shf_n)\br=0$,
where $U$ ranges over an open neighborhood system of $x$,
\eanum
Then for any $i$,
the morphism
$$h_i\cl H^i(X;\shf)\to\prolim[n]H^i(X;\shf_n)$$
is surjective. If moreover
$\{H^{i-1}(X;\shf_n)\}_n$ satisfies the Mittag-Leffler condition, then
$h_i$ is an isomorphism.
\end{lemma}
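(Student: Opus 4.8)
The plan is to apply Lemma~\ref{le:cohandprolim} twice: first over every open subset $U$, so as to identify the genuine inverse-limit sheaf $\shf=\prolim[n]\shf_n=\pi(\proolim[n]\shf_n)$ with $\rpi\proolim[n]\shf_n$ in the derived category of abelian sheaves on $X$, and then over $U=X$ to conclude.

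First I would apply Lemma~\ref{le:cohandprolim} --- with $\shr$ the constant sheaf $\Z_X$ (so that abelian sheaves on $X$ are $\shr$-modules) and $\shm_n=\shf_n$ --- to an arbitrary open $U\subset X$ and an arbitrary $i\in\Z$; this gives a short exact sequence
\[
0\to\RR^1\pi\proolim[n]H^{i-1}(U;\shf_n)\to H^i(U;\rpi\proolim[n]\shf_n)\to\prolim[n]H^i(U;\shf_n)\to0,
\]
functorial in $U$, hence an exact sequence of presheaves on $X$. Sheafification is exact, and the sheaf associated with $U\mapsto H^i(U;\rpi\proolim[n]\shf_n)$ is the cohomology sheaf $\RR^i\pi\proolim[n]\shf_n$; so, writing $\shp_{i-1}$ and $\shq_i$ for the sheaves associated with $U\mapsto\RR^1\pi\proolim[n]H^{i-1}(U;\shf_n)$ and $U\mapsto\prolim[n]H^i(U;\shf_n)$, I obtain an exact sequence of sheaves
\[
0\to\shp_{i-1}\to\RR^i\pi\proolim[n]\shf_n\to\shq_i\to0.
\]
The stalks at $x$ of $\shp_{i-1}$ and $\shq_i$ are $\indlim[x\in U]\RR^1\pi\proolim[n]H^{i-1}(U;\shf_n)$ and $\indlim[x\in U]\prolim[n]H^i(U;\shf_n)$, so hypothesis (a) says exactly that $\shp_{i-1}=0$ for every $i$, whence $\RR^i\pi\proolim[n]\shf_n\simeq\shq_i$. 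For $i\ge2$ this sheaf vanishes --- the sheaf analogue of Lemma~\ref{lem:MLcoh}, valid because the category of abelian sheaves admits countable products --- and for $i=1$ hypothesis (b) forces $\shq_1=0$. Since $\RR^0\pi\proolim[n]\shf_n=\prolim[n]\shf_n=\shf$, this proves $\rpi\proolim[n]\shf_n\simeq\shf$, concentrated in degree $0$.

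Now I would invoke Lemma~\ref{le:cohandprolim} once more with $U=X$. By the previous paragraph $H^i(X;\rpi\proolim[n]\shf_n)=H^i(X;\shf)$, and the surjection onto $\prolim[n]H^i(X;\shf_n)$ furnished by the lemma, being induced by the structure morphisms $\shf\to\shf_n$, is exactly $h_i$; hence
\[
0\to\RR^1\pi\proolim[n]H^{i-1}(X;\shf_n)\to H^i(X;\shf)\to[h_i]\prolim[n]H^i(X;\shf_n)\to0
\]
is exact. Surjectivity of $h_i$ is then immediate, and when $\{H^{i-1}(X;\shf_n)\}_n$ satisfies the Mittag-Leffler condition the left-hand term vanishes by Lemma~\ref{lem:MLcoh}, so $h_i$ is an isomorphism. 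The one step requiring care is the sheafification in the second paragraph: one must check that the short exact sequence of Lemma~\ref{le:cohandprolim} is natural in $U$ and that sheafifying $U\mapsto H^i(U;\rpi\proolim[n]\shf_n)$ yields $\RR^i\pi\proolim[n]\shf_n$ --- both routine once $\rpi\proolim[n]\shf_n$ is represented by a complex of injective sheaves, since sheafification commutes with the cohomology of such a complex --- and, along the way, that the surjection obtained for $U=X$ is indeed $h_i$. Everything else is formal.
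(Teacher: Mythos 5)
Your proof is correct and follows essentially the same route as the paper's: apply Lemma~\ref{le:cohandprolim} over each open $U$, pass to stalks at $x$ to deduce $(\RR^i\pi\proolim[n]\shf_n)_x=0$ for $i\neq 0$ from hypotheses (a) and (b), conclude $\rpi\proolim[n]\shf_n\simeq\shf$, and then apply the lemma once more with $U=X$. The paper reaches the stalk vanishing by directly taking $\indlim_{x\in U}$ of the short exact sequence; you reach it by sheafifying and identifying stalks of the sheafified outer terms, which is the same computation in different clothing. The one small inefficiency is that for $i\ge 2$ you invoke a ``sheaf analogue'' of Lemma~\ref{lem:MLcoh} --- a result the paper only states for $\Z$-modules, so at the level of pro-sheaves you are asserting it rather than citing it --- whereas hypothesis (b) already gives $\shq_i=0$ for every $i>0$, which is all that is needed; there is no reason to treat $i=1$ and $i\ge 2$ separately.
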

\Proof
Set $\shm=\proolim[n]\shf_n$. By the preceding lemma,
we have an exact sequence
\eqn
&&0\To\RR^1\pi\bl\proolim[n]H^{i-1}(U;\shf_n)\br
\To H^i(U;\rpi\shm)\To\prolim[n]H^i(U;\shf_n)\to0.
\eneqn
For any $x$, taking the inductive limit with respect to 
$U$ in an open neighborhood system of $x$,
we obtain $(\RR^i\pi\shm)_x=0$ for $i\not=0$.
Hence we conclude $\rpi\shm\simeq\shf$.
Then the exact sequence above reads as
\eqn
&&0\To\RR^1\pi\bl\proolim[n]H^{i-1}(X;\shf_n)\br
\To H^i(X;\shf)\To\prolim[n]H^i(X;\shf_n)\to0.
\eneqn
Hence we have the desired result.
\QED

\section{Formal deformations of a sheaf of rings}
Now we consider the following situation:
$X$ is a topological space,  $\sha$ is a $\cora$-algebra on $X$ and
$\hbar$ is a section of $\sha$ contained in the center of $\sha$.
We set  
\eqn
&&\shao\eqdot\sha/\hbar\sha
\eneqn
\index{A0@$\sha$}%
\index{A1@$\shao$}%

Let $\shm$ be an $\sha$-module. 
We set 
\eq \widehat{\shm}\seteq\prolim[n]\shm/\hbar^n\shm,
\eneq
and call it the $\hbar$-completion of $\shm$.
\glossary{hcompletion@$\hbar$-completion}%
We say that 
\begin{itemize}
\item
$\shm$ has no $\hbar$-torsion if $\hbar\cl\shm\to\shm$ is injective, 
\glossary{hbartorsion@$\hbar$-torsion}%
\glossary{nohbartorsion@no $\hbar$-torsion}%
\item
$\shm$ is $\hbar$-separated if
\glossary{hbarseparated@$\hbar$-separated}%
$\shm\to\widehat{\shm}$ is a monomorphism, 
{\em i.e.}, $\smash{\bigcap\limits_{n\ge0}}\,\hbar^n\shm=0$,
\item
$\shm$ is $\hbar$-complete if 
\glossary{hbarcomplete@$\hbar$-complete}%
$\shm\to\widehat{\shm}$ is an isomorphism.
\end{itemize}

\begin{lemma}\label{le:complete1}
Let $\shm\in\md[\sha]$ and assume that $\shm$ has no $\hbar$-torsion. Then
\bnum
\item
$\twh{\shm}$ has no $\hbar$-torsion,
\item
$\shm/\hbar^n\shm\isoto\twh\shm/\hbar^n\twh\shm$,

\item
$\twh{\shm}\isoto\shm\widehat{\phantom{a}}\widehat{\phantom{a}}$,
i.e., $\twh\shm$ is $\hbar$-complete.
\enum
\end{lemma}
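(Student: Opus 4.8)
The plan is to establish the three assertions in sequence, using the $\hbar$-adic filtration and elementary diagram chasing; the key technical input is the snake lemma applied to multiplication by powers of $\hbar$. First I would fix notation: write $\shm_n\eqdot\shm/\hbar^n\shm$, so $\twh\shm=\prolim[n]\shm_n$, and note that since $\shm$ has no $\hbar$-torsion we have for each $n,m\ge0$ a short exact sequence
\eq\label{eq:planses}
&&0\to\shm/\hbar^m\shm\to[\hbar^n]\shm/\hbar^{n+m}\shm\to\shm/\hbar^n\shm\to0,
\eneq
where the first map is multiplication by $\hbar^n$; this is the basic building block. In particular the transition maps $\shm_{n+1}\to\shm_n$ are epimorphisms, so the projective system $\{\shm_n\}$ satisfies the Mittag--Leffler condition, and $\RR^1\pi$ of it vanishes sectionwise by Lemma~\ref{lem:MLcoh} — this will be used freely.

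For (i), the statement is that $\hbar\cl\twh\shm\to\twh\shm$ is injective. Taking the projective limit of \eqref{eq:planses} with $n=1$ over $m$, and using that $\prolim$ is left exact while $\RR^1\prolim$ on the left-hand Mittag--Leffler system vanishes, one gets a short exact sequence $0\to\twh\shm\to[\hbar]\twh\shm\to\shm_1\to0$ (after reindexing $\prolim_m\shm/\hbar^{m+1}\shm\simeq\twh\shm$). Injectivity of $\hbar$ on $\twh\shm$ is exactly the exactness on the left. More carefully: an element of $\twh\shm$ is a compatible sequence $(x_n)$, $x_n\in\shm_n$; if $\hbar(x_n)=0$ in $\shm_n$ for all $n$, one lifts and chases through \eqref{eq:planses} to conclude each $x_n=0$. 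I would spell this out just enough to be convincing.

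For (ii), the claim is $\shm/\hbar^n\shm\isoto\twh\shm/\hbar^n\twh\shm$. The natural map $\shm\to\twh\shm$ induces $\shm_n\to\twh\shm/\hbar^n\twh\shm$; I would show it is an isomorphism by producing the inverse. Here one uses that $\twh\shm/\hbar^n\twh\shm\simeq\prolim[m]\bigl((\shm/\hbar^m\shm)/\hbar^n(\shm/\hbar^m\shm)\bigr)$ — applying the right-exact functor ``mod $\hbar^n$'' commutes with the $\prolim$ here because of Mittag--Leffler (again Lemma~\ref{lem:MLcoh}, so $\RR^1\pi=0$ kills the obstruction) — and then for $m\ge n$ the term $(\shm/\hbar^m\shm)/\hbar^n(\shm/\hbar^m\shm)$ is canonically $\shm/\hbar^n\shm$ by \eqref{eq:planses}, with identity transition maps, so the limit is just $\shm_n$. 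This identifies $\twh\shm/\hbar^n\twh\shm$ with $\shm_n$ compatibly with the natural map.

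Finally (iii) is immediate from (ii): by (i), $\twh\shm$ has no $\hbar$-torsion, so its completion makes sense, and
\eqn
&&\twh\shm\,\widehat{\phantom{a}}=\prolim[n]\bigl(\twh\shm/\hbar^n\twh\shm\bigr)
\simeq\prolim[n]\bigl(\shm/\hbar^n\shm\bigr)=\twh\shm,
\eneqn
where the middle isomorphism is (ii) and is compatible with the canonical map $\twh\shm\to\twh\shm\,\widehat{\phantom{a}}$, so that map is an isomorphism, i.e.\ $\twh\shm$ is $\hbar$-complete. The main obstacle, such as it is, lies in being careful that ``reduction mod $\hbar^n$'' commutes with the relevant projective limits — i.e.\ that the $\RR^1\pi$ obstruction terms vanish — which is precisely where the no-$\hbar$-torsion hypothesis enters, via the Mittag--Leffler property of $\{\shm/\hbar^m\shm\}_m$ and its $\hbar^n$-multiples established from \eqref{eq:planses}; the rest is bookkeeping.
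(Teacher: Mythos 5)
Your proof has the same engine as the paper's: the short exact sequence $0\to\shm/\hbar^m\shm\to[\hbar^n]\shm/\hbar^{n+m}\shm\to\shm/\hbar^n\shm\to 0$ (valid because $\shm$ has no $\hbar$-torsion) and a passage to the projective limit. For (i), only the left exactness of $\prolim$ is needed, and that part is fine.

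For (ii), however, there is a genuine soft spot. You commute ``mod $\hbar^n$'' past $\prolim_m$ and justify the step by appealing to Lemma~\ref{lem:MLcoh} to kill $\RR^1\pi$. But Lemma~\ref{lem:MLcoh} is a statement about projective systems of $\Z$-\emph{modules}, whereas here the objects $\shm/\hbar^m\shm$ are \emph{sheaves} on a topological space, and a projective system of sheaves with surjective transition morphisms need not be Mittag--Leffler at the level of sections (surjectivity of sheaf maps does not give surjectivity on $\sect(U;\scbul)$), so the $\RR^1\pi$ obstruction is not automatically dead, and the lemma you cite does not apply at the generality required. The paper sidesteps this entirely: the left-exact sequence $0\to\twh\shm\to[\hbar^n]\twh\shm\to\shm/\hbar^n\shm$ obtained in (i) already identifies $\ker(\twh\shm\to\shm/\hbar^n\shm)$ with $\hbar^n\twh\shm$, and the map $\twh\shm\to\shm/\hbar^n\shm$ is automatically surjective because it is dominated by the quotient $\shm\to\shm/\hbar^n\shm$; the commutative diagram then yields (ii) with no $\RR^1\pi$ computation and no appeal to Mittag--Leffler. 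Your conclusion is correct and the repair is immediate along these lines, but as written the step commuting ``mod $\hbar^n$'' with $\prolim$ is not justified.
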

\begin{proof}
(i) Consider the exact sequence
\eqn
&& 0\To\shm/\hbar^n\shm\To[\hbar^a]\shm/\hbar^{n+a}\shm\To\shm/\hbar^{a}\shm\To0.
\eneqn
Applying the left exact functor $\prolim[n]$ we get the exact sequence
\eqn
&& 0\To\twh\shm\To[\hbar^a]\twh\shm\To\shm/\hbar^{a}\shm,
\eneqn
which gives the result.

\noindent
(ii) Consider the commutative diagram with exact rows:
\eqn
&&\xymatrix{
0\ar[r]&\shm\ar[d]\ar[r]^{\hbar^n}&\shm\ar[d]\ar[r]&\shm/\hbar^n\shm\ar[d]^{\bwr}\ar[r]&0\\
0\ar[r]&\twh\shm\ar[r]^{\hbar^n}  &\twh\shm\ar[r]  &\shm/\hbar^n\shm.
}\eneqn

\noindent
(iii) Apply the functor $\sprolim$ to the isomorphism in (ii).
\end{proof}

In this paper, with the exception of \S~\ref{section:variant},
we assume the following conditions:
\eq\label{eq:FDringa}
&&\left\{\parbox{60ex}{\bnum
\item $\sha$ has no $\hbar$-torsion,\label{FDa}
\item $\sha$ is $\hbar$-complete,\label{FDb}
\item $\shao$ is a left Noetherian ring,\label{FDc}
\enum
}\right.  
\eneq
and\vs{-2ex}
\eq\label{eq:FDringb}
&&\left\{\parbox{60ex}{\bnum
\item[{\rm (iv)}] there exists a base $\BB$ of open subsets of $X$ such that
for any $U\in\BB$ and any coherent $(\shao\vert_U)$-module $\shf$,
we have $H^n(U;\shf)=0$ for any $n>0$.\label{FDd}
\enum
}\right.  
\eneq
It follows from \eqref{eq:FDringa} that, 
for an open set $U$ and $a_n\in\sha(U)$ ($n\ge0$),
$\sum_{n\ge0}\hbar^na_n$ is a well-defined element of $\sha(U)$.

By \eqref{eq:FDringa}~\eqref{FDb}, $\hbar\sha_x$ is contained in the Jacobson radical
of $\sha_x$ for any $x\in X$. Indeed, 
for any $a\in \hbar\sha_x$, $1-a$ is invertible in $\sha_x$
since $a$ is defined on an open neighborhood $U$ of $x$,
and $1-a$ is invertible on $U$.

Hence Nakayama's lemma implies the following lemma
that we frequently use. 

\begin{lemma} \label{le:nakayamacor}
Let $\shm$ be a locally finitely generated $\sha$-module.
\bnum 
\item
If $\shm$ satisfies $\shm=\hbar\shm$, then $\shm=0$.
\item 
Let $f\cl \shn\to\shm$ be a morphism of
$\sha$-modules. If the composition $\shn\to\shm\to\shm/\hbar\shm$ 
is an epimorphism, then $f$ is an epimorphism.
\enum
\end{lemma}

For $n\in\Z_{\ge0}$, set $\sha_n=\sha/\hbar^{n+1}\sha$. 
Note that there is an equivalence of categories between the category
$\md[\sha_n]$ and the full subcategory of $\md[\sha]$ consisting of modules 
$\shm$ satisfying  $\hbar^{n+1}\shm\simeq 0$.

\begin{lemma}\label{le:cohcoh}
Let $n\in\Z_{\ge0}$.
\bnum
\item 
An $\sha_n$-module $\shn$
is locally finitely generated as an $\sha_n$-module if and only if
it is so as an $\sha$-module.
\item 
An $\sha_n$-module $\shn$
is locally of finite presentation as an $\sha_n$-module if and only if
it is so as an $\sha$-module.
\item 
An $\sha_n$-module $\shn$
is coherent as an $\sha_n$-module if and only if
it is so as an $\sha$-module.
\item 
$\sha_n$ is a left Noetherian ring.
\enum
\end{lemma}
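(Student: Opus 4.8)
The plan is to prove all four statements of Lemma~\ref{le:cohcoh} by reducing to the corresponding facts about $\shao$ together with induction on $n$, using the hypotheses~\eqref{eq:FDringa}. The key preliminary observation is that $\sha_n$ is itself an $\hbar$-adically filtered ring: one has the two-sided ideal $\hbar\sha_n\subset\sha_n$ with $(\hbar\sha_n)^{n+1}=0$, and $\sha_n/\hbar\sha_n\simeq\shao$. By the same argument already given in the text (Nakayama, using that $\hbar\sha_x$ lies in the Jacobson radical of $\sha_x$), $\hbar\sha_{n,x}$ lies in the Jacobson radical of $\sha_{n,x}$, so Lemma~\ref{le:nakayamacor} applies verbatim with $\sha$ replaced by $\sha_n$.

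First I would do (i). A left $\sha_n$-module $\shn$ is a fortiori an $\sha$-module (via $\sha\epito\sha_n$); if it is locally finitely generated over $\sha_n$, choosing local generators gives $\sha_n^{\oplus r}\epito\shn$, hence $\sha^{\oplus r}\epito\shn$, so it is locally finitely generated over $\sha$. Conversely, a surjection $\sha^{\oplus r}\epito\shn$ factors through $\sha_n^{\oplus r}$ since $\hbar^{n+1}$ kills $\shn$; so it is locally finitely generated over $\sha_n$. For (ii), the direction ``$\sha_n$-f.p.\ $\Rightarrow$ $\sha$-f.p.'' is again immediate by pushing forward a presentation. For the converse, given an $\sha$-presentation $\shl_1\to\shl_0\to\shn\to0$ with $\shl_i$ locally free of finite rank over $\sha$, apply $\shao\tens[\sha](-)$; but I prefer to argue directly: by (i) $\shn$ is locally finitely generated over $\sha_n$, so take $\sha_n^{\oplus r}\to\shn$; its kernel $\shk$ is an $\sha$-submodule of $\sha_n^{\oplus r}$, and by the pseudo-coherence of $\sha$ (part of~\eqref{eq:FDringc}?\,---\,rather, this is where one needs that $\sha$ is coherent, which is established separately in the text before this lemma) $\shk$ is locally finitely generated over $\sha$, hence over $\sha_n$ by (i), giving an $\sha_n$-presentation. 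Here I should instead invoke the characterization~\eqref{eq:seqfp2}: it suffices to produce locally $0\to\shk\to\shn'\to\shn\to0$ with $\shn'$ locally free of finite rank over $\sha_n$ and $\shk$ locally finitely generated over $\sha_n$, which the above does.

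Next, (iii): ``coherent'' means ``locally finitely generated $+$ pseudo-coherent'', and pseudo-coherence is tested against morphisms from finitely presented modules; by (i) and (ii) the relevant classes of modules and morphisms coincide whether computed over $\sha_n$ or over $\sha$, and kernels are the same since the forgetful functor $\md[\sha_n]\to\md[\sha]$ is exact and fully faithful onto its (Serre) image. Hence $\shn$ is $\sha_n$-coherent iff it is $\sha$-coherent. Finally (iv): $\sha_n$ is coherent as a ring because $\sha$ is (one filters $\sha_n$ by powers of $\hbar$, with successive quotients coherent $\shao$-modules, and uses that an extension of coherent modules is coherent together with (iii)); and one checks the Noetherian conditions of the definition by dévissage along $0\to\hbar\sha_n\to\sha_n\to\shao\to0$ and its refinements $\hbar^{k}\sha_n/\hbar^{k+1}\sha_n$, each of which is a coherent $\shao$-module hence a Noetherian $\shao$-module by~\eqref{eq:FDringa}~\eqref{FDc}; since the class of Noetherian modules is stable by extension, subobject and quotient, $\sha_n$ is Noetherian over itself.

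The routine parts are the factorization arguments in (i)--(ii); the one genuine point requiring care, and the main obstacle, is (iv): one must know that coherence and the Noetherian property of a ring propagate through the finite filtration by powers of $\hbar$, i.e.\ that an iterated extension of Noetherian (resp.\ coherent) modules over a ring which is coherent over itself is again such. For the Noetherian side this is standard once $\sha_n$ is known to be coherent (so that ``coherent submodule'' has the right meaning); for the coherence of the ring $\sha_n$ itself one invokes that $\sha$ is coherent — a fact proved earlier in Chapter~\ref{chapter:FD} — and that $\sha_n=\sha/\hbar^{n+1}\sha$ is the quotient of a coherent ring by a finitely generated two-sided ideal lying in the Jacobson radical, which is again coherent. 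I would phrase the whole proof as a short induction on $n$, the case $n=0$ being the hypothesis on $\shao$.
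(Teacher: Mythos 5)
Your argument for (i), the forward direction of (ii), and the dévissage for the Noetherian part of (iv) is sound and essentially matches the paper. The problem is the converse direction of (ii) and the coherence of $\sha_n$ in (iv), where you twice invoke the coherence of $\sha$. This is circular: in the paper the coherence of $\sha$ is Proposition~\ref{prop:coh}, which is proved \emph{after} Lemma~\ref{le:cohcoh}, and its proof (via Lemma~\ref{lem:fg} and Corollary~\ref{cor:coh}) genuinely uses the present lemma — in particular the fact that $\sha_n$ is a coherent ring and that coherence of $\sha_n$-modules is the same whether computed over $\sha_n$ or over $\sha$. So your parenthetical ``which is established separately in the text before this lemma'' is not correct, and the step it is meant to justify (that $\shk=\ker(\sha_n^{\oplus r}\to\shn)$ is locally finitely generated) is left without support.

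The fix is the argument you wrote down first and then abandoned (modulo a typo: you wrote $\shao\tens[\sha](\scbul)$ where you mean $\sha_n\tens[\sha](\scbul)$). Given an $\sha$-presentation $\shl_1\to\shl_0\to\shn\to0$ with $\shl_i$ locally free of finite rank over $\sha$, apply the right-exact functor $\sha_n\tens[\sha](\scbul)$; since $\shn$ is an $\sha_n$-module one has $\sha_n\tens[\sha]\shn\simeq\shn$, and $\sha_n\tens[\sha]\shl_i$ is free of finite rank over $\sha_n$, so this produces an $\sha_n$-presentation with no coherence hypothesis. Similarly, for (iv) one should not appeal to the coherence of $\sha$: instead use the short exact sequences $0\to\sha_{n-1}\to[\hbar]\sha_n\to\sha_0\to0$ and the hypothesis that $\sha_0=\shao$ is a coherent (Noetherian) ring to show by induction that $\sha_n$ is a coherent $\sha$-module, and then apply part (iii) to conclude that $\sha_n$ is coherent as an $\sha_n$-module, i.e.\ a coherent ring. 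Your forward direction of (ii) also deserves a word: a free $\sha_n$-module of finite rank is not $\sha$-free, so ``push forward the presentation'' should be unpacked via the characterization~\eqref{eq:seqfp3}, using that $\sha_n\simeq\sha/\sha\hbar^{n+1}$ is itself locally of finite presentation over $\sha$.
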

\begin{proof}
Note that since we have $\sha_n\simeq\sha/\sha\hbar^{n+1}$,
$\sha_n$ is an $\sha$-module locally of finite presentation.

\noindent
(i) is obvious.

\noindent
(ii)-(a) Let $\shm$ be an $\sha_n$-module locally of finite presentation
and consider an exact sequence 
of  $\sha_n$-modules as in \eqref{eq:seqfp2}.
Then $\shk$ is locally finitely generated as an $\sha$-module, 
$\shn$ is locally of finite presentation as an $\sha$-module and $u$
is $\sha$-linear. Hence,
$\shm$ is locally of finite presentation as an $\sha$-module.

\noindent
(ii)-(b) Conversely assume that $\shm$ is an $\sha_n$-module which is 
locally of finite presentation as an $\sha$-module.
Consider an exact sequence of $\sha$-modules as in \eqref{eq:fpr}.
Applying the functor 
$\sha_n\tens[\sha]\scbul$,
we find and exact sequence of $\sha_n$-modules as in \eqref{eq:fpr},
which proves that $\shm$ is locally 
of finite presentation as an $\sha_n$-module.

\noindent
(iii) follows from (i) and (ii) since a module is coherent if it is 
locally finitely generated 
and any submodule locally  finitely generated is
locally of finite presentation.

\noindent
(iv) Let us prove that $\sha_n$ is a coherent ring.
Since $\sha_0$ is a coherent ring by the assumption,
$\sha_0$ is a coherent $\sha$-module.
Using the exact sequences of $\sha$-modules
\eqn
&&0\to\sha_{n-1}\to[\hbar]\sha_{n}\to \sha_0\to 0,
\eneqn
we get by induction on $n$ that $\sha_n$ is a coherent
$\sha$-module. Hence (iii) implies that $\sha_n$ is a coherent ring.

One proves similarly by induction on $n$ that 
 $(\sha_n)_x$ is a Noetherian ring for all $x\in X$ and that 
any filtrant family of coherent $\sha_n$-submodules
of a coherent $\sha_n$-module is locally stationary.
\end{proof}
\begin{lemma}\label{lem:sur}
Let $U\in\BB$, and $n\geq 0$.
\bnum
\item 
For any coherent $\sha_n$-module $\shn$, we have $H^k(U;\shn)=0$ for $k\neq 0$. 
\item 
For any epimorphism $\shn\to\shn'$ of coherent $\sha_n$-modules,
$\shn(U)\to\shn'(U)$ is surjective,
\item 
$\sha(U)\to\sha_n(U)$ is surjective.
\enum
\end{lemma}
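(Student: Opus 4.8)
The plan is to prove the three assertions of Lemma~\ref{lem:sur} by induction on $n$, reducing everything to the case $n=0$, which is exactly hypothesis \eqref{eq:FDringb}~\eqref{FDd} combined with the long exact sequence of cohomology.

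First I would treat (i). By Lemma~\ref{le:cohcoh}~(iii), a coherent $\sha_n$-module $\shn$ is also coherent as an $\sha$-module. For $n=0$ the vanishing $H^k(U;\shn)=0$ for $k\neq0$ is precisely \eqref{eq:FDringb}~\eqref{FDd} (note $k<0$ is automatic since $U$ is a topological space and $\shn$ an honest sheaf). For the inductive step, multiplication by $\hbar$ gives the exact sequence of $\sha$-modules
\eqn
&&0\to\shn'\to[\hbar]\shn\to\shn''\to 0,
\eneqn
where $\shn''=\shn/\hbar\shn$ is a coherent $\sha_0$-module and $\shn'=\ker(\hbar\cl\shn\to\shn)$; since $\shn$ is annihilated by $\hbar^{n+1}$, $\shn'$ is annihilated by $\hbar^n$, hence is an $\sha_{n-1}$-module, and it is coherent (it is a submodule of the coherent $\sha$-module $\shn$, which is pseudo-coherent, and it is locally finitely generated because $\sha_n$ is Noetherian by Lemma~\ref{le:cohcoh}~(iv)). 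Taking the long exact sequence of $H^\bullet(U;-)$ and using the induction hypothesis on $\shn'$ and the $n=0$ case on $\shn''$ gives $H^k(U;\shn)=0$ for $k\neq0$.

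Next, (ii): given an epimorphism $\shn\to\shn'$ of coherent $\sha_n$-modules, its kernel $\shk$ is again coherent (same pseudo-coherence plus Noetherianity argument), so we have $0\to\shk\to\shn\to\shn'\to0$, and the long exact sequence gives $\shn(U)\to\shn'(U)\to H^1(U;\shk)=0$ by part (i). Finally, (iii): the quotient map $\sha\to\sha_n=\sha/\hbar^{n+1}\sha$ is an epimorphism of $\sha$-modules, but $\sha$ itself is typically not coherent (indeed $\hbar$ is not nilpotent), so I cannot apply (ii) directly; instead I would write $\sha(U)=\prolim[m]\sha_m(U)$ using that $\sha$ is $\hbar$-complete \eqref{eq:FDringa}~\eqref{FDb}, observe that each transition map $\sha_{m+1}(U)\to\sha_m(U)$ is surjective by part (ii) applied to the epimorphism $\sha_{m+1}\to\sha_m$ of coherent $\sha_{m+1}$-modules (coherence over $\sha_{m+1}$ follows from Lemma~\ref{le:cohcoh}~(iv)), so the system $\{\sha_m(U)\}_m$ satisfies the Mittag-Leffler condition and $\prolim[m]\sha_m(U)\to\sha_n(U)$ is surjective; composing with $\sha(U)\isoto\prolim[m]\sha_m(U)$ gives the claim.

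The main obstacle is the bookkeeping in the inductive step of (i): one must make sure that the kernel $\shn'$ of multiplication by $\hbar$ on a coherent $\sha_n$-module really is coherent as an $\sha_{n-1}$-module, which requires invoking that $\sha_n$ is a coherent (indeed Noetherian) ring — this is why Lemma~\ref{le:cohcoh}~(iv) is needed — and similarly the kernel $\shk$ in (ii). Apart from that, everything is a routine dévissage: the only genuine input is the acyclicity base $\BB$ from \eqref{eq:FDringb}~\eqref{FDd} for $\shao$-modules, and the $\hbar$-completeness from \eqref{eq:FDringa} to pass from the $\sha_n$ to $\sha$ in (iii).
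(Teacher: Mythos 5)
Your overall strategy coincides exactly with the paper's: prove (i) by induction on $n$ via a short exact sequence coming from multiplication by $\hbar$, deduce (ii) from (i) and the coherence of $\sha_n$, and deduce (iii) from (ii) by Mittag-Leffler together with the identification $\sha(U)\simeq\prolim[m]\sha_m(U)$. Parts (ii) and (iii) are correct as you wrote them.

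There is, however, a genuine slip in the short exact sequence you use for (i). You define $\shn'=\ker(\hbar\cl\shn\to\shn)$ and $\shn''=\shn/\hbar\shn$, and then claim that $0\to\shn'\to\shn\to\shn''\to 0$ is exact. It is not: the kernel of the quotient map $\shn\to\shn/\hbar\shn$ is $\hbar\shn$, which in general strictly contains $\ker(\hbar)$. The correct short exact sequences available from multiplication by $\hbar$ are
\eqn
&& 0\to\hbar\shn\to\shn\to\shn/\hbar\shn\to 0
\qquad\text{and}\qquad
0\to\ker(\hbar)\to\shn\To[\hbar]\hbar\shn\to 0,
\eneqn
and the paper uses the first one. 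Either one works for the dévissage: in the first, $\hbar\shn$ is an $\sha_{n-1}$-module (annihilated by $\hbar^n$) and $\shn/\hbar\shn$ is an $\sha_0$-module; in the second, $\ker(\hbar)$ is an $\sha_0$-module and $\hbar\shn$ is an $\sha_{n-1}$-module. But the hybrid you wrote down, with $\ker(\hbar)$ on the left and $\shn/\hbar\shn$ on the right, is not exact at $\shn$. Your subsequent remark that ``$\shn'$ is annihilated by $\hbar^n$'' is true for $\hbar\shn$, not for $\ker(\hbar)$ (the latter is killed already by $\hbar$), which suggests you had $\hbar\shn$ in mind all along and simply misidentified the kernel. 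Once that is corrected, your proof of (i) is the paper's proof, and the rest of your argument goes through.
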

\begin{proof}
(i) is proved by induction on $n$, using the exact
sequence
\eq\label{eq:exseq0}
0\to\hbar\shn\to\shn\to\shn/\hbar\shn\to 0.
\eneq

\vspace{0.4em}
\noindent
(ii) follows immediately from (i) and the fact that $\sha_n$ is a
coherent ring.

\vspace{0.4em}
\noindent
(iii)\quad By (ii), $\sha_{n+1}(U)\to\sha_{n}(U)$ is surjective for any $n\ge0$.
Hence, the morphism $\prolim[m]\sha_m(U)\to \sha_n(U)$ is surjective. Since the functor
$\sprolim$ commutes with the functor $\sect(U;\scbul)$, 
$\sha(U)\isoto\prolim[m]\sha_m(U)$ and the result follows.
\end{proof}

\subsubsection*{Properties of $\sha$}
Recall that $\sha$ satisfies \eqref{eq:FDringa} and
\eqref{eq:FDringb}.
 
\begin{theorem}\label{th:formalfini1}
\bnum
\item
$\sha$ is a left Noetherian ring.
\item 
Let $\shm$ be a locally finitely generated $\sha$-module.
Then $\shm$ is coherent if and only if 
$\hbar^n\shm/\hbar^{n+1}\shm$ 
is a coherent $\sha_0$-module for any $n\ge0$. \label{th:coh:coh}
\item 
Any coherent $\sha$-module
$\shm$ is $\hbar$-complete, {\em i.e.},
$\shm\isoto\widehat{\shm}$.
\label{th:coh:int}
\item 
Conversely, an $\sha$-module $\shm$ is coherent
if and only if it is $\hbar$-complete and
$\hbar^n\shm/\hbar^{n+1}\shm$ is a coherent $\sha_0$-module
for any $n\ge0$.
\label{th:crcoh}
\item
For any coherent $\sha$-module $\shm$ and any $U\in\BB$, 
we have $H^j(U;\shm)=0$ for any $j>0$.
\enum
\end{theorem}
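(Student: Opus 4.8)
The plan is to establish the five assertions in the order (i), (ii), then the nontrivial (``if'') half of (iv), then (iii) and (v), with the remaining half of (iv) falling out at the end; all the real work sits in (i). For (i), the crux is the claim that every $\sha$-submodule $\shn$ of a finite free module $\sha^r$ is locally finitely generated. Granting it, $\sha$ is a coherent ring — the relation module of a locally finitely generated submodule of an $\sha^s$ is again a submodule of some $\sha^{s'}$ — and, more generally, every locally finitely generated $\sha$-module is coherent. To prove the claim I would run an $\hbar$-adic ``symbol'' argument: give $\shn$ the filtration induced by the $\hbar$-adic filtration of $\sha^r$, and for each $n\ge0$ let $\bar\Sigma_n\subset\shao^r$ be the image of $\shn\cap\hbar^n\sha^r$ under the isomorphism $\hbar^n\sha^r/\hbar^{n+1}\sha^r\isoto\shao^r$ (division by $\hbar^n$, which makes sense since $\sha$ has no $\hbar$-torsion). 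Because $\shao$ is Noetherian, each $\bar\Sigma_n$ is a coherent $\shao$-submodule of $\shao^r$, and $\bar\Sigma_n\subset\bar\Sigma_{n+1}$; as $\shao^r$ is a Noetherian $\shao$-module this filtrant family is locally stationary, so after shrinking $\bar\Sigma_n=\bar\Sigma_N$ for $n\ge N$. Lifting finite generating families of $\bar\Sigma_0,\dots,\bar\Sigma_N$ to elements $t_1,\dots,t_p\in\shn$ of the appropriate $\hbar$-orders, the standard division step — subtract from $s\in\shn$ suitable $\hbar$-multiples of the $t_i$ so as to raise the $\hbar$-order of the remainder, then iterate — together with the convergence of $\hbar$-series in $\sha$ (a consequence of \eqref{eq:FDringa}) and the $\hbar$-separatedness of $\sha^r$ ($\sha$ being $\hbar$-complete), shows that $t_1,\dots,t_p$ generate $\shn$ locally. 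It remains to check that $\sha_x$ is Noetherian and that filtrant families of coherent $\sha|_U$-submodules are locally stationary; both follow from the coherence of $\sha$ combined with the corresponding properties of $\shao$ and of the rings $\sha_n$ (Lemma~\ref{le:cohcoh}) — for instance, the sum of such a filtrant family is locally finitely generated, hence locally coincides with one of its members.

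Assertion (ii) is then immediate: a locally finitely generated $\sha$-module is coherent by (i), and conversely $\hbar^n\shm/\hbar^{n+1}\shm$ is a quotient of the $\shao$-coherent module $\shm/\hbar\shm$. For the substantial half of (iv) — which is also used in the proof of (iii) — assume that $\shm$ is $\hbar$-complete and that $\shm/\hbar\shm$ is $\shao$-coherent. Locally lift an $\shao$-generating family $\bar s_1,\dots,\bar s_r$ of $\shm/\hbar\shm$ to $s_1,\dots,s_r\in\shm$; writing a local section $x$ of $\shm$ as $x=\sum_i a_i^{(0)}s_i+\hbar x_1$ and iterating gives $x=\sum_i a_is_i$ with $a_i=\sum_{j\ge0}\hbar^j a_i^{(j)}\in\sha$ (a convergent series by $\hbar$-completeness of $\sha$), the remainder lying in $\bigcap_k\hbar^k\shm=0$ because $\shm$ is $\hbar$-complete, hence $\hbar$-separated. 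Thus $\shm$ is locally finitely generated, hence coherent by (i).

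Now let $\shm$ be coherent. Since the $\sha_x$ are Noetherian by (i) and $\hbar\sha_x$ lies in the Jacobson radical, Krull's intersection theorem gives $\bigcap_n\hbar^n\shm_x=0$, so $\shm$ is $\hbar$-separated. If in addition $\shm$ has no $\hbar$-torsion, then by Lemma~\ref{le:complete1} its completion $\twh{\shm}$ is $\hbar$-complete and $\twh{\shm}/\hbar\twh{\shm}\simeq\shm/\hbar\shm$ is $\shao$-coherent, so $\twh{\shm}$ is coherent by the half of (iv) just proved; the canonical map $\shm\to\twh{\shm}$ is then an epimorphism by Nakayama (Lemma~\ref{le:nakayamacor}) and a monomorphism since its kernel lies in $\bigcap_n\hbar^n\shm=0$, whence $\shm\isoto\twh{\shm}$. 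The general coherent case reduces to this one by treating the $\hbar$-torsion submodule $\shm_{\mathrm{tor}}$ separately — it is locally annihilated by a power of $\hbar$, $\sha$ being Noetherian, hence already $\hbar$-complete — using the exact sequence $0\to\shm_{\mathrm{tor}}\to\shm\to\shm/\shm_{\mathrm{tor}}\to0$ and Artin--Rees. This proves (iii). For (v), since $\shm$ is now $\hbar$-complete we have $\shm=\prolim[n]\shm/\hbar^{n+1}\shm$, where each $\shm/\hbar^{n+1}\shm$ is a coherent $\sha_n$-module, so $H^k(U;\shm/\hbar^{n+1}\shm)=0$ for $k>0$, $U\in\BB$, and the transition maps $(\shm/\hbar^{n+2}\shm)(U)\to(\shm/\hbar^{n+1}\shm)(U)$ are surjective (Lemma~\ref{lem:sur}); Lemma~\ref{lem:cohpro} then gives $H^j(U;\shm)\isoto\prolim[n]H^j(U;\shm/\hbar^{n+1}\shm)=0$ for $j>0$. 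Finally, the ``only if'' part of (iv) follows from (iii) and (ii).

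The one genuinely hard point is Part (i). The $\hbar$-adic symbol/division argument for coherence must be set up carefully — keeping track of leading $\hbar$-orders, of the convergence of the accumulated coefficient series, and of the $\hbar$-separatedness of $\sha^r$ — and the passage to the stalks $\sha_x$ is delicate, since $\sha_x$ need not itself be $\hbar$-adically complete, so one cannot simply repeat the argument at the stalk but must deduce Noetherianity of $\sha_x$ from the coherence of $\sha$ and from the finite-level rings $\sha_n$.
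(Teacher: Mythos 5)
Your overall strategy for Part (i) — the $\hbar$-adic symbol filtration $\bar\Sigma_n$ of a submodule $\shn\subset\sha^r$, local stationarity of $\{\bar\Sigma_n\}$ from Noetherianity of $\shao$, lifting generators, and the convergent division — is exactly the heart of the paper's Lemma~\ref{lem:fg0}/Lemma~\ref{lem:fg}, and parts (ii), (v) and the ``only if'' half of (iv) are handled as in the paper. But there is one genuine gap, and it is not cosmetic.

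You acknowledge that Noetherianity of the stalks $\sha_x$ ``must be deduced from the coherence of $\sha$ and from the finite-level rings $\sha_n$'', but you do not actually deduce it, and the deduction is not a formality. The paper devotes Lemmas~\ref{lem:Acoh}, \ref{lem:seppt}, and the lemma after it to this point, and the key input is an Artin--Rees type statement at the stalk (Lemma~\ref{lem:seppt}: $I\cap\hbar^{n+1}\sha_x=\hbar(I\cap\hbar^n\sha_x)$ for $n\gg0$), which in turn is extracted from the byproducts of Lemma~\ref{lem:fg}~(iii) — byproducts your phrasing of the division argument does not record. Coherence of $\sha_x$ alone does not give ascending-chain stability at a stalk, because $\sha_x$ need not be $\hbar$-adically complete and the union of an ascending chain of stalkwise-finite-type ideals is not visibly a stalk of a coherent sheaf. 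Worse, this gap propagates into your proof of (iii): you establish $\hbar$-separatedness of a coherent $\shm$ by invoking Krull's intersection theorem at each stalk, which again assumes $\sha_x$ Noetherian. The paper sidesteps Krull entirely by getting $\bigcap_n\hbar^n\shm=0$ from Corollary~\ref{cor:coh}, which is a sheaf-level consequence of Lemma~\ref{lem:fg}~(ii) (namely $\bigcap_n(\shn+\hbar^n\shl)=\shn$) and needs no stalk Noetherianity. If you want to keep Krull, you must first prove $\sha_x$ is Noetherian, which requires the full force of the stalk-level Artin--Rees argument.

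Two further, smaller remarks. First, your proof of (iii) for coherent $\shm$ with $\hbar$-torsion goes through a splitting $0\to\shm_{\mathrm{tor}}\to\shm\to\shm/\shm_{\mathrm{tor}}\to0$ plus Artin--Rees; the paper instead shows directly that $\twh{\shm}$ is coherent via Lemma~\ref{lem:limnseq} and then applies Nakayama, with no case distinction on torsion. Both routes are valid, but yours imports one more Artin--Rees use that you would need to justify at the sheaf level. Second, your division for the backward half of (iv) only uses coherence of $\shm/\hbar\shm$; this is fine (that part of the hypothesis does suffice once (i) is in place), but you should note explicitly that you are not using the stronger stated hypothesis for $n>0$, lest the reader think something has been skipped.
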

The proof of Theorem~\ref{th:formalfini1} 
decomposes into several lemmas.

\begin{lemma}\label{lem:fg0}
Let $\shl$ be a locally free $\sha$-module of finite rank
and let $\shn$ be an $\sha$-submodule of $\shl$.
Assume that
\banum 
\item
$(\shn+\hbar\shl)/\hbar\shl$ is a coherent $\shao$-module,
\item
$\shn\cap \hbar^n\shl\subset \hbar\shn+\hbar^{1+n}\shl$ for any $n\ge1$.
\eanum 
Then we have
\bnum
\item
$\shn$ is a locally finitely generated $\sha$-module,
\item 
$\shn\cap \hbar^n\shl=\hbar^n\shn$ for any $n\ge0$,
\item
$\bigcap\limits_{n\ge0}(\shn+\hbar^n\shl)=\shn$.
\enum
\end{lemma}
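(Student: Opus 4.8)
The plan is to work locally, so we may assume $\shl$ is free of rank $r$, and to prove the three assertions essentially simultaneously by a careful use of hypotheses (a) and (b) together with the $\hbar$-adic structure.

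\textbf{Step 1: Lift generators.} By hypothesis (a), $(\shn+\hbar\shl)/\hbar\shl$ is a coherent $\shao$-submodule of $\shl/\hbar\shl$, which is free of finite rank over $\shao$. So, shrinking the open set if necessary, we may choose finitely many sections $s_1,\dots,s_m\in\shn$ whose images generate $(\shn+\hbar\shl)/\hbar\shl$ over $\shao$. Let $\shn'\subset\shn$ be the $\sha$-submodule generated by $s_1,\dots,s_m$. Then by construction $\shn'+\hbar\shl=\shn+\hbar\shl$, hence $\shn\subset\shn'+\hbar\shl$.

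\textbf{Step 2: The key approximation argument.} I want to show $\shn=\shn'$, which will give assertion (i). The idea is to show, by induction on $n\ge0$, that $\shn\subset\shn'+\hbar^n\shn$ (equivalently, using also $\shn'\subset\shn$, that $\shn\subset\shn'+\hbar^n\shl$ in a controlled way). The base case $n=1$ is Step 1. For the inductive step: given $t\in\shn$, write $t=t'+\hbar^n u$ with $t'\in\shn'$, $u\in\shl$; then $\hbar^n u=t-t'\in\shn\cap\hbar^n\shl$. Now hypothesis (b) gives $\hbar^n u\in\hbar\shn+\hbar^{1+n}\shl$, i.e.\ $\hbar^n u=\hbar v+\hbar^{n+1}w$ with $v\in\shn$, $w\in\shl$; since $\sha$ has no $\hbar$-torsion and $n\ge1$, we get $\hbar^{n-1}u=v+\hbar^n w$, so $v\in\shn\cap\hbar^{n-1}\shl$, and one reorganizes to improve the approximation. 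Iterating this, every $t\in\shn$ is approximated modulo $\hbar^N\shl$ by an element of $\shn'$ for all $N$; since $\shn'$ is locally finitely generated over the Noetherian (by Theorem~\ref{th:formalfini1}(i), or rather here we only need the $\hbar$-completeness of $\sha$ applied coordinatewise) ring $\sha$, and $\shl$ is $\hbar$-complete with no $\hbar$-torsion (so $\bigcap_n\hbar^n\shl=0$), the approximating sequence converges and its limit lies in $\shn'$ and equals $t$. Hence $\shn=\shn'$ is locally finitely generated, proving (i).

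\textbf{Step 3: Deduce (ii) and (iii).} Once $\shn$ is locally finitely generated, assertion (ii), $\shn\cap\hbar^n\shl=\hbar^n\shn$, follows by induction on $n$: the inclusion $\supset$ is trivial; for $\subset$, take $x\in\shn\cap\hbar^n\shl$; by (b), $x\in\hbar\shn+\hbar^{1+n}\shl$, write $x=\hbar y+z$ with $y\in\shn$, $z\in\hbar^{1+n}\shl$; then $\hbar y=x-z\in\hbar\shl$, and $y=x/\hbar-z/\hbar$... more precisely $\hbar y\in\hbar^n\shl$ forces (no $\hbar$-torsion) $y\in\hbar^{n-1}\shl$, so $y\in\shn\cap\hbar^{n-1}\shl=\hbar^{n-1}\shn$ by the inductive hypothesis, whence $x=\hbar y+z\in\hbar^n\shn+\hbar^{n+1}\shl$; iterating in $z$ and using $\bigcap_k\hbar^k\shl=0$ together with $\hbar$-completeness shows $x\in\hbar^n\shn$. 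Finally (iii), $\bigcap_n(\shn+\hbar^n\shl)=\shn$: if $x$ lies in every $\shn+\hbar^n\shl$, write $x=t_n+\hbar^n u_n$ with $t_n\in\shn$; then $t_n-t_{n+1}\in\shn\cap\hbar^n\shl=\hbar^n\shn$, so $\{t_n\}$ is $\hbar$-Cauchy in $\shn$; since $\shn$ is locally finitely generated over the $\hbar$-complete ring $\sha$ without $\hbar$-torsion it is itself $\hbar$-complete (this is essentially Theorem~\ref{th:formalfini1}(\ref{th:coh:int}), or can be proved directly as $\shn$ is a quotient of a free module and sits inside the $\hbar$-separated $\shl$), so $t_n$ converges to some $t\in\shn$, and $x-t\in\bigcap_n\hbar^n\shl=0$, giving $x=t\in\shn$.

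\textbf{Main obstacle.} The delicate point is Step 2: making the iterative approximation genuinely converge rather than just improving termwise. One must be careful that the corrections $s_j$-coefficients form a Cauchy sequence in $\sha$ (using that $\shn'$ is a finitely generated image of $\sha^m$), and that the limit, a priori living in the $\hbar$-completion, actually lands back in $\shn'$ and recovers $t$. This requires simultaneously invoking $\hbar$-completeness of $\sha$ (hypothesis \eqref{eq:FDringa}(ii)) to get convergence of the coefficients and the no-$\hbar$-torsion/$\hbar$-separatedness of $\shl$ to identify the limit — the two halves of the argument must be interleaved correctly. Hypothesis (b) is exactly what feeds the induction, and hypothesis (a) is only needed once, at the very start, to produce finitely many generators; the rest is a formal $\hbar$-adic completeness juggle.
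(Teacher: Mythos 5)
Your overall plan — lift finitely many generators from the $\shao$-coherent quotient $(\shn+\hbar\shl)/\hbar\shl$ and run a successive approximation to show they generate $\shn$ — is the same as the paper's. But Step 2, which you yourself flag as the delicate point, has a genuine gap.

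What you establish in Step 2 is $\shn\subset\shn'+\hbar^N\shl$ for every $N$, where $\shn'=\sum_i\sha s_i$. To conclude $t\in\shn'$ you would need the approximating sequence $t_N\in\shn'$ to be $\hbar$-adically Cauchy with $t_{N+1}-t_N\in\hbar^N\shn'$ (so that $\hbar^N$ can be factored out and the $s_i$-coefficients become power series converging in $\sha$). What you actually get, from $t_{N+1}-t_N = r_N-r_{N+1}\in\hbar^N\shl$ together with $t_{N+1}-t_N\in\shn'$, is only $t_{N+1}-t_N\in\shn'\cap\hbar^N\shl$. Upgrading this to $\hbar^N\shn'$ is precisely an Artin--Rees type statement — it is essentially assertion (ii) for $\shn'$ — so the argument is circular. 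Concretely, if you trace through your own inductive step (apply (b) once, cancel an $\hbar$, then apply the inductive hypothesis to $v$), the increment $t_{N+1}-t_N$ lands in $\hbar\shn'$, of order $1$ in $\hbar$ at every step, not in $\hbar^N\shn'$; so the $s_i$-coefficients do not form a Cauchy sequence and the ``convergence of coefficients'' you invoke does not hold as stated.

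The paper closes this gap by a two-step device you don't have. First, it iterates hypothesis (b) to prove the stronger inclusion $\shn\cap\hbar\shl\subset\hbar\shn+\hbar^n\shl$ for \emph{every} $n\ge0$ (you apply (b) only once per iteration; the stronger version needs a separate small induction). Second, it sets $\tN=\bigcap_{n\ge0}(\shn+\hbar^n\shl)$ and deduces $\tN\cap\hbar\shl=\hbar\tN$ from that inclusion. This is the crucial point: it guarantees that when you write $v=\sum_i a_is_i+(\text{remainder in }\hbar\shl)$ for $v\in\tN$, the remainder lies in $\hbar\tN$, so the residue after factoring out $\hbar$ is again in $\tN$ and the iteration continues with increments of strictly increasing $\hbar$-order. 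Then $v=\sum_i(\sum_n\hbar^na_{i,n})s_i$ with $\sum_n\hbar^na_{i,n}\in\sha$ by $\hbar$-completeness, hence $\tN=\sum_i\sha s_i\subset\shn\subset\tN$ are all equal; this gives (i) and (iii) at once, and (ii) follows immediately from $\shn=\tN$ and $\tN\cap\hbar\shl=\hbar\tN$ by a trivial induction. Your Step 3 then becomes unnecessary; your ``main obstacle'' paragraph correctly senses that a completeness/closedness issue is lurking but misdiagnoses it as one of bookkeeping rather than as the need for the $\tN$-trick (or some equivalent Artin--Rees input) before the approximation can run.
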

\begin{proof}
First, let us show that
\eq\label{eq:NcaptnL}
&&\shn\cap \hbar\shl\subset \hbar\shn+\hbar^{n}\shl\quad\text{for any $n\ge 0$.}
\eneq
Indeed, \eqref{eq:NcaptnL} is trivial for $n\le1$. 
Let us argue by
induction, and let $n\ge2$, 
assuming the assertion for $n-1$. We have
$\shn\cap \hbar\shl\subset\shn\cap(\hbar\shn+\hbar^{n-1}\shl)
=\hbar\shn+(\shn\cap \hbar^{n-1}\shl)
\subset \hbar\shn+(\hbar\shn+\hbar^n\shl)$
by the assumption (b).
This proves \eqref{eq:NcaptnL}.

Set 
\eqn
&&\tN=\bigcap_{n\ge0}(\shn+\hbar^n\shl).
\eneqn
Then $\shn\subset\tN$ and 
\eq\label{eq:tNcapL}
&&\tN\cap \hbar\shl\subset \hbar\tN.
\eneq
Indeed we have
$\tN\cap \hbar\shl\subset 
(\shn+\hbar^{n+1}\shl)\cap \hbar\shl
\subset\shn\cap \hbar\shl+\hbar^{n+1}\shl
\subset \hbar\shn+\hbar^{n+1}\shl=\hbar(\shn+\hbar^n\shl)$ for any $n$.

Set 
\eqn
&&\bN=(\shn+\hbar\shl)/\hbar\shl
=(\tN+\hbar\shl)/\hbar\shl.
\eneqn
By the hypothesis~(a), $\bN$ is $\shao$-coherent.
Hence we may assume that
there exist finitely many sections $s_i$ of $\shn$ such that
$\bN=\sum_i\shao\ol{s}_i$, where $\ol{s}_i$ is the image of $s_i$
in $\shl/\hbar\shl$.

By hypothesis~(a) and Lemma~\ref{lem:sur}~(ii), we have for any $U\in\BB$,
$\bN(U)=\sum_i\shao(U)\ol{s}_i$.
Since $\sha(U)\to\shao(U)$ is surjective by Lemma~\ref{lem:sur}~(iii),
we have $\tN(U)\subset\sum_i\sha(U)s_i+\hbar\shl(U)$.
Since $\tN\cap \hbar\shl=\hbar\tN$, we have
$$\tN(U)\subset\sum_i\sha(U)s_i+\hbar\tN(U).$$
For $v\in\tN(U)$, we shall define
a sequence $\{v_n\}_{n\ge0}$ in $\tN(U)$ and 
sequences $\{a_{i,n}\}_{n\ge0}$
in $\sha(U)$,
inductively on $n$:
set $v_0=v$, and write
$$v_n=\sum_ia_{i,n}s_i+\hbar v_{n+1}.$$
Hence we have $\hbar^nv_n=\sum_i\hbar^na_{i,n}s_i+\hbar^{n+1}v_{n+1}$ and we obtain
$$v=v_0=\sum_i(\sum_{n\ge0}\hbar^na_{i,n})s_i.$$
Thus we have
$\tN=\sum_i\sha s_i$. Hence $\shn=\tN$ which proves (i) and (iii).

Since
$\tN\cap \hbar\shl=\hbar\tN$ by \eqref{eq:tNcapL}, we obtain (ii) for $n=1$.
For $n\ge1$ we have by induction
$\shn\cap \hbar^n\shl\subset \hbar\shn\cap \hbar^n\shl
=\hbar(\shn\cap \hbar^{n-1}\shl)\subset \hbar\cdot \hbar^{n-1}\shn$.
\end{proof}

\begin{lemma}\label{lem:fg}
Let $\shl$ be a locally free $\sha$-module of finite rank, and let
$\shn$ be an $\sha$-submodule of $\shl$.
Assume that
$(\shn+\hbar^{n+1}\shl)/\hbar^{n+1}\shl$ is a coherent $\sha$-module for any $n\ge0$.
Then we have
\bnum
\item
$\shn$ is a locally finitely generated $\sha$-module,
\item
$\bigcap_{n\ge0}(\shn+\hbar^n\shl)=\shn$,
\item
locally, 
$\hbar^n\shl\cap\shn\subset \hbar(\hbar^{n-1}\shl\cap\shn)$ for $n\gg0$,
\item
$\shn/\hbar^n\shn$ is a coherent $\sha$-module for any $n\ge0$.
\enum
\end{lemma}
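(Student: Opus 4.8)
The plan is to reduce Lemma~\ref{lem:fg} to Lemma~\ref{lem:fg0}, whose hypotheses (a) and (b) I will verify locally. First I would observe that the hypothesis ``$(\shn+\hbar^{n+1}\shl)/\hbar^{n+1}\shl$ is a coherent $\sha$-module for any $n\ge0$'' supplies, in the case $n=0$, precisely hypothesis (a) of Lemma~\ref{lem:fg0}. The real work is hypothesis (b), namely $\shn\cap\hbar^n\shl\subset\hbar\shn+\hbar^{n+1}\shl$ for all $n\ge1$; once this is granted, Lemma~\ref{lem:fg0} immediately gives parts (i) and (ii), as well as the equality $\shn\cap\hbar^n\shl=\hbar^n\shn$ for all $n\ge0$, which in turn gives part (iii) (indeed a much stronger statement, valid for all $n$, not just $n\gg0$). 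The weak form stated in (iii) already hints that the author expects (b) to hold only locally or only eventually, so I would phrase everything as a local assertion and work on a fixed $U\in\BB$ small enough that $\shl|_U$ is free, $\shl|_U\simeq\sha_U^{\oplus r}$.

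For hypothesis (b) I would argue as follows. Fix $n\ge1$ and work over $U\in\BB$. Consider the $\sha_0$-module $P_n\eqdot(\shn\cap\hbar^n\shl+\hbar^{n+1}\shl)/\hbar^{n+1}\shl$. Tensoring the inclusion $\shn\into\shl$ and using that $\hbar^n\shl/\hbar^{n+1}\shl\simeq\sha_0^{\oplus r}$ is $\sha_0$-coherent (hence Noetherian, by \eqref{eq:FDringa}~\eqref{FDc}), $P_n$ is a coherent $\sha_0$-submodule of $\hbar^n\shl/\hbar^{n+1}\shl$. The point is to compare $P_n$ with the image $Q_n\eqdot(\hbar^n\shn+\hbar^{n+1}\shl)/\hbar^{n+1}\shl$; clearly $Q_n\subset P_n$, and (b) is exactly the reverse inclusion. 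I would produce the reverse inclusion by a descending induction / stabilization argument: the modules
\eqn
&& R_{n,k}\eqdot(\shn\cap\hbar^n\shl+\hbar^{k}\shl)/\hbar^{k}\shl\quad(k\ge n+1)
\eneqn
form, for fixed $n$, a projective system of coherent $\sha_0$-modules (after passing to $\grr$); more usefully, I would instead use the hypothesis with index $n$ replaced by larger values to lift relations. Concretely: given $x\in\shn\cap\hbar^n\shl$, write $x=\hbar^n y$ with $y\in\shl$; the image of $y$ in $\shl/\hbar\shl$ lies in the $\sho_X$-submodule $(\shn+\hbar\shl)/\hbar\shl$ — because $\hbar$ acts injectively on $\shl/\hbar^{n+1}\shl$ modulo the next power — so by Lemma~\ref{lem:sur}~(ii),(iii) (surjectivity of $\sha(U)\to\sha_0(U)$ and of sections of epimorphisms of coherent $\sha_0$-modules over $U\in\BB$) we can lift to $y'\in\shn(U)$ with $y-y'\in\hbar\shl$; then $x-\hbar^n y'\in\hbar^{n+1}\shl$ and $\hbar^n y'\in\hbar^n\shn$, which is (b).

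The main obstacle, and the step I would spend the most care on, is making the lifting in the previous paragraph legitimate: one must know that the image of $y$ in $\shl/\hbar\shl$ genuinely lands in $(\shn+\hbar\shl)/\hbar\shl$, which requires knowing that $x=\hbar^n y$ being in $\shn$ forces $y\bmod\hbar$ to be in the span of $\shn$ — this is not automatic and is where the full strength of the hypothesis (coherence of $(\shn+\hbar^{k}\shl)/\hbar^{k}\shl$ for \emph{all} $k$, not just $k=1$) must enter, via a Noetherianity/stabilization argument on the ascending chain of $\sho_X$-coherent submodules $(\shn\cap\hbar^n\shl+\hbar^{n+1}\shl+\hbar\shl)/\hbar\shl$ of $\shl/\hbar\shl$ as $n$ grows. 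Once (b) is secured locally, Lemma~\ref{lem:fg0} yields (i), (ii), (iii). For part (iv), $\shn/\hbar^n\shn$ coherent: from $\shn\cap\hbar^n\shl=\hbar^n\shn$ we get $\shn/\hbar^n\shn\into\shl/\hbar^n\shl$ with image $(\shn+\hbar^n\shl)/\hbar^n\shl$, which is coherent over $\sha_{n-1}$ by hypothesis, hence coherent over $\sha$ by Lemma~\ref{le:cohcoh}~(iii); since $\shn$ is locally finitely generated by (i), the quotient $\shn/\hbar^n\shn$ is locally finitely generated, and being a submodule of a coherent module over the Noetherian ring $\sha_{n-1}$ (coherent, by Lemma~\ref{le:cohcoh}~(iv)) it is coherent.
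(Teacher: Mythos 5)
Your plan is to verify the hypotheses of Lemma~\ref{lem:fg0} directly for $\shn$, but hypothesis~(b) of that lemma is false for $\shn$ in general, and the lifting argument you sketch for it breaks at exactly the place you flag as delicate. Take the simplest example: $X=\{\pt\}$, $\sha=\coro$, $\shl=\sha$ and $\shn=\hbar\sha$. Then $(\shn+\hbar^{n+1}\shl)/\hbar^{n+1}\shl=\hbar\sha/\hbar^{n+1}\sha$ is coherent for every $n\ge0$, so the hypotheses of Lemma~\ref{lem:fg} hold, yet already at $n=1$ one has $\shn\cap\hbar\shl=\hbar\sha$, which is not contained in $\hbar\shn+\hbar^{2}\shl=\hbar^2\sha$. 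Concretely, in your step where you take $x\in\shn\cap\hbar^n\shl$, write $x=\hbar^n y$, and claim $y\bmod\hbar$ lies in $(\shn+\hbar\shl)/\hbar\shl$: take $x=\hbar$, $y=1$. The image of $y$ in $\shao$ is the unit, but $(\shn+\hbar\shl)/\hbar\shl$ is the zero submodule. The parenthetical ``$\hbar$ acts injectively modulo the next power'' does nothing to repair this, and the proposed stabilization on $(\shn\cap\hbar^n\shl+\hbar^{n+1}\shl+\hbar\shl)/\hbar\shl$ is vacuous: for $n\ge1$ one has $\shn\cap\hbar^n\shl\subset\hbar\shl$, so that chain is identically zero.

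The paper does not try to make $\shn$ itself satisfy~(b). Instead it works inside $\bigcup_{n\in\Z}\hbar^n\shl$ and looks at the $\hbar$-rescaled modules $\shl\cap\hbar^{-n}\shn$. Because $\hbar^n$ gives an isomorphism $(\shl\cap\hbar^{-n}\shn+\hbar\shl)/\hbar\shl\isoto(\shn\cap\hbar^n\shl+\hbar^{n+1}\shl)/\hbar^{n+1}\shl$ and the right-hand side is coherent by hypothesis, the sequence $\{(\shl\cap\hbar^{-n}\shn+\hbar\shl)/\hbar\shl\}_{n\ge0}$ is an \emph{increasing} sequence of coherent $\shao$-submodules of $\shl/\hbar\shl$, hence locally stationary. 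Setting $\shn_0=\shl\cap\hbar^{-n_0}\shn$ for the stable index, the hypotheses of Lemma~\ref{lem:fg0} are then verified for $\shn_0$ (not for $\shn$), and the assertions about $\shn$ are extracted afterwards from those about $\shn_0$ using $\shn\cap\hbar^{n_0}\shl=\hbar^{n_0}\shn_0$. This is the move you were circling around when you wrote that~(b) might hold ``only eventually'': what is true is not that the inclusion holds for $\shn$ at large $n$, but that replacing $\shn$ by a suitable $\hbar$-dilate makes it hold at every $n\ge1$. Your deduction of~(iv) once (i)--(iii) are in hand is fine; the gap is entirely in establishing (i)--(iii).
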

\begin{proof}
We embed $\shl$ into the $\sha[\opb{\hbar}]$-module 
$\cora[\hbar,\opb{\hbar}]\tens[{\cora[\hbar]}]\shl
=\bigcup_{n\in\Z}\hbar^n\shl$.
Note that $\hbar^n$ induces an isomorphism
\eqn
&&\hbar^n\cl(\shl\cap\hbar^{-n}\shn+\hbar\shl)/\hbar\shl\isoto
(\shn\cap \hbar^n\shl+\hbar^{n+1}\shl)/\hbar^{n+1}\shl.
\eneqn
Since 
\eqn
&&\hspace{-2em}
(\shn\cap \hbar^n\shl+\hbar^{n+1}\shl)/\hbar^{n+1}\shl\simeq
\bigl((\shn+\hbar^{n+1}\shl)/\hbar^{n+1}\shl\bigr)\bigcap 
\bigl(\hbar^n\shl/\hbar^{n+1}\shl\bigr)
\eneqn
 is $\sha$-coherent, 
$\{(\shl\cap \hbar^{-n}\shn+\hbar\shl)/\hbar\shl\}_{n\ge0}$ 
is an increasing sequence of
coherent $\shao$-submodules of
$\shl/\hbar\shl$. Hence it is locally stationary:
locally there exists $n_0\ge0$ such that
$\shl\cap \hbar^{-n}\shn+\hbar\shl=\shl\cap \hbar^{-n_0}\shn+\hbar\shl$
for any $n\ge n_0$.
Set 
\eq\label{eq:shn0}
&&\shn_0\eqdot\shl\cap \hbar^{-n_0}\shn.
\eneq
Then $(\shn_0+\hbar\shl)/\hbar\shl$ is a coherent $\shao$-module and
\eqn
&&\shn_0\cap \hbar^{n}\shl\subset \hbar^n(\hbar^{-n-n_0}\shn\cap \shl)
\subset \hbar^{n}(\shn_0+\hbar\shl)\subset \hbar\shn_0+\hbar^{n+1}\shl
\eneqn
for any $n>0$.
Hence by Lemma~\ref{lem:fg0}:
\begin{itemize}
\item
$\shn_0$ is locally finitely generated over $\sha$, 
\item
$\bigcap\limits_{n\ge0}(\shn_0+\hbar^n\shl)=\shn_0$,
\item
$\shn_0\cap \hbar^n\shl=\hbar^n\shn_0$ for any $n\ge0$.
\end{itemize}

\vspace{0.4em}
\noindent
(i) Since 
$\shn\cap\hbar^{n_0}\shl=\hbar^{n_0}\shn_0$ by \eqref{eq:shn0},
the module
$\shn/\hbar^{n_0}\shn_0\simeq\shn/(\shn\cap\hbar^{n_0}\shl)\simeq
(\shn+\hbar^{n_0}\shl)/\hbar^{n_0}\shl$ is 
$\sha$-coherent. Since 
$\hbar^{n_0}\shn_0$ is locally finitely generated over $\sha$,
$\shn$ is also locally finitely generated over $\sha$.

\vspace{0.4em}
\noindent
(ii) We have
\eqn
\bigcap_{n\ge n_0}(\shn+\hbar^n\shl)
&\subset& (\shn+\hbar^{n_0}\shl)\bigcap\cap_{n\ge n_0}(\shn+\hbar^n\shl)\\
&\subset&\shn+\hbar^{n_0}\shl\bigcap\cap_{n\ge n_0}(\shn+\hbar^n\shl)\\
&\subset&\shn+\cap_{n\ge n_0}(\hbar^{n_0}\shl\cap\shn+\hbar^n\shl)\\
&\subset&\shn+\cap_{n\ge n_0}(\hbar^{n_0}\shn_0+\hbar^n\shl)\\
&\subset&\shn+\hbar^{n_0}\shn_0=\shn.
\eneqn

\vspace{0.4em}
\noindent
(iii) For $n>n_0$, we have
\eqn
\hbar^{n}\shl\cap\shn&\subset& \hbar^{n_0}(\shl\cap \hbar^{-n_0}\shn)\cap \hbar^n\shl\\
&\subset& \hbar^{n_0}(\shn_0\cap \hbar^{n-n_0}\shl)\\
&\subset& \hbar^{n_0}\hbar^{n-n_0}\shn_0=\hbar^n\shn_0\\
&\subset& \hbar(\shn\cap \hbar^{n-1}\shl).
\eneqn
\vspace{0.4em}
\noindent
(iv) 
Since $\shn$ has no $\hbar$-torsion, we have  the exact sequence 
\eqn
&&0\to\shn/\hbar^n\shn\to[\hbar]\shn/\hbar^{n+1}\shn\to \shn/\hbar\shn\to 0.
\eneqn
Hence, it is enough to show that $\shn/\hbar\shn$ is
coherent.
By (i), the images of $\shn$ and $\hbar\shn$ in $\shl/\hbar^n\shl$ are coherent.
Since $\shn\cap \hbar^n\shl\subset \hbar\shn$ for some $n$,
by (ii), we have the exact sequence
$$\dfrac{\hbar\shn}{\hbar\shn\cap \hbar^n\shl}\to\dfrac{\shn}{\shn\cap \hbar^n\shl}
\to \dfrac{\shn}{\hbar\shn}\to 0,$$
which implies that $\shn/\hbar\shn$ is coherent.
\end{proof}

\begin{corollary}\label{cor:coh}
Assume that $\shm$ is a locally finitely generated $\sha$-module.
If $\shm/\hbar^n\shm$ is a coherent $\sha$-module for all $n>0$, then
$\shm$ is an $\sha$-module locally of finite presentation
and $\smash{\bigcap\limits_{n\ge0}}\hbar^n\shm=0$.
\end{corollary}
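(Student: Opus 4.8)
The plan is to reduce everything to Lemma~\ref{lem:fg}. Since both conclusions---being locally of finite presentation, and the vanishing of $\bigcap_{n\ge0}\hbar^n\shm$---are local on $X$, I would pass to an open subset on which $\shm$ is finitely generated and choose an exact sequence
\[
0\to\shn\to\shl\to[p]\shm\to0
\]
with $\shl$ a free $\sha$-module of finite rank and $\shn=\ker p$.

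The substantial step is to check that the pair $(\shl,\shn)$ satisfies the hypothesis of Lemma~\ref{lem:fg}, namely that $(\shn+\hbar^{n+1}\shl)/\hbar^{n+1}\shl$ is a coherent $\sha$-module for every $n\ge0$. I would identify this module with the kernel of the epimorphism $\shl/\hbar^{n+1}\shl\to\shm/\hbar^{n+1}\shm$ induced by reduction modulo $\hbar^{n+1}$, regarded as a morphism of $\sha_n$-modules. Its source is free of finite rank over $\sha_n$, hence $\sha_n$-coherent; its target is $\sha$-coherent by hypothesis, hence $\sha_n$-coherent by Lemma~\ref{le:cohcoh}~(iii). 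Since $\sha_n$ is left Noetherian by Lemma~\ref{le:cohcoh}~(iv), and therefore a coherent ring, the kernel of this morphism is an $\sha_n$-coherent module, so it is again $\sha$-coherent by Lemma~\ref{le:cohcoh}~(iii).

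Granting this, Lemma~\ref{lem:fg} gives at once that $\shn$ is a locally finitely generated $\sha$-module and that $\bigcap_{n\ge0}(\shn+\hbar^n\shl)=\shn$. Feeding the first fact back into the exact sequence $0\to\shn\to\shl\to\shm\to0$---with $\shl$ free of finite rank and $\shn$ locally finitely generated---shows that $\shm$ is locally of finite presentation, by the criterion~\eqref{eq:seqfp2}. For the remaining assertion, the surjectivity of $p$ gives $\hbar^n\shm=(\hbar^n\shl+\shn)/\shn$ for all $n$, hence
\[
\bigcap_{n\ge0}\hbar^n\shm=\Bigl(\bigcap_{n\ge0}(\hbar^n\shl+\shn)\Bigr)/\shn=\shn/\shn=0 .
\]

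I do not expect a genuine obstacle: the real work is already done in Lemmas~\ref{lem:fg0} and~\ref{lem:fg}. The only point that needs a moment of care is the coherence verification in the second paragraph---one must argue over the quotient ring $\sha_n$, where Noetherianity (hence coherence) is available from Lemma~\ref{le:cohcoh}, rather than directly over $\sha$, whose coherence has not yet been proved at this point of the development.
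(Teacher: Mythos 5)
Your proof is correct and follows essentially the same route as the paper's: reduce to an exact sequence $0\to\shn\to\shl\to\shm\to 0$ with $\shl$ free of finite rank, verify the hypothesis of Lemma~\ref{lem:fg} for $(\shl,\shn)$, and read off both conclusions. You simply spell out more explicitly the coherence verification (working over $\sha_n$ via Lemma~\ref{le:cohcoh} rather than over $\sha$, whose coherence is not yet available), which the paper leaves implicit with the phrase ``we deduce.''
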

\begin{proof}
We may assume that $\shm=\shl/\shn$ for a
locally free $\sha$-module $\shl$ of finite rank and $\shn\subset\shl$.
{}From the exact sequence
\eqn
0\to (\shn+\hbar^n\shl)/\hbar^n\shl\to\shl/\hbar^n\shl\to \shm/\hbar^n\shm\to 0,
\eneqn
 we deduce that  $(\shn+\hbar^n\shl)/\hbar^n\shl$ is coherent for any $n$.
Hence $\shn$ is locally finitely generated by  Lemma~\ref{lem:fg},
which implies that $\shm$ is locally of finite presentation.
Since 
$\bigcap\limits_{n\ge0}(\shn+\hbar^n\shl)=\shn$ by Lemma~\ref{lem:fg},
\eqn
&&\smash{\bigcap\limits_{n\ge0}}\hbar^n\shm\simeq 
\bl\smash{\bigcap\limits_{n\ge0}}(\shn+\hbar^n\shl)\br/\shn
\eneqn
vanishes.
\end{proof}

\begin{proposition}\label{prop:coh}
$\sha$ is coherent.
\end{proposition}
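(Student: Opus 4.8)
The plan is to show that $\sha$, which is obviously locally finitely generated as a module over itself, is pseudo-coherent. By the reformulation of pseudo-coherence recalled in the preliminaries, it suffices to prove that every locally finitely generated left ideal $\shi\subset\sha$ is locally of finite presentation. So I would fix such an $\shi$ and try to reduce everything to Lemma~\ref{lem:fg} and Corollary~\ref{cor:coh}.

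First I would verify the hypothesis of Lemma~\ref{lem:fg} for $\shl=\sha$ and $\shn=\shi$, namely that $(\shi+\hbar^{n+1}\sha)/\hbar^{n+1}\sha$ is a coherent $\sha$-module for every $n\ge0$. This module is the image of $\shi$ under the epimorphism $\sha\epito\sha_n$; being a quotient of $\shi$ it is locally finitely generated over $\sha$, and, since it is annihilated by $\hbar^{n+1}$, it is locally finitely generated over $\sha_n$ by Lemma~\ref{le:cohcoh}~(i). As it is moreover a submodule of $\sha_n$, and $\sha_n$ is a left Noetherian — hence coherent — ring by Lemma~\ref{le:cohcoh}~(iv), it is a coherent $\sha_n$-module, and therefore a coherent $\sha$-module by Lemma~\ref{le:cohcoh}~(iii).

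With the hypothesis checked, Lemma~\ref{lem:fg}~(iv) gives that $\shi/\hbar^n\shi$ is a coherent $\sha$-module for every $n\ge0$, and then Corollary~\ref{cor:coh}, applied to $\shm=\shi$ (locally finitely generated by assumption), yields that $\shi$ is locally of finite presentation. That finishes the argument. I do not expect a real obstacle here: all the $\hbar$-adic substance has already been packaged into Lemma~\ref{lem:fg} and Corollary~\ref{cor:coh}, and the only additional input is the Noetherianity of the truncations $\sha_n$ from Lemma~\ref{le:cohcoh}. The one point that needs a little care is the very first reduction — that pseudo-coherence of $\sha$ may be tested on its locally finitely generated submodules rather than on arbitrary kernels of morphisms from finitely presented modules to $\sha$ — but this is exactly the equivalence recorded in the definition of pseudo-coherence, so it costs nothing. (The same computation with a general locally free $\shl$ of finite rank in place of $\sha$ would in fact show that any locally finitely generated submodule of a locally free $\sha$-module of finite rank is locally of finite presentation, but this is not needed for the proposition.)
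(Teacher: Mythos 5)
Your proof is correct and follows essentially the same route as the paper's: reduce pseudo-coherence of $\sha$ to showing that a locally finitely generated left ideal $\shi$ is locally of finite presentation, check that $(\shi+\hbar^{n+1}\sha)/\hbar^{n+1}\sha$ is a coherent $\sha$-module, and then invoke Lemma~\ref{lem:fg}~(iv) followed by Corollary~\ref{cor:coh}. The only difference is that you spell out the coherence of $(\shi+\hbar^{n+1}\sha)/\hbar^{n+1}\sha$ (via Lemma~\ref{le:cohcoh} and the Noetherianity of $\sha_n$), which the paper leaves implicit after observing that it is a submodule of $\sha/\hbar^{n+1}\sha$.
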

\begin{proof}
Let $\shi$ be a locally finitely generated $\sha$-submodule of $\sha$.
Since 
\eqn
&&(\shi+\hbar^{n+1}\sha)/\hbar^{n+1}\sha\simeq
\shi/(\shi\cap\hbar^{n+1}\sha)\subset\sha/\hbar^{n+1}\sha,
\eneqn
the $\sha$-module $\shi/\hbar^n\shi$ is coherent by 
Lemma \ref{lem:fg}~(iv). 
Hence Corollary~\ref{cor:coh} implies that
$\shi$ is locally of finite presentation.
\end{proof}

\begin{lemma}
Any filtrant family of coherent $\sha$-submodules of $\sha$ is 
locally stationary.
\end{lemma}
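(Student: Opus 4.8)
The statement to prove is that any filtrant family $\{\shi_j\}_{j\in J}$ of coherent $\sha$-submodules of $\sha$ is locally stationary. Since $\shao = \sha/\hbar\sha$ is left Noetherian by \eqref{eq:FDringa}~\eqref{FDc}, and by Lemma~\ref{le:cohcoh}~(iv) each $\sha_n$ is Noetherian, the idea is to transfer the Noetherian/stationarity property from the $\sha_n$'s back to $\sha$ by an $\hbar$-adic approximation argument. First I would reduce to the case where $J$ has a countable cofinal subset: stationarity is a local statement and the condition only needs to be checked on a neighborhood basis, but in fact the cleanest route is to observe that, since $\sha$ is Noetherian by Theorem~\ref{th:formalfini1}~(i) — wait, Theorem~\ref{th:formalfini1} has not yet been fully proved at this point in the text; rather, we have Proposition~\ref{prop:coh} ($\sha$ coherent) available, and this lemma is one of the steps toward Theorem~\ref{th:formalfini1}~(i). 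So I should \emph{not} invoke $\sha$ Noetherian.

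The plan, then, is the following. Fix $U \in \BB$ and work locally on $U$. For each $n\ge 0$, the images $\shi_j^{(n)} \eqdot (\shi_j + \hbar^{n+1}\sha)/\hbar^{n+1}\sha$ form a filtrant family of coherent $\sha_n$-submodules of $\sha_n$ (coherent by Lemma~\ref{lem:fg}~(iv) or directly, being quotients of the coherent module $\shi_j$). By Lemma~\ref{le:cohcoh}~(iv) — more precisely its proof, which establishes that filtrant families of coherent $\sha_n$-submodules of a coherent $\sha_n$-module are locally stationary — there is, locally near each point, an index $j_n$ such that $\shi_{j_n}^{(n)} \isoto \shi_j^{(n)}$ for all $j \ge j_n$. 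Using filtrancy of $J$, I can arrange $j_0 \le j_1 \le j_2 \le \cdots$ and pass to the cofinal chain they determine; so without loss of generality $J = \Z_{\ge 0}$ and $\shi_k^{(n)}$ stabilizes in $k$ for each fixed $n$, with the stabilization starting at $k = n$, i.e. $\shi_n^{(n)} = \shi_k^{(n)}$ for all $k \ge n$.

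Now the key step: I claim that locally $\shi_m = \shi_{m+1} = \cdots$ for $m$ large, or more precisely that $\shi_m = \bigcup_k \shi_k$ for $m$ large. The union $\shi \eqdot \bigcup_k \shi_k$ satisfies $(\shi + \hbar^{n+1}\sha)/\hbar^{n+1}\sha = \shi_n^{(n)}$ for each $n$, hence $\shi/\hbar^n\shi$ is coherent for all $n$ (as in Lemma~\ref{lem:fg}~(iv) applied to $\shn = \shi \subset \shl = \sha$), so by Corollary~\ref{cor:coh}, $\shi$ is locally finitely generated and $\bigcap_n \hbar^n\sha = 0$ gives $\bigcap_n(\shi + \hbar^n\sha) = \shi$ (here using $\bigcap_n(\shi+\hbar^n\sha) = \shi$ from Lemma~\ref{lem:fg}~(ii)). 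Since $\shi$ is locally finitely generated, a finite set of generators $s_1,\dots,s_r$ of $\shi$ over a small open set already lies in some $\shi_m$; hence $\shi_m \supset \sum_i \sha s_i = \shi \supset \shi_m$, so $\shi_m = \shi_k$ for all $k \ge m$. This is the conclusion.

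\textbf{Main obstacle.} The delicate point is making the "finite generating set lies in a single $\shi_m$" step genuinely local and compatible with the stabilization indices chosen at the $\sha_n$-level: one needs the generators $s_i$ — which a priori are sections over a possibly smaller open set than $U$ — to be captured by one member of the chain, which is fine since each $s_i$, being a section over a point's neighborhood, factors through some $\shi_{k(i)}$ and we take $m = \max_i k(i)$. Also one must be careful that Corollary~\ref{cor:coh} requires $\shi/\hbar^n\shi$ coherent \emph{as an $\sha$-module}, which follows from Lemma~\ref{lem:fg}~(iv) once we know $(\shi + \hbar^{n+1}\sha)/\hbar^{n+1}\sha$ is coherent for all $n$ — and that in turn is exactly the local stabilization of the $\sha_n$-level families, i.e. $(\shi+\hbar^{n+1}\sha)/\hbar^{n+1}\sha = \shi_n^{(n)}$ is a coherent $\sha_n$-module, hence a coherent $\sha$-module by Lemma~\ref{le:cohcoh}~(iii). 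So the argument is essentially a bootstrapping between the graded pieces and the limit, with Corollary~\ref{cor:coh} as the bridge; no single step is hard, but the order — first stabilize each $\sha_n$-family, then build $\shi$, then apply Corollary~\ref{cor:coh}, then extract finitely many generators — must be respected.
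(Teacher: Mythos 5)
Your route is genuinely different from the paper's. The paper does \emph{not} stabilize each $\sha_n$-level separately and then form a union; instead it works with the single doubly-indexed increasing family
$\{(\hbar^{-k}\shi_i\cap \sha+\hbar\sha)/\hbar\sha\}_{i\in I,\ k\ge 0}$
of coherent $\shao$-submodules of $\shao$, locally stabilizes it once (using only that $\shao$ is Noetherian), then applies Lemma~\ref{lem:fg0} to get $\shj_i\cap\hbar\sha=\hbar\shj_i$ for $\shj_i=\sha\cap\hbar^{-k_0}\shi_i$ and concludes $\shj_i=\shj_{i_0}$ by Nakayama. Your approach is more in the spirit of Corollary~\ref{cor:crcoh}: stabilize each $\sha_n$-level, build the union $\shi$, prove it is finitely generated via the coherence of the $\hbar$-adic quotients, and trap a generating set inside one $\shi_m$.

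That strategy can be made to work, but as written there is a circular citation. You invoke Corollary~\ref{cor:coh} to conclude that $\shi$ is locally finitely generated, but the \emph{hypothesis} of Corollary~\ref{cor:coh} is precisely that $\shm$ be locally finitely generated; its conclusion is local finite \emph{presentation} (and $\hbar$-separation), not local finite generation. So that corollary cannot give you what you want. The statement you actually need is Lemma~\ref{lem:fg}~(i): once $(\shi+\hbar^{n+1}\sha)/\hbar^{n+1}\sha$ is coherent for all $n$, Lemma~\ref{lem:fg}~(i) directly asserts that $\shi$ is locally finitely generated over $\sha$, with no prior finiteness assumption, and this is available at this point in the text. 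Substituting that reference repairs the step. Separately, the ``without loss of generality $J=\Z_{\ge0}$'' reduction should be dropped: the chain determined by the stabilization indices $j_n$ need not be cofinal in a general filtrant $J$, so passing to it does lose generality. It is also unnecessary; you can define $\shi=\bigcup_{j\in J}\shi_j$ directly, observe that $(\shi+\hbar^{n+1}\sha)/\hbar^{n+1}\sha$ locally agrees with the stabilized term $\shi_{j_n}^{(n)}$ hence is coherent for every $n$ (coherence being a local condition, the shrinking neighborhoods as $n$ varies do not matter), conclude $\shi$ is locally finitely generated by Lemma~\ref{lem:fg}~(i), trap finitely many generators in a single $\shi_m$ using filtrancy of $J$, and finally note that $\shi/\shi_m$ is a coherent $\sha$-module (by Proposition~\ref{prop:coh}) with zero stalk at the given point, hence vanishes on a neighborhood, which gives the local stationarity for the full family.
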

\begin{proof}
Let $\{\shi_i\}_{i\in I}$ be a family of 
coherent $\sha$-submodules of $\sha$ indexed by a filtrant ordered set $I$, 
with $\shi_i\subset\shi_j$ for any $i\leq j$.
Then $\{(\hbar^{-k}\shi_i\cap \sha+\hbar\sha)/\hbar\sha\}_{i\in I,\ k\ge 0}$
is increasing with respect to $k$ and $i\in I$.
Hence locally there exist $i_0$ and $k_0$ such that
$\hbar^{-k}\shi_i\cap \sha+\hbar\sha=\hbar^{-k_0}\shi_{i_0}\cap \sha+\hbar\sha$
for any $i\ge i_0$ and $k\ge k_0$.
Then, for $i\ge i_0$, the ideal $\shj_i\eqdot \sha\cap \hbar^{-k_0}\shi_i$ satisfies 
\eqn
\shj_i\cap \hbar^m\sha\subset \hbar^m(\hbar^{-m-k_0}\shi_i\cap\sha)\subset 
\hbar^m(\hbar^{-k_0}\shi_i\cap\sha+\hbar\sha)
\subset \hbar\shj_i+\hbar^{m+1}\sha
\eneqn
for any $m>0$.
Hence Lemma \ref{lem:fg0} implies that $\shj_i\cap \hbar\sha=\hbar\shj_i$.
Since we have $\shj_i\subset\shj_{i_0}+\hbar\sha$, we have
$\shj_i\subset \shj_{i_0}+(\shj_i\cap\hbar\sha)\subset \shj_{i_0}+\hbar\shj_i$.
Then Nakayama's lemma implies $\shj_i=\shj_{i_0}$, or equivalently,
$\hbar^{-k_0}\shi_i\cap \sha= \hbar^{-k_0}\shi_{i_0}\cap \sha$ for $i\ge i_0$.
Thus $\{\shi_i\cap \hbar^{k_0}\sha\}_{i}$ is locally stationary.
Since $\{\shi_i/(\shi_i\cap \hbar^{k_0}\sha)\}_i$ is
a filtrant family of coherent submodules of $\sha_{k_0-1}$,
it is also locally stationary and it follows that 
$\{\shi_i\}_i$ is locally stationary.
\end{proof}

\begin{lemma} \label{lem:Acoh}
For any $x\in X$, $\sha_x$ is a coherent ring.
\end{lemma}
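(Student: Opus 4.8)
The plan is to deduce coherence of the stalk $\sha_x$ from the already-established coherence of the sheaf $\sha$ (Proposition~\ref{prop:coh}), exploiting the $\hbar$-adic structure and the hypothesis~\eqref{eq:FDringa}~\eqref{FDc} that $\shao=\sha/\hbar\sha$ is left Noetherian, hence $(\sha_0)_x=(\shao)_x$ is a Noetherian (in particular coherent) ring. The natural strategy is to show that any finitely generated left ideal $I$ of $\sha_x$ is finitely presented, i.e.\ that its module of relations is finitely generated. First I would fix generators $a_1,\dots,a_r\in I$; since these are finitely many germs, they are all defined on some open $U\ni x$ belonging to the base $\BB$ of~\eqref{eq:FDringb}, and they generate a locally finitely generated $\sha$-submodule $\shi\subset\sha|_U$ (shrinking $U$ if necessary). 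By Proposition~\ref{prop:coh}, $\sha$ is coherent, so after further shrinking $U$ we may assume $\shi$ is a coherent $\sha|_U$-module and that the kernel $\shk$ of $(\sha|_U)^r\to\shi$ is locally finitely generated, hence (shrinking once more) generated by finitely many sections over $U$.

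The main point is then to check that the stalk at $x$ of this kernel computes the module of relations in $\sha_x$, i.e.\ that $\shk_x=\ker\bigl((\sha_x)^r\to I\bigr)$. This is immediate because taking stalks is exact: $\shk_x$ is the kernel of $(\sha|_U)^r_x=(\sha_x)^r\to\shi_x=I$. Since $\shk$ is generated by finitely many sections on $U$, its stalk $\shk_x$ is a finitely generated $\sha_x$-module, which exhibits $I$ as a finitely presented $\sha_x$-module. As $I$ was an arbitrary finitely generated left ideal, $\sha_x$ is left coherent; the analogous argument with $\sha^\rop$ (which satisfies the same hypotheses, $\sha^\rop$ being $\hbar$-complete without $\hbar$-torsion with $\sha^\rop/\hbar\sha^\rop=\shao^\rop$ Noetherian) — or simply the observation that ``coherent ring'' in the sense used here refers to coherence as a left module over itself — gives the statement as formulated.

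I do not expect a genuine obstacle here: the content has essentially been front-loaded into Proposition~\ref{prop:coh} and the finiteness machinery of Lemmas~\ref{lem:fg0}--\ref{lem:fg}. The only mildly delicate point is the bookkeeping of ``shrinking $U$ finitely many times'': one uses that coherence of $\sha$ means precisely that finitely generated subsheaves have locally finitely generated relation sheaves, so each step is valid on a possibly smaller neighborhood of $x$, and since we take only finitely many such steps the intersection is still an open neighborhood of $x$ (and we may keep it inside $\BB$, though the acyclicity in~\eqref{eq:FDringb} is not even needed for this particular stalk statement). An alternative, slightly slicker route would be to invoke directly that the stalk of a coherent sheaf of rings at any point is a coherent ring — a standard fact — but since the present lemma is stated as a step toward proving $\sha$ is Noetherian (Theorem~\ref{th:formalfini1}), spelling out the argument via Proposition~\ref{prop:coh} as above keeps the exposition self-contained.
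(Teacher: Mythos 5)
Your proof is correct and follows essentially the same route as the paper: extend the given generators (equivalently, the presentation map $\sha_x^{\oplus r}\to\sha_x$) to a morphism of sheaves on a neighborhood of $x$, apply Proposition~\ref{prop:coh} to conclude the kernel sheaf is coherent, and use exactness of taking stalks to see that the kernel at $x$ is finitely generated. The paper's version is just more terse (two sentences), working directly with a morphism $f\cl\sha_x^{\oplus n}\to\sha_x$ rather than a chosen ideal and its generators.
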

\begin{proof}
Any morphism $f\cl\sha_x^{\oplus n}\to\sha_x$
extends to a morphism $\tilde{f}\cl\sha^{\oplus n}\vert_U\to\sha\vert_U$
for some open neighborhood $U$ of $x$.
Since $\shn\eqdot\ker\tilde{f}$ is coherent,
$\shn_x\simeq \ker f$ is a finitely generated $\sha_x$-module.
\end{proof}

\begin{lemma}\label{lem:seppt}
For any $x\in X$ and a finitely generated left ideal $I$ of $\sha_x$,
$I\cap \hbar^{n+1}\sha_x=\hbar(I\cap \hbar^n\sha_x)$ for $n\gg0$.
\end{lemma}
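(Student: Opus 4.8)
The plan is to reduce everything to Lemma~\ref{lem:fg}: I would lift the finitely generated ideal $I\subset\sha_x$ to a finitely generated ideal sheaf on a neighborhood of $x$, apply part~(iii) of that lemma, and then pass to stalks at $x$.

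Concretely, choose a finite generating family of $I$ and form the corresponding epimorphism $f\cl\sha_x^{\oplus m}\to I\subset\sha_x$. Since $f$ is given by finitely many sections of $\sha$, it extends to a morphism $\tilde f\cl\sha^{\oplus m}\vert_U\to\sha\vert_U$ on some open neighborhood $U$ of $x$. Put $\shl\eqdot\sha\vert_U$ and $\shn\eqdot\im\tilde f\subset\shl$, so that $\shn$ is a locally finitely generated left $\sha\vert_U$-submodule of $\shl$ with $\shn_x=I$.

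To apply Lemma~\ref{lem:fg} to $\shl$ and $\shn$ I must check that $(\shn+\hbar^{n+1}\shl)/\hbar^{n+1}\shl$ is a coherent $\sha$-module for every $n\ge0$. Now $(\shn+\hbar^{n+1}\shl)/\hbar^{n+1}\shl\simeq\shn/(\shn\cap\hbar^{n+1}\shl)$ is a finitely generated left $\sha_n$-submodule of $\sha_n=\sha/\hbar^{n+1}\sha$ (using $\shl=\sha\vert_U$), and $\sha_n$ is a coherent ring by Lemma~\ref{le:cohcoh}~(iv); hence this module is $\sha_n$-coherent, and therefore $\sha$-coherent by Lemma~\ref{le:cohcoh}~(iii). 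Thus the hypothesis of Lemma~\ref{lem:fg} is met.

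Lemma~\ref{lem:fg}~(iii) then provides an open neighborhood $V\ni x$ and an integer $n_0$ such that $\hbar^{n+1}\shl\cap\shn\subset\hbar(\hbar^{n}\shl\cap\shn)$ on $V$ for all $n\ge n_0$. Passing to stalks at $x$ yields $\hbar^{n+1}\sha_x\cap I\subset\hbar(\hbar^{n}\sha_x\cap I)$ for $n\ge n_0$, and the reverse inclusion is trivial since $\hbar$ is central and $I$ is a left ideal. Hence $I\cap\hbar^{n+1}\sha_x=\hbar(I\cap\hbar^{n}\sha_x)$ for $n\gg0$. The argument is essentially a translation of Lemma~\ref{lem:fg}, so there is no real obstacle; the one point requiring attention is the coherence of the truncations $(\shn+\hbar^{n+1}\shl)/\hbar^{n+1}\shl$, which is exactly where the coherence of the rings $\sha_n$ enters.
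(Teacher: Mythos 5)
Your proof is correct and follows essentially the same route as the paper: extend the finitely generated ideal to a neighborhood of $x$, invoke Lemma~\ref{lem:fg}~(iii), and pass to stalks. The only (minor) difference is in how the coherence hypothesis of Lemma~\ref{lem:fg} is verified: the paper takes a coherent ideal sheaf $\shi$ with $\shi_x=I$ (this exists because $\sha$ is coherent, Proposition~\ref{prop:coh}), from which coherence of the truncations $(\shi+\hbar^{n+1}\sha)/\hbar^{n+1}\sha$ is automatic, whereas you verify it directly from the coherence of the rings $\sha_n$ (Lemma~\ref{le:cohcoh}); both routes are available at this point in the paper and equally valid.
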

\begin{proof}
Let us take a coherent ideal $\shi$ of $\sha$ defined on a neighborhood of
$x$ such that $I=\shi_x$. Then Lemma \ref{lem:fg}
implies that 
$\shi\cap \hbar^{n+1}\sha=\hbar(\shi\cap \hbar^n\sha)$ for $n\gg0$.
\end{proof}

\begin{lemma}
For any $x\in X$, $\sha_x$ is a Noetherian ring.
\end{lemma}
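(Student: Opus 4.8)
By Lemma~\ref{lem:Acoh}, $\sha_x$ is a coherent ring, and by Lemma~\ref{le:cohcoh}~(iv) each $(\sha_n)_x=\sha_x/\hbar^{n+1}\sha_x$ is left Noetherian; so it is enough to prove that every left ideal $I$ of $\sha_x$ is finitely generated. I would transport to the stalk the strategy used in Lemma~\ref{lem:fg}.

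\emph{Reduction to a saturated ideal.} The left ideals $\bigl(\{a\in\sha_x\mid\hbar^na\in I\}+\hbar\sha_x\bigr)/\hbar\sha_x$ of the Noetherian ring $(\shao)_x$ form an increasing sequence in $n$, hence stabilize at some $n_0$; set $K=\{a\in\sha_x\mid\hbar^{n_0}a\in I\}$. Since $I\cap\hbar^{n_0}\sha_x=\hbar^{n_0}K$, and $I/(I\cap\hbar^{n_0}\sha_x)$ is a left ideal of the Noetherian ring $(\sha_{n_0-1})_x$ and so finitely generated, it suffices to show that $K$ is finitely generated. The stabilization yields $K\cap\hbar^n\sha_x\subseteq\hbar K+\hbar^{n+1}\sha_x$ for all $n\ge1$, and $(K+\hbar\sha_x)/\hbar\sha_x$ is a finitely generated $(\shao)_x$-ideal; lifting its generators to $s_1,\dots,s_m\in K$ and putting $N=\sum_i\sha_x s_i$, an iteration as in the proof of \eqref{eq:NcaptnL} gives $N\subseteq K\subseteq N+\hbar^n\sha_x$ for every $n$, hence
\eqn
&&K\ \subseteq\ \bigcap_{n\ge0}\bl N+\hbar^n\sha_x\br.
\eneqn
So the problem reduces to showing that a finitely generated left ideal $N$ of $\sha_x$ is $\hbar$-adically closed: $\bigcap_n(N+\hbar^n\sha_x)=N$.

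\emph{Reduction to the torsion-free case.} By Lemma~\ref{lem:seppt} (Artin--Rees for finitely generated ideals), the $\hbar$-saturation $Q$ of $N$ equals $\{a\in\sha_x\mid\hbar^ca\in N\}$ for $c\gg0$, so $\hbar^cQ=N\cap\hbar^c\sha_x$ is the intersection of two coherent submodules of $\sha_x$, hence coherent; therefore $Q$ is finitely generated and $\sha_x/Q$ has no $\hbar$-torsion. A telescoping reduces $\bigcap_n(N+\hbar^n\sha_x)=N$ to $\bigcap_n(Q+\hbar^n\sha_x)=Q$. Write $\widetilde Q=\bigcap_n(Q+\hbar^n\sha_x)$: torsion-freeness of $\sha_x/Q$ gives $Q\cap\hbar\sha_x=\hbar Q$, whence $\widetilde Q\cap\hbar\sha_x=\hbar\widetilde Q$ and $\widetilde Q=Q+\hbar\widetilde Q$, so $\widetilde Q/Q=\bigcap_n\hbar^n(\sha_x/Q)$ is $\hbar$-divisible.

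\emph{Main obstacle.} To conclude $\widetilde Q=Q$ by Nakayama's lemma for $\sha_x$ (applicable because $\hbar\sha_x$ lies in the Jacobson radical of $\sha_x$) one needs $\widetilde Q/Q$ to be finitely generated; this is the heart of the matter and is not formal, since $\sha_x$ is only coherent and, crucially, is \emph{not} $\hbar$-adically complete — so the telescoping-series argument of Lemma~\ref{lem:fg0} cannot be run directly at the stalk. I would get around this by descending to a basic open set: choose $U\in\BB$ with $x\in U$ and a coherent ideal $\shq\subseteq\sha\vert_U$ with $\shq_x=Q$. Over $U$, $\sha(U)$ is $\hbar$-complete and coherent $\shao\vert_U$-modules are acyclic, so Lemma~\ref{lem:fg}~(ii) applies to $\shq\subseteq\sha\vert_U$ and gives $\bigcap_n(\shq+\hbar^n\sha)=\shq$ \emph{as sheaves}; using the acyclicity over $U\in\BB$ (concretely, that $\sha(U)/\shq(U)$ is $\hbar$-adically separated, and that any germ at $x$ divisible by all powers of $\hbar$ is represented by a single section over some $U'\in\BB$) one transports this identity to the stalk, obtaining $\widetilde Q=Q$. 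Unwinding the two reductions then gives $K=N$, so $K$, and therefore $I$, is finitely generated; thus $\sha_x$ is Noetherian.
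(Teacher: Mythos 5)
Your plan takes a different route from the paper's: you try to show directly that an arbitrary left ideal $I\subset\sha_x$ is finitely generated, via the reductions $I\rightsquigarrow K\rightsquigarrow Q$, whereas the paper proves the ascending chain condition for \emph{finitely generated} left ideals of $\sha_x$. The reductions you carry out are all sound, and you correctly isolate the crux: one needs $\bigcap_n(Q+\hbar^n\sha_x)=Q$ for a finitely generated saturated left ideal $Q$, and Nakayama is unavailable because $\widetilde Q/Q$ is not known to be finitely generated, while the telescoping-series trick of Lemma~\ref{lem:fg0} fails because the stalk $\sha_x$ is not $\hbar$-complete.

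The proposed fix, however, does not close the gap. You invoke Lemma~\ref{lem:fg}~(ii) to get $\bigcap_n(\shq+\hbar^n\sha)=\shq$ as an identity of \emph{subsheaves}, and then want to deduce $\bigcap_n(\shq_x+\hbar^n\sha_x)=\shq_x$ at the stalk. But a countable intersection of subsheaves does not commute with taking stalks: one only has $\bigl(\bigcap_n\shf_n\bigr)_x\subseteq\bigcap_n(\shf_n)_x$, and it is precisely the reverse inclusion that you need. The assertion you offer in support — ``any germ at $x$ divisible by all powers of $\hbar$ is represented by a single section over some $U'\in\BB$'' — is not a consequence of acyclicity or of $\hbar$-separatedness of $\sha(U)/\shq(U)$; it is exactly the statement to be proved, restated. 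Concretely, $b\in\bigcap_n(Q+\hbar^n\sha_x)$ gives, for each $n$, a decomposition $\tilde b=q_n+\hbar^na_n$ valid on some $V_n\ni x$, and there is no a priori reason the $V_n$ contain a common $U'\in\BB$. Indeed, this closedness of finitely generated ideals in the $\hbar$-adic topology is what one would deduce \emph{from} Noetherianity via Krull's intersection theorem — so deriving Noetherianity from it requires an independent argument that the sketch does not supply.

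The paper sidesteps this entirely by proving the ACC for finitely generated left ideals: given an increasing chain $\{I_n\}$, the modules $\hbar^{-k}I_n\cap\sha_x$ are \emph{automatically} finitely generated by coherence (Lemma~\ref{lem:Acoh}), and the stabilization of $\{(\hbar^{-k}I_n\cap\sha_x+\hbar\sha_x)/\hbar\sha_x\}_{n,k}$ in $(\shao)_x$, combined with Lemma~\ref{lem:seppt}, produces an inclusion $\hbar^{-k}I_n\cap\sha_x\subset\hbar^{-k_0}I_{n_0}\cap\sha_x+\hbar(\hbar^{-k}I_n\cap\sha_x)$ to which Nakayama applies because the quotient is finitely generated. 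The essential difference is that the paper's argument only ever applies Nakayama to objects known to be finitely generated, which is exactly the hypothesis your $\widetilde Q/Q$ lacks. I would advise replacing the last step of your proof by the paper's ACC argument, or else finding a genuine proof that $\bigcap_n\hbar^n\shm_x=0$ for coherent $\shm$ without presupposing Noetherianity of the stalk.
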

\begin{proof}
Set $A=\sha_x$.
Let us show that an increasing sequence $\{I_n\}_n$
of finitely generated left ideals of $A$
is stationary.
Since $\{(\hbar^{-k}I_n\cap A+\hbar A)/\hbar A\}_{n,k}$ is
increasing with respect $n,k$,
there exist $n_0$ and $k_0$ such that
$\hbar^{-k}I_n\cap A+\hbar A=\hbar^{-k_0}I_{n_0}\cap A+\hbar A$
for $n\ge n_0$ and $k\ge k_0$.
For any $n\ge n_0$ there exists $k\ge k_0$ such that
$\hbar^{-k}I_n\cap \hbar A=\hbar(\hbar^{-k}I_n\cap A)$
by Lemma~\ref{lem:seppt}.
Hence we have
$\hbar^{-k}I_n\cap A
\subset \hbar^{-k}I_n\cap(\hbar^{-k_0}I_{n_0}\cap A+\hbar A)
\subset \hbar^{-k_0}I_{n_0}\cap A+(\hbar^{-k}I_n\cap\hbar A)
\subset \hbar^{-k_0}I_{n_0}\cap A+\hbar(\hbar^{-k}I_n\cap A)$.
Since $\hbar^{-k}I_n\cap A$ is finitely generated by Lemma \ref{lem:Acoh},
Nakayama's lemma implies that
$\hbar^{-k}I_n\cap A=\hbar^{-k_0}I_{n_0}\cap A$.
Hence $\hbar^{-k_0}I_{n}\cap A=\hbar^{-k_0}I_{n_0}\cap A$ for any $n\ge n_0$.
Therefore $I_n\cap \hbar^{k_0}A=\hbar^{k_0}(\hbar^{-k_0}I_{n}\cap A)$
is stationary.
Since
$\{I_n/(I_n\cap \hbar^{k_0}A)\}_n$ is stationary,
$\{I_n\}_n$ is stationary.
\end{proof}

Thus, we have proved that $\sha$ is a Noetherian ring. 

\begin{lemma}\label{lem:limnseq}
Let $\{\shm_n\}_{n\ge0}$ be a projective system
of coherent $\sha$-modules.
Assume that $\hbar^{n+1}\shm_n=0$ and the induced morphism
$\shm_{n+1}/\hbar^{n+1}\shm_{n+1}\to\shm_n$ is an isomorphism for any $n\ge 0$.
Then $\shm\eqdot\prolim[n]\shm_n$ is a coherent $\sha$-module and
$\shm/\hbar^{n+1}\shm\to\shm_n$ is an isomorphism for any $n\ge0$.
\end{lemma}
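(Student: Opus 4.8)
The plan is to build $\shm$ as the $\hbar$-completion of a single coherent module defined on small enough opens, and then glue. Since coherence and the conclusion $\shm/\hbar^{n+1}\shm\isoto\shm_n$ are local statements (coherence by Theorem~\ref{th:formalfini1}\,\eqref{th:coh:coh} together with Lemma~\ref{le:cohcoh}), I may work on a fixed $U\in\BB$ and then patch. On such a $U$, by Lemma~\ref{lem:sur}\,(ii) the epimorphisms $\shm_{n+1}\to\shm_n$ (which exist because $\shm_{n+1}/\hbar^{n+1}\shm_{n+1}\isoto\shm_n$) are surjective on sections, so $\{\shm_n(U)\}_n$ has surjective transition maps; in particular it is Mittag-Leffler, and by Lemma~\ref{lem:sur}\,(i) the higher cohomologies $H^k(U;\shm_n)$ vanish for $k\neq0$. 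Hence the conditions of Lemma~\ref{lem:cohpro} are satisfied and $\shm=\prolim[n]\shm_n$ satisfies $H^k(U;\shm)=0$ for $k>0$ and $\shm(U)=\prolim[n]\shm_n(U)$, with the projection $\shm\to\shm_n$ surjective on sections over $U\in\BB$.

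Next I would show $\shm/\hbar^{n+1}\shm\isoto\shm_n$. Fix $U\in\BB$ and a presentation: since $\shm_0$ is coherent over $\shao$ and $\sha(U)\to\shao(U)$ is surjective (Lemma~\ref{lem:sur}\,(iii)), lift a finite generating set of $\shm_0(U)$ to sections of $\shm(U)$ (using that $\shm(U)\to\shm_0(U)$ is surjective); this gives an $\sha$-linear map $\shl\eqdot\sha^{\oplus r}\vert_U\to\shm$ which is surjective modulo $\hbar$, hence surjective by Nakayama (Lemma~\ref{le:nakayamacor}\,(ii)). Let $\shn\eqdot\ker(\shl\to\shm)$. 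The key compatibility to check is that $\shl/\hbar^{n+1}\shl\to\shm_n$ (the composite $\shl\to\shm\to\shm/\hbar^{n+1}\shm\to\shm_n$) has kernel $(\shn+\hbar^{n+1}\shl)/\hbar^{n+1}\shl$; since $\shm_n$ is coherent, this kernel is coherent for every $n$, so Lemma~\ref{lem:fg} applies to $\shn\subset\shl$: $\shn$ is locally finitely generated, $\bigcap_n(\shn+\hbar^n\shl)=\shn$, and $\shn/\hbar^n\shn$ is coherent. Coherence of $\shl/\shn=\shm$ then follows, and $\shm$ has no $\hbar$-torsion as a submodule-quotient controlled by $\shl$; finally $\shm/\hbar^{n+1}\shm\simeq\shl/(\shn+\hbar^{n+1}\shl)\simeq\shm_n$, the last isomorphism because the surjection $\shl/(\shn+\hbar^{n+1}\shl)\to\shm_n$ has trivial kernel by the identification of $\shn$ with the full kernel together with $\bigcap_n(\shn+\hbar^n\shl)=\shn$.

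The main obstacle is the middle step: identifying the kernel of $\shl/\hbar^{n+1}\shl\to\shm_n$ precisely with $(\shn+\hbar^{n+1}\shl)/\hbar^{n+1}\shl$. This is where the hypothesis $\shm_{n+1}/\hbar^{n+1}\shm_{n+1}\isoto\shm_n$ is used in an essential way — it forces the tower $\{\shm_n\}$ to be exactly the $\hbar$-adic tower of its limit rather than some pathological pro-system — and one must chase the commutative diagram relating $\shl\to\shm\to\shm_n$ with $\shl/\hbar^{n+1}\shl\to\shm/\hbar^{n+1}\shm$, using surjectivity of $\shm(U)\to\shm_n(U)$ to promote stalkwise statements to statements over $U\in\BB$. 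Once that identification is in hand, Lemma~\ref{lem:fg}, Corollary~\ref{cor:coh}, and Theorem~\ref{th:formalfini1} do the rest, and the construction glues because it is canonical (it produces $\widehat{\shm\vert_U}$ for the single coherent $\shm\vert_U$, independent of the chosen presentation).
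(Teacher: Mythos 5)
Your argument has a circularity at exactly the point you flag as ``the main obstacle.'' You set $\shn\eqdot\ker(\shl\to\shm)$ and need the kernel of $\shl/\hbar^{n+1}\shl\to\shm_n$ to equal $(\shn+\hbar^{n+1}\shl)/\hbar^{n+1}\shl$, so that the latter is coherent and Lemma~\ref{lem:fg} applies. The containment $(\shn+\hbar^{n+1}\shl)/\hbar^{n+1}\shl\subset\ker(\shl/\hbar^{n+1}\shl\to\shm_n)$ is automatic, but equality holds precisely when $\ker(\shm\to\shm_n)\subset\hbar^{n+1}\shm$, i.e.\ precisely when $\shm/\hbar^{n+1}\shm\to\shm_n$ is injective --- which is the injectivity half of the very conclusion you are proving. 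Gesturing at a diagram chase relating $\shl\to\shm\to\shm_n$ to $\shl/\hbar^{n+1}\shl\to\shm/\hbar^{n+1}\shm$ does not produce the missing inclusion, and the hypothesis $\shm_{n+1}/\hbar^{n+1}\shm_{n+1}\isoto\shm_n$ alone does not ``force the tower to be the $\hbar$-adic tower of its limit'': it is a statement about consecutive levels, not about the limit.

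The paper avoids this by reversing the order of construction. It defines $\shn_n\eqdot\ker(\shl_n\to\shm_n)$ directly (coherent over the Noetherian ring $\sha_n$ by construction, no identification needed), proves $\shn_{n+1}\to\shn_n$ is an epimorphism by a $3\times3$ diagram chase in which the hypothesis supplies the exact right-hand column, passes to sections over $U\in\BB$ via Lemma~\ref{lem:sur}\,(i) to get the Mittag--Leffler condition on $\{\shn_n(U)\}_n$, and thereby deduces both the exactness of $0\to\shn\to\shl\to\shm\to0$ and the surjectivity of $\shn\to\shn_n$. That surjectivity is exactly what you were trying to extract for free from the unproved identification, and it gives $\shm/\hbar^{n+1}\shm\simeq\coker(\shn\to\shl_n)\simeq\coker(\shn_n\to\shl_n)\simeq\shm_n$; coherence then follows from Corollary~\ref{cor:coh} and Proposition~\ref{prop:coh} much as you intended. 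So your scaffolding (lift generators, get $\shl\epito\shm$ by Nakayama, reduce to $\shn\subset\shl$) is right, but the indispensable step is the surjectivity of $\shn\to\shn_n$, obtained by the diagram chase plus Mittag--Leffler rather than by identifying $\ker(\shl/\hbar^{n+1}\shl\to\shm_n)$ in advance.
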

\begin{proof}
Since the question is local, 
we may assume that $X\in\BB$ and there exist 
a free $\cora$-module $V$ of finite rank and a morphism $V\to \shm_0(X)$
which induces an epimorphism
$\shl\eqdot \sha\otimes_{\cora}V\epito \shm_0$.
Since $\shm_{n+1}(X)\to \shm_n(X)$ is surjective 
and $V$ is projective, 
we have a projective system of morphisms
$\{V\to \shm_n(X)\}_n$:
$$\xymatrix{
&V\ar@{.>}[d]\ar@{.>}[dr]\ar@{.>}@<-.1ex>[drrr]\ar@<.2ex>[drrrr]\\
\cdots\ar@{->>}[r]
&\shm_n(X)\ar@{->>}[r]&\shm_{n-1}(X)\ar@{->>}[r]&{\cdots}\ar@{->>}[r]
&\shm_1(X)\ar@{->>}[r]&\shm_0(X),}
$$
which induces a projective system of morphisms
$\{\shl\to\shm_n\}_n$. Hence we may assume that
there exists a morphism
$\shl\to \shm$ such that the composition 
$\shl\to\shm\to \shm_0$ is an epimorphism.
Since $\shl\to \shm_n/\hbar\shm_n\isoto\shm_0$ is an
epimorphism,
$\shl\to\shm_n$ is an epimorphism by 
Lemma~\ref{le:nakayamacor}.

Set $\shl_n=\shl/\hbar^{n+1}\shl$, and
let $\shn_n$ be the kernel of $\shl_n\to\shm_n$.
Set $\shn=\smash{\prolim[n]} \shn_n$.
Then we have a commutative diagram with exact rows:
$$\xymatrix@R=3ex{
0\ar[r]&\shn\ar[r]\ar[d]&\shl\ar[r]\ar[d]&\shm\ar[d]\\
0\ar[r]&\shn_n\ar[r]&\shl_n\ar[r]&\shm_n\ar[r]&0.}$$
In the commutative diagram
$$
\xymatrix@R=4ex{
&&0\ar[d]&0\ar[d]\\
&&\hbar^{n+1}\shl_{n+1}\ar[r]\ar[d]&\hbar^{n+1}\shm_{n+1}\ar[r]\ar[d]&0\\
0\ar[r]&\shn_{n+1}\ar[r]\ar[d]&\shl_{n+1}\ar[r]\ar[d]&\shm_{n+1}\ar[r]\ar[d]&0\\
0\ar[r]&\shn_n\ar[r]&\shl_n\ar[r]\ar[d]&\shm_n\ar[d]\ar[r]&0\\
&&0&0
}$$
the rows and the columns are exact.
Hence the left vertical arrow $\shn_{n+1}\to\shn_n$ is an epimorphism.
Therefore, $\shn_{n+1}(U)\to\shn_n(U)$ is surjective for any $U\in\BB$,
and $\shn(U)\isoto\prolim[m]\shn_m(U)\to\shn_n(U)$ is surjective.
Hence $\shn\to \shn_n$ is an epimorphism for any $n\ge0$,
and
$\{\shn_n(U)\}_n$ satisfies the Mittag-Leffler condition.

Thus in the following commutative diagram
$$
\xymatrix{
0\ar[r]&\shn(U)\ar[r]\ar[d]^(.5){\sim}
&\shl(U)\ar[r]\ar[d]^(.5){\sim}&\shm(U)\ar[d]^(.5){\sim}\ar[r]&0\\
0\ar[r]&\db{\prolim[n]\shn_n(U)}\ar[r]&\db{\prolim[n]\shl_n(U)}\ar[r]&
\db{\prolim[n]\shm_n(U)}\ar[r]&0,}
$$
the bottom row is exact.
Hence $0\to\shn\to\shl\to\shm\to0$ is exact.
Since $\shn\to\shn_n$ is an epimorphism,
we have $\shm/\hbar^{n+1}\shm\simeq \coker (\shn\to\shl_n)
\simeq\coker(\shn_n\to\shl_n)\simeq \shm_n$.
Since $\shm$ is locally finitely generated and
$\shm/\hbar^{n+1}\shm$ is coherent for any $n\ge0$,
$\shm$ is coherent by Corollary~\ref{cor:coh}
and Proposition~\ref{prop:coh}.
\end{proof}

\begin{proposition}\label{prop:vancoh}
Let $\shm$ be a coherent $\sha$-module. Then we have the following properties.
\bnum
\item
$\shm$ is $\hbar$-complete, i.e., $\shm\isoto\widehat{\shm}$,
\item
for any $U\in\BB$, $H^k(U;\shm)=0$ for any $k>0$.
\enum
\end{proposition}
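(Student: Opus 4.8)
The plan is to obtain both assertions from the lemmas already proved, without returning to a local presentation $\shm\simeq\shl/\shn$.

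\emph{For (i).} Set $\shm_n\eqdot\shm/\hbar^{n+1}\shm$ for $n\ge0$. Each $\shm_n$ is a coherent $\sha$-module — the quotient of the coherent module $\shm$ by its locally finitely generated, hence coherent, submodule $\hbar^{n+1}\shm$ — it satisfies $\hbar^{n+1}\shm_n=0$, and the canonical morphism $\shm_{n+1}/\hbar^{n+1}\shm_{n+1}\to\shm_n$ is an isomorphism. So Lemma~\ref{lem:limnseq} applies and tells us that $\widehat\shm=\prolim[n]\shm_n$ is a coherent $\sha$-module and that $\widehat\shm/\hbar^{n+1}\widehat\shm\to\shm_n$ is an isomorphism for every $n$. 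Reducing the canonical morphism $\phi\cl\shm\to\widehat\shm$ modulo $\hbar^{n+1}$ produces a morphism $\shm_n\to\widehat\shm/\hbar^{n+1}\widehat\shm$ which, by compatibility of the isomorphism furnished by Lemma~\ref{lem:limnseq} with the structure maps, is a section of the previous one; hence it too is an isomorphism.

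I would then finish (i) as follows. The cokernel $\shc\eqdot\coker\phi$ is a coherent $\sha$-module (cokernel of a morphism of coherent modules over the coherent ring $\sha$), and applying $\shao\otimes_\sha(\scbul)$ to $\phi$ shows $\shc/\hbar\shc=0$, since $\shm_0\to\widehat\shm/\hbar\widehat\shm$ is onto; Nakayama (Lemma~\ref{le:nakayamacor}) forces $\shc=0$, so $\phi$ is an epimorphism. As $\phi$ is surjective and each $\shm_n\to\widehat\shm/\hbar^{n+1}\widehat\shm$ is injective, $\ker\phi\subset\hbar^{n+1}\shm$ for all $n\ge0$, hence $\ker\phi\subset\bigcap_{n\ge0}\hbar^n\shm$; but this intersection vanishes by Corollary~\ref{cor:coh} (applicable since $\shm$ is locally finitely generated and each $\shm/\hbar^n\shm$ is coherent). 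Therefore $\phi$ is an isomorphism, i.e.\ $\shm$ is $\hbar$-complete.

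\emph{For (ii).} Fix $U\in\BB$. The standing assumptions \eqref{eq:FDringa} and \eqref{eq:FDringb} are inherited by the open subset $U$, with base $\{V\in\BB\,;\,V\subset U\}$, so it suffices to prove $H^k(X;\shm)=0$ for $k>0$ under the extra hypothesis $X=U\in\BB$. By (i) we have $\shm\simeq\prolim[n]\shm_n$ with $\shm_n=\shm/\hbar^{n+1}\shm$ a coherent $\sha_n$-module. By Lemma~\ref{lem:sur}~(i), $H^k(V;\shm_n)=0$ for $k>0$ and $V\in\BB$; and since $\shm_{n+1}\to\shm_n$ is an epimorphism of coherent $\sha_{n+1}$-modules, Lemma~\ref{lem:sur}~(ii) gives that $\shm_{n+1}(V)\to\shm_n(V)$ is surjective for $V\in\BB$, so $\{H^0(V;\shm_n)\}_n$ satisfies the Mittag-Leffler condition. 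I would then apply Lemma~\ref{lem:cohpro} to the system $\{\shm_n\}$: its hypotheses (a) and (b) are checked over the cofinal system of neighborhoods $V\in\BB$, where for $i>0$ all $H^i(V;\shm_n)$ vanish and for $i=0$ the Mittag-Leffler property yields $\RR^1\pi\proolim[n]\shm_n(V)=0$ by Lemma~\ref{lem:MLcoh}. Hence on $X=U$: for $k\ge2$ both $\{H^{k}(U;\shm_n)\}_n$ and $\{H^{k-1}(U;\shm_n)\}_n$ are zero, so $H^k(U;\shm)\isoto\prolim[n]H^k(U;\shm_n)=0$; and for $k=1$, $\prolim[n]H^1(U;\shm_n)=0$ while $\{H^0(U;\shm_n)\}_n$ is Mittag-Leffler, so again $H^1(U;\shm)\isoto0$. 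This gives the vanishing.

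The argument is essentially routine given the earlier lemmas; the two points deserving care are, in (i), the compatibility of the isomorphism produced by Lemma~\ref{lem:limnseq} with the mod-$\hbar^{n+1}$ reductions of $\phi$ (so that $\shm_n\to\widehat\shm/\hbar^{n+1}\widehat\shm$ is genuinely a section of it), and, in (ii), the verification that the $\RR^1\pi$-terms in the hypotheses of Lemma~\ref{lem:cohpro} vanish — which is precisely where the surjectivity statement of Lemma~\ref{lem:sur}~(ii) is used. I do not expect a serious obstacle beyond this bookkeeping.
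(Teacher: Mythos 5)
Your proof is correct and takes essentially the same route as the paper: part (i) rests on Lemma~\ref{lem:limnseq} (to show $\widehat\shm$ is coherent with $\widehat\shm/\hbar\widehat\shm\simeq\shm/\hbar\shm$), Corollary~\ref{cor:coh} (for $\bigcap_n\hbar^n\shm=0$), and Nakayama, while part (ii) is exactly the paper's application of Lemma~\ref{lem:cohpro} via Lemma~\ref{lem:sur}. The only cosmetic differences are that you establish surjectivity before injectivity and spell out the compatibility of the Lemma~\ref{lem:limnseq} isomorphism with the reduction of $\phi$, where the paper simply applies Lemma~\ref{le:nakayamacor}~(ii) to $\phi$ once it knows $\widehat\shm/\hbar\widehat\shm\simeq\shm/\hbar\shm$.
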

\begin{proof}
(i) 
Since the kernel of
$\shm\to\widehat{\shm}$ is $\bigcap\limits_{n\ge0} \hbar^n\shm$,
the morphism $\shm\to\widehat{\shm}$ is a monomorphism by 
Corollary \ref{cor:coh}.

Let us show that
$\shm\to\widehat{\shm}$ is an epimorphism.
By the preceding lemma, $\widehat{\shm}$ is a coherent $\sha$-module,
and $\widehat{\shm}/\hbar\widehat{\shm}\simeq \shm/\hbar\shm$.
Hence Nakayama's lemma implies that
$\shm\to\widehat{\shm}$ is an epimorphism.

\vspace{0.4em}
\noindent
(ii) For any $U\in\BB$, the map
$\sect(U;\shm/\hbar^{n+1}\shm)\to \sect(U;\shm/\hbar^{n}\shm)$
is surjective, and $H^k(U;\shm/\hbar^n\shm)=0$ for any $k>0$.
Hence Lemma \ref{lem:cohpro} implies (ii).
\end{proof}

\Cor\label{cor:crcoh}
Let $\shm$ be an $\sha$-module.
If $\shm$ satisfies the following conditions {\rm(i)} and {\rm(ii)},
then $\shm$ is a coherent $\sha$-module.
\bnum
\item 
$\shm$ is $\hbar$-complete,
\item
$\hbar^n\shm/\hbar^{n+1}\shm$ is a coherent $\sha_0$-module for all $n\ge0$.
\enum
\encor
\Proof
Set $\shm_n=\shm/\hbar^{n+1}\shm$. Then it is a coherent $\sha$-module by
(ii),
and $\prolim[n]\shm_n$ is a coherent $\sha$-module 
by Lemma~\ref{lem:limnseq}.
\QED

This completes the proof of Theorem~\ref{th:formalfini1}.

\bigskip
\begin{lemma}\label{lem:free}
Let $\shm$ be a coherent $\sha$-module without $\hbar$-torsion.
If $\shm/\hbar\shm$ is a locally free $\sha_0$-module of rank $r\in\Z_{\ge0}$,
then $\shm$ is a locally free $\sha$-module of rank $r$.
\end{lemma}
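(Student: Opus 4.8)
The assertion is local on $X$, so we fix $x\in X$ and work on a neighborhood that we shall shrink as needed. The plan is to lift a basis of the $\sha_0$-module $\shm/\hbar\shm$ to $\shm$ and to check, by two applications of Nakayama's lemma (Lemma~\ref{le:nakayamacor}), that the resulting morphism $\sha^{\oplus r}\to\shm$ is an isomorphism; the hypothesis that $\shm$ has no $\hbar$-torsion will be used in the injectivity step.

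Since $\shm/\hbar\shm$ is locally free of rank $r$ over $\sha_0$ it admits a basis on some neighborhood, and since $\shm\to\shm/\hbar\shm$ is an epimorphism of sheaves its sections lift locally; hence, on a small enough neighborhood $U$ of $x$, we may choose $s_1,\dots,s_r\in\shm(U)$ whose images $\bar s_i$ in $(\shm/\hbar\shm)(U)$ form a basis of $\shm/\hbar\shm|_U$ over $\sha_0|_U$. Let $\phi\cl\sha^{\oplus r}|_U\to\shm|_U$ be the morphism sending the $i$-th standard generator to $s_i$. Reduced modulo $\hbar$, $\phi$ becomes the morphism $\sha_0^{\oplus r}|_U\to\shm/\hbar\shm|_U$ sending the standard generators to the $\bar s_i$, which is an isomorphism; in particular the composition $\sha^{\oplus r}|_U\to\shm|_U\to\shm/\hbar\shm|_U$ is an epimorphism. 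As $\shm$ is coherent, hence locally finitely generated, Lemma~\ref{le:nakayamacor}~(ii) implies that $\phi$ is an epimorphism (shrinking $U$ once more if necessary).

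It remains to see that $\phi$ is a monomorphism. Set $\shn=\ker\phi$. Since $\sha$ is a coherent ring (Proposition~\ref{prop:coh}), $\sha^{\oplus r}$ is coherent, and therefore $\shn$, being the kernel of a morphism of coherent modules, is coherent; in particular it is locally finitely generated. We claim $\shn=\hbar\shn$, which by Lemma~\ref{le:nakayamacor}~(i) forces $\shn=0$. Let $V\subset U$ be open and $a=(a_i)\in\shn(V)$, so that $\sum_i a_i s_i=0$ in $\shm(V)$. Reducing modulo $\hbar$ and using that the $\bar s_i$ form a basis of $\shm/\hbar\shm|_V$, we find that each $a_i$ lies in $\hbar\sha(V)$; since $\sha$ has no $\hbar$-torsion, we may write $a=\hbar b$ for a unique $b\in\sha^{\oplus r}(V)$. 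Then $\hbar\,\phi(b)=\phi(a)=0$ in $\shm(V)$, and as $\shm$ has no $\hbar$-torsion this yields $\phi(b)=0$, i.e.\ $b\in\shn(V)$. Hence $a=\hbar b\in\hbar\shn$, proving $\shn=\hbar\shn$.

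Thus $\phi$ is an isomorphism over $U$, and $\shm$ is a locally free $\sha$-module of rank $r$. The argument is essentially formal: the only thing to watch is the bookkeeping of shrinking neighborhoods so that the sections $s_i$ are simultaneously defined and their reductions remain a basis; there is no genuine analytic difficulty.
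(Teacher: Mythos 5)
Your proof is correct and follows essentially the same strategy as the paper: lift a basis of $\shm/\hbar\shm$ to a morphism $\sha^{\oplus r}\to\shm$, apply Nakayama (Lemma~\ref{le:nakayamacor}) to get surjectivity, and then apply Nakayama again to the coherent kernel after showing $\shn=\hbar\shn$. The only cosmetic difference is that the paper obtains $\shn/\hbar\shn=0$ from the exact sequence $0\to\shn/\hbar\shn\to\shl/\hbar\shl\to\shm/\hbar\shm\to0$ (valid because $\shm$ has no $\hbar$-torsion), whereas you establish the same fact by a direct element chase; both arguments use the same hypotheses in the same places.
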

\begin{proof}
We may assume that there exists
a morphism of $\sha$-modules
$f\cl\shl\seteq\sha^{\oplus r}\to \shm$
such that $\shl/\hbar\shl\to\shm/\hbar\shm$ is an isomorphism.
Then, Nakayama's lemma implies that $f$ is an epimorphism.
Let $\shn$ be the kernel of $f$.
Since $\shm$ has no $\hbar$-torsion,
we have an exact sequence
$0\to \shn/\hbar\shn\to \shl/\hbar\shl\to\shm/\hbar\shm\to0$.
Hence $\shn/\hbar\shn=0$ and Nakayama's lemma implies $\shn=0$.
\end{proof}

The following proposition gives a criterion for the coherence of the
projective limit
of coherent modules, generalizing Lemma~\ref{lem:limnseq}.

\begin{proposition}\label{prop:procoh}
Let $\{\shn_n\}_{n\ge1}$ be a projective system of 
coherent $\sha$-modules. Assume
\banum
\item
the pro-object $\proolim[n]\shn_n/\hbar\shn_n$ is representable by a 
coherent $\sha_0$-module,
\item
the pro-object $\proolim[n]\ker(\shn_n\to[\hbar] \shn_n)$ is representable
by a coherent $\sha_0$-module.
\eanum
Then
\bnum
\item
$\shn\eqdot\prolim[n]\shn_n$ is a coherent $\sha$-module,
\item
$\shn/\hbar^{k+1}\shn\isoto \proolim[n]\shn_n/\hbar^{k+1}\shn_n$ for any $k\ge0$,
\item
$\ker(\shn\to[\hbar] \shn)\isoto\proolim[n]\ker(\shn_n\to[\hbar] \shn_n)$. 
\item
Assume moreover that for each $n\ge 1$ there exists $k\ge0$ such 
that $\hbar^k\shn_n=0$. Then 
the projective system $\{\shn_n\}_{n}$ satisfies the Mittag-Leffler condition.
\enum
\end{proposition}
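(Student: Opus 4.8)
The plan is to deduce all four assertions from the coherence criterion of Corollary~\ref{cor:crcoh}: it suffices to know that $\shn$ is $\hbar$-complete and that every $\hbar$-graded piece $\hbar^k\shn/\hbar^{k+1}\shn$ is a coherent $\sha_0$-module, the decisive input being an identification $\shn/\hbar^{k+1}\shn\isoto\prolim[n]\shn_n/\hbar^{k+1}\shn_n$. Since coherence and the asserted isomorphisms are local, I would first shrink $X$ so that $X\in\BB$, so that the two pro-objects in (a) and (b) are represented by genuine coherent $\sha_0$-modules $\shf$ and $\shg$, and so that, by Theorem~\ref{th:formalfini1}(v), each (coherent) $\shn_n$ is acyclic on $X$, hence $\sect(X;\scbul)$ is exact on coherent $\sha_n$-modules. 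Write $T_n^{(j)}=\ker(\hbar^j\colon\shn_n\to\shn_n)$ and $G_k=\proolim[n]\bigl(\hbar^k\shn_n/\hbar^{k+1}\shn_n\bigr)\in\Pro(\Mod(\sha_0))$; thus $G_0\simeq\shf$ by (a) and $\proolim[n]T_n^{(1)}\simeq\shg$ by (b), and by Lemma~\ref{lem:ML} the systems $\{\shn_n/\hbar\shn_n\}_n$ and $\{T_n^{(1)}\}_n$ — and any level-wise quotient of either — are Mittag--Leffler, with vanishing $\RR^1\pi$.

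The core of the argument is to show, by induction on $k$, that $G_k$ is Mittag--Leffler with $\pi(G_k)=\prolim[n]\hbar^k\shn_n/\hbar^{k+1}\shn_n$ a coherent $\sha_0$-module. The engine is the level-wise exact sequence given by multiplication by $\hbar$,
\eqn
&&0\to\bigl(T_n^{(1)}\cap\hbar^{k-1}\shn_n\bigr)\big/\bigl(T_n^{(1)}\cap\hbar^k\shn_n\bigr)\to\hbar^{k-1}\shn_n/\hbar^k\shn_n\to[\hbar]\hbar^k\shn_n/\hbar^{k+1}\shn_n\to0,
\eneqn
where one uses $\hbar^{k-1}T_n^{(k)}=T_n^{(1)}\cap\hbar^{k-1}\shn_n$ to identify the kernel. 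Applying the exact functor $\proolim[n]$ yields a short exact sequence $0\to K_k\to G_{k-1}\to G_k\to0$ in $\Pro(\Mod(\sha_0))$, where $K_k$ is a subquotient of $\proolim[n]T_n^{(1)}\simeq\shg$ built from the decreasing filtration $T_n^{(1)}\supseteq T_n^{(1)}\cap\hbar\shn_n\supseteq\cdots$, which terminates because $\bigcap_j\hbar^j\shn_n=0$ (Corollary~\ref{cor:coh}). The main obstacle is precisely here: one must show that $K_k$ is Mittag--Leffler, so that its image in the coherent module $G_{k-1}$ is representable, forcing $G_k$ to be coherent and Mittag--Leffler. This is the point where both hypotheses (a) and (b) are used — in their full strength, i.e.\ representability of the two pro-objects rather than merely the Mittag--Leffler property — together with the local stationarity of chains of coherent submodules of the Noetherian modules $\shn_n$: a sub-pro-object of a representable pro-object need not be representable, and the content of the step is that the torsion sub-pro-objects occurring here are. (For $k=1$ the step is concrete: $K_1=\proolim[n]T_n^{(1)}/(T_n^{(1)}\cap\hbar\shn_n)$ is a quotient of $\shg$, so $G_1=\coker(K_1\to\shf)$ is coherent.)

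Granting this, the rest is formal. From the level-wise sequences $0\to\hbar^k\shn_n/\hbar^{k+1}\shn_n\to\shn_n/\hbar^{k+1}\shn_n\to\shn_n/\hbar^k\shn_n\to0$ and induction on $k$, the pro-objects $Q_k\eqdot\proolim[n]\shn_n/\hbar^{k+1}\shn_n$ are Mittag--Leffler with $\pi(Q_k)$ coherent over $\sha_k$; since $\RR^1\pi$ of the systems involved vanishes (Lemma~\ref{lem:MLcoh}) and $U\in\BB$ is acyclic, the argument of Lemma~\ref{lem:cohpro} shows the canonical map $\shn/\hbar^{k+1}\shn\to\prolim[n]\shn_n/\hbar^{k+1}\shn_n$ is an isomorphism (injectivity: $\prolim$ is left exact; surjectivity: $\RR^1\pi=0$), which is (ii). Then $\hbar^k\shn/\hbar^{k+1}\shn=\ker\bigl(\pi(Q_k)\to\pi(Q_{k-1})\bigr)=\pi(G_k)$ is coherent over $\sha_0$, and, since each $\shn_n$ is $\hbar$-complete (Theorem~\ref{th:formalfini1}(iii)), $\widehat{\shn}\simeq\prolim[k]\prolim[n]\shn_n/\hbar^{k+1}\shn_n\simeq\prolim[n]\prolim[k]\shn_n/\hbar^{k+1}\shn_n\simeq\prolim[n]\shn_n=\shn$; so $\shn$ is $\hbar$-complete and Corollary~\ref{cor:crcoh} gives (i). For (iii), left-exactness of $\prolim$ gives $\ker(\hbar\colon\shn\to\shn)=\prolim[n]T_n^{(1)}=\pi\bigl(\proolim[n]T_n^{(1)}\bigr)\simeq\shg$, i.e.\ the canonical map $\ker(\hbar\colon\shn\to\shn)\to\proolim[n]\ker(\hbar\colon\shn_n\to\shn_n)$ is an isomorphism. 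Finally, for (iv): if $\hbar^k\shn_n=0$ then $\shn_n=\shn_n/\hbar^k\shn_n$, and (ii) together with the Mittag--Leffler property of $\{\shn_m/\hbar^k\shn_m\}_m$ forces the canonical map $\shn\to\shn_n$ to have image equal to the stationary image $\im(\shn_M\to\shn_n)$ for $M\gg n$; as this holds for every $n$, the system $\{\shn_n\}_n$ is Mittag--Leffler by Lemma~\ref{lem:ML}.
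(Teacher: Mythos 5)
Your overall blueprint (show $\shn$ is $\hbar$-complete with coherent graded pieces, then invoke Corollary~\ref{cor:crcoh}) is sound, and you have correctly identified where the difficulty lies. The problem is that you never actually close the gap you flag as ``the main obstacle,'' and, as written, it cannot be closed in your framework.

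You induce on $G_k=\proolim[n]\hbar^k\shn_n/\hbar^{k+1}\shn_n$ and produce a short exact sequence $0\to K_k\to G_{k-1}\to G_k\to0$, where $K_k=\proolim[n](T_n^{(1)}\cap\hbar^{k-1}\shn_n)/(T_n^{(1)}\cap\hbar^k\shn_n)$. To conclude that $G_k$ is coherent you need $K_k$ (or at least its image) to be representable, and your own remark already pinpoints the trouble: a sub-pro-object of a representable pro-object need not be representable, and $K_k$ is built from the decreasing filtration $T_n^{(1)}\cap\hbar^j\shn_n$ of $T_n^{(1)}$ whose pro-limits are sub-pro-objects of $\shg$. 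Hypotheses (a) and (b) say nothing about these; ``both hypotheses in their full strength'' and ``Noetherianity of $\shn_n$'' do not supply a proof that $K_k$ is Mittag--Leffler. Your parenthetical $k=1$ case does work, but only because there you can replace $K_1\to\shf$ by the composite $\shg\twoheadrightarrow K_1\to\shf$, so $G_1=\coker(\shg\to\shf)$ is a cokernel of a map between representable coherent objects; for $k\ge2$ there is no natural map $\shg\to G_{k-1}$ to substitute, since $T_n^{(1)}$ sits in $\shn_n$, not in $\hbar^{k-1}\shn_n$. So the inductive step is genuinely missing.

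The paper avoids the issue by changing the induction variable: it inducts on $\shs_k\eqdot\proolim[n]\shn_n/\hbar^{k+1}\shn_n$ and uses the right-exact (not short exact) sequence
$\ker(\hbar\cl\shn_n\to\shn_n)\to\shn_n/\hbar^k\shn_n\to[\hbar]\hbar\shn_n/\hbar^{k+1}\shn_n\to0$.
Applying the exact functor $\proolim[n]$ gives $\shg\to\shs_{k-1}\to\proolim[n]\hbar\shn_n/\hbar^{k+1}\shn_n\to0$, so $\proolim[n]\hbar\shn_n/\hbar^{k+1}\shn_n$ is the cokernel of a morphism between two representable coherent objects; coherent modules are stable under cokernels inside $\Pro(\Mod(\sha_0))$, so it is coherent. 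Then the extension $0\to\proolim[n]\hbar\shn_n/\hbar^{k+1}\shn_n\to\shs_k\to\shs_0\to0$ shows $\shs_k$ coherent, with $\shs_0$ supplied by (a). At no point is a sub-pro-object asked to be representable. The paper then finishes via Lemma~\ref{lem:limnseq} (applied to the system $\{\shs_k\}_k$), which gives (i) and (ii) in one stroke, (iii) is left-exactness of $\prolim$, and (iv) is the reindexing $\proolim[n]\shn_n\simeq\proolim[k]\shs_k$ combined with Lemma~\ref{lem:ML}. If you want to repair your proof, the minimal fix is to switch your induction from $G_k$ to $\shs_k$ and to use this cokernel presentation rather than a short exact sequence with a possibly non-representable kernel.
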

\begin{proof}
For any $k\geq 0$, set 
\eqn
&&\shs_k\eqdot \proolim[n]\shn_n/\hbar^{k+1}\shn_n.
\eneqn
Then $\shs_0$ is representable by a coherent $\sha$-module
by hypothesis~(a).
We shall  show that $\shs_k$ is representable by a coherent $\sha$-module
for all $k\geq0$ by induction on $k$. 
Consider the exact sequences
\eq
&&0\to\hbar\shn_n/\hbar^{k+1}\shn_n\to \shn_n/\hbar^{k+1}\shn_n\to \shn_n/\hbar\shn_n\to 0,
\label{eq:ex1}\\
&&\ker(\shn_n\to[\hbar]\shn_n)\to
\shn_n/\hbar^{k}\shn_n\to[\hbar]\hbar\shn_n/\hbar^{k+1}\shn_n\to 0.
\label{eq:ex2}
\eneq

Assume that $\shs_{k-1}$ is representable by a coherent $\sha$-module.
Applying the functor $\proolim[n]$ to the exact sequence~\eqref{eq:ex2}, 
we deduce that the object 
$\proolim[n]\hbar\shn_n/\hbar^{k+1}\shn_n$ is representable 
by a coherent $\sha$-module. 
Then applying the functor $\proolim[n]$ to the exact sequence~\eqref{eq:ex1}, 
we deduce that
$\shs_{k}$ is representable by a coherent $\sha$-module.

Since $\shn_n\simeq\prolim[k]\shn_n/\hbar^{k+1}\shn_n$
by Theorem~\ref{th:formalfini1} \eqref{th:coh:int},
we have
\eqn
&&
\shn\simeq\prolim[k,n]\shn_n/\hbar^{k+1}\shn_n\simeq\prolim[k]\shs_k.
\eneqn
Since $\shs_{k+1}/\hbar^{k+1}\shs_{k+1}\simeq \shs_k$, 
Lemma \ref{lem:limnseq} implies (i), (ii).
The property (iii) is obvious.

Let us prove (iv).
By the assumption,
$\shn_n\simeq\proolim[k]\shn_n/\hbar^k\shn_n$.
Hence
$$\proolim[n]\shn_n\simeq
\proolim[k,n]\shn_n/\hbar^k\shn_n\simeq
\proolim[k]\shs_k.$$
Since $\{\shs_k\}_{k}$ satisfies the Mittag-Leffler condition,
$\{\shn_n\}_n$  satisfies the Mittag-Leffler condition
by Lemma~\ref{lem:ML}.
\end{proof}

\begin{remark}
In Proposition~\ref{prop:procoh} (iv), the condition $\hbar^k\shn_n=0$
($k\gg0$) is necessary as seen by considering the projective system
$\shn_n=\hbar^n\sha$, ($n\in\N$).
\end{remark}

\section{A variant of the preceding results}\label{section:variant}

Here, we consider rings which satisfy hypotheses \eqref{eq:FDringa}, but in which
\eqref{eq:FDringb} is replaced with 
another  hypothesis. 
Indeed, 
as we shall see, the ring $\shd_X\forl$ of differential operators 
on a complex manifold $X$
has nice properties, although
$\shd_X$ does not satisfy \eqref{eq:FDringb}.
The study of modules over $\shd_X\forl$ is performed in \cite{DGS}.

We assume that $X$ is a Hausdorff locally compact space.
By a basis $\BB$ of compact subsets of $X$,
we mean a family of compact subsets such that for any $x\in X$ and any
open neighborhood $U$ of $x$, 
there exists $K\in\BB$ such that $x\in\Int(K)\subset U$.

We consider a  $\cora$-algebra  $\sha$ on $X$ and
a section $\hbar$ of $\sha$ contained in the center of $\sha$.
Set $\shao=\sha/\hbar\sha$. 
We assume the conditions \eqref{eq:FDringa} and 
\eq\label{eq:FDringc}
&&\left\{\parbox{62ex}{
\bnum
\item[{\rm (iv')}] 
there exist a base $\BB$ of compact subsets of $X$ and a prestack 
$U\mapsto\mdgd[\shao\vert_U]$ ($U$ open in $X$) such that 
\banum
\item for any $K\in \BB$ and an open subset $U$ such that $K\subset
  U$,
there exists $K'\in\BB$ such that
$K\subset\Int(K')\subset K'\subset U$,
\item $U\mapsto \mdgd[\shao\vert_U]$ is a full subprestack of 
$U\mapsto \mdcoh[\shao\vert_U]$,
\item for an open subset $U$ and $\shm\in\mdcoh[\shao\vert_U]$,
if $\shm\vert_V$ belongs to $\mdgd[\shao\vert_V]$ for any 
relatively compact open subset $V$ of $U$,
then $\shm$ belongs to $\mdgd[\shao\vert_U]$,\label{cond:exh}
\item
for any open subset $U$ of $X$, 
$\mdgd[\shao\vert_U]$ is stable by subobjects, quotients
and extension
in $\mdcoh[\shao\vert_U]$,
\item 
for any $K\in\BB$, any open set $U$ containing $K$, any  $\shm\in\mdgd[\shao\vert_U]$
and any $j>0$, one has $H^j(K;\shm)=0$,
\item for any $\shm\in\mdcoh[\shao\vert_U]$, there exists an open covering 
$U=\bigcup_iU_i$ such that
$\shm\vert_{U_i}\in\mdgd[\shao\vert_{U_i}]$,
\label{goodlocal}
\item $\shao\in\mdgd[\shao]$.\label{cond:good}
\eanum 
\enum
}\right.  
\eneq
Note that Lemmas \ref{le:nakayamacor} and \ref{le:cohcoh} still hold.

The prestack $U\mapsto \mdgd[\shao\vert_U]$ being
given, a coherent module which belongs to
$\mdgd[\shao\vert_{U}]$ will be called a good module. Note that in
view of hypothesis (iv')~\eqref{goodlocal}, hypothesis (iv')~\eqref{cond:good}
could be deleted
 since all the results of this subsection will be of local
nature. However, we keep it for simplicity.
\begin{example}\label{exa:DL}
Let $X$ be a complex manifold, $\sho_X$ the structure sheaf and let
$\shd_X$ denote the $\C$-algebra of differential operators. 
One checks easily
that, taking for $\BB$ the set of Stein compact subsets and for
$\shao$ the $\C$-algebra $\shd_X$, 
the prestack of good $\shd_X$-modules in the sense of \cite{Ka2}
satisfies the hypotheses 
\eqref{eq:FDringc}.
\end{example}
\begin{definition}
A coherent $\sha$-module $\shm$ is good 
\glossary{good!module}%
if both the kernel and the cokernel of 
$\hbar\cl\shm\to\shm$ are good $\shao$-modules. One denotes by
$\mdgd[\sha]$ the category of good $\sha$-modules. 
\end{definition}
Note that an $\shao$-module is good if and only if it is good as an
$\sha$-module.
This allows us to state:
\begin{definition}
An $\sha_n$-module $\shm$ is good if it is good as an $\sha$-module.
\end{definition}

\begin{lemma}\label{le:gdthick1}
The category $\mdgd[\sha]$ is a subcategory of $\mdcoh[\sha]$
stable by subobjects, quotients and extension.
\end{lemma}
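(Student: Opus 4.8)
The category $\mdgd[\sha]$ is a subcategory of $\mdcoh[\sha]$ stable by subobjects, quotients and extension.

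The plan is to reduce every assertion to the corresponding statement about good $\shao$-modules, which is part of hypothesis (iv') (see \eqref{eq:FDringc}, in particular the stability clause (d) and the subquotient-and-extension clause). Throughout, I use that the property of being good is the same whether tested over $\sha$ or over $\shao$, so that the ``snake lemma'' applied to multiplication by $\hbar$ is the essential tool.

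First I would treat stability by subobjects. Let $\shm\in\mdgd[\sha]$ and let $\shn\subset\shm$ be a coherent $\sha$-submodule; by Proposition~\ref{prop:coh} and Theorem~\ref{th:formalfini1}, coherent submodules of coherent modules are coherent, so this makes sense. Consider the inclusion of the two-term complexes $(\shn\to[\hbar]\shn)\hookrightarrow(\shm\to[\hbar]\shm)$ and the quotient complex $(\shm/\shn\to[\hbar]\shm/\shn)$. The associated long exact sequence of cohomology gives an exact sequence
\eqn
0\to\ker(\hbar_\shn)\to\ker(\hbar_\shm)\to\ker(\hbar_{\shm/\shn})
\to\coker(\hbar_\shn)\to\coker(\hbar_\shm)\to\coker(\hbar_{\shm/\shn})\to0.
\eneqn
Since $\ker(\hbar_\shm)$ is good, its submodule $\ker(\hbar_\shn)$ is good by hypothesis (iv')(d) (stability by subobjects in $\mdcoh[\shao]$); here I use that $\ker(\hbar_\shn)$ is a coherent $\shao$-module, being a submodule of the coherent module $\ker(\hbar_\shm)$ over the Noetherian ring $\sha_0$. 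For the cokernel, $\coker(\hbar_\shn)$ is not yet obviously a subquotient of a good module, so I would instead argue for $\shm/\shn$ first and then use extensions; see the next paragraph.

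Next, stability by quotients. With $\shn\subset\shm$ as above, set $\shp=\shm/\shn$. From the same long exact sequence, $\coker(\hbar_\shp)$ is a quotient of $\coker(\hbar_\shm)$, hence good by (iv')(d) (stability by quotients); and $\ker(\hbar_\shp)$ sits in an exact sequence
\eqn
0\to\coim\bl\ker(\hbar_\shm)\to\ker(\hbar_\shp)\br\to\ker(\hbar_\shp)\to\ker\bl\coker(\hbar_\shn)\to\coker(\hbar_\shm)\br\to0,
\eneqn
so $\ker(\hbar_\shp)$ is an extension of a submodule of $\coker(\hbar_\shn)$ by a quotient of $\ker(\hbar_\shm)$. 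The latter is good; for the former I need $\coker(\hbar_\shn)$ good, which is precisely the cokernel question from the previous step. So the three statements are genuinely coupled, and the cleanest route is to establish all of them simultaneously: given the exact six-term sequence above, observe that $\ker(\hbar_\shn)$, $\coker(\hbar_{\shm/\shn})$ are a submodule and a quotient of good modules, hence good, and that $\coker(\hbar_\shn)$ and $\ker(\hbar_{\shm/\shn})$ each sit as extensions built from subquotients of $\ker(\hbar_\shm)$ and $\coker(\hbar_\shm)$; by (iv')(d) (stability by subobjects, quotients, \emph{and} extensions), all four are good. This simultaneously yields stability by subobjects and by quotients.

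Finally, stability by extension: given an exact sequence $0\to\shm'\to\shm\to\shm''\to0$ of coherent $\sha$-modules with $\shm',\shm''\in\mdgd[\sha]$, the snake lemma for multiplication by $\hbar$ gives
\eqn
0\to\ker(\hbar_{\shm'})\to\ker(\hbar_\shm)\to\ker(\hbar_{\shm''})\to\coker(\hbar_{\shm'})\to\coker(\hbar_\shm)\to\coker(\hbar_{\shm''})\to0.
\eneqn
Then $\ker(\hbar_\shm)$ is an extension of a submodule of the good module $\ker(\hbar_{\shm''})$ by the good module $\ker(\hbar_{\shm'})$, and $\coker(\hbar_\shm)$ is an extension of the good module $\coker(\hbar_{\shm''})$ by a quotient of the good module $\coker(\hbar_{\shm'})$; both are good by (iv')(d). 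I expect the main (minor) obstacle to be bookkeeping: making sure at each stage that the $\shao$-modules appearing really are coherent before invoking the stability hypotheses, which follows from Noetherianity of $\sha$ (hence of $\sha_0$) established in Theorem~\ref{th:formalfini1}, and confirming that ``good over $\sha$'' and ``good over $\shao$'' coincide for $\shao$-modules, as noted just before the statement.
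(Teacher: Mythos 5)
Your argument for stability by extension is correct and matches the paper's: from the six-term exact sequence one sees directly that $\ker(\hbar_\shm)$ and $\coker(\hbar_\shm)$ are extensions of (sub)quotients of good modules, hence good by (iv')(d).

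However, the treatment of subobjects and quotients has a genuine gap, and you actually identify the problem yourself before trying to paper over it. In the six-term sequence
\eqn
0\to\ker(\hbar_\shn)\to\ker(\hbar_\shm)\to\ker(\hbar_{\shm/\shn})\to\coker(\hbar_\shn)\to\coker(\hbar_\shm)\to\coker(\hbar_{\shm/\shn})\to0,
\eneqn
write $B=\ker(\hbar_{\shm/\shn})$ and $C=\coker(\hbar_\shn)$. Then $B$ is an extension of $\ker(C\to\coker(\hbar_\shm))$ (a submodule of $C$) by a quotient of $\ker(\hbar_\shm)$, and $C$ is an extension of a submodule of $\coker(\hbar_\shm)$ by $\im(B\to C)$ (a quotient of $B$). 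The shared piece $\im(B\to C)=\ker(C\to\coker(\hbar_\shm))$ is not a subquotient of the good modules $\ker(\hbar_\shm)$ and $\coker(\hbar_\shm)$; it genuinely depends on the unknowns. So the statement that $B$ and $C$ ``each sit as extensions built from subquotients of $\ker(\hbar_\shm)$ and $\coker(\hbar_\shm)$'' is not correct, and the circularity you flagged is not resolved by ``establishing all of them simultaneously.''

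The paper breaks the circle by attacking the subobject case directly and by a different method. Given a coherent submodule $\shn\subset\shm$ with $\shm$ good, it shows by induction on $k$ that $\shn/(\hbar\shn+\shn\cap\hbar^{k+1}\shm)$ is good: the successive differences are subquotients of $\hbar^{k}\shm/\hbar^{k+1}\shm$, each of which is a quotient of $\shm/\hbar\shm$, hence good. It then invokes an Artin--Rees type estimate (Lemma~\ref{lem:fg}, applied on each relatively compact open set) giving $\shn\cap\hbar^{k+1}\shm\subset\hbar\shn$ for $k\gg0$, so that $\shn/\hbar\shn$ is good on any relatively compact open set, and concludes $\shn/\hbar\shn$ is good globally using the exhaustion property (iv')(\ref{cond:exh}). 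Once the subobject case is in hand, the quotient case follows from the same six-term sequence exactly as you indicate (now $\shm_\hbar'$ and $\shm'/\hbar\shm'$ are known to be good), and then your extension argument applies. In short: the extension step and the ``derive quotients from subobjects'' reduction are fine, but the core of the lemma is the subobject case, and that requires the Artin--Rees/exhaustion argument rather than a purely formal snake-lemma manipulation.
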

\begin{proof}
First note that
$\hbar^n\shm/\hbar^{n+1}\shm$ is a good $\sha_0$-module
for any $\shm\in\mdgd[\sha]$ and any integer $n\ge0$.
Indeed, it is a quotient of $\shm/\hbar\shm$.

For an $\sha$-module $\shn$, set
$\shn_\hbar\eqdot\ker(\hbar\cl\shn\to\shn)$.

We shall show that any coherent $\sha$-submodule $\shn$ 
of a good $\sha$-module
$\shm$ is a good $\sha$-module. 
It is obvious that $\shn_\hbar$ is a good $\sha_0$-module,
because it is a coherent submodule of
$\shm_\hbar$.
We shall show that $\shn/(\hbar\shn+\shn\cap\hbar^{k+1}\shm)$ is 
a good $\sha_0$-module for any $k\ge0$.
We argue by induction on $k$. For $k=0$, it is a good $\sha_0$-module
since it is a coherent submodule of $\shm/\hbar\shm$.
For $k>0$, we have an exact sequence
\eq
&&\ba{c}
0\to\dfrac{\hbar\shn+\shn\cap\hbar^{k}\shm}
{\hbar\shn+\shn\cap\hbar^{k+1}\shm}\to
\dfrac{\shn}{\hbar\shn+\shn\cap\hbar^{k+1}\shm}\\[1.5ex]
\hs{40ex}\to 
\dfrac{\shn}{\hbar\shn+\shn\cap\hbar^{k}\shm}\to 0.\ea
\label{eq:nmseq}
\eneq
Since $(\shn\cap\hbar^{k}\shm)/(\shn\cap\hbar^{k+1}\shm)$
is a coherent submodule of $\hbar^{k}\shm/\hbar^{k+1}\shm$, it is a good $\sha_0$-module.
Since $(\hbar\shn+\shn\cap\hbar^{k}\shm)/
(\hbar\shn+\shn\cap\hbar^{k+1}\shm)$ is a quotient of
$(\shn\cap\hbar^{k}\shm)/(\shn\cap\hbar^{k+1}\shm)$,
the left term in 
\eqref{eq:nmseq} is a good $\sha_0$-module.
Hence the induction proceeds and we conclude that 
$\shn/(\hbar\shn+\shn\cap\hbar^{k+1}\shm)$ is 
a good $\sha_0$-module.

\noindent 
On any compact set, we have $\shn\cap\hbar^{k+1}\shm
\subset\hbar\shn$ for $k\gg0$. Hence,
$(\shn/\hbar\shn)\vert_V$ is a good $(\sha_0\vert_V)$-module
for any relatively compact subset $V$.
Hence $\shn$ belongs to $\mdgd[\sha]$ by 
(iv')~\eqref{cond:exh}.

Consider an exact sequence $0\to \shm'\to\shm\to\shm''\to 0$ of
coherent $\sha$-modules. 
It gives rise to an exact sequence of coherent $\shao$-modules
\eqn
&&0\to \shm_\hbar'\to\shm_\hbar\to\shm_\hbar''\to\shm'/\hbar\shm'
\to\shm/\hbar\shm\to\shm''/\hbar\shm''\to 0.
\eneqn
If $\shm$ is a good $\sha$-module, then so is $\shm'$.
Hence the exact sequence above implies
that $\shm_\hbar''$ and $\shm''/\hbar\shm''$ are good $\sha_0$-modules.
This shows that $\mdgd[\sha]$ is stable by quotients.

Finally, let us show that $\mdgd[\sha]$ is stable by extension.
If $\shm_\hbar'$, $\shm_\hbar''$, $\shm'/\hbar\shm'$ and
$\shm''/\hbar\shm''$ are good $\sha_0$-modules, then
so are $\shm_\hbar$ and $\shm/\hbar\shm$ by the exact sequence
above.
\end{proof}

\begin{lemma} \label{lem:surB}
Let $K\in\BB$, and $n\geq 0$.
\bnum
\item 
For any good $\sha_n$-module $\shn$, we have $H^j(K;\shn)=0$ for $j\neq 0$. 
\item 
For any epimorphism $\shn\to\shn'$ of good $\sha_n$-modules,
$\shn(K)\to\shn'(K)$ is surjective.
\item 
$\sha(K)\to\sha_n(K)$ is surjective.
\enum
\end{lemma}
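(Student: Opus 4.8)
The plan is to prove (i), (ii) and (iii) in this order, (ii) using (i) and (iii) using both, in close parallel with Lemma~\ref{lem:sur}: the acyclicity of \emph{good} $\shao$-modules on the compact sets $K\in\BB$ (part of hypothesis \eqref{eq:FDringc}) plays the role taken there by the acyclicity on the opens of $\BB$. For (i) I would induct on $n$. Since a good $\sha_n$-module $\shn$ is by definition good over $\sha$, $\shn/\hbar\shn=\coker(\hbar\colon\shn\to\shn)$ is a good $\shao$-module, hence $H^j(K;\shn/\hbar\shn)=0$ for $j>0$ by the acyclicity hypothesis; when $n=0$ this is the whole statement, as then $\shn=\shn/\hbar\shn$. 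For $n\ge1$, $\hbar\shn=\ker(\shn\to\shn/\hbar\shn)$ is coherent (the category of coherent $\sha$-modules being abelian), hence a good $\sha$-module by Lemma~\ref{le:gdthick1}; since $\hbar^n(\hbar\shn)=0$ it is a good $\sha_{n-1}$-module, so $H^j(K;\hbar\shn)=0$ for $j>0$ by induction. The cohomology long exact sequence of $0\to\hbar\shn\to\shn\to\shn/\hbar\shn\to0$ then gives (i).

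For (ii), set $\shk=\ker(\shn\to\shn')$. As the kernel of a morphism of coherent modules, $\shk$ is coherent; being a coherent submodule of the good module $\shn$, it is good by Lemma~\ref{le:gdthick1}, and since $\hbar^{n+1}\shk=0$ it is a good $\sha_n$-module, so $H^1(K;\shk)=0$ by (i). Applying $\sect(K;\scbul)$ to $0\to\shk\to\shn\to\shn'\to0$ then gives the surjectivity of $\shn(K)\to\shn'(K)$. For (iii), I would first observe that $\sha_m$ is a good $\sha$-module for every $m\ge0$: for $m=0$ this is hypothesis (iv')(g), and one proceeds by induction on $m$ using $0\to\sha_{m-1}\to[\hbar]\sha_m\to\shao\to0$ and the stability of good modules under extension (Lemma~\ref{le:gdthick1}). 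By (ii) the transition maps $\sha_{m+1}(K)\to\sha_m(K)$ are then surjective, so the projective system $\{\sha_m(K)\}_m$ satisfies the Mittag-Leffler condition and $\prolim[m]\sha_m(K)\to\sha_n(K)$ is surjective; everything therefore reduces to the isomorphism
\eq\label{eq:crux}
&&\sha(K)\isoto\prolim[m]\sha_m(K)
\eneq
(recall that $\sha\isoto\widehat\sha=\prolim[m]\sha_m$, the kernel of $\sha_{m+1}\to\sha_m$ being isomorphic to $\shao$ via multiplication by $\hbar^{m+1}$).

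Over an open subset \eqref{eq:crux} would be automatic, the sections of a projective limit of sheaves being the projective limit of sections; over the compact $K$ one has only $\sect(K;\scbul)\simeq\varinjlim_{K\subset U}\sect(U;\scbul)$, and a filtered colimit does not commute with a countable projective limit in general. The plan is to obtain \eqref{eq:crux}---together with $H^j(K;\sha)=0$ for $j>0$, which drops out of the same argument---by transposing to the compact setting the Mittag-Leffler argument of Lemma~\ref{le:cohandprolim}, Lemma~\ref{lem:cohpro} and Proposition~\ref{prop:vancoh}(ii): each $\sha_m$ being good, $H^j(K;\sha_m)=0$ for $j>0$ by (i), and $\{\sha_m(K)\}_m$ satisfies the Mittag-Leffler condition by (ii), so one expects $\rsect(K;\sha)$ to be represented by $\prolim[m]\sha_m(K)$. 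The delicate point is the exchange of the colimit over the neighbourhoods of $K$ with $\prolim[m]$; to make it legitimate one replaces those neighbourhoods by the cofinal family $\{\Int(K')\mid K'\in\BB,\ K\subset\Int(K')\}$ furnished by hypothesis (iv')(a) and feeds in the local vanishing statements contained in \eqref{eq:FDringc}. Concretely: given $\bar s\in\sha_n(K)$, represented by some $s'\in\sha_n(W)$ on an open $W\supset K$, one chooses $K'\in\BB$ with $K\subset\Int(K')\subset K'\subset W$, lifts the germ $s'|_{K'}\in\sha_n(K')$ step by step along the surjections $\sha_{m+1}(K')\to\sha_m(K')$ of (ii), and assembles the resulting compatible sequence into an honest section of $\sha$ near $K$, using that series $\sum_{k\ge0}\hbar^k a_k$ converge in the sections of $\sha$ by $\hbar$-completeness.

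The main obstacle is exactly \eqref{eq:crux} in (iii): carrying the ``$\sect$ commutes with $\prolim$'' argument over from the open sets of the base to the compact sets of $\BB$. Parts (i) and (ii) are formal once Lemma~\ref{le:gdthick1} and the hypotheses in \eqref{eq:FDringc} are available, whereas \eqref{eq:crux} is precisely where the technical conditions (iv')(a)--(f) are genuinely needed; it is the analogue, in the ``Stein-compact'' framework of \S\ref{section:variant}, of what Lemma~\ref{lem:cohpro} and Proposition~\ref{prop:vancoh} provide in the open-basis framework.
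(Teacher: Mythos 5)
Your proof is correct and takes essentially the same route as the paper's: (i) by induction on $n$ via $0\to\hbar\shn\to\shn\to\shn/\hbar\shn\to0$ and the acyclicity of good $\shao$-modules on $K\in\BB$; (ii) from (i) plus the fact that the kernel of a morphism of good modules is good (Lemma~\ref{le:gdthick1}); and (iii) by choosing, via hypothesis (iv')(a), a $K'\in\BB$ with $K\subset\Int(K')\subset K'\subset W$, lifting along the surjections $\sha_{m+1}(K')\to\sha_m(K')$ from (ii), and passing through $\prolim_m\sha_m(\Int(K'))\simeq\sha(\Int(K'))\to\sha(K)$. The only stylistic deviation is that you frame (iii) as ``reducing to'' the isomorphism $\sha(K)\isoto\prolim_m\sha_m(K)$, which is stronger than necessary and is not actually what your ``concretely'' paragraph proves (nor what the paper proves); the argument you then give establishes surjectivity of $\sha(K)\to\sha_n(K)$ directly, which is exactly the paper's proof and all that is required.
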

\begin{proof}
(i) is proved by induction on $n$, using the exact sequence \eqref{eq:exseq0}.

\vspace{0.4em}
\noindent
(ii) follows immediately from (i) and the fact that the kernel of a
morphism of good modules is good.

\vspace{0.4em}
\noindent
(iii)\quad By (ii), $\sha_{n+1}(K)\to\sha_{n}(K)$ is surjective for any $n\ge0$.
Hence $\prolim[m]\bl\sha_m(K)\br\to \sha_n(K)$ is surjective.

For $s\in\sha_n(K)$, there exist $K'\in\BB$ and $s'\in \sha_n(K')$
such that $K\subset \Int(K')$ and $s'\vert_K=s$.
Then $s'$ is in the image of
$\prolim[m]\bl\sha_m(K')\br\to \sha_n(K')$.
Hence $s$ is in the image of $\sha(K)\to\sha_n(K)$, because
$\prolim[m]\bl\sha_m(K')\br\to \sha_n(K')\to\sha_n(K)$
decomposes into
$$\prolim[m]\bl\sha_m(K')\br\to\prolim[m]\bl\sha_m(\Int(K'))\br\simeq\sha(\Int(K'))
\to\sha(K)\to\sha_n(K).$$
\end{proof}

The proof of the following theorem is almost the same as  
the proof of Theorem~\ref{th:formalfini1},
and we do not repeat it.

\begin{theorem}\label{th:formalfini1b}
Assume \eqref{eq:FDringa} and \eqref{eq:FDringc}.
\bnum
\item
$\sha$ is a left Noetherian ring.
\item 
Let $\shm$ be a locally finitely generated $\sha$-module.
Then $\shm$ is coherent if and only if 
$\hbar^n\shm/\hbar^{n+1}\shm$ 
is a coherent $\sha_0$-module for any $n\ge0$.
\item 
For any coherent $\sha$-module $\shm$,
$\shm$ is $\hbar$-complete, i.e.,
$\shm\isoto\widehat{\shm}$.
\item 
Conversely, an $\sha$-module $\shm$ is coherent
if and only if
$\shm$ is $\hbar$-complete 
and $\hbar^n\shm/\hbar^{n+1}\shm$ is a coherent $\sha_0$-module
for any $n\ge0$. 
\item
For any good $\sha$-module $\shm$ and any $K\in\BB$, 
we have $H^j(K;\shm)=0$ for any $j>0$. 
\enum
\end{theorem}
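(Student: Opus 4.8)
The plan is to rerun the proof of Theorem~\ref{th:formalfini1} with one preliminary reduction and two systematic substitutions; as the author notes, the statement is ``almost the same'', and the only thing requiring real care is keeping the two substitutions compatible.

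\emph{Preliminary reduction.} Assertions (i)--(iv) are all local (the Noetherian property, coherence, $\hbar$-completeness, and the coherence criterion are local properties), so I may shrink $X$. By (iv')~\eqref{goodlocal} every coherent $\shao$-module is locally good; hence, after shrinking, I may assume that all the $\shao$-modules that occur --- the graded pieces $\hbar^n\shm/\hbar^{n+1}\shm$, the kernel and cokernel of $\hbar\cl\shm\to\shm$, the modules $(\shn+\hbar^{n+1}\shl)/\hbar^{n+1}\shl$ and their subquotients in Lemmas~\ref{lem:fg0} and \ref{lem:fg}, and so on --- are good. By Lemma~\ref{le:gdthick1}, ``good'' is then preserved by every subobject, quotient, extension and (through the analogue of Lemma~\ref{lem:limnseq}) projective limit that the argument produces.

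\emph{The two substitutions.} (1) Every appeal to Lemma~\ref{lem:sur} --- acyclicity and surjectivity of sections over an open $U\in\BB$ of coherent $\sha_n$-modules --- is replaced by the corresponding appeal to Lemma~\ref{lem:surB} --- the same over a compact $K\in\BB$, for \emph{good} $\sha_n$-modules. (2) Every step ``this holds on $U$ for all $U\in\BB$, hence as sheaves'' is replaced by ``this holds on $K$ for all $K\in\BB$, hence as sheaves'', which is legitimate because the interiors of the members of $\BB$ still form a basis of the topology and, for any sheaf $\shf$, $\shf(K)\simeq\indlim[V\supset K]\shf(V)$ with $V$ running over open neighbourhoods of $K$. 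Thus, in the analogue of Lemma~\ref{lem:fg0}, hypothesis~(a) is read with ``good'' in place of ``coherent''; $\bN(K)=\sum_i\shao(K)\ol{s}_i$ and the surjectivity of $\sha(K)\to\shao(K)$ come from Lemma~\ref{lem:surB}~(ii),(iii); the series $\sum_{n\ge0}\hbar^na_{i,n}$ still converges in $\sha(K)$ by the $\hbar$-completeness in \eqref{eq:FDringa}; and $\tN(K)=\sum_i\sha(K)s_i$ for all $K\in\BB$ gives $\tN=\sum_i\sha s_i$ as above. With these changes, Lemmas~\ref{lem:fg0}, \ref{lem:fg}, Corollary~\ref{cor:coh}, Proposition~\ref{prop:coh}, together with the stalkwise statements (which only use the sheaf results just re-proved, plus Lemmas~\ref{le:nakayamacor} and \ref{le:cohcoh}), yield~(i); Lemma~\ref{lem:limnseq}, the $\hbar$-completeness half of Proposition~\ref{prop:vancoh}, and Corollary~\ref{cor:crcoh} yield (ii), (iii), (iv). For~(v): if $\shm$ is a good $\sha$-module, then $\shm/\hbar^{n+1}\shm$ is a good $\sha_n$-module (filtered by the $\hbar^k\shm/\hbar^{k+1}\shm$, with Lemma~\ref{le:gdthick1}), so $H^j(K;\shm/\hbar^{n+1}\shm)=0$ for $j>0$ and $\sect(K;\shm/\hbar^{n+1}\shm)\to\sect(K;\shm/\hbar^n\shm)$ is surjective by Lemma~\ref{lem:surB}~(i),(ii); since $\shm\isoto\prolim[n]\shm/\hbar^{n+1}\shm$ by~(iii), the Mittag-Leffler argument of Proposition~\ref{prop:vancoh}~(ii) applied over $K$ --- writing $\rsect(K;\scbul)\simeq\indlim[V\supset K]\rsect(V;\scbul)$ and using (iv')~\eqref{cond:exh} to shrink inside such a $V$ to members of $\BB$, so that Lemma~\ref{le:cohandprolim} (or Lemma~\ref{lem:cohpro}) applies --- gives $H^j(K;\shm)=0$ for $j>0$.

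The hard part, I expect, is exactly this friction between the two substitutions: coherence is intrinsically local, whereas the cohomological input now available --- acyclicity of sections over a compact $K\in\BB$ --- is available only for \emph{good} modules. Handling it amounts to the bookkeeping already indicated: reduce to the good case on small opens by (iv')~\eqref{goodlocal}, propagate goodness through every operation by Lemma~\ref{le:gdthick1}, and pass from relatively compact opens back to arbitrary ones by (iv')~\eqref{cond:exh}. The combinatorial core of the proof --- Nakayama on $\hbar$, the $\hbar$-adic filtration estimates of Lemmas~\ref{lem:fg0}--\ref{lem:fg}, and the projective-limit criterion of Lemma~\ref{lem:limnseq} --- is unchanged.
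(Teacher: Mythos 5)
The paper itself gives no proof here --- it just asserts that the argument is ``almost the same'' as for Theorem~\ref{th:formalfini1} --- and your proposal is a correct and careful elaboration of exactly that strategy: substitute compacts $K\in\BB$ for opens $U\in\BB$, Lemma~\ref{lem:surB} for Lemma~\ref{lem:sur}, and keep track of goodness via (iv')~\eqref{cond:good}, (iv')~\eqref{goodlocal}, (iv')~\eqref{cond:exh} and Lemma~\ref{le:gdthick1}. Your identification of the one real tension --- coherence is local, while the new cohomological input is restricted to \emph{good} modules over compacts --- and the bookkeeping you do to resolve it (reducing to good modules locally, noting that coherent subquotients of the free, hence good, module $\shl$ are automatically good, and propagating goodness through extensions and projective limits) is the substance of the adaptation that the paper leaves implicit. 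One small simplification worth noting: in Lemmas~\ref{lem:fg0} and~\ref{lem:fg} no preliminary shrinking is needed at all, since $\shl$ is free of finite rank, hence good by (iv')~\eqref{cond:good}, and every coherent subquotient occurring is then automatically good by Lemma~\ref{le:gdthick1}; the reduction via (iv')~\eqref{goodlocal} is only genuinely needed when one starts from an arbitrary coherent $\shm$ rather than a submodule of a free one.
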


\section{$\hbar$-graduation and $\hbar$-localization}

In this section, $\sha$ is a sheaf of algebras satisfying
hypotheses~\eqref{eq:FDringa} and either 
\eqref{eq:FDringb} or \eqref{eq:FDringc}.

\subsubsection*{Graded  modules}
Let $\shr$ be a $\Z[\hbar]$-algebra on a topological space $X$.
We assume that $\shr$ has no $\hbar$-torsion.
We set $$\shro\seteq\shr/\shr\hbar.$$

\begin{definition}\label{def:grad}
We denote by $\gr\cl \Der(\shr)\to \Der(\shro)$ the 
left derived functor of the right exact functor
$\md[\shr]\to\md[\shro]$ given by
$\shm\mapsto \shm/\hbar \shm$.
For $\shm\in\Der(\shr)$ we call $\gr(\shm)$ the graded module associated to $\shm$. 
\end{definition}
We have
\eqn
&&\gr(\shm)\simeq\shro\lltens[\shr]\shm\simeq\Z_X\lltens[{\Z_X[\hbar]}]\shm.
\eneqn
\begin{lemma}\label{lem:grHa}
Let $\shm\in\RD(\shr)$ and let $a\in\Z$. 
Then we have an exact sequence of $\shro$-modules
\eqn
&&0\to \shro\tens[\shr]H^a(\shm)
\to H^a(\gr(\shm))\to\tor^{\shr}_{1}(\shro,H^{a+1}(\shm))\to0.
\eneqn
\end{lemma}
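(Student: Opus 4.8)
For $\shm \in \RD(\shr)$ and $a \in \Z$, there is an exact sequence of $\shro$-modules
$$0 \to \shro \tens[\shr] H^a(\shm) \to H^a(\gr(\shm)) \to \tor^{\shr}_1(\shro, H^{a+1}(\shm)) \to 0.$$

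**Plan.** The key point is that $\shro = \shr/\shr\hbar$ admits the very short flat resolution $0 \to \shr \to[\hbar] \shr \to \shro \to 0$ as a right $\shr$-module, since $\shr$ has no $\hbar$-torsion. Hence for any $\shr$-module $\shn$ we have $\tor^{\shr}_i(\shro,\shn) = 0$ for $i \ge 2$, $\tor^{\shr}_1(\shro,\shn) = \ker(\hbar\cl \shn \to \shn)$, and $\shro \tens[\shr]\shn = \shn/\hbar\shn$. So the statement is really a manifestation of the universal coefficient spectral sequence (or just the hyper-Tor long exact sequence) collapsing because the resolution has length one.

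Concretely, I would represent $\gr(\shm) = \shro \lltens[\shr]\shm$ by tensoring a K-flat resolution $\shp^\bullet \to[\sim] \shm$ of $\shr$-modules with the two-term complex $[\,\shr \to[\hbar] \shr\,]$ placed in degrees $-1,0$; equivalently, $\gr(\shm)$ is the mapping cone of $\hbar\cl \shm \to \shm$ in $\RD(\shr)$. This is the cleanest route: from the distinguished triangle
$$\shm \to[\hbar] \shm \to \gr(\shm) \to[+1]$$
take the long exact cohomology sequence:
$$\cdots \to H^a(\shm) \to[\hbar] H^a(\shm) \to H^a(\gr(\shm)) \to H^{a+1}(\shm) \to[\hbar] H^{a+1}(\shm) \to \cdots.$$
Breaking this into short exact pieces gives
$$0 \to \coker\bigl(\hbar\cl H^a(\shm) \to H^a(\shm)\bigr) \to H^a(\gr(\shm)) \to \ker\bigl(\hbar\cl H^{a+1}(\shm) \to H^{a+1}(\shm)\bigr) \to 0.$$
Then I identify the outer terms: $\coker(\hbar\cl H^a(\shm)\to H^a(\shm)) = H^a(\shm)/\hbar H^a(\shm) \simeq \shro\tens[\shr] H^a(\shm)$, and $\ker(\hbar\cl H^{a+1}(\shm)\to H^{a+1}(\shm)) \simeq \tor^{\shr}_1(\shro, H^{a+1}(\shm))$, both via the flat resolution $0\to\shr\to[\hbar]\shr\to\shro\to 0$. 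This yields exactly the claimed sequence.

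**Main obstacle.** There is essentially no hard step here; the only thing to be careful about is the identification $\gr(\shm) \simeq \mathrm{Cone}(\hbar\cl \shm\to\shm)$ in the derived category — that is, checking that the mapping cone of multiplication by $\hbar$ on a K-flat resolution $\shp^\bullet$ of $\shm$ computes $\shro \lltens[\shr]\shm$. This follows because that cone is $[\shr\to[\hbar]\shr]\tens[\shr]\shp^\bullet$, the first factor is a flat resolution of $\shro$, and $\shp^\bullet$ is K-flat, so the tensor product computes the derived tensor product; naturality of the cone construction makes the triangle and its cohomology sequence canonical. Once that is in place the rest is the standard snake-type bookkeeping, and I would not belabor it.
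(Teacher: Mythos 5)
Your proof is correct and follows essentially the same route as the paper: use the short exact sequence $0\to\shr\to[\hbar]\shr\to\shro\to0$ (valid since $\shr$ has no $\hbar$-torsion) to obtain the distinguished triangle $\shm\to[\hbar]\shm\to\gr(\shm)\to[+1]$, take the long exact cohomology sequence, and identify the cokernel and kernel of $\hbar$ with $\shro\tens[\shr]H^a(\shm)$ and $\tor^{\shr}_1(\shro,H^{a+1}(\shm))$ respectively.
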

Although this kind of results is well-known, 
we give a proof for the reader's convenience.
\begin{proof}
The exact sequence $0\to\shr\To[\hbar]\shr\to\shro\to0$ gives rise to
the distinguished triangle
\eqn
&&\shm\To[\hbar]\shm\To\gr(\shm)\To[+1].
\eneqn
It induces a long exact sequence
\eqn
&&H^a(\shm)\To[\hbar]H^a(\shm)\To H^a(\gr(\shm))
\To H^{a+1}(\shm)\To[\hbar]H^{a+1}(\shm).
\eneqn
The result then follows from
\eqn
\shro\tens[\shr]H^a(\shm)&\simeq&\coker(H^a(\shm)\To[\hbar]H^a(\shm)),\\
\tor^{\shr}_{1}(\shro,H^{a+1}(\shm))
&\simeq&\ker(H^{a+1}(\shm)\To[\hbar]H^{a+1}(\shm)).
\eneqn
\end{proof}

\begin{proposition}\label{pro:tensgr}
\bnum
\item 
Let $\shk_1\in\Der(\shr^\rop)$ and $\shk_2\in\Der(\shr)$. Then 
\eq\label{eq:circgr}
&&\gr(\shk_1\lltens[\shr]\shk_2)
\simeq \gr(\shk_1)\lltens[\shro]\gr(\shk_2).
\eneq
\item
Let $\shk_i\in\Der(\shr)$ \lp$i=1,2$\rp. Then 
\eq\label{eq:circgr1}
&&\gr(\rhom[\shr](\shk_1,\shk_2))
\simeq \rhom[\shro](\gr(\shk_1),\gr(\shk_2)).
\eneq
\enum
\end{proposition}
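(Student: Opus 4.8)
The plan is to derive both formulas from the basic identity $\gr(\shm)\simeq\Z_X\lltens[{\Z_X[\hbar]}]\shm$, i.e.\ the fact that $\gr$ is derived tensor product with $\shro$ over $\shr$, combined with standard associativity/adjunction isomorphisms for $\ltens$ and $\rhom$ in derived categories. For part (i), I would write
\eqn
&&\gr(\shk_1\lltens[\shr]\shk_2)
\simeq\shro\lltens[\shr]\bl\shk_1\lltens[\shr]\shk_2\br
\simeq\bl\shro\lltens[\shr]\shk_1\br\lltens[\shr]\shk_2
\simeq\bl\gr(\shk_1)\br\lltens[\shr]\shk_2,
\eneqn
using associativity of the derived tensor product and the fact that $\gr(\shk_1)$ is already a complex of $\shro^\rop$-modules. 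Then, since $\shro=\shr/\shr\hbar$, any $\shro$-module is annihilated by the central section $\hbar$, so $\gr(\shk_1)\lltens[\shr]\shk_2\simeq\gr(\shk_1)\lltens[\shro]\bl\shro\lltens[\shr]\shk_2\br=\gr(\shk_1)\lltens[\shro]\gr(\shk_2)$; this last step is the usual base-change isomorphism $N\lltens[\shr]M\simeq N\lltens[\shro](\shro\lltens[\shr]M)$ valid for any right $\shro$-module complex $N$. Assembling these gives \eqref{eq:circgr}.

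For part (ii), the analogous chain uses the tensor--hom adjunction. One has
\eqn
&&\gr\bl\rhom[\shr](\shk_1,\shk_2)\br
\simeq\shro\lltens[\shr]\rhom[\shr](\shk_1,\shk_2)
\simeq\rhom[\shr]\bl\shk_1,\shro\lltens[\shr]\shk_2\br
\simeq\rhom[\shr]\bl\shk_1,\gr(\shk_2)\br,
\eneqn
where the middle isomorphism is the projection formula / the fact that $\shro\simeq\Z_X\lltens[{\Z_X[\hbar]}]\shr$ is perfect over $\shr$ (it has the length-one free resolution $0\to\shr\to[\hbar]\shr\to\shro\to0$), so $\shro\lltens[\shr]-$ commutes with $\rhom[\shr](\shk_1,-)$ up to a finiteness/perfectness hypothesis on $\shk_1$. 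Then, because $\gr(\shk_2)$ is a complex of $\shro$-modules, $\rhom[\shr](\shk_1,\gr(\shk_2))\simeq\rhom[\shro](\shro\lltens[\shr]\shk_1,\gr(\shk_2))=\rhom[\shro](\gr(\shk_1),\gr(\shk_2))$ by the change-of-rings adjunction for $\rhom$. Combining yields \eqref{eq:circgr1}.

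The main obstacle is making the manipulations in part (ii) legitimate: the isomorphism $\shro\lltens[\shr]\rhom[\shr](\shk_1,\shk_2)\isoto\rhom[\shr](\shk_1,\shro\lltens[\shr]\shk_2)$ is not valid for arbitrary objects of the derived category and generally requires $\shk_1$ to have, locally, a bounded resolution by free modules of finite rank (or at least that $\shro$ be perfect as an $\shr$-module and some boundedness on $\shk_1$). Here this is fine because $\shro$ has the explicit finite free resolution $\shr\to[\hbar]\shr$ coming from \eqref{eq:FDringa}~\eqref{FDa} (no $\hbar$-torsion), so $\shro\lltens[\shr]-$ is computed by a two-term complex and commutes with $\rhom$; I would make this the one point treated carefully, reducing \eqref{eq:circgr1} to the statement that for the two-term complex $K^\bullet=[\shr\to[\hbar]\shr]$ one has $K^\bullet\tens[\shr]\rhom[\shr](\shk_1,\shk_2)\simeq\rhom[\shr](\shk_1,K^\bullet\tens[\shr]\shk_2)$, which holds termwise and hence for the totalization. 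Part (i) has no such subtlety — associativity of $\ltens$ is unconditional — so it is purely formal.
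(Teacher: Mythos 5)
Your proof is correct and follows essentially the same route as the paper: compute $\gr$ as a derived tensor product (equivalently a mapping cone on $\hbar$) and manipulate with associativity and change of rings. The paper records only the chain of isomorphisms for part (i) and dispatches (ii) with ``the proof is similar''; what you add is the explicit verification for (ii), which is a genuine service since that is where the only non-routine point lies. Two small remarks. First, in (i) your intermediate expression $\shro\lltens[\shr]\bl\shk_1\lltens[\shr]\shk_2\br$ does not quite typecheck, since $\shk_1\lltens[\shr]\shk_2$ carries only a $\Z_X[\hbar]$-structure and not an $\shr$-structure; the paper sidesteps this by writing $\shk_1\lltens[\shr]\shk_2\lltens[{\Z_X[\hbar]}]\Z_X$, and your conclusion is nonetheless correct because the effect is the same as forming $\text{Cone}(\hbar)$ and moving the cone onto the $\shk_1$ factor. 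Second, you are over-cautious in flagging a possible finiteness hypothesis on $\shk_1$ for the commutation of $\gr$ with $\rhom[\shr](\shk_1,-)$: no hypothesis on $\shk_1$ is needed at all, because applying the triangulated functor $\rhom[\shr](\shk_1,-)$ to the distinguished triangle $\shk_2\to[\hbar]\shk_2\to\gr(\shk_2)\To[+1]$ immediately yields $\rhom[\shr](\shk_1,\gr(\shk_2))\simeq\gr\bl\rhom[\shr](\shk_1,\shk_2)\br$; you in fact arrive at this conclusion yourself via the two-term resolution $\shr\to[\hbar]\shr$ of $\shro$, and it is exactly the no-$\hbar$-torsion hypothesis on $\shr$ that makes this resolution available. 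The final change-of-rings step $\rhom[\shr]\bl\shk_1,\gr(\shk_2)\br\simeq\rhom[\shro]\bl\gr(\shk_1),\gr(\shk_2)\br$ is the standard tensor-hom adjunction for $\shr\to\shro$ and is correctly invoked.
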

\begin{proof}
(i) We have 
\eqn
\gr(\shk_1\lltens[\shr]\shk_2)
&\simeq&\shk_1\lltens[\shr]\shk_2\lltens[{\Z_X[\hbar]}]\Z_X
\simeq\shk_1\lltens[\shr]\gr(\shk_2)\\
&\simeq&
\shk_1\lltens[\shr]\shro\lltens[\shro]\gr(\shk_2))\\
&\simeq&
(\shk_1\lltens[\shr]\shro)\lltens[\shro]\gr(\shk_2)\\
&\simeq&\gr(\shk_1)\lltens[\shro]\gr(\shk_2).
\eneqn
\vspace{0.4em}
\noindent
(ii) The proof is similar.
\end{proof}
\begin{proposition}\label{pro:operationgr}
Let $f\cl X\to Y$ be a morphism of topological spaces. Let $\shm\in\Der(\Z_X[\hbar])$
 and $\shn\in\Der(\Z_Y[\hbar])$. Then
\eqn
&&\gr\roim{f}\shm\simeq\roim{f}\gr\shm,\\
&&\gr\opb{f}\shn\simeq\opb{f}\gr\shn.
\eneqn
\end{proposition}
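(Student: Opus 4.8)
The plan is to reduce everything to the description of $\gr$ already used in the proof of Lemma~\ref{lem:grHa}: for any $\shm\in\Der(\Z_X[\hbar])$ one has, using the resolution $0\to\Z_X[\hbar]\To[\hbar]\Z_X[\hbar]\to\Z_X\to0$, a functorial distinguished triangle
\[
\shm\To[\hbar]\shm\To\gr(\shm)\To[+1],
\]
so that $\gr(\shm)$ is precisely a cone of multiplication by $\hbar$ on $\shm$. Since such a cone is unique up to isomorphism, it is enough to check that, after applying $\opb f$ (resp.\ $\roim f$) to the triangle attached to $\shn$ (resp.\ to $\shm$), one again obtains the triangle attached by $\gr$ to $\opb f\shn$ (resp.\ to $\roim f\shm$).

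First I would treat the inverse image. The functor $\opb f$ is exact, hence triangulated; it sends $\Z_Y[\hbar]$ to $\Z_X[\hbar]$ and $\Z_Y$ to $\Z_X$, and it intertwines multiplication by $\hbar$ on a $\Z_Y[\hbar]$-module with multiplication by $\hbar$ on its inverse image. Applying $\opb f$ to $\shn\To[\hbar]\shn\To\gr(\shn)\To[+1]$ therefore gives a distinguished triangle $\opb f\shn\To[\hbar]\opb f\shn\To\opb f\gr(\shn)\To[+1]$, which exhibits $\opb f\gr(\shn)$ as a cone of $\hbar$ on $\opb f\shn$, i.e.\ as $\gr(\opb f\shn)$. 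Alternatively, $\opb f$ is exact and commutes with $\tens$ over the structure sheaves, hence with $\lltens$ and with base change; applying this to a flat resolution of $\Z_Y$ over $\Z_Y[\hbar]$ gives directly $\opb f\bl\Z_Y\lltens[{\Z_Y[\hbar]}]\shn\br\isoto\Z_X\lltens[{\Z_X[\hbar]}]\opb f\shn$.

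Next I would treat the direct image. Via the canonical morphism $\Z_Y[\hbar]\to\roim f\Z_X[\hbar]$, the functor $\roim f$ maps $\Der(\Z_X[\hbar])$ to $\Der(\Z_Y[\hbar])$, and on $\roim f\shm$ multiplication by $\hbar$ coincides with $\roim f$ applied to multiplication by $\hbar$ on $\shm$. Being triangulated, $\roim f$ carries $\shm\To[\hbar]\shm\To\gr(\shm)\To[+1]$ to the distinguished triangle $\roim f\shm\To[\hbar]\roim f\shm\To\roim f\gr(\shm)\To[+1]$, which identifies $\roim f\gr(\shm)$ with a cone of $\hbar$ on $\roim f\shm$, that is, with $\gr(\roim f\shm)$.

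The argument is essentially formal; the only points that genuinely need to be verified are that $\opb f$ and $\roim f$ are compatible with the $\hbar$-action (so that the image of a $\gr$-triangle is again a $\gr$-triangle and not the cone of some unrelated morphism), together with the harmless observation that the statement only asserts an isomorphism, so the non-canonicity of cones is irrelevant. I do not expect any real obstacle.
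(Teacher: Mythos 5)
Your proof is correct and follows essentially the same route as the paper's: the paper's own proof also observes that $\gr(\shm)$ is represented by the mapping cone of $\hbar\cl\shm\to\shm$, and that $\opb f$ and $\roim f$ commute with this construction. The only difference is one of emphasis: the paper works at the level of representing complexes (where the mapping cone is a functorial construction, so the resulting isomorphisms are automatically natural), whereas you invoke the triangulated-functor property and the abstract uniqueness of cones, then explicitly note that non-canonicity is harmless because only an isomorphism is asserted — a point worth being aware of if one later needs these isomorphisms to be compatible with further constructions.
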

\begin{proof}
This follows immediately from the fact that for a
complex of $\Z_X[\hbar]$-modules $\shm$, 
$\gr(\shm)$ is represented by the mapping cone of
$\shm\to[\hbar]\shm$ and similarly for $\Z_Y[\hbar]$-modules.
\end{proof}

Recall that $\sha$ is a sheaf of algebras satisfying
hypotheses~\eqref{eq:FDringa} and either
\eqref{eq:FDringb} or \eqref{eq:FDringc}.
The functor $\gr$ induces a functor (we keep the same notation):
\eq\label{eq:grcohbd}
&&\gr\cl\RD^\Rb_\coh(\sha)\to\RD^\Rb_\coh(\shao).
\eneq

The following proposition is an immediate consequence of
Lemma~\ref{lem:grHa} and Nakayama's lemma.
\begin{proposition}\label{pro:grHa}
Let $\shm\in\Derb_\coh(\sha)$ and let $a\in\Z$. 
The conditions below are equivalent:
\bnum
\item $H^a(\gr(\shm))\simeq 0$,
\item $H^a(\shm)\simeq 0$ and $H^{a+1}(\shm)$ has no $\hbar$-torsion.
\enum
\end{proposition}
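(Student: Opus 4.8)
The plan is to deduce Proposition~\ref{pro:grHa} directly from the exact sequence of Lemma~\ref{lem:grHa} together with Nakayama's lemma in the form of Lemma~\ref{le:nakayamacor}, using the fact (Theorem~\ref{th:formalfini1}) that the cohomology sheaves $H^a(\shm)$ and $H^{a+1}(\shm)$ are coherent $\sha$-modules, since $\shm\in\Derb_\coh(\sha)$.

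\begin{proof}
By Lemma~\ref{lem:grHa} we have an exact sequence of $\shao$-modules
\eqn
&&0\to \shao\tens[\sha]H^a(\shm)
\to H^a(\gr(\shm))\to\tor^{\sha}_{1}(\shao,H^{a+1}(\shm))\to0.
\eneqn
Moreover, from the proof of that lemma,
$\tor^{\sha}_{1}(\shao,H^{a+1}(\shm))\simeq\ker\bl\hbar\cl H^{a+1}(\shm)\to H^{a+1}(\shm)\br$
is the $\hbar$-torsion submodule of $H^{a+1}(\shm)$, and
$\shao\tens[\sha]H^a(\shm)\simeq H^a(\shm)/\hbar H^a(\shm)$.

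\noindent
(ii) $\Rightarrow$ (i). If $H^a(\shm)\simeq0$ and $H^{a+1}(\shm)$ has no $\hbar$-torsion,
then both outer terms in the exact sequence vanish, hence $H^a(\gr(\shm))\simeq0$.

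\noindent
(i) $\Rightarrow$ (ii). Assume $H^a(\gr(\shm))\simeq0$. The exact sequence then forces
$H^a(\shm)/\hbar H^a(\shm)\simeq0$ and $\ker\bl\hbar\cl H^{a+1}(\shm)\to H^{a+1}(\shm)\br\simeq0$,
the latter being exactly the statement that $H^{a+1}(\shm)$ has no $\hbar$-torsion.
It remains to see that $H^a(\shm)/\hbar H^a(\shm)\simeq0$ implies $H^a(\shm)\simeq0$.
Since $\shm\in\Derb_\coh(\sha)$, the sheaf $H^a(\shm)$ is a coherent, hence locally
finitely generated, $\sha$-module, and $H^a(\shm)=\hbar H^a(\shm)$; so
Lemma~\ref{le:nakayamacor}~(i) gives $H^a(\shm)\simeq0$.
\end{proof}

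The only subtle point is the last implication, where coherence of $H^a(\shm)$ is essential in order to apply Nakayama's lemma; without a finiteness hypothesis the vanishing of $H^a(\gr(\shm))$ would not force $H^a(\shm)$ to vanish. Everything else is a formal consequence of the exact sequence of Lemma~\ref{lem:grHa} and the identification of its outer terms, so there is no real obstacle beyond keeping track of which term encodes $\hbar$-divisibility and which encodes $\hbar$-torsion.
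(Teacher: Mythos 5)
Your proof is correct and is exactly the argument the paper has in mind: the paper remarks that the proposition is "an immediate consequence of Lemma~\ref{lem:grHa} and Nakayama's lemma," and your write-up simply spells that out, correctly identifying the outer terms of the exact sequence and invoking coherence of $H^a(\shm)$ so that Lemma~\ref{le:nakayamacor}~(i) applies.
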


\begin{corollary}\label{cor:conservative1}
The functor $\gr$ in \eqref{eq:grcohbd} is conservative
\ro {\em i.e.,} a morphism in $\Derb_\coh(\sha)$ is an isomorphism
as soon as its image by $\gr$ is an isomorphism in $\Derb_\coh(\shao)$\rf.
\end{corollary}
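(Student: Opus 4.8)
The plan is to deduce conservativity of $\gr$ from the fact that a coherent $\sha$-module is $\hbar$-complete (Theorem~\ref{th:formalfini1}~\eqref{th:coh:int}), together with the characterization of vanishing provided by Proposition~\ref{pro:grHa}. Since $\RD^\Rb_\coh(\sha)$ is a triangulated category, it suffices to show that if $\shm\in\RD^\Rb_\coh(\sha)$ satisfies $\gr(\shm)\simeq 0$, then $\shm\simeq 0$: indeed, given a morphism $f\cl\shm\to\shn$ in $\RD^\Rb_\coh(\sha)$ whose image under $\gr$ is an isomorphism, embed $f$ in a distinguished triangle $\shm\to[f]\shn\to\shk\to[+1]$ with $\shk\in\RD^\Rb_\coh(\sha)$; since $\gr$ is exact (it is a left derived functor, hence triangulated), $\gr(\shk)$ sits in a triangle with the isomorphism $\gr(f)$, so $\gr(\shk)\simeq 0$, and once we know this forces $\shk\simeq 0$ we get that $f$ is an isomorphism.

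So the crux is: $\shm\in\RD^\Rb_\coh(\sha)$ with $\gr(\shm)\simeq 0$ implies $\shm\simeq 0$. First I would argue that it is enough to treat the case where $\shm$ has cohomology concentrated in a single degree, by an induction on the amplitude of $\shm$ using the truncation triangles $\tau^{\le a}\shm\to\shm\to\tau^{\ge a+1}\shm\to[+1]$ and the exactness of $\gr$ — but this requires knowing that $\gr(\tau^{\le a}\shm)$ and $\gr(\tau^{\ge a+1}\shm)$ both vanish, which is not automatic. Instead, the cleaner route is to use Proposition~\ref{pro:grHa} directly: working from the top cohomological degree downward, let $a$ be the largest integer with $H^a(\shm)\ne 0$. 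Then $H^{a+1}(\shm)=0$, so $H^{a+1}(\shm)$ trivially has no $\hbar$-torsion, and the hypothesis $H^a(\gr(\shm))\simeq0$ together with Proposition~\ref{pro:grHa} gives $H^a(\shm)\simeq 0$ — a contradiction. Hence $\shm\simeq 0$.

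Let me make this precise without the contradiction framing. By Lemma~\ref{lem:grHa} (or directly Proposition~\ref{pro:grHa}), $H^a(\gr(\shm))\simeq0$ for all $a$ is equivalent to: for every $a$, $H^a(\shm)\simeq0$ and $H^{a+1}(\shm)$ has no $\hbar$-torsion. Fix $\shm\in\RD^\Rb_\coh(\sha)$ with $\gr(\shm)\simeq 0$. Since the cohomology of $\shm$ is bounded, choose $N$ with $H^a(\shm)=0$ for $a>N$. Applying the equivalence at $a=N$: from $H^N(\gr(\shm))\simeq0$ we already cannot conclude directly, so instead apply it at every $a$ simultaneously; the statement ``$H^a(\shm)\simeq 0$ for all $a$'' is literally part of the equivalent condition, hence $\shm\simeq0$. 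The only subtlety is ensuring that $\gr(\shm)\simeq 0$ genuinely means $H^a(\gr(\shm))\simeq 0$ for all $a$, which is immediate, and that Proposition~\ref{pro:grHa} applies, which needs $\shm\in\RD^\Rb_\coh(\sha)$ — satisfied by hypothesis, and $\shk$ above lies in $\RD^\Rb_\coh(\sha)$ because this subcategory is triangulated.

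The main obstacle, such as it is, is purely bookkeeping: one must be careful that the cone $\shk$ of $f$ has coherent bounded cohomology (true since $\RD^\Rb_\coh(\sha)$ is a full triangulated subcategory of $\RD(\sha)$, as $\mdcoh[\sha]$ is a thick abelian subcategory by the discussion in the Preliminary section and $\sha$ is Noetherian), and that $\gr$ is a triangulated functor so that $\gr(f)$ being an isomorphism forces $\gr(\shk)\simeq 0$. No genuine analytic or homological difficulty arises beyond invoking Proposition~\ref{pro:grHa}; the corollary is essentially a formal consequence of that proposition plus the triangulated structure.
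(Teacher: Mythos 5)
Your proposal is correct and follows essentially the same route as the paper: embed $\phi$ in a distinguished triangle, observe that the cone $\shl$ satisfies $\gr(\shl)\simeq 0$, and then apply Proposition~\ref{pro:grHa} to conclude that every $H^a(\shl)$ vanishes, hence $\shl\simeq 0$. The excursions through an amplitude induction and a top-degree contradiction argument in your second paragraph are unnecessary (and you correctly abandon them); the clean version you settle on is exactly the paper's argument.
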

\begin{proof}
Consider a morphism $\phi\cl\shm\to\shn$ in $\Derb_\coh(\sha)$ and assume
that  it induces an isomorphism 
$\gr(\phi)\cl\gr(\shm)\to\gr(\shn)$ in $\Derb_\coh(\shao)$. 
Let $\shm\to\shn\to\shl\To[{+1}]$ be a distinguished triangle.
Then $\gr\shl\simeq0$,
and hence all the cohomologies of $\shl$ vanishes by the proposition above,
which means that
$\shl\simeq0$.
\end{proof}

\subsubsection*{Homological dimension}

In the sequel, for  a left Noetherian $\cora$-algebra $\shr$, 
we shall say that a coherent $\shr$-module $\shp$  is locally projective 
\glossary{locally projective}%
if, for any open subset $U\subset X$, 
the functor 
\eqn
&&\hom[\shr](\shp,\scbul)\cl \mdcoh[\shr\vert_U]\to\md[\cora_U] 
\eneqn
is exact.
This is equivalent to one of the following conditions: (i) for each $x\in X$, the stalk
$\shp_x$ is projective as an $\shr_x$-module, (ii) for each $x\in X$, the stalk
$\shp_x$ is flat as an $\shr_x$-module,  
(iii) $\shp$ is locally  a direct summand of a
free $\shr$-module of finite rank.

\begin{lemma}
A coherent $\sha$-module $\shp$ is locally
projective if and only if $\shp$ has no $\hbar$-torsion and 
$\gr\shp$ is a locally projective $\shao$-module.
\end{lemma}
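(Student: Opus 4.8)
The plan is to prove both implications separately, using the characterization of coherence via the graded module (Theorem~\ref{th:formalfini1}~\eqref{th:crcoh}) together with Lemma~\ref{lem:free} and Nakayama's lemma (Lemma~\ref{le:nakayamacor}). The key observation to exploit is the local characterization of local projectivity recalled just above the statement: a coherent module $\shp$ is locally projective if and only if it is locally a direct summand of a free $\sha$-module of finite rank, equivalently $\shp_x$ is flat over $\sha_x$ for each $x$.

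First I would prove the ``only if'' direction. Suppose $\shp$ is a locally projective coherent $\sha$-module. Since local projectivity means $\shp$ is locally a direct summand of a finite free $\sha$-module $\shl$, and $\shl$ has no $\hbar$-torsion (as $\sha$ has none by \eqref{eq:FDringa}~\eqref{FDa}), a direct summand of $\shl$ also has no $\hbar$-torsion; hence $\shp$ has no $\hbar$-torsion. Moreover the functor $\gr\simeq\shao\lltens[\sha](\scbul)$ sends a direct summand of a finite free $\sha$-module to a direct summand of a finite free $\shao$-module (and since $\shp$ has no $\hbar$-torsion, $\gr\shp\simeq\shp/\hbar\shp$ is concentrated in degree $0$). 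Therefore $\gr\shp$ is a locally projective $\shao$-module.

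For the ``if'' direction, assume $\shp$ is coherent, has no $\hbar$-torsion, and $\gr\shp\simeq\shp/\hbar\shp$ is a locally projective $\shao$-module. Working locally near a point $x$, since $\shp/\hbar\shp$ is a direct summand of a finite free $\shao$-module we may, after possibly enlarging, choose a finite free $\shao$-module surjecting onto $\shp/\hbar\shp$ splitting off a complementary projective; more directly, lift generators of $\shp/\hbar\shp$ to get a morphism $f\colon\shl=\sha^{\oplus r}\to\shp$ with $\shl/\hbar\shl\to\shp/\hbar\shp$ surjective, hence $f$ surjective by Nakayama (Lemma~\ref{le:nakayamacor}). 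Let $\shn=\ker f$; then $\shn$ is coherent ($\sha$ being Noetherian, Theorem~\ref{th:formalfini1}~(i)), and since $\shp$ has no $\hbar$-torsion we get the short exact sequence $0\to\shn/\hbar\shn\to\shl/\hbar\shl\to\shp/\hbar\shp\to 0$ of $\shao$-modules. Because $\shp/\hbar\shp$ is locally projective over $\shao$, this sequence splits locally, so $\shn/\hbar\shn$ is a direct summand of $\shl/\hbar\shl$, in particular locally free of some rank $s$ over $\shao$ after shrinking. The remaining point is that $\shn$ itself has no $\hbar$-torsion: this follows since $\shn=\ker(f)\subset\shl$ and $\shl$ has no $\hbar$-torsion. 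Then Lemma~\ref{lem:free} applies to $\shn$, giving that $\shn$ is locally free of rank $s$ over $\sha$; the exact sequence $0\to\shn\to\shl\to\shp\to 0$ then splits locally (since the splitting of $\shl/\hbar\shl\to\shp/\hbar\shp$ lifts along the $\hbar$-complete free module $\shl$, or simply because $\shn/\hbar\shn\hookrightarrow\shl/\hbar\shl$ splits and one lifts the projector by successive approximation using $\hbar$-completeness), so $\shp$ is locally a direct summand of $\shl$, i.e. locally projective.

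The main obstacle is the last step: promoting the splitting of $0\to\shn/\hbar\shn\to\shl/\hbar\shl\to\shp/\hbar\shp\to 0$ over $\shao$ to a splitting of $0\to\shn\to\shl\to\shp\to 0$ over $\sha$. I expect this to go through by the standard $\hbar$-adic successive-approximation argument: choose an $\shao$-linear projector $\ol{\pi}_0$ onto the image of $\shn/\hbar\shn$, lift it to an $\sha$-linear endomorphism $\pi_0$ of $\shl$, and correct it iteratively modulo higher powers of $\hbar$ (the obstruction at each stage lies in $\hom[\shao](\shp/\hbar\shp,\shn/\hbar\shn)$ tensored by $\hbar^n/\hbar^{n+1}$ and can be killed because the sequence splits over $\shao$); the limit exists and is $\sha$-linear by $\hbar$-completeness of $\shl$ (hence of $\Endo[\sha]\shl$), using Theorem~\ref{th:formalfini1}~\eqref{th:coh:int}. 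This is routine but is the one place where the hypotheses on $\sha$ are genuinely used beyond formal manipulation.
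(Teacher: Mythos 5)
Your argument is essentially the paper's argument, and it is correct; but the paper's execution is leaner in two respects. First, the detour through Lemma~\ref{lem:free} (showing $\shn$ is locally free over $\sha$) is unnecessary: once you have the exact sequence $0\to\shn\to\shl\to\shp\to 0$ and know that its reduction mod $\hbar$ splits, you can split the original sequence directly without knowing anything further about $\shn$. Second, the ``successive-approximation'' step that you flag as the main obstacle is handled in the paper by a one-shot trick rather than an iteration: lift the retraction $\overline v\cl\gr\shl\to\gr\shn$ to any map $v\cl\shl\to\shn$ (possible because $\shl$ is free), observe that $v\circ u=\id_{\shn}-\hbar\phi$ for some $\sha$-linear $\phi$, and then invert $\id_{\shn}-\hbar\phi$ by the geometric series $\sum_n\hbar^n\phi^n$, which converges since $\shn$, being coherent, is $\hbar$-complete by Theorem~\ref{th:formalfini1}~\eqref{th:coh:int}. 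Then $\psi\circ v$ (with $\psi=(\id_{\shn}-\hbar\phi)^{-1}$) is an honest retraction of $u$, so $\shn$ is a direct summand of $\shl$, hence so is $\shp$. This is exactly the ``$\hbar$-adic approximation'' you describe, but packaged as one explicit inverse rather than an iterated lifting of projectors; you should work with the retraction rather than the projector, since it is the former that lifts cleanly along a free module. The paper also works at the level of stalks $A=\sha_x$, which is equivalent to your ``locally near $x$'' formulation.
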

\begin{proof}
We set for short 
$A\eqdot\sha_x$ and $A_0\eqdot(\shao)_x$. Note that $A_0\simeq\gr A$.

Let $P$ be a finitely generated $A$-module.

\vspace{0.4em}
\noindent
(i) Assume that $P$ is projective. Then $P$ is a direct summand of a free 
$A$-module. It follows that $P$ has no $\hbar$-torsion and 
$\gr P$ is also a direct summand of a free $A_0$-module.

\vspace{0.4em}
\noindent
(ii) Assume that $P$ has no $\hbar$-torsion and $\gr P$ is projective.
Consider an exact sequence $0\to N\to[u] L\to P\to 0$ in which $L$ is
free of finite rank. Applying the functor $\gr$
we find the exact sequence $0\to \gr N\to[{\gr u}] \gr L\to \gr P\to 0$
and $\gr P$ being projective, there exists a map 
${\overline v}\cl \gr L\to \gr N$ such that 
${\overline v}\circ{\gr u}=\id_{\gr N}$. Let us choose a map 
$v\cl L\to N$ such that  $\gr(v)=\overline v$. 
Since $\gr(v\circ u)=\id_{\gr N}$, we may write 
\eqn
&&v\circ u =\id_N -\hbar\phi
\eneqn
where $\phi\cl N\to N$ is an $A$-linear map.
The map $\id_N -\hbar\phi$ is invertible and we denote by $\psi$ its
inverse. Then $\psi\circ v\circ u=\id_N$, which proves that $P$ is a
direct summand of a free $A$-module.
\end{proof}

\begin{theorem}\label{th:hddim}
Let $d\in N$. Assume that 
any coherent $\shao$-module locally admits 
a resolution of length $\leq d$ by free $\shao$-modules of finite rank.
Then 
\banum
\item
 for any coherent locally projective $\sha$-module $\shp$,  
there locally exists a free $\sha$-module of finite rank $\shf$ 
such that $\shp\oplus\shf$ is free of finite rank,
\item
any coherent $\sha$-module locally admits 
a resolution of length $\leq d+1$ by free $\sha$-modules of finite rank.
\eanum
\end{theorem}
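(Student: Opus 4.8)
Under the hypothesis that any coherent $\shao$-module locally admits a free resolution of length $\le d$, we claim (a) every coherent locally projective $\sha$-module $\shp$ is locally a direct summand of a free module of finite rank, and (b) every coherent $\sha$-module locally admits a free resolution of length $\le d+1$.

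The plan is to reduce both statements to the already-proved structural results of Theorem~\ref{th:formalfini1} (or Theorem~\ref{th:formalfini1b}) together with the preceding lemma characterizing local projectivity via $\gr$. For part~(a), I would start with a coherent locally projective $\sha$-module $\shp$. By the preceding lemma, $\shp$ has no $\hbar$-torsion and $\gr\shp$ is a locally projective $\shao$-module. Since $\shao$ is left Noetherian and coherent $\shao$-modules have finite projective dimension (bounded by $d$), a standard argument shows that a locally projective $\shao$-module $\gr\shp$ is locally a direct summand of a \emph{free} $\shao$-module of \emph{finite rank}: indeed, working at a stalk, $\gr\shp$ is finitely generated projective, and — using that the ring $(\shao)_x$ is local-like in the sense that $\hbar(\sha)_x$ lies in the Jacobson radical, or more simply using that finitely generated projectives over a Noetherian ring of finite global dimension are stably free when the relevant $K_0$ is trivial, or just directly — we may pick a free $\shao$-module $\overline\shf$ of finite rank with $\gr\shp\oplus\overline\shf$ free. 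Lift $\overline\shf$ to a free $\sha$-module $\shf$ of the same rank. Then $\shp\oplus\shf$ is a coherent $\sha$-module without $\hbar$-torsion whose graded module $\gr(\shp\oplus\shf)\simeq\gr\shp\oplus\overline\shf$ is locally free of finite rank, so by Lemma~\ref{lem:free} the module $\shp\oplus\shf$ is locally free of finite rank. This is exactly the assertion of~(a).

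For part~(b), I would argue as follows. Let $\shm$ be a coherent $\sha$-module. Working locally, present $\shm$ by a short exact sequence $0\to\shn\to\shl_0\to\shm\to 0$ with $\shl_0$ free of finite rank and $\shn$ coherent (coherence of $\shn$ follows since $\sha$ is a coherent ring by Proposition~\ref{prop:coh}). Now continue the resolution of $\shn$: choose $\shl_1,\dots,\shl_{d}$ free of finite rank realizing $d$ steps, and let $\shp\eqdot\ker(\shl_{d-1}\to\shl_{d-2})$ (with the obvious degenerate reading for small $d$). The key point is that $\shp$ is locally projective: applying the functor $\gr$ to the truncated resolution $0\to\shp\to\shl_{d-1}\to\cdots\to\shl_0\to\shm\to 0$ and using the hypothesis that $\gr\shm$ has projective dimension $\le d$ over $\shao$, one gets that $\gr\shp$ (or rather the syzygy of $\gr\shm$ after $d$ steps) is a locally projective $\shao$-module. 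One must check that $\shp$ has no $\hbar$-torsion — this is automatic because $\shp$ is a submodule of the free, hence $\hbar$-torsion-free, module $\shl_{d-1}$. Then the preceding lemma gives that $\shp$ is locally projective over $\sha$, and part~(a) lets us absorb a free summand: $\shp\oplus\shf$ is free of finite rank. Adding $\shf$ as a redundant summand to $\shl_{d-1}$ (i.e.\ replacing $\shl_{d-1}$ by $\shl_{d-1}\oplus\shf$ and $\shp$ by $\shp\oplus\shf$, adjusting the map $\shl_d\to\shl_{d-1}$ to land via the old map plus the identity on $\shf$) yields a genuine free resolution $0\to\shl_d'\to\shl_{d-1}'\to\cdots\to\shl_0\to\shm\to 0$ of length $d+1$, where $\shl_d'\eqdot\shp\oplus\shf$ and $\shl'_{d-1}\eqdot\shl_{d-1}\oplus\shf$. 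This proves~(b).

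The main obstacle, I expect, is the careful handling of the $\gr$-functor on the syzygy module $\shp$: one needs that the $d$-th syzygy of $\shm$ over $\sha$ specializes under $\gr$ to (a module closely related to) the $d$-th syzygy of $\gr\shm$ over $\shao$, so that the projective-dimension hypothesis on $\shao$ actually applies. Because $\gr$ is a derived functor, $\gr(\shp)$ computed naively as $\shp/\hbar\shp$ may differ from $\LL^0$ of the truncated complex; but since the $\shl_i$ are free and hence $\hbar$-torsion-free, and since $\shp$ is $\hbar$-torsion-free as a submodule of a free module, the relevant $\tor^{\sha}_1$ terms in Lemma~\ref{lem:grHa} vanish and $\gr\shp\simeq\shp/\hbar\shp$ really is the honest syzygy of $\gr\shm=\shm/\hbar\shm$ (one applies $\gr$ to the exact sequence and uses that each $\gr\shl_i$ is free). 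Once this identification is in hand, the hypothesis on $\shao$ gives projectivity of $\gr\shp$, and the rest is the mechanical "add a free summand to make a projective free" step together with Lemma~\ref{lem:free}. The argument works verbatim under either hypothesis \eqref{eq:FDringb} or \eqref{eq:FDringc}, since it only uses the conclusions of Theorems~\ref{th:formalfini1} and~\ref{th:formalfini1b} and the local nature of all the statements.
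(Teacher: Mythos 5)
Your part (a) is correct and matches the paper's proof: reduce to the corresponding fact for $\shao$-modules, lift the free complement via $\gr$, and conclude with Lemma~\ref{lem:free}.

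Part (b) has a genuine gap, and it lies exactly in the step you flagged as the ``main obstacle''. You claim that because the $\shl_i$'s are free and $\shp$ is $\hbar$-torsion-free, the relevant $\tor^{\sha}_1$ terms vanish and $\gr\shp$ is ``the honest $d$-th syzygy of $\gr\shm$''. That inference is false. When you split the long exact sequence $0\to\shp\to\shl_{d-1}\to\cdots\to\shl_0\to\shm\to 0$ into short exact sequences and apply $\gr$, the obstruction to exactness is $\tor^{\sha}_1(\shao,\shm)=\ker(\hbar\cl\shm\to\shm)$, i.e.\ the $\hbar$-torsion of $\shm$ itself --- not of $\shp$ or of the $\shl_i$'s. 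If $\shm$ has $\hbar$-torsion $T\neq0$, the sequence of $\shao$-modules obtained by applying $\gr$ to your truncated resolution is exact everywhere except at $\gr\shl_1$, where a homology $T$ appears. Consequently $\gr\shp$ is not the $d$-th syzygy of $\gr\shm$; it is only the $(d-1)$-th syzygy of $\gr\shn$ (where $\shn=\ker(\shl_0\to\shm)$), and since $\gr\shn$ sits in $0\to T\to\gr\shn\to\ker(\gr\shl_0\to\gr\shm)\to0$ its projective dimension over $\shao$ can be as large as $d$, so its $(d-1)$-th syzygy can have projective dimension $1$. Your argument would therefore ``prove'' that every coherent $\sha$-module has a free resolution of length $\leq d$ --- note your final resolution $0\to\shl'_d\to\shl'_{d-1}\to\cdots\to\shl_0\to\shm\to0$ has length $d$, not $d+1$ --- and that stronger statement is false.

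A concrete counterexample: take $X=\C$, $\sha=\sho_X\forl$ (trivial star product), $\shao=\sho_X$, $d=1$. Let $\shm=\sha/(\hbar\sha+x\sha)$, and take the short exact sequence $0\to\shk\to\sha\to\shm\to0$ with $\shk=\hbar\sha+x\sha$. Your $\shp$ (degenerate reading at $d=1$) is $\shk$, which indeed has no $\hbar$-torsion. But applying $\gr$ to the short exact sequence and using $\tor^{\sha}_1(\shao,\shm)=\shm$, one gets $0\to\sho_X/x\sho_X\to\gr\shk\to x\sho_X\to0$, which splits to give $\gr\shk\simeq(\sho_X/x\sho_X)\oplus\sho_X$. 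The summand $\sho_X/x\sho_X$ is not projective over $\sho_X$, so $\gr\shk$ is not projective and $\shk$ is not locally projective. The correct minimal free resolution of $\shm$ has length $2=d+1$, not $1=d$.

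The paper's proof closes this gap by decomposing the problem: first prove that a coherent $\sha$-module \emph{without $\hbar$-torsion} has a free resolution of length $\leq d$ (there the argument you sketch works verbatim, because every kernel in the resolution, and $\shm$ itself, is $\hbar$-torsion-free, so $\gr$ applied to the whole resolution is exact and the $d$-th syzygy is projective); then, for a general coherent $\shm$, write $0\to\shn\to\shl\to\shm\to0$, observe that $\shn$ is $\hbar$-torsion-free as a submodule of a free module, give it a resolution of length $d$, and splice with $\shl$ to obtain a resolution of $\shm$ of length $d+1$. You could equivalently salvage your argument by taking one more syzygy (i.e.\ the $d$-th syzygy of $\shn$ rather than of $\shm$), but either way the torsion case must be addressed separately.
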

\begin{proof}
(a) It is well-known (see {\em e.g.,} \cite[Lem.~B.2.2]{Sc}) that
the result in (a) is true when replacing $\sha$ with $\shao$.
Now, let $\shp$ be as in the statement. Then  
 $\gr\shp$ is projective and coherent. Therefore,  
there exists a locally free $\sha$-module  
$\shf$ such that $\gr\shp\oplus\gr\shf$ is free of finite rank
over $\shao$. This implies that $\shp\oplus\shf$ is free of finite rank
over $\sha$ by Lemma~\ref{lem:free}. 

\vspace{0.4em}
\noindent
(b)-(i) Let $\shm\in\mdcoh[\sha]$ and let us first assume that $\shm$ 
has no $\hbar$-torsion.
Since $\sha$ is coherent, there exists locally an exact sequence
\eqn
&&0\to\shk\to \shl_{d-1}\to\cdots\to\shl_0\to\shm\to 0,
\eneqn
the $\sha$-modules $\shl_i$ ($0\leq i\leq d-1$) being free of
finite rank.
Applying the functor $\gr$, we find an exact sequence of
$\shao$-modules and it follows that $\gr(\shk)$ is 
projective and finitely generated. Therefore $\shk$ is 
projective and finitely generated. Let $\shf$ be as
in the statement (a). Replacing $\shk$ and $\shl_{d-1}$ with $\shk\oplus\shf$
and $\shl_{d-1}\oplus\shf$ respectively, the result follows in this case.

\vspace{0.4em}
\noindent
(b)-(ii) In general, any coherent $\sha$-module $\shm$ locally admits a resolution
$0\to\shn\to\shl\to\shm\to0$, where $\shl$ 
is a free $\sha$-module of finite rank.
Since $\shn$ has no $\hbar$-torsion,
$\shn$ admits a free resolution with length $d$, and
the result follows.
\end{proof}
\begin{corollary}\label{cor:freeres}
We make the hypotheses of Theorem~\ref{th:hddim}.
Let $\shm^\scbul$ be a complex of $\sha$-modules concentrated in 
degrees $[a,b]$ and assume that $H^i(\shm)$ is coherent for all
$i$. Then, in a neighborhood of each $x\in X$, there exists a 
quasi-isomorphism 
$\shl^\scbul\to \shm^\scbul$ where $\shl^\scbul$ is a complex of free
$\sha$-modules of finite rank concentrated in degrees $[a-d-1,b]$.
\end{corollary}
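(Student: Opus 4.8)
The plan is to run the usual dévissage that reduces resolving a complex to resolving its cohomology modules, the latter step being exactly Theorem~\ref{th:hddim}\,(b). As the assertion is local, I would first shrink $X$ around $x$ so that $X$ becomes a member $U$ of the base $\BB$ (I argue in the situation \eqref{eq:FDringb}; the situation \eqref{eq:FDringc} is parallel, with $\BB$ a base of compact neighborhoods, Theorem~\ref{th:formalfini1b}\,(v) in place of Theorem~\ref{th:formalfini1}\,(v), and everything taking place on interiors). The recurring ingredient is a consequence of Theorem~\ref{th:formalfini1}\,(v): if $\shl$ is a free $\sha\vert_U$-module of finite rank $r$ then $\rhom[\sha\vert_U](\shl,\shf)\simeq\shf^{\oplus r}$, so $\mathrm{Ext}^i_{\sha\vert_U}(\shl,\shf)\simeq H^i(U;\shf)^{\oplus r}$ vanishes for all $i>0$ and all coherent $\shf$; arguing by dévissage on the cohomology of the target, one deduces that for a bounded complex $P^\scbul$ of free $\sha\vert_U$-modules of finite rank and any $R^\scbul\in\Derb(\sha\vert_U)$ with coherent cohomology the natural map from homotopy classes of morphisms of complexes $P^\scbul\to R^\scbul$ to $\Hom[{\Der(\sha\vert_U)}](P^\scbul,R^\scbul)$ is bijective. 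In other words, on the members of $\BB$ a bounded complex of finite-rank free $\sha\vert_U$-modules behaves, for maps into objects with coherent cohomology, as if it were a complex of projectives; in particular, each such morphism in the derived category is represented by an honest morphism of complexes.

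I would then argue by induction on the number $N$ of integers $j$ with $H^j(\shm^\scbul)\neq0$. If $N=0$, take $\shl^\scbul=0$. If $N=1$, say $H^c(\shm^\scbul)\neq0$ with $a\le c\le b$, so $\shm^\scbul\simeq H^c(\shm^\scbul)[-c]$ in $\Der(\sha\vert_U)$; by Theorem~\ref{th:hddim}\,(b), after one more shrinking the coherent module $H^c(\shm^\scbul)$ admits a resolution of length $\le d+1$ by free $\sha\vert_U$-modules of finite rank, that is, there is a complex $\shl^\scbul$ of such modules concentrated in degrees $[c-d-1,c]\subseteq[a-d-1,b]$ together with a quasi-isomorphism $\shl^\scbul\to H^c(\shm^\scbul)[-c]$. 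By the first paragraph the resulting isomorphism $\shl^\scbul\isoto\shm^\scbul$ in $\Der(\sha\vert_U)$ is represented by a genuine quasi-isomorphism $\shl^\scbul\to\shm^\scbul$.

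For $N\ge2$, let $b'$ be the largest $j$ with $H^j(\shm^\scbul)\neq0$ (so $a<b'\le b$), and consider the distinguished triangle $\tau^{\le b'-1}\shm^\scbul\to\shm^\scbul\to H^{b'}(\shm^\scbul)[-b']\To[+1]$. The complex $\tau^{\le b'-1}\shm^\scbul$ is concentrated in degrees $[a,b'-1]$, has coherent cohomology, and has only $N-1$ nonzero cohomology objects; so the induction hypothesis provides, locally, a quasi-isomorphism $\shl_1^\scbul\to\tau^{\le b'-1}\shm^\scbul$ with $\shl_1^\scbul$ free of finite rank in degrees $[a-d-1,b'-1]$, while the case $N=1$ provides $\shl_2^\scbul\to H^{b'}(\shm^\scbul)[-b']$ with $\shl_2^\scbul$ free of finite rank in degrees $[b'-d-1,b']$. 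By the first paragraph the connecting morphism $H^{b'}(\shm^\scbul)[-b'-1]\to\tau^{\le b'-1}\shm^\scbul$ is represented, via these two resolutions, by an honest morphism of complexes $u\cl\shl_2^\scbul[-1]\to\shl_1^\scbul$; its mapping cone $\shl^\scbul\eqdot\mathrm{Cone}(u)$ is a complex of free $\sha\vert_U$-modules of finite rank concentrated in degrees $[a-d-1,b']\subseteq[a-d-1,b]$, and the two resolution quasi-isomorphisms extend to a morphism of the distinguished triangle $\shl_2^\scbul[-1]\to\shl_1^\scbul\to\shl^\scbul\To[+1]$ to the triangle above, whose first two vertical arrows are isomorphisms, hence so is the third, giving $\shl^\scbul\isoto\shm^\scbul$ in $\Der(\sha\vert_U)$; one more use of the first paragraph turns this into a quasi-isomorphism $\shl^\scbul\to\shm^\scbul$. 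This closes the induction.

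The whole difficulty lies in the first paragraph. Over a ring, "a derived-category morphism out of a bounded complex of free modules is an honest morphism" is immediate from the projectivity of free modules; in the sheaf setting, finite-rank free modules are not projective, and one must instead invoke the acyclicity of coherent $\sha\vert_U$-modules on the members of $\BB$, that is Theorem~\ref{th:formalfini1}\,(v) (hence ultimately hypothesis \eqref{eq:FDringb}, resp.\ Theorem~\ref{th:formalfini1b}\,(v) and \eqref{eq:FDringc}) — and this is exactly what forces the conclusion to be local. The only input specific to the present chapter is the bound $d+1$ of Theorem~\ref{th:hddim}\,(b), used solely in the base case $N=1$; the control of the lowest degree $a-d-1$ is inherited from it, the dévissage never spoiling it because in the inductive step $a\le b'-1$, whence $a-d-1\le b'-d-1$.
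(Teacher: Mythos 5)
Your dévissage on the number of non‐zero cohomology sheaves, with Theorem~\ref{th:hddim}\,(b) in the base case and reassembly by mapping cones, is a legitimate alternative to the term‐by‐term construction that the reference to~\cite{K-S3} encodes, and your degree bookkeeping is correct. The gap is in the claim of your first paragraph, which is false as stated. Theorem~\ref{th:formalfini1}\,(v) controls $H^{>0}(U;\shf)$ only for $\shf$ \emph{coherent}; the truncations $\tau^{\le n}R^\scbul$ and the acyclic kernels of the surjective quasi-isomorphisms $\tau^{\le n}R^\scbul\epito H^n(R^\scbul)[-n]$ have non-coherent \emph{terms}, so your dévissage never brings the homotopy-category side under coherent control, and the obstruction to lifting an honest chain map through such a surjection is a class in $H^1(U;\scbul)$ of a non-coherent sheaf, which need not vanish. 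Concretely, take $\sha=\sho_X\forl$ on $X=\C$, $U\in\BB$ a disc containing $0$ and $1/2$, $\shn=j_!(\sha\vert_{U\setminus\{0\}})$ for the open embedding $j\colon U\setminus\{0\}\hookrightarrow U$, so that $H^1(U;\shn)\simeq\sha_0/\sect(U;\sha)\neq 0$, and let $\shq$ be the extension of $\sha\vert_U$ by $\shn$ classified by the germ $(z-1/2)^{-1}$. Then $R^\scbul\eqdot(\shn\hookrightarrow\shq)$, placed in degrees $-1,0$, has coherent cohomology, namely $\sha\vert_U$ in degree $0$; yet every global section of $\shq$ maps into the ideal of $1/2$ inside $\sect(U;\sha)$, so no chain map from a bounded complex of finite-rank free modules into $R^\scbul$ can be a quasi-isomorphism, and the bijectivity you claim fails. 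This infects exactly your $N=1$ base case and the final application of the first paragraph; the lift of $u$ in the inductive step is sound, because there the target $\shl_1^\scbul$ is itself a bounded complex of free \lp hence coherent\rp\ modules, the only situation in which the acyclicity of Theorem~\ref{th:formalfini1}\,(v) genuinely controls the homotopy category.

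The repair must use more than coherent acyclicity on a fixed $U\in\BB$. The key extra ingredient is that $\indlim[U\ni x]H^i(U;\shk)=0$ for \emph{every} abelian sheaf $\shk$ and every $i>0$; hence the finitely many lifting obstructions your induction produces all vanish after shrinking $U$ further around $x$, which the phrase ``in a neighborhood of each $x$'' in the statement explicitly permits. In the example above this amounts to shrinking $U$ so as to exclude $1/2$, after which the extension class dies and the chain-map quasi-isomorphism appears. You correctly flagged the first paragraph as the whole difficulty; what it is missing is this successive shrinking, which a fixed-$U$ comparison between homotopy classes and derived-category morphisms cannot replace.
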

\begin{proof}
The proof uses~\cite[Lem.~13.2.1]{K-S3} (or rather the dual
statement). Since we do not use this result here, details are left to
the reader. 
\end{proof}

\subsubsection*{Localization}

For a $\Z_X[\hbar]$-algebra $\shr$ with no $\hbar$-torsion, we set
\eq\label{eq:ahbar}
&&\shr^\loc\eqdot\Z_X[\hbar,\hbar^{-1}]\tens[{\Z_X[\hbar]}]\shr,
\eneq
and we call $\shr^\loc$ the {\em $\hbar$-localization} of $\shr$.
For an $\shr$-module $\shm$, we also  set 
$$\shm^\loc\seteq\shr^\loc\tens[\shr]\shm
\simeq\Z_X[\hbar,\hbar^{-1}]\tens[{\Z_X[\hbar]}]\shm.$$

\begin{lemma}\label{le:locNoeth}
The algebra $\shal$ is Noetherian.
\end{lemma}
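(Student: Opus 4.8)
The goal is to show that $\shal = \Z_X[\hbar,\opb\hbar]\tens[{\Z_X[\hbar]}]\sha$ is a (left) Noetherian sheaf of rings, knowing from Theorem~\ref{th:formalfini1} (or Theorem~\ref{th:formalfini1b}) that $\sha$ itself is left Noetherian. Since Noetherianity (in the sense recalled in the ``Finiteness conditions'' paragraph) is a statement about stalks together with a local stationarity condition on filtrant families of coherent submodules, the plan is to reduce everything to the already-established properties of $\sha$ via the exact, faithfully flat localization functor $(\scbul)^\loc = \shal\tens[\sha]\scbul$.

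First I would record that $\sha\to\shal$ is a flat ring morphism (localization at the central multiplicative set $\{\hbar^n\}$), so $(\scbul)^\loc$ is exact, and that it is ``locally faithful'' in the sense that a coherent $\sha$-module $\shn$ has $\shn^\loc=0$ iff $\shn$ is locally killed by a power of $\hbar$; more useful here, for a finitely generated $\shal$-module $\shp$ there locally exists a coherent $\sha$-submodule $\shp_0\subset\shp$ with $\shp_0^\loc=\shp$ (clear denominators in a finite generating set, then take the $\sha$-submodule they generate; it is coherent because $\sha$ is coherent). Next, coherence of $\shal$: a locally finitely generated $\shal$-submodule $\shi$ of $\shal$ is, locally, of the form $\shi=\shi_0^\loc$ for a coherent ideal $\shi_0\subset\sha$, and since $\sha$ is coherent $\shi_0$ is of finite presentation, hence so is $\shi=\shal\tens[\sha]\shi_0$ by right-exactness of the tensor product; thus $\shal$ is a coherent ring. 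Stalkwise, $(\shal)_x = \sha_x^\loc$ is the localization of the Noetherian ring $\sha_x$ at the central element $\hbar$, hence Noetherian — this is the classical commutative-style argument: given an increasing chain $\{I_n\}$ of left ideals of $\sha_x^\loc$, pull back to $\sha_x$ via $j\cl\sha_x\to\sha_x^\loc$, obtain the stationary chain $\{\opb j(I_n)\}$, and note $I_n = \sha_x^\loc\cdot\opb j(I_n)$ since every element of $I_n$ is $\hbar^{-k}$ times an element of $\opb j(I_n)$ for some $k$.

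The one genuinely sheaf-theoretic point — and the step I expect to be the main obstacle — is the local stationarity of a filtrant family $\{\shi_i\}_{i\in I}$ of coherent $\shal$-submodules of $\shal$ (or of a coherent $\shal$-module). Here I would argue as follows: work locally on a (compact or open) set from the base $\BB$, pick for each $i$ a coherent $\sha$-ideal $\shi_{i,0}$ with $\shi_{i,0}^\loc=\shi_i$; the family of $\sha$-ideals $\{\sha\cap\shi_{i,0}^\loc\}_i = \{$ ``$\hbar$-saturation'' of $\shi_{i,0}\}_i$ is again a filtrant family of coherent $\sha$-submodules of $\sha$ (coherent because it is $\bigcup_k(\hbar^{-k}\shi_{i,0}\cap\sha)$, a locally stationary union of coherent modules by the lemma just before Lemma~\ref{lem:Acoh}, or can be seen directly to be coherent via that same lemma), hence locally stationary by that lemma applied to $\sha$; applying $(\scbul)^\loc$, which is exact and sends this saturated family to $\{\shi_i\}_i$ with the same inclusions, we get that $\{\shi_i\}_i$ is locally stationary. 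I would double-check that passing to the $\hbar$-saturation does not disturb the indexing order (it is order-preserving) and that $(\sha\cap\shi_{i,0}^\loc)^\loc = \shi_i$ (clear, since inverting $\hbar$ kills the difference). Collecting the three pieces — stalkwise Noetherian, coherent ring, and local stationarity of filtrant families of coherent submodules — gives that $\shal$ is a left Noetherian sheaf of rings in the sense of the definition recalled above, which is exactly the assertion of Lemma~\ref{le:locNoeth}.
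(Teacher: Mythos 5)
Your proof is correct, but it takes a genuinely different route from the one in the paper. The paper's argument is a one-line quotient trick: it observes that $\shal\simeq\sha[T]/\sha[T](T\hbar-1)$ for a central indeterminate $T$ (so that $T\hbar-1$ is central and the ideal two-sided), and invokes the cited result \cite[Th.~A.30]{Ka2} that $\sha[T]$ is Noetherian whenever $\sha$ is; Noetherianity of $\shal$ then follows since it is a quotient. You instead verify all three clauses of the sheaf-theoretic definition directly — coherence of $\shal$ as a ring, Noetherianity of each stalk $(\shal)_x\simeq\sha_x[\opb\hbar]$, and local stationarity of filtrant families of coherent $\shal$-submodules — by passing to the $\hbar$-saturation $\sha\cap\shi_i$ and reducing everything to properties of $\sha$ established in the proof of Theorem~\ref{th:formalfini1} (notably the unnamed lemma that filtrant families of coherent $\sha$-submodules of $\sha$ are locally stationary). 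Your approach is longer but self-contained within the chapter and avoids the appeal to the theorem on polynomial extensions; the paper's is shorter at the cost of delegating the sheaf-theoretic content to that citation. One place where you should be a bit more explicit: the coherence of $\sha\cap\shi_i$ is a two-step matter — for each $k$, $\sha\cap\hbar^{-k}\shi_{i,0}\simeq\hbar^k\sha\cap\shi_{i,0}$ via multiplication by $\hbar^k$ and absence of $\hbar$-torsion, hence is coherent as the intersection of two coherent $\sha$-submodules of $\sha$; then the increasing union over $k$ is locally stationary by the unnamed lemma, so the saturation is itself coherent. You sketch exactly this, but since it is the one point where the saturation actually intervenes, it deserves to be spelled out.
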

\begin{proof}
Let $T$ be an indeterminate. One
knows by \cite[Th.~A.30]{Ka2} that $\sha{[T]}$ is Noetherian. Since
$\shal\simeq\sha{[T]}/\sha{[T]}(T\hbar-1)$, the result follows. 
\end{proof}

\section{Cohomologically complete modules}
In order to  give a criterion for the coherency of 
the cohomologies of a complex of modules over
an algebra $\sha$ satisfying
\eqref{eq:FDringa}  and either \eqref{eq:FDringb} or \eqref{eq:FDringc}.
we introduce the notion of cohomologically complete complexes.

In this section, $\shr$ is a $\Z[\hbar]$-algebra satisfying
\eq
&&\text{$\shr$ has no $\hbar$-torsion.}
\eneq

Recall that
$\shm^\loc\seteq\Z[\hbar,\hbar^{-1}]\tens[{\Z[\hbar]}]\shm$
for an $\shr$-module $\shm$.

\begin{lemma}
For $\shm,\shm'\in\Derb(\shr^\loc)$, we have
\eqn
&&\rhom[\shr^\loc](\shm,\shm')\isoto\rhom[\shr](\shm,\shm').
\eneqn
\end{lemma}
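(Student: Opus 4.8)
For $\shm,\shm'\in\Derb(\shr^\loc)$, the natural morphism
\[
\rhom[\shr^\loc](\shm,\shm')\isoto\rhom[\shr](\shm,\shm')
\]
is an isomorphism.

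The plan is to reduce the statement to a computation with a single pair of modules and then exploit the fact that $\hbar$ already acts invertibly on $\shm'$. First I would recall that $\shr^\loc=\Z[\hbar,\opb\hbar]\tens[{\Z[\hbar]}]\shr$ is a flat $\shr$-algebra (localization), so the forgetful functor $\Der(\shr^\loc)\to\Der(\shr)$ is exact and fully faithful onto its essential image; more precisely, for any $\shm\in\Der(\shr^\loc)$ and $\shm'\in\Der(\shr)$ we have the standard adjunction isomorphism
\[
\rhom[\shr^\loc](\shm,\shm')\simeq\rhom[\shr](\shm,\RR\shom[{\Z[\hbar]}](\Z[\hbar,\opb\hbar],\shm')).
\]
Thus the claim is equivalent to showing that when $\shm'\in\Derb(\shr^\loc)$, the natural morphism $\shm'\to\RR\shom[{\Z[\hbar]}](\Z[\hbar,\opb\hbar],\shm')$ is an isomorphism in $\Der(\shr)$.

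Next I would resolve $\Z[\hbar,\opb\hbar]$ as a $\Z[\hbar]$-module by the two-term complex that presents it as a colimit along multiplication by $\hbar$: concretely, $\Z[\hbar,\opb\hbar]\simeq\varinjlim(\Z[\hbar]\to[\hbar]\Z[\hbar]\to[\hbar]\cdots)$, which gives a quasi-isomorphism
\[
\bl\,\Z[\hbar][\tau,\opb\tau]\,\br\text{-type mapping-telescope complex}\quad\text{of the form}\quad \bigoplus_{n}\Z[\hbar]\to[\id-\hbar\,\mathrm{shift}]\bigoplus_{n}\Z[\hbar]
\]
concentrated in degrees $[-1,0]$, free over $\Z[\hbar]$. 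Applying $\RR\shom[{\Z[\hbar]}](-,\shm')$ turns this into the two-term complex computing $\mathrm{holim}$ of the tower $(\shm'\from[\hbar]\shm'\from[\hbar]\cdots)$, that is, the telescope complex $\prod_n\shm'\to[\id-\hbar\,\mathrm{shift}]\prod_n\shm'$. Since $\hbar$ acts invertibly on $\shm'$ (as $\shm'$ is an $\shr^\loc$-complex), the endomorphism $\id-\hbar\,\mathrm{shift}$ of $\prod_n\shm'$ is invertible — its inverse is the geometric series $\sum_{k\ge0}(\hbar\,\mathrm{shift})^k$, which converges because $\hbar\,\mathrm{shift}$ is locally nilpotent on each coordinate of $\bigoplus$ but on the product one checks directly that the partial sums stabilize termwise in the appropriate indexing, or one simply writes down the explicit two-sided inverse — so its cokernel, which represents $\RR\shom[{\Z[\hbar]}](\Z[\hbar,\opb\hbar],\shm')$, is identified with $\shm'$ via the augmentation, and the composite $\shm'\to\RR\shom[{\Z[\hbar]}](\Z[\hbar,\opb\hbar],\shm')\to\shm'$ is the identity.

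The main obstacle is the bookkeeping with the telescope: one must verify carefully that $\id-\hbar\,\mathrm{shift}$ is genuinely invertible on the relevant (co)product and that the resulting identification of $\RR\shom[{\Z[\hbar]}](\Z[\hbar,\opb\hbar],\shm')$ with $\shm'$ is compatible with the $\shr$-action and with the natural augmentation morphism, so that the composite round-trip is the identity rather than merely an abstract isomorphism. Once this is pinned down, combining it with the adjunction displayed above yields the isomorphism $\rhom[\shr^\loc](\shm,\shm')\isoto\rhom[\shr](\shm,\shm')$, and one should note that the boundedness hypothesis on $\shm,\shm'$ is used only to keep everything inside $\Derb$ and is otherwise harmless.
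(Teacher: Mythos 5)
Your route is genuinely different from the paper's: the paper uses the extension-of-scalars adjunction together with the one-line observation that $\shr^\loc\lltens[\shr]\shm\simeq\shm$ for $\shm\in\Der(\shr^\loc)$ (flatness of $\shr\to\shr^\loc$, plus the fact that $\hbar$ already acts invertibly on $\shm$), whereas you pass to the right adjoint $\rhom[\shr](\shr^\loc,\scbul)$ of restriction and reduce everything to showing $\shm'\isoto\rhom[{\Z[\hbar]}](\Z[\hbar,\opb{\hbar}],\shm')$, which you then attack with an explicit telescope resolution. The strategy is sound, but two things in the write-up are wrong. First, the displayed adjunction has $\shr$ and $\shr^\loc$ interchanged; for $\shm\in\Der(\shr^\loc)$ and $\shm'\in\Der(\shr)$ the correct statement reads
\[
\rhom[\shr](\shm,\shm')\simeq\rhom[\shr^\loc]\bl\shm,\rhom[\shr](\shr^\loc,\shm')\br
\simeq\rhom[\shr^\loc]\bl\shm,\rhom[{\Z[\hbar]}](\Z[\hbar,\opb{\hbar}],\shm')\br.
\]

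Second, and substantively, the endomorphism $\id-\hbar\cdot\mathrm{shift}$ of $\prod_n\shm'$ is \emph{not} invertible: its kernel is $\set{(g_n)_n}{g_n=\hbar g_{n+1}\ \text{for all }n}$, which, once $\hbar$ acts invertibly, is isomorphic to $\shm'$ (by projecting to $g_0$) and in particular nonzero. The "geometric series inverse'' $\sum_{k\ge0}(\hbar\,\mathrm{shift})^k$ has $n$-th component $\sum_{k\ge0}\hbar^k g_{n+k}$, a non-stabilizing sum that is not a well-defined section of the product. The correct statement, and the one that actually finishes the argument, is that the two-term complex $\prod_n\shm'\to\prod_n\shm'$ in degrees $0$ and $1$ has $H^0\simeq\shm'$ \emph{via the kernel} and $H^1=0$ (the $\varprojlim^1$ vanishes because every system $g_n-\hbar g_{n+1}=m_n$ is solvable by back-substitution when $\hbar$ is invertible); the image of the augmentation $\shm'\to\rhom[{\Z[\hbar]}](\Z[\hbar,\opb{\hbar}],\shm')$ therefore lands in the kernel, not the cokernel as you assert. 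With these corrections your argument does prove the lemma, albeit at considerably greater length than the paper's two-line proof.
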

\Proof
We have $\shr^\loc\lltens[\shr]\shm\simeq\shm$. Hence,
\eqn
\rhom[\shr^\loc](\shm,\shm')
&\simeq&\rhom[\shr^\loc](\shr^\loc\ltens[\shr]\shm,\shm')\\
&\simeq&\rhom[\shr](\shm,\shm').
\eneqn
\QED
The next result is obvious.

\begin{lemma}\label{lem:locA}
The triangulated category $\Der(\shr ^\loc)$ 
is equivalent to the full subcategory of
$\Der(\shr)$ consisting of objects $\shm$ satisfying 
one of the following equivalent conditions:
\bnum
\item
$\gr(\shm)=0$,
\item
$\hbar\cl H^i(\shm)\to H^i(\shm)$ is an isomorphism for any integer $i$,
\item
$\shm\to\shr ^\loc\ltens[\shr]\shm$ is an isomorphism,
\item
$\rhom[\shr](\shr^\loc,\shm)\to\shm$ is an isomorphism,
\item
$\rhom[\shr](\shr^\loc/\shr,\shm)\simeq0$.
\enum
\end{lemma}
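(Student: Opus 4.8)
The plan is to prove the statement in two stages: first that the five conditions (i)--(v) are mutually equivalent, and second that the restriction-of-scalars functor $\rho\cl\Der(\shr^\loc)\to\Der(\shr)$ induces an equivalence onto the full subcategory they describe. For the second stage, the preceding lemma (whose proof only uses $\shr^\loc\lltens[\shr]\shn\simeq\shn$ for $\shn\in\Der(\shr^\loc)$, and hence does not need any boundedness assumption) shows that $\rho$ is fully faithful. Its essential image is exactly the class of $\shm$ satisfying (iii): since $\shr^\loc$ is flat over $\shr$ and $\shr^\loc\tens[\shr]\shr^\loc\simeq\shr^\loc$, an object in the image of $\rho$ satisfies (iii), and conversely an $\shm$ with $\shm\isoto\shr^\loc\lltens[\shr]\shm$ is isomorphic to the $\shr^\loc$-module complex $\shr^\loc\lltens[\shr]\shm$, hence lies in the image. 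So $\Der(\shr^\loc)$ is equivalent to $\set{\shm\in\Der(\shr)}{\text{(iii) holds}}$, and it remains to see that (iii) sits on a list of equivalent conditions.

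For (i)$\Leftrightarrow$(ii): $\gr(\shm)$ is the mapping cone of $\hbar\cl\shm\to\shm$, so by Lemma~\ref{lem:grHa} (equivalently, by the long exact cohomology sequence of the triangle $\shm\To[\hbar]\shm\To\gr(\shm)\To[+1]$), all $H^a(\gr\shm)$ vanish iff $\hbar$ acts bijectively on every $H^i(\shm)$. For (ii)$\Leftrightarrow$(iii): the functor $\Z[\hbar,\opb\hbar]\tens[{\Z[\hbar]}]\scbul$ is exact, so $\shr^\loc\lltens[\shr]\shm$ has cohomology $H^i(\shm)^\loc$, the canonical morphism $\shm\to\shr^\loc\lltens[\shr]\shm$ inducing on $H^i$ the localization map; this map is an isomorphism for all $i$ precisely when each $H^i(\shm)$ is $\hbar$-torsion free and $\hbar$-divisible, i.e. when $\hbar$ acts bijectively on it.

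For the last two conditions, applying $\rhom[\shr](\scbul,\shm)$ to the triangle $\shr\to\shr^\loc\to\shr^\loc/\shr\To[+1]$ yields a triangle $\rhom[\shr](\shr^\loc/\shr,\shm)\to\rhom[\shr](\shr^\loc,\shm)\to\shm\To[+1]$ whose second arrow is the morphism of (iv); hence (iv)$\Leftrightarrow$(v). Next, (iii)$\Rightarrow$(iv): under (iii) the object $\shm$ lies in the essential image of $\rho$, so by full faithfulness $\rhom[\shr](\shr^\loc,\shm)\simeq\rhom[\shr^\loc](\shr^\loc,\shm)\simeq\shm$, and unwinding the $(\shr^\loc\lltens[\shr]\scbul,\rho)$-adjunction identifies this with the canonical morphism. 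Finally, (v)$\Rightarrow$(i): applying the exact functor $\gr$ to $\shr\to\shr^\loc\to\shr^\loc/\shr\To[+1]$, and using $\gr\shr\simeq\shro$ (as $\shr$ has no $\hbar$-torsion) together with $\gr\shr^\loc\simeq\shr^\loc/\hbar\shr^\loc=0$ ($\hbar$ being invertible in $\shr^\loc$), we get $\gr(\shr^\loc/\shr)\simeq\shro[1]$; then Proposition~\ref{pro:tensgr}(ii) gives $\gr(\rhom[\shr](\shr^\loc/\shr,\shm))\simeq\rhom[\shro](\shro[1],\gr\shm)\simeq(\gr\shm)[-1]$, so (v) forces $\gr\shm=0$. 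This closes the cycle (i)$\Leftrightarrow$(ii)$\Leftrightarrow$(iii)$\Rightarrow$(iv)$\Leftrightarrow$(v)$\Rightarrow$(i), and combined with the first stage gives the asserted equivalence of categories.

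I do not expect a real obstacle here, consistent with the text calling the result obvious. The fiddliest points are purely formal: checking in (iii)$\Rightarrow$(iv) that the isomorphism produced by the change-of-rings adjunction coincides with the canonical morphism named in the statement, and the identification $\gr(\shr^\loc/\shr)\simeq\shro[1]$ (which can also be seen by hand: every element of $\shr^\loc/\shr$ is annihilated by a power of $\hbar$, multiplication by $\hbar$ is surjective on it, and its kernel is $\opb\hbar\shr/\shr\simeq\shro$).
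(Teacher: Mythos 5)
Your proof is correct, and there is nothing in the paper to compare it against: after stating the lemma the text simply declares ``The next result is obvious'' and supplies no argument. Your two-stage decomposition (fully faithfulness of restriction of scalars with essential image characterized by (iii), then the cycle of equivalences among (i)--(v)) is the natural way to make the claim precise, and each step checks out. In particular, the identification $\gr(\shr^\loc/\shr)\simeq\shro[1]$ driving (v)$\Rightarrow$(i), the use of $\hbar$-exactness of $\Z[\hbar,\opb\hbar]\tens[{\Z[\hbar]}]\scbul$ for (ii)$\Leftrightarrow$(iii), and the reduction of (iv)$\Leftrightarrow$(v) to the triangle obtained from $\shr\to\shr^\loc\to\shr^\loc/\shr\To[+1]$ are all sound. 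The two ``fiddly'' points you flagged are genuine but routine: the preceding lemma was stated for $\Derb$ but, as you say, its proof uses only $\shr^\loc\lltens[\shr]\shn\simeq\shn$ and tensor-hom adjunction, both valid unboundedly; and unwinding the adjunction to identify the isomorphism in (iii)$\Rightarrow$(iv) with the canonical evaluation map is a formal verification that works at the level of representatives exactly as you indicate. No gaps.
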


\Lemma\label{lem:extU}
Let $K$ be a $\Z[\hbar]$-module with projective dimension $\le1$.
Then for any $\shm\in\Der(\shr)$, any open subset $U$ and any integer
$i$, we have an exact sequence
\eqn
&&0\To\Ext[{\Z[\hbar]}]{1}\bl K,H^{i-1}(U;\shm)\br
\To H^i\bl U;\rhom[{\Z[\hbar]}](K,\shm)\br\\
&&\hs{30ex}\To\Hom[{\Z[\hbar]}]\bl K,H^{i}(U;\shm)\br
\To0.\eneqn
\enlemma
\Proof
We have a distinguished triangle 
\eqn&&\RHom[{\Z[\hbar]}]\bl K, \tau^{<i}\rsect(U;\shm)\br
\to\RHom[{\Z[\hbar]}]\bl K,\rsect(U;\shm)\br\\
&&\hs{30ex}\to
\RHom[{\Z[\hbar]}]\bl K,\tau^{\ge i}\rsect(U;\shm)\br\To[+1].\eneqn
Since $H^k\RHom[{\Z[\hbar]}](K,N)=0$ 
for any $k\not=0,1$ and any $\Z[\hbar]$-module $N$, 
we have $H^{i+1}\RHom[{\Z[\hbar]}]\bl K,\tau^{<i}\rsect(U;\shm)\br\simeq0$.
Hence we have an exact sequence
\eqn&&0\to H^i\RHom[{\Z[\hbar]}]\bl K, \tau^{<i}\rsect(U;\shm)\br
\to H^i\RHom[{\Z[\hbar]}]\bl K,\rsect(U;\shm)\br\\
&&\hs{30ex}
\to H^i\RHom[{\Z[\hbar]}]\bl K,\tau^{\ge i}\rsect(U;\shm)\br\to0.
\eneqn
Then the result follows from 
$$H^i\RHom[{\Z[\hbar]}]\bl K, \tau^{<i}\rsect(U;\shm)\br\simeq
 \Ext[{\Z[\hbar]}]{1}\bl K, H^{i-1}(U;\shm)\br$$ and
$H^i\RHom[{\Z[\hbar]}]\bl K,\tau^{\ge i}\rsect(U;\shm)\br
\simeq
\Hom[{\Z[\hbar]}]\bl K, H^i(U;\shm)\br$.
\QED

Recall that we set
\eq
\twh{\shm}\seteq\prolim[n]\shm/\hbar^{n}\shm.
\eneq

\begin{lemma}\label{lem:comp}
Let $\shm\in\md[\shr]$ and assume that  $\shm$ has no $\hbar$-torsion.
\bnum
\item
$\hom[\shr](\shr ^\loc/\shr,\shm^\loc/\shm)\simeq
\ext[\shr]{1}(\shr ^\loc/\shr,\shm)\simeq \widehat\shm$.
\item
$\ker(\shm\to\twh\shm)\simeq\hom[\shr](\shrl,\shm)$.
In particular,
$\shm$ is $\hbar$-separated if and only if $\hom[\shr](\shrl,\shm)\simeq 0$.
\item
$\coker(\shm\to\twh \shm)\simeq\ext[\shr]{1}(\shrl,\shm)$.
In particular, $\shm$ is $\hbar$-complete if and only if 
$\ext[\shr]{j}(\shrl,\shm)\simeq0$ for $j=0,1$. \label{extcomp}
\enum
\end{lemma}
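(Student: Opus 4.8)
The plan is to reduce the whole statement to two facts about the module $\shr^\loc/\shr$ and then to read off (i)--(iii) by applying $\rhom[\shr](\scbul,\shm)$, resp.\ $\rhom[\shr](\shr^\loc/\shr,\scbul)$, to the two short exact sequences of $\shr$-modules
$$0\to\shr\to\shr^\loc\to\shr^\loc/\shr\to0,\qquad 0\to\shm\to\shm^\loc\to\shm^\loc/\shm\to0$$
(both exact because $\shr$, resp.\ $\shm$, has no $\hbar$-torsion). The first fact is the acyclicity $\rhom[\shr](\shr^\loc/\shr,\shm^\loc)\simeq0$: writing $\shr^\loc/\shr\isoto\indlim[n](\shr/\hbar^n\shr)$ (with transition maps induced by multiplication by $\hbar$), this is $\hbar$-torsion, while $\hbar$ acts invertibly on the $\shr^\loc$-module $\shm^\loc$, so $\rhom[\shr](\shr/\hbar^n\shr,\shm^\loc)\simeq[\shm^\loc\to[{\hbar^n}]\shm^\loc]$ is acyclic and hence so is its derived inverse limit over $n$. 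The second fact is the computation $\rhom[\shr](\shr^\loc/\shr,\shm)\isoto\twh\shm[-1]$: using the same presentation together with the resolution $0\to\shr\to[{\hbar^n}]\shr\to\shr/\hbar^n\shr\to0$ (valid as $\shr$ is $\hbar$-torsion-free), one gets $\rhom[\shr](\shr/\hbar^n\shr,\shm)\isoto(\shm/\hbar^n\shm)[-1]$ (the no-$\hbar$-torsion of $\shm$ kills $H^0$), the induced maps on the $H^1$'s are the canonical projections $\shm/\hbar^{n+1}\shm\to\shm/\hbar^n\shm$, and the pro-object/$\rpi$ formalism (Lemmas~\ref{lem:MLcoh}, \ref{le:cohandprolim}, \ref{lem:cohpro}) then identifies the derived inverse limit of the $(\shm/\hbar^n\shm)[-1]$ with $(\prolim[n]\shm/\hbar^n\shm)[-1]=\twh\shm[-1]$, the Mittag-Leffler property of the system of projections killing the $\RR^1\pi$-contribution.

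Granting the second fact, parts (ii) and (iii) are formal. Applying $\rhom[\shr](\scbul,\shm)$ to $0\to\shr\to\shr^\loc\to\shr^\loc/\shr\to0$ and inserting the computation gives a distinguished triangle $\twh\shm[-1]\to\rhom[\shr](\shr^\loc,\shm)\to\shm\To[+1]$, so that $\rhom[\shr](\shr^\loc,\shm)$ is the fibre of the connecting morphism $\shm\to\twh\shm$, which I would identify with the canonical $\hbar$-completion map (by naturality, or by recomputing with $\shr^\loc=\indlim[n]\hbar^{-n}\shr$). Passing to cohomology yields $\hom[\shr](\shr^\loc,\shm)\isoto\ker(\shm\to\twh\shm)$, $\ext[\shr]{1}(\shr^\loc,\shm)\isoto\coker(\shm\to\twh\shm)$, and $\ext[\shr]{j}(\shr^\loc,\shm)=0$ for $j\ge2$. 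Since $\ker(\shm\to\twh\shm)=\bigcap_n\hbar^n\shm$ and $\coker(\shm\to\twh\shm)$ vanishes exactly when $\shm\isoto\twh\shm$, this is precisely (ii) and (iii), the ``in particular'' clauses included (recall $\shrl=\shr^\loc$).

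For (i), the second fact immediately gives $\hom[\shr](\shr^\loc/\shr,\shm)=0$ and $\ext[\shr]{1}(\shr^\loc/\shr,\shm)\isoto\twh\shm$. For the remaining isomorphism, apply $\rhom[\shr](\shr^\loc/\shr,\scbul)$ to $0\to\shm\to\shm^\loc\to\shm^\loc/\shm\to0$ and use the first fact: the middle term vanishes, so $\rhom[\shr](\shr^\loc/\shr,\shm^\loc/\shm)\isoto\rhom[\shr](\shr^\loc/\shr,\shm)[1]\isoto\twh\shm$, whence in degree $0$ one reads $\hom[\shr](\shr^\loc/\shr,\shm^\loc/\shm)\isoto\twh\shm\isoto\ext[\shr]{1}(\shr^\loc/\shr,\shm)$, which is exactly (i).

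The step I expect to be the real obstacle is the second fact, i.e.\ the identification $\rhom[\shr](\shr^\loc/\shr,\shm)\isoto\twh\shm[-1]$: one must treat the $\hbar$-adic inverse limit carefully — verifying that the transition maps in $\shr^\loc/\shr\isoto\indlim[n](\shr/\hbar^n\shr)$ are multiplication by $\hbar$ and that these induce the natural projections on $\ext[\shr]{1}(\shr/\hbar^n\shr,\shm)\simeq\shm/\hbar^n\shm$, so that the relevant pro-system is Mittag-Leffler and contributes no $\RR^1\pi$, all the while keeping track of the sheaf-theoretic subtleties of derived inverse limits that the $\rpi$-formalism recalled above is built to manage. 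The no-$\hbar$-torsion hypothesis on $\shm$ (and the torsion-freeness of $\shr$ used for the resolution of $\shr/\hbar^n\shr$) enters exactly here. Everything else — the first fact and the two long exact sequences — is routine.
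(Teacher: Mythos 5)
Your plan is sound and lands very close to the paper's proof, but there are two points worth flagging: the direction of the key computation, and the Mittag-Leffler claim.

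On the direction: the paper computes $\hom[\shr](\shr^\loc/\shr,\shm^\loc/\shm)$ first, directly --- writing $\shr^\loc/\shr\simeq\indlim[n]\hbar^{-n}\shr/\shr$ and observing that the underived $\hom$ turns this colimit into $\prolim[n]\hom[\shr](\hbar^{-n}\shr/\shr,\hbar^{-n}\shm/\shm)\simeq\prolim[n]\shm/\hbar^n\shm\simeq\widehat\shm$ --- and then transports this to $\ext[\shr]{1}(\shr^\loc/\shr,\shm)$ via the acyclicity of $\rhom[\shr](\shr^\loc/\shr,\shm^\loc)$. No derived inverse limits enter at all. You reverse the order (compute $\ext[\shr]{1}(\shr^\loc/\shr,\shm)$ by resolving $\shr/\hbar^n\shr$ and passing to a derived limit, then transport over to the $\hom$ for (i)), which works but is heavier; note also that the precise packaging of $\rhom[\shr](\shr^\loc/\shr,\shm)$ as $\rpi$ of a pro-object is Lemma~\ref{lem:ccpro}, not the lemmas you cite.

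On the Mittag-Leffler claim: you assert the full quasi-isomorphism $\rhom[\shr](\shr^\loc/\shr,\shm)\simeq\widehat\shm[-1]$, which in particular requires $\ext[\shr]{j}(\shr^\loc/\shr,\shm)=0$ for $j\ge 2$, and you justify the latter by ML. This step does not go through as written: surjectivity of the sheaf morphisms $\shm/\hbar^{n+1}\shm\to\shm/\hbar^n\shm$ does not give Mittag-Leffler for the section systems $\{\sect(U;\shm/\hbar^n\shm)\}_n$, so the $\RR^1\pi$-contribution in degree $\ge 2$ is not obviously killed. Fortunately, the full claim is never needed. By Lemma~\ref{lem:ccpro}~(ii) one has $\rhom[\shr](\shr^\loc/\shr,\shm)[1]\simeq\rpi\bl\proolim[n]\shm/\hbar^n\shm\br$, and since $\pi$ is left exact and the pro-object sits in degree $0$, the left-hand side sits in degrees $\ge0$ (giving $\hom[\shr](\shr^\loc/\shr,\shm)=0$) and has $\RR^0\pi=\prolim[n]\shm/\hbar^n\shm\simeq\widehat\shm$ in degree $0$ (giving $\ext[\shr]{1}(\shr^\loc/\shr,\shm)\simeq\widehat\shm$), with no ML hypothesis. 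Feeding these two facts into the long exact sequence attached to $0\to\shr\to\shr^\loc\to\shr^\loc/\shr\to0$ --- where also $\ext[\shr]{1}(\shr,\shm)=0$ since $\shr$ is free --- yields exactly the five-term sequence $0\to\hom[\shr](\shr^\loc,\shm)\to\shm\to\widehat\shm\to\ext[\shr]{1}(\shr^\loc,\shm)\to0$ that the paper uses, and hence (ii) and (iii), without ever touching $\ext^{\ge2}$.
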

\begin{proof}
We have 
\eqn
\hom[\shr](\shr ^\loc/\shr,\shm^\loc/\shm)
&\simeq&\prolim[n]\hom[\shr](\hbar^{-n}\shr/\shr,\shm^\loc/\shm)\\
&\simeq&\prolim[n]\hom[\shr](\hbar^{-n}\shr/\shr,\hbar^{-n}\shm/\shm)\\
&\simeq&\prolim[n]\shm/\hbar^n\shm\simeq\widehat\shm.
\eneqn
Since $\rhom[\shr](\shr ^\loc/\shr,\shm^\loc)\simeq0$ by Lemma~\ref{lem:locA},
applying the functor $\rhom[\shr](\shr ^\loc/\shr,\scbul)$
to $0\to\shm\to\shm^\loc\to\shm^\loc/\shm\to0$, we obtain
an isomorphism
$\hom[\shr](\shr^\loc/\shr,\shm^\loc/\shm)
\isoto\ext[\shr]{1}(\shr ^\loc/\shr,\shm)$.
Hence we obtain (i).

\medskip
By the long exact sequence associated with $0\to\shr\to\shr ^\loc\to\shr
^\loc/\shr\to0$, we obtain
\eqn
&&\hom[\shr](\shr^\loc/\shr,\shm)\to
\hom[\shr](\shr^\loc,\shm)\to\hom[\shr](\shr,\shm)
\\
&&\hs{10ex}
\to\ext[\shr]{1}(\shr^\loc/\shr,\shm)\to
\ext[\shr]{1}(\shr^\loc,\shm)\to0,
\eneqn
which reduces to
$$0\to \hom[\shr](\shr^\loc,\shm)\to\shm\to\widehat\shm\to
\ext[\shr]{1}(\shr^\loc,\shm)\to0.$$
Hence we obtain (ii) and (iii).
\end{proof}

Consider the right orthogonal \glossary{right orthogonal}%
category $\Der(\shr^\loc)^{\perp r}$ \index{DerCf@$\Der(\shr^\loc)^{\perp r}$}%
to the full
subcategory  $\Der(\shr^\loc)$ of  $\Der(\shr)$. By definition, this is the full 
triangulated subcategory consisting of objects $\shm\in\Der(\shr)$ satisfying 
 $\Hom[\Der(\shr)](\shn,\shm)\simeq0$ for any $\shn\in\Der(\shr^\loc)$
 (see \cite[Exe.~10.15]{K-S3}).

\begin{definition}\label{def:cohco}
One says that  an object $\shm$ of $\Der(\shr)$ 
is cohomologically complete \glossary{cohomologically complete}%
if it belongs to  $\Der(\shr^\loc)^{\perp r}$.
\end{definition}

\begin{proposition}\label{prop:cccr}
\bnum
\item
For $\shm\in\Der(\shr)$, the following conditions are equivalent:
\banum
\item
$\shm$  is cohomologically complete,
\item 
$\rhom[\shr](\shr^\loc,\shm)\simeq\rhom[{\Z[\hbar]}](\Z[\hbar,\hbar^{-1}],\shm)\simeq0$,
\item
$\indlim[U\ni x]\Ext[{\Z[\hbar]}]{j}\bl{\Z[\hbar,\opb{\hbar}]},H^i(U;\shm)\br\simeq0$
for any $x\in X$, $j=0,1$ and any $i\in\Z$. Here, $U$ ranges over an open
 neighborhood system of $x$.
\eanum
\item 
$\rhom[\shr](\shr^\loc/\shr,\shm)$ is \cc\ for any $\shm\in\Der(\shr)$.
\item
For any $\shm\in\Der(\shr)$, there exists a distinguished triangle
$$\shm'\to\shm\to\shm''\To[+1]$$
with $\shm'\in\Der(\shr ^\loc)$ and
$\shm''\in\Der(\shr^\loc)^{\perp r}$.
\item
Conversely, if 
$$\shm'\to\shm\to\shm''\To[+1]$$ is a distinguished triangle
with $\shm'\in\Der(\shr ^\loc)$ and
$\shm''\in\Der(\shr^\loc)^{\perp r}$, then 
$\shm'\simeq\rhom[\shr](\shr ^\loc,\shm)$ and
$\shm''\simeq\rhom[\shr](\shr ^\loc/\shr[-1],\shm)$.
\enum
\end{proposition}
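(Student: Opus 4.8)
\emph{Approach.} Everything will be deduced formally from the triangle $\shr\to\shr^\loc\to\shr^\loc/\shr\To[+1]$, from the adjunctions relating restriction, extension and coinduction of scalars along $\shr\to\shr^\loc$, and from Lemmas~\ref{lem:locA} and~\ref{lem:extU}. First I would record three auxiliary observations. \textbf{(A)} Since $\Z[\hbar,\opb{\hbar}]$ is flat over $\Z[\hbar]$, one has $\shr^\loc\simeq\shr\ltens[{\Z[\hbar]}]\Z[\hbar,\opb{\hbar}]$ by \eqref{eq:ahbar}, so the extension/restriction-of-scalars adjunction gives a functorial isomorphism
\eqn
&&\rhom[\shr](\shr^\loc,\shm)\simeq\rhom[{\Z[\hbar]}](\Z[\hbar,\opb{\hbar}],\shm)
\qquad(\shm\in\Der(\shr));
\eneqn
this object carries an invertible action of $\hbar$, hence lies in $\Der(\shr^\loc)$ by Lemma~\ref{lem:locA}. \textbf{(B)} Applying $\rhom[\shr](\scbul,\shm)$ to $\shr\to\shr^\loc\to\shr^\loc/\shr\To[+1]$ produces a distinguished triangle $\rhom[\shr](\shr^\loc/\shr,\shm)\to\rhom[\shr](\shr^\loc,\shm)\to\shm\To[+1]$. \textbf{(C)} Tensoring the same triangle by the flat localization $\shr^\loc$ and using that $\shr^\loc=\shr\ltens[\shr]\shr^\loc\to\shr^\loc\ltens[\shr]\shr^\loc$ is an isomorphism (idempotence of $\hbar$-localization), one gets $(\shr^\loc/\shr)\ltens[\shr]\shr^\loc\simeq0$.

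\emph{Part (i).} The displayed isomorphism in (b) is (A). For (b)$\Rightarrow$(a): if $\rhom[\shr](\shr^\loc,\shm)\simeq0$, then for every $\shn\in\Der(\shr^\loc)$ the coinduction adjunction yields $\RHom[\shr](\shn,\shm)\simeq\RHom[\shr^\loc]\bl\shn,\rhom[\shr](\shr^\loc,\shm)\br\simeq0$, so $\shm\in\Der(\shr^\loc)^{\perp r}$. For (a)$\Rightarrow$(b): apply the hypothesis to the objects $(\shr^\loc)_U\eqdot j_{U!}\opb{j_U}\shr^\loc$, $U\subset X$ open, which belong to $\Der(\shr^\loc)$; the $(j_{U!},\opb{j_U})$-adjunction gives $\RHom[\shr]\bl(\shr^\loc)_U,\shm[k]\br\simeq H^k\bl U;\rhom[\shr](\shr^\loc,\shm)\br$, so cohomological completeness of $\shm$ forces every cohomology sheaf of $\rhom[\shr](\shr^\loc,\shm)$ to have no section over any open, i.e.\ $\rhom[\shr](\shr^\loc,\shm)\simeq0$ (this is essentially \cite[Exe.~10.15]{K-S3}). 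Finally (b)$\Leftrightarrow$(c): apply Lemma~\ref{lem:extU} with $K=\Z[\hbar,\opb{\hbar}]$, which has projective dimension $\le1$ over $\Z[\hbar]$ (the mapping telescope of $\hbar\cl\Z[\hbar]\to\Z[\hbar]$); the vanishing of $\rhom[{\Z[\hbar]}](\Z[\hbar,\opb{\hbar}],\shm)$ amounts to the vanishing of all stalks of all its cohomology sheaves, and by the three-term exact sequences of that lemma together with exactness of filtered colimits this is exactly condition (c).

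\emph{Parts (ii)--(iv).} For (ii), the tensor--hom adjunction and (C) give $\rhom[\shr]\bl\shr^\loc,\rhom[\shr](\shr^\loc/\shr,\shm)\br\simeq\rhom[\shr]\bl(\shr^\loc/\shr)\ltens[\shr]\shr^\loc,\shm\br\simeq0$, whence $\rhom[\shr](\shr^\loc/\shr,\shm)$ is cohomologically complete by the criterion proved in (i). For (iii), rotating the triangle (B) yields $\shm'\to\shm\to\shm''\To[+1]$ with $\shm'\eqdot\rhom[\shr](\shr^\loc,\shm)$ and $\shm''\eqdot\rhom[\shr](\shr^\loc/\shr,\shm)[1]=\rhom[\shr](\shr^\loc/\shr[-1],\shm)$; then $\shm'\in\Der(\shr^\loc)$ by (A), while $\shm''\in\Der(\shr^\loc)^{\perp r}$ by (ii), cohomological completeness being stable under shift. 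For (iv), let $\shm'\to\shm\to\shm''\To[+1]$ be any triangle with $\shm'\in\Der(\shr^\loc)$ and $\shm''\in\Der(\shr^\loc)^{\perp r}$; applying $\Hom[\Der(\shr)](\shn,\scbul)$ for $\shn\in\Der(\shr^\loc)$ and using $\Hom[\Der(\shr)](\shn,\shm''[j])=0$ for $j=-1,0$ shows $\Hom[\Der(\shr)](\shn,\shm')\isoto\Hom[\Der(\shr)](\shn,\shm)$, so $(\shm'\to\shm)$ is the coreflection of $\shm$ into $\Der(\shr^\loc)$, unique up to unique isomorphism; the triangle of (iii) has the same property, so $\shm'\simeq\rhom[\shr](\shr^\loc,\shm)$, and then $\shm''$, being a cone of the induced map $\shm'\to\shm$, is isomorphic to $\rhom[\shr](\shr^\loc/\shr[-1],\shm)$.

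\emph{Main obstacle.} The only step that is not pure diagram chasing is the equivalence (a)$\Leftrightarrow$(b) of (i): one must pass from the categorical orthogonality defining ``cohomologically complete'' (a condition on global $\Hom$'s) to the vanishing of the \emph{sheaf} $\rhom[\shr](\shr^\loc,\scbul)$, which rests on the observation that the localized restrictions $(\shr^\loc)_U$ already lie in $\Der(\shr^\loc)$ and jointly detect acyclicity of a complex of sheaves. All remaining steps use only the triangle $\shr\to\shr^\loc\to\shr^\loc/\shr$, standard adjunctions, and the idempotence $\shr^\loc\ltens[\shr]\shr^\loc\simeq\shr^\loc$.
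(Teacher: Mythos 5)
Your proof is correct and, apart from two places, follows the same route as the paper. The differences: for (i)(a)$\Rightarrow$(b) you argue via the extensions-by-zero $(\shr^\loc)_U$ to detect acyclicity of $\rhom[\shr](\shr^\loc,\shm)$ opens-by-opens, but you have already observed in (A) that $\rhom[\shr](\shr^\loc,\shm)$ lies in $\Der(\shr^\loc)$; the paper exploits exactly this and simply plugs $\shn=\rhom[\shr](\shr^\loc,\shm)$ itself into the adjunction isomorphism $\Hom[\Der(\shr)](\shn,\shm)\simeq\Hom[\Der(\shr)](\shn,\rhom[\shr](\shr^\loc,\shm))$, forcing $\id=0$, which is shorter and avoids the stalk-by-stalk passage. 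For (iv) you argue abstractly that both triangles present the coreflection of $\shm$ into $\Der(\shr^\loc)$; the paper instead applies $\rhom[\shr](\shr^\loc,\scbul)$ to the given triangle, uses $\rhom[\shr](\shr^\loc,\shm'')\simeq0$ and the identity $\shm'\simeq\rhom[\shr](\shr^\loc,\shm')$ for $\shm'\in\Der(\shr^\loc)$ (Lemma~\ref{lem:locA}~(iv)) to get the isomorphism directly. Both of your variants are valid; they cost a bit more machinery than the paper's, but the underlying adjunctions, the triangle $\shr\to\shr^\loc\to\shr^\loc/\shr$, and the idempotence $\shr^\loc\ltens[\shr](\shr^\loc/\shr)\simeq0$ are the same.
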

\begin{proof}
(i) (a)$\Leftrightarrow$(b) \quad For any $\shn\in\Der(\shrl)$, one has
\eqn
\Hom[\shr](\shn,\shm)&\simeq&\Hom[\shr](\shrl\lltens[\shr]\shn,\shm)\\
&\simeq&\Hom[\shr](\shn,\rhom[\shr](\shrl,\shm))
\eneqn
and it vanishes for all $\shn\in\Der(\shrl)$ if and only if 
$\rhom[\shr](\shrl,\shm)\simeq0$.

\smallskip
\noindent
(i) (b)$\Leftrightarrow$(c) follows from Lemma~\ref{lem:extU}.

\smallskip
\noindent
(ii)\quad
Since $\shr ^\loc\ltens[\shr](\shr ^\loc/\shr)\simeq0$, we have
\eqn
&&\rhom[\shr]\bl\shr ^\loc,\rhom[\shr](\shr ^\loc/\shr,\shm)\br\\
&&\hs{10ex}\simeq\rhom[\shr](\shr ^\loc\lltens[\shr](\shr ^\loc/\shr),\shm)
\simeq0,
\eneqn
and hence
$\rhom[\shr](\shr ^\loc/\shr,\shm)$ is cohomologically complete.

\noindent
(iii) We have obviously 
$\rhom[\shr](\shr ^\loc,\shm)\in\Der(\shr ^\loc)$. Hence the 
distinguished triangle
$$\rhom[\shr](\shr ^\loc,\shm)\to\rhom[\shr](\shr,\shm)\to
\rhom[\shr](\shr ^\loc/\shr[-1],\shm)\To[+1]$$
gives the result.

\smallskip
\noindent
(iv)\quad
Since $\rhom[\shr](\shr ^\loc,\shm'')\simeq0$, we have
$$\shm'
\simeq\rhom[{\shr}](\shr ^\loc,\shm')
\isoto\rhom[{\shr}](\shr ^\loc,\shm),$$
and hence $\shm''\simeq \rhom[\shr](\shr ^\loc/\shr[-1],\shm)$.
\QED

Note that $\shm\mapsto\rhom[\shr](\shr ^\loc,\shm)$ is a right adjoint
functor of the inclusion functor
$\Der(\shr ^\loc)\to\Der(\shr)$, and
the quotient category $\Der(\shr)/\Der(\shr ^\loc)$ 
is equivalent to $\Der(\shr^\loc)^{\perp r}$.

Remark that $\shm\in\Der(\shr)$ is \cc\ if and only if its image in
$\Der(\Z_X[\hbar])$ is \cc.
\Cor\label{cor:cccrt}
Let $\shm$ be an $\shr$-module. Assume the following conditions:
\banum
\item
$\shm$ has no $\hbar$-torsion and is $\hbar$-complete,
\item
for any $x\in X$, denoting by $\shu_x$ the family of open neighborhoods of $x$, we have 
$\inddlim[U\in\shu_x]H^i(U;\shm)\simeq0$ for $i\not=0$.
\eanum
Then $\shm$ is \cc.
\encor
\Proof
For $U$ open, we have the maps
\eqn
\sect(U;\shm)\to[a]\prolim[n]\sect(U;\shm)/\hbar^n\sect(U;\shm)
            \to[b]\prolim[n]\sect(U;\shm/\hbar^n\shm)
            \simeq\sect(U;\shm)
\eneqn
whose composition is the identity. Since $b$ is a monomorphism, $a$ is an isomorphism 
and therefore $\sect(U;\shm)$ is $\hbar$-complete. 
Consider the assertion
\eqn
&&\inddlim[U\in\shu_x]\Ext[{\Z[\hbar]}]{j}(\Z[\hbar,\hbar^{-1}],H^i(U;\shm))\simeq0\mbox{ for }j=0,1.
\eneqn
This assertion is true for $i=0$ since  $\sect(U;\shm)$ is $\hbar$-complete and is true for $i\not=0$ by
hypothesis~(b). The same vanishing assertion remains true
after replacing  $\inddlim$ with $\indlim$. 
Applying Proposition~\ref{prop:cccr}~(i), we find that $\shm$ is \cc.
\QED

\begin{proposition}\label{pro:conserv}
Let $\shm\in\Der(\shr)$ be a cohomologically complete object and $a\in\Z$.
If $H^i(\gr(\shm))=0$ for any $i<a$, then $H^i(\shm)=0$ for any
$i<a$.
\end{proposition}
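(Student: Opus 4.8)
The plan is to deduce the statement from the elementary fact that a cohomologically complete object receives no nonzero morphism from an object of $\Der(\shr^\loc)$. Set $\shn\eqdot\tau^{<a}\shm$ and $\shl\eqdot\tau^{\ge a}\shm$, so that there is a distinguished triangle $\shn\to\shm\to\shl\To[+1]$. I would first check that $\gr(\shn)\simeq0$; by Lemma~\ref{lem:locA} this means $\shn\in\Der(\shr^\loc)$. Since $\shm$ is cohomologically complete, $\Hom[\Der(\shr)](\shn,\shm)\simeq0$, so the canonical morphism $\shn\to\shm$ is the zero morphism; but this morphism induces an isomorphism $H^i(\shn)\isoto H^i(\shm)$ for every $i<a$, which forces $H^i(\shm)\simeq0$ for $i<a$, as desired.

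So the actual work is to prove $\gr(\shn)\simeq0$. Here one uses that $\gr$ is a triangulated functor — it is represented by the mapping cone of multiplication by $\hbar$ — hence the triangle above yields a distinguished triangle $\gr(\shn)\to\gr(\shm)\to\gr(\shl)\To[+1]$. From the triangle $\shl\To[\hbar]\shl\to\gr(\shl)\To[+1]$ together with $H^i(\shl)=0$ for $i<a$, the long exact sequence gives $H^i(\gr(\shl))=0$ for $i\le a-2$. Similarly, from $\shn\To[\hbar]\shn\to\gr(\shn)\To[+1]$ and $H^i(\shn)=0$ for $i\ge a$, one gets $H^i(\gr(\shn))=0$ for $i\ge a$. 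Finally, the long exact cohomology sequence of $\gr(\shn)\to\gr(\shm)\to\gr(\shl)\To[+1]$ shows that for $i\le a-1$ the group $H^i(\gr(\shn))$ lies between $H^{i-1}(\gr(\shl))$ and $H^i(\gr(\shm))$; the former vanishes because $i-1\le a-2$, the latter by hypothesis because $i<a$. Hence $H^i(\gr(\shn))=0$ for all $i$, i.e.\ $\gr(\shn)\simeq0$.

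The only step carrying real content is this last piece of degree bookkeeping: one must be sure that $\gr$ applied to the truncation $\tau^{\ge a}\shm$ has cohomology concentrated in degrees $\ge a-1$, so that the ``error term'' $\gr(\shl)$ cannot interfere with the assumed vanishing of $\gr(\shm)$ in degrees $<a$. Everything else — the mapping-cone description of $\gr$, the equivalence $\gr(\scbul)\simeq0\Leftrightarrow\scbul\in\Der(\shr^\loc)$ of Lemma~\ref{lem:locA}, and the reading of cohomological completeness as right orthogonality to $\Der(\shr^\loc)$ (Definition~\ref{def:cohco}) — is formal.
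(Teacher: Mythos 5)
Your proof is correct, and it differs from the paper's in a way worth noting. Both arguments hinge on showing $\tau^{<a}\shm\in\Der(\shr^\loc)$ and then cashing in cohomological completeness, but they do each step differently. To get $\tau^{<a}\shm\in\Der(\shr^\loc)$, the paper reads off a single long exact sequence from the triangle $\shm\To[\hbar]\shm\to\gr(\shm)\To[+1]$: the hypothesis $H^i(\gr\shm)=0$ for $i<a$ forces $\hbar$ to act invertibly on $H^i(\shm)$ for $i<a$, and that is exactly the criterion in Lemma~\ref{lem:locA}~(ii). You instead verify the equivalent criterion $\gr(\tau^{<a}\shm)\simeq0$, which requires juggling $\gr$ of both truncations and the triangle relating them; this is correct but does strictly more bookkeeping than necessary (you could have used the paper's observation and then applied Lemma~\ref{lem:locA}~(ii) to $\tau^{<a}\shm$ directly, since its nonzero cohomology objects are exactly the $H^i(\shm)$ for $i<a$). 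Where you come out ahead is in the use of cohomological completeness: you invoke the definition as right orthogonality and immediately get that the canonical morphism $\tau^{<a}\shm\to\shm$ is zero, so that the induced isomorphisms on $H^i$ ($i<a$) must be zero maps between zero objects. The paper instead applies $\rhom[\shr](\shr^\loc,\scbul)$ to the truncation triangle, uses $\rhom[\shr](\shr^\loc,\shm)\simeq0$, and then argues that $\tau^{<a}\shm\simeq\rhom[\shr](\shr^\loc,\tau^{\ge a}\shm)[-1]$ lies simultaneously in $\Der^{<a}(\shr)$ and $\Der^{\ge a+1}(\shr)$ --- which requires the extra input that $\rhom[\shr](\shr^\loc,\scbul)$ has cohomological amplitude $[0,1]$. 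Your route avoids that amplitude estimate entirely and is in that respect more elementary, at the mild cost of some extra degree chasing up front.
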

\Proof
The exact sequence
$H^{i-1}(\gr\shm)\to H^i(\shm)\To[\hbar] H^i(\shm)\to H^i(\gr\shm)$
implies that $H^i(\shm)\To[\hbar] H^i(\shm)$
is an isomorphism for $i<a$.
Hence $\tau^{<a}\shm\in\Der(\shr ^\loc)$ and we have
$\rhom[\shr](\shr ^\loc,\tau^{<a}\shm)\simeq\tau^{<a}\shm$.
By the distinguished triangle,
$$\rhom[\shr](\shr ^\loc,\tau^{<a}\shm)
\to\rhom[\shr](\shr ^\loc,\shm)\to
\rhom[\shr](\shr ^\loc,\tau^{\ge a}\shm)\to[+1],$$
we have
$\tau^{<a}\shm\simeq\rhom[\shr](\shr ^\loc,\tau^{\ge a}\shm)[-1]$
and they belong to $\Der^{<a}(\shr)\cap\Der^{\ge a+1}(\shr)\simeq 0$.
\QED
\begin{corollary}\label{cor:conserv}
Let $\shm\in\Der(\shr)$ be a cohomologically complete object. If
$\gr(\shm)\simeq0$, then $\shm\simeq0$.
\end{corollary}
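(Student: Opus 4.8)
This is a direct consequence of Proposition~\ref{pro:conserv}, so the plan is simply to feed that proposition the right input. The hypothesis $\gr(\shm)\simeq 0$ says precisely that $H^i(\gr(\shm))=0$ for every $i\in\Z$; in particular, for \emph{any} fixed $a\in\Z$, the condition ``$H^i(\gr(\shm))=0$ for all $i<a$'' is satisfied. Applying Proposition~\ref{pro:conserv} (whose hypotheses also require $\shm$ to be cohomologically complete, which is assumed here) we conclude that $H^i(\shm)=0$ for all $i<a$.

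Since $a\in\Z$ was arbitrary, letting $a\to+\infty$ yields $H^i(\shm)=0$ for every $i\in\Z$, and therefore $\shm\simeq 0$ in $\Der(\shr)$. There is no real obstacle: the substantive content (the truncation argument showing $\tau^{<a}\shm\in\Der(\shr^\loc)$, together with the adjunction $\rhom[\shr](\shr^\loc,\scbul)$ killing a cohomologically complete object that also lies in $\Der(\shr^\loc)$) has already been carried out in the proof of Proposition~\ref{pro:conserv}; the corollary is obtained from it just by quantifying over all $a$.

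\begin{proof}
Since $\gr(\shm)\simeq0$, we have $H^i(\gr(\shm))=0$ for all $i\in\Z$. Hence, for any $a\in\Z$, Proposition~\ref{pro:conserv} applies and gives $H^i(\shm)=0$ for all $i<a$. As $a$ is arbitrary, $H^i(\shm)=0$ for all $i\in\Z$, that is, $\shm\simeq0$.
\end{proof}
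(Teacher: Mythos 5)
Your proof is correct and matches exactly what the paper intends: the corollary is an immediate consequence of Proposition~\ref{pro:conserv}, obtained by letting $a$ range over all of $\Z$. The paper indeed gives no separate proof for this corollary for precisely this reason.
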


\Prop\label{prop:homcc}
Assume that  $\shm\in\Der(\shr)$ is \cc. Then
$\rhom[\shr](\shn,\shm)\in\Der(\Z_X[\hbar])$ is \cc\
for any $\shn\in\Der(\shr)$.
\enprop
\Proof
It follows from
\eqn
&&\rhom[{\Z[\hbar]}]\bl\Z[\hbar,\hbar^{-1}],\rhom[\shr](\shn,\shm)\br\\
&&\hs{25ex}\simeq
\rhom[\shr]\bl\shn,\rhom[{\Z[\hbar]}](\Z[\hbar,\hbar^{-1}],\shm)\br.
\eneqn
\QED

We can 
give an alternative definition of a cohomologically
complete module.

\begin{lemma}\label{lem:ccpro}
Let $\shm\in\Der(\shr)$. Then we have
\bnum
\item
$\rpi\bl(\proolim[n]\shr\hbar^n)\lltens[\shr]\shm\br
\simeq\rhom[{\shr}](\shr^\loc,\shm)$,
\item
$\rpi\bl(\proolim[n]\shr/\shr\hbar^n)\lltens[\shr]\shm\br
\simeq\rhom[{\shr}](\shr^\loc/\shr[-1],\shm)$.
\enum
\end{lemma}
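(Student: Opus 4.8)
The plan is to identify both sides of (i) with the homotopy projective limit (a Milnor-type two-term complex) of the tower $\cdots\To[\hbar]\shm\To[\hbar]\shm$, and then to deduce (ii) from (i) by comparing two distinguished triangles. The whole argument will rest on the hypothesis that $\shr$ has no $\hbar$-torsion, which is what forces the ideals $\shr\hbar^n$ and the submodules $\hbar^{-n}\shr\subset\shr^\loc$ to be free of rank one.

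\emph{For (i).} On the left, I would use that right multiplication by $\hbar^n$ is an isomorphism $\shr\isoto\shr\hbar^n$ of $(\shr,\shr)$-bimodules, so each $\shr\hbar^n$ is free of rank one — hence flat — as a right $\shr$-module, and under these isomorphisms the transition map $\shr\hbar^{n+1}\into\shr\hbar^n$ becomes $\hbar\cl\shr\to\shr$. Thus $\shr\hbar^n\lltens[\shr]\shm\simeq\shr\hbar^n\tens[\shr]\shm\simeq\shm$ compatibly with the transition morphisms, so that in $\Pro(\Der(\Z_X[\hbar]))$
\[
\bl\proolim[n]\shr\hbar^n\br\lltens[\shr]\shm\;\simeq\;\proolim[n]\bl\cdots\To[\hbar]\shm\To[\hbar]\shm\br.
\]
Since $\rpi$ applied to a projective system indexed by $\Z_{\ge1}$ computes $\mathrm R\varprojlim$, represented by the complex $\bl\prod_n\shm\to\prod_n\shm\br$ with differential $(x_n)\mapsto(x_n-\hbar x_{n+1})$, the left-hand side of (i) equals this Milnor complex. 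For the right-hand side I would write $\shr^\loc=\indlim[n]\hbar^{-n}\shr$, a filtrant union of free rank-one sub-$\shr$-modules whose transition maps $\hbar^{-n}\shr\into\hbar^{-n-1}\shr$ again correspond to $\hbar$; applying $\rhom[\shr](\scbul,\shm)$ to the telescope short exact sequence $0\to\bigoplus_n\hbar^{-n}\shr\to\bigoplus_n\hbar^{-n}\shr\to\shr^\loc\to0$ gives a distinguished triangle $\rhom[\shr](\shr^\loc,\shm)\to\prod_n\shm\to\prod_n\shm\To[+1]$ with the same connecting map $(x_n)\mapsto(x_n-\hbar x_{n+1})$ (restriction of homomorphisms). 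So $\rhom[\shr](\shr^\loc,\shm)$ is the same Milnor complex, which proves (i); one also checks the identification is compatible with the natural maps of both sides to $\shm\simeq\rhom[\shr](\shr,\shm)\simeq\shr\lltens[\shr]\shm$, which in Milnor terms are both the projection onto the $n=0$ factor.

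\emph{For (ii).} I would apply the exact functor $\proolim[n]$ to the short exact sequences $0\to\shr\hbar^n\to\shr\to\shr/\shr\hbar^n\to0$ to obtain, in $\Pro(\md[\shr])$, a short exact sequence $0\to\proolim[n]\shr\hbar^n\to\shr\to\proolim[n]\shr/\shr\hbar^n\to0$, the middle term being the constant pro-object $\shr$. Applying $\scbul\lltens[\shr]\shm$ and then $\rpi$ produces a distinguished triangle whose middle term is $\shm$; comparing it, via (i) and the compatibility just noted, with the triangle $\rhom[\shr](\shr^\loc,\shm)\to\shm\to\rhom[\shr](\shr^\loc/\shr[-1],\shm)\To[+1]$ obtained from $0\to\shr\to\shr^\loc\to\shr^\loc/\shr\to0$ identifies the two third terms, which is (ii).

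The only genuine difficulty is bookkeeping: making rigorous that $\scbul\lltens[\shr]\shm$ commutes with the formal projective limit, that $\rpi$ of the resulting pro-object of (possibly unbounded) complexes is the Milnor complex above, and that the products of sheaves that appear — not exact in $\Sh(X)$ — cause no harm here, the point being that a product of K-injective complexes is again K-injective so that $\prod_n\rhom[\shr](\hbar^{-n}\shr,\shm)$ and $\rpi$ of the $\hbar$-tower are literally the same two-term complex. All of this is covered by the pro-object formalism recalled in Lemmas~\ref{lem:ML}--\ref{le:cohandprolim}; the algebraic substance of the proof lies entirely in the freeness and flatness of $\shr\hbar^n$ and $\hbar^{-n}\shr$.
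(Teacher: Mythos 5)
Your plan — present both sides of (i) as the same Milnor two-term complex $\prod_n\shm\to\prod_n\shm$, with the right-hand side obtained from the telescope presentation $0\to\bigoplus_n\hbar^{-n}\shr\to\bigoplus_n\hbar^{-n}\shr\to\shr^\loc\to0$, and then deduce (ii) by comparing triangles — is a legitimate and slightly different organization from the paper, which first proves the \emph{underived} identification $\hom[\shr](\shr^\loc,\shm)\simeq\prolim_n(\shr\hbar^n\tens[\shr]\shm)$ and then checks on injectives that $\rpi(L\lltens[\shr]\scbul)$ is a right derived functor. Your treatment of (ii) and of the right-hand side of (i) is fine: for $\shm$ K-injective, $\rhom[\shr](\shr^\loc,\shm)$ really is the Milnor complex because $\hbar^{-n}\shr$ is free of rank one and $\hom$ out of a direct sum is the product of $\hom$'s.

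The gap is on the left-hand side, and it is exactly the heart of the lemma. You assert that for $\shm$ K-injective, $\rpi$ of the $\hbar$-tower $\cdots\to[\hbar]\shm\to[\hbar]\shm$ is ``literally the same two-term complex'' $\prod_n\shm\to\prod_n\shm$, invoking only the K-injectivity of infinite products. That remark does not address the real issue: in $\Sh(X)$ countable products are not exact, so the Milnor sequence $0\to\varprojlim\to\prod\to\prod$ is not automatically a $\pi$-acyclic resolution, and $\rpi$ of a tower need not be computed by it. The paper does this work in two steps: Lemma~\ref{le:cohandprolim} commutes $\rsect(U;\scbul)$ past $\rpi$, reducing the computation to pro-objects of $\Z$-modules (where Lemma~\ref{lem:MLcoh} applies), and then one needs the tower $\{\sect(U;\shm)\}_n$ with $\hbar$-transition maps to satisfy Mittag-Leffler; this last point uses that for $\shm$ injective and $\shr$ $\hbar$-torsion-free, $\hbar\cl\sect(U;\shm)\to\sect(U;\shm)$ is surjective, because $\sect(U;\scbul)$ preserves injectives. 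Nothing in your proposal supplies this $\hbar$-divisibility argument or the reduction to $\Z$-modules, so the claim that the left-hand side has no higher $\RR^i\pi$ (equivalently, that it is the Milnor complex in $\Sh(X)$) remains unjustified. Once you insert the argument just described, your Milnor-complex route does close up; as written, it assumes the conclusion at the one place where there is something to prove.
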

\begin{proof}
It is enough to show (i). Set $L=\proolim[n](\shr\hbar^n)$.
Note that $L$ is flat, {\em i.e.}, the functor 
$L\tens[\shr]\scbul$ from $\md[\shr]$ to $\Pro(\md[\shr])$ is exact.

\noindent
One has the isomorphisms
\eqn
\hom[\shr](\shr^\loc,\shm)&\simeq&\hom[\shr](\indlim[n]\shr\hbar^{-n},\shm)\\
&\simeq&\prolim[n]\hom[\shr](\shr\hbar^{-n},\shm)\\
&\simeq&\prolim[n]\hom[\shr](\shr\hbar^{-n},\shr)\tens[\shr]\shm\\
&\simeq&\prolim[n]\bl\shr\hbar^{n}\tens[\shr]\shm\br.
\eneqn

\noindent
It remains to show that
$\rpi(L\lltens[{\shr}]\scbul)$ is the right
derived functor of  
$\shm\mapsto\prolim[n](\shr\hbar^n\tens[{\shr}]\shm)$. 
Hence, it is enough to
check that if $\shm$ is an injective $\shr$-module, then 
 $\rpi(L\ltens[{\shr}]\shm)$ is in degree zero.
Applying Lemma~\ref{le:cohandprolim} with $\shm_n=\shr\hbar^n
\tens[{\shr}]\shm$, we find  $H^i(U;\rpi(L\ltens[{\shr}]\shm))\simeq 0$ for
$i>0$. 
Therefore,
$\RR^i\pi(L\ltens[{\shr}]\shm)\simeq0$ for $i>1$.
On the other hand, since $\{\sect(U;\shm_n)\}_n$ satisfies the
Mittag-Leffler condition, we get that 
$\RR^1\pi(L\ltens[{\shr}]\shm)\simeq0$.
\end{proof}
Hence, $\shm$ is \cc\ if and only if the morphism
$\shm\to\rpi\bl\proolim[n](\shr/\shr\hbar^n)\ltens[\shr]\shm\br$
is an isomorphism.

\begin{proposition}\label{pro:cohcodirim}
Let $f\cl X\to Y$ be a continuous map, and $\shm\in\Der(\Z_X[\hbar])$.
If $\shm$ is \cc, then so is $\roim{f}\shm$.
\end{proposition}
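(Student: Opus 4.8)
The strategy is to detect cohomological completeness through the vanishing of $\rhom[{\Z[\hbar]}](\Z[\hbar,\opb\hbar],\scbul)$ (Proposition~\ref{prop:cccr}~(i)) and to show that this functor commutes with $\roim f$. By the remark following Proposition~\ref{prop:cccr}, $\shm\in\Der(\shr)$ is cohomologically complete if and only if its image in $\Der(\Z_X[\hbar])$ is; since forgetting scalars commutes with $\roim f$ (the underlying $\Z_Y[\hbar]$-module of $\roim f\shm$ is $\roim f$ of the underlying $\Z_X[\hbar]$-module of $\shm$), we reduce at once to the case $\shr=\Z_X[\hbar]$. By Proposition~\ref{prop:cccr}~(i)~(b) it then suffices to prove
\eq\label{eq:rhom-pushforward}
&&\rhom[{\Z_Y[\hbar]}]\bl\Z_Y[\hbar,\opb\hbar],\roim f\shm\br\isoto
\roim f\,\rhom[{\Z_X[\hbar]}]\bl\Z_X[\hbar,\opb\hbar],\shm\br,
\eneq
for then the right-hand side vanishes as soon as $\rhom[{\Z_X[\hbar]}](\Z_X[\hbar,\opb\hbar],\shm)\simeq0$, and we conclude by Proposition~\ref{prop:cccr}~(i) once more.

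To prove \eqref{eq:rhom-pushforward}, I would resolve $\Z_Y[\hbar,\opb\hbar]$ over $\Z_Y[\hbar]$ by the two-term ``telescope'' complex of free modules $F^\bullet$ sitting in degrees $-1,0$, with $F^{-1}=F^0=\bigoplus_{n\ge0}\Z_Y[\hbar]$ and differential $e_n\mapsto e_n-\hbar e_{n+1}$; this is the standard resolution attached to the identification $\Z_Y[\hbar,\opb\hbar]\simeq\indlim[n]\hbar^{-n}\Z_Y[\hbar]$, so in particular $\Z_Y[\hbar,\opb\hbar]$ has $\Z_Y[\hbar]$-projective dimension one. Since $\opb f$ is exact, commutes with arbitrary direct sums, and sends $\Z_Y[\hbar,\opb\hbar]$ to $\Z_X[\hbar,\opb\hbar]$, the complex $\opb f F^\bullet$ is the analogous free resolution of $\Z_X[\hbar,\opb\hbar]$. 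Choose an injective resolution $\shm\to I^\bullet$ over $\Z_X[\hbar]$; then $f_*I^\bullet$ is an injective resolution of $\roim f\shm$ over $\Z_Y[\hbar]$, because $f_*$ preserves injectives. Using $\hom[{\Z_X[\hbar]}]\bl\bigoplus_n\Z_X[\hbar],\scbul\br\simeq\prod_n(\scbul)$ and the analogue on $Y$, both sides of \eqref{eq:rhom-pushforward} are computed by the same two-term total complex $\bigl[\prod_n f_*I^\bullet\to\prod_n f_*I^\bullet\bigr]$: on the right because $F^\bullet$ is bounded and $f_*I^\bullet$ is a complex of injectives; on the left because $\roim f$ is computed termwise on the complex of injective $\Z_X[\hbar]$-modules $\bigl[\prod_n I^\bullet\to\prod_n I^\bullet\bigr]$ and $f_*$, being a right adjoint, commutes with the countable products and with the two-term cone. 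The two descriptions agree by the $\opb f\dashv\roim f$ adjunction applied to the free modules $F^i$, which yields \eqref{eq:rhom-pushforward}.

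The real content is concentrated in the commutation \eqref{eq:rhom-pushforward}: it would be an instance of the standard formula $\roim f\,\rhom[{\opb f\shr}](\opb f\shg,\shm)\simeq\rhom[\shr](\shg,\roim f\shm)$ if $\Z_Y[\hbar,\opb\hbar]$ were locally free of finite rank over $\Z_Y[\hbar]$, which it is not; the point is that it nevertheless admits the bounded \emph{free} (though not finitely generated) resolution $F^\bullet$, and that $\roim f$ commutes with the countable products produced by the non-finite free modules $F^i$. The only place requiring the usual care is unboundedness of $\shm$: for $\shm\notin\Der^+(\Z_X[\hbar])$ one uses $K$-injective resolutions instead, after which the computation above applies verbatim. (Alternatively one can argue from Lemma~\ref{lem:ccpro}, using that $\roim f$ commutes with $\rpi$ and with $\proolim[n]\bl\Z_X[\hbar]/\hbar^n\br\ltens[{\Z_X[\hbar]}]\scbul$, but the resolution argument above seems the most direct.)
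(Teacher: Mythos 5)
Your proof is correct, and it takes the same route as the paper: deduce the result from the commutation isomorphism
$$\rhom[{\Z_Y[\hbar]}](\Z_Y[\hbar,\opb{\hbar}],\roim{f}\shm)\simeq
\roim{f}\,\rhom[{\Z_X[\hbar]}](\Z_X[\hbar,\opb{\hbar}],\shm).$$
The paper states this isomorphism as immediate and gives no further detail, whereas you spend most of your argument proving it via the telescope resolution. The resolution argument is sound, but it is more work than is needed, and the motivation you give for it rests on a misconception. You write that the commutation would follow from the ``standard formula'' only if $\Z_Y[\hbar,\opb{\hbar}]$ were locally free of finite rank. In fact, the formula
$$\rhom[{\Z_Y[\hbar]}](K,\roim{f}\shm)\simeq
\roim{f}\,\rhom[{\Z_X[\hbar]}](\opb{f}K,\shm)$$
holds for \emph{arbitrary} $K\in\Der(\Z_Y[\hbar])$, with no finiteness hypothesis: it is an internal form of the adjunction $\opb{f}\dashv\roim{f}$. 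For any open $U\subset Y$, applying $\rsect(U;\scbul)$ to either side yields
$$\RHom\bl(\opb{f}K)\vert_{\opb{f}U},\;\shm\vert_{\opb{f}U}\br,$$
on the left via the adjunction $\RHom(K\vert_U,\roim{f}\shm\vert_U)\simeq\RHom(\opb{f}K\vert_{\opb{f}U},\shm\vert_{\opb{f}U})$, and on the right via $\rsect(U;\scbul)\circ\roim{f}\simeq\rsect(\opb{f}U;\scbul)$. The natural morphism between the two sides (obtained from the counit $\opb{f}\roim{f}\to\id$) is therefore an isomorphism. For unbounded $\shm$ one uses $K$-injective resolutions, and $f_*$ preserves $K$-injectives because $\opb{f}$ is exact; this is all that is needed. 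The finiteness hypothesis you have in mind belongs to the \emph{other} commutation, $\opb{f}\rhom(K,N)\simeq\rhom(\opb{f}K,\opb{f}N)$, which fails without, say, $K$ perfect, but that formula plays no role here.

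One minor point: your opening reduction from $\Der(\shr)$ to $\Der(\Z_X[\hbar])$ is vacuous, since the proposition is already stated for $\shm\in\Der(\Z_X[\hbar])$.
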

\Proof
It immediately follows from
$$\rhom[{\Z_Y[\hbar]}](\Z_Y[\hbar,\hbar^{-1}],\roim{f}\shm)
\simeq\roim{f}\rhom[{\Z_X[\hbar]}](\Z_X[\hbar,\hbar^{-1}],\shm).$$
\QED

\section{Cohomologically complete $\sha$-modules}

In this section, $\sha$ is a $\cora$-algebra satisfying
hypotheses~\eqref{eq:FDringa} and either
\eqref{eq:FDringb} or \eqref{eq:FDringc}.

\begin{theorem}\label{th:cohimplcohco}
Let $\shm\in\Db_\coh(\sha)$. Then $\shm$ is cohomologically complete.
\end{theorem}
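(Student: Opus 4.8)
The plan is to reduce the statement, by d\'evissage, to the case of a single coherent $\sha$-module, and then to split that module according to its $\hbar$-torsion. Throughout I would use that the cohomologically complete objects form the triangulated subcategory $\Der(\sha^\loc)^{\perp r}$ of $\Der(\sha)$ (Definition~\ref{def:cohco}); in particular this class is stable by shift, and in a distinguished triangle, if two of the three vertices are cohomologically complete then so is the third. Moreover, by the equivalence (a)$\Leftrightarrow$(c) in Proposition~\ref{prop:cccr}~(i), cohomological completeness is a local property, so we may shrink $X$ around any chosen point. Using the truncation triangles $\tau^{\le n}\shm\to\shm\to\tau^{\ge n+1}\shm\To[+1]$ and inducting on the amplitude of $\shm\in\Db_\coh(\sha)$, we reduce to $\shm=\shn[k]$ with $\shn$ a coherent $\sha$-module, and then by shift-invariance to $\shm=\shn$.

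The crucial auxiliary fact is: \emph{an $\sha$-module $\shk$ locally annihilated by a power of $\hbar$ is cohomologically complete}. Indeed, shrinking $X$ we may assume $\hbar^m\shk=0$; then $\rhom[\sha](\sha^\loc,\shk)$ lies in $\Der(\sha^\loc)$, being the value of the right adjoint to the inclusion $\Der(\sha^\loc)\hookrightarrow\Der(\sha)$, so multiplication by $\hbar$ is invertible on it; on the other hand, applying $\rhom[\sha](\sha^\loc,\scbul)$ to the zero endomorphism $\hbar^m\colon\shk\to\shk$ shows that $\hbar^m$ acts by $0$ on $\rhom[\sha](\sha^\loc,\shk)$. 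Hence $\rhom[\sha](\sha^\loc,\shk)\simeq0$, which by Proposition~\ref{prop:cccr}~(i), (a)$\Leftrightarrow$(b), means $\shk$ is cohomologically complete. (In particular every coherent $\shao$-module, regarded over $\sha$, is cohomologically complete.)

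Now let $\shn$ be coherent. Since $\sha$ is left Noetherian (Theorem~\ref{th:formalfini1}~(i), resp.\ Theorem~\ref{th:formalfini1b}~(i)), the increasing sequence of coherent submodules $\ker(\hbar^n\colon\shn\to\shn)$ is locally stationary, so the $\hbar$-torsion submodule $\shn_{\mathrm{tor}}\subset\shn$ is coherent and locally annihilated by a power of $\hbar$, while $\shn'\eqdot\shn/\shn_{\mathrm{tor}}$ is coherent and $\hbar$-torsion-free. The short exact sequence $0\to\shn_{\mathrm{tor}}\to\shn\to\shn'\to0$ gives a distinguished triangle, so it suffices to prove $\shn_{\mathrm{tor}}$ and $\shn'$ cohomologically complete. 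For $\shn_{\mathrm{tor}}$ this is the auxiliary fact. For $\shn'$ I would invoke Corollary~\ref{cor:cccrt}: hypothesis (a) holds because $\shn'$ is $\hbar$-torsion-free by construction and $\hbar$-complete (a coherent module being $\hbar$-complete, by Theorem~\ref{th:formalfini1}, resp.\ Theorem~\ref{th:formalfini1b}); and hypothesis (b), namely $\inddlim[U\ni x]H^i(U;\shn')\simeq0$ for $i\ne0$, is trivial for $i<0$ and, for $i>0$, follows from the vanishing of the positive cohomology of $\shn'$ on the base $\BB$ (Theorem~\ref{th:formalfini1}~(v), resp.\ Theorem~\ref{th:formalfini1b}~(v)). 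In the framework \eqref{eq:FDringc} one must first shrink $X$ so that the coherent $\shao$-module $\shn'/\hbar\shn'$ is good (Hypothesis (iv')~\eqref{goodlocal}); since $\ker(\hbar\colon\shn'\to\shn')=0$ is good as well, $\shn'$ is then a good $\sha$-module, and Theorem~\ref{th:formalfini1b}~(v) becomes applicable.

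The step I expect to be the main obstacle is the verification of hypothesis (b) of Corollary~\ref{cor:cccrt} in the locally compact framework \eqref{eq:FDringc}: there one only has acyclicity of good modules over the \emph{compact} members of $\BB$, so to show the ind-object $\inddlim[U\ni x]H^i(U;\shn')$ vanishes one must, given an open $U\ni x$, produce a nested chain $x\in\Int(K)\subseteq K\subseteq\Int(K')\subseteq K'\subseteq U$ with $K,K'\in\BB$, and argue that $H^i(K';\shn')=0$ already forces the restriction $H^i(U;\shn')\to H^i(\Int(K);\shn')$ to be identically zero (for $i>0$). Finally, the separate treatment of the $\hbar$-torsion is forced by the fact that Corollary~\ref{cor:cccrt} genuinely requires $\hbar$-torsion-freeness --- which a coherent $\sha$-module like $\shao$ need not have --- and the auxiliary fact of the second paragraph is precisely what makes the torsion part harmless.
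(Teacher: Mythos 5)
Your proposal is correct and follows essentially the same route as the paper: d\'evissage to a single coherent module, splitting via the $\hbar$-torsion filtration, the nilpotent-versus-invertible argument on $\rhom[\sha](\sha^\loc,\cdot)$ for the torsion piece, and Corollary~\ref{cor:cccrt} for the torsion-free piece. (The final "obstacle" you anticipate is actually routine: a single $K\in\BB$ with $x\in\Int(K)\subset K\subset U$ already forces the restriction $H^i(U;\shn')\to H^i(\Int(K);\shn')$ to vanish for $i>0$, so the nested chain of two compacts is not needed.)
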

\begin{proof}
Since any coherent module is an extension of a module without
$\hbar$-torsion by an $\hbar$-torsion module, it is enough to treat
each case.

\smallskip
Assume first that $\shm$ is an $\hbar$-torsion coherent $\sha$-module.
Since the question is local, we may assume that there exists $n$ such
that $\hbar^n\shm=0$.
Then the action of $\hbar$ on the cohomology groups of
$\rhom[\sha](\sha ^\loc,\shm)$ is nilpotent and invertible,
and hence the cohomology groups vanishes.

\medskip
Now assume that $\shm$ is a coherent $\sha$-module without $\hbar$-torsion.
Then Corollary~\ref{cor:cccrt} shows that $\shm$ is \cc.
\end{proof}

\begin{corollary}\label{cor:cohimplcohco}
If $\shm\in\Db_\coh(\sha)$ and $\shn\in\Der(\sha)$,
then $\rhom[\sha](\shn,\shm)$ is cohomologically complete.
\end{corollary}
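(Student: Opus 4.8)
The plan is to reduce the statement to two results already established: Theorem~\ref{th:cohimplcohco}, which says that an object with bounded coherent cohomology over $\sha$ is cohomologically complete, and Proposition~\ref{prop:homcc}, which says that $\rhom[\sha](\shn,\scbul)$ preserves cohomological completeness (landing a priori in $\Der(\Z_X[\hbar])$). Combining them should give the corollary almost immediately.

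First I would invoke Theorem~\ref{th:cohimplcohco} to conclude that $\shm$, being in $\Db_\coh(\sha)$, is cohomologically complete as an object of $\Der(\sha)$. Next I would apply Proposition~\ref{prop:homcc} with $\shr=\sha$: since $\shm$ is \cc, the object $\rhom[\sha](\shn,\shm)$ is \cc\ as an object of $\Der(\Z_X[\hbar])$, for any $\shn\in\Der(\sha)$. Finally I would appeal to the remark following Proposition~\ref{prop:cccr} — namely that an object of $\Der(\sha)$ is \cc\ if and only if its image in $\Der(\Z_X[\hbar])$ is \cc\ — to transport the conclusion back: $\rhom[\sha](\shn,\shm)$, viewed in $\Der(\Z_X[\hbar])$, is \cc, hence it is cohomologically complete. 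This finishes the argument.

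There is essentially no obstacle here; the whole content has been pushed into Theorem~\ref{th:cohimplcohco} and Proposition~\ref{prop:homcc}. The only point requiring a moment's care is the passage between cohomological completeness over $\sha$ and over $\Z_X[\hbar]$, but this is exactly the remark quoted above (cohomological completeness is detected on the underlying complex of $\Z_X[\hbar]$-modules, since both $\rhom[\sha](\sha^\loc,\scbul)$ and $\rhom[{\Z_X[\hbar]}](\Z_X[\hbar,\opb{\hbar}],\scbul)$ compute the same obstruction). So the write-up is three lines: Theorem~\ref{th:cohimplcohco}, then Proposition~\ref{prop:homcc}, then the remark.
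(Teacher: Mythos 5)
Your argument is exactly the paper's: the paper's proof reads, in full, ``It is an immediate consequence of Proposition~\ref{prop:homcc} and the theorem above,'' i.e.\ Theorem~\ref{th:cohimplcohco}, which is precisely your first two steps. Your third step (transporting back via the remark after Proposition~\ref{prop:cccr}) is harmless but not really needed here, since $\rhom[\sha](\shn,\shm)$ lives naturally in $\Der(\Z_X[\hbar])$ (or $\Der(\coro_X)$), not in $\Der(\sha)$, so the conclusion of Proposition~\ref{prop:homcc} already is the assertion of the corollary.
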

\begin{proof}
It is an immediate consequence of Proposition~\ref{prop:homcc} and 
the theorem above.
\QED

In the course of the proof of Theorem~\ref{th:formalfini2} below, we shall
use the following elementary lemma that we state here without proof.
\begin{lemma}[{\rm Cross Lemma}]\label{le:crosslemma}
Let $\shc$ be an abelian category and consider an exact diagram in
$\shc$ 
\eqn
\xymatrix@C=3ex@R=3ex{
   &X_2\ar[d]&\\
X_1\ar[r]&Y\ar[d]\ar[r]&Z_1\\
&Z_2.&
}\eneqn
Then the conditions below are equivalent:
\banum
\item $\im(X_2\to Z_1)\isoto\im(Y\to Z_1)$,
\item $\im(X_1\to Z_2)\isoto\im(Y\to Z_2)$,
\item $X_1\oplus X_2 \to Y$ is an epimorphism.
\eanum
\end{lemma}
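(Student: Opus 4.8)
The plan is to show that each of (a), (b), (c) is equivalent to the single statement that the images of $X_1$ and $X_2$ in $Y$ span $Y$ as a subobject. First I would fix notation: by exactness of the two lines through $Y$, set $K_1\eqdot\im(X_1\to Y)=\ker(Y\to Z_1)$ and $K_2\eqdot\im(X_2\to Y)=\ker(Y\to Z_2)$, viewed as subobjects of $Y$, and write $g_1\cl Y\to Z_1$, $g_2\cl Y\to Z_2$. Since $\im(X_1\oplus X_2\to Y)$ is by definition the subobject $K_1+K_2$, condition (c) reads exactly $K_1+K_2=Y$.

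Next I would reformulate (a). The morphism $X_2\to Z_1$ factors as $X_2\epito K_2\into Y\to[g_1]Z_1$, so $\im(X_2\to Z_1)=\im(g_1\vert_{K_2})$. Now $g_1$ induces a monomorphism $\ol{g_1}\cl Y/K_1\into Z_1$ realizing the isomorphism $Y/K_1\isoto\im(Y\to Z_1)$, and under it the image of $K_2$ corresponds to the subobject $(K_1+K_2)/K_1$ of $Y/K_1$. Hence the canonical monomorphism $\im(X_2\to Z_1)\into\im(Y\to Z_1)$ is, up to this identification, the inclusion $(K_1+K_2)/K_1\into Y/K_1$, which is an isomorphism if and only if $K_1+K_2=Y$. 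This gives (a)$\Leftrightarrow$(c). Exchanging the indices $1$ and $2$ gives (b)$\Leftrightarrow$(c) by the identical argument, and the three conditions are thereby equivalent.

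The only step needing care is the subobject bookkeeping: that the image of a composite is unaffected by precomposition with an epimorphism, and that the induced map $Y/K_1\to Z_1$ sends the image of a subobject $K_2\subset Y$ to (the isomorphic image of) $(K_1+K_2)/K_1$. These are elementary properties of the modular lattice of subobjects in an abelian category together with the universal properties of image and coimage; alternatively, since the diagram involves only finitely many objects and arrows, one may invoke the Freyd--Mitchell embedding theorem to assume $\shc=\Mod(R)$, where the whole equivalence is a two-line element chase ($y\in Y$ maps to $0$ in $Z_1$ iff $y\in K_1$, and so on). Either way no genuine obstacle arises, which is presumably why the lemma is recorded in the text without proof.
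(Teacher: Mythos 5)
The paper records this Cross Lemma explicitly \emph{without proof} (``we state here without proof''), so there is no argument in the text to compare against. Your proof is correct: identifying $K_i\eqdot\im(X_i\to Y)=\ker(Y\to Z_i)$ via exactness, observing that (c) says $K_1+K_2=Y$, and then using that $g_1$ induces a monomorphism $Y/K_1\into Z_1$ under which the image of $K_2$ corresponds to $(K_1+K_2)/K_1$ (the second isomorphism theorem in the modular lattice of subobjects) reduces each of (a), (b) to $K_1+K_2=Y$ exactly as you say. The Freyd--Mitchell reduction to $\Mod(R)$ is an equally valid shortcut here since only finitely many objects and arrows are involved; either route establishes the lemma.
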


\begin{theorem}\label{th:formalfini2}
Let $\shm\in\Der^+(\sha)$ and assume:
\banum
\item[{\rm(a)}]
$\shm$ is cohomologically complete,
\item[{\rm(b)}]
$\gr(\shm)\in\Der^+_\coh(\shao)$.
\eanum
Then,  $\shm\in\Der^+_\coh(\sha)$, and we have the isomorphism
\eqn
&&H^i(\shm)\isoto\prolim[n]H^i(\sha_n\ltens[\sha]\shm)
\eneqn
for all $i\in\Z$.
\end{theorem}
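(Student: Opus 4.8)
The plan is to use cohomological completeness to realise $\shm$ as a derived projective limit of the complexes $\sha_n\lltens[\sha]\shm$ and then to apply the coherence criterion for projective limits, Proposition~\ref{prop:procoh}; this yields at once the coherence of the $H^i(\shm)$ and the asserted isomorphism. The first point is that $\sha_n\lltens[\sha]\shm\in\Der^+_\coh(\sha)$ for every $n\ge0$. Since $\hbar$ is central and not a zero-divisor, the short exact sequence of right $\sha$-modules $0\to\sha_{n-1}\to[\hbar]\sha_n\to\shao\to0$ gives, upon applying $(\scbul)\lltens[\sha]\shm$, a distinguished triangle $\sha_{n-1}\lltens[\sha]\shm\to\sha_n\lltens[\sha]\shm\to\gr(\shm)\To[+1]$, where $\shao\lltens[\sha]\shm\simeq\gr(\shm)$ by definition of $\gr$. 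As $\gr(\shm)\in\Der^+_\coh(\shao)$, hence $\gr(\shm)\in\Der^+_\coh(\sha)$ by Lemma~\ref{le:cohcoh}, and $\mdcoh[\sha]$ is stable by extension, an induction on $n$ yields the claim.

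Since $\shm$ is cohomologically complete, the remark following Lemma~\ref{lem:ccpro} (applied with $\sha/\sha\hbar^{n+1}=\sha_n$) gives $\shm\isoto\rpi\bl\proolim[n](\sha_n\lltens[\sha]\shm)\br$. The spectral sequence $\RR^p\pi(\proolim[n]H^q(\sha_n\lltens[\sha]\shm))\Rightarrow H^{p+q}(\shm)$ (cf.\ Lemma~\ref{le:cohandprolim}), which degenerates because $\RR^p\pi$ of a pro-system vanishes for $p\neq0,1$, then furnishes short exact sequences of cohomology sheaves
\[
0\to\RR^1\pi\bl\proolim[n]H^{i-1}(\sha_n\lltens[\sha]\shm)\br\to H^i(\shm)\to\prolim[n]H^i(\sha_n\lltens[\sha]\shm)\to0 .
\]
Hence it suffices to prove, for every $j$, that $\prolim[n]H^j(\sha_n\lltens[\sha]\shm)$ is a coherent $\sha$-module and that $\{H^j(\sha_n\lltens[\sha]\shm)\}_n$ satisfies the Mittag--Leffler condition; indeed the latter forces the $\RR^1\pi$ term above to vanish (sheaf analogue of Lemma~\ref{lem:MLcoh}), so that $H^i(\shm)\isoto\prolim[n]H^i(\sha_n\lltens[\sha]\shm)$ is coherent, giving $\shm\in\Der^+_\coh(\sha)$ together with the formula of the theorem.

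Both properties will follow from Proposition~\ref{prop:procoh} applied to the projective system of coherent $\sha$-modules $\shn_n\seteq H^j(\sha_n\lltens[\sha]\shm)$; these moreover satisfy $\hbar^{n+1}\shn_n=0$, so that part~(iv) of that proposition applies and produces the Mittag--Leffler property. The hypotheses (a), (b) of Proposition~\ref{prop:procoh} require that $\proolim[n](\shn_n/\hbar\shn_n)$ and $\proolim[n]\ker(\shn_n\to[\hbar]\shn_n)$ be representable by coherent $\shao$-modules. The crucial computation is $\proolim[n]\gr(\sha_n\lltens[\sha]\shm)\simeq\gr(\shm)$: by Proposition~\ref{pro:tensgr}, $\gr(\sha_n\lltens[\sha]\shm)\simeq\gr(\sha_n)\lltens[\shao]\gr(\shm)$; the complex $\gr(\sha_n)$ has cohomology $\shao$ in degrees $0$ and $-1$; and the transition maps of the projective system $\{\ker(\sha_n\to[\hbar]\sha_n)\}_n=\{\hbar^n\sha/\hbar^{n+1}\sha\}_n$ are all zero, so this pro-system is essentially zero and $\proolim[n]\gr(\sha_n)\simeq\shao$. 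Therefore $\proolim[n]H^j(\gr(\sha_n\lltens[\sha]\shm))\simeq H^j(\gr(\shm))$ is coherent over $\shao$, and by Lemma~\ref{lem:grHa} (applied termwise and passed to the projective limit, which is exact on pro-objects) the pro-object $\proolim[n](\shn_n/\hbar\shn_n)$ is a subobject, and $\proolim[n]\ker(\shn_n\to[\hbar]\shn_n)$ a quotient object, of this coherent module. A Mittag--Leffler argument based on Lemma~\ref{lem:ML} then shows that these sub- and quotient-pro-objects are representable by coherent $\shao$-modules, giving (a) and (b), and Proposition~\ref{prop:procoh} concludes. The main obstacle is exactly this last step --- deriving the Mittag--Leffler property of $\{\shn_n/\hbar\shn_n\}_n$ and of $\{\ker(\shn_n\to[\hbar]\shn_n)\}_n$ from the coherence of $H^j(\gr(\shm))$ and Lemma~\ref{lem:grHa}; the remainder is formal manipulation in $\Pro(\shc)$ and with derived projective limits.
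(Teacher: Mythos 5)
Your outline of the reduction — realizing $\shm$ as $\rpi$ of the pro-complex $\proolim[n](\sha_n\lltens[\sha]\shm)$ via cohomological completeness, and then feeding the cohomology pro-systems $\shn_n^j=H^j(\sha_n\lltens[\sha]\shm)$ into Proposition~\ref{prop:procoh} — follows the paper's strategy, and the observation that $\proolim[n]H^j(\gr(\sha_n\lltens[\sha]\shm))\simeq H^j(\gr\shm)$ (using Proposition~\ref{pro:tensgr} and the essentially zero pro-system $\{\hbar^n\sha/\hbar^{n+1}\sha\}_n$) is correct. The difficulty lies exactly where you flag it. From the resulting exact sequence in $\Pro(\Mod(\shao))$
\[
0\to\proolim[n]\bigl(\shn_n^j/\hbar\shn_n^j\bigr)\to H^j(\gr\shm)\to\proolim[n]\ker\bigl(\hbar\cl\shn_n^{j+1}\to\shn_n^{j+1}\bigr)\to 0
\]
you conclude that the outer pro-objects are representable by coherent $\shao$-modules. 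The asserted ``Mittag--Leffler argument based on Lemma~\ref{lem:ML}'' cannot deliver this: a sub- or quotient-pro-object of a representable coherent object need not itself be representable. In $\Pro(\Mod(\Z))$, the exact sequence $0\to\proolim[n]2^n\Z\to\Z\to\proolim[n]\Z/2^n\Z\to0$, with inclusions and projections as transition maps, has both outer terms non-representable even though $\Z$ is Noetherian. Nor does the Mittag--Leffler condition of Definition~\ref{def:ML} bridge the gap: the constant object $\Z$ and its quotient $\proolim[n]\Z/2^n\Z$ satisfy Mittag--Leffler, yet the latter is not representable, while the sub-pro-object $\proolim[n]2^n\Z$ is not even Mittag--Leffler, so Lemma~\ref{lem:ML} gives no purchase on it.

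The missing ingredient is exactly what step~(2) of the paper's proof supplies. One pairs the two long exact sequences arising from the triangles $\shm_{n-1}\To[\hbar]\shm_n\To\shm_0\To[+1]$ and $\shm_0\To[\hbar^{n+1}]\shm_{n+1}\To\shm_n\To[+1]$ into the commutative exact diagram~\eqref{diag:cross} and applies the Cross Lemma (Lemma~\ref{le:crosslemma}). This converts the assertion that the \emph{decreasing} family $\{\im(\shn_n^j\to\shn_0^j)\}_n$ is locally stationary — which is precisely the representability of $\proolim[n]\coker(\hbar\cl\shn_n^j\to\shn_n^j)$, since $\coker(\shn_{n-1}^j\To[\hbar]\shn_n^j)\simeq\im(\shn_n^j\to\shn_0^j)$ — into the assertion that the \emph{increasing} family $\{\im\phi_n^j\}_n$ of coherent submodules of $\shn_0^{j+1}$ is locally stationary, which holds because $\sha$ is Noetherian. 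Your proposal has no substitute for this conversion of a decreasing stabilization problem into an increasing one, and without it the hypotheses of Proposition~\ref{prop:procoh} remain unverified.
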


\begin{proof}
We shall assume \eqref{eq:FDringb}.
The case of hypothesis~\eqref{eq:FDringc} 
could be treated with slight modifications.

Recall that $\sha_n\eqdot\sha/\hbar^{n+1}\sha$
and set $\shm_n=\sha_n\ltens[\sha]\shm$, $\shn_n^j\eqdot H^j(\shm_n)$.

\vspace{0.4em}
\noindent
(1) \quad For each $n\in \N$, the distinguished triangle
$\sha/\hbar^{n}\sha\To[\hbar]\sha/\hbar^{n+1}\sha\To\sha/\hbar\sha\To[+1]$
induces the distinguished triangle
\eq\label{eq:exse3}
\shm_{n-1}\To[\hbar]&\shm_n\To\shm_0\To[+1].
\eneq
This triangle gives rise to the long exact sequence
\eq\label{eq:lexse3}
&&\shn_0^{j-1}\To\shn_{n-1}^j\To[\hbar] \shn_{n}^j\To \shn_0^j\To
\shn_{n-1}^{j+1}
\eneq
from which we deduce by induction on $n$
that $\shn^j_n$ is a coherent $\sha$-module for any $j$ and $n\ge0$
by using the hypothesis (b).

\vspace{0.5em}
\noindent
(2) \quad 
Let us show that
\eq\label{eq:repr}
&&\parbox{58ex}{$\proolim[n]\coker(\shn_n^j\to[\hbar]\shn^j_n)$
and $\proolim[n]\ker(\shn_n^j\to[\hbar]\shn^j_n)$ are locally representable
for all $j\in\Z$.}
\eneq

\vspace{0.4em}
\noindent
 Consider the distinguished triangle:
\eq&&
\shm_0\To[\hbar^{n+1}]\shm_{n+1}\To\shm_n\To[+1].
\label{eq:exse2}
\eneq
It gives rise to the long exact sequence
\eq\label{eq:lexse2}
&&\cdots \to \shn^j_0\To[\hbar^{n+1}] \shn^j_{n+1}\To \shn^j_n\To[\phi^j_n]
\shn^{j+1}_0\To\cdots.
\eneq
Now consider the exact diagram, deduced from \eqref{eq:lexse3} and \eqref{eq:lexse2}:
\eq
\xymatrix@C=4ex@R=4ex{
   &\shn^j_{n+1}\ar[d]&\\
\shn^j_{n-1}\ar[r]^-{\hbar\cdot}\ar[rd]_-{\phi^j_{n-1}}
              &\shn^j_{n}\ar[d]^-{\phi^j_n}\ar[r]&\shn^j_{0}\\
&\shn^{j+1}_{0}.
}\label{diag:cross}\eneq
Here the commutativity of the triangle follows from
the commutative diagram
\eqn
\xymatrix@C=7ex{
\shm_0\ar[r]^{\hbar^{n}}\ar[d]^{\id}&\shm_n\ar[r]\ar[d]^\hbar
                                      &\shm_{n-1}\ar[r]^-{+1}\ar[d]^\hbar&\\
\shm_0\ar[r]^{\hbar^{n+1}}&\shm_{n+1}\ar[r]
                                       &\shm_n\ar[r]^-{+1}&
}\eneqn
Hence $\im(\phi^j_{n-1})\subset \im(\phi^j_{n})\subset\shn^{j+1}_0$.
Therefore, the sequence $\{\im \phi_n^j\}_n$ of coherent $\sha$-submodules 
of $\shn^{j+1}_0$ is increasing and thus locally stationary. 
It follows from \eqref{diag:cross} and Lemma~\ref{le:crosslemma} that 
\eq\label{eq:decreasing}
&& \parbox{62ex}{
the decreasing
sequence $\{\im(\shn^j_n\to \shn^j_0)\}_n$ is locally stationary
for any $j\in\Z$.
}
\eneq
Since $\coker(\shn^j_{n-1}\to[\hbar] \shn^j_n)\simeq \im(\shn^j_n\to \shn^j_0)$ 
by \eqref{eq:lexse3}, we deduce that 
\eqn
&&
\proolim[n]\coker(\shn^j_n\to[\hbar] \shn^j_n)
\simeq \proolim[n]\coker(\shn^j_{n-1}\to[\hbar] \shn^j_n)
\eneqn
is locally representable.

Since 
$\ker(\shn^{j}_{n-1}\to[\hbar]\shn^{j}_{n})\simeq
\shn^{j-1}_0/\im(\shn^{j-1}_n\to\shn^{j-1}_0)$ 
by \eqref{eq:lexse3}, we get that 
$\proolim[n]\ker(\shn^j_n\to[\hbar] \shn^j_n)
\simeq \proolim[n]\ker(\shn^j_{n-1}\to[\hbar] \shn^j_n)$ is 
locally representable.

Therefore, we have proved \eqref{eq:repr}.
Then by Proposition~\ref{prop:procoh}, 
$\prolim[n]\shn_n^j$ is a coherent $\sha$-module and
$\{\shn_n^{j}\}_n$ satisfies the Mittag-Leffler condition.

\vspace{0.5em}
\noindent
(3) \quad Hence it remains to prove that 
$H^j(\shm)\isoto\prolim[n]\shn^j_n$ for any $j$.
Set $\shm'=(\proolim[n]\sha_n)\ltens[\sha]\shm\in\Der^+(\Pro(\md[\sha]))$ and
$\shn^j=H^j(\shm')\simeq\proolim[n]\shn^j_n\in\Pro(\md[\sha])$.
Lemma~\ref{lem:ccpro} implies that
$$\shm\isoto\rpi\shm'.$$  
Since the $\shn^j_n$'s are coherent $\sha$-modules,
for any any $U\in\BB$, $H^i(U;\shn^j_n)=0$ ($i>0$)
and $\{\shn^j_n(U)\}_n$ satisfies the 
Mittag-Leffler condition.
Hence in the exact sequence
\eqn
&&0\to \RR^1\pi\bl\proolim[n]H^{i-1}(U;\shn^j_n)\br
\to H^i(U;\rpi\shn^j)\to\prolim[n]H^i(U;\shn^j_n)\to0,
\eneqn
the first and the last term vanish, and we obtain
$\RR^i\pi\shn^j=0$ for any $i>0$.
Let us show that
$H^j(\shm)\isoto\prolim[n]\shn^j_n$ by induction on
$j$.
Assuming $H^j(\shm)\isoto\prolim[n]\shn^j_n$ for $j<c$, 
let us show that $H^c(\shm)\isoto\prolim[n]\shn^c_n$.
By the assumption, $H^i(\shm)\isoto\rpi(\shn^i)$ for any $i<c$.
Hence $\tau^{<c}\shm\isoto\rpi(\tau^{<c}\shm')$.
Since $\shm\isoto\rpi\shm'$, we obtain
$\tau^{\ge c}\shm\isoto\rpi(\tau^{\ge c}\shm')$.
Hence taking the $c$-th cohomology, we obtain
$H^c(\shm)\isoto\RR^0\pi H^c(\shm')\simeq\prolim[n]\shn^c_n$.
\QED
The next result will be useful.
\begin{proposition}\label{pro:cocotens}
Assume that $\sha^\rop/\hbar\sha^\rop$ is a Noetherian ring
and the flabby dimension of $X$ is finite. 
If $\shm\in\Derb(\sha)$ is cohomologically complete, then for any
$\shn\in\Derb_\coh(\sha^\rop)$, the object $\shn\ltens[\sha]\shm$ 
of $\Der^{-}(\Z[\hbar]_X)$ is cohomologically
 complete.
\end{proposition}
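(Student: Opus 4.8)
The plan is to reduce, after localizing on $X$, to the case where $\shn$ is replaced by a bounded‑above complex of \emph{finite free} $\sha^\rop$‑modules, and then to use the finiteness of the flabby dimension of $X$ to cut the (possibly unbounded‑below) resulting complex down to bounded pieces for which cohomological completeness is clear. Note first that being cohomologically complete is a local property and depends only on the image in $\Der(\Z_X[\hbar])$: by Proposition~\ref{prop:cccr}~(i) it amounts to the vanishing of $\indlim[U\ni x]\Ext[{\Z[\hbar]}]{j}\bl\Z[\hbar,\opb\hbar],H^i(U;\scbul)\br$ for all $x\in X$, all $i\in\Z$ and $j=0,1$. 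Hence we may shrink $X$ and assume that there is a quasi‑isomorphism $\shl^\scbul\to\shn$ with $\shl^\scbul$ a complex of free $\sha^\rop$‑modules of finite rank concentrated in degrees $\le 0$ (this exists since $\sha^\rop$ is a coherent ring — its base ring $\sha^\rop/\hbar\sha^\rop$ being Noetherian by hypothesis — and $\shn$ has bounded coherent cohomology; in general $\shl^\scbul$ is unbounded below). As $\shl^\scbul$ is a bounded‑above complex of flat modules and $\shm\in\Derb(\sha)$, say concentrated in degrees $\le b$, we have $\shn\lltens[\sha]\shm\simeq\shl^\scbul\tens[\sha]\shm$, a complex of $\Z_X[\hbar]$‑modules concentrated in degrees $\le b$.

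For $n\ge0$ let $\sigma^{\ge -n}\shl^\scbul$ be the brutal (stupid) truncation of $\shl^\scbul$ in degrees $\ge -n$: a bounded complex of free $\sha^\rop$‑modules of finite rank. The brutal filtration exhibits $(\sigma^{\ge -n}\shl^\scbul)\tens[\sha]\shm$ as obtained from the shifted finite direct sums $\shm^{\oplus r_p}[-p]$ ($-n\le p\le 0$) by finitely many extensions; since the full subcategory of cohomologically complete objects of $\Der(\Z_X[\hbar])$ is triangulated and stable by shifts and finite direct sums, and since $\shm$ is cohomologically complete, so is $(\sigma^{\ge -n}\shl^\scbul)\tens[\sha]\shm$. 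The brutal truncation triangle gives a distinguished triangle
$$(\sigma^{\ge -n}\shl^\scbul)\tens[\sha]\shm\To\shl^\scbul\tens[\sha]\shm\To\shc_n\To[+1],$$
where $\shc_n=(\sigma^{<-n}\shl^\scbul)\tens[\sha]\shm$ is concentrated in degrees $\le -n-1+b$.

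Now let $d$ be the flabby dimension of $X$, so $H^k(U;\shf)=0$ for $k>d$, any sheaf $\shf$ and any open $U$. Fix $i\in\Z$. For a complex $\shk$ bounded above and any $m\le i-d$, the triangle $\tau^{<m}\shk\to\shk\to\tau^{\ge m}\shk\To[+1]$ together with $H^k(U;\tau^{<m}\shk)=0$ for $k\ge m+d$ (the hypercohomology spectral sequence) yields $H^i(U;\shk)\isoto H^i(U;\tau^{\ge m}\shk)$. Apply this with $m=i-d$ to $\shk=\shl^\scbul\tens[\sha]\shm$ and to $\shk=(\sigma^{\ge -n}\shl^\scbul)\tens[\sha]\shm$; choosing $n$ large enough that $-n-1+b<i-d$, the natural map $(\sigma^{\ge -n}\shl^\scbul)\tens[\sha]\shm\to\shl^\scbul\tens[\sha]\shm$ is an isomorphism after applying $\tau^{\ge i-d}$, whence $H^i\bl U;(\sigma^{\ge -n}\shl^\scbul)\tens[\sha]\shm\br\isoto H^i(U;\shl^\scbul\tens[\sha]\shm)$ for \emph{every} open $U$. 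The left‑hand side belongs to a cohomologically complete complex, so Proposition~\ref{prop:cccr}~(i) gives $\indlim[U\ni x]\Ext[{\Z[\hbar]}]{j}\bl\Z[\hbar,\opb\hbar],H^i(U;\shl^\scbul\tens[\sha]\shm)\br\simeq0$ for $j=0,1$; as $i$ was arbitrary, $\shl^\scbul\tens[\sha]\shm\simeq\shn\lltens[\sha]\shm$ is cohomologically complete, as desired.

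The main obstacle is precisely the unboundedness below of the free resolution $\shl^\scbul$, which is unavoidable because $\sha_0$ is only assumed Noetherian and not of finite homological dimension: one cannot pass directly from ``$(\sigma^{\ge -n}\shl^\scbul)\tens\shm$ is cohomologically complete for each $n$'' to the same statement for the colimit without a truncation device, and it is exactly the finiteness of the flabby dimension of $X$ that makes each $H^i(U;\scbul)$ depend only on a bounded piece of the complex. The Noetherianity of $\sha^\rop/\hbar\sha^\rop$ enters only to guarantee the existence, locally, of the finite free resolution $\shl^\scbul$.
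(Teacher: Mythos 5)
Your proof is correct and follows essentially the same strategy as the paper's: locally replace $\shn$ by a bounded-above complex of finite free $\sha^\rop$-modules, observe that tensoring $\shm$ with a \emph{bounded} finite free complex yields a cohomologically complete object (since the right orthogonal $\Der(\shr^\loc)^{\perp r}$ is triangulated, hence stable under shifts, finite sums and cones), and use the finiteness of the flabby dimension to show that the error term coming from the brutal truncation, being concentrated in degrees $\le -n-1+b$, does not affect any fixed cohomology degree once $n$ is large. The only (cosmetic) difference is which of the equivalent characterizations in Proposition~\ref{prop:cccr}~(i) you verify: the paper works at the sheaf level with criterion (b), bounding $H^i\rhom[{\Z[\hbar]}](\Z[\hbar,\hbar^{-1}],\scbul)$ directly, while you verify the pointwise criterion (c) via a comparison of $H^i(U;\scbul)$ for the truncations; both are valid, and yours makes the flabby-dimension input slightly more explicit.
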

\begin{proof}
By the assumption on the flabby dimension, there exists $a\in\Z$ such that
$H^i\rhom[{\Z[\hbar]}](\Z[\hbar,\hbar^{-1}],\shf)=0$ for any 
$\shf\in\Der^{\le 0}(\Z_X[\hbar])$ and any $i>a$.

For any $n\in\Z$ we can locally find a finite complex $L$ of 
free $\sha ^\rop$-modules of finite rank such that there exists a distinguished triangle 
$L\ltens[\sha]\shm\to\shn\ltens[\sha]\shm\to G$ where $G\in \Der^{<n}(\Z_X[\hbar])$.
Since $L\ltens[\sha]\shm$ is \coco,
$H^i\rhom[\sha](\sha ^\loc,\shn\ltens[\sha]\shm)\simeq
H^i\rhom[\sha](\sha ^\loc,G)=0$ for $i>n+a$.
Hence $\shn\ltens[\sha]\shm$ is \coco.
\end{proof}

\subsubsection*{Flatness}
\begin{theorem}\label{th:flat}
Assume that $\sha^\rop/\hbar\sha^\rop$ is a Noetherian ring
and the flabby dimension of $X$ is finite.
Let $\shm$ be an $\sha$-module.
Assume the following conditions:
\banum
\item $\shm$ has no $\hbar$-torsion,
\item $\shm$ is \coco,
\item $\shm/\hbar\shm$ is a flat $\sha_0$-module.
\eanum
Then
$\shm$ is a flat $\sha$-module.
\end{theorem}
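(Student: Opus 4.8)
The plan is to convert the conclusion into a $\tor$-vanishing statement and then prove that statement by dévissage on the $\hbar$-torsion. Since $\sha$ is left Noetherian, hence coherent, by Theorem~\ref{th:formalfini1}, every right $\sha$-module is the filtered colimit of its coherent submodules and $\tor$ commutes with filtered colimits; so it suffices to show that $\shn\ltens[\sha]\shm$ is concentrated in degree $0$ for every coherent right $\sha$-module $\shn$, equivalently that $\tor^\sha_j(\shn,\shm)=0$ for all $j>0$. This yields $\tor^\sha_1(-,\shm)=0$ on all right modules, which is flatness.

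First I would treat the case where $\shn$ has \emph{no} $\hbar$-torsion. Then $\gr(\shn)\simeq\shn/\hbar\shn$ is concentrated in degree $0$ by Lemma~\ref{lem:grHa}, and likewise $\gr(\shm)\simeq\shm/\hbar\shm$ by hypothesis~(a). By Proposition~\ref{pro:tensgr},
\[
\gr(\shn\ltens[\sha]\shm)\simeq\gr(\shn)\ltens[\shao]\gr(\shm)\simeq(\shn/\hbar\shn)\ltens[\shao](\shm/\hbar\shm),
\]
and since $\shm/\hbar\shm$ is a flat $\shao$-module by hypothesis~(c), this equals $(\shn/\hbar\shn)\tens[\shao](\shm/\hbar\shm)$, concentrated in degree $0$. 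On the other hand $\shn\ltens[\sha]\shm$ is \coco\ by Proposition~\ref{pro:cocotens} --- this is where the hypotheses that $\sha^\rop/\hbar\sha^\rop$ is Noetherian and that $X$ has finite flabby dimension enter, together with the fact that $\shm$ is itself \coco\ (hypothesis~(b)). Applying Proposition~\ref{pro:conserv} with $a=0$ gives $H^i(\shn\ltens[\sha]\shm)=0$ for $i<0$; as a derived tensor product of modules lies in degrees $\le0$, we conclude that $\shn\ltens[\sha]\shm$ is concentrated in degree $0$.

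Next I would handle the $\hbar$-torsion part. If $\shp$ is an $\shao$-module, then using $\shao\ltens[\sha]\shm\simeq\gr(\shm)\simeq\shm/\hbar\shm$ (in degree $0$, by~(a)) and associativity of the derived tensor product,
\[
\shp\ltens[\sha]\shm\simeq\shp\ltens[\shao]\bl\shao\ltens[\sha]\shm\br\simeq\shp\ltens[\shao](\shm/\hbar\shm)\simeq\shp\tens[\shao](\shm/\hbar\shm),
\]
again concentrated in degree $0$ by flatness of $\shm/\hbar\shm$ over $\shao$. More generally, if $\shp$ is a coherent right $\sha$-module locally satisfying $\hbar^k\shp=0$, the finite filtration $0=\hbar^k\shp\subset\cdots\subset\hbar\shp\subset\shp$ has successive quotients that are $\shao$-modules, and the associated distinguished triangles together with the previous case show that $\shp\ltens[\sha]\shm$ is concentrated in degree $0$.

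Finally, for an arbitrary coherent right $\sha$-module $\shn$, let $\shn_\hbar\subset\shn$ be its $\hbar$-torsion submodule; since $\sha$ is Noetherian, $\shn_\hbar$ is coherent and locally annihilated by a power of $\hbar$, while $\shn/\shn_\hbar$ is coherent and has no $\hbar$-torsion. From the distinguished triangle
\[
\shn_\hbar\ltens[\sha]\shm\to\shn\ltens[\sha]\shm\to(\shn/\shn_\hbar)\ltens[\sha]\shm\To[+1]
\]
and the two previous steps, whose outer terms are concentrated in degree $0$, we deduce that $\shn\ltens[\sha]\shm$ is concentrated in degree $0$, which completes the proof. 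The main obstacle is precisely that the clean "$\gr$ plus cohomological completeness'' argument only forces concentration in degree $0$ when $\shn$ has no $\hbar$-torsion (otherwise $\gr\shn$ already contributes a nonzero term in degree $-1$ and Proposition~\ref{pro:conserv} is not enough); the $\hbar$-torsion case must be dealt with separately by a change-of-rings reduction to $\shao$-modules, where the flatness of $\shm/\hbar\shm$ does the work.
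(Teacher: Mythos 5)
Your proof is correct, and it actually tightens a step that the paper leaves unexplained.  The paper's proof takes the same skeleton you start from: for a coherent right $\sha$-module $\shn$ one shows $\shn\ltens[\sha]\shm\in\Der^{\ge 0}$ by combining Proposition~\ref{pro:cocotens} (so $\shn\ltens[\sha]\shm$ is \cc) with Proposition~\ref{pro:conserv} after computing $\gr(\shn\ltens[\sha]\shm)\simeq(\gr\shn)\ltens[\shao](\gr\shm)$.  But the paper then asserts, for \emph{every} coherent $\shn$, that $(\gr\shn)\ltens[\shao](\gr\shm)\in\Der^{\ge 0}$.  Since $\gr\shm$ is flat and concentrated in degree $0$, this is equivalent to $H^{-1}(\gr\shn)\tens[\shao]\gr\shm=0$; and $H^{-1}(\gr\shn)=\ker(\hbar\cl\shn\to\shn)$ is nonzero as soon as $\shn$ has $\hbar$-torsion.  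Already for $X=\{\pt\}$, $\sha=\coro$, $\shm=\coro$, $\shn=\coro/\hbar\coro$, one finds $H^{-1}(\gr(\shn\ltens[\sha]\shm))\neq 0$ while $\shn\ltens[\sha]\shm$ is concentrated in degree $0$: Proposition~\ref{pro:conserv} simply does not apply there.  You spotted exactly this and resolved it the right way: apply the gr-plus-completeness argument only when $\shn$ is $\hbar$-torsion-free, treat $\shao$-modules by the change-of-rings isomorphism $\shp\ltens[\sha]\shm\simeq\shp\ltens[\shao](\shm/\hbar\shm)$ using flatness of $\shm/\hbar\shm$, climb up to modules killed by $\hbar^k$ by the finite filtration, and finish with the torsion/torsion-free extension (using Noetherianity to see that $\shn_\hbar$ is coherent and locally killed by a power of $\hbar$).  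The only small blemish is the phrase ``$\sha$ is left Noetherian'': for right modules you should invoke Theorem~\ref{th:formalfini1} applied to $\sha^\rop$ (which is what the standing hypothesis on $\sha^\rop/\hbar\sha^\rop$ is for), but the argument is otherwise sound and more complete than what is printed.
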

\Proof
Let $\shn$ be a coherent  $\sha^\rop$-module.
It is enough to show that we have $H^i(\shn\ltens[\sha]\shm)=0$ for any $i<0$.
We know by Proposition~\ref{pro:cocotens} that  $\shn\ltens[\sha]\shm$ is \coco.
Since $\gr(\shn\ltens[\sha]\shm)\simeq
(\gr\shn)\ltens[\shao](\gr\shm)$ belongs to $\Der^{\ge0}(\Z_X)$,
we have $\shn\ltens[\sha]\shm\in\Der^{\ge0}(\Z[\hbar]_X)$
by Proposition~\ref{pro:conserv}.
\QED
\begin{corollary}
In the situation of Theorem~\ref{th:flat}, assume
moreover that $\shm/\hbar\shm$ is a faithfully flat $\sha_0$-module. Then 
 $\shm$ is a faithfully flat $\sha$-module.
\end{corollary}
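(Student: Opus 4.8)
The plan is to reduce the statement to a fact about stalks and then to an elementary ring-theoretic observation. By Theorem~\ref{th:flat} we already know that $\shm$ is a flat $\sha$-module, so the only thing left is to check faithfulness, i.e.\ that $\shn\tens[\sha]\shm=0$ forces $\shn=0$ for a (right) $\sha$-module $\shn$. Since the stalk functors are exact and jointly conservative and since tensor products commute with stalks, it suffices to prove that $\shm_x$ is a faithfully flat $\sha_x$-module for every $x\in X$: indeed, given $\shn\neq0$, choose $x$ with $\shn_x\neq0$; then $(\shn\tens[\sha]\shm)_x\simeq\shn_x\tens[\sha_x]\shm_x\neq0$, whence $\shn\tens[\sha]\shm\neq0$.

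Fix $x\in X$ and set $A=\sha_x$, $A_0=A/\hbar A\simeq(\sha_0)_x$, $M=\shm_x$ and $M_0=M/\hbar M\simeq(\shm/\hbar\shm)_x$. Then $M$ is a flat $A$-module (stalk of a flat module) and $M_0$ is a faithfully flat $A_0$-module by hypothesis; moreover $\hbar$ is central in $A$ and, as already recalled in the text, $\hbar\sha_x$ is contained in the Jacobson radical of $A$. I would then invoke the standard criterion that a flat left module $M$ over a unital ring $A$ is faithfully flat as soon as $M\neq\mathfrak a M$ for every maximal right ideal $\mathfrak a$ of $A$ (any nonzero right module has a nonzero cyclic submodule $A/\mathfrak a$ with $\mathfrak a$ a proper right ideal, hence contained in a maximal one, and flatness of $M$ propagates the nonvanishing along the resulting surjection and injection). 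Now let $\mathfrak a$ be a maximal right ideal of $A$. Since $\hbar$ lies in the Jacobson radical, $\hbar\in\mathfrak a$, so $\hbar A\subset\mathfrak a$ and $\bar{\mathfrak a}\eqdot\mathfrak a/\hbar A$ is a proper right ideal of $A_0$. From $\hbar M\subset\mathfrak a M$ we obtain $M/\mathfrak a M\simeq M_0/\bar{\mathfrak a}M_0$, which is nonzero because $M_0$ is faithfully flat over $A_0$. Hence $M=\shm_x$ is faithfully flat over $A=\sha_x$, and the reduction of the previous paragraph concludes the proof.

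There is essentially no hard step here; the one point requiring care is that faithful flatness of $\shm$ cannot be obtained directly by the ``$\hbar$-adic'' Nakayama argument used for flatness. For a general $\sha$-module $\shn$, the vanishing of $\shn/\hbar\shn$ — which does follow from $\shn\tens[\sha]\shm=0$ together with faithful flatness of $\shm/\hbar\shm$ — only yields $\shn=\hbar\shn$, and this does not force $\shn=0$ (take $\shn=\sha^\loc$). This is exactly why the argument is organized stalkwise and through maximal right ideals: there the containment $\hbar\in\mathfrak a$ makes reduction modulo $\hbar$ harmless, so faithful flatness of $\shm/\hbar\shm$ over $\sha_0$ can be fed in cleanly.
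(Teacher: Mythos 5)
Your argument is correct, but it follows a genuinely different route from the paper. The paper reduces to coherent $\shn$ (valid because any $\sha^\op$-module is a filtered colimit of coherent submodules, and flatness of $\shm$ lets one pass to a submodule), applies $\gr$ via Proposition~\ref{pro:tensgr} to get $\gr\shn\lltens[\sha_0]\gr\shm\simeq0$, invokes faithful flatness of $\gr\shm$ to conclude $\gr\shn\simeq0$, and finishes with the conservativity of $\gr$ on $\Derb_\coh(\sha)$ (Corollary~\ref{cor:conservative1}). You instead reduce to stalks and then run the classical noncommutative criterion: a flat module $M$ over $A$ is faithfully flat iff $M\ne\mathfrak aM$ for every maximal right ideal $\mathfrak a$, which you verify using the fact recorded just before Lemma~\ref{le:nakayamacor} that $\hbar\sha_x$ lies in the Jacobson radical of $\sha_x$, so every maximal right ideal contains $\hbar A$ and reduction mod $\hbar$ is harmless. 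Your approach is more elementary — it avoids the $\gr$-conservativity machinery entirely — and your closing remark correctly pinpoints why the direct ``Nakayama'' argument cannot work for faithfulness. One step you leave implicit is that faithful flatness of the sheaf $\shm/\hbar\shm$ over $\sha_0$ gives faithful flatness of the stalk $(\shm/\hbar\shm)_x$ over $(\sha_0)_x$; this is true (given a nonzero $(\sha_0)_x^\op$-module $N$ killed by $(\shm_0)_x$, the skyscraper $i_{x*}N$ is a nonzero $\sha_0^\op$-module killed by $\shm_0$, contradicting sheaf-level faithfulness), but it deserves a sentence, since the paper's argument sidesteps this passage to stalks altogether.
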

\begin{proof}
Let $\shn$ be a coherent $\sha^\op$-module such that 
$\shn\tens[\sha]\shm \simeq 0$. We have to show that $\shn\simeq 0$. 
By Theorem~\ref{th:flat}, we know that $\shm$ is flat, so that 
$\shn\tens[\sha]\shm \simeq \shn\lltens[\sha]\shm$.
Therefore 
\eqn
&&(\gr\shn)\ltens[\sha_0](\gr\shm)\simeq\gr(\shn\tens[\sha]\shm)\simeq0
\eneqn
 and the hypothesis that
$\shm/\hbar\shm$ is faithfully flat implies that $\gr\shn\simeq 0$. 
Since $\shn$ is coherent,  Corollary~\ref{cor:conservative1}
 implies that $\shn\simeq 0$.
\end{proof}

\begin{proposition}\label{prop:cut}
Assume \eqref{eq:FDringa} and \eqref{eq:FDringb}.
Let $U$ be an open subset of $X$ satisfying:
\eq
&&\text{$U\cap V\in \BB$ for any $V\in \BB$.}
\eneq
Then for any coherent $\sha$-module $\M$,
we have
\bnum
\item
$R^n\sect_U(\M)=0$ for any $n\not=0$,
\item
$\sect_U(\sha)\tens[\sha]\M\to\sect_U(\M)$ is an isomorphism,
\item
$\sect_U(\sha)$ is a flat $\sha^\rop$-module.
\enum
\end{proposition}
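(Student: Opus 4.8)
The plan is to prove (i) first — it carries all the geometric content — and then to deduce (ii) and (iii) formally, using the coherence machinery of Theorem~\ref{th:formalfini1}. Write $j\cl U\into X$ for the open inclusion, so that $\sect_U=\oim{j}\,\opb{j}$ (underived) and $R^n\sect_U(\shf)$ is the sheaf associated with the presheaf $V\mapsto H^n(U\cap V;\shf)$. For (i) I would compute stalks: the stalk of $R^n\sect_U(\M)$ at $x$ is $\indlim[V\ni x]H^n(U\cap V;\M)$, and since $\BB$ is a base the colimit may be taken over $V\in\BB$ with $x\in V$. For such $V$ one has $U\cap V\in\BB$ by hypothesis, so $H^n(U\cap V;\M)=0$ for $n>0$ by Theorem~\ref{th:formalfini1}~(v) (as $\M$ is coherent); hence $R^n\sect_U(\M)=0$ for $n\ne0$.

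For (ii) the idea is to compare the right exact functor $\sect_U(\sha)\tens[\sha](\scbul)$ with $\sect_U(\scbul)$ on a free resolution. The question being local, near a given point I would choose — using that $\sha$ is coherent — a resolution $\shl^\scbul\to\M$ (in degrees $\le0$, possibly unbounded below) by free $\sha$-modules of finite rank. Since $\sect_U(\sha^{\oplus r})=\sect_U(\sha)^{\oplus r}=\sect_U(\sha)\tens[\sha]\sha^{\oplus r}$ naturally, there is a natural isomorphism of complexes $\sect_U(\shl^\scbul)\isoto\sect_U(\sha)\tens[\sha]\shl^\scbul$. On one side, the syzygies $\shk_i=\ker(\shl^{-i}\to\shl^{-i+1})$ (with $\shk_0=\M$) are coherent, so by (i) each short exact sequence $0\to\shk_{i+1}\to\shl^{-i}\to\shk_i\to0$ stays exact after applying $\sect_U$; splicing these shows that $\sect_U(\shl^\scbul)$ is a resolution of $\sect_U(\M)$. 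On the other side, right exactness of $\sect_U(\sha)\tens[\sha](\scbul)$ identifies $H^0$ of $\sect_U(\sha)\tens[\sha]\shl^\scbul$ with $\sect_U(\sha)\tens[\sha]\M$. Matching the two computations of $H^0$ gives the isomorphism of (ii) (one checks it is the natural map $\sect_U(\sha)\tens[\sha]\M\to\sect_U(\M)$); matching $H^{-n}$ for $n>0$, which vanishes on the resolution side, gives $H^{-n}\bl\sect_U(\sha)\lltens[\sha]\M\br=0$ for $n>0$ and every coherent $\M$ — which I will use for (iii).

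For (iii), flatness of the $\sha^\rop$-module $\sect_U(\sha)$ is a condition on stalks, and each $\sha_x$ is left Noetherian by Theorem~\ref{th:formalfini1}~(i). Over a left Noetherian ring a right module is flat as soon as its first $\mathrm{Tor}$ against each cyclic module vanishes, and every left ideal of $\sha_x$ is the stalk at $x$ of a coherent left ideal $\shi\subset\sha$; so it is enough that $H^{-1}\bl\sect_U(\sha)\lltens[\sha](\sha/\shi)\br=0$ for all coherent $\shi$, which is the case $\M=\sha/\shi$ of the previous step.

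The hard part is really (i): the hypothesis $U\cap V\in\BB$ for all $V\in\BB$ is precisely what forces $\sect_U$ to be exact on coherent modules, i.e.\ to have no higher direct images there, and it is the only place where the acyclicity axiom for $\BB$ enters. Once (i) is in hand, (ii) is a bookkeeping comparison of $H^\scbul$ of a free resolution and (iii) is a standard Noetherian flatness criterion; the two points to watch are the coherence of the iterated syzygies in (ii) and the passage between stalks and coherent ideals in (iii), both immediate from Theorem~\ref{th:formalfini1}.
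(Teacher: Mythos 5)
Your proof is correct and follows essentially the same route as the paper: (i) is identical (reduce to stalks, use that $\BB$ is a base and Theorem~\ref{th:formalfini1}~(v)), and (ii) and (iii) are the standard comparison argument. The only cosmetic difference is in (ii)--(iii): you take a full local free resolution $\shl^\scbul\to\M$, so you obtain in one stroke the derived statement $H^{-n}\bl\sect_U(\sha)\lltens[\sha]\M\br=0$ for $n>0$, and then (iii) drops out immediately from the Noetherian criterion $\Tor[{\sha_x}]{1}\bl(\sect_U\sha)_x,\sha_x/I\br=0$. The paper instead uses a two-term presentation $0\to\shn\to\shl\to\M\to0$ and a diagram chase (bootstrapping surjectivity from $\M$ to $\shn$) to get (ii), and then deduces (iii) by observing that $\M\mapsto(\sect_U\sha)_x\tens[{\sha_x}]\M_x$ is exact on coherent modules. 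Same ideas, slightly different bookkeeping; your version packages the Tor-vanishing more explicitly.
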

\begin{proof}
(i)\quad 
Since $R^n\sect_U(\M)$ is the sheaf associated with the presheaf 
$V\mapsto H^n(U\cap V;\M)$, (i) follows from
Theorem~\ref{th:formalfini1}~(v).

\vspace{0.4em}
\noindent
(ii)\quad The question being local, we may assume that
we have an exact sequence $0\to\shn\to\shl\to\M\to0$, where
$\shl$ is a free $\sha$-module of finite rank.
Then, we have a commutative diagram with exact rows by (i):
\eqn&&\xymatrix{
&\sect_U(\sha)\tens[\sha]\shn\ar[r]\ar[d]
&\sect_U(\sha)\tens[\sha]\shl\ar[r]\ar[d]^{\txt{\large$\wr$}}
&\sect_U(\sha)\tens[\sha]\M\ar[r]\ar[d]&0\\
0\ar[r]&\sect_U(\shn)\ar[r]
&\sect_U(\shl)\ar[r]&\sect_U(\M)\ar[r]&0.}
\eneqn
Since the middle vertical arrow is an isomorphism,
$\sect_U(\sha)\tens[\sha]\M\to \sect_U(\M)$ is an epimorphism.
Applying this to $\shn$,
$\sect_U(\sha)\tens[\sha]\shn\to \sect_U(\shn)$ is an epimorphism.
Hence, $\sect_U(\sha)\tens[\sha]\M\to \sect_U(\M)$ is an isomorphism.

\vspace{0.4em}
\noindent
(iii)\quad By (i) and (ii),
$\M\mapsto\sect_U(\sha)\tens[\sha]\M$ is an exact functor on the category
of coherent $\sha$-modules.
It follows that for all $x\in X$, the functor
$\M\mapsto(\sect_U(\sha))_x\tens[{\sha_x}]\M_x$
is exact on the category $\mdcoh[\sha]$. Therefore, $(\sect_U(\sha))_x$ is a flat
$\sha_x^\rop$-module.
\end{proof}

\begin{remark}
The results of this chapter can be generalized in the following situation.
Let $\sha$ be a sheaf of rings on a topological space $X$
and let $\shi$ be a both-sided sheaf of ideals of $\sha$.
We assume that:\\
 there exists locally a section $s$ of $\shi$ such that
$\sha\ni a\mapsto as$ and $\sha\ni a\mapsto sa$ give isomorphisms
$\sha\isoto\shi$.

We set $\sha_0=\sha/\shi$, $\sha(-n)=\shi^n\subset\sha$ 
and $\sha(n)=\rhom[\sha](\sha(-n),\sha)$ for $n\ge0$.

Then we have $\sha(n)\subset\sha(n+1)$,
and $\sha(n)\otimes_\sha\sha(m)\simeq\sha(n+m)$.

We set $\sha^\loc=\indlim[n]\sha(n)$ and for an $\sha$-module $\shm$, we set
$\shm(n)=\sha(n)\otimes_{\sha}\shm$.

We say that $\shm$ is $\shi$-torsion free if
$\shm(-1)\to\shm$ is a monomorphism. Of course, $\sha$ is 
$\shi$-torsion free.

Finally, for an $\sha$-module $\shm$ we set
$\widehat{\shm}\seteq\prolim[n]\coker(\shm(-n)\to\shm)$.

Instead of \eqref{eq:FDringa}, we assume 
\eq\label{eq:FDringaa}
&&\left\{\parbox{60ex}{\bnum
\item $\sha\isoto\widehat{\sha}$,
\item $\shao$ is a left Noetherian ring.
\enum
}\right.  
\eneq
Under the assumptions \eqref{eq:FDringaa} and \eqref{eq:FDringb},
all the results of this chapter hold
with  suitable modifications.

In particular, our theory can be applied when $X=T^*M$ is the
cotangent bundle to a complex manifold $M$ and 
$\sha=\widehat{\she_X}(0)$ is the ring of formal microdifferential
operators of order $0$ 
(see Section~\ref{section:DQcotg} for more details on the ring of
formal microdifferential operators).
\end{remark}

\chapter{$\DQ$-algebroids}\label{chapter:DQ}  

\section{Algebroids}\label{section:algebroid}

In this section, $X$ denotes a topological space and 
recall that $\cora$ is a commutative unital ring.
A $\cora$-linear category means a category $\shc$ such that
$\Hom[\shc](X,Y)$ is endowed with a $\cora$-module structure
for any $X$, $Y\in\shc$, and the
composition map $\Hom[\shc](X,Y)\times\Hom[\shc](Y,Z)\to\Hom[\shc](X,Z)$
is $\cora$-bilinear for any $X$, $Y$, $Z\in\shc$. One defines
similarly the notion of a $\cora$-linear stack.

The notion of an algebroid  has been introduced in \cite{Ko2}. We refer
to \cite{D-P} for a more systematic study and to \cite{K-S3} 
for an introduction to stacks.
Recall that a $\cora$-algebroid  $\sha$ on $X$ 
\glossary{algebroid}%
is a $\cora$-linear stack locally non empty and such that 
for any open subset $U$ of $X$, any two objects of $\sha(U)$ are locally 
isomorphic.

If $A$ is a $\cora$-algebra (an algebra, not a sheaf of algebras), 
we denote by $A^+$ the $\cora$-linear category with 
one object and having $A$ as the endomorphism ring
of this object. 

Let $\sha$ be a sheaf of $\cora$-algebras on $X$ and consider the prestack 
$U\mapsto \sha(U)^+$ ($U$ open in $X$). 
We denote by $\sha^+$ the associated stack.
Then $\sha^+$ is a $\cora$-algebroid and is 
called the $\cora$-algebroid  associated with $\sha$.  
The category $\sha^+(X)$ is equivalent to the full subcategory
of $\Mod(\sha^\op)$ consisting of objects locally isomorphic to $\sha^\op$.

Conversely, if $\sha$ is an algebroid on $X$ and
$\sigma\in\sha(X)$, then $\sha$ is equivalent to the algebroid
$\shend_\sha(\sigma)^+$.

For an algebroid $\sha$ and $\sigma$, $\tau\in\sha(U)$,
the $\cora$-algebras $\shend_{\sha}(\sigma)$ and $\shend_{\sha}(\tau)$
are locally isomorphic.
Hence, any definition of local nature concerning sheaves of
$\cora$-algebras, such as
being coherent or Noetherian, extends to $\cora$-algebroids.

Recall that for an algebroid $\sha$,  the algebroid $\sha^\rop$ is defined by  
 $\sha^\rop(U)=(\sha(U))^\rop$ ($U$ open in $X$). Then, if $\sha$ is a sheaf of $\cora$-algebras,
$(\sha^\rop)^+\simeq(\sha^+)^\rop$.

\begin{convention}\label{conv1}
If $\sha$ is a sheaf of algebras and if there is no risk of confusion, 
we shall keep the same notation $\sha$ to denote the associated algebroid.
\end{convention}
Note that two algebras may not be isomorphic 
even if the associated algebroids are equivalent.
\begin{example}
Let $X$ be a complex manifold, $\shl$ a line bundle on $X$ and denote as usual 
by $\shd_X$ the ring of differential operators on $X$. The ring of $\shl$-twisted  
differential operators is given by
\eqn
&&\shd_X^\shl\eqdot \shl\tens[\sho_X]\shd_X\tens[\sho_X]\shl^{\tens-1}.
\eneqn
In general the two algebras $\shd_X$ and $\shd_X^\shl$ are not
isomorphic although the associated algebroids are
equivalent. The equivalence is obtained by using the bi-invertible 
module $\shd_X\tens[\sho_X]\shl^{\tens-1}$
(see Definition~\ref{def:biinv1} and Lemma~\ref{lem:algiso}
below). 
\end{example}

Let $\shu=\{U_i\}_{i\in I}$ be an open covering of $X$.
In the sequel we set $U_{ij}\seteq U_i\cap U_j$, 
$U_{ijk}\seteq U_{i}\cap U_j\cap U_k$, etc.

Consider the data of
\eq\label{eq:algdatum0}
&& \left\{\parbox{300 pt}{
a $\cora$-algebroid $\sha$ on $X$,\\
$\sigma_i\in \sha(U_i)$ and 
isomorphisms $\phi_{ij}\cl \sigma_j\vert_{U_{ij}}\isoto \sigma_i\vert_{U_{ij}}$.
}\right. 
\eneq
To these data, we associate:
\begin{itemize}
\item
$\sha_i=\shend_\sha(\sigma_i)$,
\item
$f_{ij}\cl \sha_j\vert_{U_{ij}}\isoto \sha_i\vert_{U_{ij}}$,
the $\cora$-algebra isomorphism 
$a\mapsto \phi_{ij}\circ a\circ\opb{\phi_{ij}}$,
\item
$a_{ijk}$, the invertible element of $\sha_i(U_{ijk})$
given by $\phi_{ij}\circ\phi_{jk}\circ\opb{\phi_{ik}}$.
\end{itemize}
Then:
\eq\label{eq:algebroid}
&& \left\{ \parbox{300 pt}{
$f_{ij}\circ f_{jk} = \Ad(a_{ijk})\circ f_{ik}$ on $U_{ijk}$, \\
$a_{ijk} a_{ikl} = f_{ij}(a_{jkl}) a_{ijl}$ on $U_{ijkl}$.
}\right. 
\eneq
(Recall that $\Ad(a)(b)=ab\opb{a}$.)

Conversely, let 
$\sha_i$ be $\cora$-algebras  on $U_i$ ($i\in I$), let
$f_{ij}\cl \sha_j|_{U_{ij}} \isoto\sha_i\vert_{U_{ij}}$ ($i,j\in I$)
be $\cora$-algebra isomorphisms,
and let $a_{ijk}$ ($i,j,k\in I$) be invertible sections of $\sha_i(U_{ijk})$ satisfying
\eqref{eq:algebroid}.
One calls 
\eq\label{eq:descentdata}
&&( \{\sha_i\}_{i\in I},\{f_{ij}\}_{i,j\in I}, \{a_{ijk}\}_{i,j,k \in I})
\eneq
a  gluing datum for $\cora$-algebroids on $\shu$.
The following result,
which easily follows from \cite[Lem~3.8.1]{Go},
is stated (in a different form) in \cite{Ka1} and 
goes back to \cite{Gi}.

\begin{proposition}\label{prop:descdata}
Assume that $X$ is paracompact. 
Consider a gluing datum \eqref{eq:descentdata} on $\shu$. Then there exist 
an algebroid  $\sha$ on $X$ and $\{\sigma_i,\phi_{ij}\}_{i,j\in I}$ as in \eqref {eq:algdatum0}
to which this gluing datum is associated.
Moreover, the data $(\sha,\sigma_i,\phi_{ij})$ are unique
up to an equivalence of stacks, this equivalence being unique up to
a unique isomorphism.
\end{proposition}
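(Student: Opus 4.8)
The plan is to construct the algebroid $\sha$ by hand as a stack associated to a prestack built out of the gluing datum, and then to verify the existence of the objects $\sigma_i$ and transition isomorphisms $\phi_{ij}$ realizing the given datum. First I would recall (from \cite[Lem.~3.8.1]{Go} or the Giraud-style theory of stacks) that a stack on $X$ can be described by descent data relative to an open covering: a stack $\shs_i$ on each $U_i$, equivalences $g_{ij}\cl\shs_j\vert_{U_{ij}}\isoto\shs_i\vert_{U_{ij}}$, and invertible $2$-morphisms $\alpha_{ijk}\cl g_{ij}\circ g_{jk}\Rightarrow g_{ik}$ on $U_{ijk}$ satisfying a cocycle condition on $U_{ijkl}$; paracompactness of $X$ is what makes the gluing (the $2$-descent) effective. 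So the main task is to turn the algebra-level gluing datum \eqref{eq:descentdata} into a stack-level descent datum.

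To do this I would set $\shs_i\eqdot\sha_i^+$, the $\cora$-algebroid on $U_i$ associated with the sheaf of algebras $\sha_i$, which has a distinguished object $\sigma_i$ (the unique object of $\sha_i(U_i)^+$, whose endomorphism algebra is $\sha_i$). The algebra isomorphism $f_{ij}\cl\sha_j\vert_{U_{ij}}\isoto\sha_i\vert_{U_{ij}}$ induces, via functoriality of $(\scbul)^+$ and stackification, an equivalence of algebroids $g_{ij}\cl\shs_j\vert_{U_{ij}}\isoto\shs_i\vert_{U_{ij}}$ sending $\sigma_j$ to $\sigma_i$; and the invertible section $a_{ijk}\in\sha_i(U_{ijk})^\times=\Aut_{\sha_i^+}(\sigma_i)$ supplies an invertible $2$-morphism $\alpha_{ijk}\cl g_{ij}\circ g_{jk}\Rightarrow g_{ik}$. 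The first line of \eqref{eq:algebroid}, $f_{ij}\circ f_{jk}=\Ad(a_{ijk})\circ f_{ik}$, is exactly the statement that $\alpha_{ijk}$ is a natural transformation between these two functors (conjugation by $a_{ijk}$ being how an element of $\Aut(\sigma_i)$ acts on morphisms), and the second line, the twisted cocycle identity on $U_{ijkl}$, is precisely the pentagon/cocycle condition required of $\{\alpha_{ijk}\}$ in $2$-descent. Applying the descent theorem for stacks then yields an algebroid $\sha$ on $X$ together with equivalences $\sha\vert_{U_i}\simeq\shs_i$, and transporting $\sigma_i$ through these equivalences gives objects of $\sha(U_i)$ and isomorphisms $\phi_{ij}$ whose associated datum is the given one; one checks that $\sha$ is indeed an algebroid, i.e.\ locally nonempty with locally isomorphic objects, since each $\shs_i=\sha_i^+$ is.

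For the uniqueness statement I would argue that any two solutions $(\sha,\sigma_i,\phi_{ij})$ and $(\sha',\sigma_i',\phi_{ij}')$ give rise, over each $U_i$, to an equivalence $\sha\vert_{U_i}\simeq\shend_\sha(\sigma_i)^+=\sha_i^+\simeq\shend_{\sha'}(\sigma_i')^+\simeq\sha'\vert_{U_i}$ carrying $\sigma_i$ to $\sigma_i'$, and that the compatibility with $\phi_{ij}$, $\phi_{ij}'$ (both inducing $f_{ij}$ up to the $2$-morphism recorded by $a_{ijk}$) means these local equivalences glue, by the same stack-descent machinery applied now to morphisms of stacks, to a global equivalence $\sha\simeq\sha'$ compatible with all the data; uniqueness of this equivalence up to unique isomorphism is the $1$-categorical shadow of effective $2$-descent. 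I expect the main obstacle to be purely bookkeeping: matching the precise normalization conventions in \eqref{eq:algebroid} (the placement of $f_{ij}$ in the twisted cocycle relation, and $\Ad(a)(b)=ab\opb{a}$ versus the opposite convention) to the cocycle conditions in the stack-descent statement one cites, and keeping track of where stackification versus the naive prestack $U\mapsto\sha_i(U)^+$ is used, so that the constructed $\sha$ genuinely is a stack and not merely a prestack. None of this requires new ideas beyond \cite{Go,Gi,Ka1}; the proof is essentially a translation, and I would present it as such, deferring the verification of the $2$-cocycle conditions to a short explicit computation left to the reader or cited from \loc.\ cit.
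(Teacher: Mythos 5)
Your proposal is correct and takes exactly the approach the paper intends: the paper offers no proof of this proposition, merely asserting that it ``easily follows'' from the lemma on gluing stacks from $2$-descent data in \cite[Lem.~3.8.1]{Go}, and your argument is precisely the unwinding of that citation (set $\shs_i=\sha_i^+$, promote $f_{ij}$ to an equivalence $g_{ij}$, promote $a_{ijk}$ to a $2$-morphism $\alpha_{ijk}$, check that \eqref{eq:algebroid} encodes naturality and the $2$-cocycle identity, then glue). The bookkeeping you flag is real but benign — for instance, with the paper's convention $\Ad(a)(b)=ab\opb{a}$, the natural transformation $\alpha_{ijk}\cl g_{ij}\circ g_{jk}\Rightarrow g_{ik}$ evaluated at $\sigma_k$ is $a_{ijk}^{-1}$ rather than $a_{ijk}$, and then the pentagon gives exactly $a_{ijk}a_{ikl}=f_{ij}(a_{jkl})a_{ijl}$ — so the translation goes through as you describe.
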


We will give another construction in Proposition~\ref{prop:descdatab},
which may be applied to non paracompact spaces such as algebraic varieties.

\bigskip
For an algebroid  $\sha$, one defines the 
$\cora$-linear abelian category $\md[\sha]$, whose objects are called
$\sha$-modules, by setting
\glossary{modules!over an algebroid}%
\eq
&&\md[\sha]\eqdot\Fct_{\cora}(\sha,\stkMod(\cora_X)).
\eneq
Here $\stkMod(\cora_X)$ is the $\cora$-linear stack of sheaves of $\cora$-modules
on $X$ and, for two $\cora$-linear stacks $\sha_1$ and $\sha_2$,
$\Fct_{\cora}(\sha_1,\sha_2)$ is the category of $\cora$-linear functors
of stacks from $\sha_1$ to $\sha_2$.
If $\sha$ is the algebroid associated with a $\cora$-algebra $A$ on $X$,
then $\md[\sha]$ is equivalent to $\md[A]$.
The category $\md[{\A[]}]$ is a Grothendieck category and
we denote by $\RD(\A[])$ its derived category 
and by $\Derb(\A[])$ its bounded derived category. 

For a $\cora$-algebroid $\sha$, the $\cora$-linear prestack
$U\mapsto\md[\sha\vert_U]$ is a stack and we denote it by
$\stkMod(\sha)$.

In the sequel, we shall write for short  ``$\sigma\in\sha$''
instead of ``$\sigma\in\sha(U)$ for some open set $U$''.

\begin{definition}\label{def:invertible}
An $\sha$-module $\shl$ is invertible 
\glossary{invertible}\glossary{module!invertible}
if it is locally isomorphic to
$\sha$, namely for any $\sigma\in\sha$, the $\shend_\sha(\sigma)$-module
$\shl(\sigma)$ is locally isomorphic to $\shend_\sha(\sigma)$.
\end{definition}
This terminology is motivated by the fact that 
for  an invertible module  $\shl$, if we set
$\shb\eqdot(\shend_{\sha}(\shl))^\rop$, then
$\hom[\sha](\shl,\sha)\tens[\sha]\shl\simeq\shb$
and $\shl\tens[\shb]\hom[\sha](\shl,\sha)\simeq \sha$.

We denote by $\Inv(\sha)$ the full subcategory of
$\md[\sha]$ consisting of invertible $\sha$-modules and by 
$\stkInv(\sha)$ the corresponding full substack of $\stkMod(\sha)$.
Then we have equivalences of $\cora$-linear stacks
$\sha\isoto \stkInv(\sha^\rop)\isoto \stkInv(\sha)^\rop$.

Recall that for two $\cora$-linear categories $\shc$ and $\shc'$, one
defines their tensor product $\shc\tens[\cora]\shc'$ by setting
$\Ob(\shc\tens[\cora]\shc')=\Ob(\shc)\times\Ob(\shc')$ and 
\eqn
&&\Hom[{\shc\tens[\cora]\shc'}]((M,M'), (N,N'))=
\Hom[\shc](M,N)\tens[\cora]\Hom[\shc'](M',N')
\eneqn
for $M,N\in\shc$ and $N,N'\in\shc'$.
Then $\shc\tens[\cora]\shc'$ is a $\cora$-linear category.

For a pair of $\cora$-algebroids $\sha$ and $\sha'$, the $\cora$-algebroid 
$\sha\tens[\cora]\sha'$ is the $\cora$-linear stack associated with
the prestack 
$U\mapsto\sha(U)\tens[\cora]\sha'(U)$ ($U$ open in $X$).
We have
$$\md[{\sha\tens[\cora]\sha'}]\simeq\Fct_{\cora}(\sha,\stkMod(\sha')).$$
For a $\cora$-algebroid $\sha$,
$\md[{\sha\tens[\cora]\sha^\rop}]$ has a canonical object
given by
\eqn
&&
\sha\tens[\cora]\sha^\rop\ni(\sigma,\sigma'^\rop)
\mapsto \hom[\sha](\sigma',\sigma)\in
\stkMod(\cora_X). 
\eneqn
We denote this object by the same letter $\sha$.
If $\sha$ is associated with a $\cora$-algebra $A$,
this object corresponds to $A$, 
regarded as an $(A\tens[\cora]A^\rop)$-module.

For $\cora$-algebroids $\sha_i$ ($i=1,2,3$),
we have the tensor product functor
\eq\label{eq:tensalgebr}
&&\hspace{0.6em}\scbul\tens[\sha_2]\scbul\cl
\md[{\sha_1\tens[\cora]\sha_2^\rop}]\times
\md[{\sha_2\tens[\cora]\sha_3^\rop}]\\
&&\hspace{6cm}\to\md[{\sha_1\tens[\cora]\sha_3^\rop}],
\nonumber\eneq
and the $\hom$ functor
\eq\label{eq:homalgebr}
&&\hom[\sha_1](\scbul,\scbul)\cl
\md[{\sha_1\tens[\cora]\sha_2^\rop}]^\rop\times
\md[{\sha_1\tens[\cora]\sha_3^\rop}]\\
&&\hspace{8cm}\to\md[{\sha_2\tens[\cora]\sha_3^\rop}].
\nonumber\eneq
In particular, we have 
\eqn&&
\ba{lcccc}
\scbul\tens[\sha]\scbul
&\cl&\md[{\sha^\rop}]\times
\md[{\sha}]&\To&\md[{\cora_X}],\\[1ex]
\hom[\sha](\scbul,\scbul)
&\cl&\md[{\sha}]^\rop\times
\md[{\sha}]&\To&\md[{\cora_X}],\\[1ex]
\hom[\sha](\scbul,\sha)
&\cl&\md[{\sha}]^\rop&\To&\md[{\sha^\rop}].
\ea
\eneqn
Since $\md[{\A[]}]$ is a Grothendieck category, any left exact functor
from $\md[{\A[]}]$ to an abelian category admits a right
derived functor.

Now consider the tensor product in \eqref{eq:tensalgebr}. It admits a
left derived functor as soon as $\sha_3$ is $\cora$-flat. 
Indeed, any $\shm\in\md[\sha_2\tens(\sha_3)^\rop]$ is a quotient of an
$\sha_2$-flat module since there is an exact sequence
\eqn
&&\bigoplus_{s\in\Hom(\shl,\shm\vert_U)}\shl\to\shm\to0,
\eneqn
where $U$ ranges over the family of open subsets of $X$ and 
$\shl\in(\sha_2\tens(\sha_3)^\rop)^\rop(U)$. (Recall that for a
$\cora$-algebroid $\sha$, $\sha^\rop(U)$ is equivalent to 
$\stkInv(\sha)(U)$.) Note that $\shl$ is $\sha_2$-flat since 
$(\sha_3)^\rop$ is $\cora$-flat.

The following lemma is obvious.
\begin{lemma}
Let $\sha$ and $\sha'$ be $\cora$-algebroids. 
To give a functor of algebroids $\phi\cl\sha'\to\sha$ is equivalent to
giving an $(\sha'\tens\sha^\rop)$-module $\shl$ which is locally
isomorphic to $\sha$ \ro i.e.\ for $\sigma\in\sha$ and $\sigma'\in\sha'$,
$\shl(\sigma'\tens\sigma^\rop)$ is locally isomorphic to
$\shend_\sha(\sigma)$ as an $\shend_\sha(\sigma)^\rop$-module\rf.
\end{lemma}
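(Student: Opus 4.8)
The plan is to translate both sides of the asserted correspondence into the language of $\cora$-linear functors of stacks, where they become visibly the same datum; this is essentially a dictionary chase, so I would keep the argument short.

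First I would recall the identification displayed above, $\md[{\sha'\tens[\cora]\sha^\rop}]\simeq\Fct_{\cora}(\sha',\stkMod(\sha^\rop))$, under which an $(\sha'\tens\sha^\rop)$-module $\shl$ corresponds to the $\cora$-linear functor of stacks $\Phi_\shl\cl\sha'\to\stkMod(\sha^\rop)$ that sends an object $\sigma'$ \ro defined over some open set\rf\ to the $\sha^\rop$-module $\sigma^\rop\mapsto\shl(\sigma'\tens\sigma^\rop)$. Here $\shl(\sigma'\tens\sigma^\rop)$ carries a module structure over $\shend_{\sha'}(\sigma')\tens\shend_\sha(\sigma)^\rop$, and the hypothesis that it be locally isomorphic to $\shend_\sha(\sigma)$ as an $\shend_\sha(\sigma)^\rop$-module says exactly that $\Phi_\shl(\sigma')$ is an \emph{invertible} $\sha^\rop$-module for every $\sigma'$. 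Since invertibility is a local condition and $\stkInv(\sha^\rop)$ is a full substack of $\stkMod(\sha^\rop)$, such a $\Phi_\shl$ is the same datum as a $\cora$-linear functor of stacks $\sha'\to\stkInv(\sha^\rop)$.

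Next I would use the canonical equivalence of $\cora$-linear stacks $\sha\isoto\stkInv(\sha^\rop)$ recalled above, which sends an object $\tau$ of $\sha$ to the invertible $\sha^\rop$-module $\sigma^\rop\mapsto\hom[\sha](\sigma,\tau)$. Composing a functor of algebroids $\phi\cl\sha'\to\sha$ with this equivalence and with the inclusion $\stkInv(\sha^\rop)\into\stkMod(\sha^\rop)$ produces the $(\sha'\tens\sha^\rop)$-module $\shl_\phi$ with $\shl_\phi(\sigma'\tens\sigma^\rop)=\hom[\sha](\sigma,\phi(\sigma'))$, which is locally isomorphic to $\shend_\sha(\sigma)$ precisely because $\phi(\sigma')$ and $\sigma$ are locally isomorphic in the algebroid $\sha$; conversely, composing $\Phi_\shl$ with a quasi-inverse of $\sha\isoto\stkInv(\sha^\rop)$ recovers a functor of algebroids. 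These two assignments are quasi-inverse to one another by construction. To see that we really get an equivalence of categories \ro not just a bijection on equivalence classes\rf, I would check on morphisms: a morphism of $(\sha'\tens\sha^\rop)$-modules between two such $\shl$'s corresponds, through the same dictionary, to a natural transformation of the associated functors $\sha'\to\stkInv(\sha^\rop)$, hence \ro every arrow of $\stkInv(\sha^\rop)$, like every arrow of $\sha$, being invertible\rf\ to a $2$-morphism of functors of algebroids, and conversely.

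The only step that is not routine bookkeeping is the stack-theoretic passage in the second paragraph: one must verify that a $\cora$-linear functor of stacks into $\stkMod(\sha^\rop)$ all of whose values are locally invertible factors, \emph{as a functor of stacks}, through the full substack $\stkInv(\sha^\rop)$, which uses fullness of $\stkInv$ together with stack descent, and one must keep the left/right conventions aligned so that ``invertible $\sha^\rop$-module, evaluated at $\sigma^\rop$'' is literally ``locally isomorphic to $\shend_\sha(\sigma)$ as an $\shend_\sha(\sigma)^\rop$-module''. Everything else reduces to the elementary fact that $\hom[\sha](\sigma,\tau)$ is locally isomorphic to $\shend_\sha(\sigma)$ whenever $\sigma$ and $\tau$ belong to the same algebroid.
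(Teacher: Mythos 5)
Your proof is correct. The paper declares this lemma ``obvious'' and supplies no argument, giving only the explicit formula for $\shl_\phi$ (the pullback of the canonical $(\sha\tens\sha^\rop)$-module $\sha$ along $\phi\tens\sha^\rop$, which is exactly your $\sigma'\tens\sigma^\rop\mapsto\hom_\sha(\sigma,\phi(\sigma'))$), so you are filling in precisely the dictionary chase the authors have in mind: the identification $\md[{\sha'\tens\sha^\rop}]\simeq\Fct_\cora(\sha',\stkMod(\sha^\rop))$ followed by the equivalence $\sha\isoto\stkInv(\sha^\rop)$. One remark: the step you single out as the only non-routine one --- that a functor whose values are all locally invertible factors through the full substack $\stkInv(\sha^\rop)$ --- is in fact immediate from fullness (you get a factorization on objects, and morphisms come for free because $\stkInv(\sha^\rop)$ is \emph{full} in $\stkMod(\sha^\rop)$), so no separate descent argument is needed there; this is consistent with the paper's assessment of the lemma as obvious.
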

The $\sha'\tens\sha^\rop$-module $\shl$  corresponding
to $\phi$ is the module induced from
the $\sha\tens\sha^\rop$-module $\sha$ by
$\phi\tens\sha^\rop\cl\sha'\tens\sha^\rop\to \sha\tens\sha^\rop$.

The forgetful functor 
$$\Mod(\sha)\to\Mod(\sha')$$
is isomorphic to $\M\mapsto\shl\tens[\sha]\M$.

Let $f\cl X\to Y$ be a continuous map and let 
$\sha$ be a $\cora$-algebroid on $Y$.
We denote by $\opb{f}\sha$ the $\cora$-linear 
stack associated with the prestack $\sts$
given by:
\eqn
&&\sts(U)=\left\{(\sigma,V)\,;\,\text{$V$ is an open subset of $Y$ such that
$f(U)\subset V$}\right.\\
&&\left.\hs{17ex}\text{ and $\sigma\in\sha(V)$}\right\}
\quad\text{for any open subset $U$ of $X$,}\\
&&\Hom[\sts(U)]\bl(\sigma,V),(\sigma',V')\br)
=\sect(U;f^{-1}\hom[\sha](\sigma,\sigma')).
\eneqn
Then $\opb{f}\sha$ is a $\cora$-algebroid.
We have functors
\eqn
\oim{f},\,\eim{f}\cl\md[\opb{f}\sha]\To\md[\sha],\\
\opb{f}\cl\md[\sha]\To\md[\opb{f}\sha].\\
\eneqn

For two topological spaces $X_1$ and $X_2$, let $p_i\cl X_1\times X_2\to X_i$
be the projection.
Let $\sha_i$ be a $\cora$-algebroid on $X_i$ ($i=1,2$).
We define a $\cora$-algebroid on $X_1\times X_2$, 
 called the external tensor product of $\sha_1$ and $\sha_2$,
 by setting: 
\eqn
&&\sha_1\etens\sha_2\eqdot p_1^{-1}\sha_1\tens p_2^{-1}\sha_2.
\eneqn
We have a canonical bi-functor
$$\scbul\etens\scbul\cl\md[\sha_1]\times\md[\sha_2]\To\md[\sha_1\etens\sha_2].$$

\subsubsection*{Bi-invertible modules}

The following notion of bi-invertible modules 
will appear all along these Notes
since it describes equivalences of algebroids.
\begin{definition}
Let $A$ and $A'$ be two sheaves of $\cora$-algebras.
An $A\otimes A'$-module $L$ is called {\em bi-invertible}
\glossary{bi-invertible}\glossary{module!bi-invertible}
if there exists locally a section $w$ of $L$ such that
$A\ni a\mapsto (a\otimes 1)w\in L$ and $A'\ni a' \mapsto (1\otimes a')w\in L$
give isomorphisms of $A$-modules and $A'$-modules, respectively.
\end{definition}
\begin{lemma}\label{lem:biinvert1}
Let $L$ be a bi-invertible $A\otimes A'$-module
and let $u$ be a section of $L$.
If $A\ni a\mapsto (a\otimes 1)u\in L$ is an isomorphism of 
$A$-modules, then
$A'\ni a'\mapsto (1\otimes a')u\in L$ is also an isomorphism of
$A'$-modules.
\end{lemma}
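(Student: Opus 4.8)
The plan is to reduce to a purely local statement and then transport the hypothesis through the isomorphism supplied by a local trivializing section. Since ``being an isomorphism of sheaves'' is a local property and the bi-invertibility of $L$ is by definition local, I would work on a small enough open set on which there is a section $w$ of $L$ such that $\phi_w\colon a\mapsto (a\otimes1)w$ is an isomorphism $A\isoto L$ of $A$-modules and $\phi'_w\colon a'\mapsto (1\otimes a')w$ is an isomorphism $A'\isoto L$ of $A'$-modules. Using $\phi_w$, I would write $u=(b\otimes1)w$ for a uniquely determined section $b$ of $A$.

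The first key step is to express the hypothesis map in terms of $b$. From $(a\otimes1)u=(a\otimes1)(b\otimes1)w=(ab\otimes1)w=\phi_w(ab)$ one sees that $a\mapsto(a\otimes1)u$ equals $\phi_w$ composed with right multiplication by $b$ on $A$. Since this composite (by hypothesis) and $\phi_w$ are both isomorphisms, right multiplication by $b$ is a bijective endomorphism of the left $A$-module $A$. From this I would deduce that $b$ is invertible in $A$: surjectivity gives a section $c$ with $cb=1$, and then, since right multiplication by $b$ sends both $bc$ and $1$ to $b$ (because $b(cb)=b$), its injectivity forces $bc=1$. This is the one point requiring a little care, since $A$ need not be commutative, so both injectivity and surjectivity are genuinely used; everything else is formal.

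The final step is the symmetric computation on the $A'$-side: $(1\otimes a')u=(1\otimes a')(b\otimes1)w=(b\otimes1)(1\otimes a')w=\mu_b\bigl(\phi'_w(a')\bigr)$, where $\mu_b$ denotes the map $x\mapsto(b\otimes1)x$ on $L$. The map $\mu_b$ is $A'$-linear because $b\otimes1$ commutes with every $1\otimes a'$ inside $A\otimes A'$, and it is bijective with inverse $x\mapsto(c\otimes1)x$ thanks to the invertibility of $b$. Hence $a'\mapsto(1\otimes a')u$, being $\mu_b\circ\phi'_w$, is an isomorphism of $A'$-modules; reassembling over the open cover completes the proof. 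I do not expect any real obstacle here beyond the noncommutative bookkeeping in the invertibility step noted above.
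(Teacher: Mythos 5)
Your proof is correct and follows essentially the same route as the paper's: localize to an open set where a trivializing section $w$ exists, write $u$ in terms of $w$, deduce that the relating element of $A$ is a two-sided unit using the injectivity/surjectivity of the hypothesized isomorphism together with $\phi_w$, and then conclude via the commuting triangle with $\phi'_w$. The paper packages the invertibility argument slightly differently (directly producing $ab=1$ and $ba=1$ from the two injectivities rather than via the right-multiplication endomorphism), but the underlying computation is the same.
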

\begin{proof}
Let $w$ be as above.
There exist $a\in A$ and $b\in A$ such that 
$u=(a\otimes 1)w$ and $w=(b\otimes 1)u$. Then 
we have $u=(ab\otimes 1)u$ and hence $ab=1$. Similarly
$w=(ba\otimes 1)w$ implies $ba=1$.
Hence
we have a commutative diagram
$$\xymatrix@C=10ex{
A'\ar[r]_w^{\sim}\ar[rd]_u&L\ar[d]_{\bwr}^{\;a\otimes1}\\
&L}
$$
and we obtain the desired result.
\end{proof}

\begin{remark}
Let $A$ and $B$ be two $\cora$-algebras and let
$L$ be an $(A\tens B^\rop)$-module.
Even if $L$ is isomorphic to $A$ as an $A$-module and 
isomorphic to $B^\rop$ as a
$B^\rop$-module, $L$ is not necessarily bi-invertible,
as shown by the following example.

Let $I$ be an infinite set and take $\mathrm{o}\in I$.
Set $I^*=I\setminus\{\mathrm{o}\}$.
Then there exists a bijection $v\cl I^*\to I$.
Set 
\eqn
&&X=\{a\in \Hom[\Set](I,I);a(\mathrm{o})=\mathrm{o}\},\\
&&Y=\{b\in\Hom[{\Set}](I,I); b(\mathrm{o})=\mathrm{o} \mbox{ and }b(I^*)\subset I^*\}.
\eneqn
Set $Z=X$. Then $X$ and $Y$ are semi-groups and
$X$ acts on $Z$ from the left and $Y$ acts on $Z$ from the right.
Let $v'\in Z$ be the unique element extending $v$.
Then $\id_I\in Z$ gives an isomorphism $X\isoto Z$ 
($X\ni a\mapsto a\in Z$) and $v'\in Z$
induces an isomorphism $Y\isoto Z$ ($Y\ni b\mapsto v'\circ b\in Z$).
Let $A=\cora[X]$ and $B=\cora[Y]$ be the semigroup algebras corresponding to $X$ and $Y$.
Set $L=\cora[Z]$. Then $L$ is an $(A\tens B^\rop)$-module and
$L$ is isomorphic to $A$ as an $A$-module and 
isomorphic to $B^\rop$ as an
$B^\rop$-module.
Let $u$ be the element of $L$ corresponding to $\id_I$. Then $u$ gives an
isomorphism $A\ni a\mapsto (a\otimes 1)u\in L$. 
Since the image of $B^\rop\ni b\mapsto (1\otimes b)u\in L$ is $\cora[Y]\neq L$, 
$L$ is not bi-invertible in view of Lemma~\ref{lem:biinvert1}.
\end{remark}

However the following partial result holds.
\begin{lemma}
Let $A$ and $A'$ be $\cora$-algebras and
let $L$ be an $A\otimes A'$-module.
Assume that $L$ is isomorphic to $A$ as an $A$-module and 
isomorphic to $A'$ as an $A'$-module. 
If we assume moreover that $A_x$ is a left noetherian ring for any
$x\in X$, then $L$ is 
bi-invertible.
\end{lemma}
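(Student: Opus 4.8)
The plan is to exhibit, locally on $X$, a single section $w$ of $L$ for which both maps $a\mapsto(a\otimes1)w$ and $a'\mapsto(1\otimes a')w$ are isomorphisms. Fix an isomorphism $\alpha\cl A\isoto L$ of $A$-modules and an isomorphism $\beta\cl A'\isoto L$ of $A'$-modules (these exist by hypothesis, after shrinking $X$ if necessary), and set $u=\alpha(1)$, $v=\beta(1)$; thus $a\mapsto(a\otimes1)u$ is an isomorphism $A\isoto L$ and $a'\mapsto(1\otimes a')v$ is an isomorphism $A'\isoto L$. I claim that $w\eqdot v$ works. Since the $A'$-side is then exactly the hypothesis on $\beta$, the whole problem reduces to showing that $a\mapsto(a\otimes1)v$ is an isomorphism $A\isoto L$. (Note that one cannot invoke Lemma~\ref{lem:biinvert1} directly here, since it presupposes that $L$ is already bi-invertible; and one is forced to take $w=v$ rather than $w=u$, because the argument will use that $A$, not $A'$, has Noetherian stalks.)

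First I would extract a ring-theoretic relation. Write $v=(c\otimes1)u$ with $c$ a uniquely determined section of $A$, using that $a\mapsto(a\otimes1)u$ is bijective. Since $a'\mapsto(1\otimes a')v$ is surjective, choose a section $d'$ of $A'$ with $(1\otimes d')v=u$, and write $(1\otimes d')u=(e\otimes1)u$ for a section $e$ of $A$. Because the $A$- and $A'$-module structures on $L$ commute,
\[u=(1\otimes d')v=(1\otimes d')(c\otimes1)u=(c\otimes1)(1\otimes d')u=(c\otimes1)(e\otimes1)u=(ce\otimes1)u,\]
and injectivity of $a\mapsto(a\otimes1)u$ together with $u=(1\otimes1)u$ yields $ce=1$ in $A$.

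The crux — the only step I expect to require genuine work, and the only place the Noetherian hypothesis enters — is to promote $ce=1$ to $ec=1$, i.e.\ to see that $c$ is a unit of $A$. This is false in general (the semigroup algebra $A=\cora[X]$ of the preceding Remark is precisely a counterexample: there the distinguished element has a right inverse but is not invertible), and it is exactly where one uses that $A_x$ is left Noetherian for every $x$. Concretely, I would prove (or cite) the classical fact that a surjective endomorphism of a finitely generated module over a left Noetherian ring is injective, via the ascending chain $\ker\rho\subseteq\ker\rho^2\subseteq\cdots$. Applied in the stalk $A_x$ to $\rho=$ right multiplication by $e$ — which is surjective because $a=(ac)e\in A_xe$ for all $a$ — this shows $\rho$ is bijective; then $\rho(ec)=(ec)e=e(ce)=e=\rho(1)$ forces $ec=1$ in $A_x$. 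Since this holds at every stalk, $ec=1$ as sections of $A$, and (as $ce=1$ already) $c$ is a unit of $A$ with inverse $e$.

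It remains to assemble the pieces: since $(a\otimes1)v=(a\otimes1)(c\otimes1)u=(ac\otimes1)u=\alpha(ac)$, the map $a\mapsto(a\otimes1)v$ factors as $\alpha$ precomposed with right multiplication by $c$ on $A$, both of which are isomorphisms, so it is an isomorphism $A\isoto L$. Together with the hypothesis that $a'\mapsto(1\otimes a')v$ is an isomorphism $A'\isoto L$, this shows that $w\eqdot v$ meets the definition of a bi-invertible module, and the argument is complete.
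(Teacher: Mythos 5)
Your proof is correct and follows essentially the same route as the paper's: with $u=\alpha(1)$, $v=\beta(1)$, write $v=(c\otimes 1)u$, pull $(1\otimes d')v=u$ back through $u$ to get an element $e$ with $ce=1$, use that a surjective left-$A_x$-linear endomorphism (here right multiplication by $e$) of the Noetherian module $A_x$ is injective to upgrade $ce=1$ to $ec=1$, and then conclude that $a\mapsto(a\otimes 1)v$ is an isomorphism because it factors as $\alpha$ precomposed with the bijective right multiplication by $c$. The only differences from the paper are notational ($c,d',e$ versus $a,a',a''$) and that you spell out the ascending-chain argument the paper only cites implicitly.
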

\begin{proof}
Assume that $A\ni a\mapsto (a\otimes 1)u\in L$
and $A'\ni a'\mapsto (1\tens a')v\in L$ are isomorphisms for some
$u,v\in L$. Set $v=(a\otimes 1)u$ and $u=(1\otimes a')v$.
There exists $a''\in A$ such that $(1\otimes a')u=(a''\otimes1)u$.
Then we have $u=(1\otimes a')v=(1\otimes a')(a\otimes 1)u
=(a\otimes 1)(1\otimes a')u=(aa''\otimes 1)u$.
Hence we obtain $aa''=1$.
Therefore the $A$-linear endomorphism $f\cl A\ni z\mapsto za''$ is an
epimorphism ($f(za)=z$).
Since $A_x$ is a left noetherian ring, $f$ is an isomorphism.
Hence, $a''$, as well as $a$, is an invertible element.
Then the following commutative diagram implies the desired result:
\eqn
&&\xymatrix@C=10ex{
A'\ar[r]_v^{\sim}\ar[rd]_u&L\ar[d]_{\bwr}^{\;a\otimes1}\\
&L.}
\eneqn
\end{proof}

\begin{definition}\label{def:biinv1}
For two $\cora$-algebroids $\sha$ and $\sha'$, we say that
an $(\sha\otimes \sha')$-module $\shl$ is  bi-invertible 
\glossary{bi-invertible}\glossary{module!bi-invertible}
if for any $\sigma\in\sha$ and $\sigma'\in\sha'$,
$\shl(\sigma\otimes \sigma')$ is a bi-invertible
$\shend_{\sha}(\sigma)\otimes \shend_{\sha'}(\sigma')$-module.
\end{definition}

\begin{lemma}\label{lem:algiso}
To give an equivalence $\sha'\isoto\sha$ is equivalent to
giving a bi-invertible $(\sha'\otimes\sha^\rop)$-module.
More precisely, 
the forgetful functor
$\stkMod(\sha)\to\stkMod(\sha')$ is given by
$\shm\mapsto\shl\tens[\sha]\shm$
for a bi-invertible $(\sha'\otimes\sha^\rop)$-module
$\shl$.
\end{lemma}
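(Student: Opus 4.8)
The plan is to reduce Lemma~\ref{lem:algiso} to a purely algebraic statement by invoking the two facts recorded just above: first, that giving a functor of algebroids $\phi\cl\sha'\to\sha$ is the same as giving an $(\sha'\tens\sha^\rop)$-module $\shl$ which is invertible as a right $\sha$-module; and second, that the associated forgetful functor $\stkMod(\sha)\to\stkMod(\sha')$ is $\shm\mapsto\shl\tens[\sha]\shm$. Since a bi-invertible module is in particular invertible as a right $\sha$-module, every bi-invertible $\shl$ does arise from a functor, so it suffices to show: the functor $\phi$ attached to $\shl$ is an equivalence of stacks if and only if $\shl$ is moreover invertible as a left $\sha'$-module, i.e.\ bi-invertible. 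The final ``more precisely'' clause is then just the second recalled fact, specialised to the case of an equivalence.

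Both conditions being local on $X$, I would first pass to the local model. On a small enough open set $U$, choose $\sigma\in\sha(U)$, $\sigma'\in\sha'(U)$ and set $A=\shend_\sha(\sigma)$, $A'=\shend_{\sha'}(\sigma')$, so that $\sha\vert_U\simeq A^+$ and $\sha'\vert_U\simeq A'^+$. Shrinking $U$ so that $\phi(\sigma')\simeq\sigma$ (possible since any two objects of an algebroid are locally isomorphic) and composing the action of $\phi$ on endomorphism sheaves with a chosen such isomorphism, one obtains a morphism of sheaves of $\cora$-algebras $\rho\cl A'\to A$. Unwinding the definition of $\shl$ as the module induced from the canonical $\sha\tens\sha^\rop$-module $\sha$ along $\phi\tens\sha^\rop$, one gets $\shl(\sigma'\tens\sigma^\rop)\simeq\hom[\sha](\sigma,\phi(\sigma'))\simeq A$, carrying the regular right $A$-action and the left $A'$-action $a'\cdot a=\rho(a')a$.

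Now the two conditions translate as follows. On one hand, $\phi$ is an equivalence of stacks if and only if it is fully faithful (essential surjectivity is automatic, an algebroid being locally non-empty with all objects locally isomorphic), and in the local model fully faithfulness says exactly that $\rho\cl A'\to A$ is an isomorphism of sheaves of algebras. On the other hand, $\shl$ is bi-invertible if and only if $L\eqdot\shl(\sigma'\tens\sigma^\rop)$, with the bimodule structure above, is bi-invertible over $A'\tens A^\rop$. The section $w=1\in A$ trivialises the right $A$-module structure of $L$; hence, by Lemma~\ref{lem:biinvert1}, if $L$ is bi-invertible then $w=1$ also trivialises the left $A'$-structure, which forces $a'\mapsto\rho(a')$, and so $\rho$, to be bijective; conversely, if $\rho$ is bijective then $w=1$ trivialises both structures simultaneously, so $L$ is bi-invertible by definition. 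Combining, $\shl$ bi-invertible $\Longleftrightarrow$ $\rho$ an isomorphism $\Longleftrightarrow$ $\phi$ an equivalence, which together with the recalled description of the forgetful functor proves the lemma.

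The only delicate point I anticipate is the bookkeeping of the local reduction: one must check that the local isomorphism $\phi(\sigma')\simeq\sigma$ can be fixed coherently enough that $\rho$ and the bimodule $L\simeq A$ are related as claimed, and that fully faithfulness of a functor of stacks is genuinely detected on endomorphism sheaves. Once the local model is in place, the algebraic heart of the matter is the single application of Lemma~\ref{lem:biinvert1}, so I do not expect any substantive obstacle beyond that set-up.
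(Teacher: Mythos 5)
Your proof is correct, and it supplies precisely the argument the paper leaves implicit: the paper states Lemma~\ref{lem:algiso} without proof, as an evident consequence of the two facts recorded immediately before it (the correspondence between functors of algebroids and modules invertible on the $\sha$-side, and the description of the induced forgetful functor). You correctly reduce the content to a local statement, identify the bimodule $L$ with $A$ carrying the right regular action and the left $A'$-action through $\rho$, and the single substantive step — that triviality of one side together with bi-invertibility forces triviality of the other, via Lemma~\ref{lem:biinvert1} — is exactly the right mechanism. This is the argument the authors have in mind.
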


Let $\shm\in\Mod(\A[])$. We shall
denote by $\shend_{\cora}(\shm)$ the stack associated with the
prestack $\sts$ whose objects are those of $\sha$ and 
$\hom[\sts](\sigma,\sigma')=\hom[\cora](\shm(\sigma),\shm(\sigma'))$
 for $\sigma$, $\sigma'\in\A[](U)$.
Then $\shend_{\cora}(\shm)$ is a $\cora$-algebroid and there exists
a natural functor of $\cora$-algebroids $\sha\to\shend_{\cora}(\shm)$.
Note that $\shm$ may be regarded as an $\shend_{\cora}(\shm)$-module.

In particular, $\shend_{\cora}(\sha)$ is a $\cora$-algebroid, 
there is 
a functor of $\cora$-algebroids $\sha\otimes\sha^\op\to\shend_{\cora}(\sha)$,
and $\sha$ may be regarded as an $\shend_{\cora}(\sha)$-module.

\begin{lemma}\label{lem:endstack}
Let $\sha$ and $\sha'$ be $\cora$-algebroids and let $\M\in\Mod(\sha)$,
$\M'\in\Mod(\sha')$.
Assume that $\M$ and $\M'$ are locally isomorphic as $\cora$-modules,
that is, for any $\sigma\in\sha$ and $\sigma'\in\sha'$,
$\M(\sigma)$ and $\M'(\sigma')$ are locally isomorphic as $\cora_X$-modules.
Then $\shend_{\cora}(\M)$ and $\shend_{\cora}(\M')$ are equivalent
as $\cora$-algebroids.
\end{lemma}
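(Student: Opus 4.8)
The plan is to reduce the statement about algebroids to the corresponding statement about sheaves of algebras and then invoke locality of the construction $\shend_\cora(-)$. Recall from the excerpt that for an algebroid $\sha$ and any object $\sigma\in\sha(U)$, the restriction $\sha\vert_U$ is equivalent to the algebroid $\shend_\sha(\sigma)^+$ associated to the sheaf of algebras $\shend_\sha(\sigma)$. Since equivalences of algebroids are, by Lemma~\ref{lem:algiso}, given by bi-invertible modules, and since both $\shend_\cora(\M)$ and $\shend_\cora(\M')$ are defined by local data, it suffices to work locally on $X$. So first I would fix a point $x\in X$ and shrink $X$ to an open neighborhood $U$ on which there exist $\sigma\in\sha(U)$ and $\sigma'\in\sha'(U)$; set $A=\shend_\sha(\sigma)$, $A'=\shend_{\sha'}(\sigma')$, $M=\M(\sigma)$, $M'=\M'(\sigma')$, so that $M$ is an $A$-module, $M'$ is an $A'$-module, and by hypothesis $M$ and $M'$ are locally isomorphic as $\cora_X$-modules.

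Next I would identify $\shend_\cora(\M)$ with the algebroid associated to the sheaf of $\cora$-algebras $\shend_\cora(M)=\hom[\cora](M,M)$, and similarly $\shend_\cora(\M')$ with the one associated to $\shend_\cora(M')=\hom[\cora](M',M')$; this is immediate from the definition of $\shend_\cora(-)$ given just before the lemma, together with the fact that passing from a sheaf of algebras to its associated algebroid is compatible with the stackification used there. The problem is thus reduced to: if $M$ and $M'$ are locally isomorphic sheaves of $\cora_X$-modules, then the $\cora$-algebroids associated to $\shend_\cora(M)$ and $\shend_\cora(M')$ are equivalent. For this I would cover $U$ by opens $U_i$ on which there are isomorphisms $u_i\cl M\vert_{U_i}\isoto M'\vert_{U_i}$ of $\cora_{U_i}$-modules; conjugation by $u_i$ gives $\cora$-algebra isomorphisms $f_i\cl \shend_\cora(M)\vert_{U_i}\isoto\shend_\cora(M')\vert_{U_i}$, and on $U_{ij}$ the composite $u_i\circ u_j^{-1}$ is an invertible section of $\shend_\cora(M')(U_{ij})$, call it $c_{ij}$. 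One checks $f_i=\Ad(c_{ij})\circ f_j$ on $U_{ij}$ and the cocycle identity $c_{ij}c_{jk}=c_{ik}$ on $U_{ijk}$, so that $(\{\shend_\cora(M')\vert_{U_i}\},\{f_i f_j^{-1}\},\{c_{ij}\})$ — or more precisely the transport of this data — is a gluing datum in the sense of the excerpt; alternatively, and more cleanly, the collection $\{u_i\}$ directly defines a bi-invertible $\bigl(\shend_\cora(M)\tens\shend_\cora(M')^\rop\bigr)$-module, namely the subsheaf of $\hom[\cora](M,M')$ locally generated by the $u_i$, and Lemma~\ref{lem:algiso} then produces the desired equivalence of algebroids.

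The verification that these local bi-invertible modules glue to a global one, and that the resulting equivalence is independent of the auxiliary choices $\sigma,\sigma',u_i$ up to canonical isomorphism, is the only point requiring care, but it is formal: it is exactly the content of stack descent, and the needed patching statement for algebroids is Proposition~\ref{prop:descdata} (using paracompactness of $X$, or Proposition~\ref{prop:descdatab} in the algebraic case). I expect the main obstacle to be purely bookkeeping — keeping straight the several layers (module/sheaf of algebras/prestack/stack, and the opposite algebroids) and making sure the bi-invertibility condition of Definition~\ref{def:biinv1} is literally satisfied at the level of endomorphism algebras $\shend_\sha(\sigma)$ and $\shend_{\sha'}(\sigma')$ — rather than any genuine mathematical difficulty. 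In particular no finiteness or Noetherian hypothesis is needed here, since we are handed the local isomorphisms $u_i$ outright and need only conjugate by them.
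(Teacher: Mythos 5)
Your proposal arrives at the same key idea as the paper — producing a bi-invertible bimodule of the form $\hom[\cora](\M(\sigma),\M'(\sigma'))$ and then invoking Lemma~\ref{lem:algiso} — but you reach it via an unnecessary detour through localization and descent that introduces a hypothesis the lemma does not make. The paper simply defines $\shl$ globally at the algebroid level by setting $\shl(\sigma'\tens\sigma^\rop)=\hom[\cora](\M(\sigma),\M'(\sigma'))$ for every pair $\sigma\in\sha$, $\sigma'\in\sha'$; this is already, without any patching, a well-defined module over the algebroid $\shend_{\cora}(\M')\tens\shend_{\cora}(\M)^\rop$, because the assignment is functorial in $\sigma$ and $\sigma'$ in exactly the way the algebroid formalism of \S~\ref{section:algebroid} encodes. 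Bi-invertibility in the sense of Definition~\ref{def:biinv1} is a purely local condition, and it is precisely the hypothesis that $\M(\sigma)$ and $\M'(\sigma')$ are locally isomorphic over $\cora_X$ that supplies the local generator $w$. Nothing needs to be glued.

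Your closing appeal to Proposition~\ref{prop:descdata} (and hence to paracompactness of $X$) is therefore not only superfluous but unwelcome: the lemma as stated carries no paracompactness hypothesis, and if descent were genuinely required your argument would have a gap. What you interpret as ``local bi-invertible modules to be glued'' are just the restrictions of the single global object $\shl$ above, and the role of the local isomorphisms $u_i$ is only to witness that $\shl$ is bi-invertible, not to construct it. Two smaller remarks: the object you want is all of $\hom[\cora](M,M')$, not a ``subsheaf locally generated by the $u_i$'' (these coincide anyway, since each $u_i$ already generates over $\shend_\cora(M')$); and you have $\shend_\cora(M)\tens\shend_\cora(M')^\rop$ where the natural structure on $\hom[\cora](M,M')$ is over $\shend_\cora(M')\tens\shend_\cora(M)^\rop$ — harmless here since the statement is symmetric, but worth keeping straight.
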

\begin{proof}
For $\sigma\in \sha$ and $\sigma'\in\sha'$,
set 
$\shl(\sigma'\tens\sigma^\rop)
=\hom[\cora](\M(\sigma),\M'(\sigma'))$. Then 
$\shl$ is an $(\shend_{\cora}(\M')\tens\shend_{\cora}(\M)^\rop)$-module.
By the assumption, $\shl$ is a bi-invertible
$\bigl(\shend_{\cora}(\M')\otimes \shend_{\cora}(\M)^\rop\bigr)$-module.
Hence we obtain the desired result.
\end{proof}

Since Proposition~\ref{prop:descdata} does not apply to algebraic varieties,
we need an alternative local description of algebroids.

Let $\shu=\{U_i\}_{i\in I}$ be an open covering of $X$.
Consider the data of
\eq\label{eq:algdatum0b}
&& \left\{\parbox{300 pt}{
a $\cora$-algebroid $\sha$ on $X$,\\
$\sigma_i\in \sha(U_i)$.
}\right. 
\eneq
To these data, we associate 
\begin{itemize}
\item
$\sha_i\eqdot\shend_\sha(\sigma_i)$,
\item
$\shl_{ij}\eqdot\hom[\sha_i\vert_{U_{ij}}](\sigma_j\vert_{U_{ij}},\sigma_i\vert_{U_{ij}})$,
(hence $\shl_{ij}$ is a bi-invertible $\sha_i\tens\sha_j^\rop$-module on $U_{ij}$),
\item
the natural isomorphisms
\eq\label{eq:aijkl}
&&a_{ijk}\cl \shl_{ij}\tens[\sha_j]\shl_{jk}\isoto \shl_{ik}
\quad\text{in $\md[{\sha_i\tens\sha_k^\rop\vert_{U_{ijk}}}]$.}
\eneq
\end{itemize}
Then the diagram below in $\md[{\sha_i\tens\sha_l^\rop\vert_{U_{ijkl}}}]$
commutes:
\eq\label{eq:algebroidb}
&& \xymatrix{
\shl_{ij}\tens \shl_{jk}\tens \shl_{kl}\ar[r]^-{a_{ijk}}\ar[d]^-{a_{jkl}}
                         &\shl_{ik}\tens \shl_{kl}\ar[d]^-{a_{ikl}} \\
\shl_{ij}\tens \shl_{jl}\ar[r]^-{a_{ijl}}&\shl_{il}\,.
}
\eneq
 Conversely, let $\sha_i$ be sheaves of $\cora$-algebras  on $U_i$ ($i\in I$), 
let $\shl_{ij}$ be a bi-invertible $\sha_i\tens\sha_j^\rop$-module on $U_{ij}$,
and let $a_{ijk}$ be isomorphisms as in \eqref{eq:aijkl} such that the
diagram \eqref{eq:algebroidb} commutes. 
One calls 
\eq\label{eq:descentdatab}
&&( \{\sha_i\}_{i\in I},\{\shl_{ij}\}_{i,j\in I}, \{a_{ijk}\}_{i,j,k \in I})
\eneq
an algebraic gluing datum for $\cora$-algebroids on $\shu$.
\begin{proposition}\label{prop:descdatab}
Consider an algebraic gluing datum \eqref{eq:descentdatab} on $\shu$. Then there exist 
an algebroid  $\sha$ on $X$ and $\{\sigma_i,\phi_{ij}\}_{i,j\in I}$ as in \eqref {eq:algdatum0}
to which this gluing datum is associated.
Moreover, the data $(\sha,\sigma_i,\phi_{ij})$ are unique
up to an equivalence of stacks, this equivalence being unique up to
a unique isomorphism.
\end{proposition}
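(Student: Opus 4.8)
The plan is to reduce Proposition~\ref{prop:descdatab} to a descent statement for stacks. The key point is that an algebraic gluing datum $(\{\sha_i\},\{\shl_{ij}\},\{a_{ijk}\})$ on $\shu=\{U_i\}_{i\in I}$ is exactly the data needed to glue the local stacks $\stkMod(\sha_i)$ (or better, the local substacks $\stkInv(\sha_i)^\rop$) into a global $\cora$-linear stack $\sha$ on $X$. First I would observe that a bi-invertible $\sha_i\tens\sha_j^\rop$-module $\shl_{ij}$ on $U_{ij}$ defines, by Lemma~\ref{lem:algiso}, an equivalence of stacks $\Phi_{ij}\cl\stkMod(\sha_j)\vert_{U_{ij}}\isoto\stkMod(\sha_i)\vert_{U_{ij}}$, namely $\shm\mapsto\shl_{ij}\tens[\sha_j]\shm$; and the isomorphisms $a_{ijk}$ give natural isomorphisms $\Phi_{ij}\circ\Phi_{jk}\isoto\Phi_{ik}$ on $U_{ijk}$. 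The commutativity of diagram \eqref{eq:algebroidb} is precisely the cocycle condition that makes $(\{\stkMod(\sha_i)\},\{\Phi_{ij}\},\{a_{ijk}\})$ a descent datum for stacks over $\shu$. Since stacks form a $2$-stack — more concretely, since the $2$-presheaf $U\mapsto(\cora\text{-linear stacks on }U)$ satisfies effective descent, a fact proved in \cite{Go} (cf. \cite[Lem.~3.8.1]{Go}) without any paracompactness hypothesis — this descent datum is effective: there is a $\cora$-linear stack $\shs$ on $X$ with equivalences $\psi_i\cl\shs\vert_{U_i}\isoto\stkMod(\sha_i)$ compatible with the $\Phi_{ij}$ up to the coherent $2$-morphisms $a_{ijk}$.

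Next I would cut this $\shs$ down to an algebroid. Inside each $\stkMod(\sha_i)$ sits the full substack $\stkInv(\sha_i)^\rop$ of invertible $\sha_i$-modules, and the equivalences $\Phi_{ij}$ preserve invertibility (tensoring by a bi-invertible module sends invertible modules to invertible modules). Hence the substacks $\psi_i^{-1}(\stkInv(\sha_i)^\rop)$ of $\shs\vert_{U_i}$ agree on overlaps and glue to a full substack $\sha\subset\shs$. This $\sha$ is $\cora$-linear, locally non-empty (each $\sha\vert_{U_i}$ contains $\psi_i^{-1}(\sha_i^\rop)$, i.e.\ the object corresponding to $\sha_i$ itself), and any two of its objects are locally isomorphic because this holds in $\stkInv(\sha_i)^\rop$; so $\sha$ is a $\cora$-algebroid. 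Setting $\sigma_i\in\sha(U_i)$ to be the object corresponding under $\psi_i$ to the rank-one free module $\sha_i$, one has $\shend_\sha(\sigma_i)\simeq\sha_i$ canonically, and $\hom[\sha_i\vert_{U_{ij}}](\sigma_j,\sigma_i)\simeq\shl_{ij}$ with the associativity isomorphisms matching $a_{ijk}$; choosing any sections $w_{ij}$ of $\shl_{ij}$ generating it over $\sha_i$ yields isomorphisms $\phi_{ij}\cl\sigma_j\vert_{U_{ij}}\isoto\sigma_i\vert_{U_{ij}}$ as required in \eqref{eq:algdatum0}, and by construction the associated gluing datum is $(\{\sha_i\},\{\shl_{ij}\},\{a_{ijk}\})$ (possibly after correcting the $a_{ijk}$ by the discrepancy among the chosen $w_{ij}$, which is absorbed since we are free to rescale).

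For uniqueness, suppose $(\sha,\sigma_i,\phi_{ij})$ and $(\sha',\sigma_i',\phi_{ij}')$ both induce the given datum. Then $\hom[\sha_i](\scbul,\sigma_i)$ and $\hom[\sha_i](\scbul,\sigma_i')$ furnish equivalences $\sha\vert_{U_i}\isoto\stkMod(\sha_i)$ and $\sha'\vert_{U_i}\isoto\stkMod(\sha_i)$ landing in $\stkInv(\sha_i)^\rop$, and these are compatible on overlaps with the $\shl_{ij}$, hence by the $2$-universal property of effective descent for stacks (equivalently, by gluing the local equivalences $\sha\vert_{U_i}\isoto\sha'\vert_{U_i}$ using the coherence data) one obtains an equivalence $\sha\isoto\sha'$ carrying $\sigma_i$ to $\sigma_i'$ and $\phi_{ij}$ to $\phi_{ij}'$, unique up to a unique isomorphism. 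The main obstacle here is not any computation but getting the $2$-categorical bookkeeping right: one must check that the coherence isomorphisms $a_{ijk}$ satisfying \eqref{eq:algebroidb} are exactly what the $2$-descent formalism demands, and that all the identifications ($\shend_\sha(\sigma_i)\simeq\sha_i$, the comparison of associativity data, the rescaling of $w_{ij}$) are natural enough to assemble coherently. This is entirely parallel to the proof of Proposition~\ref{prop:descdata}, the only difference being that one glues \emph{via} bi-invertible bimodules rather than via algebra isomorphisms, which is precisely what removes the need for paracompactness (we never have to choose a section of a torsor over $X$, only local data), so the argument goes through verbatim for algebraic varieties.
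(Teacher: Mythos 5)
Your construction follows essentially the same route as the paper: glue the stacks $\stkMod(\sha_i)$ along the equivalences induced by the bi-invertible bimodules $\shl_{ij}$ (with coherence given by the $a_{ijk}$), then cut down to the algebroid as the full substack of objects locally isomorphic to $\sha_i$ (i.e.\ $\stkInv(\sha_i)^\rop$). The only genuine difference is stylistic: the paper makes the glued stack fully explicit, declaring that an object of $\md[{\A[U]}]$ is a family $\{\shm_i,q_{ij}\}$ with $q_{ij}\cl\shl_{ij}\tens[\sha_j]\shm_j\isoto\shm_i$ satisfying the cocycle compatibility coming from \eqref{eq:algebroidb}, and then checks directly that this prestack is a stack; you instead invoke 2-descent for stacks as a black box. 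Two small cautions. First, your citation of~\cite{Go} for that non-paracompact 2-descent statement is imprecise: the paper only appeals to~\cite[Lem.~3.8.1]{Go} for the paracompact Proposition~\ref{prop:descdata}, and the whole purpose of giving the explicit construction here is to \emph{avoid} leaning on such a reference in the algebraic setting — so it is cleaner to construct the glued prestack by hand as the paper does. Second, your final step of choosing sections $w_{ij}$ of $\shl_{ij}$ that generate it over $\sha_i$ so as to produce isomorphisms $\phi_{ij}$ is not justified and in general fails: a bi-invertible module on $U_{ij}$ need not be free, and if it always were, the algebraic gluing datum would reduce to the datum~\eqref{eq:descentdata} and there would be no content to the algebraic variant. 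This appears to track a slip in the Proposition's statement itself — the data produced by the construction are $(\sha,\{\sigma_i\})$ as in~\eqref{eq:algdatum0b}, not $(\sha,\{\sigma_i,\phi_{ij}\})$ as in~\eqref{eq:algdatum0} — and the $\phi_{ij}$ are neither needed for, nor produced by, the construction. With that emendation your argument is correct.
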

\begin{proof}[Sketch of proof]
We define a category $\md[{\A[X]}]$ as follows. 
An object $\shm\in\md[{\A[X]}]$ is defined 
as a family $\{\shm_i,q_{ij}\}_{i,j\in I}$ with $\shm_i\in\md[\sha_i]$
and the $q_{ij}$'s are isomorphisms
\eqn
&&q_{ij}\cl \shl_{ij}\tens[\sha_j]\shm_j\isoto\shm_i
\eneqn
making the diagram below commutative:
\eqn
&&\xymatrix{
\shl_{ij}\tens \shl_{jk}\tens \shm_{k}\ar[r]^-{q_{jk}}\ar[d]^-{a_{ijk}}
                         &\shl_{ij}\tens \shm_{j}\ar[d]^-{q_{ij}} \\
\shl_{ik}\tens \shm_{k}\ar[r]^-{q_{ik}}&\shm_{i}.
}
\eneqn
A morphism $\{\shm_i,q_{ji}\}_{i,j\in I}\to\{\shm'_i,q'_{ji}\}_{i,j\in I}$ 
in $\md[{\A}]$ is a family of morphisms $u_i\cl\shm_i\to\shm'_i$ satisfying the
natural compatibility conditions. Replacing $X$ with $U$ open in $X$,
we define a prestack $U\mapsto\md[{\A[U]}]$ and one easily checks that
this prestack is a stack and moreover that $\md[{\A[U_i]}]$ is
equivalent to $\md[\sha_i]$. We denote it by $\stkMod(\sha)$.
Then we define the algebroid $\A$ as the substack of
$(\stkMod(\sha))^\rop$ consisting of objects locally isomorphic to
$\sha_i$ on $U_i$.
\end{proof}

\subsubsection*{Invertible algebroids}
In this subsection,  $(X,\shr)$ denotes a topological space endowed with a sheaf of commutative $\cora$-algebras.
Recall (see \cite[Chap.19~\S~5]{K-S3}) that an $\shr$-linear stack 
$\sts$ is a $\cora$-linear stack $\sts$ together with a morphism of
$\cora$-algebras $\shr\to\shend(\id_\sts)$. 
Here,
$\shend(\id_{\sts})$ is the sheaf of endomorphisms of the identity functor
$\id_{\sts}$ from $\sts$ to itself. 
\begin{definition}\label{def:stralg}
\bnum
\item
An $\shr$-algebroid $\shp$ 
\glossary{Ralgebroid@$\shr$-algebroid}\glossary{algebroid!$\shr$-}%
 is a $\cora$-algebroid $\shp$ on $X$
endowed with a morphism of $\cora$-algebras $\shr\to\shend(\id_\shp)$.
\item
An $\shr$-algebroid $\shp$ on $X$ is called an invertible
$\shr$-algebroid
\glossary{invertible!$\shr$-algebroid}%
\glossary{algebroid!invertible $\shr$-}%
\glossary{Ralgebroid@$\shr$-algebroid!invertible}%
if $\shr_U\to\shend_{\shp}(\sigma)$ is an isomorphism
for any open subset $U$ of $X$ and any $\sigma\in\shp(U)$.
\enum
\end{definition}

We shall state some properties of invertible $\shr$-algebroids.
Since the proofs are more or less obvious, 
we omit them.

For two  $\shr$-algebroids $\shp_1$ and $\shp_2$,
the  $\shr$-algebroid $\shp_1\otimes_{\shr}\shp_2$  is defined as
the $\shr$-linear stack associated with the prestack $\sts$ 
given by
\eqn
&&\sts(U)=\shp_1(U)\times\shp_2(U),\\
&&\hom[\sts]\bl(\sigma_1,\sigma_2),(\sigma_1',\sigma_2')\br)
=\hom[\shp_1](\sigma_1,\sigma'_1)\tens[\shr]\hom[\shp_2](\sigma_2,\sigma'_2).
\eneqn
If  $\shp_1$ and $\shp_2$ are 
invertible, then so is $\shp_1\otimes_{\shr}\shp_2$.

We have a functor of $\cora$-linear stacks
$\shp_1\otimes_{\cora_X}\shp_2\to\shp_1\otimes_{\shr}\shp_2$.

Note that
\eq\label{eq:Ostacks1}
&&\parbox{60ex}{
If $\shp_1$ and $\shp_2$ are two invertible $\shr$-algebroids and 
$F\cl \shp_1\to\shp_2$ is a functor of $\shr$-linear stacks, then $F$ is an
equivalence.
}\eneq
\eq\label{eq:Ostacks2}
&&\parbox{60ex}{
For any invertible $\shr$-algebroid $\shp$,
$\shp\otimes_{\shr}\shp^\op$ is equivalent to $\shr$ as an $\shr$-algebroid.
}\eneq
\eq\label{eq:Ostacks3}
&&\parbox{60ex}{The set of equivalence classes of invertible $\shr$-algebroids
has a structure of an additive group 
by the operation $\scbul\otimes_{\shr}\scbul$ defined above,
and this group is isomorphic to $H^2(X;\shr^\times)$ (see \cite{Br,K-S3}).
Here $\shr^\times$ denotes the abelian sheaf of invertible sections of $\shr$.
}\eneq
\eq\label{eq:Ostacks4}
&&\parbox{60ex}{
For two invertible $\shr$-algebroids $\shp_1$ and $\shp_2$,
there is a natural functor
$$
\scbul\tens[\shr]\scbul\cl\Mod(\shp_1)\times\Mod(\shp_2)\to
\Mod(\shp_1\otimes_{\shr}\shp_2),
$$
and its derived version.
}\eneq

\subsubsection*{Invertible $\OS$-algebroids}

In this subsection,  $(X,\sho_X)$ denotes a complex  manifold. As a particular case of 
Definition~\ref{def:stralg}, taking $\cora=\C$ and $\shr=\sho_X$, we get the notions of an $\OS$-algebroid
\glossary{Oalgebroid@$\OS$-algebroid}\glossary{algebroid!$\OS$-}%
as well as that of an invertible $\OS$-algebroid.
\glossary{invertible!$\OS$-algebroid}%
\glossary{algebroid!invertible $\OS$-}%
\glossary{Oalgebroid@$\OS$-algebroid!invertible}%

\begin{lemma}\label{le:endosho}
 Any $\C$-algebra endomorphism of $\sho_X$ is equal to the identity.
\end{lemma}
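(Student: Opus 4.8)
The statement is that any $\C$-algebra endomorphism $\phi$ of the sheaf $\sho_X$ on a complex manifold $X$ is the identity. Since the claim is local and the assertion is about a morphism of sheaves, it suffices to check that $\phi$ induces the identity on each stalk $\sho_{X,x}$; so I would fix a point $x\in X$ and work in local coordinates $z_1,\dots,z_n$ centered at $x$, identifying $\sho_{X,x}$ with the ring $\C\{z_1,\dots,z_n\}$ of convergent power series. The first step is to show $\phi$ preserves the maximal ideal $\mathfrak m_x$: indeed $\phi$ sends invertible elements to invertible elements (a ring endomorphism of a local ring sending $1$ to $1$ does this, using that $\phi(u)\phi(u^{-1})=1$), so $\phi(\mathfrak m_x)\subset\mathfrak m_x$, and more generally $\phi(\mathfrak m_x^k)\subset\mathfrak m_x^k$ for all $k$; thus $\phi$ is continuous for the $\mathfrak m_x$-adic topology.

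\textbf{Key steps.} Next I would use that $\phi$ is $\C$-linear and a ring homomorphism to reduce the problem to understanding $w_j\eqdot\phi(z_j)\in\mathfrak m_x$: for any polynomial $P$, $\phi(P(z_1,\dots,z_n))=P(w_1,\dots,w_n)$, and by $\mathfrak m_x$-adic continuity this extends to $\phi(f)=f(w_1,\dots,w_n)$ for every $f\in\sho_{X,x}$ (the substitution $f(w_1,\dots,w_n)$ makes sense because each $w_j\in\mathfrak m_x$, and one must check convergence of the substituted series — this is the standard fact that substitution of convergent series without constant term into a convergent series converges). So $\phi$ is entirely determined by the germs $w_1,\dots,w_n$, and the task becomes: show $w_j=z_j$ for all $j$. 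The crucial input that forces this is that $\phi$ is a \emph{global} sheaf endomorphism, not merely a stalk endomorphism: the same $\phi$ must be compatible on overlaps and defined on all of $\sho_X$. In particular, for any open $U$ and any $f\in\sho_X(U)$, $\phi_U(f)$ is again holomorphic on $U$; taking $f=z_j$ itself on a coordinate chart, $w_j=\phi(z_j)$ is a globally defined holomorphic function on that chart, and the substitution formula $\phi(f)=f\circ w$ (with $w=(w_1,\dots,w_n)$) must hold on the whole chart, i.e.\ $w\cl U\to\C^n$ is a holomorphic self-map inducing $\phi$ by pullback.

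\textbf{Concluding the argument and the main obstacle.} Once I know $\phi$ acts by pullback along a holomorphic map $w$ on each chart, I would argue that $w$ must be the identity. Here I would exploit that $\phi$ respects products and the ideal structure: from $\phi(z_iz_j)=w_iw_j$ and looking at the induced map on $\mathfrak m_x/\mathfrak m_x^2$ one gets a $\C$-linear automorphism, but that alone only shows $w$ is a local biholomorphism near $x$, not the identity — this is exactly where naively working stalk-by-stalk fails and the global/rigidity nature of the hypothesis is essential. The clean way to finish: since $\phi$ is an endomorphism of the sheaf $\sho_X$ defined over the \emph{whole} complex manifold (and in fact the lemma is only used locally, so one may take $X$ a polydisc or $\C^n$), the induced holomorphic map $w$ is forced by functoriality to be compatible with all the coordinate functions simultaneously and with the evaluation maps $\sho_X\to\C$ at points; concretely, evaluation at a point $p$ composed with $\phi$ equals evaluation at $w(p)$, and since $\phi$ must be the identity on the constant subsheaf $\C_X\subset\sho_X$ (it is $\C$-linear and unital) while also being a sheaf map over $X$, one shows $w(p)=p$ for all $p$, hence $w=\id$ and $\phi=\id$. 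The main obstacle in writing this carefully is precisely pinning down why the substitution map cannot be a nontrivial local coordinate change: the resolution is that a genuine sheaf endomorphism over the base must commute with restriction to all opens and hence with point evaluations, which rigidifies $w$ to the identity — whereas a mere ring endomorphism of the single stalk $\sho_{X,x}$ would indeed admit many automorphisms (all local biholomorphisms fixing $x$), so the word "sheaf" (equivalently: the endomorphism is of $\sho_X$ as an $\sho_X$-object over $X$, or at minimum compatible with the topology) is doing the real work and must be used.
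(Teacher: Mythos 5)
Your argument is correct, but the paper's proof is considerably shorter and avoids the entire machinery you build. The paper never introduces local coordinates, $\mathfrak{m}_x$-adic continuity, the substitution theorem for convergent power series, or the representation of $\phi$ as pullback along a holomorphic map $w$. It observes directly: the stalk map $\phi_x\cl\sho_{X,x}\to\sho_{X,x}$ preserves $\mathfrak{m}_x$ (as you note at the start), hence induces a $\C$-algebra endomorphism of the residue field $\sho_{X,x}/\mathfrak{m}_x\simeq\C$, which by $\C$-linearity and unitality must be the identity; equivalently $\phi(f)(x)=f(x)$ for every germ $f$ at $x$. Since this holds at every point $x$ of the open set on which a given section $f$ is defined, $\phi(f)=f$ as a holomorphic function. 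That is the entire proof; the global hypothesis is used exactly once, when quantifying over $x$.

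Your route eventually reaches the same conclusion via the map $w$, but the final step, proving $w(p)=p$, is genuinely underjustified as written. The appeal to ``$\phi$ being the identity on the constant subsheaf $\C_X$'' does not by itself give $w(p)=p$: that only says constants are preserved, which any pullback map satisfies. What actually forces $w(p)=p$ is that the argument you gave at the beginning --- $\phi$ preserves the maximal ideal of a stalk, hence preserves the value at the point --- applies at \emph{every} point $p$ of the chart, not just the fixed $x$. Combined with your substitution formula $\phi(f)(p)=f(w(p))$, this gives $f(w(p))=f(p)$ for all $f$; taking $f=z_j-z_j(p)$ forces $w_j(p)=z_j(p)$ for all $j$, hence $w(p)=p$. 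Once you make this explicit the argument closes, but at that point you have essentially rediscovered the paper's observation with $w$ as an unnecessary intermediary: the statement ``$\phi$ preserves evaluation at every point'' already settles the lemma without ever constructing $w$.
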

Although this result is elementary and well-known, we give a proof.
\begin{proof}
Let $\phi$ be a $\C$-algebra endomorphism of $\sho_X$. For $x\in X$, denote by 
$\phi_x$ the $\C$-algebra endomorphism of $\sho_{X,x}$ induced by $\phi$ and
by $\mathfrak{m}_x$ the unique maximal ideal of 
the ring $\sho_{X,x}$. Then $\phi_x$ sends
$\mathfrak{m}_x$ to $\mathfrak{m}_x$, $\phi_x$ induces an $\C$-algebra
homomorphism 
$u_x\cl\sho_{X,x}/\mathfrak{m}_x
\to \sho_{X,x}/\mathfrak{m}_x$. 
Since the composition 
$\C\isoto\sho_{X,x}/\mathfrak{m}_x\To[u_x]
\sho_{X,x}/\mathfrak{m}_x\isoto\C$ is the identity, we obtain that $u_x$
is the identity. 
Hence,
for any $f\in\sho_X$, $\phi(f)(x)=f(x)$. Therefore $\phi(f)=f$.
\end{proof}

\begin{lemma}
Let $\shp$ be a $\C$-algebroid on a complex manifold $X$.
Assume that, for any $\sigma\in\shp$,
$\shend_{\shp}(\sigma)$ is locally isomorphic to $\OO$ as a
$\C$-algebra.
Then $\shp$ is uniquely endowed with a structure of $\OO$-algebroid,
and $\shp$ is invertible.
\end{lemma}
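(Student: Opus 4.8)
The statement asserts that a $\C$-algebroid $\shp$ on $X$ all of whose endomorphism rings $\shend_\shp(\sigma)$ are locally isomorphic to $\OO[X]$ as a $\C$-algebra carries a unique $\OO[X]$-algebroid structure, and moreover that this structure is invertible. The plan is to exhibit the structure map $\sho_X\to\shend(\id_\shp)$ explicitly, using Lemma~\ref{le:endosho}, and then to verify that it is well-defined, $\C$-algebra linear, unique, and that the resulting $\sho_X$-algebroid is invertible in the sense of Definition~\ref{def:stralg}. The key input is that $\C$-algebra endomorphisms of $\sho_X$ are rigid (Lemma~\ref{le:endosho}), which forces any two local isomorphisms $\shend_\shp(\sigma)\isoto\sho_X$ to differ by the identity, hence to be canonical; this is what allows the local $\sho_X$-module structures on the $\shend_\shp(\sigma)$ to glue.

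\textbf{Key steps.} First I would fix, for each $\sigma\in\shp(U)$, a local isomorphism of $\C$-algebras $\psi_\sigma\cl\shend_\shp(\sigma)\isoto\sho_U$ (available after shrinking $U$), and observe that by Lemma~\ref{le:endosho} any two such isomorphisms over the same open set agree: if $\psi,\psi'\cl\shend_\shp(\sigma)\isoto\sho_V$ are both $\C$-algebra isomorphisms, then $\psi'\circ\opb{\psi}$ is a $\C$-algebra endomorphism of $\sho_V$, hence the identity, so $\psi=\psi'$. Thus there is a canonical $\C$-algebra isomorphism $\shend_\shp(\sigma)\simeq\sho$ locally, and in particular a canonical way to view each section of $\sho_X$ as a (central) endomorphism of $\sigma$. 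Next, for $\sigma,\tau\in\shp(U)$ and any isomorphism $a\cl\tau\isoto\sigma$, conjugation $\Ad(a)\cl\shend_\shp(\tau)\isoto\shend_\shp(\sigma)$ is a $\C$-algebra isomorphism; composing with the canonical identifications with $\sho$ gives a $\C$-algebra automorphism of $\sho$, which is the identity by Lemma~\ref{le:endosho}. Hence the canonical identifications are compatible with all isomorphisms in $\shp$, and with restriction to smaller opens (again by the rigidity). This is exactly the data of a morphism of $\C$-algebras $\sho_X\to\shend(\id_\shp)$: for $f$ a local section of $\sho_X$ and $\sigma\in\shp$, let $f$ act on $\sigma$ via the canonical identification $\shend_\shp(\sigma)\simeq\sho$; the compatibility just checked says this commutes with all morphisms of $\shp$, so it is a natural transformation of $\id_\shp$, and $f\mapsto(f$ acting on $\id_\shp)$ is a ring homomorphism since each $\shend_\shp(\sigma)\simeq\sho$ is. Uniqueness: any $\sho_X$-algebroid structure on $\shp$ gives, for each $\sigma$, a morphism $\sho_U\to\shend_\shp(\sigma)$ of $\C$-algebras into a ring locally isomorphic to $\sho_U$; postcomposing with $\psi_\sigma$ yields a $\C$-algebra endomorphism of $\sho_U$, necessarily the identity, so the structure map is forced to be the canonical one. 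Finally, invertibility: by Definition~\ref{def:stralg}(ii) we must check $\sho_U\to\shend_\shp(\sigma)$ is an isomorphism for every $\sigma\in\shp(U)$; but this map, postcomposed with the local $\C$-algebra isomorphism $\shend_\shp(\sigma)\isoto\sho$, is the identity of $\sho$, hence it is itself an isomorphism. Since $\shp$ is locally nonempty (it is a $\C$-algebroid) and any two objects over an open set are locally isomorphic, the hypothesis that $\shend_\shp(\sigma)$ is locally isomorphic to $\OO$ for \emph{some} $\sigma$ over each point propagates to all $\sigma$, so invertibility holds globally.

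\textbf{Main obstacle.} The only subtle point is making the gluing of the local $\sho$-structures fully rigorous: one must check that the canonical identifications $\shend_\shp(\sigma)\simeq\sho$ are compatible not just pointwise but as an isomorphism of sheaves over overlaps, and that they intertwine the restriction functors and the isomorphisms $\phi_{ij}$ of a descent datum as in~\eqref{eq:algdatum0}. All of this reduces, repeatedly, to the single rigidity statement of Lemma~\ref{le:endosho} (a $\C$-algebra endomorphism of $\sho$ on any open set is the identity), so once that is invoked the verifications are routine diagram chases; I would state them and leave the bookkeeping to the reader, exactly as the authors signal they will. The content of the lemma is thus almost entirely packaged in Lemma~\ref{le:endosho}, together with the formal fact that an algebroid is determined by its local endomorphism rings and the gluing isomorphisms between them.
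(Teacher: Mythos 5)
Your proof is correct and follows the same route as the paper's: both proofs reduce everything to the rigidity statement of Lemma~\ref{le:endosho}, which forces the local $\C$-algebra isomorphisms $\sho_V\isoto\shend_\shp(\sigma)\vert_V$ to be unique and hence to glue over $U$, to intertwine all conjugations $\Ad(a)$, and to be independent of any choice; the paper simply records this and declares the remaining verifications obvious, whereas you spell them out. The only remark worth adding is that the uniqueness also handles compatibility with non-invertible morphisms $a\cl\tau\to\sigma$: locally $a$ factors as an endomorphism of $\sigma$ followed by an isomorphism, and since $\shend_\shp(\sigma)\simeq\sho$ is commutative the check for isomorphisms already covers the general case — but this is exactly the kind of bookkeeping you correctly flag as routine.
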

\begin{proof}
By Lemma~\ref{le:endosho},
for an open subset $U$ and $\sigma\in\shp(U)$,
there exists
a unique $\C$-algebra isomorphism
$\OO\vert_U\isoto\shend_\shp(\sigma)$.
It gives a structure of
$\OO$-algebroid on $\shp$.
The remaining statements are obvious.
\end{proof}

Let $\shp$ be an invertible $\OS$-algebroid. For $\sigma$, $\sigma'\in\shp(U)$, the
two $\sho_X$-module  structures 
on $\hom[\shp](\sigma,\sigma')$ induced by
$\shend_{\shp}(\sigma)\simeq\sho_X$ and by $\shend_{\shp}(\sigma')\simeq\sho_X$
coincide, and $\hom[\shp](\sigma,\sigma')$ is an invertible $\sho_X$-module.

Let $f\cl X\to Y$ be a morphism of complex manifolds.
For an invertible $\OS[Y]$-algebroid $\shp_Y$, we set 
\eqn
&&\spb{f}\shp_Y\eqdot \sho_X\otimes_{\opb{f}\sho_Y}\opb{f}\shp_Y,
\eneqn
where the tensor product $\otimes_{\opb{f}\sho_Y}$ is defined
similarly as for $\cora$-algebroids. 
Then $\spb{f}\shp_Y$ is an invertible $\OS[X]$-algebroid.
We have functors
\eq
&&\quad\spb{f}\;\cl\Mod(\shp_Y)\to \Mod(\spb{f}\shp_Y),
\quad\lspb{f}\;\cl\Derb(\shp_Y)\to \Derb(\spb{f}\shp_Y),
\eneq
and
\eq&&
\ba{rl}
\eim{f},\;\oim{f}&\cl\Mod(\spb{f}\shp_Y)\to \Mod(\shp_Y),\\[1ex]
\quad\reim{f},\,\roim{f}&\cl\Derb(\spb{f}\shp_Y)\to \Derb(\shp_Y).
\phantom{aaaaaaaaaaaaaa}
\ea
\eneq
Let $f\cl X\to Y$ be a morphism of complex manifolds,
and let $\shp_X$ \ro resp.\ $\shp_Y$\rf\ be an invertible $\OS$-algebroid
\ro resp.\ an invertible $\OS[Y]$-algebroid\rf.
If $\opb{f}\shp_Y\to \shp_X$ is a functor of $\C$-linear stacks, then
it defines a functor of $\C$-linear stacks $\spb{f}\shp_Y\to \shp_X$ and this
last functor is an equivalence by the preceding results.
\begin{remark}\label{rem:oesterle}
Invertible $\OS$-algebroids are trivial in the algebraic case.
Indeed, for a smooth algebraic variety $X$, 
the group $H^2(X;\sho^\times_X)$ is zero. 
Here the cohomology is calculated with respect to the Zariski topology.
(With the {\'e}tale topology, it does not vanish in general.)
This result and its proof below have been communicated 
to us by Prof.\ Joseph Oesterl{\'e}, 
and we thank him here.

Let $K$ be the field of rational functions on $X$, $K^\times_X$, 
the constant sheaf with the abelian group $K^\times$ as stalks,  and denote by 
$X_1$ the set of closed irreducible hypersurfaces of $X$.
One has an exact sequence
\eqn
&&0\to \sho_X^\times\to K^\times_X\to \bigoplus_{S\in X_1}\Z_S\to 0.
\eneqn
 Since $K^\times_X$ is constant, it is a flabby sheaf for the 
Zariski topology. On the other hand the sheaf $\bigoplus_{S\in X_1}\Z_S$ is also flabby. 
It follows that $H^j(X;\sho^\times_X)$ is zero for $j>1$.
\end{remark}

\section{$\DQ$-algebras}\label{section:DQalg}
{}From now on, $X$ will be  a complex  manifold. 
We denote by $\delta_X\cl X\hookrightarrow X\times X$
the diagonal embedding and we set $\Delta_X=\delta_X(X)$.
We denote by $\OO$ the structure sheaf on $X$, 
by $d_X$ 
\index{d@$d_X$\ (dimension)}%
the complex dimension, by $\Omega_X$ the sheaf of holomorphic
forms of maximal degree and by 
$\Theta_X$ the sheaf of holomorphic vector fields.
As usual, we denote by  $\shd_X$ the sheaf of rings of (finite order) differential
operators on $X$ and by $F_n(\shd_X)$ the sheaf of differential operators of order
$\le n$. Recall that a bi-differential operator $P$ on $X$ 
\glossary{bi-differential operator}%
is a $\C$-bilinear morphism
$\sho_X\times\sho_X\to\sho_X$ which is obtained as 
the composition $\opb{\delta_X}\circ\tw P$ where $\tw P$ is a 
differential operator on $X\times X$ defined on a neighborhood of the
diagonal and $\opb{\delta}$ is the restriction to the diagonal:
\eq\label{eq:bidiff1}
&&P(f,g)(x)=
(\tw P(x_1,x_2;\partial_{x_1},\partial_{x_2})(f(x_1)g(x_2))\vert_{x_1=x_2=x}.
\eneq
Hence the sheaf of bi-differential operators is isomorphic to
$\shd_X\otimes_{\sho_X}\shd_X$, 
where both $\shd_X$ are regarded as $\OO$-modules
by the left multiplications. 

\subsubsection*{Star-products}
\begin{notation}
We denote by $\coro$ 
\index{Ch@$\coro$ $=\C[[\hbar]]$}%
the ring $\C\forl$
of formal power series 
in an indeterminate $\hbar$ and by $\cor$ 
\index{Chl@$\cor$ $=\C\Ls$}%
the field $\C\Ls$ 
of Laurent series in
$\hbar$. Then $\cor$ is the fraction field of $\coro$. 
\end{notation}
We set 
\eqn
&\OO[X]\forl\eqdot \prolim[n]\sho_X\otimes(\coro/\hbar^n\coro)\simeq
\prod\limits_{n\ge0}\sho_X\hbar^n.
\eneqn
Let us recall a classical definition (see \cite{BFFLS,Ko1}). 
\begin{definition}
An associative multiplication law 
$\star$ on $\OO[X]\forl$ is a
star-product
\glossary{star product}%
 if it is $\coro$-bilinear and satisfies
\eq\label{eq:star1}
&&f\star g=\sum_{i\geq 0}P_i(f,g)\hbar^i\text{ for } f,g\in\OO[X],
\eneq
where the $P_i$'s are bi-differential operators such that 
$P_0(f,g)=fg$ and $P_i(f,1)=P_i(1,f)=0$ for all $f\in\sho_X$ and $i>0$. 
We call $(\OO[X]\forl,\star)$ a star-algebra.
\end{definition}
Note that
$1\in\sho_X\subset\sho_X\forl$ is  a unit with respect to $\star$.
Note also that we have 
\eqn
&&(\ssum_{i\ge0}f_i\hbar^i)\star(\ssum_{i\ge0}g_i\hbar^i)=\ssum_{n\ge0}
\bl\ssum_{i+j+k=n}P_k(f_i,g_j)\br\hbar^n.
\eneqn
Recall that a star-product defines a Poisson structure on $(X,\sho_X)$ by setting for 
$f,g\in\sho_X$:
\eq\label{eq:Poisson0}
&&\{f,g\}=P_1(f,g)-P_1(g,f)
=\opb{\hbar}(f\star g-g\star f)\mbox{ mod }\hbar\sho_X\forl,
\eneq
and that locally, (globally in the real case), any Poisson manifold 
$(X,\sho_X)$ may be 
endowed with a star-product to which the Poisson structure is associated. 
This is a famous theorem of Kontsevich \cite{Ko1}.

\begin{proposition}\label{pro:gauge1}
Let $\star$ and $\star'$ be star-products and let
$\phi\cl (\OO[X]\forl,\star)\to (\OO[X]\forl,\star')$
be a morphism of $\coro$-algebras.
Then there exists a unique sequence of 
differential operators $\{R_i\}_{i\ge0}$ on $X$
such that $R_0=1$ and
$\phi(f)=\sum_{i\ge0}R_i(f)\hbar^i$ for any $f\in\sho_X$. 
In particular, $\phi$ is an isomorphism.
\end{proposition}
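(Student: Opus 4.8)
The plan is to show that $\phi$ is entirely determined by its restriction to $\sho_X\subset\OO[X]\forl$ and to analyze that restriction coefficient by coefficient in $\hbar$. First I would note that $\phi$ is automatically $\hbar$-adically continuous: for $a=\sum_{k\ge0}a_k\hbar^k$ the partial sums $a^{(N)}=\sum_{k\le N}a_k\hbar^k$ satisfy $a-a^{(N)}\in\hbar^{N+1}\OO[X]\forl$, so $\coro$-linearity together with additivity gives $\phi(a)\equiv\sum_{k\le N}\hbar^k\phi(a_k)\pmod{\hbar^{N+1}}$ for every $N$; since $\OO[X]\forl$ is $\hbar$-complete and $\hbar$-separated this forces $\phi(a)=\sum_{k\ge0}\hbar^k\phi(a_k)$. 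Hence $\phi$ is determined by $\phi\vert_{\sho_X}$, and for $f\in\sho_X$ I may write $\phi(f)=\sum_{i\ge0}\phi_i(f)\hbar^i$ with each $\phi_i\colon\sho_X\to\sho_X$ a $\C$-linear morphism of sheaves. Everything then reduces to proving that $\phi_0=\id$ and that each $\phi_i$ is a differential operator; uniqueness of the resulting $R_i=\phi_i$ is immediate from $\hbar$-separatedness, and it suffices to check all of this locally.

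For the base case, reducing modulo $\hbar$ and using that the reductions of $\star$ and of $\star'$ are both the ordinary commutative product of $\sho_X$ (because $P_0(f,g)=P'_0(f,g)=fg$), the operator $\phi_0$ is a $\C$-algebra endomorphism of $\sho_X$; by Lemma~\ref{le:endosho} it equals $\id_{\sho_X}$. For the remaining $\phi_n$ I would argue by induction on $n$, assuming $\phi_0=\id,\phi_1,\dots,\phi_{n-1}$ are differential operators. Writing $\star=\sum_kP_k(\scbul,\scbul)\hbar^k$ and $\star'=\sum_lP'_l(\scbul,\scbul)\hbar^l$, I expand the identity $\phi(f\star g)=\phi(f)\star'\phi(g)$ and compare coefficients of $\hbar^n$. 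On the left the only contribution involving $\phi_n$ is $\phi_n(P_0(f,g))=\phi_n(fg)$, and on the right, since $P'_0$ is the product and $\phi_0=\id$, the only contributions involving $\phi_n$ are $\phi_n(f)\,g$ and $f\,\phi_n(g)$; every other term is of the form $\phi_m\circ P_k$ or $P'_l(\phi_i(\scbul),\phi_j(\scbul))$ with all indices $<n$. Since a differential operator composed with a bi-differential operator is again bi-differential, and likewise $P'_l(\phi_i(\scbul),\phi_j(\scbul))$ is bi-differential, the induction hypothesis lets us collect all these into a single bi-differential operator $Q_n$, yielding the recursion
\[
\phi_n(fg)-f\,\phi_n(g)-g\,\phi_n(f)=-Q_n(f,g).
\]

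To conclude that $\phi_n$ is a differential operator, I would read this identity through commutators with multiplication operators: for any $f\in\sho_X$ the operator $g\mapsto[\phi_n,f](g)=\phi_n(fg)-f\,\phi_n(g)$ is the sum of multiplication by $\phi_n(f)$ and the differential operator $g\mapsto-Q_n(f,g)$, whose order is bounded by the order $N$ of $Q_n$ uniformly in $f$. Thus $[\phi_n,f]\in F_N(\shd_X)$ for every $f$, hence $\phi_n\in F_{N+1}(\shd_X)$ by the defining inductive property of the order filtration of $\shd_X$; set $R_n=\phi_n$, completing the induction. Finally, for invertibility, $\phi=\id+\hbar\psi$ where $\psi$ is the $\coro$-linear continuous endomorphism of $\OO[X]\forl$ whose restriction to $\sho_X$ is $\sum_{i\ge1}R_i(\scbul)\hbar^{i-1}$; since $\hbar\psi$ strictly increases $\hbar$-order, $\sum_{k\ge0}(-\hbar\psi)^k$ converges $\hbar$-adically to a two-sided inverse of $\phi$, so $\phi$ is bijective and therefore an isomorphism of $\coro$-algebras. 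The genuinely delicate step is the inductive one — correctly isolating the recursion and recognizing, from the coboundary identity, that $\phi_n$ must be a differential operator; the continuity reduction and the invertibility are routine $\hbar$-adic bookkeeping.
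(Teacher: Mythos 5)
Your proof follows essentially the same route as the paper's: expand $\phi$ on $\sho_X$ coefficientwise in $\hbar$, use Lemma~\ref{le:endosho} to get $\phi_0=\id$, then induct on $n$ by comparing $\hbar^n$-coefficients in $\phi(f\star g)=\phi(f)\star'\phi(g)$, isolating $\phi_n$ up to known bi-differential corrections, and concluding $\phi_n\in\shd_X$ from the fact that $[\phi_n,f]$ is a differential operator of uniformly bounded order for all $f\in\sho_X$.

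The one place you go slightly lighter than the paper is the final inductive step. You invoke ``the defining inductive property of the order filtration of $\shd_X$'' to pass from $[\phi_n,f]\in F_N(\shd_X)$ for all $f$ to $\phi_n\in F_{N+1}(\shd_X)$. The paper does \emph{not} take this as the definition of $F_\bullet(\shd_X)$ (it sets up $\shd_X$ ``as usual,'' i.e.\ generated by $\sho_X$ and $\Theta_X$); it therefore isolates exactly this implication as Lemma~\ref{le:gauge1} and gives a short local-coordinate proof (build a $P\in F_{l+1}$ with $[P,x_i]=[\phi,x_i]$ for each coordinate $x_i$, then show $\phi-P$ is multiplication by a function). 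Your argument is thus correct, but you are tacitly using a genuine lemma rather than a definition; in the paper's framework that step needs the proof just described. The extra material you include — $\hbar$-adic continuity making $\phi$ determined by $\phi\vert_{\sho_X}$, and the Neumann-series argument $\phi^{-1}=\sum_k(-\hbar\psi)^k$ for invertibility — is correct and spells out points the paper leaves implicit.
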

First, we need a lemma. In this lemma, we set $F_\infty(\shd_X)=\shd_X$.

\begin{lemma}\label{le:gauge1}
Let $l\in\Z_{\ge-1}\sqcup\{\infty\}$, and $\phi\in\Endo[\C_X](\sho_X)$.
If
$[\phi,g]\in\Fl_l(\shd_X)$ for all 
$g\in\sho_X$, then $\phi\in\Fl_{l+1}(\shd_X)$. 
\end{lemma}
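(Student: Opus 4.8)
\textbf{Plan for the proof of Lemma~\ref{le:gauge1}.}
The statement is a standard order-counting argument for differential operators, to be carried out by induction on $l$. The base case $l=-1$ says: if $[\phi,g]=0$ for all $g\in\sho_X$, then $\phi$ is $\sho_X$-linear (multiplication by a local section of $\sho_X$), hence a differential operator of order $\le 0=(-1)+1$. The case $l=\infty$ is empty of content since $\Fl_\infty(\shd_X)=\shd_X$ by convention, so $[\phi,g]\in\shd_X$ trivially for all $g$, but then we must show $\phi\in\shd_X$ — this is genuinely the interesting case and I expect it is not literally what is meant; rather the induction is over finite $l\ge -1$, and the $l=\infty$ clause should be read as: if $[\phi,g]\in\shd_X$ for all $g$, there is nothing to improve. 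So the real content is: for finite $l\ge-1$, $[\phi,g]\in\Fl_l(\shd_X)$ for all $g\in\sho_X$ implies $\phi\in\Fl_{l+1}(\shd_X)$.

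First I would reduce to the stalk/local-coordinate situation: work in a coordinate chart with coordinates $x=(x_1,\dots,x_n)$, so that $\shd_X$ is the sheaf of operators $\sum a_\alpha(x)\partial_x^\alpha$ (finite sums), and $\Fl_l(\shd_X)$ consists of those with $|\alpha|\le l$. The key algebraic fact is the characterization of differential operators via iterated commutators with functions: an operator $P\in\Endo[\C_X](\sho_X)$ lies in $\Fl_m(\shd_X)$ if and only if for all $g_0,\dots,g_m\in\sho_X$ the iterated commutator $[\dots[[P,g_0],g_1]\dots,g_m]=0$. This is Grothendieck's inductive definition of differential operators, which I would either cite or prove by a short induction. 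Granting it, the lemma is almost immediate: by hypothesis $[\phi,g_0]\in\Fl_l(\shd_X)$ for every $g_0$, so for any $g_1,\dots,g_{l+1}$ we have $[\dots[[\phi,g_0],g_1]\dots,g_{l+1}]=0$ (one takes $m=l$ iterated brackets of $[\phi,g_0]$, which vanish since $[\phi,g_0]$ has order $\le l$); thus $\phi$ kills all iterated commutators of length $l+2$, hence $\phi\in\Fl_{l+1}(\shd_X)$.

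The one point requiring care — and the main (mild) obstacle — is that $\phi$ is only assumed to be a $\C_X$-linear endomorphism of $\sho_X$, not a priori a differential operator, so I cannot simply expand it in coordinates from the start. The clean way is to prove directly, by induction on $m$, the implication: if $\psi\in\Endo[\C_X](\sho_X)$ satisfies $[\psi,g]\in\Fl_{m-1}(\shd_X)$ for all $g\in\sho_X$, then $\psi\in\Fl_m(\shd_X)$ (the statement being that of the lemma with $l=m-1$). For $m=0$ this is: $[\psi,g]=0$ for all $g$ forces $\psi$ to be $\sho_X$-linear, hence an $\sho_X$-module endomorphism of $\sho_X$, hence multiplication by $\psi(1)\in\sho_X$, so $\psi\in\Fl_0(\shd_X)$. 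For the inductive step, assume the claim for $m-1$ and suppose $[\psi,g]\in\Fl_{m-1}(\shd_X)$ for all $g$. Fix local coordinates; for each $g$, $[\psi,g]$ is a genuine differential operator of order $\le m-1$, and I would use this to build a candidate operator $Q=\sum_{|\alpha|\le m} a_\alpha(x)\partial_x^\alpha\in\Fl_m(\shd_X)$ whose commutators with all coordinate functions $x_i$ agree with those of $\psi$ — concretely, the top-order coefficients of $[\psi,x_i]$ as an order-$(m-1)$ operator determine, by Leibniz, the $a_\alpha$ with $|\alpha|=m$, then lower coefficients are pinned down by descending induction on the order, using compatibility coming from $[[\psi,x_i],x_j]=[[\psi,x_j],x_i]$. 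Then $\psi-Q$ commutes with every $x_i$, hence (since the $x_i$ generate $\sho_X$ topologically and everything is continuous for the Krull/ord filtration, or simply by the $m=0$ case applied suitably) with all of $\sho_X$, so $\psi-Q\in\Fl_0(\shd_X)$ and therefore $\psi\in\Fl_m(\shd_X)$. Either route — citing Grothendieck's characterization or the hands-on coordinate induction just sketched — completes the proof; I would present the coordinate induction in a few lines since it is elementary and self-contained.
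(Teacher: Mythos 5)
Your coordinate route follows the same skeleton as the paper's proof: reduce to local coordinates, construct $P\in\Fl_{l+1}(\shd_X)$ with $[P,x_i]=[\phi,x_i]$ (the symmetry $[[\phi,x_i],x_j]=[[\phi,x_j],x_i]$ is exactly what makes this possible), and then argue that $\psi=\phi-P$, which commutes with each $x_i$, must lie in $\Fl_0(\shd_X)$. Up to that final step your sketch agrees with the paper.

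The gap is in that final step. You write that $\psi$ commutes with all of $\sho_X$ ``since the $x_i$ generate $\sho_X$ topologically and everything is continuous for the Krull/ord filtration, or simply by the $m=0$ case applied suitably.'' Neither of these works as stated. The element $\phi$ is only assumed to be a $\C_X$-linear endomorphism of $\sho_X$, so $\psi$ carries no continuity with respect to any topology on $\sho_X$; you cannot pass from ``commutes with polynomials'' to ``commutes with all germs'' by a density argument. And the $m=0$ case that you formulated requires commuting with \emph{every} $g\in\sho_X$ as a hypothesis, which is precisely what you do not yet have. The paper's proof closes this gap with a pointwise Taylor-expansion argument that requires no continuity: setting $\theta=\psi-\psi(1)$, one first checks $\theta$ kills polynomials, and then for arbitrary $f\in\sho_X$ writes $f=f(0)+\sum_i x_if_i$ near $0$ and computes $\theta(f)=\sum_i x_i\,\theta(f_i)$, which vanishes at $0$; translating the base point gives $\theta(f)=0$ everywhere, so $\psi\in\sho_X$. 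This one computation is the genuine content you are missing.

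Your alternative route, citing Grothendieck's iterated-commutator characterization of $\Fl_m(\shd_X)$, is not really a different proof: the nontrivial direction of that characterization \emph{is} this lemma (the easy direction is Leibniz). Using it as a black box is circular in spirit, and your parenthetical ``or prove by a short induction'' defers exactly the step at issue.
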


\begin{proof}
We may assume that $X$ is an open subset of $\C^n$ 
and we denote by $(x_1,\dots,x_n)$ the
coordinates. Set $P_i=[\phi,x_i]$. Then
\eqn
&&[P_i,x_j]=[\,[\phi,x_i],x_j]=[\,[\phi,x_j],x_i]=[P_j,x_i].
\eneqn
This implies the existence of $P\in\Fl_{l+1}(\shd_X)$ such that 
$[P,x_i]=P_i$ for all $i$. Setting $\psi\eqdot\phi-P$, we have
\eqn
&&[\psi,x_i]=0 \text{ for all } i=1,\dots,n.
\eneqn
Let us show that $\psi\in\sho_X$. Replacing $\psi$ with 
$\theta\eqdot \psi-\psi(1)$, we get
by induction on the order of the polynomials 
that $\theta(Q)=0$ and $[\theta,Q]=0$
for all $Q\in\C[x_1,\dots,x_n]$. 
Let $f\in\sho_X$. We shall prove that $\theta(f)(x)=0$ 
for all $x\in X$. It is enough to prove it for $x=0$. Then, writing 
$f=f(0)+\sum_ix_if_i$, we get
\eqn
\theta(f)&=&\theta(f(0))+\sum_i\theta(x_if_i)
=\theta(f(0))+\sum_i\bigl(x_i\theta(f_i)+[\theta,x_i]f_i\bigr)\\
&=&\sum_ix_i\theta(f_i),
\eneqn
which vanishes at $x=0$.
\end{proof}

\begin{proof}[Proof of Proposition~\ref{pro:gauge1}]
Let us write
\eq\label{eq:gauge1}
&&\phi(f)=\sum_{i\ge0}\hbar^i\phi_i(f),\quad f\in\sho_X.
\eneq
By Lemma~\ref{le:endosho},
$\phi_0=1$. We shall prove by induction that the $\phi_i$'s in 
\eqref{eq:gauge1} are differential 
operators and we assume that this is so for all $i<n$ for $n\in \Z_{>0}$. 

Let $\{P_i\}$ and $\{P'_i\}$ be the sequence of bi-differential operators 
associated with the star-products $\star$ and $\star'$, respectively.
We have
\eqn
\phi(f\star g)&=&\phi(\sum_{j\ge0}\hbar^jP_j(f,g))
              =\sum_{i,j\ge0}\hbar^{i+j}\phi_i(P_j(f,g)),\\[1ex]
\phi(f)\star'\phi(g)&=&
\sum_{i\ge0}\hbar^i\phi_i(f)\star'\sum_{j\in\N}\hbar^j\phi_j(g)
               =\sum_{i,j,k\ge0}\hbar^{i+j+k}P'_k(\phi_i(f),\phi_j(g)).
\eneqn
Since $\phi(f\star g)=\phi(f)\star'\phi(g)$, we get:
\eq\label{eq:gauge2}
&&\sum_{n=i+j}\phi_i(P_j(f,g))=\sum_{n=i+j+k}P'_k(\phi_i(f),\phi_j(g)).
\eneq
By the induction hypothesis, the left hand side of \eqref{eq:gauge2} may be 
written as $\phi_n(fg)+Q_n(f,g)$ where $Q_n$ is a bi-differential
operator. Similarly, the right hand side of \eqref{eq:gauge2} may be 
written as $\phi_n(f)g+f\phi_n(g)+R_n(f,g)$ where $R_n$ is a bi-differential operator.
For any $g\in\sho_X$, considering $g$ as an endomorphism of $\sho_X$, we get
\eqn
&&[\phi_n,g](f)\eqdot\phi_n(fg)-g\phi_n(f)=f\phi_n(g)+S_n(f),
\eneqn
where $S_n$ is a differential operator.
Then, the result follows from Lemma~\ref{le:gauge1}. 
\end{proof}

\subsubsection*{$\DQ$-algebras}

\begin{definition}
A $\DQ$-algebra $\A[]$ on $X$ 
\glossary{DQalgebra@$\DQ$-algebra}%
is a $\coro$-algebra locally
isomorphic to a star-algebra $(\OO[X]\forl,\star)$
as a $\coro$-algebra.
\end{definition}
Clearly a $\DQ$-algebra $\A[]$ satisfies the conditions:
\eq&&
\left\{
\parbox{60ex}{
\bnum
\item $\hbar\cl \sha\to \sha$ is injective,\label{cond:bidiff1}
\item $\sha\to \prolim[n]\sha/\hbar^n\sha$ is an isomorphism,\label{cond:bidiff2}
\item $\sha/\hbar \sha$ is isomorphic to $\sho_X$ as a $\C$-algebra.\label{cond:bidiff3}
\enum
}\label{cond:DQ}
\right.
\eneq
For a $\coro$-algebra $\sha$ satisfying \eqref{cond:DQ}, 
the $\C$-algebra isomorphism $\sha/\hbar \sha\isoto \sho_X$
in \eqref{cond:DQ}~\eqref{cond:bidiff3} is unique by Lemma~\ref{le:endosho}.
We denote by
\eq
&&\sigma_0\cl \sha\to\sho_X 
\eneq
the $\coro$-algebra morphism $\sha\to \sha/\hbar \sha\isoto \sho_X$.
If $\phi$ is a $\C$-linear section of
$\sigma_0\cl\sha\to\sho_X$, then $\phi$ extends to an isomorphism of
$\coro$-modules
$\tw\phi\cl \OO[X]\forl\isoto \sha$,
given by $\tw\phi(\sum_if_i\hbar^i)=\sum_i\phi(f_i)\hbar^i$.

\begin{definition}\label{def:standartsection}
We say that a $\C$-linear section $\phi\cl\sho_X\to \sha$ of $\sha\to \sho_X$
is standard 
\glossary{standard section}\glossary{section!standard}%
if there exists a sequence of bi-differential operators $P_i$ such that
\eq\label{def:standartsect}
&&\phi(f)\phi(g)=\sum_{i\ge0}\phi(P_i(f,g))\hbar^i\mbox{ for any $f,g\in\sho_X$.}
\eneq
\end{definition}
Consider a standard section $\phi\cl\sho_X\to \sha$ of $\sha\to \sho_X$.
Define a star-product $\star$ on $\OO[X]\forl$ by setting
\eqn
&&f\star g=\sum_{i\ge0}P_i(f,g)\hbar^i \mbox{ for any }f,g\in\sho_X.
\eneqn
Then we get an isomorphism of  $\coro$-algebras 
\eq\label{eq:standartiso}
&&\tw\phi\cl (\OO[X]\forl,\star)\isoto \sha.
\eneq
We call $\tw\phi$ in \eqref{eq:standartiso} {\em a standard
  isomorphism}.
\glossary{standard!isomorphism}\glossary{isomorphism!standard}%

Hence, a $\DQ$-algebra is nothing but a $\coro$-algebra satisfying 
\eqref{cond:DQ} and admitting locally a standard section. 
\begin{remark}\label{rem:conj}
We conjecture that a $\coro$-algebra satisfying \eqref{cond:DQ} locally 
admits a standard section. 
\end{remark}
Let $\sha$ be a $\DQ$-algebra.
For $f$, $g\in\sho_X$, taking $a$, $b\in\sha$ such that
$\sigma_0(a)=f$ and $\sigma_0(b)=g$, we set
\eq
&&\{f,g\}=\sigma_0(\hbar^{-1}(ab-ba))\in\sho_X.
\label{eq:Poisson}
\eneq
Then this definition does not depend on the choice
of $a$, $b$ and it defines a Poisson structure on $X$.
In particular, two $\DQ$-algebras 
induce the same Poisson structure on $X$
as soon as they are locally isomorphic.

By Proposition~\ref{pro:gauge1}, if $\phi,\phi'\cl\sho_X\to \sha$ are
two  standard sections, 
then there exists a unique sequence of differential operators
$\{R_i\}_{i\ge0}$ such that
$\phi'(f)=\sum_{i\ge0}\hbar^i\phi(R_i(f))$ for any $f\in\sho_X$.

Clearly, a $\DQ$-algebra  satisfies the hypotheses
\eqref{eq:FDringa} and \eqref{eq:FDringb}. Hence, a $\DQ$-algebra is a right and left
Noetherian ring (in particular, coherent).

\begin{lemma}\label{le:starop}
Let $\sha$  be a $\DQ$-algebra. Then 
the opposite algebra $\sha^\rop$ is also a $\DQ$-algebra.
\end{lemma}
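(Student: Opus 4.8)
The statement to prove is Lemma~\ref{le:starop}: if $\sha$ is a $\DQ$-algebra, then $\sha^\rop$ is also a $\DQ$-algebra. The point is purely local, so I may assume $\sha=(\OO[X]\forl,\star)$ for some star-product $\star$, and I must exhibit a local isomorphism of $\coro$-algebras between $\sha^\rop$ and a star-algebra. The natural candidate is the ``opposite'' star-product $\star^\rop$ defined by $f\star^\rop g\eqdot g\star f$; one checks immediately that $\star^\rop$ is $\coro$-bilinear, associative, that $1$ is a unit, and that writing $f\star^\rop g=\sum_{i\ge0}P_i^\rop(f,g)\hbar^i$ with $P_i^\rop(f,g)\eqdot P_i(g,f)$ exhibits the coefficients as bi-differential operators (since swapping arguments of a bi-differential operator is again a bi-differential operator) with $P_0^\rop(f,g)=P_0(g,f)=gf=fg$ and $P_i^\rop(f,1)=P_i(1,f)=0$, $P_i^\rop(1,f)=P_i(f,1)=0$ for $i>0$. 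Hence $(\OO[X]\forl,\star^\rop)$ is a star-algebra by the definition in this section.

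\textbf{Key step.} It remains to identify $\sha^\rop=(\OO[X]\forl,\star)^\rop$ with $(\OO[X]\forl,\star^\rop)$ as $\coro$-algebras. But this is tautological: both have the same underlying $\coro$-module $\OO[X]\forl$, and by the very definition of the opposite ring, the product in $(\OO[X]\forl,\star)^\rop$ of $f$ and $g$ (in that order) is $g\star f$, which is exactly $f\star^\rop g$. So $\mathrm{id}_{\OO[X]\forl}$ is an isomorphism of $\coro$-algebras $\sha^\rop\isoto(\OO[X]\forl,\star^\rop)$. Globalizing: for a general $\DQ$-algebra $\sha$, cover $X$ by opens $U$ on which $\sha\vert_U\simeq(\OO[U]\forl,\star_U)$ as $\coro$-algebras; taking opposites, $\sha^\rop\vert_U\simeq(\OO[U]\forl,\star_U^\rop)$, which is a star-algebra over $U$ by the previous paragraph. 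Thus $\sha^\rop$ is locally isomorphic to a star-algebra, hence a $\DQ$-algebra.

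\textbf{Main obstacle.} There is essentially no obstacle here; the lemma is a formal bookkeeping statement. The only things requiring a word of justification are (i) that $P_i^\rop$ is a bi-differential operator --- if $P_i=\opb{\delta_X}\circ\tw P_i$ with $\tw P_i$ a differential operator on $X\times X$ near the diagonal, then $P_i^\rop=\opb{\delta_X}\circ(s^*\tw P_i)$ where $s\cl X\times X\to X\times X$ is the coordinate swap, and $s^*\tw P_i$ is again a differential operator near the diagonal; and (ii) that the normalization conditions $P_0^\rop(f,g)=fg$ and $P_i^\rop(f,1)=P_i^\rop(1,f)=0$ ($i>0$) hold, which as noted follow by swapping the arguments in the corresponding conditions for $\{P_i\}$. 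Everything else --- $\coro$-bilinearity, associativity of $\star^\rop$, the unit --- is inherited immediately from the corresponding property of $\star$. So the proof is short: define $\star^\rop$, verify it is a star-product, observe $\sha^\rop\simeq(\OO[X]\forl,\star^\rop)$ locally via the identity map, and conclude.
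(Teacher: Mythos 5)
Your proof is correct and is essentially the fully spelled-out version of the paper's own argument, which simply says ``This follows from \eqref{eq:star1}''; you supply the expected details (defining $\star^\rop$, checking the normalization conditions, observing the identification $\sha^\rop\simeq(\OO[X]\forl,\star^\rop)$).
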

\begin{proof}
This follows from~\eqref{eq:star1}.
\end{proof}
Let $X$ and $Y$ be complex manifolds endowed with two 
star-products $\star_X$
and $\star_Y$. Denote by $\{P_i\}_{i}$ and 
$\{Q_j\}_{j}$ the bi-differential operators associated to these
star-products as in \eqref{eq:star1}.
Let $P_i\etens Q_j$ be a bi-differential operator on
$X\times Y$ defined as follows.
Let us take differential operators 
$\tw{P}_i(x_1,x_2,\partial_{x_1},\partial_{x_2})$ and
$\tw{Q}_j(y_1,y_2,\partial_{y_1},\partial_{y_2})$
corresponding to $P_i$ and $Q_j$
as in \eqref{eq:bidiff1}.
Then we set
\eqn
&&(P_i\etens Q_j)(f,g)(x,y)\\
&&\quad=\bl\tw{P}_i(x_1,x_2,\partial_{x_1},\partial_{x_2})\tw{Q}_j(y_1,y_2,\partial_{y_1},\partial_{y_2})
(f(x_1,y_1)g(x_2,y_2))\br\vert_{\substack{x_1=x_2=x\\y_1=y_2=y}}.
\eneqn
Hence, $P_i\etens Q_j$ is the unique 
bi-differential operator on $X\times Y$ such that
$(P_i\etens Q_j)(f_1(x)g_1(y),f_2(x)g_2(y))=
P_i(f_1(x),f_2(x))\cdot Q_j(g_1(y),g_2(y))$ for any 
$f_\nu(x)\in\sho_X$ and $g_\nu(y)\in\sho_Y$ ($\nu=1,2$).

One defines the external  product 
of the star-products $\star_X$ and
$\star_Y$ 
on $\OO[X\times Y]\forl$ by setting 
\eqn
&&f\star g=
\sum_{n\geq 0}\hbar^n\sum_{i+j=n}(P_i\etens Q_j)(f,g).
\eneqn
Hence:
\begin{lemma}\label{le:starprod}
Let $X$ and $Y$ be complex manifolds, and
let $\A[X]$ be a $\DQ$-algebra on $X$ and 
$\A[Y]$ a $\DQ$-algebra on $Y$.
Then
there exists a $\DQ$-algebra $\sha$ on $X\times Y$
which contains $\A[X]\boxtimes_{\coro} \A[Y]$ as a $\coro$-subalgebra.
Moreover such an $\sha$ is unique up to a unique isomorphism.
\end{lemma}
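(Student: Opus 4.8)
The plan is to construct $\sha$ on $X\times Y$ directly from the external product of star-products, using a partition-of-unity/gluing argument, and then prove uniqueness by a standard-section comparison. First I would fix open coverings $\{U_i\}$ of $X$ and $\{V_j\}$ of $Y$ on which $\A[X]$ and $\A[Y]$ are isomorphic to star-algebras. On $U_i$, choose a standard isomorphism $\tw\phi_i\cl(\OO[U_i]\forl,\star_X^{(i)})\isoto\A[X]\vert_{U_i}$ as in \eqref{eq:standartiso}, and similarly $\tw\psi_j\cl(\OO[V_j]\forl,\star_Y^{(j)})\isoto\A[Y]\vert_{V_j}$ on $V_j$. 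On $U_i\times V_j$ form the external product star-product $\star^{(ij)}$ of $\star_X^{(i)}$ and $\star_Y^{(j)}$ using the bi-differential operators $P_k\etens Q_l$ described just above the statement, and set $\sha_{ij}\eqdot(\OO[U_i\times V_j]\forl,\star^{(ij)})$. The key local computation is that $\A[X]\boxtimes_{\coro}\A[Y]$, restricted to $U_i\times V_j$, embeds as a $\coro$-subalgebra of $\sha_{ij}$: the map is $\tw\phi_i(f)\boxtimes\tw\psi_j(g)\mapsto$ (the element of $\sho_{U_i\times V_j}\forl$ obtained by applying $\tw\phi_i\etens\tw\psi_j$), and the fact that it is a ring homomorphism is exactly the identity $(P_k\etens Q_l)(f_1(x)g_1(y),f_2(x)g_2(y))=P_k(f_1,f_2)(x)\cdot Q_l(g_1,g_2)(y)$ together with $\coro$-bilinearity.

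Next I would glue the local pieces $\sha_{ij}$. On overlaps, $\tw\phi_i^{-1}\circ\tw\phi_{i'}$ and $\tw\psi_j^{-1}\circ\tw\psi_{j'}$ are $\coro$-algebra isomorphisms between star-algebras, hence by Proposition~\ref{pro:gauge1} are given by sequences of differential operators. Tensoring these (more precisely, using $R_k\etens S_l$ on the external product) produces $\coro$-algebra isomorphisms $g_{(ij),(i'j')}\cl\sha_{i'j'}\vert\isoto\sha_{ij}\vert$ on the overlap $U_{ii'}\times V_{jj'}$; the cocycle condition on triple overlaps follows from the cocycle conditions for $\{\tw\phi_i\}$ and $\{\tw\psi_j\}$ separately, because the external-product construction $(-)\etens(-)$ is functorial in each slot. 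Gluing the sheaves $\sha_{ij}$ along the $g_{(ij),(i'j')}$ yields a sheaf of $\coro$-algebras $\sha$ on $X\times Y$ which is locally a star-algebra, i.e.\ a $\DQ$-algebra, and the local embeddings $\A[X]\boxtimes_{\coro}\A[Y]\vert_{U_i\times V_j}\into\sha_{ij}$ are compatible with the gluing (again because they are built from $\tw\phi_i\etens\tw\psi_j$ and the transition isomorphisms respect the external decomposition), hence glue to a global $\coro$-algebra morphism $\A[X]\boxtimes_{\coro}\A[Y]\into\sha$. One checks it is injective because it is so on $\sho$ modulo $\hbar$ (where it is the obvious inclusion $\sho_X\boxtimes_\C\sho_Y\into\sho_{X\times Y}$) and everything is $\hbar$-torsion-free and $\hbar$-complete.

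For uniqueness, suppose $\sha$ and $\sha'$ are two $\DQ$-algebras on $X\times Y$ each containing $\A[X]\boxtimes_{\coro}\A[Y]$ as a $\coro$-subalgebra. Working locally on $U_i\times V_j$, both $\sha$ and $\sha'$ are star-algebras and the images of $\A[X]\vert_{U_i}$ and $\A[Y]\vert_{V_j}$ generate a dense (in the $\hbar$-adic sense) subalgebra; a section $\sho_{X\times Y}\to\sha$ built from a chosen section $\sho_X\to\A[X]$ and $\sho_Y\to\A[Y]$ via the external product is standard, and similarly for $\sha'$. The resulting standard isomorphisms $\OO[U_i\times V_j]\forl\isoto\sha\vert$ and $\OO[U_i\times V_j]\forl\isoto\sha'\vert$ differ by a $\coro$-algebra automorphism of a star-algebra, which by Proposition~\ref{pro:gauge1} is the identity on the star-product induced by $\A[X]\boxtimes_{\coro}\A[Y]$ — more precisely, any $\coro$-algebra isomorphism $\sha\vert\isoto\sha'\vert$ fixing the subalgebra $\A[X]\boxtimes_{\coro}\A[Y]$ is forced by Proposition~\ref{pro:gauge1}, since it is determined by a sequence of differential operators vanishing on the image, hence equal to $1$. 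This gives local isomorphisms $\sha\isoto\sha'$ over $\A[X]\boxtimes_{\coro}\A[Y]$, unique by the same argument, so they glue to a unique global one. The main obstacle I expect is the bookkeeping in the gluing step: verifying that the transition isomorphisms obtained by "externally tensoring" differential-operator-valued isomorphisms via $R_k\etens S_l$ genuinely satisfy the cocycle identity and are compatible with the subalgebra inclusion. This is conceptually forced by functoriality of $(-)\etens(-)$ but requires care to set up cleanly; everything else is a routine consequence of Proposition~\ref{pro:gauge1}, Lemma~\ref{le:endosho}, and the $\hbar$-completeness results of Chapter~\ref{chapter:FD}.
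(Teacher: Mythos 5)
Your approach is essentially the one the paper intends: the paper gives no explicit proof, treating the lemma as a consequence of the external-product-of-star-products construction immediately preceding it, together with Proposition~\ref{pro:gauge1}. Your local construction and gluing are the right skeleton. However, the uniqueness step contains a real gap.

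You write that the images of $\A[X]$ and $\A[Y]$ "generate a dense (in the $\hbar$-adic sense) subalgebra" of $\sha$. This is false: $(\A[X]\boxtimes_{\coro}\A[Y])/\hbar\simeq\sho_X\boxtimes_{\C}\sho_Y$ is a proper subsheaf of $\sho_{X\times Y}\simeq\sha/\hbar\sha$, so $\A[X]\boxtimes_{\coro}\A[Y]$ is not $\hbar$-adically dense. Consequently, your final sentence — that an automorphism fixing $\A[X]\boxtimes_{\coro}\A[Y]$ is "determined by a sequence of differential operators vanishing on the image, hence equal to $1$" — does not follow as stated. Proposition~\ref{pro:gauge1} only tells you the automorphism has the form $f\mapsto\sum_i\hbar^iR_i(f)$ with $R_0=1$; you still need to show the higher $R_i$ vanish. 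The operators $R_i$ are not literally "vanishing on the image" of $\A[X]\boxtimes_{\coro}\A[Y]$ under a standard isomorphism, because the elements $\phi(f)\psi(g)$ correspond, in $\sho_{X\times Y}\forl$, to power series whose leading terms are $f(x)g(y)$ but whose higher-order terms are genuinely non-trivial. What you actually get, by descending induction on the order in $\hbar$, is that the first nonvanishing $R_n$ annihilates all functions of the form $f(x)g(y)$; you then must invoke the elementary (but necessary) fact that a differential operator on $X\times Y$ that kills all decomposable holomorphic functions (equivalently, all monomials $x^\alpha y^\beta$) is zero. The same ingredient is needed to justify, in your gluing step, that the "external tensor" $g_{(ij),(i'j')}$ of two algebra isomorphisms is itself an algebra isomorphism (it preserves the product on decomposables; to conclude on general elements you again need that bi-differential operators are determined by their values on decomposables). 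None of this is hard, but it is the substance that the phrase "forced by Proposition~\ref{pro:gauge1}" sweeps under the rug, and it is where "$\hbar$-adic density" should be replaced by "differential operators are determined by their action on polynomials."
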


We call $\sha$ the external  product 
\glossary{external product! of $\DQ$-algebras}%
of the $\DQ$-algebra $\A[X]$ on $X$ and
the $\DQ$-algebra $\A[Y]$ on $Y$, and denote it by
$\A[X]\detens \A[Y]$.

\begin{remark}
\bnum
\item
Any commutative $\DQ$-algebra is locally isomorphic to
$(\OO{\forl},\star)$ 
where $\star$ is the trivial star-product $f\star g=fg$.
\item 
For the trivial $\DQ$-algebra $\OO{\forl}$, we have
\eqn
&&\aut_{\text{$\coro$-alg}}(\OO{\forl})\simeq
\hbar\Theta_X\forl\seteq\prod_{n\ge1}\hbar^n\Theta_X,
\eneqn
(recall that $\Theta_X$ is the sheaf of vector fields on $X$)
and we associate to $v\seteq\sum_{n\ge1}\hbar^nv_n$ the automorphism
$f\mapsto \exp(v)f$.
\enum
\end{remark}

\subsubsection*{The ring $\DA$ and another construction for $\DQ$-algebras}
We define the $\coro$-algebra 
\eqn
&&\shd_X\forl\eqdot \prolim[n]\shd_X\otimes(\coro/\hbar^n\coro)\simeq
\prod_{n\ge0}\shd_X\hbar^n.
\eneqn
\index{Dhbar@$\shd_X\forl$}%
Then $\OO{\forl}$ has a $\shd_X\forl$-module structure,
and $\shd_X\forl\subset \shend_{\coro}(\OO[X]\forl)$.

Let $\A$ be a $\DQ$-algebra. Choose (locally)  a standard
section $\phi$ 
giving rise to a standard isomorphism of $\coro$-modules
$\tw\phi\cl \OO[X]\forl\isoto\A$. 
This last isomorphism induces an isomorphism 
\eq\label{eq:isoDA1}
&&\Phi\cl \shend_{\coro}(\OO[X]\forl)\isoto\shend_{\coro}(\A).
\eneq

\begin{definition}\label{def:DA}
Let $\A$ be a $\DQ$-algebra and let $\phi$ be a standard section.
The sheaf of rings $\DA$
\index{DA@$\DA$}%
is the $\coro$-subalgebra of
$\shend_{\coro}(\A)$, the image of 
$\shd_X\forl\subset \shend_{\coro}(\OO[X]\forl)$
by the isomorphism $\Phi$ in \eqref{eq:isoDA1}.
\end{definition}

It is easy to see that
$\DA\subset \shend_{\coro}(\A)$ does not depend 
on the choice of the standard section $\phi$ in virtue of
Proposition~\ref{pro:gauge1}.
Hence $\DA$ is well-defined on $X$ although standard sections only
locally exist.

By its construction, we have $\DA\isoto\prolim[n]\DA/\hbar^n\DA$.
Moreover, the image of the algebra morphism
$\A\otimes\A^\op\to \shend_{\coro}(\A)$,
as well as the one
of $\opb{\de}\A[X\times X^a]\to \shend_{\coro}(\A)$ is contained in
$\DA$.
Hence we have algebra morphisms
\eqn
&&\A[X]\otimes\A[X^a]\to \opb{\de}\A[X\times X^a]\to \DA. 
\eneqn

We shall  show how to construct a star-algebra from the data of
sections of $\shd_X\forl$ 
satisfying suitable commutation properties.

Let $\A\eqdot(\OO\forl,\star)$ be a star-algebra.  
There are two $\coro$-linear morphisms from $\OO{\forl}$ to 
$\shd_X\forl$ given by
\eq\label{eq:OstartoDlr}
&&\Phi^l\cl f\mapsto f\,\star,\quad\Phi^r\cl f\mapsto \star\,f.
\eneq
Hence, for $f\in\OO[X]$, we have:
\eqn
&&\Phi^l(f)=\sum_{i\geq 0}P_i(f,\scbul)\hbar^i,
\quad\Phi^r(f)=\sum_{i\geq 0}P_i(\scbul,f)\hbar^i.
\eneqn

Then $\Phi^l\cl \A\to \shd_X\forl$ and $\Phi^r\cl \A^\rop\to \shd_X\forl$ are 
two $\coro$-algebra morphisms, and
induce a  $\coro$-algebra morphism $\A\tens\A^\rop\to \shd_X\forl$.

Assume to be given a local coordinate system  $x=(x_1,\dots,x_n)$ on
$X$ and for $i=1,\dots,n$, set 
$\Phi^l(x_i)=A_i$ and $\Phi^r(x_i)=B_i$. Then $\{A_i,B_j\}_{i,j=1,\dots,n}$ are sections of 
$\shd_X\forl$ which satisfy
\eq\label{eq:OstartoD1}
&&\left\{  
\parbox{58ex}{
$A_i(1)=B_i(1)=x_i$,\\
$A_i\equiv x_i$ mod $\hbar\shd_X\forl$, $B_i\equiv x_i$ mod $\hbar\shd_X\forl$,\\
$[A_i,B_j]=0$ ($i,j=1,\dots,n$).
}
\right. 
\eneq
Conversely, we have the following result. 
\begin{proposition}\label{pro:OstartoD}
Let $\{A_i,B_j\}_{i,j=1,\dots,n}$ be sections of $\shd_X\forl$ 
which satisfy \eqref{eq:OstartoD1}.
Define the subalgebra $\A\subset\shd_X\forl$ by 
\eq\label{eq:OstartoD2}
&&\A=\set{a\in \shd_X\forl}{[a,B_i]=0, i=1,\dots,n}
\eneq
and define the $\coro$-linear map $\psi\cl\A\to\OO\forl$ by setting $\psi(a)=a(1)$. 
Then 
\banum
\item
$\psi$ is a $\coro$-linear isomorphism, 
\item
the product on $\OO\forl$ given by 
$\psi(a)\star\psi(b)\eqdot\psi(a\cdot b)$ is a star-product, $\A$ is a
$\DQ$-algebra and $\opb{\psi}$ is  a standard isomorphism,
 \item
the algebra $\A^\rop$ is obtained by replacing $A_i$ with $B_i$ \lp$i=1,\dots,n$\rp\, in 
the above construction.
\eanum
\end{proposition}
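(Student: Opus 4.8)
I would prove (a) first, which is the substantial part, and then deduce (b) and (c). The easy half of (a) is that $\psi$ is injective: if $a\in\A$ with $a(1)=0$, then modulo $\hbar$ the operator $\bar a\in\shd_X$ commutes with $x_1,\dots,x_n$ and so lies in $\OO$, whence $\bar a=\bar a(1)=0$; thus $a\in\hbar\shd_X\forl\cap\A=\hbar\A$ (using absence of $\hbar$-torsion), and iterating gives $a\in\bigcap_k\hbar^k\A=0$. The same computation shows $\A\cap\hbar^k\shd_X\forl=\hbar^k\A$ for all $k$, so $\A$ is a closed $\coro$-submodule of $\shd_X\forl$, hence $\hbar$-complete and without $\hbar$-torsion; likewise for $\OO\forl$.

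It remains to prove $\psi$ surjective, and the plan is to construct its inverse explicitly, by induction, together with the star-product. Concretely, I would build by induction on $N$ bi-differential operators $P_0,\dots,P_N$ on $X$ with $P_0(f,g)=fg$ and $P_i(f,1)=P_i(1,f)=0$ for $i>0$, together with a $\coro$-linear map $\Phi\cl\OO\forl\to\shd_X\forl$ of the form $\Phi(f)=\sum_{k\ge0}R_k(f)\hbar^k$ for $f\in\OO$, where $R_0(f)$ is multiplication by $f$ and $R_k\cl\OO\to\shd_X$ is such that $(f,g)\mapsto R_k(f)(g)$ is bi-differential, subject to: $\Phi(F)(1)=F$, $\Phi(F)$ commutes with the $B_i$ modulo $\hbar^{N+1}$, and $\Phi$ is multiplicative modulo $\hbar^{N+1}$ for $f\star g\eqdot\sum_iP_i(f,g)\hbar^i$. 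Writing the commutation condition out to order $N$, the step from $N-1$ to $N$ amounts to solving, for each $f$, a system $[R_N(f),x_i]=C_{i,N}(f)$, $i=1,\dots,n$, where $C_{i,N}(f)$ is a differential operator, bi-differential in $f$, determined by the data already built. Identifying a differential operator with its total symbol turns $\ad(x_i)$ into $\partial_{\xi_i}$, so the system asks for a primitive, in the $\xi$-variables, of the polynomial $1$-form $\sum_iC_{i,N}(f)\,d\xi_i$; it is solvable as soon as this form is closed, i.e. $[C_{i,N}(f),x_j]=[C_{j,N}(f),x_i]$, and the remaining ambiguity (an element of $\OO$) is fixed by demanding $R_N(f)(1)=0$, i.e. $P_N(f,1)=0$.

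The main obstacle, and the heart of the proof, is the verification of this closedness condition, and it is precisely here that the hypothesis $[A_i,B_j]=0$ is used. By the injectivity established above (in its version modulo $\hbar^{N+1}$) the construction forces $\Phi(x_i)=A_i$, so the relations $[A_i,B_j]=0$, read off up to the relevant order, supply exactly the identities making $[C_{i,N}(f),x_j]-[C_{j,N}(f),x_i]$ vanish after a commutator manipulation using $[x_i,x_j]=0$ and the Jacobi identity — these identities being precisely the statement that the pertinent polynomial $2$-form in $\xi$ is zero. Granting this, passage to the $\hbar$-adic limit produces $\Phi$ with $\Phi(F)\in\A$ and $\Phi(F)(1)=F$, so $\psi\circ\Phi=\id$ and $\psi$ is surjective; combined with injectivity, $\Phi=\opb\psi$, proving (a).

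For (b): the $P_i$ are bi-differential with $P_0(f,g)=fg$ and $P_i(f,1)=P_i(1,f)=0$, and $\opb\psi=\Phi$ is a $\coro$-linear bijection of $\OO\forl$ onto the $\coro$-algebra $\A$ carrying $\star$ to the composition law of $\A$; associativity and the unit property of $\star$ follow by transport of structure, so $(\OO\forl,\star)$ is a star-algebra, $\A$ is a $\DQ$-algebra and $\opb\psi$ is a standard isomorphism. Moreover $F\star G=\psi\bl\opb\psi(F)\opb\psi(G)\br=\bl\opb\psi(F)\opb\psi(G)\br(1)=\opb\psi(F)(G)$, which exhibits the $P_i$ as bi-differential and, taking $G=1$, gives $B_i(F)=F\star x_i$ and (since $A_i=\opb\psi(x_i)$) $A_i(F)=x_i\star F$ for all $F$. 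Finally, for (c): the hypotheses \eqref{eq:OstartoD1} are symmetric in the $A_i$ and the $B_i$, as $[B_i,A_j]=-[A_j,B_i]=0$, so applying (a) and (b) to the interchanged data yields a $\DQ$-algebra $\A'\eqdot\set{a\in\shd_X\forl}{[a,A_i]=0}$ with a star-product $\star'$ for which $A_i$ acts as $F\mapsto F\star'x_i$; comparing with $A_i(F)=x_i\star F$ gives $x_i\star F=F\star'x_i$ for all $i$ and $F$, and since a star-product is determined by the operators of left multiplication by the coordinates, this forces $\star'=\star^{\rop}$, i.e. $\A'\simeq\A^{\rop}$, which is the assertion of (c).
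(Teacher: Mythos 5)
Your overall architecture---injectivity and $\hbar$-completeness first, then an order-by-order construction of $\Phi=\opb{\psi}$---takes a genuinely different route from the paper, which constructs the section directly by a Cauchy-type contour integral $G(f)=\tfrac{1}{(2\pi i)^n}\oint f(y)\opb{(y_1-A_1)}\cdots\opb{(y_n-A_n)}\,dy_1\cdots dy_n$; there $[G(f),B_j]=0$ is immediate from $[A_i,B_j]=0$ and $G(f)\equiv f\pmod{\hbar}$ is immediate from $A_i\equiv x_i\pmod{\hbar}$, with no inductive obstruction to discharge. Your approach replaces this with an order-by-order lifting problem, and that is exactly where a gap appears.

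The gap is in the verification of your integrability condition. Writing $a=\Phi_{N-1}(f)$ with $[a,B_i]\equiv\hbar^N D_i\pmod{\hbar^{N+1}}$, the Jacobi identity gives $\hbar^N\bigl([D_i,x_j]-[D_j,x_i]\bigr)\equiv[[a,B_i],B_j]-[[a,B_j],B_i]=[a,[B_i,B_j]]\pmod{\hbar^{N+1}}$. Closedness therefore requires $[a,[B_i,B_j]]\equiv0\pmod{\hbar^{N+1}}$, but \eqref{eq:OstartoD1} does \emph{not} assume $[B_i,B_j]=0$, and indeed this fails in general: in Example~\ref{exa:DQ1} one computes $[B_i,B_j]=-\hbar\,a_{ij}\neq0$. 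So your assertion that the obstruction ``vanishes after a commutator manipulation using $[x_i,x_j]=0$ and the Jacobi identity'' is not substantiated; the manipulation leaves the term $[a,[B_i,B_j]]$ standing, and its vanishing to the required order does not follow in any evident way from $[a,B_k]\equiv0\pmod{\hbar^N}$ together with $[A_i,B_j]=0$. This is precisely the difficulty the paper's integral formula sidesteps. (Your derivations of (b) and (c) from (a) are fine, and your treatment of (c) by interchanging the roles of the $A_i$ and $B_i$ agrees with the paper's.)
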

\begin{proof}
(a)-(i) $\A\cap\hbar\shd_X\forl=\hbar\A$, since $[\hbar a,B_i]=0$ implies 
$[a,B_i]=0$.
Hence we have $\A/\hbar^j\A\subset\shd_X\forl/\hbar^j\shd_X\forl$
for any $j$.

\noindent
(a)-(ii) $\A\isoto\prolim[j]\A/\hbar^j\A$.  Indeed, let 
$a=\sum_{i=0}^\infty \hbar^ia_i$ and assume that  
\eqn
&&[\sum_{i=0}^k\hbar^ia_i,B_l]=0\mbox{ mod }\hbar^{k+1}\, (l=1,\dots,n)
\eneqn
for all $k\in\N$. Then $[a,B_l]=0$ for $l=1,\dots,n$.

\noindent
(a)-(iii) Let $\psi_j\cl\hbar^j\A/\hbar^{j+1}\A\to \hbar^j\OO/\hbar^{j+1}\OO$ 
be the morphisms induced by $\psi$. 
By (a)-(ii) it is enough to check that all $\psi_j$'s are isomorphisms. 
Since all $\hbar^j\A/\hbar^{j+1}\A$ are isomorphic and all 
$ \hbar^j\OO/\hbar^{j+1}\OO$ are isomorphic, we are reduced to 
prove that $\psi_0\cl\A/\hbar\A\to\OO$ is an isomorphism. 

\noindent
(a)-(iv) $\psi_0$ is injective. Let $a_0\in \A/\hbar\A\subset\shd_X$.
Since $[a_0,x_i]\in\hbar\shd_X\forl$ implies  $[a_0,x_i]=0$, 
we get $a_0\in\OO$. Therefore,  $a_0(1)=0$ implies $a_0=0$.

\noindent
(a)-(v) $\psi_0$ is surjective. Let $y=(y_1,\dots,y_n)$ be 
a local coordinate system on a copy of $X$. Notice first that the sections 
$y_i-A_i$ of $\shd_{X\times Y}\forl$ are invertible 
on the open sets $\{y_i\neq x_i\}$. 
Let $f(x_1,\dots,x_n)\in\OO$. Define
the section $G(f)$ of $\shd_X\forl$ by
\eq
&&G(f)
=\dfrac{1}{(2\pi i)^n}\oint f(y)\opb{(y_1-A_1)}\cdots\opb{(y_n-A_n)}\,
dy_1\cdots dy_n\,.
\eneq
Then $[G(f),B_i]=0$ for all $i$.
It is obvious that $G(f)-f\in\hbar\shd_X\forl$ and $\psi_0(G(f))=f$.

\vspace{0.2cm}
\noindent
(b) Clearly, the algebra  $(\OO\forl,\star)$ satisfies \eqref{cond:DQ}.
Moreover, $f\mapsto G(f)$ is  a standard section since there exist
$P_i(f)\in\shd_X\forl$ ($i\in\N$) such that $G(f)=\sum_iP_i(f)\hbar^i$
and  $P_i(f)$ is obtained as the action of a bidifferential operator $P_i$ on $f$.

\vspace{0.2cm}
\noindent
(c) follows from $\sha^\rop=\{b\in\shend_{\coro}(\A); [b,\A]=0\}$.
\end{proof}
 
\begin{example}\label{exa:DQ1}
Let $M\eqdot\{a_{ij}\}_{i,j=1,\dots,n}$ be an $n\times n$
skew-symmetric matrix with entries in $\C$.
Let $X=\C^n$ and consider the sections of $\shd_X\forl$:
\eqn
&& A_i=x_i+\dfrac{\hbar}{2}\sum_ja_{ij}\partial_j,
\quad B_i=x_i-\dfrac{\hbar}{2}\sum_ja_{ij}\partial_j.
\eneqn
Then  $\{A_i,B_j\}_{i,j=1,\dots,n}$ satisfy \eqref{eq:OstartoD1}, 
thus define a $\DQ$-algebra $\A$.
Note that the Poisson structure associated with the $\DQ$-algebra 
$\A$ is symplectic if and only if the matrix $M$ is non-degenerate. 
\end{example}

\section{$\DQ$-algebroids}

Let us introduce the notion of a deformation quantization 
algebroid, a $\DQ$-algebroid for short. 
\begin{definition}\label{def:DFDalg}
A $\DQ$-algebroid  $\A[]$ on $X$ 
\glossary{DQalgebroid@$\DQ$-algebroid}\glossary{algebroid!$\DQ$-}%
is a $\coro$-algebroid  such that
for each open set $U\subset X$ and each $\sigma\in\A[](U)$, 
the $\coro$-algebra $\shend_{\A[]}(\sigma)$ is a $\DQ$-algebra on $U$.
\end{definition}
Note that a $\DQ$-algebroid
is called a twisted associative deformation of $\OO[X]$ in \cite{Ye}. 

By \eqref{eq:Poisson},
a $\DQ$-algebroid $\A[]$ on the complex manifold $X$ defines a 
Poisson structure on $X$. 
It is proved in \cite{Ko2} that, conversely, 
any complex Poisson
manifold $X$ may be endowed with a $\DQ$-algebroid to which
this Poisson structure is associated.

According to Convention~\ref{conv1}, if $\A[]$ is a
$\DQ$-algebra, we shall  often use the same
notation $\A[]$ for the associated $\DQ$-algebroid.

Note that any  $\DQ$-algebroid  $\A[]$ on $X$ may be obtained as the 
stack associated with a gluing datum
as in~\eqref{eq:descentdata}, where
the sheaves $\sha_i$ are $\DQ$-algebras.

Let $\A[]$ be a $\DQ$-algebroid on $X$.
For an $\A[]$-module 
$\shm$,  the local notions of being coherent 
or locally free, etc.\ make sense.

The category $\md[{\A[]}]$ is a Grothendieck category.
We denote by $\RD(\A[])$ its derived category 
and by $\RD^\Rb(\A[])$ its bounded derived category. We 
still call an object of this derived category an $\A[]$-module. 
We denote by
$\RD^\Rb_{\coh}(\A[])$ the full triangulated subcategory 
 of $\RD^\Rb(\A[])$ consisting
of objects with coherent cohomologies.

\subsubsection*{Opposite structure}
If $X$ is endowed with a $\DQ$-algebroid $\A[X]$, then we denote by
$X^a$ the manifold $X$ endowed with the algebroid
$\A[X]^\rop$, that is:
\eq\label{eq:dualalst}
&&\A[X^a]=\A[X]^\rop.
\eneq
This is a $\DQ$-algebroid by Lemma~\ref{le:starop}.

\subsubsection*{External product}

Assume that complex manifolds $X$ and $Y$ are endowed with $\DQ$-algebroids 
$\A[X]$ and $\A[Y]$ respectively.
By Lemma~\ref{le:starprod}, there is a canonical 
$\DQ$-algebroid  $\A[X]\detens\A[Y]$ on $X\times Y$ 
locally equivalent to the stack associated with 
the external product $\A[X]\detens\A[Y]$ of the $\DQ$-algebras 
\glossary{external product! of $\DQ$-algebroids}%
\index{tens@$\detens$}%
and there is a faithful functor of $\coro$-algebroids
\eq\label{eq:detWst}
&& \A[X]\etens\A[Y]\to\A[X]\detens\A[Y],
\eneq
which  induces a functor
\eq
&&\for\cl \Mod(\A[X]\detens\A[Y])\to \Mod(\A[X]\etens\A[Y]).
\eneq
When there is no risk of confusion, we set
\eqn
&&\A[X\times Y]\eqdot\A[X]\detens\A[Y].
\eneqn
Then $\A[X\times Y]$ belongs to 
$\md[{\A[X\times Y]\tens(\A[X^a]\etens\A[Y^a])}]$ 
and the
functor $\for$ admits a left adjoint functor 
$\shk\mapsto \A[X\times Y]\tens[{\A[X]\etens\A[Y]}]\shk$:
\eq\label{eq:detWst2}
\xymatrix@C=6ex{
\md[{\A[X\times Y]}]\ar@<0.5ex>[r]^(.45){\for}&
                 \md[{\A[X]\etens\A[Y]}].\ar@<0.5ex>[l]
}\eneq
We denote by $\scbul\detens\scbul$ the bi-functor $\A[X\times Y]
\mathop\otimes\limits_{\A\etens\A[Y]}
(\scbul\etens\scbul)$:
\eq\label{eq:detWst2b}
\scbul\detens\scbul&\cl&\md[{\A[X]}]\times\md[{\A[Y]}]
\To \md[{\A[X\times Y]}].
\eneq

\begin{lemma} If $\shm$ is an $\A$-module without $\hbar$-torsion,
then the functor
$$\shm\detens\scbul:\md[{\A[Y]}]\to \md[{\A[X\times Y]}]$$
is an exact functor.
\end{lemma}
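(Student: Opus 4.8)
The claim is that if $\shm \in \Mod(\A[X])$ has no $\hbar$-torsion, then $\shm \detens \scbul \colon \Mod(\A[Y]) \to \Mod(\A[X\times Y])$ is exact. Since $\shm \detens \scbul$ is by construction a composition of right-exact functors (the external product $\shm \etens \scbul$ followed by $\A[X\times Y] \tens[{\A[X]\etens\A[Y]}] \scbul$, as in \eqref{eq:detWst2b}), it suffices to prove that it carries monomorphisms to monomorphisms, equivalently that it is left exact. The strategy is to reduce everything to a purely local computation on $\OO[X\times Y]\forl$, where $\shm \detens \scbul$ becomes an explicit tensor product over a $\DQ$-algebra, and then to exploit the $\hbar$-adic filtration together with the hypothesis that $\shm$ has no $\hbar$-torsion.

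First I would localize: by Convention~\ref{conv1} and the fact that a $\DQ$-algebroid is locally equivalent to a $\DQ$-algebra (and that $\A[X]\detens\A[Y]$ is locally the external product of $\DQ$-algebras by Lemma~\ref{le:starprod}), the question reduces to the case where $\A[X] = (\OO[X]\forl,\star_X)$, $\A[Y] = (\OO[Y]\forl,\star_Y)$ and $\A[X\times Y] = \A[X]\detens\A[Y]$ is the $\DQ$-algebra on $X\times Y$ of Lemma~\ref{le:starprod}. In this situation, for $\shm\in\Mod(\A[X])$ one checks that $\shm\detens\shn \simeq \A[X\times Y]\tens[{\A[X]\etens\A[Y]}](\shm\etens\shn)$, and because $\A[X\times Y]$ contains $\A[X]\etens\A[Y]$ as a subalgebra through which the module structures factor, one can rewrite this as $\shm\htens[{\coro}]\shn$ with a suitably $\hbar$-completed external product; the key point to extract is that $\gr_\hbar(\shm\detens\shn) \simeq (\gr_\hbar\shm)\etens(\gr_\hbar\shn)$ over $\OO[X\times Y]$, using Proposition~\ref{pro:tensgr} and the flatness of external product of $\sho$-modules over $\C$.

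Next I would run the exactness argument. Take a short exact sequence $0\to\shn'\to\shn\to\shn''\to 0$ in $\Mod(\A[Y])$; right-exactness gives $\shm\detens\shn \to \shm\detens\shn'' \to 0$ and I must show injectivity of $\shm\detens\shn' \to \shm\detens\shn$. Since $\shm$ has no $\hbar$-torsion, $\shm$ is $\hbar$-torsion-free as a $\coro$-module, hence flat over $\coro$; therefore $\shm\detens\scbul$ respects the $\hbar$-adic filtrations, i.e. $(\shm\detens\shn)/\hbar^{n}(\shm\detens\shn) \simeq \shm\detens(\shn/\hbar^n\shn)$ and more importantly $\hbar^n(\shm\detens\shn)/\hbar^{n+1}(\shm\detens\shn) \simeq (\shm/\hbar\shm)\etens(\hbar^n\shn/\hbar^{n+1}\shn)$ as $\OO[X\times Y]$-modules, where I use that $\shm/\hbar\shm$ is a sheaf on $X$ and external product over $\C$ of $\sho$-modules is exact. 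Let $K$ be the kernel of $\shm\detens\shn'\to\shm\detens\shn$. Applying $\gr_\hbar$ and using that $\gr_\hbar$ is a left-derived functor, I get that $\gr_\hbar K$ injects into the kernel of $\gr_\hbar(\shm\detens\shn')\to\gr_\hbar(\shm\detens\shn)$; but this latter map is $(\gr_\hbar\shm)\etens(\gr_\hbar\shn'\to\gr_\hbar\shn)$, whose kernel, since $\gr_\hbar\shm = \shm/\hbar\shm$ is a flat $\sho$-module in the relevant variables and external product is exact, is controlled. Combining this with the fact that $\shm\detens\shn$ is $\hbar$-complete and $\hbar$-separated (it is a projective limit of its $\hbar^n$-quotients, being an $\hbar$-completed tensor product), a standard dévissage on the $\hbar$-adic filtration forces $K=0$: if $K\neq 0$ then some $\gr_\hbar^n K\neq 0$, contradicting the computation above.

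\textbf{Main obstacle.} The delicate step is the identification $\gr_\hbar(\shm\detens\shn) \simeq (\gr_\hbar\shm)\etens(\gr_\hbar\shn)$ together with the compatibility of $\detens$ with the $\hbar$-adic filtration — i.e.\ showing that the $\hbar$-completion implicit in $\detens$ does not destroy exactness. This is exactly where the no-$\hbar$-torsion hypothesis on $\shm$ is used: it guarantees $\shm$ is $\coro$-flat, so that $\shm\detens\scbul$ commutes with the formation of $\gr_\hbar$ and with finite truncations $\scbul\,/\,\hbar^n$. Once that compatibility is in hand, the exactness follows formally from the exactness of the classical external product $\etens$ of $\OO$-modules over $\C$ and a filtered dévissage; I expect the write-up of that reduction, rather than any genuinely new idea, to be the bulk of the work.
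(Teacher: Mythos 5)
There is a genuine gap in your argument, and it sits precisely at the step you flag as the "main obstacle."

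Your strategy is to show that the kernel $K$ of $\shm\detens\shn'\to\shm\detens\shn$ vanishes by a dévissage on the $\hbar$-adic filtration: compute $\gr_\hbar K$, show it vanishes, and conclude $K=0$. But for this conclusion you need $K$ (or $\shm\detens\shn'$) to be $\hbar$-\emph{separated}, i.e.\ $\bigcap_n\hbar^n K=0$. You assert that $\shm\detens\shn$ is "$\hbar$-complete and $\hbar$-separated, \dots\ being an $\hbar$-completed tensor product", but this is false: $\shm\detens\shn$ is the plain tensor product $\A[X\times Y]\tens[{\A\etens\A[Y]}](\shm\etens\shn)$, not its $\hbar$-completion, and for arbitrary $\shn\in\Mod(\A[Y])$ (not coherent) there is no reason for it or its submodules to be $\hbar$-separated. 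Without separation the filtered dévissage simply does not conclude; a nonzero, $\hbar$-infinitely-divisible kernel would have $\gr_\hbar^n K=0$ for every $n$. There are also secondary problems upstream (the map $\gr_\hbar\shn'\to\gr_\hbar\shn$ need not be injective, so your asserted control on the kernel at the graded level does not follow; and $\gr_\hbar$ being a left-derived functor produces a long exact sequence, not an injection of $\gr_\hbar K$), but the non-separation is the essential one.

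What the paper does instead is to first reduce flatness to stalks, then reduce to $\shm$ \emph{coherent} without $\hbar$-torsion (flatness passes to filtered colimits, and the Noetherian $\A$ writes $\shm$ as a filtered colimit of coherent torsion-free submodules). For coherent modules the $\hbar$-adic control you want is available, but \emph{only} through the cohomological-completeness machinery of Chapter~1: the module $\shn_U=(p_U)_*\bl(\A[X\times Y]\tens[\A]\shm)\vert_{U\times Y}\br$ over a Stein $U$ is cohomologically complete (Theorem~\ref{th:cohimplcohco} plus Proposition~\ref{pro:cohcodirim}), has no $\hbar$-torsion, and has a flat graded module $\shn_U/\hbar\shn_U$ over $\sho_Y$; Theorem~\ref{th:flat} then yields flatness of $\shn_U$ over $\A[Y]^\op$, and passing to the stalk gives the result. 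In short, the argument does not and cannot proceed by a naked $\hbar$-adic dévissage on possibly non-separated modules: the ingredient you are missing is the reduction to the coherent case combined with the cohomological-completeness criterion (Theorem~\ref{th:flat}), which is exactly the tool designed to replace the $\hbar$-separation you implicitly assumed.
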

\begin{proof}
We may assume that $\A$ and $\A[Y]$ are $\DQ$-algebras.
Hence it is enough to show that for any $(x,y)\in X\times Y$, setting 
$\shn\seteq\A[X\times Y]\tens[{\A}]\shm$, $\shn_{(x,y)}$ is a flat module
over $\A[Y,y]^\rop$. We may assume further that $\shm$ is a coherent $\A$-module
without $\hbar$-torsion.
For any Stein open subset $U$, let $p_U\cl U\times Y\to Y$ be the projection.
Set
$\shn_U\seteq (p_U)_*\bl(\A[X\times Y]\tens[{\A}]\shm)\vert_{U\times Y}\br$.
Then it is easy to check the conditions (a)--(c) in 
Theorem~\ref{th:flat}
are satisfied ((c) follows from the $\sho$-module version of this lemma), 
and we conclude that
$\shn_U$ is a flat $\A[Y]^\rop$-module.
Hence, $\shn_{(x,y)}\simeq\indlim[{x\in U}](\shn_U)_y$ is a flat $(\A[Y]^\rop)_y$-module.
\end{proof}
Hence
the left derived functor
$$\scbul\ldetens\scbul\cl \Der(\A[X])\times\Der(\A[Y])
\to \Der(\A[X\times Y])$$
\index{tensl@$\ldetens$}%
satisfies
$\shm^\scbul\ldetens\shn^\scbul\isoto\shm^\scbul\detens\shn^\scbul$
as soon as $\shm^\scbul$ or $\shn^\scbul$ is a complex bounded from above 
of modules without $\hbar$-torsion.

\subsubsection*{Graded modules}
For a $\coro$-algebroid $\shb$ on $X$, 
one denotes by $\gr(\shb)$ the $\C$-algebroid 
associated with the prestack $\sts$ given by
\eqn
&&\Ob(\sts(U))=\Ob(\shb(U))\quad\text{for an open subset $U$ of $X$,}\\
&&\Hom[\sts(U)](\sigma,\sigma')=
\Hom[{\shb}](\sigma,\sigma')/\hbar \Hom[{\shb}](\sigma,\sigma')
\quad\mbox{for $\sigma$, $\sigma'\in\shb(U)$}.
\eneqn
Let now $\A[X]$ be a $\DQ$-algebroid  on $X$. 
Then it is easy to see that $\bA$ is an invertible $\OS$-algebroid
and that we have a natural functor
$\A[X]\to\bA$ of $\C$-algebroids. This functor induces a functor
\eq
&&\for\cl\Mod(\bA)\to \Mod(\A).
\eneq
The functor $\for$ above  is fully faithful and
$\Mod(\bA)$ is equivalent to the 
full subcategory of
$\Mod(\A)$ consisting of objects $M$ such that $\hbar\cl M\to M$ vanishes.
The functor $\for\cl\Mod(\bA)\to \Mod(\A)$
admits a left adjoint functor
$M\mapsto M/\hbar M\simeq\C\tens[\coro] M$.
The functor $\for$ is exact and it induces a functor
\eq\label{eq:derforgr}
&&\for\cl\Der(\bA)\to \Der(\A).
\eneq
\begin{remark}\label{rem:notff}
The functor in~\eqref{eq:derforgr} is not full 
in general. 
Indeed, choose $X=\rmpt$, $\A=\coro$ and
$L=\coro/\hbar\coro$ viewed as a $\gr(A)$-module.  Then
\eqn
&&\Hom[\Derb(\coro)]\bl\for(L),\for(L[1])\br\simeq \coro/\hbar\coro,\\
&&\Hom[\Derb(\C)](L,L\,[1])\simeq0.
\eneqn
It could be also shown that this functor is not faithful in general.
\end{remark}
\bigskip
One extends Definition~\ref{def:grad} to the algebroid $\A$.
As an $(\A[X]\tens\A[X^a])$-module, $\bA$ is isomorphic to
$\C\tens[\coro]\A\simeq\A/\hbar\A$. We get the functor 
\eq\label{eq:dergr}
&&\gr\cl\Der(\A)\to\Der(\bA),\,
\shm\mapsto\gr(\A)\lltens[{\A}]\shm\simeq\C\lltens[\coro]\shm.
\eneq
Note that Lemma~\ref{lem:grHa},
Propositions~\ref{pro:tensgr} and~\ref{pro:grHa} 
as well as  Corollary~\ref{cor:conservative1} still hold.
Moreover
\begin{corollary}\label{cor:grsupp}
Let $\shm\in\Derb_\coh(\A)$. Then its support, $\Supp(\shm)$, is a
closed complex analytic subset of $X$.
\end{corollary}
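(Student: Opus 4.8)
The plan is to reduce the statement about the $\DQ$-algebroid $\A$ to the corresponding statement about $\sho_X$-modules, using the functor $\gr$ and the fact that $\gr(\A)=\bA$ is an invertible $\OS$-algebroid. The key point is the identity of supports: for $\shm\in\Derb_\coh(\A)$, I claim that $\Supp(\shm)=\Supp(\gr(\shm))$, where $\gr(\shm)\in\Derb_\coh(\bA)$. Granting this, the result follows because $\gr(\shm)$ is a bounded complex of $\bA$-modules with coherent cohomology over the invertible $\OS$-algebroid $\bA$; locally $\bA$ is equivalent to $\OO[X]$, so $\gr(\shm)$ corresponds locally to an object of $\Derb_\coh(\OO[X])$, whose support (the union of the supports of its finitely many cohomology sheaves, each a coherent $\sho_X$-module) is a closed complex analytic subset of $X$ by the classical coherence theory. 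Since being a closed analytic subset is a local property, $\Supp(\gr(\shm))$ is a closed analytic subset of $X$, and hence so is $\Supp(\shm)$.

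The heart of the argument is therefore the equality $\Supp(\shm)=\Supp(\gr(\shm))$. First I would reduce to the case where $\A$ is a $\DQ$-algebra (the statement is local on $X$, and locally a $\DQ$-algebroid admits an object, hence is equivalent to the algebroid of a $\DQ$-algebra). So let $\sha$ be a $\DQ$-algebra; it satisfies hypotheses \eqref{eq:FDringa} and \eqref{eq:FDringb}. Now $\shm\in\Derb_\coh(\sha)$, and $\gr(\shm)\simeq\shao\lltens[\sha]\shm\in\Derb_\coh(\shao)$ by \eqref{eq:grcohbd}. One inclusion is immediate: $\gr(\shm)_x$ is computed from $\shm_x$, so $\Supp(\gr(\shm))\subset\Supp(\shm)$. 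For the reverse inclusion, suppose $x\notin\Supp(\gr(\shm))$. Then in a neighborhood $U$ of $x$ we have $\gr(\shm)\vert_U\simeq 0$. By Theorem~\ref{th:cohimplcohco}, $\shm$ is cohomologically complete, hence so is $\shm\vert_U$; applying Corollary~\ref{cor:conserv} to $\shm\vert_U$, we conclude $\shm\vert_U\simeq 0$, i.e.\ $x\notin\Supp(\shm)$. This proves $\Supp(\shm)=\Supp(\gr(\shm))$.

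It remains to record why $\Supp(\gr(\shm))$ is closed analytic. Since $\gr(\shm)\in\Derb_\coh(\shao)=\Derb_\coh(\sho_X)$ (in the algebra case; in the algebroid case replace $\sho_X$ by $\bA$ and work locally where it is equivalent to $\OO[X]$), and the complex is bounded, $\Supp(\gr(\shm))=\bigcup_i\Supp\bl H^i(\gr(\shm))\br$, a finite union. Each $H^i(\gr(\shm))$ is a coherent $\sho_X$-module, and the support of a coherent $\sho_X$-module is a closed complex analytic subset of $X$ (this is the classical fact that the annihilator ideal is coherent and its zero set is analytic). A finite union of closed analytic subsets is closed analytic, so $\Supp(\gr(\shm))$ is closed analytic, and by the previous paragraph so is $\Supp(\shm)$.

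The main obstacle is really just making sure the reduction to the algebra case and the use of the conservativity of $\gr$ on cohomologically complete objects are invoked correctly: the cleanest route is via Theorem~\ref{th:cohimplcohco} (every object of $\Derb_\coh(\sha)$ is cohomologically complete) together with Corollary~\ref{cor:conserv} (a cohomologically complete object with $\gr\simeq 0$ vanishes), which is exactly the statement that $\gr$ detects vanishing, hence detects supports. Everything else — the local equivalence of $\bA$ with $\OO[X]$ via invertibility (using \eqref{eq:Ostacks1} and the results on invertible $\OS$-algebroids), and the classical coherence of the support of an $\sho_X$-module — is standard. No genuinely new calculation is needed beyond this reduction.
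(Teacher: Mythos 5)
Your proof is correct and follows essentially the same strategy as the paper: establish $\Supp(\shm)=\Supp(\gr(\shm))$, then invoke the local equivalence $\gr(\A)\simeq\OO[X]$ and the classical fact that coherent $\OO[X]$-modules have closed analytic support. The only difference is how you obtain the equality of supports: the paper simply cites Corollary~\ref{cor:conservative1} (conservativity of $\gr$ on $\Derb_\coh(\sha)$, proved elementarily via Nakayama through Proposition~\ref{pro:grHa}), whereas you derive it from the heavier cohomological-completeness machinery (Theorem~\ref{th:cohimplcohco} plus Corollary~\ref{cor:conserv}); both are valid and already available in the text, so yours is a slightly more circuitous but equally correct route to the same key fact that $\gr$ detects vanishing.
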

\begin{proof}
By Corollary~\ref{cor:conservative1}, $\Supp(\shm)=\Supp(\gr(\shm))$. Since
$\gr(\shm)\in\Derb_\coh(\gr(\A))$ and $\gr(\A)$ is locally isomorphic 
to $\OO$, the result follows.
\end{proof}

Let $d_X$ denote the complex dimension of $X$. Applying
Theorem~\ref{th:hddim}, we get
\begin{corollary}\label{cor:hddim}
Let $\A[X]$ be a $\DQ$-algebra and let $\shm\in\mdcoh[{\A[X]}]$. 
Then, locally, $\shm$ admits a resolution by free modules
of finite rank of length $\leq d_X+1$.
\end{corollary}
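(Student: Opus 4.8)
The plan is to deduce Corollary~\ref{cor:hddim} directly from Theorem~\ref{th:hddim}~(b) applied to the ring $\A[X]$, the only point to check being the hypothesis that every coherent $\OO[X]$-module locally admits a free resolution of length $\le d_X$. First I would recall that for a $\DQ$-algebra we have $\shao=\A[X]/\hbar\A[X]\simeq\OO[X]$, so that $\A[X]$ satisfies \eqref{eq:FDringa} and \eqref{eq:FDringb}, as already observed in Section~\ref{section:DQalg}. Thus Theorem~\ref{th:hddim} is applicable once we supply the integer $d$ such that coherent $\OO[X]$-modules locally have finite free resolutions of length $\le d$.

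The key step is therefore: on a complex manifold $X$ of dimension $d_X$, every coherent $\OO[X]$-module $\shf$ locally admits a resolution $0\to\shl_{d_X}\to\cdots\to\shl_0\to\shf\to0$ by free $\OO[X]$-modules of finite rank. This is a classical fact: the local ring $\OO[X,x]$ is a regular local ring of Krull dimension $d_X$ (it is isomorphic to the ring of convergent power series in $d_X$ variables), hence has global dimension $d_X$ by the Auslander--Buchsbaum--Serre theorem, so every finitely generated $\OO[X,x]$-module has projective (equivalently free, since the ring is local) resolution of length $\le d_X$; one then spreads this resolution out to a neighborhood of $x$ using coherence of $\OO[X]$ (Oka's theorem) together with Nakayama. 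Granting this, Theorem~\ref{th:hddim}~(b) with $d=d_X$ gives that any coherent $\A[X]$-module locally admits a free resolution of length $\le d_X+1$, which is exactly the assertion.

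The main (and only real) obstacle is the invocation of the regularity of $\OO[X,x]$ and the resulting homological dimension bound; everything else is bookkeeping. Since this is standard, I would simply cite it (for instance as in \cite[Lem.~B.2.2]{Sc}, which is already used in the proof of Theorem~\ref{th:hddim}) rather than reprove it. In the algebraic setting the same argument applies verbatim with $\OO[X,x]$ the local ring of a smooth variety, which is again regular of dimension $d_X$, so the corollary holds there as well.

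\begin{proof}
Since $\A[X]$ is a $\DQ$-algebra, $\A[X]/\hbar\A[X]\simeq\OO[X]$, and for any $x\in X$ the ring $\OO[X,x]$ is a regular local ring of dimension $d_X$, hence of global dimension $d_X$. Consequently every coherent $\OO[X]$-module locally admits a resolution of length $\le d_X$ by free $\OO[X]$-modules of finite rank. The hypotheses of Theorem~\ref{th:hddim} are thus satisfied with $d=d_X$, and part~(b) of that theorem yields the assertion.
\end{proof}
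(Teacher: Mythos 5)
Your proof is correct and follows the same approach as the paper: the paper simply applies Theorem~\ref{th:hddim} to $\A[X]$ with $\shao\simeq\OO[X]$, leaving implicit the classical fact (which you spell out via regularity of $\OO[X,x]$) that coherent $\OO[X]$-modules locally admit free resolutions of length $\le d_X$.
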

\begin{proposition}\label{pro:grforadj}
The functors $\gr$ in~\eqref{eq:dergr} and $\for$ in~\eqref{eq:derforgr}
define pairs of adjoint functors $(\gr,\for)$ and $(\for,\gr[-1])$.
\end{proposition}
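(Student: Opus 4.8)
The plan is to exhibit the adjunction isomorphisms directly, reducing everything to the derived tensor--$\rhom$ adjunction together with the fundamental isomorphisms
\[
\gr(\A)\lltens[\A]\shm\simeq\C\lltens[\coro]\shm,
\qquad
\rhom[\A](\gr(\A),\shn)\simeq\rhom[\coro](\C,\shn)
\]
valid for $\shm\in\Der(\A)$ and $\shn\in\Der(\bA)$, where we regard $\bA\simeq\A/\hbar\A$ as an $(\A\tens\A^a)$-module on one side and an $(\bA\tens\bA^a)$-module on the other, and use that the forgetful functor $\for$ is induced by the $(\A\tens\bA^a)$-module $\bA$ (itself bi-invertible over $\bA$ on the right). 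The adjunction $(\gr,\for)$ amounts to constructing, for $\shm\in\Der(\A)$ and $\shn\in\Der(\bA)$, a functorial isomorphism
\[
\Hom[\Der(\bA)]\bl\gr(\shm),\shn\br\simeq\Hom[\Der(\A)]\bl\shm,\for(\shn)\br,
\]
and symmetrically for $(\for,\gr[-1])$ one needs
\[
\Hom[\Der(\A)]\bl\for(\shn),\shm\br\simeq\Hom[\Der(\bA)]\bl\shn,\gr(\shm)[-1]\br .
\]

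First I would treat $(\gr,\for)$, which is the ``easy'' half: since $\gr(\shm)=\gr(\A)\lltens[\A]\shm$ and $\for(\shn)=\bA\lltens[\bA]\shn$ with $\bA=\gr(\A)$ as a left $\bA$--, right $\A$--object, the tensor--hom adjunction in the derived category of modules over algebroids (the derived version of \eqref{eq:homalgebr}, available because $\md[\A]$ and $\md[\bA]$ are Grothendieck categories so $\rhom$ exists, and $\gr(\A)$ is $\coro$-flat hence the relevant $\lltens$ exists) gives
\[
\RHom[\bA]\bl\gr(\A)\lltens[\A]\shm,\shn\br
\simeq\RHom[\A]\bl\shm,\rhom[\bA](\gr(\A),\shn)\br,
\]
and then the second fundamental isomorphism above identifies $\rhom[\bA](\gr(\A),\shn)$ with $\for(\shn)$ (for $\shn$ a $\bA$-module, $\hbar$ acts by zero, so $\rhom[\coro](\C,\shn)\simeq\shn$ as a complex, and the $\A$-structure is exactly the one defining $\for$). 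Taking $H^0$ gives the claimed bijection; functoriality in both variables is immediate from the construction. This is really just unwinding Proposition~\ref{prop:cccr}(iii)-type manipulations already used in the text.

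For $(\for,\gr[-1])$ the key input is that $\bA=\gr(\A)$, viewed as an object of $\md[\A\tens\bA^a]$ used to define $\for$, admits a length-one free resolution $0\to\A\to[\hbar]\A\to\bA\to0$ over $\A$ on the left; dually $\bA$ as a right $\A$-module sits in the triangle $\A\To[\hbar]\A\To\bA\To[+1]$. Thus $\for$ has \emph{both} a left adjoint and a right adjoint, and the right adjoint is computed by applying $\rhom[\A](\bA,-)$, which by the triangle equals $\gr(-)$ shifted: $\rhom[\A](\bA,\shm)\simeq\gr(\shm)[-1]$. Concretely I would write, for $\shn\in\Der(\bA)$ and $\shm\in\Der(\A)$,
\[
\RHom[\A]\bl\for(\shn),\shm\br
\simeq\RHom[\A]\bl\bA\lltens[\bA]\shn,\shm\br
\simeq\RHom[\bA]\bl\shn,\rhom[\A](\bA,\shm)\br,
\]
again by tensor--hom adjunction across the algebroids, and then identify $\rhom[\A](\bA,\shm)\simeq\gr(\shm)[-1]$ using the distinguished triangle $\shm\To[\hbar]\shm\To\gr(\shm)\To[+1]$ (equivalently, $\rhom[\A](\A/\hbar\A,\shm)$ is the fiber of $\hbar$ shifted). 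Passing to $H^0$ yields the stated adjunction.

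The main obstacle is bookkeeping rather than a genuine difficulty: one must make sure the various tensor--$\rhom$ adjunctions are legitimate \emph{at the level of algebroids} (not just sheaves of rings), i.e.\ that $\bA$ genuinely defines the functor $\for$ via $\shn\mapsto\bA\tens[\bA]\shn$ and that $\rhom[\A](\bA,-)$ lands in $\Der(\bA)$ with the correct $\bA$-action — this is where one invokes that $\bA$ is an $(\A\tens\bA^a)$-module with the expected bi-invertibility, together with the flatness of $\bA$ over $\coro$ needed to derive $\lltens$ (cf.\ the discussion after \eqref{eq:homalgebr}). Once these identifications are in place, both adjunctions drop out of the two fundamental isomorphisms and the triangle $\A\To[\hbar]\A\To\bA\To[+1]$; no computation beyond that is required.
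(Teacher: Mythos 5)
Your proposal is correct and follows essentially the same route as the paper's own proof: both adjunctions are obtained from the derived tensor--$\rhom$ adjunction applied to the bimodule $\bA$, and in the second case one identifies $\rhom[\A](\bA,\shm)\simeq\gr(\shm)[-1]$. The only cosmetic difference is that you extract this last isomorphism directly from the triangle $\shm\To[\hbar]\shm\To\gr(\shm)\To[+1]$, whereas the paper factors it as $\rhom[\A](\bA,\A)\lltens[\A]\shm$ with $\rhom[\A](\bA,\A)\simeq\bA[-1]$; both come down to the same Koszul resolution $\A\To[\hbar]\A\to\bA$.
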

\begin{proof}
Consider a pair $(B,C)$ in which either $B=\A$ and $C=\bA$ or 
$B=\bA$ and $C=\A$, and let $K$ be a $(B,C)$-bimodule.
We have the adjunction formula, for $M\in\Derb(B)$ and $N\in\Der(C)$:
\eq\label{eq:adjlltensrhom}
&&\Hom[\Der(B)](K\lltens[C]N,M)\simeq\Hom[\Der(C)](N,\rhom[B](K,M)).
\eneq
(i) Let us apply formula~\eqref{eq:adjlltensrhom} with $B=\bA$, $C=\A$ and 
$K=\bA$ considered as a $(\bA,\A)$-bimodule. We get
\eqn
&&\hspace{-5ex}\Hom[\Der(\bA)](\bA\lltens[\A]\shm,\shn)\\
&&\hspace{15ex}\simeq\Hom[\Der(\A)](\shm,\rhom[\bA](\bA,\shn)),
\eneqn
and when remarking that $\rhom[\bA](\bA,\shn)\simeq\for(\shn)$, we get the first 
adjunction pairing.

\noindent
(ii) Let us apply formula~\eqref{eq:adjlltensrhom} with $C=\bA$, $B=A$ and 
$K=\bA$ considered as an $(\A,\bA)$-bimodule.
We get
\eqn
&&\hspace{-5ex}\Hom[\Der(\A)](\bA\lltens[\bA]\shn,\shm)\\
&&\hspace{15ex}\simeq\Hom[\Der(\bA)](\shn,\rhom[\A](\bA,\shm)).
\eneqn
We have $\bA\lltens[\bA]\shn\simeq\for(\shn)$ and to get the second 
adjunction pairing, notice that
\eqn
\rhom[\A](\bA,\shm)&\simeq&\rhom[\A](\bA,\A)\lltens[\A]\shm,
\eneqn
and $\rhom[\A](\bA,\A)\simeq\bA\,[-1]$.
\end{proof}

\subsubsection*{Duality}
Let $\A[X]$ be a $\DQ$-algebroid on $X$.

\begin{definition}\label{def:dual1}
Let $\shm\in\RD(\A[X])$. 
Its dual  $\RDAA\shm\in\RD(\A[X^a])$ 
\index{DA@$\RDA\shm$}\glossary{dual!of $\A$-module}%
is given by
\eq\label{eq:dual1}
&&\RDAA\shm\eqdot \rhom[{\A[X]}](\shm,\A[X]).
\eneq
\end{definition}
When there is no risk of confusion, we write $\RDA$ instead of
$\RDAA$. 

By Corollary~\ref{cor:hddim}, $\RD'_{\rma}$ sends 
$\Derb_\coh(\A)$ to $\Derb_\coh(\A[X^a])$:
$$\RDA\cl\Derb_\coh(\A)\To\Derb_\coh(\A[X^a]).$$

Assume that $\shm\in\RD^\Rb_\coh(\A[X])$. 
Then there is a canonical isomorphism:
\eq
\shm\isoto\RDA\RDA\shm.
\eneq

For a $\bA$-module $\shm$, denote by $\RDO\shm$ its dual,
\eq\label{def:dualo}
&&\RDO\shm\eqdot \rhom[{\bA}](\shm,\bA).
\eneq

\begin{proposition}\label{pro:dualgr}
Let $\shm\in\Derb_\coh(\A[X])$. Then
\eqn
&&\gr(\RDA\shm)\simeq \RDO(\gr(\shm)).
\eneqn
\end{proposition}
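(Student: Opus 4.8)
The statement to prove is that for $\shm\in\Derb_\coh(\A[X])$ one has a natural isomorphism
\[
\gr(\RDA\shm)\simeq\RDO(\gr(\shm)).
\]
The natural strategy is to combine the commutation of $\gr$ with $\rhom$ already available in the excerpt (Proposition~\ref{pro:tensgr}\,(ii), now valid for algebroids as noted after \eqref{eq:dergr}) with the identification of the image of $\A[X]$ under $\gr$. First I would unwind the definitions: $\RDA\shm=\rhom[{\A[X]}](\shm,\A[X])$, so applying $\gr$ gives
\[
\gr(\RDA\shm)=\gr\bl\rhom[{\A[X]}](\shm,\A[X])\br.
\]

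\textbf{Key steps.} The heart of the argument is to apply Proposition~\ref{pro:tensgr}\,(ii) with $\shr$ replaced by the algebroid $\A[X]$, $\shk_1=\shm$ and $\shk_2=\A[X]$, yielding
\[
\gr\bl\rhom[{\A[X]}](\shm,\A[X])\br\simeq\rhom[{\bA}]\bl\gr(\shm),\gr(\A[X])\br.
\]
Then I would invoke the fact, recorded in the excerpt just before \eqref{eq:dergr}, that as an $(\A[X]\tens\A[X^a])$-module $\bA$ is isomorphic to $\gr(\A[X])\simeq\A[X]/\hbar\A[X]$; in particular $\gr(\A[X])\simeq\bA$ as a $\bA$-bimodule (equivalently as an object of $\md[{\bA\tens\bA^\rop}]$), compatibly with the algebroid structures. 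Substituting this identification gives
\[
\rhom[{\bA}]\bl\gr(\shm),\gr(\A[X])\br\simeq\rhom[{\bA}]\bl\gr(\shm),\bA\br=\RDO(\gr(\shm)),
\]
which is the desired isomorphism. One should also check that $\gr(\shm)\in\Derb_\coh(\bA)$, so that $\RDO(\gr\shm)$ makes sense and the $\rhom$ is a bounded complex: this follows from \eqref{eq:grcohbd} together with the fact that $\bA$ is locally isomorphic to $\OO[X]$ (hence Noetherian of finite global dimension by Corollary~\ref{cor:hddim}), so $\RDO$ sends $\Derb_\coh(\bA)$ to $\Derb_\coh(\bA[X^a])$.

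\textbf{Main obstacle.} The only genuinely delicate point is the algebroid bookkeeping: Proposition~\ref{pro:tensgr}\,(ii) is stated for a $\Z[\hbar]$-algebra $\shr$, and one must make sure it transports verbatim to the $\DQ$-algebroid setting — that is, that $\gr$ as defined in \eqref{eq:dergr} still commutes with $\rhom[{\A[X]}]$ when one of the arguments is the canonical bimodule $\A[X]$, and that the resulting isomorphism is the natural one respecting the $\A[X^a]$- (resp.\ $\bA[X^a]$-) module structures on both sides. This is precisely what the sentence ``Note that Lemma~\ref{lem:grHa}, Propositions~\ref{pro:tensgr} and~\ref{pro:grHa} \dots still hold'' is meant to cover, so the proof should simply cite that remark; the computation $\gr(\A[X])\simeq\bA$ as a bimodule, already spelled out in the excerpt, then closes the argument. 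I expect the write-up to be just the three displayed isomorphisms chained together, with a one-line reference to \eqref{eq:circgr1} and to the identification of $\gr(\A[X])$.
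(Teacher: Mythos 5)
Your proof is correct and follows the same approach as the paper, whose entire argument is the one-line reference to Proposition~\ref{pro:tensgr}. You have simply made explicit the chain of isomorphisms (applying part~(ii) with $\shk_1=\shm$, $\shk_2=\A[X]$ and then identifying $\gr(\A[X])\simeq\bA$ as the canonical bimodule), together with the algebroid transfer of Proposition~\ref{pro:tensgr} which the paper flags in the remark following \eqref{eq:dergr}.
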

\begin{proof}
This follows from Proposition~\ref{pro:tensgr}.
\end{proof}

\begin{corollary}\label{cor:grHa}
Let $\shl\in\Derb_\coh(\A[X])$ and $j\in\Z$. Let us assume that\\
$\ext[{\bA[X]}]{j}(\gr(\shl),\bA)\simeq 0$.
Then $\ext[{\A[X]}]{j}(\shl,\A[X])\simeq 0$.
\end{corollary}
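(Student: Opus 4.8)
The statement to prove is Corollary~\ref{cor:grHa}: if $\shl\in\Derb_\coh(\A[X])$ and $\ext[{\bA[X]}]{j}(\gr(\shl),\bA)\simeq0$, then $\ext[{\A[X]}]{j}(\shl,\A[X])\simeq0$. The natural strategy is to reduce the vanishing of a cohomology object over $\A[X]$ to the vanishing of its graded, using the conservativity of $\gr$ established in Corollary~\ref{cor:conservative1}, together with the compatibility of $\gr$ with duality proved in Proposition~\ref{pro:dualgr}. The main point to be careful about is that $\gr$ does not in general commute with $H^j$ — it only does so up to a $\tor_1$-term, as in Lemma~\ref{lem:grHa} — so one cannot simply write $H^j(\gr(\RDA\shl))\simeq\gr(\ext[{\A}]{j}(\shl,\A))$.

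First I would observe that $\RDA\shl\in\Derb_\coh(\A[X^a])$, hence it is cohomologically complete by Theorem~\ref{th:cohimplcohco}, and that by Proposition~\ref{pro:dualgr} we have $\gr(\RDA\shl)\simeq\RDO(\gr(\shl))$. Now the hypothesis $\ext[{\bA[X]}]{j}(\gr(\shl),\bA)\simeq0$ says precisely $H^j(\gr(\RDA\shl))\simeq0$. I want to conclude $H^j(\RDA\shl)\simeq\ext[{\A}]{j}(\shl,\A)\simeq0$. Set $\shm\eqdot\RDA\shl$ and write $\shn^i\eqdot H^i(\shm)$; these are coherent $\A[X^a]$-modules. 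Applying Lemma~\ref{lem:grHa} with $a=j$ gives an exact sequence
\eqn
&&0\to \bA\tens[\A]\shn^j\to H^j(\gr(\shm))\to \tor^{\A}_1(\bA,\shn^{j+1})\to0,
\eneqn
so from $H^j(\gr(\shm))\simeq0$ we deduce $\bA\tens[\A]\shn^j\simeq0$, i.e.\ $\shn^j/\hbar\shn^j\simeq0$. By Nakayama's lemma (Lemma~\ref{le:nakayamacor}, applicable since $\shn^j$ is locally finitely generated) this forces $\shn^j\simeq0$, which is exactly $\ext[{\A[X]}]{j}(\shl,\A[X])\simeq0$.

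Alternatively, and perhaps more in the spirit of the surrounding text, one can argue via conservativity directly: the vanishing $H^j(\gr(\shm))\simeq0$ together with Proposition~\ref{pro:grHa} (which states the equivalence $H^a(\gr(\shm))\simeq0 \iff H^a(\shm)\simeq0$ and $H^{a+1}(\shm)$ has no $\hbar$-torsion) immediately yields $H^j(\shm)\simeq0$. This is the cleanest route and I would use it: Proposition~\ref{pro:grHa} is stated for $\shm\in\Derb_\coh(\A)$, and $\RDA\shl$ lies in $\Derb_\coh(\A[X^a])$ by Corollary~\ref{cor:hddim}, so the hypotheses are met. The only ``obstacle'' is bookkeeping — making sure $\gr$ here means the algebroid version from \eqref{eq:dergr} and that Propositions~\ref{pro:grHa} and \ref{pro:dualgr} indeed carry over to algebroids, which the text asserts just after \eqref{eq:dergr} and in Proposition~\ref{pro:dualgr} respectively. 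So the proof is essentially one line: combine $\gr(\RDA\shl)\simeq\RDO(\gr(\shl))$ (Proposition~\ref{pro:dualgr}) with Proposition~\ref{pro:grHa} applied to $\RDA\shl$.
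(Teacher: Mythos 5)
Your proposal is correct, and the second route you identify as "cleanest" — combining Proposition~\ref{pro:dualgr} (to rewrite the hypothesis as $H^j(\gr(\RDA\shl))\simeq 0$) with Proposition~\ref{pro:grHa} applied to $\RDA\shl\in\Derb_\coh(\A[X^a])$ — is exactly the paper's proof. Your first variant merely unpacks Proposition~\ref{pro:grHa} via Lemma~\ref{lem:grHa} and Nakayama, and the remark about cohomological completeness (Theorem~\ref{th:cohimplcohco}) is true but plays no role in either argument.
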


\begin{proof}
Applying the above proposition, we get
\eqn
\ext[{\bA}]{j}(\gr(\shl),\bA[X])&=&H^j(\RD'_\rmo(\gr(\shl)))\\
&\simeq&H^j(\gr(\RD'_\rma(\shl))).
\eneqn
Then the result follows from Proposition~\ref{pro:grHa}.
\end{proof}

\subsubsection*{Simple modules}
\glossary{simple!module}%
\glossary{module!simple}%

\begin{definition}\label{def:simplemod}
Let $\Lambda$ be a smooth submanifold of $X$  and 
let $\shl$ be a coherent $\A[X]$-module supported by $\Lambda$. 
One says that $\shl$ is simple along $\Lambda$ if
$\gr(\shl)$ is concentrated in degree $0$ and $H^0(\gr(\shl))$ 
is an invertible $\sho_\Lambda\tens[{\sho_X}]\gr(\A)$-module. (In particular, 
$\shl$ has no $\hbar$-torsion.) 
\end{definition}

\begin{proposition}\label{pro:vanishsimple}
Let $\Lambda$ be a closed submanifold of $X$ of codimension $l$ and 
let $\shl$ be a coherent $\A[X]$-module simple along $\Lambda$. Then 
$H^j(\RD'_\rma(\shl))=\ext[{\A[X]}]{j}(\shl,\A[X])$ vanishes for $j\neq l$ 
and $H^l(\RD'_\rma(\shl))$
is a coherent $\A[X^a]$-module simple along $\Lambda$.
\end{proposition}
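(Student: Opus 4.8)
The plan is to reduce the statement about the $\DQ$-algebroid $\A[X]$ to the corresponding statement for the graded algebroid $\bA$, which is locally isomorphic to $\OO[X]$, and there invoke the classical computation of $\ext[{\OO[X]}]{j}$ of an invertible module supported on a closed submanifold. The two main tools are Proposition~\ref{pro:dualgr}, which gives $\gr(\RDA\shl)\simeq\RDO(\gr(\shl))$, and the conservativity of $\gr$ on $\Derb_\coh$ (Corollary~\ref{cor:conservative1}), together with Proposition~\ref{pro:grHa} and Corollary~\ref{cor:grHa} which translate vanishing of $\ext$ over $\A[X]$ to vanishing over $\bA$ and back.

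First I would treat the vanishing. Since $\shl$ is simple along $\Lambda$, $\gr(\shl)$ is concentrated in degree $0$ and $H^0(\gr(\shl))$ is an invertible $\sho_\Lambda\tens[{\sho_X}]\gr(\A)$-module. Locally $\bA\simeq\OO[X]$ and this module becomes an invertible $\sho_\Lambda$-module, i.e.\ locally $\simeq\sho_\Lambda$. The classical computation (e.g.\ via the Koszul resolution of $\sho_\Lambda$ over $\sho_X$, using that $\Lambda$ has codimension $l$) gives $\ext[{\OO[X]}]{j}(\sho_\Lambda,\OO[X])=0$ for $j\neq l$, and $\ext[{\OO[X]}]{l}(\sho_\Lambda,\OO[X])\simeq\omega_{\Lambda/X}\otimes_{\sho_\Lambda}(\text{something invertible})$, in any case an invertible $\sho_\Lambda$-module. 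Hence $H^j(\RDO(\gr(\shl)))=0$ for $j\neq l$ and $H^l(\RDO(\gr(\shl)))$ is an invertible $\sho_\Lambda\tens[{\sho_X}]\gr(\A[X^a])$-module concentrated in degree $0$. By Corollary~\ref{cor:grHa} (applied for each $j\neq l$), $\ext[{\A[X]}]{j}(\shl,\A[X])=H^j(\RDA\shl)=0$ for $j\neq l$, so $\RDA\shl$ is concentrated in degree $l$; write $\shn\eqdot H^l(\RDA\shl)[-l]\simeq\RDA\shl\in\Derb_\coh(\A[X^a])$.

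It remains to check that $\shn$ (after the shift, as an honest $\A[X^a]$-module) is simple along $\Lambda$. By Proposition~\ref{pro:dualgr}, $\gr(\shn)\simeq\gr(\RDA\shl)[-l]\simeq\RDO(\gr(\shl))[-l]$, which by the previous paragraph is concentrated in degree $0$ with $H^0$ an invertible $\sho_\Lambda\tens[{\sho_X}]\gr(\A[X^a])$-module. That is exactly the defining condition for $\shn[-l]$ (viewed as the $\A[X^a]$-module $H^l(\RDA\shl)$) to be simple along $\Lambda$; note in particular it has no $\hbar$-torsion by Proposition~\ref{pro:grHa}. Finally, $\Supp(\shn)=\Supp(\gr(\shn))\subset\Lambda$ by Corollary~\ref{cor:conservative1} (and in fact equals $\Lambda$ since $\gr(\shn)$ is invertible on $\Lambda$), so $\shn$ is supported by $\Lambda$, completing the proof.

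The main obstacle is the classical $\sho_X$-module computation of $\ext[{\OO[X]}]{j}(\shm,\OO[X])$ for $\shm$ invertible on a smooth closed submanifold $\Lambda\subset X$ of codimension $l$: one must verify the vanishing for $j\neq l$ and the invertibility (over $\sho_\Lambda\tens[{\sho_X}]\gr(\A)$, equivalently over $\sho_\Lambda$ locally) of the top $\ext$. This is standard — it follows from the local Koszul resolution and the fact that $\Lambda$ is locally a complete intersection — but it is the one genuinely non-formal input; everything else is a mechanical transport along $\gr$ using the propositions already established. Since the paper treats the commutative case in Chapter~\ref{chapter:Com}, one may simply cite the corresponding $\sho_X$-statement there, and the rest of the argument is purely formal.
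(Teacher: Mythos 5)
Your proposal is correct and follows essentially the same approach as the paper's proof: reduce locally to $\bA\simeq\OO[X]$, use the classical Koszul computation of $\ext[{\OO[X]}]{j}(\sho_\Lambda,\OO[X])$, transport the vanishing back via Corollary~\ref{cor:grHa}, and identify $\gr$ of the dual via Proposition~\ref{pro:dualgr}. The only difference is that you spell out a few bookkeeping checks (absence of $\hbar$-torsion, the support being exactly $\Lambda$) that the paper leaves implicit.
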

\begin{proof}
The question being local, we may assume that $\A$ is a $\DQ$-algebra
so that $\bA\simeq\OO[X]$ and $\gr(\shl)\simeq\OO[\Lambda]$.
Then, we have $\ext[{\OO[X]}]{j}(\gr(\shl),\OO[X])\simeq 0$ for $j\neq l$.
Therefore, $\ext[{\A[X]}]{j}(\shl,\A[X])=0$ for $j\neq l$
by Corollary~\ref{cor:grHa} and 
\eqn\gr(\ext[{\A[X]}]{l}(\shl,\A[X]))&\simeq&\RDO(\gr\shl)\,[l]\\
&\simeq&\ext[{\OO[X]}]{l}(\gr(\shl),\OO[X])
\eneqn
by Proposition~\ref{pro:dualgr}.
If $\gr(\shl)$ is locally isomorphic to $\sho_\Lambda$, then so is 
$\ext[{\OO[X]}]{l}(\gr(\shl),\OO[X])$.
\end{proof}

\subsubsection*{Homological dimension of $\A$-modules}
The codimension of the support of a coherent $\sho_X$-module $\shf$ is
related to the vanishing of the $\ext[\sho_X]{j}(\shf,\sho_X)$.
Similar results hold for $\A$-modules.
\begin{proposition}\label{pro:dimext1}
Let $\shm$ be  a coherent $\A$-module. Then 
\banum
\item
$\ext[{\A}]{j}(\shm,\A)\simeq 0$ for $j<\codim\Supp\shm$,
\item
$\codim\Supp\ext[{\A}]{j}(\shm,\A)\geq j$.
\eanum
\end{proposition}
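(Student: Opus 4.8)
The statement is the $\DQ$-analogue of the classical relation between the codimension of the support of a coherent $\sho_X$-module and the vanishing of its $\ext$'s against $\sho_X$; the idea is to reduce everything to that classical fact via the functor $\gr$. The key inputs are: (1) $\gr(\A)$ is locally isomorphic to $\OO[X]$ (since $\A$ is a $\DQ$-algebroid, this is the invertible $\OS$-algebroid $\bA$, which is locally $\OO[X]$), (2) $\gr\cl\Derb_\coh(\A)\to\Derb_\coh(\bA)$ commutes with duality, i.e.\ $\gr(\RDA\shm)\simeq\RDO(\gr(\shm))$ (Proposition~\ref{pro:dualgr}), (3) $\gr$ is conservative on $\Derb_\coh(\A)$ and commutes with taking supports: $\Supp(\shm)=\Supp(\gr(\shm))$ (Corollary~\ref{cor:conservative1} and Corollary~\ref{cor:grsupp}), and (4) the vanishing criterion of Proposition~\ref{pro:grHa}, which says that $H^j(\gr(\shn))\simeq 0$ is equivalent to $H^j(\shn)\simeq 0$ together with $H^{j+1}(\shn)$ having no $\hbar$-torsion; in particular $H^j(\gr(\shn))\simeq 0$ implies $H^j(\shn)\simeq 0$.

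\textbf{Step (a).} The question is local, so I may assume $\A$ is a $\DQ$-algebra and hence $\bA\simeq\OO[X]$. Set $\shn\eqdot\RDA\shm=\rhom[{\A}](\shm,\A)$, an object of $\Derb_\coh(\A[X^a])$ by Corollary~\ref{cor:hddim}. By Proposition~\ref{pro:dualgr}, $\gr(\shn)\simeq\RDO(\gr(\shm))=\rhom[{\OO[X]}](\gr(\shm),\OO[X])$. Now $\gr(\shm)\in\Derb_\coh(\OO[X])$ has support $\Supp(\gr(\shm))=\Supp(\shm)$ (Corollary~\ref{cor:conservative1}), so the classical codimension--$\ext$ estimate for $\sho_X$-modules gives $H^j(\RDO(\gr(\shm)))\simeq 0$ for $j<\codim\Supp(\shm)$. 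Hence $H^j(\gr(\shn))\simeq 0$ for $j<\codim\Supp\shm$, and by Proposition~\ref{pro:grHa} (the easy implication $H^j(\gr(\shn))=0\Rightarrow H^j(\shn)=0$) we conclude $H^j(\shn)=\ext[{\A}]{j}(\shm,\A)\simeq 0$ for $j<\codim\Supp\shm$, proving (a).

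\textbf{Step (b).} For the codimension estimate on the $\ext$'s themselves, fix $j$ and set $\shf\eqdot\ext[{\A}]{j}(\shm,\A)=H^j(\shn)$, a coherent $\A[X^a]$-module. Its support satisfies $\Supp(\shf)\subset\Supp(\shn)=\Supp(\shm)$, and by conservativity of $\gr$ it suffices to bound $\codim\Supp(\gr(\shf))$. By Lemma~\ref{lem:grHa} there is an exact sequence relating $H^j(\gr(\shn))$, $\shro\tens H^j(\shn)$ and $\tor_1(\shro,H^{j+1}(\shn))$, so $\Supp(\gr(\shf))\subset\Supp(H^j(\gr(\shn)))$; and $H^j(\gr(\shn))=\ext[{\OO[X]}]{j}(\gr(\shm),\OO[X])$. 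The classical result for coherent $\sho_X$-modules gives $\codim\Supp\ext[{\OO[X]}]{j}(\gr(\shm),\OO[X])\ge j$, which yields $\codim\Supp(\gr(\shf))\ge j$ and hence $\codim\Supp\shf\ge j$, proving (b).

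\textbf{Main obstacle.} The only genuine point to be careful about is the bookkeeping with $\gr$, supports, and $\hbar$-torsion in Step (b): one must check that passing through the short exact sequence of Lemma~\ref{lem:grHa} does not enlarge the support beyond $\Supp(H^j(\gr(\shn)))$, and that the identification $H^j(\gr(\shn))\simeq\ext[{\OO[X]}]{j}(\gr(\shm),\OO[X])$ (valid because $\gr(\shn)\simeq\RDO(\gr(\shm))$ as a \emph{complex}, not just objectwise) is used correctly. The classical $\sho_X$-module facts are standard and may be cited; everything else is a routine transfer of the commutative statement along the conservative functor $\gr$.
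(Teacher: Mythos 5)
Your proposal is correct and follows the same route as the paper: reduce both statements to the classical codimension--$\ext$ estimate for $\sho_X$-modules via the conservative functor $\gr$, using $\Supp(\shm)=\Supp(\gr\shm)$ and $\gr(\RDA\shm)\simeq\RDO(\gr\shm)$. Part (a) is just an unpacking of the paper's Corollary~\ref{cor:grHa}, which bundles exactly the steps you spell out.

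One small remark on Step (b): the paper gets the support inclusion in one stroke, reading off $\Supp(H^j(\shn))\subset\Supp(H^j(\gr\shn))$ directly from the equivalence in Proposition~\ref{pro:grHa} (vanishing of $H^j(\gr\shn)$ at a point forces vanishing of $H^j(\shn)$ there). Your detour through $\gr(\shf)$ is sound but needs one extra beat you do not quite spell out: the exact sequence of Lemma~\ref{lem:grHa} only controls $\shro\tens\shf=H^0(\gr\shf)$, not the $\tor_1$ term in degree $-1$; you then need $\Supp(\gr\shf)=\Supp(\shf)=\Supp(\shro\tens\shf)$, the last equality by Nakayama for the coherent module $\shf$. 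You flag this as the main point to check, and it does go through, so the argument is complete---just slightly less direct than the paper's.
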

\begin{proof}
(a) First, note that $\Supp(\shm)=\Supp(\gr\shm)$. 
Therefore,  
\eqn
&&\ext[{\bA}]{j}(\gr\shm,\bA)\simeq 0 \mbox{ for } j<\codim\Supp\shm 
\eneqn
and the result follows from Corollary~\ref{cor:grHa}.

\noindent
(b) By Proposition~\ref{pro:grHa}, we know that 
\eqn
&&\Supp\ext[{\A}]{j}(\shm,\A)\subset\Supp\ext[{\bA}]{j}(\gr\shm,\bA),
\eneqn
and $\codim\Supp\ext[{\bA}]{j}(\gr\shm,\bA)\geq j$
by classical results for $\sho_X$-modules.
\end{proof}

\subsubsection*{Extension of the base ring}
Recall that $\cor\seteq\C\Ls$ is the fraction field of $\coro$. 
To a $\DQ$-algebroid $\A[X]$ we associate the $\cor$-algebroid
\eq
&&\Ah[X]=\cor\tens[\coro]\A[X]
\eneq
and we call $\Ah[X]$ the {\em $\hbar$-localization} of $\A[X]$.
It follows from Lemma~\ref{le:locNoeth} that 
the algebroid $\Ah[X]$ is Noetherian.

There naturally exists a faithful functor of $\coro$-algebroid 
\eq\label{eq:WotoW}
&&\A[X]\to \Ah[X].
\eneq
This functor gives rise to 
a pair of adjoint functors $(\loc,\for)$:
\eq\label{eq:WotoW2a}
\xymatrix@C=8ex{
\md[{\Ah[X]}]\ar@<0.5ex>[r]^{\for}& 
    \md[{\A[X]}].\ar@<0.5ex>[l]^{\loc}
}\eneq
Both functors are exact and we keep the same notations for  their derived  
functors 
\eq\label{eq:WotoW2b}
\xymatrix@C=8ex{
{\Db(\Ah[X])}\ar@<0.5ex>[r]^{\for}&
    {\Db(\A)}.\ar@<0.5ex>[l]^{\loc}
}\eneq
For $\shn\in\Db(\A)$, we set
\index{loc@$\shn^\loc$}%
\eq\label{eq:defloc}
&&\shn^\loc\eqdot\loc(\shn)\simeq\cor\tens[\coro]\shn.
\eneq
We say that an $\A[X]$-module $\shm_0$ is a submodule
\glossary{submodule!of $\Ah[\stx]$-module}%
 of an $\Ah[\stx]$-module $\shm$ if there is a monomorphism $\shm_0\to\for(\shm)$
in $\md[{\A[X]}]$.

If $\shm$ is  an $\Ah[\stx]$-module, $\shm_0$ an $\A[X]$-submodule and 
$\shm_0\tens[\coro]\cor\isoto\shm$, 
then we shall say that $\shm_0$ generates $\shm$.

The following result is of constant use and follows from
\cite[Appendix~A]{Ka2}.
\begin{lemma}
Any locally finitely generated $\A[X]$-submodule of a
coherent $\Ah[X]$-module is coherent, i.e., any coherent 
$\Ah[X]$-module is pseudo-coherent as an $\A$-module.
\end{lemma}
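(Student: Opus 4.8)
The statement is local, so I would begin by replacing $\A[X]$ with a $\DQ$-algebra $\sha\eqdot\A[X]$ and $\Ah[X]$ with $\sha^\loc=\cor\tens[\coro]\sha$; then the claim becomes a statement about a coherent $\sha^\loc$-module $\shm$ and a locally finitely generated $\sha$-submodule $\shn\subset\for(\shm)$, and I must show $\shn$ is coherent, i.e. pseudo-coherent as an $\sha$-module. The essential point, referenced in the excerpt, is \cite[Appendix~A]{Ka2}, which treats exactly this kind of descent from a localization to the base ring; the plan is to reduce our situation to the hypotheses of that result.

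\textbf{Key steps.} First I would note that $\sha$ satisfies \eqref{eq:FDringa} and \eqref{eq:FDringb}, hence is left Noetherian and coherent by Proposition~\ref{prop:coh} and the discussion following Theorem~\ref{th:formalfini1}; moreover $\sha^\loc$ is Noetherian by Lemma~\ref{le:locNoeth}. Next, since $\shn$ is locally finitely generated over $\sha$, I may locally choose a free $\sha$-module $\shl\eqdot\sha^{\oplus r}$ and an epimorphism $\shl\epito\shn$; composing with $\shn\into\for(\shm)$ gives $u\cl\shl\to\for(\shm)$. The kernel $\shn'\eqdot\ker(\shl\to\shn)$ is the same as $\ker u$, and it suffices to show $\shn'$ is locally finitely generated — then $\shn$ is of finite presentation, in particular pseudo-coherent, hence coherent once we invoke that finitely generated submodules of coherent modules over a coherent ring are coherent (here $\sha$ is coherent). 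To control $\ker u$ I would localize: $u^\loc\cl\shl^\loc\to\shm$ is a morphism between a coherent $\sha^\loc$-module $\shl^\loc$ and the coherent $\sha^\loc$-module $\shm$, so $\ker(u^\loc)$ is coherent over $\sha^\loc$ because $\sha^\loc$ is Noetherian; pick locally a finite generating set, clear denominators by multiplying by a power of $\hbar$, and lift these generators to sections of $\ker u\subset\shl$. This produces a locally finitely generated $\sha$-submodule $\shk\subset\ker u$ with $\shk^\loc\isoto\ker(u^\loc)$. The remaining task is to show $\shk=\ker u$, equivalently that $(\ker u)/\shk=0$; this quotient is $\hbar$-torsion (it dies after localization) and is a submodule of $\shl/\shk$. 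Here is where the $\hbar$-adic machinery enters: $\shl$ has no $\hbar$-torsion and $\shk$, being a finitely generated submodule of the coherent module $\shl$ over the coherent Noetherian ring $\sha$, is coherent, so by Theorem~\ref{th:formalfini1}~\eqref{th:coh:int} both are $\hbar$-complete; one then argues that an $\hbar$-torsion submodule of $\shl/\shk$ that vanishes generically must be zero, using that $\shl/\shk$ has coherent cohomology and hence is $\hbar$-separated (Corollary~\ref{cor:coh}, $\bigcap_n\hbar^n(\shl/\shk)=0$), combined with Noetherianity to bound the $\hbar$-torsion. More precisely, the $\hbar$-torsion part $T$ of $\shl/\shk$ is coherent and killed by some $\hbar^N$ locally, yet $T\cap\hbar^N(\shl/\shk)=0$ forces $T$ into $(\shl/\shk)/\hbar^N(\shl/\shk)=\shl/(\shk+\hbar^N\shl)$ injectively; tracking this back shows $(\ker u)/\shk$ embeds in something generically zero and $\hbar$-complete, hence is zero. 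This gives $\ker u=\shk$ locally finitely generated, completing the proof.

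\textbf{Main obstacle.} The subtle step is the very last one: showing that the locally finitely generated $\shk\subset\ker u$ obtained by lifting generators of $\ker(u^\loc)$ actually exhausts $\ker u$, rather than only a generically-full submodule. The quotient $(\ker u)/\shk$ is $\hbar$-torsion by construction, but a priori it could be a nonzero $\hbar$-torsion module (e.g. something like $\sha/\hbar\sha$ sitting inside $\shl/\shk$). Ruling this out requires genuinely using that $\shk$ is $\hbar$-complete and that $\shl/\shk$ is $\hbar$-separated together with the Noetherian hypothesis to bound the torsion uniformly; this is precisely the content packaged in \cite[Appendix~A]{Ka2}, and the cleanest route is to verify that our $(\sha,\sha^\loc,\shm,\shn)$ fits its hypotheses and cite it, rather than reproving the $\hbar$-adic bookkeeping. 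Everything else — Noetherianity, coherence of $\sha$, the localization being exact, lifting finitely many generators — is routine given the results already established in Chapter~\ref{chapter:FD}.
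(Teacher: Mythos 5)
The paper does not actually prove this lemma: it records it as a consequence of \cite[Appendix~A]{Ka2} and moves on, so your fallback ("the cleanest route is to cite [Ka2]") matches the paper. But the direct argument you sketch before that fallback contains a genuine conceptual error, and your ``main obstacle'' paragraph attempts to patch it by proving something that is false. Once you have lifted generators (after clearing denominators by some $\hbar^N$) to get a finitely generated $\shk\subset\ker u$ with $\shk^\loc\simeq(\ker u)^\loc$, there is no reason for $\shk$ to equal $\ker u$. Take $\sha=\coro$, $\shl=\coro^2$, $u=0$, so $\ker u=\coro^2$ and $\ker(u^\loc)=\cor^2$; choose generators $(\hbar^{-1},0),(0,1)$ of $\cor^2$ and clear denominators uniformly by $\hbar$ to get $\shk=\coro(1,0)+\coro(0,\hbar)\subsetneq\coro^2=\ker u$. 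The quotient $(\ker u)/\shk\simeq\coro/\hbar\coro$ is a nonzero $\hbar$-torsion module, is $\hbar$-separated, and is $\hbar$-complete — so ``generically zero, $\hbar$-separated, $\hbar$-complete, hence zero'' is simply not a valid implication, and you cannot ``rule out'' the nonzero torsion. Moreover the assertion that the $\hbar$-torsion part $T$ of $\shl/\shk$ is coherent is unjustified: being a submodule of a coherent sheaf over a Noetherian sheaf of rings does not make $T$ coherent (compare the ideal sheaf of $\{1/n:n\geq1\}$ inside $\sho_\C$), and that is precisely the finiteness you are trying to establish.

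The thing you actually need is not $\shk=\ker u$ but that $\ker u$ is locally finitely generated, and that requires a different mechanism. A workable route inside the paper's own toolbox: use that $\shm$ is $\hbar$-torsion-free to get the saturation $\ker u\cap\hbar\shl=\hbar\ker u$; observe that $\shk_n\eqdot\shl\cap\hbar^{-n}\shk$ is, for each $n$, the kernel of the composition $\shl\to[\hbar^n]\shl\to\shl/\shk$ and hence coherent; note $\bigcup_n\shk_n=\ker u$, so the images $(\shk_n+\hbar\shl)/\hbar\shl$ form an increasing sequence of coherent $\shao$-submodules of $\shl/\hbar\shl$ with union $(\ker u+\hbar\shl)/\hbar\shl$, which is therefore locally stationary by Noetherianity of $\shao$ and hence coherent; then Lemma~\ref{lem:fg0} (whose hypotheses (a),(b) are exactly the coherence just obtained and the saturation) gives that $\ker u$ is locally finitely generated, hence coherent since $\sha$ is a coherent ring. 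That is the kind of $\hbar$-adic bookkeeping that \cite[Appendix~A]{Ka2} packages; your sketch skips the step where Noetherianity of $\shao$ is actually used and replaces it with a vanishing claim that does not hold.
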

\begin{definition}\label{def:lattice}
A coherent $\A[X]$-submodule $\shm_0$ of a coherent $\Ah[X]$-module $\shm$ is called
an $\A[X]$-lattice 
\glossary{lattice}\glossary{Alattice@$\A$-lattice}%
of $\shm$ if $\shm_0$ generates $\shm$.
\end{definition}
We extend Definition~\ref{def:dual1} to $\Ah$-modules and,
for $\shm\in\Derb(\Ah)$,  we set 
\index{DAh@$\RDAh\shm$}\glossary{dual!of $\Ah$-module}%
\eq\label{eq:dual1h}
&&\RDAh\shm\eqdot \rhom[{\Ah[X]}](\shm,\Ah[X]).
\eneq
\begin{proposition}\label{pro:dimext1l}
Let $\shm$ be  a coherent $\Ah$-module. Then 
\banum
\item
$\ext[{\Ah}]{j}(\shm,\Ah)\simeq 0$ for $j<\codim\Supp\shm$,
\item
$\codim\Supp\ext[{\Ah}]{j}(\shm,\Ah)\geq j$.
\eanum
\end{proposition}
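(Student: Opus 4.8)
The statement to prove is Proposition~\ref{pro:dimext1l}: for a coherent $\Ah$-module $\shm$, one has $\ext[{\Ah}]{j}(\shm,\Ah)\simeq 0$ for $j<\codim\Supp\shm$, and $\codim\Supp\ext[{\Ah}]{j}(\shm,\Ah)\geq j$. The plan is to deduce this from the corresponding statement for $\A$-modules, namely Proposition~\ref{pro:dimext1}, together with the compatibility between duality and $\hbar$-localization.

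\textbf{First step: reduce to $\A$-lattices.} Since the question is local and $\Ah[X]$ is Noetherian (it is the $\hbar$-localization of the Noetherian algebroid $\A[X]$), any coherent $\Ah[X]$-module $\shm$ locally admits an $\A[X]$-lattice $\shm_0$, i.e.\ a coherent $\A[X]$-submodule $\shm_0$ with $\shm_0\tens[\coro]\cor\isoto\shm$ (Definition~\ref{def:lattice}; such lattices exist by taking the $\A[X]$-submodule generated by finitely many local generators of $\shm$, which is coherent by the lemma preceding Definition~\ref{def:lattice}). Note that $\Supp\shm=\Supp\shm_0$ whenever $\shm_0$ is an $\A[X]$-lattice, because $\shm$ is obtained from $\shm_0$ by inverting the central element $\hbar$, so $\shm_x=0$ iff $\hbar$ acts invertibly on $(\shm_0)_x$ with zero target... more precisely $\Supp\shm\subset\Supp\shm_0$ is clear, and conversely if $(\shm_0)_x\ne 0$ then by Nakayama (Lemma~\ref{le:nakayamacor}) $(\shm_0/\hbar\shm_0)_x\ne 0$, hence $(\gr\shm_0)_x\ne 0$; but one must be slightly careful since $\shm_0$ may have $\hbar$-torsion. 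In any case $\codim\Supp\shm=\codim\Supp\shm_0$ will follow from the argument, or one replaces $\shm_0$ by $\shm_0/(\hbar\text{-torsion})$, which is still a lattice and whose support equals $\Supp\shm$.

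\textbf{Second step: localize the duality.} The key compatibility is that $\hbar$-localization commutes with $\RHom$ over the relevant rings. Concretely, $\Ah[X]\lltens[{\A[X]}]\A[X]\simeq\Ah[X]$, and for a coherent $\A[X]$-module $\shm_0$ one has
\eq
&&\Ah[X]\tens[\coro]\rhom[{\A[X]}](\shm_0,\A[X])\isoto
\rhom[{\Ah[X]}](\shm_0\tens[\coro]\cor,\Ah[X])
=\RDAh\shm.\nonumber
\eneq
This follows because $\shm_0$ is pseudo-coherent, hence locally admits a resolution by free $\A[X]$-modules of finite rank (Corollary~\ref{cor:hddim}), and $\cor$ is a flat $\coro$-module so $\cor\tens[\coro]\scbul$ is exact and commutes with the finite $\Hom$'s appearing in such a resolution. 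Thus $\ext[{\Ah}]{j}(\shm,\Ah)\simeq \cor\tens[\coro]\ext[{\A}]{j}(\shm_0,\A[X])$.

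\textbf{Third step: conclude.} Apply Proposition~\ref{pro:dimext1} to the coherent $\A[X]$-module $\shm_0$: we get $\ext[{\A[X]}]{j}(\shm_0,\A[X])\simeq 0$ for $j<\codim\Supp\shm_0=\codim\Supp\shm$, and $\codim\Supp\ext[{\A[X]}]{j}(\shm_0,\A[X])\geq j$. Tensoring with the flat $\coro$-module $\cor$ gives (a) immediately. For (b), note that $\Supp\bl\cor\tens[\coro]\shn\br\subset\Supp\shn$ for any $\A[X]$-module $\shn$ (inverting $\hbar$ can only shrink the support), so $\codim\Supp\ext[{\Ah}]{j}(\shm,\Ah)\geq\codim\Supp\ext[{\A[X]}]{j}(\shm_0,\A[X])\geq j$. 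Since both assertions are local, this finishes the proof.

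\textbf{Main obstacle.} The only genuine subtlety is the possible presence of $\hbar$-torsion in the chosen lattice $\shm_0$, which could make $\Supp\shm_0$ strictly larger than $\Supp\shm$; this is handled by the standard device of replacing $\shm_0$ with its quotient by the $\hbar$-torsion submodule (which is a coherent $\A[X]$-submodule since $\A[X]$ is Noetherian), after which $\hbar$-torsion-freeness gives $\Supp\shm_0=\Supp\shm$ via Nakayama. Everything else is a routine transfer of the $\A$-module statement along the exact flat functor $\cor\tens[\coro]\scbul$, using that duality commutes with $\hbar$-localization on coherent modules.
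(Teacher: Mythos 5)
Your proof is correct and takes the same route as the paper's, which simply says: choose an $\A[X]$-lattice $\shm_0$ of $\shm$ locally, then apply Proposition~\ref{pro:dimext1}. You have usefully filled in the two transfer steps (equality of codimensions of supports, and compatibility of $\ext$ with $\hbar$-localization via a finite free resolution and flatness of $\cor$ over $\coro$), both of which are routine but worth spelling out.

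One small remark: your ``main obstacle'' is not actually there. By Definition~\ref{def:lattice} an $\A[X]$-lattice is a coherent $\A[X]$-\emph{submodule} of $\shm$, i.e.\ it comes with a monomorphism $\shm_0\hookrightarrow\for(\shm)$. Since $\hbar$ acts invertibly on $\shm$, it cannot be a zero-divisor on any submodule, so $\shm_0$ is automatically $\hbar$-torsion-free; the replacement by the torsion-free quotient is never needed. With that, $\Supp\shm_0=\Supp\shm$ follows directly: the inclusion $\shm_0\hookrightarrow\shm$ gives $(\shm_0)_x\ne0\Rightarrow\shm_x\ne0$ without any appeal to Nakayama, and the converse is clear from $\shm=\cor\tens[\coro]\shm_0$.
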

\begin{proof}
The result is local and we may choose an $\A[X]$-lattice $\shm_0$ of
$\shm$. Then the result follows from Proposition~\ref{pro:dimext1}.
\end{proof}

\subsubsection*{Good modules}
\begin{definition}\label{def:good}
\bnum
\item
A coherent $\Ah[X]$-module $\shm$ is good 
\glossary{good!$\A$-module}%
if, for any
relatively compact open subset $U$ of $X$, there exists 
an $(\A[X]\vert_U)$-lattice of $\shm\vert_U$.
\item
One denotes by $\mdgd[{\Ah[X]}]$ the full subcategory of 
$\mdcoh[{\Ah[X]}]$ consisting of good modules.
\item
One denotes by $\RD^\Rb_{\gd}(\Ah[X])$ the full subcategory of 
$\RD^\Rb_{\coh}(\Ah[X])$ consisting of objects $\shm$ such that 
$H^j(\shm)$ is good for all $j\in\Z$. 
\enum
\end{definition}
Roughly speaking, a coherent  $\Ah[X]$-module  $\shm$ is good if it
is endowed with a good
filtration (see \cite{Ka2}) on each open relatively compact subset of $X$.

\begin{proposition}\label{pro:good}
\banum
\item
The category $\mdgd[{\Ah[X]}]$ is a thick subcategory of $\mdcoh[{\Ah[X]}]$,
\glossary{thick subcategory}%
\ro i.e., stable by kernels, cokernels and extension\rf.
\item
The full subcategory $\RD^\Rb_{\gd}(\Ah[X])$  of $\RD^\Rb_{\coh}(\Ah[X])$ 
is triangulated.
\item
An object $\shm\in\RD^\Rb_{\coh}(\Ah[X])$ is good if and only if, for any 
open relatively compact subset $U$ of $X$, there exists 
an $\A[X]\vert_U$-module $\shm_0\in \RD^\Rb_{\coh}(\A[X]\vert_U)$ 
such that $\shm_0^\loc$ is isomorphic to $\shm\vert_U$.
\eanum
\end{proposition}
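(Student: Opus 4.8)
The plan is to prove the three statements by standard dévissage arguments, reducing everything to the abelian case (a) and to the criterion that an $\A[X]\vert_U$-lattice can be found on relatively compact opens. All three parts are local in nature, so I will freely shrink $X$ and, by Convention~\ref{conv1}, assume $\A[X]$ is a $\DQ$-algebra; then $\A[X]$ satisfies \eqref{eq:FDringa} and \eqref{eq:FDringb}, and $\Ah[X]=\A[X]^\loc$ is Noetherian by Lemma~\ref{le:locNoeth}. The key preliminary observation is the Lemma quoted just above (from \cite[Appendix~A]{Ka2}): any locally finitely generated $\A[X]$-submodule of a coherent $\Ah[X]$-module is coherent over $\A[X]$. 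This is what makes lattices behave well under the abelian operations.

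\textbf{Step (a): thickness of $\mdgd[{\Ah[X]}]$.} First I would check stability under subquotients. Let $0\to\shm'\to\shm\to\shm''\to0$ be exact in $\mdcoh[{\Ah[X]}]$ with $\shm$ good, and fix a relatively compact open $U$ together with an $(\A[X]\vert_U)$-lattice $\shm_0\subset\shm\vert_U$. Then $\shm_0\cap\shm'\vert_U$ is a locally finitely generated $\A[X]$-submodule of the coherent $\Ah[X]$-module $\shm'\vert_U$ (local finite generation because $\A[X]$ is Noetherian, so submodules of coherent modules are coherent), hence coherent by the quoted Lemma; and its $\hbar$-localization is $\shm'\vert_U$ since localization is exact. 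Likewise the image of $\shm_0$ in $\shm''\vert_U$ is a coherent $\A[X]$-module generating $\shm''\vert_U$. Thus $\shm'$ and $\shm''$ are good. For extensions, given good $\shm'$ and $\shm''$, choose lattices $\shm'_0$ of $\shm'\vert_U$ and $\shm''_0$ of $\shm''\vert_U$; after shrinking, lift a finite system of generators of $\shm''_0$ to $\shm\vert_U$, obtaining a coherent $\A[X]$-submodule $\shn\subset\shm\vert_U$ mapping onto $\shm''_0$, and set $\shm_0\eqdot\shm'_0+\shn$; this is locally finitely generated, hence coherent, and generates $\shm\vert_U$. Since $\mdcoh[{\Ah[X]}]$ is abelian, this shows $\mdgd[{\Ah[X]}]$ is a thick (abelian, extension-closed) subcategory.

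\textbf{Step (b): $\RD^\Rb_{\gd}(\Ah[X])$ is triangulated.} It is by definition a full subcategory of $\RD^\Rb_{\coh}(\Ah[X])$, closed under shift, and closed under taking cones: if $\shm'\to\shm\to\shm''\to[+1]$ is a distinguished triangle with $\shm',\shm''\in\RD^\Rb_{\gd}$, the long exact cohomology sequence together with the thickness from (a)—applied to the short exact sequences built from the $H^j$ and the connecting maps—gives that each $H^j(\shm)$ is good, so $\shm\in\RD^\Rb_{\gd}$. Hence $\RD^\Rb_{\gd}(\Ah[X])$ is a full triangulated subcategory.

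\textbf{Step (c): the derived lattice criterion.} The "if'' direction is immediate: given $\shm_0\in\RD^\Rb_{\coh}(\A[X]\vert_U)$ with $\shm_0^\loc\simeq\shm\vert_U$, the cohomology $H^j(\shm_0)$ is a coherent $\A[X]\vert_U$-module (localization is exact so $H^j(\shm_0)^\loc\simeq H^j(\shm\vert_U)$), and on any relatively compact $V\sset U$ it furnishes a lattice of $H^j(\shm\vert_V)$; so $H^j(\shm)$ is good, i.e.\ $\shm\in\RD^\Rb_{\gd}$. For the converse, fix $U$ relatively compact and pick $V$ with $U\sset V\sset X$, $\ol V$ compact. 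Using Corollary~\ref{cor:hddim} (every coherent $\A[X]$-module locally has a finite free resolution, of length $\le d_X+1$) together with Corollary~\ref{cor:freeres}, represent $\shm\vert_V$, after shrinking and up to choosing an $\A[X]$-lattice $H^j(\shm)_0$ of each $H^j$ on $V$, by a bounded complex of free $\A[X]\vert_V$-modules of finite rank whose $\hbar$-localization is quasi-isomorphic to $\shm\vert_V$; restrict to $U$ to get $\shm_0$. The cleanest way is induction on the cohomological amplitude: handle a single good $H^j$ (an $\Ah[X]$-module admitting a lattice, whose lattice is coherent over $\A[X]$), and splice via the truncation triangles and the stability already proved. \textbf{The main obstacle} is exactly this last gluing step—producing a single $\A[X]\vert_U$-complex $\shm_0$ with $\shm_0^\loc\simeq\shm\vert_U$ from the piecewise lattice data, controlling coherence and the quasi-isomorphism simultaneously over a relatively compact open; everything else is routine dévissage using Step (a) and the quoted Lemma on finitely generated submodules of coherent $\Ah[X]$-modules.
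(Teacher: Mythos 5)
Your subquotient argument in (a) reaches the right conclusion, but the parenthetical justification is wrong: the Noetherian property of $\A[X]$ does \emph{not} imply that arbitrary sub-$\A$-modules of a coherent $\A$-module are locally finitely generated or coherent (already for $\sho_\C$ there are non-coherent ideal subsheaves), so you have not established local finite generation of $\shm_0\cap\shm'\vert_U$. The correct route is through the image: the image of $\shm_0$ in $\shm''\vert_U$ is a locally finitely generated $\A$-submodule of the coherent $\Ah$-module $\shm''\vert_U$, hence $\A$-coherent by the quoted Lemma; then $\shm_0\cap\shm'\vert_U$ is the kernel of a morphism between $\A$-coherent modules, hence $\A$-coherent.

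The genuine gaps are the extension case of (a) and the splicing step in (c), and they are really the same gap. "After shrinking, lift a finite system of generators of $\shm''_0$" fails: a coherent $\A$-module on a relatively compact $U$ need not be globally finitely generated, local lifts of generators do not glue, and shrinking $U$ is not permitted since the lattice must be produced on the given $U$. The missing ingredient is the combination of the Noetherian property of $\Ah$ with compactness of $\ol{U}$, which is precisely what \cite[Prop.~4.23]{Ka2} (to which the paper defers) is organised around. Take $V\supset\ol{U}$ relatively compact, choose lattices $\shm'_0$, $\shm''_0$ of $\shm'\vert_V$, $\shm''\vert_V$, and let $\shn$ be the preimage of $\shm''_0$ in $\shm\vert_V$. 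The coherent $\A\vert_V$-submodules of $\shn$ containing $\shm'_0$ form a filtrant family (a sum of two of them is again locally finitely generated, hence coherent by the Lemma) with union $\shn$; their localizations form a filtrant family of coherent $\Ah$-submodules of $\shm\vert_V$ with union $\shm\vert_V$. By the Noetherian property of $\Ah$ this localized family is locally stationary with stable value $\shm\vert_V$, and after covering $\ol{U}$ by finitely many opens on which it has stabilized and taking a common upper bound in the family, one obtains a lattice of $\shm\vert_U$. The same locally-stationary-plus-compactness device is what lets you pass, in (c), from the local existence of a lattice complex (which your d\'evissage produces) to a single $\shm_0$ on all of $U$; without it, the construction you outline only works near each point.
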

Since the proof is similar to that of \cite[Prop.~4.23]{Ka2}, we shall
not repeat it.
\begin{proposition}\label{pro:invol}
Let $\shm\in\RD^\Rb_{\coh}(\Ah[X])$. Then $\Supp(\shm)$ is a closed
complex analytic subset of $X$, involutive \lp{\em i.e.,} co-isotropic\rp\, for
the Poisson bracket on $X$.
\end{proposition}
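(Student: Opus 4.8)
\textbf{Proof plan for Proposition~\ref{pro:invol}.}
The statement has two parts: that $\Supp(\shm)$ is a closed complex analytic subset, and that it is involutive for the Poisson bracket. The first part is immediate: since $\gr$ is conservative on $\Derb_\coh(\A[X])$ (the analogue of Corollary~\ref{cor:conservative1} for $\A$), and since for an $\Ah[X]$-module the support is the same as the support of an $\A[X]$-lattice, the question reduces to the coherent $\A[X]$-case, where $\Supp(\shm)=\Supp(\gr(\shm))$ by Corollary~\ref{cor:conservative1}; then $\gr(\shm)\in\Derb_\coh(\bA)$ and $\bA$ is locally isomorphic to $\OO[X]$, so $\Supp(\gr(\shm))$ is a closed analytic subset. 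This is exactly Corollary~\ref{cor:grsupp}. The work is entirely in the involutivity assertion.

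For involutivity, the plan is to reduce to the classical theorem of Gabber on the integrability of the characteristic variety, applied to a suitable filtered ring. First I would localize: the statement is local on $X$, so I may assume $\A[X]$ is a $\DQ$-algebra $\A$, and I may choose an $\A$-lattice $\shm_0$ of $\shm$, so that $\Supp(\shm)=\Supp(\shm_0)=\Supp(\gr(\shm_0))=\Supp(\shm_0/\hbar\shm_0)$. Now the key point is that the $\hbar$-adic filtration on $\A$ is, up to the Poisson structure, a ``symbol'' filtration: from \eqref{eq:Poisson} the bracket $\{f,g\}=\sigma_0(\hbar^{-1}(ab-ba))$ shows that $\gr_\hbar\A\simeq\sho_X[\hbar]$ carries precisely the structure of a Rees-type object whose induced bracket on $\gr$ recovers the Poisson bracket of $X$. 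Concretely, consider the Rees ring of $\A$ with respect to the $\hbar$-filtration (or simply observe that $\A$ itself, filtered by powers of $\hbar$, has associated graded $\sho_X[\hbar]$ with the Poisson bracket induced in degree $-1$). The pair $(\A,\hbar\text{-filtration})$ is a filtered ring whose associated graded is commutative Noetherian and for which the bracket $\{\gr_p a,\gr_q b\}$ lands in $\gr_{p+q-1}$ and is a biderivation; moreover the Poisson structure on $\Supp(\shm_0/\hbar\shm_0)\subset \Spec(\gr_0)=X$ is exactly the given one. Gabber's theorem (the ``integrability of characteristic ideals'') then applies to the good filtration $\{F_n\shm_0\}$ one gets on $\shm_0$ from a local good filtration of the lattice, giving that the radical of the annihilator of $\gr\shm_0$ — whose zero set is $\Supp(\shm)$ — is closed under the Poisson bracket.

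The main obstacle I anticipate is setting up the filtered-ring formalism so that Gabber's theorem genuinely applies: Gabber's hypotheses require a positively-filtered (or $\Z$-filtered, Zariskian) Noetherian ring with commutative associated graded and with the Poisson bracket having degree $-1$, and one must check that the $\hbar$-filtered $\DQ$-algebra, suitably Rees-ified, fits this framework — in particular that the filtration is Zariskian (which follows from $\hbar$-completeness and $\hbar$ being central, conditions \eqref{cond:DQ}) and that one can produce a global-on-$U$ good filtration on the lattice $\shm_0$ (which is where goodness of $\shm$ and Corollary~\ref{cor:hddim} on finite homological dimension enter). Once the framework is in place the conclusion is Gabber's theorem verbatim. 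I would therefore structure the proof as: (1) reduce to $\A$ a $\DQ$-algebra and pass to an $\A$-lattice; (2) invoke Corollary~\ref{cor:grsupp} for the analyticity and for the identification $\Supp(\shm)=\Supp(\gr\shm_0)$; (3) describe the $\hbar$-filtered structure and its Rees ring, checking it is Zariskian with commutative graded and Poisson bracket of degree $-1$ inducing the given bracket on $X$ via \eqref{eq:Poisson}; (4) equip the lattice with a good filtration and apply Gabber's involutivity theorem to conclude that $\Supp(\shm)$ is co-isotropic. Alternatively, if one prefers to avoid Gabber, one can cite the involutivity of the characteristic variety already established for modules over the ring of formal microdifferential operators and transfer it, but the filtered-ring route is the most self-contained.
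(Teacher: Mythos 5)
Your proposal is correct and follows the same route as the paper, which simply localizes so that $\A$ is a $\DQ$-algebra and then invokes Gabber's integrability theorem~\cite{Ga}; the only difference is that you unpack the filtered-ring mechanics (passing to an $\A$-lattice, treating the $\hbar$-adic filtration as the symbol filtration whose graded commutator recovers the Poisson bracket via~\eqref{eq:Poisson}, and checking the Zariskian/good-filtration hypotheses) that the paper leaves implicit in the two-line citation. The detailed verification you sketch in step~(3)--(4) is exactly what the reference to Gabber is shorthand for, so nothing is missing or superfluous.
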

\begin{proof}
Since the problem is local, we
may assume that $\A$ is a $\DQ$-algebra.
Then the proposition follows from
Gabber's theorem \cite{Ga}.
\end{proof}
\begin{remark}
One shall be aware that the support of a coherent
$\A$-module is not involutive in general. Indeed,
for a DQ-algebra $\A$, any coherent $\OO$-module may be 
regarded as an $\A$-module.
Hence any closed analytic subset can be the support of a coherent $\A$-module.
\end{remark}

\section{$\DQ$-modules supported by the diagonal}
\label{section:diagonal}

Let $X$ be a  complex manifold endowed with a $\DQ$-algebroid
$\A[X]$. We denote by $\A[X\times X^a]$ the external product of
$\A[X]$ and $\A[X^a]$ on $X\times X^a$.
We still denote by $\de\cl X\hookrightarrow X\times X^a$ the diagonal embedding
and we denote by $\Mod_{\Delta_X}(\A[X]\etens\A[X^a])$ the category 
of $(\A[X]\etens\A[X^a])$-modules supported by the diagonal $\Delta_X$.
Then 
\eqn
&&\oim{\de}\cl\Mod(\A[X]\tens\A[X^a])\to\Mod_{\Delta_X}(\A[X]\etens\A[X^a])
\eneqn
gives an equivalence of categories, with quasi-inverse $\opb{\de}$.
We shall often  identify these two categories by this equivalence.

Recall that we have a canonical object
$\A[X]$ in $\md[{\A[X]\tens\A[X^a]}]$ (see \S~\ref{section:algebroid}). We
identify $\A[X]$ with an $(\A[X]\etens\A[X^a])$-module supported by the
diagonal $\Delta_X$ of $X\times X^a$. In fact,
it has a structure of $\A[X\times X^a]$-module.
More generally, we have:
\begin{lemma}\label{lem:inve}
Let $\shm$ be an $(\A\tens\A[X^a])$-module.
\banum
\item
The following conditions are equivalent:
\bnum
\item $\shm$ is a bi-invertible $(\A\tens\A[X^a])$-module
\ro see {\rm Definition~\ref{def:biinv1})},
\item
$\shm$ is invertible as an $\A$-module
\ro see {\rm Definition~\ref{def:invertible})}, that is,
$\shm$ is locally isomorphic to $\A$ as an $\A$-module,
\item
$\shm$ is invertible as an $\A[X^a]$-module.
\enum\label{cond:biinv}

\item
Under the equivalent conditions in {\rm(a)}, 
$\oim{\de}\shm\to\A[X\times X^a]\tens[{\A[X]\etens\A[X^a]}]\oim{\de}\shm$ 
is an isomorphism
and $\oim{\de}\shm$ has a structure of an $\A[X\times X^a]$-module.
Moreover, $\oim{\de}{\shm}$ is a simple $\A[X\times X^a]$-module along 
the diagonal of $X\times X^a$.
\item
Conversely, if $\shn$ is a simple $\A[X\times X^a]$-module along
the diagonal of $X\times X^a$, then $\opb{\de}\shn$ satisfies 
the equivalent conditions {\rm (a)~(i)--(iii)}. 
\eanum
\end{lemma}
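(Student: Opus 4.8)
The plan is to reduce everything to the case of $\DQ$-algebras, where $\A$ becomes the trivial-looking bimodule but still carries the full star-product structure, and then treat the three parts in turn. For part (a), the equivalence of (i), (ii) and (iii): the implication (i)$\Rightarrow$(ii) and (i)$\Rightarrow$(iii) are immediate from the definition of bi-invertibility. For the converse, I would use Lemma~\ref{lem:biinvert1} together with the remark that $\A_x=(\shend_{\A[X]}(\sigma))_x$ is a left Noetherian ring (a $\DQ$-algebra is Noetherian, as observed right after Remark~\ref{rem:conj}), so that the partial result proved in the unnamed lemma just before Definition~\ref{def:biinv1} applies: an $(\A\tens\A[X^a])$-module that is isomorphic to $\A$ as a left $\A$-module and to $\A[X^a]$ as a left $\A[X^a]$-module (equivalently as a right $\A$-module) is automatically bi-invertible. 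So the only work is to check that (ii) alone forces the right-module condition: if $\shm\simeq\A$ as a left $\A$-module, pick a local generator $w$; since $\hbar$ lies in the center and $\shm/\hbar\shm\simeq\sho_X$ is a rank-one $\sho_X$-module, the right action of $\A^\rop$ on $w$ is, modulo $\hbar$, an isomorphism onto $\shm/\hbar\shm$, and then Nakayama plus $\hbar$-completeness (Theorem~\ref{th:formalfini1}~(iii), applicable since $\shm$ is coherent) upgrade this to an isomorphism $\A[X^a]\isoto\shm$. Thus (ii) $\Rightarrow$ (i), and symmetrically (iii) $\Rightarrow$ (i).

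\textbf{Part (b).}
Assuming the equivalent conditions of (a), I want to show $\oim{\de}\shm$ has an $\A[X\times X^a]$-module structure, that the natural map $\oim{\de}\shm\to\A[X\times X^a]\tens[{\A[X]\etens\A[X^a]}]\oim{\de}\shm$ is an isomorphism, and that $\oim{\de}\shm$ is simple along $\Delta_X$. The key point is that $\oim{\de}\shm$, being bi-invertible, is locally isomorphic to $\oim{\de}\A[X]=\dA[X]$ as an $(\A[X]\etens\A[X^a])$-module, and $\dA[X]$ by construction carries an $\A[X\times X^a]$-module structure (this is recalled in the introduction: ``there exists a canonical $\A[X\times X^a]$-module $\dA$ on $X\times X^a$ supported by the diagonal''). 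So locally the statement holds for $\dA[X]$; I would transport the structure along the local isomorphism, using that the transition data for $\shm$ are given by invertible sections, hence extend to automorphisms compatible with the $\A[X\times X^a]$-action. Concretely: working with a $\DQ$-algebra, $\A[X\times X^a]$ is generated over $\A[X]\etens\A[X^a]$ by the ``diagonal'' differential operators, and one checks the action on a local generator $w$ of $\shm$ is determined by requiring it to agree, modulo $\hbar$, with the $\sho_{X\times X}$-action on $\sho_{\Delta_X}$; the bi-invertibility guarantees no obstruction. For simplicity: $\gr(\oim{\de}\shm)\simeq\oim{\de}(\shm/\hbar\shm)\simeq\oim{\de}\sho_X\simeq\sho_{\Delta_X}$, which is concentrated in degree $0$ and is an invertible $\sho_{\Delta_X}\tens[{\sho_{X\times X}}]\gr(\A[X\times X^a])$-module, so Definition~\ref{def:simplemod} is satisfied. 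The isomorphism $\oim{\de}\shm\isoto\A[X\times X^a]\tens[{\A[X]\etens\A[X^a]}]\oim{\de}\shm$ then follows because both sides are simple along $\Delta_X$ with the same associated graded (apply $\gr$, use right-exactness and the commutative-case identity $\A[X\times X^a]/\hbar\tens_{\sho}\sho_{\Delta}\simeq\sho_\Delta$), and the map is an isomorphism modulo $\hbar$; conclude by $\hbar$-completeness and Nakayama (Corollary~\ref{cor:conservative1} or directly Theorem~\ref{th:formalfini1}).

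\textbf{Part (c) and the main obstacle.}
For the converse, let $\shn$ be simple along $\Delta_X$; then $\gr(\shn)\simeq H^0(\gr(\shn))$ is an invertible $\sho_{\Delta_X}\tens[{\sho_{X\times X}}]\gr(\A[X\times X^a])$-module, so $\de^{-1}\gr(\shn)$ is an invertible $\gr(\A[X])$-module on $X$, hence locally isomorphic to $\sho_X$. Since $\shn$ has no $\hbar$-torsion (part of the definition of simple), $\opb{\de}\shn$ is a coherent $(\A[X]\etens\A[X^a])$-module without $\hbar$-torsion whose graded is invertible over $\gr(\A[X])$; by Lemma~\ref{lem:free} (applied with the algebroid $\A[X]$, viewing $\opb{\de}\shn$ as an $\A[X]$-module via the first factor) it is locally free of rank one over $\A[X]$, i.e.\ invertible as an $\A[X]$-module — that is condition (a)(ii). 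Similarly it is invertible over $\A[X^a]$, giving (a)(iii), and by part (a) it is bi-invertible. I expect the main obstacle to be the bookkeeping in part (b): making the $\A[X\times X^a]$-module structure on $\oim{\de}\shm$ genuinely canonical and checking it is compatible with the algebroid gluing data, rather than merely existing locally. The cleanest route is probably to avoid explicit cocycle chasing and instead argue that the forgetful functor from $\A[X\times X^a]$-modules simple along $\Delta_X$ to bi-invertible $(\A[X]\etens\A[X^a])$-modules is an equivalence — full faithfulness because $\hom$ between simple modules along $\Delta_X$ is computed by the graded (via $\hbar$-completeness and the vanishing of negative Ext's, Proposition~\ref{pro:vanishsimple}), and essential surjectivity being exactly the local statement for $\dA[X]$ together with descent — so that the lifted structure is unique, hence canonical.
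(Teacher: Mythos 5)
Your part (a) is valid but takes a genuinely different route from the paper. The paper takes a left generator $u$ of $\shm$, notes that the right action $a\mapsto ua$ factors as $ua=\theta(a)u$ for a unique map $\theta$, observes $\theta$ is a $\coro$-algebra endomorphism, and then invokes Proposition~\ref{pro:gauge1} (rigidity of $\coro$-algebra endomorphisms of a $\DQ$-algebra) to conclude $\theta$, hence the right action, is bijective. You instead run (ii)$\Rightarrow$(iii) through the formal deformation machinery: $\shm$ is $\hbar$-complete with $\shm/\hbar\shm\simeq\sho_X$, so Theorem~\ref{th:formalfini1}~(iv) gives coherence over $\A[X^a]$ and Lemma~\ref{lem:free} gives local freeness of rank one; then (ii)$+$(iii)$\Rightarrow$(i) by the Noetherian lemma preceding Definition~\ref{def:biinv1}. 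That is sound. The paper's route is more direct (it never needs coherence over the second factor), yours exploits the general finiteness theorems. Part (c) is essentially the paper's argument; just note that the assertion you want is coherence of $\opb{\de}\shn$ as an $\A[X]$-module (via Theorem~\ref{th:formalfini1}~(iv)), not as an $(\A[X]\etens\A[X^a])$-module, before feeding into Lemma~\ref{lem:free}.

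The genuine gap is in part (b), and you partly anticipated it. The ``cleaner route'' — show the forgetful functor from $\A[X\times X^a]$-modules simple along $\Delta_X$ to bi-invertible $(\A\etens\A[X^a])$-modules is fully faithful, then descend — is circular as you sketch it. The graded comparison only equates the two graded $\hom$ sheaves; to upgrade that to an isomorphism of $\coro$-level $\hom$s you would need to know, e.g., that $\hom_{\A\etens\A[X^a]}(\shn_1,\shn_2)$ is cohomologically complete and that $\gr$ commutes with it, but $\shn_i$ is not coherent over $\A\etens\A[X^a]$, so Proposition~\ref{pro:tensgr}~(ii) and Corollary~\ref{cor:cohimplcohco} do not apply, and Proposition~\ref{pro:vanishsimple} concerns $\ext^j_{\A[X]}(\shl,\A[X])$, not maps between two simple modules. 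Moreover $\A\etens\A[X^a]$ is not $\hbar$-adically dense in $\A[X\times X^a]$ (the graded quotient is $\sho_X\etens\sho_X$ versus $\sho_{X\times X}$), so ``determined modulo $\hbar$'' does not propagate automatically. The concrete fact one needs is that the annihilator $\tilde\shi$ of a local generator $u$ in $\A[X\times X^a]$ coincides with the left ideal $\shi$ generated by the $(\A\etens\A[X^a])$-annihilator $\shi'$ of $u$; this is exactly what the paper establishes, by writing the action of $\A\tens\A^\rop$ on $u$ in terms of bi-differential operators $S_i$, extending to $\A[X\times X^a]$ via the same formulas, checking coherence through Theorem~\ref{th:formalfini1}~(ii), and then comparing $\shi/\hbar\shi\to\tilde\shi/\hbar\tilde\shi\simeq I_\Delta$. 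Your $\gr$-plus-Nakayama argument for $\oim\de\shm\isoto\A[X\times X^a]\tens_{\A\etens\A[X^a]}\oim\de\shm$ has the right flavour, but invoking Corollary~\ref{cor:conservative1} requires the target to already be known coherent over $\A[X\times X^a]$, and producing that is precisely the content of the annihilator computation you skip.
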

\begin{proof}
The statement is local and we may assume that $\A[X]=(\OO[X]\forl,\star)$.

\vspace{0.4em}
\noindent
(a)\quad Assume (ii) and take a generator $u\in\shm$ as an $\A$-module.
Then for any $a\in\A$, there exists a unique $\theta(a)\in\A$
such that $ua=\theta(a)u$.
Then $\theta\cl\A\to\A$ gives a $\coro$-algebra endomorphism of $\A$.
Hence $\theta$ is an isomorphism by Proposition~\ref{pro:gauge1}.
Thus we obtain (i).
Similarly (iii) implies (i).

\vspace{0.4em}
\noindent
(b)\quad Let us choose $u\in\shm$ as in (a) and identify
$\shm$ with $\sho_X\forl$ that
we regard as a sheaf supported by the diagonal.
The action of
$\A[X]\tens\A[X]^\rop$ on $\shm$ can be expressed by differential operators.
Namely, there exist differential operators
$\{S_i(x,\partial_{x_1},\partial_{x_2},\partial_{x_3})\}_{i\in\N}$
such that
\eqn
&&f\star a\star \theta(g)=
\sum_i\bl S_i(x,\partial_{x_1},\partial_{x_2},\partial_{x_3})
f(x_1)g(x_2)a(x_3)\br\vert_{x_1=x_2=x_3=x}\hbar^i\\
&& \hs{30ex}\text{for $f$, $g\in \A[X]$ and $a\in\OO{\forl}$.}
\eneqn
Then this action extends to an action of
$\A[X\times X^a]$ by setting
\eqn
&&f(x,y)\star a(x)=
\sum_i\bl S_i(x,\partial_{x_1},\partial_{x_2},\partial_{x_3})
f(x_1,x_2)a(x_3)\br\vert_{x_1=x_2=x_3=x}\hbar^i\\
&&\hs{30ex}\text{for $f\in \A[X\times X^a]$ and $a\in\OO{\forl}$.}
\eneqn
We denote by $\widetilde\shm$ the $\A[X\times X^a]$-module thus obtained.
Then, as an $(\A\tens\A[X^a])$-module, it is isomorphic to $\shm$.
Hence $\widetilde\shm$ is a locally finitely generated $\A[X\times X^a]$-module.
Since $\hbar^n\widetilde\shm/\hbar^{n+1}\widetilde\shm$
is isomorphic to $\sho_X$,
$\widetilde\shm$ is a coherent $\A[X\times X^a]$-module
by Theorem~\ref{th:formalfini1}~\eqref{th:coh:coh}.

\vspace{0.4em}
\noindent
Let $\tilde\shi$ be the annihilator of
$u\in\shm\simeq \widetilde\shm$. Then $\tilde\shi$ is a coherent 
left ideal of $\A[X\times X^a]$.
In the exact sequence
\eqn
&&\tor^{\coro}_{1}(\widetilde\shm,\C)\to 
\tilde\shi/\hbar\tilde\shi\to \A[X\times X^a]/\hbar\A[X\times X^a]
\to\widetilde\shm/\hbar\widetilde\shm\to0,
\eneqn
$\tor^{\coro}_{1}(\widetilde\shm,\C)$ vanishes.
Therefore we obtain an exact sequence
\eqn
&&0\to \tilde\shi/\hbar\tilde\shi\to\sho_{X\times X^a}\to\sho_X\to0,
\eneqn
and $\tilde\shi/\hbar\tilde\shi$ is isomorphic to
the defining ideal $I_\Delta\subset \sho_{X\times X^a}$
of the diagonal set $\Delta\subset X\times X^a$. This shows that 
$\widetilde\shm$ is simple along the diagonal.

\vspace{0.4em}
\noindent
Denote  by $\shi'$ the left ideal of $\A[X]\tens\A[X]^\rop$
generated by the sections $\{a\tens 1-1\tens \theta(a)\}$ where 
$a$ ranges over the family of sections of $\A[X]$ and by 
$\shi$ the left ideal of $\A[X\times X^a]$ generated by 
$\shi'$. Set $\shm'\seteq\A[X\times X^a]\tens[{\A[X]\etens\A[X^a]}]\shm$.
We have:
\eqn
&&\shm\simeq (\A[X]\tens\A[X^a])/\shi',\\
&&\shm'\simeq \A[X\times X^a]/\shi.
\eneqn
There exists a surjective $\A[X\times X^a]$-linear morphism
$\shm'\epito\widetilde\shm$,
and hence $\shi\subset\tilde\shi$.
Since $\shi/\hbar\shi\to \tilde\shi/\hbar\tilde\shi\simeq I_\Delta$
is surjective,
we conclude that $\shi=\tilde\shi$.
Hence we obtain $\shm'\simeq\widetilde\shm$.

\smallskip
\noindent
(c)\ 
By the assumption, 
$\oim{p_1}\gr(\shn)\simeq\gr(\opb{\de}\shn)$ is an invertible $\sho_X$-module.
where $p_1\cl X\times X^a\to X$ is the projection. Hence Theorem 
\ref{th:formalfini1} \eqref{th:crcoh} implies that $\opb{\de}\shn$ is a coherent $\A$-module.
It is locally isomorphic to $\A$ by Lemma \ref{lem:free} 
because $\gr(\opb{\de}\shn)$
is locally isomorphic to $\sho_X$.
\end{proof}

Thus we obtain:
\begin{proposition}
The category of bi-invertible $(\A[X]\tens\A[X^a])$-modules
is 
equivalent to the category of coherent
$\A[X\times X^a]$-modules simple along the diagonal.
\end{proposition}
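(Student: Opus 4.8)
The plan is to establish the claimed equivalence by exhibiting mutually quasi-inverse functors coming from $\oim{\de}$ and $\opb{\de}$, using Lemma~\ref{lem:inve} as the essential input. In one direction, I would send a bi-invertible $(\A[X]\tens\A[X^a])$-module $\shm$ to the $\A[X\times X^a]$-module $\oim{\de}\shm$ equipped with the canonical $\A[X\times X^a]$-structure produced in Lemma~\ref{lem:inve}~(b); that lemma already tells us the output is a coherent $\A[X\times X^a]$-module, simple along the diagonal, so this direction is well-defined on objects. In the other direction, I would send a coherent $\A[X\times X^a]$-module $\shn$ simple along $\Delta_X$ to $\opb{\de}\shn$, which by Lemma~\ref{lem:inve}~(c) satisfies the equivalent conditions (a)(i)--(iii), hence is a bi-invertible $(\A[X]\tens\A[X^a])$-module. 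So both assignments are defined on objects directly from the cited lemma.

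Next I would check these are functorial and mutually quasi-inverse. Functoriality is immediate once one notes, as recalled just before the statement, that $\oim{\de}$ and $\opb{\de}$ give an equivalence between $\Mod(\A[X]\tens\A[X^a])$ and $\Mod_{\Delta_X}(\A[X]\etens\A[X^a])$; one only has to verify that a morphism of bi-invertible $(\A[X]\tens\A[X^a])$-modules induces an $\A[X\times X^a]$-linear morphism between the associated simple modules, and conversely that a morphism of simple modules restricts to an $(\A[X]\tens\A[X^a])$-linear morphism. For the first point I would argue locally, where by Lemma~\ref{lem:inve}~(b) the $\A[X\times X^a]$-structure on $\oim{\de}\shm$ is the unique one extending the $(\A[X]\etens\A[X^a])$-action (it is forced by the explicit differential-operator formula, or abstractly by the surjection $\A[X\times X^a]\tens[{\A[X]\etens\A[X^a]}]\oim{\de}\shm\isoto\oim{\de}\shm$), so any $(\A[X]\etens\A[X^a])$-linear map between two such modules is automatically $\A[X\times X^a]$-linear. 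That the two functors are quasi-inverse is then just the statement that $\opb{\de}\circ\oim{\de}\simeq\id$ and $\oim{\de}\circ\opb{\de}\simeq\id$ on the relevant subcategories, which follows from the equivalence $\Mod(\A[X]\tens\A[X^a])\simeq\Mod_{\Delta_X}(\A[X]\etens\A[X^a])$ together with the isomorphism $\oim{\de}\shm\isoto\A[X\times X^a]\tens[{\A[X]\etens\A[X^a]}]\oim{\de}\shm$ of Lemma~\ref{lem:inve}~(b), which identifies the $\A[X\times X^a]$-module structure uniquely.

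I expect the only mildly delicate point to be checking that the $\A[X\times X^a]$-module structure on $\oim{\de}\shm$ is canonical, i.e.\ independent of the choice of generator $u$ used in the local construction of Lemma~\ref{lem:inve}~(b), so that the assignment $\shm\mapsto(\oim{\de}\shm$ with its structure$)$ glues to a functor on the algebroid level; this is where I would invoke that the structure is the unique extension of the canonical $(\A[X]\etens\A[X^a])$-structure (via the displayed isomorphism $\oim{\de}\shm\isoto\A[X\times X^a]\tens[{\A[X]\etens\A[X^a]}]\oim{\de}\shm$), which removes any dependence on choices. Since the proposition is essentially a repackaging of Lemma~\ref{lem:inve}, I would keep the write-up short: state the two functors, invoke parts (b) and (c) of the lemma for well-definedness, and note that the quasi-inverse property is the already-recalled equivalence $\Mod(\A[X]\tens\A[X^a])\simeq\Mod_{\Delta_X}(\A[X]\etens\A[X^a])$ restricted to the full subcategories in question.
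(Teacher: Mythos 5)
Your proposal is correct and follows the same route as the paper, which states this proposition with no proof beyond the words ``Thus we obtain:'' immediately after Lemma~\ref{lem:inve}; you have simply made explicit the expected bookkeeping (functoriality via the canonical isomorphism $\oim{\de}\shm\isoto\A[X\times X^a]\tens[{\A[X]\etens\A[X^a]}]\oim{\de}\shm$, and the quasi-inverse property via the equivalence $\Mod(\A[X]\tens\A[X^a])\simeq\Mod_{\Delta_X}(\A[X]\etens\A[X^a])$ recalled just before the statement).
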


\begin{definition}\label{def:CD}
We regard  $\oim{\delta_X}\A$ as an $\A[X\times X^a]$-module supported by the diagonal
and denote it by $\dA$. 
\glossary{canonical module associated with the diagonal}%
\index{CX@$\dA$}%
We call it the canonical module associated with the diagonal.
\end{definition}

The next corollary immediately follows from Lemma \ref{lem:inve}.
\begin{corollary}\label{co:existCD}
The  $\A[X\times X^a]$-module $\dA$
is  coherent and simple along the diagonal.
Moreover, $\A[X\times X^a]\tens_{\A\etens\A[X^a]}\dA\to\dA$
is an isomorphism in $\md[{\A[X\times X^a]}]$, and
$\A[X]\to \de^{-1}(\dA)$ is an isomorphism in $\md[{\A[X]\tens\A[X^a]}]$.
\end{corollary}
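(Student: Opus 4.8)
The statement to prove is Corollary~\ref{co:existCD}: that $\dA = \oim{\delta_X}\A[X]$ is coherent and simple along the diagonal, that the canonical morphism $\A[X\times X^a]\tens_{\A\etens\A[X^a]}\dA\to\dA$ is an isomorphism, and that $\A[X]\to\de^{-1}(\dA)$ is an isomorphism in $\md[{\A[X]\tens\A[X^a]}]$. The whole point is that everything is already contained in Lemma~\ref{lem:inve} once one checks that $\A[X]$, viewed as an $(\A[X]\tens\A[X^a])$-module, falls under its hypotheses. So the plan is essentially a two-line deduction, but I would spell out the matching of hypotheses carefully.

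\textbf{First step.} Observe that the canonical object $\A[X]\in\md[{\A[X]\tens\A[X^a]}]$ (the $\A$-bimodule $\A$, as defined in \S~\ref{section:algebroid}) is invertible as a left $\A[X]$-module: indeed, for each $\sigma\in\A[X](U)$, the $\shend_{\A[X]}(\sigma)$-module $\A[X](\sigma,\sigma)=\shend_{\A[X]}(\sigma)$ is the ring itself, hence certainly locally isomorphic to itself as a left module. This is precisely condition (a)(ii) of Lemma~\ref{lem:inve}. Hence by the equivalence (a)(i)$\Leftrightarrow$(ii) of that lemma, $\A[X]$ is a bi-invertible $(\A[X]\tens\A[X^a])$-module (which is of course also clear directly, since $1\in\A[X]$ is the required section $w$).

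\textbf{Second step.} Now apply part (b) of Lemma~\ref{lem:inve} with $\shm=\A[X]$. It gives at once: the canonical morphism $\oim{\de}\A[X]\to\A[X\times X^a]\tens_{\A[X]\etens\A[X^a]}\oim{\de}\A[X]$ is an isomorphism, and $\oim{\de}\A[X]=\dA$ carries a structure of $\A[X\times X^a]$-module, under which it is a \emph{simple} $\A[X\times X^a]$-module along $\Delta_X$. In particular $\dA$ is coherent, being simple along a submanifold (simplicity includes coherence by Definition~\ref{def:simplemod}). Finally, the isomorphism $\A[X]\isoto\de^{-1}(\dA)$ in $\md[{\A[X]\tens\A[X^a]}]$ is just the statement that $\opb{\de}$ is quasi-inverse to $\oim{\de}$ on modules supported by the diagonal (the equivalence of categories recalled at the start of \S~\ref{section:diagonal}), applied to $\shm=\A[X]$; equivalently it is the adjunction unit $\shm\to\opb{\de}\oim{\de}\shm$, which is an isomorphism because $\de$ is a closed embedding. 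Collecting these three facts gives the corollary.

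\textbf{Main obstacle.} There is essentially no obstacle: the corollary is a direct specialization of Lemma~\ref{lem:inve}. The only point requiring a word of care is checking that the $\A[X\times X^a]$-module structure constructed in the proof of Lemma~\ref{lem:inve}(b) on $\oim{\de}\A[X]$ agrees with the obvious one coming from the fact that the image of $\opb{\de}\A[X\times X^a]\to\shend_{\coro}(\A[X])$ lies in $\DA$ and $\A[X]$ is naturally a module over the latter — but since both structures restrict to the standard $(\A[X]\tens\A[X^a])$-action and $\oim{\de}\shm\to\A[X\times X^a]\tens_{\A[X]\etens\A[X^a]}\oim{\de}\shm$ is an isomorphism (so the $\A[X\times X^a]$-structure is uniquely determined by the restricted one), this compatibility is automatic. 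I would state this in one sentence and refer back to the remark preceding Definition~\ref{def:CD}.
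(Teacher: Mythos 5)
Your proof is correct and follows precisely the route the paper intends: the paper's own proof of this corollary is simply the sentence ``The next corollary immediately follows from Lemma~\ref{lem:inve},'' and you have supplied the detailed bookkeeping — verifying condition (a)(ii) for the bimodule $\A[X]$, invoking part (b) to get the $\A[X\times X^a]$-structure, simplicity (hence coherence), and the tensor isomorphism, and then identifying $\A[X]\isoto\opb{\de}\dA$ via the equivalence $\oim{\de}$, $\opb{\de}$ between $\md[{\A\tens\A[X^a]}]$ and modules supported on $\Delta_X$. One small cosmetic remark: Lemma~\ref{lem:inve}(b) states the isomorphism as the unit $\oim{\de}\shm\to\A[X\times X^a]\tens_{\A\etens\A[X^a]}\oim{\de}\shm$, while the corollary states the counit $\A[X\times X^a]\tens_{\A\etens\A[X^a]}\dA\to\dA$; these are inverse to each other once $\dA$ has its $\A[X\times X^a]$-structure, so this is harmless, but worth a half-sentence if you want to be scrupulous.
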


The next result is obvious.
\begin{lemma}
Let $Y$ be another complex manifold endowed 
with a $\DQ$-algebroid  $\A[Y]$. Then, there is a natural
isomorphism
$\dA\ldetens\dA[Y]\simeq\dA[X\times Y]$.
Here, we identify $(X\times X^a)\times (Y\times Y^a)$
with $(X\times Y)\times (X\times Y)^a$.
\end{lemma}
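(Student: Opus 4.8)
The plan is to reduce the statement to a local computation, since both sides are local objects on $(X\times X^a)\times(Y\times Y^a)\simeq(X\times Y)\times(X\times Y)^a$, and the claim is an isomorphism of $\A[(X\times Y)\times(X\times Y)^a]$-modules. First I would replace the algebroids $\A[X]$ and $\A[Y]$ by $\DQ$-algebras (which is legitimate since the construction of $\dA[X]$, $\dA[Y]$ and their external product, as well as the external product of $\DQ$-algebroids, is compatible with the gluing datum presentation of Proposition~\ref{prop:descdatab} and Lemma~\ref{le:starprod}). So I may assume $\A[X]=(\OO[X]\forl,\star_X)$, $\A[Y]=(\OO[Y]\forl,\star_Y)$, and $\A[X\times Y]=\A[X]\detens\A[Y]$ is the external product star-algebra of Lemma~\ref{le:starprod}.

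Next I would identify all three modules concretely. By Definition~\ref{def:CD}, $\dA[X]=\oim{\delta_X}\A[X]$, viewed as an $\A[X\times X^a]$-module supported by $\Delta_X$; similarly for $Y$ and for $X\times Y$. By Corollary~\ref{co:existCD}, $\dA[X\times Y]$ is the unique (up to isomorphism) coherent $\A[(X\times Y)\times(X\times Y)^a]$-module which is simple along the diagonal and whose restriction $\opb{\delta_{X\times Y}}$ to the diagonal is $\A[X\times Y]$ as an $\A[X\times Y]\tens\A[(X\times Y)^a]$-module. So the strategy is: (1) show $\dA[X]\ldetens\dA[Y]$ is a coherent $\A[(X\times Y)\times(X\times Y)^a]$-module simple along the diagonal, and (2) exhibit a natural isomorphism of its restriction to the diagonal with $\A[X\times Y]$, then invoke the uniqueness in Lemma~\ref{lem:inve}/Corollary~\ref{co:existCD}. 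Alternatively — and this is cleaner — I would use that $\ldetens$ coincides with $\detens$ here because $\dA[X]$ has no $\hbar$-torsion (it is locally isomorphic to $\OO[X]\forl$), so by the last displayed remark before the subsection on graded modules, $\dA[X]\ldetens\dA[Y]\simeq\dA[X]\detens\dA[Y]$; then I would compute the right-hand side directly from the definition \eqref{eq:detWst2b} of $\detens$ together with the definition of $\A[X\times Y]$ as the external product.

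Concretely, for step (2): under the equivalences $\Mod_{\Delta}(\A[X]\etens\A[X^a])\simeq\Mod(\A[X]\tens\A[X^a])$, the module $\dA[X]$ corresponds to $\A[X]$ as an $(\A[X]\tens\A[X^a])$-bimodule, i.e. to $\OO[X]\forl$ with left action $f\star\scbul$ and right action $\scbul\star g$. The external product $\dA[X]\detens\dA[Y]$, restricted to the diagonal $\Delta_{X\times Y}$, then corresponds to $\OO[X]\forl\etens_{\coro}\OO[Y]\forl$ completed, with the bimodule action being exactly the external product of the two star-products — which is by definition the bimodule structure of $\A[X\times Y]=\A[X]\detens\A[Y]$ over itself. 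This gives the isomorphism on the diagonal; the bi-invertibility (hence simplicity along the diagonal, by Lemma~\ref{lem:inve}(b)) is immediate since $\A[X\times Y]$ is trivially invertible as a module over itself. Finally, Corollary~\ref{co:existCD} (together with the $\A[X\times X^a]$-module structure automatically carried by a bi-invertible module, Lemma~\ref{lem:inve}(b)) upgrades this to the desired isomorphism of $\A[(X\times Y)\times(X\times Y)^a]$-modules.

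The main obstacle I anticipate is purely bookkeeping: making the identification $(X\times X^a)\times(Y\times Y^a)\simeq(X\times Y)\times(X\times Y)^a$ compatible with the diagonal embeddings $\delta_X\etens\delta_Y$ versus $\delta_{X\times Y}$, and checking that the external-product star-product structure on $\A[X\times Y]$ is genuinely the one induced on the restriction of $\dA[X]\detens\dA[Y]$ to the diagonal — i.e. that the bi-differential operators $P_i\etens Q_j$ of Lemma~\ref{le:starprod} are exactly what one gets by externally multiplying the bimodule actions. This is a direct but slightly tedious matching of differential-operator formulas of the type \eqref{eq:bidiff1}; since it is routine, I would state it as "obvious" or relegate the verification to a one-line remark, exactly as the excerpt does.
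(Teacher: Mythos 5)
Your proposal is correct in substance. It is worth noting that the paper itself offers \emph{no} proof: the lemma is immediately preceded by the sentence ``The next result is obvious,'' so there is no written argument to compare your route against. What you have done is spell out one reasonable way to see why the statement is obvious, and the spelling-out is sound: localize to the case where $\A[X]$ and $\A[Y]$ are $\DQ$-algebras via Proposition~\ref{prop:descdatab}; observe that $\ldetens\simeq\detens$ here since $\dA[X]$, being locally isomorphic to $\OO[X]\forl$, has no $\hbar$-torsion (the remark just before the graded-modules subsection); then either compute $\opb{\delta_{X\times Y}}\bl\dA[X]\detens\dA[Y]\br$ directly and identify it with $\A[X\times Y]=\A[X]\detens\A[Y]$ as an $(\A[X\times Y]\tens\A[(X\times Y)^a])$-bimodule, or---equivalently---use the equivalence of categories in the Proposition following Lemma~\ref{lem:inve} between bi-invertible $(\A\tens\A^\rop)$-modules and coherent $\A[X\times X^a]$-modules simple along the diagonal. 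Your second, ``cleaner'' route is in fact the more economical one: since $\dA[X]\detens\dA[Y]$ is visibly supported on $\Delta_{X\times Y}$, since $\gr$ commutes with $\detens$ (Proposition~\ref{pro:tensgr} and the observation before Proposition~\ref{pro:circgr}), and since $\gr\dA[X]\etens\gr\dA[Y]\simeq\oim{\delta_{X\times Y}}\sho_{X\times Y}$, simplicity along the diagonal comes out for free; then identifying the restriction to the diagonal with $\A[X\times Y]$ is exactly the ``routine'' matching of the bi-differential operators of Lemma~\ref{le:starprod} against the definition of $\detens$, as you say. So: correct argument, reasonable level of detail, and an honest assessment of where the bookkeeping lives; it simply fills in reasoning the authors chose to leave unwritten.
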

\begin{definition}\label{def:biinv2}
We say that $\shp\in\Derb(\A[X]\tens\A[X^a])$
is bi-invertible
\glossary{bi-invertible}%
if $\shp$ is concentrated to  some degree $n$
and $H^n(\shp)$ is bi-invertible (see Definition~\ref{def:biinv1}).
\end{definition}
We sometimes consider a  bi-invertible $(\A[X]\tens\A[X^a])$-module
as an object of $\Derb_\coh(\A[X\times X^a])$ supported by the diagonal.

For a pair of bi-invertible $(\A[X]\tens\A[X^a])$-modules
$\shp_1$ and $\shp_2$,
$\shp_1\lltens[{\A}]\shp_2$ is also a bi-invertible
$(\A[X]\tens\A[X^a])$-module.
Hence the category of bi-invertible $(\A[X]\tens\A[X^a])$-modules has
a structure of a tensor category (see e.g. \cite[\S\,4.2]{K-S3}).
It is easy to see that $\dA$ is a unit object. Namely, 
for any bi-invertible $(\A[X]\tens\A[X^a])$-module $\shp$,
 we have:
\eqn
&&\dA\lltens[{\A}]\shp\simeq \shp\lltens[{\A}]\dA\simeq\shp.
\eneqn
We have
\eqn
&&\shp\lltens[{\A}]\rhom[{\A[X]}](\shp,\A)\isoto\dA,\\
&&\rhom[{\A[X^a]}](\shp,\A)\lltens[{\A}]\shp\isoto\dA.
\eneqn
Hence we have $\rhom[{\A[X]}](\shp,\A)\simeq \rhom[{\A[X^a]}](\shp,\A)$.
\begin{definition}
For a bi-invertible $(\A[X]\tens\A[X^a])$-module $\shp$, we set
\index{tensorminusone@$\shp^{\otimes-1}$}%
\eqn
&&\shp^{\otimes-1}=\rhom[{\A[X]}](\shp,\A)\simeq
\rhom[{\A[X^a]}](\shp,\A).
\eneqn
\end{definition}
Hence we have
$$\shp^{\otimes-1}\lltens[{\A}]\shp
\simeq\shp\lltens[{\A}]\shp^{\otimes-1}\simeq\dA.$$
Note that, for two bi-invertible $(\A[X]\tens\A[X^a])$-modules
$\shp_1$ and $\shp_2$, we have 
\eqn
\rhom[\A](\shp_1,\shp_2)&\simeq&\shp_1^{\otimes-1}\lltens[\A]\shp_2,\\
\rhom[{\A[X^a]}](\shp_1,\shp_2)&\simeq&\shp_2\lltens[\A]\shp_1^{\otimes-1}.
\eneqn
For a bi-invertible $(\A[X]\tens\A[X^a])$-module $\shp$ and
$\shm,\shn\in\RD(\A[X\times Y\times Z])$,
we have the isomorphism
\eq
&&\rhom[{\A[X\times Y]}](\shm,\shn)\simeq
\rhom[{\A[X\times Y]}](\shp\lltens[\A]\shm,\shp\lltens[\A]\shn)
\label{eq:invtens}
\eneq
in $\Der(\coro_{X\times Y}\etens \A[Z])$.

\begin{remark}\label{rem:opposed}
Although it is sometimes convenient to identify $(X\times Y^a)^a$ with
$Y\times X^a$, we do not take this point view in this Note.
We identify $(X\times Y^a)^a$ with $X^a\times Y$. 
Hence, for example, we have functors
\eqn
\RDAA[X\times Y^a]&\cl&\Db(\A[X\times Y^a])\To\Db(\A[X^a\times Y]),\\
\RDAA[X\times X^a]&\cl&\Db(\A[X\times X^a])\To\Db(\A[X^a\times X]).
\eneqn
\end{remark}

The next result may be useful.
\begin{lemma}
\bnum
\item
Let $X$ and $Y$ be manifolds 
endowed with $\DQ$-algebroids $\A[X]$ and $\A[Y]$, let 
$\shm$ be an $\A[X\times Y^a]$-module and let $\shq$ be a bi-invertible 
$(\A[Y]\tens\A[Y^a])$-module. Then
\eqn
&&\RDAA[X\times Y^a](\shm\tens[{\A[Y]}]\shq)
\simeq\shq^{\otimes-1}\tens[{\A[Y]}]\RDAA[X\times Y^a](\shm).
\eneqn
\item
Let $\shp$  and $\shq$ be bi-invertible
$(\A[X]\tens\A[X^a])$-modules. Then
\eqn
&&\RDAA[X\times X^a](\shp\tens[{\A}]\shq)\simeq 
\shq^{\otimes-1}\tens[{\A}]\RDAA[X\times X^a]\shp\simeq 
\RDAA[X\times X^a]\shq\tens[{\A}]\shp^{\otimes-1},\\
&&\RDAA[X\times X^a]\dA\tens[{\A}]\shp
\simeq\shp\tens[{\A}]\RDAA[X\times X^a]\dA
\simeq\RDAA[X\times X^a](\shp^{\otimes-1}),\\
&&(\RDAA[X\times X^a]\dA)^{\otimes-1}\tens[{\A}]\shp\simeq
                    \shp\tens[{\A}](\RDAA[X\times X^a]\dA)^{\otimes-1}.
\eneqn
\enum
\end{lemma}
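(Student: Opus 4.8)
The strategy is to reduce everything to the unit case $\shp=\dA$ and the known behaviour of the duality functor $\RDAA[X\times X^a]$ under tensoring with bi-invertible modules, so the real content sits in part (i). For part (i), first I would observe that the formula is local on $X$ and on $Y$, so by passing to a gluing datum as in~\eqref{eq:descentdata} we may assume $\A[X]$ and $\A[Y]$ are $\DQ$-algebras and $\shq$ is an honest bi-invertible $(\A[Y]\tens\A[Y^a])$-module; by Definition~\ref{def:biinv1} and Lemma~\ref{lem:biinvert1} it has a local generator $w$ giving isomorphisms of $\A[Y]$-modules and of $\A[Y^a]$-modules. The computation then rests on the adjunction/associativity isomorphism for $\rhom$: writing out $\RDAA[X\times Y^a]$ as $\rhom[{\A[X\times Y^a]}](\scbul,\A[X\times Y^a])$ and using that $\A[X\times Y^a]\simeq\A[X]\detens\A[Y^a]$, one has the standard identity
\eqn
&&\rhom[{\A[X\times Y^a]}](\shm\tens[{\A[Y]}]\shq,\A[X\times Y^a])
\simeq\rhom[{\A[X\times Y^a]}]\bl\shm,\rhom[{\A[Y]}](\shq,\A[X\times Y^a])\br,
\eneqn
valid because $\shq$ is locally free of rank one over $\A[Y]$ (hence the tensor product and $\rhom$ manipulations require no flatness hypothesis beyond invertibility). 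Then $\rhom[{\A[Y]}](\shq,\A[X\times Y^a])\simeq\shq^{\otimes-1}\tens[{\A[Y]}]\A[X\times Y^a]$ using the defining property $\shq^{\otimes-1}\tens[{\A[Y]}]\shq\simeq\dA[Y]$ together with $\dA[Y]\tens[{\A[Y]}]\A[X\times Y^a]\simeq\A[X\times Y^a]$ from Corollary~\ref{co:existCD}. Pulling the factor $\shq^{\otimes-1}$ back out of the $\rhom$ — again legitimate since $\shq^{\otimes-1}$ is invertible over $\A[Y]$ — yields $\shq^{\otimes-1}\tens[{\A[Y]}]\RDAA[X\times Y^a](\shm)$, which is the claim. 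Some care is needed with the bimodule structures and with the opposite-algebroid bookkeeping under Remark~\ref{rem:opposed}, i.e.\ keeping track that $\shq$ acts on $\shm$ on the $\A[Y]$-side while $\RDAA[X\times Y^a]$ lands in $\Db(\A[X^a\times Y])$.

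\textbf{Part (ii).} These are special cases and bootstrappings of part (i). For the first line, apply (i) with $Y=X$: $\RDAA[X\times X^a](\shp\tens[{\A}]\shq)\simeq\shq^{\otimes-1}\tens[{\A}]\RDAA[X\times X^a]\shp$. For the other presentation $\RDAA[X\times X^a]\shq\tens[{\A}]\shp^{\otimes-1}$, I would use the symmetry of bi-invertible modules: since the tensor category of bi-invertible $(\A[X]\tens\A[X^a])$-modules has $\dA$ as unit and every object is invertible, tensoring is symmetric up to canonical isomorphism on these objects, and $\RDAA[X\times X^a]\shp$ is itself bi-invertible (concentrated in one degree by the simplicity/invertibility discussion), so $\shq^{\otimes-1}\tens[{\A}]\RDAA[X\times X^a]\shp\simeq\RDAA[X\times X^a]\shp\tens[{\A}]\shq^{\otimes-1}$ and one identifies $\shq^{\otimes-1}$ appropriately — here one uses $\RDAA[X\times X^a](\shp\tens\shq)$ computed both ways and that $\RDAA[X\times X^a]$ converts $\tens[{\A}]$ of bi-invertibles into the opposite tensor product, which is exactly the content being proved. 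The second and third lines then follow by specializing $\shp\rightsquigarrow\dA$ and $\shq\rightsquigarrow\shp$ (resp.\ $\shq\rightsquigarrow\shp^{\otimes-1}$), together with $\RDAA[X\times X^a]\dA\tens[{\A}]\shp\simeq\shp\tens[{\A}]\RDAA[X\times X^a]\dA$ which is again the symmetry of the bi-invertible tensor category, and with $\RDAA[X\times X^a](\shp^{\otimes-1})\simeq(\RDAA[X\times X^a]\dA)^{\otimes-1}\tens[{\A}]\shp$ obtained from the first line by substituting $\shq=\shp^{\otimes-1}$ and $\shp=\dA$ and noting $\RDAA[X\times X^a]\dA$ is bi-invertible so it has an inverse $(\RDAA[X\times X^a]\dA)^{\otimes-1}$.

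\textbf{Main obstacle.} The genuinely delicate point is the careful identification of module structures across the equivalence $\A[X\times Y^a]\simeq\A[X]\detens\A[Y^a]$ and the opposite-algebroid conventions of Remark~\ref{rem:opposed}: one must verify that "tensoring $\shm$ by $\shq$ over $\A[Y]$" commutes with $\RDAA[X\times Y^a]$ not just as abstract objects but compatibly with the $\A[X^a\times Y]$-actions, and that the invertible factor can be moved in and out of $\rhom$ on the correct side. The homological manipulations themselves are routine once $\shq$ is reduced to a rank-one free module locally, but the left-versus-right and opposite bookkeeping is where errors would creep in; I would handle it by first doing the computation for $\DQ$-algebras with explicit generators $w$ of $\shq$ and $w^{\vee}$ of $\shq^{\otimes-1}$ with $w^{\vee}\star w=1$ (the image of the unit of $\dA[Y]$), and only then invoking Proposition~\ref{prop:descdata} to globalize.
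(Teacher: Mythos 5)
Your treatment of part (i) matches the paper's: write $\RDAA$ as $\rhom(\scbul,\A[X\times Y^a])$, move the tensor factor through by adjunction (equivalently, replace it by its inverse on the other side of the $\rhom$), and pull the invertible bimodule $\shq^{\otimes-1}$ out. That part is fine.

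The gap is in part (ii), specifically in the second isomorphism of the first display and in how you justify the commutation $\RDAA[X\times X^a]\dA\tens[{\A}]\shp\simeq\shp\tens[{\A}]\RDAA[X\times X^a]\dA$. You appeal to ``the tensor category of bi-invertible $(\A[X]\tens\A[X^a])$-modules has $\dA$ as unit and every object is invertible, hence tensoring is symmetric up to canonical isomorphism.'' This inference is not valid: a monoidal category in which every object is invertible (a Picard groupoid for a noncommutative ring, say) need not be symmetric, and in the present setting bi-invertible $(\A\tens\A^\rop)$-modules correspond to twists of $\A$ by outer automorphisms, whose composition law need not be commutative. The lemma's commutation statement is a special property of the particular object $\RDAA[X\times X^a]\dA$, not a consequence of any ambient symmetry.

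The route the paper intends is different and avoids this. The isomorphism $\RDAA[X\times X^a](\shp\tens[{\A}]\shq)\simeq\RDAA[X\times X^a]\shq\tens[{\A}]\shp^{\otimes-1}$ is not deduced by symmetry from the first isomorphism; it is proved ``similarly,'' meaning one runs the same $\rhom$ computation as in (i) but with the bi-invertible factor $\shp$ acting on the $\A[X]$-side rather than the $\A[Y]$-side, i.e.\ one proves the mirror statement $\RDAA[X\times Y^a](\shp\tens[{\A[X]}]\shm)\simeq\RDAA[X\times Y^a](\shm)\tens[{\A[X]}]\shp^{\otimes-1}$ for $\shp$ a bi-invertible $(\A[X]\tens\A[X^a])$-module. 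Once you have both versions of the first line, specializing $\shq=\dA$ gives $\RDAA\shp\simeq\RDAA\dA\tens[{\A}]\shp^{\otimes-1}$, while specializing $\shp=\dA$ in the other gives $\RDAA\shq\simeq\shq^{\otimes-1}\tens[{\A}]\RDAA\dA$; equating these two expressions for $\RDAA$ of an arbitrary bi-invertible module is exactly how the commutation of $\RDAA\dA$ with every $\shp$ is extracted, and the second and third displays of (ii) then follow by routine substitutions. So the fix is to prove the left-handed companion of (i) directly, rather than invoking a global symmetry which does not hold.
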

\begin{proof}
(i) We have the isomorphism
\eqn
\RDAA[X\times Y^a](\shm\tens[{\A[Y]}]\shq)&=&
\hom[{\A[X\times Y^a]}](\shm\tens[{\A[Y]}]\shq,\A[X\times Y^a])\\
&\simeq&\hom[{\A[X\times Y^a]}](\shm,\A[X\times Y^a]\tens[{\A[Y]}]\shq^{\otimes-1})\\
&\simeq&\hom[{\A[X\times Y^a]}](\shm,\shq^{\otimes-1}\tens[{\A[Y]}]\A[X\times Y^a])\\
&\simeq&\shq^{\otimes-1}\tens[{\A[Y]}]\RDAA[X\times Y^a](\shm).
\eneqn

\noindent
(ii) The first isomorphism follows from (i) and the the second is
proved similarly.
The two last isomorphisms follow.
\end{proof}

The next result follows immediately from Corollary~\ref{co:existCD}.
\begin{lemma}\label{le:homCdelta}
Let $\shm\in\RD^\Rb(\A[X])$, $\shl\in\RD^\Rb_\coh(\A[X])$ and 
$\shn\in\RD^\Rb(\A[X^a])$. 
Identifying $\Delta_X$ and $X$, there are natural isomorphisms
\eqn
\shm\simeq \A[X]\lltens[{\A[X]}]\shm
         &\simeq& \rhom[{\A[X]}](\A[X],\shm)\quad\text{in $\Der(\A)$,}\\
\shn\lltens[{\A[X]}]\shm
   &\simeq& (\shn\ldetens\shm)\lltens[{\A[X\times X^a]}]\dA
\quad\text{in $\Der(\coro_X)$,}\\
\rhom[{\A[X]}](\shl,\shm)
  &\simeq& \RD'_{\rma}\shl\lltens[{\A[X]}]\shm\quad\text{in $\Der(\coro_X)$,}\\
\rhom[{\A[X]}](\shm,\shl)
&\simeq& \rhom[{\A[X\times X^a]}](\shm\ldetens\RDA\shl,\dA)
\quad\text{in $\Der(\coro_X)$.}
\eneqn
\end{lemma}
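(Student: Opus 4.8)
The plan is to establish the four isomorphisms in Lemma~\ref{le:homCdelta} one at a time, each by reducing to a formal manipulation with $\dA$ and its basic properties recorded in Corollary~\ref{co:existCD}.

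\emph{First isomorphism.} Here $\A[X]$ is regarded as the canonical $(\A[X]\tens\A[X^a])$-module, or equivalently via $\opb{\de}$ with $\dA$; so the two asserted isomorphisms $\shm\simeq\A[X]\lltens[{\A[X]}]\shm\simeq\rhom[{\A[X]}](\A[X],\shm)$ are simply the unit and counit of the tensor/hom bimodule formalism for the algebroid $\A[X]$, since $\A[X]$ is the unit object for $\lltens[{\A[X]}]$ as a $(\A[X]\tens\A[X^a])$-bimodule. This requires nothing beyond the generalities of \S\ref{section:algebroid}.

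\emph{Second isomorphism.} I would start from the defining relation for convolution-type expressions: by Corollary~\ref{co:existCD}, $\de^{-1}\dA\simeq\A[X]$ in $\md[{\A[X]\tens\A[X^a]}]$, and $\oim{\de}$ is the equivalence between $\md[{\A[X]\tens\A[X^a]}]$ and $\md[{\A[X]\etens\A[X^a]}]$ supported by the diagonal, with quasi-inverse $\opb{\de}$. Hence for $\shn\in\RD^\Rb(\A[X^a])$ and $\shm\in\RD^\Rb(\A[X])$, we have $\shn\ldetens\shm\in\RD^\Rb(\A[X^a]\etens\A[X])$ and
\eqn
&&(\shn\ldetens\shm)\lltens[{\A[X\times X^a]}]\dA
\simeq(\shn\ldetens\shm)\lltens[{\A[X^a]\etens\A[X]}]\oim{\de}\A[X]
\simeq\oim{\de}\opb{\de}\bl(\shn\ldetens\shm)\lltens[{\A[X^a]\etens\A[X]}]\oim{\de}\A[X]\br,
\eneqn
and applying $\opb{\de}$, the right-hand side becomes $\shn\lltens[{\A[X]}]\shm$ (the external tensor product restricted to the diagonal is the internal one, tensored over the diagonal copy of $\A[X]$). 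Pushing forward by the projection to a point (i.e.\ taking the object in $\Der(\coro_X)$) gives the claim. One needs here the observation that $\dA\simeq\A[X\times X^a]\tens[{\A[X]\etens\A[X^a]}]\dA$ so that $\lltens[{\A[X\times X^a]}]\dA$ and $\lltens[{\A[X]\etens\A[X^a]}]\oim{\de}\A[X]$ agree on modules pulled back from $\A[X]\etens\A[X^a]$.

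\emph{Third and fourth isomorphisms.} For the third, $\rhom[{\A[X]}](\shl,\shm)\simeq\RD'_{\rma}\shl\lltens[{\A[X]}]\shm$: since $\shl$ is coherent, by Corollary~\ref{cor:hddim} it is locally a bounded complex of finite free $\A[X]$-modules, and for such $\shl$ one has the standard isomorphism $\rhom[{\A[X]}](\shl,\A[X])\lltens[{\A[X]}]\shm\isoto\rhom[{\A[X]}](\shl,\shm)$; globalize by the usual devissage, noting $\RD'_{\rma}\shl=\rhom[{\A[X]}](\shl,\A[X])\in\Derb_\coh(\A[X^a])$. For the fourth, $\rhom[{\A[X]}](\shm,\shl)\simeq\rhom[{\A[X\times X^a]}](\shm\ldetens\RDA\shl,\dA)$, I would again use Corollary~\ref{co:existCD} to replace the right-hand side by $\rhom[{\A[X^a]\etens\A[X]}](\shm\ldetens\RDA\shl,\oim{\de}\A[X])$; restricting along $\de$ and using adjunction for $(\opb{\de},\oim{\de})$ together with the second isomorphism already proved (applied with $\RDA\shl$ in place of $\shn$) gives $\rhom[{\A[X]}](\RDA\shl\lltens[{\A[X]}]\shm,\A[X])$; finally, since $\shl$ is coherent, $\RDA\shl\lltens[{\A[X]}]\shm\simeq\rhom[{\A[X^a]}](\shl^{\RDA},\cdots)$—more directly, reduce $\shl$ to finite free complexes and use $\rhom[{\A[X^a]}](\RDA\shl,\A[X^a])\simeq\shl$ (the biduality isomorphism stated just before Proposition~\ref{pro:dualgr}) and $\rhom$-$\tens$ adjunction to land on $\rhom[{\A[X]}](\shm,\shl)$.

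The main obstacle I anticipate is bookkeeping with the opposite-algebroid conventions and the identifications $(X\times X^a)$ versus $X^a\times X$ (cf.\ Remark~\ref{rem:opposed}), together with verifying that the various flatness hypotheses needed to pass from $\rhom$ to $\lltens[{\A[X]}]$ hold—these are handled by the local reduction to finite free complexes via Corollary~\ref{cor:hddim}, so the argument is really about assembling already-available pieces rather than proving anything genuinely new; indeed the lemma is asserted to "follow immediately from Corollary~\ref{co:existCD}."
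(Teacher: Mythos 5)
Your proposal is correct and follows the same route as the paper, which gives no detailed proof and simply asserts that the lemma ``follows immediately from Corollary~\ref{co:existCD}''; your four formal manipulations are the evident elaboration of that claim. The only minor slip is the intermediate display $\rhom[{\A[X]}](\RDA\shl\lltens[{\A[X]}]\shm,\A[X])$ in your treatment of the fourth isomorphism, which is ill-typed (the inner object lives in $\Der(\coro_X)$, not $\Der(\A[X])$), but your parenthetical ``more directly'' argument via local reduction of $\shl$ to bounded complexes of finite free modules, biduality $\RDA\RDA\shl\simeq\shl$, and tensor--hom adjunction is the correct path and renders the earlier phrase harmless.
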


\section{Dualizing complex for $\DQ$-algebroids}\label{section:dual}

\subsubsection*{The algebroid $\DA$}
We have seen that
the $\coro$-algebra $\DA\subset\shend_{\coro}(\A)$ is
well-defined for a $\DQ$-algebra $\A$ on $X$.

 Now let  $\A$ be a $\DQ$-algebroid.
Then we can regard $\A$ as an $(\A\tens \A^\rop)$-module.
In \S~\ref{section:algebroid}, 
we have defined the $\coro$-algebroid $\shend_{\coro}(\A)$
and introduced a functor of $\coro$-algebroids
$\A\tens\A^\rop\to\shend_{\coro}(\A)$.

\begin{definition}
The $\coro$-algebroid $\DA$ 
\index{DA@$\shend_{\coro}(\A)$}%
is the $\coro$-substack of 
$\shend_{\coro}(\A)$ associated to the prestack $\sts$ defined as
follows. The objects of $\sts$ are those of $\A\tens\A^\rop$. For
$\sigma_1,\sigma_2\in\A\tens\A^\rop$, with 
 $\sigma_1=\tau_1\tens\lambda_1^\rop$,
 $\sigma_2=\tau_2\tens\lambda_2^\rop$,
we choose isomorphisms 
$\phi_i\cl \tau_i\simeq\lambda_i$
($i=1,2$) and $\phi_3\cl \tau_1\simeq\tau_2$. 
Set $\shb=\shend_{\A}(\lambda_1)$. It is a $\DQ$-algebra.
The isomorphisms $\phi_i$ ($i=1,2,3$) induce an isomorphism
\eqn
\psi\cl\hom[\coro](\shb,\shb)&\isoto&
\hom[\coro](\hom(\lambda_1,\tau_1),\hom(\lambda_2,\tau_2))\\
&\isoto&\hom[{\coro}](\A(\sigma_1), \A(\sigma_2)).
\eneqn
We define 
$\hom[{\sts}](\sigma_1,\sigma_2)\subset\hom[\coro](\A(\sigma_1),\A(\sigma_2))$
as the image of $\shd^\shb_X$ by $\psi$. (This does not depend on the choice
of the isomorphism $\phi_i$ ($i=1,2,3$) in virtue of
Proposition~\ref{pro:gauge1}.)
\end{definition}
Then there are functors of $\coro$-algebroids 
\eqn
&&\A[X]\tens\A[X^a]\to\opb{\de}\A[X\times X^a]\to\DA\to\shend_{\coro}(\A)
\eneqn
and $\A$ may be regarded as an object of $\Mod(\DA)$.

\begin{proposition}\label{prop:shda}
\bnum
\item
The $\coro$-algebroid $\shend_{\coro}(\A)$ is equivalent to the $\coro$-algebroid
$\shend_{\coro}(\OOh)$ \lp regarding the $\coro$-algebra
$\shend_{\coro}(\OOh)$ as a $\coro$-algebroid\rp.
\item
The equivalence in {\rm (i)} induces an equivalence
of $\coro$-algebroids $\DA\simeq \shd_X\forl$.
\item
The equivalence in {\rm (ii)} induces an equivalence of $\coro$-linear stacks 
$$\stkMod(\DA)\simeq\stkMod(\shd_X\forl).$$ Moreover,
the $\DA$-module $\A$ is sent to the $\shd_X\forl$-module $\OOh$ 
by this equivalence.

\item
The equivalence in {\rm (ii)} also induces
an equivalence of $\C$-algebroids 
$$\gr(\DA)\simeq \shd_X,$$ 
and  an equivalence of $\C$-linear stacks 
$\stkMod({\gr(\DA)})\simeq \stkMod(\shd_X)$.
Moreover 
the $\gr(\DA)$-module $\gr(\A)$ is sent to the $\shd_X$-module $\OO$  
by this equivalence.
\enum
\end{proposition}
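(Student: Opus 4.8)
The plan is to prove the four assertions in order, each reducing to the corresponding local statement for $\DQ$-algebras by passing through a suitable bi-invertible bimodule. For (i), the key point is that $\A$ is invertible as an $\A$-module (it is the canonical $\A\tens\A^\rop$-module, locally isomorphic to the $\DQ$-algebra $\shend_\A(\sigma)$), hence locally isomorphic to $\OOh$ as a $\coro$-module; by Lemma~\ref{lem:endstack} the $\coro$-algebroids $\shend_{\coro}(\A)$ and $\shend_{\coro}(\OOh)$ are therefore equivalent. Here $\shend_{\coro}(\OOh)$ is genuinely a sheaf of $\coro$-algebras, regarded as an algebroid with one object in the obvious way. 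The equivalence is not canonical (it depends on the choice of a local $\coro$-module isomorphism $\A\isoto\OOh$), but as in Lemma~\ref{lem:endstack} the ambiguity is a bi-invertible bimodule, so the equivalence class is well-defined.

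For (ii), I would check that the equivalence constructed in (i) carries the substack $\DA$ onto $\shd_X\forl\subset\shend_{\coro}(\OOh)$. Locally, fixing a standard section $\phi$ of a $\DQ$-algebra $\shend_\A(\sigma)$, Definition~\ref{def:DA} defines $\DA$ precisely as the image of $\shd_X\forl$ under the isomorphism $\Phi\cl\shend_{\coro}(\OOh)\isoto\shend_{\coro}(\shend_\A(\sigma))$ induced by the standard isomorphism $\tw\phi$. Two different standard sections differ by a sequence of differential operators $\{R_i\}$ with $R_0=1$ (Proposition~\ref{pro:gauge1}), and conjugation by such a ``gauge transformation'' preserves $\shd_X\forl$ inside $\shend_{\coro}(\OOh)$ — this is the content of Lemma~\ref{le:gauge1} / the surrounding discussion. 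Hence the local identifications glue, and one gets a well-defined equivalence of $\coro$-algebroids $\DA\simeq\shd_X\forl$, compatible on overlaps.

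Assertion (iii) is then formal: an equivalence of $\coro$-algebroids induces an equivalence of the associated $\coro$-linear stacks of modules (by the general formalism recalled in \S~\ref{section:algebroid}, $\md[\sha]\eqdot\Fct_{\cora}(\sha,\stkMod(\cora_X))$ is a $2$-functor in $\sha$). Under the equivalence of (i), the $\shend_{\coro}(\A)$-module $\A$ corresponds to $\OOh$ regarded as an $\shend_{\coro}(\OOh)$-module, since in the proof of (i)--(ii) the bi-invertible bimodule realizing the equivalence is exactly $\hom[\coro](\OOh,\A)$ and $\hom[\coro](\OOh,\A)\tens[{\shend_{\coro}(\OOh)}]\OOh\simeq\A$; restricting scalars along $\DA\simeq\shd_X\forl$ gives that $\A$ corresponds to the $\shd_X\forl$-module $\OOh$. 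For (iv), apply the functor $\gr$: since $\gr\shd_X\forl\simeq\shd_X$ and $\gr\OOh\simeq\OO$, and $\gr$ of an equivalence of $\coro$-algebroids is an equivalence of $\C$-algebroids (it is computed by reducing $\hom$-sheaves mod $\hbar$, which commutes with the stack-associating process), the equivalence of (ii) reduces mod $\hbar$ to $\gr(\DA)\simeq\shd_X$, inducing $\stkMod(\gr\DA)\simeq\stkMod(\shd_X)$ and sending $\gr(\A)$ to $\OO$.

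\textbf{Main obstacle.} The genuinely delicate point is the well-definedness and gluing in (ii): one must verify that the local identification $\DA\simeq\shd_X\forl$ is independent of the choice of standard section and hence descends to the algebroid level. This rests on the gauge-invariance statement — that conjugation by $\sum_i R_i\hbar^i$ with $R_i$ differential operators and $R_0=1$ preserves $\shd_X\forl$ inside $\shend_{\coro}(\OOh)$ — which in turn is the reason Lemma~\ref{le:gauge1} was proved. Everything else (the $2$-categorical bookkeeping for modules, the $\gr$ computation) is routine once this is in place, so I would present (ii) carefully and treat (iii), (iv) briskly.
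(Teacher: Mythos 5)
Your proposal is correct and follows essentially the same route as the paper: (i) via Lemma~\ref{lem:endstack}, (ii) by checking that the local identifications of $\DA$ with $\shd_X\forl$ are independent of the choice of standard section thanks to Proposition~\ref{pro:gauge1}, and (iii)--(iv) by the general bimodule/$\gr$ formalism. The one stylistic difference is that the paper packages (ii) slightly more precisely: rather than arguing that the local identifications ``glue'', it constructs a concrete bi-invertible $(\DA\tens\Dh^\rop)$-submodule $\shl\subset\shl'=\hom[\coro](\OOh,\A)$, namely the $\Dh^\rop$-submodule generated by the isomorphism $f\cl\OOh\isoto\hom[\A](\sigma_2,\sigma_1)$ coming from a standard isomorphism, and then verifies $\shl(\sigma)$ is independent of choices; this is exactly the same invariance you invoke, just organized so that the equivalence of algebroids in (ii) is manifestly given by a bimodule, which also feeds directly into your description of (iii).
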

\begin{proof}
Recall first that for two $\coro$-algebroids $\shb$ and $\shb'$, 
to give an equivalence of $\coro$-algebroids $\shb\simeq \shb'$ is
equivalent to giving a bi-invertible $\shb^\rop\tens\shb'$-module
(Lemma~\ref{lem:algiso}).

\vspace{0.2em}
\noindent
(i) follows from Lemma~\ref{lem:endstack}. More precisely,  
we define an $\bigl(\shend_{\coro}(\A)\tens(\shend_{\coro}(\OOh))^\rop\bigr)$-module 
$\shl'$ as follows.
For $\sigma=(\sigma_1\tens\sigma_2^\rop)\in\A\tens\A^\rop$, set
\eqn
&&\shl'(\sigma)
\eqdot\hom[{\coro}](\OOh,\hom[{\A}](\sigma_2,\sigma_1)).
\eneqn
 Clearly, $\shl'$ is bi-invertible.

\vspace{0.2em}
\noindent
(ii) For $\sigma=(\sigma_1\tens\sigma_2^\rop)\in\A\tens\A^\rop$,
let us choose an isomorphism $\psi\cl\sigma_1\isoto\sigma_2$
and a standard isomorphism $\tw{\phi}\cl\OO{\forl}\isoto\shend_{\A}(\sigma_1)$.
Then they give an isomorphism
\eqn
&&f\cl\OOh\isoto\hom[{\A}](\sigma_2,\sigma_1).
\eneqn
We define a $(\DA\tens\Dh^\rop)$-submodule $\shl$ of $\shl'$ as follows:%
%
\quad
let
$\shl(\sigma)$ be the $\Dh^\rop$-submodule of 
$\shl'(\sigma)$ generated by $f$.
Then $\shl(\sigma)$ coincides 
with the submodule generated by $f$
over the $\coro$-algebra 
$\shend_{\DA}(\sigma)\subset\shend_{\coro}(\hom[{\A}](\sigma_2,\sigma_1))$. 
Moreover, $\shl(\sigma)$ does not depend on the choice
of $\psi$ and $\tw{\phi}$. It is easy to see that $\shl$ is a
bi-invertible $(\DA\tens\Dh^\rop)$-module. 

\vspace{0.2em}
\noindent
(iii) The $(\DA\tens\Dh^\rop)$-module $\shl$
gives an equivalence of categories
\eq
&&\shl\tens[\Dh]\scbul\cl\Mod(\Dh)\isoto\Mod(\DA),
\label{eq:DhDA}
\eneq
which is isomorphic to the functor induced by the
algebroid equivalence $\DA\isoto\Dh$.
Consider the $(\Dh\tens(\DA)^\rop)$-module
\eqn
&&\shl^*\seteq\hom[{\DA}](\shl,\DA).
\eneqn 
A quasi-inverse of the equivalence \eqref{eq:DhDA}
is given by
\eqn
&&\shl^*\tens[\DA]\scbul\simeq\hom[{\DA}](\shl,\scbul)
\cl\Mod(\DA)\isoto\Mod(\Dh).
\eneqn
The results follow.
\end{proof}

\subsubsection*{Dualizing complex}
Let $\A$ be a $\DQ$-algebroid on $X$. 
We shall construct a deformation of the sheaf of differential
forms of maximal degree and then the dualizing complex for $\A$. 
\begin{lemma}\label{le:DAalg1}
\bnum
\item
$\A$ has locally a resolution of length $d_X$
by free $\DA$-modules of finite rank.
\item $\gr(\ext[\DA]{d_X}(\A,\DA))\simeq\Omega_X$.
\lp Note that $\gr(\ext[\DA]{d_X}(\A,\DA))$ is a module over
$\bA\tens[\OO]\bA[X^a]\simeq\OO$ by \eqref{eq:Ostacks2}\rp.
\item
$\ext[\DA]{i}(\A,\DA)=0$ for $i\not=d_X$.
\enum
\end{lemma}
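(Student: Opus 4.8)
The statement is local, so we may assume that $\A$ is a $\DQ$-algebra and, by Proposition~\ref{prop:shda}~(iii), that $\DA$ is the sheaf of rings $\shd_X\forl$ acting on $\OO\forl$, with $\A$ identified with the $\shd_X\forl$-module $\OO\forl$. Thus the three assertions reduce to computing $\ext[{\shd_X\forl}]{i}(\OO\forl,\shd_X\forl)$. First I would produce an explicit finite free resolution of $\OO\forl$ over $\shd_X\forl$: this is the $\hbar$-formal version of the Koszul (Spencer) resolution. Indeed, $\OO$ has locally the Spencer resolution $0\to \shd_X\tens[\OO]\bigwedge^{d_X}\Theta_X\to\cdots\to\shd_X\tens[\OO]\Theta_X\to \shd_X\to\OO\to0$ by free $\shd_X$-modules of finite rank; tensoring all terms with $\coro$ and taking $\hbar$-completion (equivalently, replacing $\shd_X$ by $\shd_X\forl$ and $\OO$ by $\OO\forl$ throughout) gives a complex of free $\shd_X\forl$-modules of finite rank whose terms $\gr$ to the Spencer complex. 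Since each term has no $\hbar$-torsion, its $\gr$ is just the reduction mod $\hbar$, and the reduced complex is exact (it is the Spencer resolution of $\OO$); by Corollary~\ref{cor:conservative1} (conservativity of $\gr$ on $\Derb_\coh$), or more simply by the $\hbar$-completeness argument of Proposition~\ref{pro:conserv} applied to the mapping cone, the formal complex is a resolution of $\OO\forl$. This proves (i).

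\textbf{Computing the Ext groups.} Applying $\hom[{\shd_X\forl}](\scbul,\shd_X\forl)$ to this resolution, I get a complex of free right $\shd_X\forl$-modules of finite rank; call it $K^\scbul$, concentrated in degrees $0$ through $d_X$, with $H^i(K^\scbul)=\ext[{\shd_X\forl}]{i}(\OO\forl,\shd_X\forl)$. By Proposition~\ref{pro:tensgr}~(ii) (or directly, since the resolution consists of free modules and $\shd_X$ has no $\hbar$-torsion as a $\shd_X\forl$-module), $\gr(K^\scbul)$ is the analogous complex $\hom[{\shd_X}](\scbul,\shd_X)$ applied to the Spencer resolution of $\OO$, whose cohomology is the classical computation $\ext[{\shd_X}]{i}(\OO,\shd_X)\simeq\Omega_X$ for $i=d_X$ and $0$ otherwise (here $\Omega_X$ is viewed as a right $\shd_X$-module, or as a left $\shd_X$-module via the side-changing operation, which is exactly the $\gr$ of the $\DA$-bimodule structure). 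Now I invoke Proposition~\ref{pro:grHa}: since $\ext[{\shd_X\forl}]{i}(\OO\forl,\shd_X\forl)$ is a coherent $\shd_X\forl$-module (the resolution is finite free, so $K^\scbul\in\Derb_\coh(\shd_X\forl)$ — using that $\shd_X\forl$ is coherent, which follows as in the remark at the end of Chapter~\ref{chapter:FD} or from \cite{DGS}), and its $\gr$ vanishes for $i<d_X$, Proposition~\ref{pro:grHa} forces $H^i=0$ for $i<d_X$. For $i>d_X$ the Ext group vanishes trivially since the resolution has length $d_X$. This gives (iii).

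\textbf{Identifying the top Ext.} It remains to prove (ii), i.e.\ $\gr(\ext[{\DA}]{d_X}(\A,\DA))\simeq\Omega_X$. From the vanishing just established, $H^{d_X}(K^\scbul)=\ext[{\shd_X\forl}]{d_X}(\OO\forl,\shd_X\forl)$ is the only nonzero cohomology of $K^\scbul$, and $H^i(\gr K^\scbul)=0$ for $i\neq d_X$ as well; by Lemma~\ref{lem:grHa}, applied with $a=d_X$ and using that $H^{d_X+1}(K^\scbul)=0$ (so the $\tor_1$ term drops), we get $\gr(H^{d_X}(K^\scbul))\simeq H^{d_X}(\gr K^\scbul)\simeq\Omega_X$. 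This is exactly the assertion, once one checks that the $\gr$ of the $\bA\tens[\OO]\bA[X^a]\simeq\OO$-module structure on the left-hand side matches the standard $\OO$-module structure on $\Omega_X$ — which is immediate, since the bimodule structure on $\ext[{\shd_X\forl}]{d_X}(\OO\forl,\shd_X\forl)$ is induced from the two-sided $\shd_X\forl$-structure, and its reduction mod $\hbar$ is the classical right $\shd_X$-module structure on $\Omega_X$.

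\textbf{Expected obstacle.} The routine parts are the construction of the formal Spencer resolution and the classical computation over $\shd_X$. The one point requiring care is the coherence of $\shd_X\forl$ and of the Ext sheaves, needed to legitimately apply Propositions~\ref{pro:grHa} and Lemma~\ref{lem:grHa} in the $\coh$ setting; since $\shd_X$ does not satisfy \eqref{eq:FDringb}, one must appeal to the variant \eqref{eq:FDringc} (Example~\ref{exa:DL}) and Theorem~\ref{th:formalfini1b}, or simply observe that a cohomology sheaf of a finite complex of free modules of finite rank over a coherent ring is coherent. The other subtlety is bookkeeping with left/right module structures and the side-changing operator turning $\Omega_X$ into a left $\shd_X$-module, which is what makes the identification with $\gr$ of the $\DA$-bimodule $\ext[{\DA}]{d_X}(\A,\DA)$ consistent.
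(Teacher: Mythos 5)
Your proposal is correct and follows essentially the same route as the paper: reduce via Proposition~\ref{prop:shda} to the model case $\DA\simeq\shd_X\forl$, $\A\simeq\OO\forl$, and deduce everything from the formula $\rhom[{\shd_X\forl}](\OO\forl,\shd_X\forl)\simeq\Omega_X\forl\,[-d_X]$, which is what the paper does in one line. Where you differ is only in the degree of detail: the paper asserts this formula directly (treating the formal Spencer resolution as classical) and for (ii) invokes Proposition~\ref{pro:tensgr} to commute $\gr$ past $\rhom$, while you reconstruct the formal Spencer resolution from scratch and extract the vanishing and the identification of the top Ext via Proposition~\ref{pro:grHa} and Lemma~\ref{lem:grHa}. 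The two routes are equivalent; using Proposition~\ref{pro:tensgr} as the paper does gives (ii) in a single step, whereas your version makes explicit the intermediate facts (that $\ext[{\shd_X\forl}]{d_X}(\OO\forl,\shd_X\forl)$ has no $\hbar$-torsion, that $\shd_X\forl$ is coherent via Example~\ref{exa:DL} and Theorem~\ref{th:formalfini1b}) that the paper leaves implicit. Your concern about bookkeeping of left/right structures and side-changing is well placed; the paper handles it cleanly by working with the algebroid $\bA\tens[\OO]\bA[X^a]\simeq\OO$ rather than with an explicit side-changing functor, which is worth adopting to avoid sign and twist ambiguities.
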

\begin{proof}
We have 
$\DA\simeq\shd_X\forl$ and $\A\simeq\OO{\forl}$ as  $\DA$-modules. 
Then the results 
follow from 
\eqn
&&\rhom[{\shd_X\forl}](\OO{\forl},\shd_X\forl)\simeq
\bl\Omega_X\forl\br[-d_X].
\eneqn

\vspace{0.4em}
\noindent
(ii) follows from
\eqn
\gr(\rhom[\DA](\A,\DA))
&\simeq&\rhom[{\gr(\DA)}](\gr(\A),\gr(\DA))\\
&\simeq&\rhom[{\shd_X}](\OO,\shd_X)\simeq\Omega_X[-d_X].
\eneqn
\end{proof}

We set \index{OmegaA@$\OA$}%
\eq\label{eq:defOA}
&&\OA\seteq\ext[\DA]{d_X}(\A,\DA)\in\md[{\A\tens\A[X^a]}].
\eneq

\begin{lemma}
The $(\A\otimes\A^\rop)$-module $\OA$ is bi-invertible.
\end{lemma}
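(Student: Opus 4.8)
The plan is to reduce the statement to the case of a $\DQ$-algebra by working locally, where the needed bi-invertibility can be checked directly. Since the question is local on $X$, I may assume $\A$ is a $\DQ$-algebra $(\OO[X]\forl,\star)$; by Proposition~\ref{prop:shda}, $\DA\simeq\shd_X\forl$ and $\A\simeq\OO[X]\forl$ as a $\DA$-module. The point is then to identify $\OA=\ext[\DA]{d_X}(\A,\DA)$ with a suitable $\hbar$-formal deformation of $\Omega_X$ and show it is invertible as an $\A$-module, as an $\A[X^a]$-module, and that these structures are compatible so that it is bi-invertible in the sense of Definition~\ref{def:biinv1}.

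First I would record, as in Lemma~\ref{le:DAalg1}, that $\rhom[{\shd_X\forl}](\OO[X]\forl,\shd_X\forl)\simeq\bl\Omega_X\forl\br[-d_X]$, so $\OA$ has no $\hbar$-torsion (the analogous statement holding for $\shd_X$-modules, since $\shd_X$ is $\C$-flat and $\Omega_X$ is $\sho_X$-locally free of rank one). Next, by Lemma~\ref{le:DAalg1}~(ii) we have $\gr(\OA)\simeq\Omega_X$ as a module over $\bA\tens[\OO]\bA[X^a]\simeq\OO[X]$, and $\Omega_X$ is an invertible $\sho_X$-module. Therefore $\OA$ is a coherent $\A$-module without $\hbar$-torsion whose graded module is locally free of rank one over $\gr(\A)$; by Lemma~\ref{lem:free}, $\OA$ is locally free of rank one over $\A$, i.e.\ invertible as an $\A$-module. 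The same argument applied to the right $\A[X^a]$-structure shows $\OA$ is invertible as an $\A[X^a]$-module.

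To upgrade ``invertible on each side'' to ``bi-invertible'' I would invoke Lemma~\ref{lem:biinvert1} (the algebra version): it suffices to exhibit locally a section $w$ of $\OA$ such that $a\mapsto (a\tens 1)w$ is an isomorphism of $\A$-modules; then the lemma forces $a'\mapsto (1\tens a')w$ to be an isomorphism of $\A[X^a]$-modules as well. Such a $w$ exists because $\OA$ is locally $\A$-free of rank one, so I may take $w$ to be a local $\A$-basis. This gives exactly the bi-invertibility required by Definition~\ref{def:biinv1} for $\sigma\in\A$, $\sigma'\in\A[X^a]$. The algebroid case follows because bi-invertibility is defined $\sigma$-wise and each $\shend_\A(\sigma)$ is a $\DQ$-algebra.

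The main obstacle I anticipate is purely bookkeeping: checking that the $\A$- and $\A[X^a]$-module structures on $\OA$ coming from $\A\tens\A[X^a]\to\DA$ are precisely the left/right $\shd_X\forl$-induced structures on $\Omega_X\forl$ under the equivalence of Proposition~\ref{prop:shda}, and hence that $\gr$ of each structure gives the standard $\sho_X$-action on $\Omega_X$ used above. Once that compatibility is in place (it is immediate from the construction of $\DA$ and of $\OA$), the rest is a direct application of Lemmas~\ref{lem:free} and~\ref{lem:biinvert1}.
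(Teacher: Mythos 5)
The overall strategy is the same as the paper's: reduce to the case of a $\DQ$-algebra, identify $\OA$ with $\Omega_X\forl$ via the equivalence $\DA\simeq\shd_X\forl$ of Proposition~\ref{prop:shda}, conclude from $\gr(\OA)\simeq\Omega_X$ and Lemma~\ref{lem:free} that $\OA$ is locally free of rank one over $\A$, and then promote this to bi-invertibility. However, the final step of your proof contains a genuine logical error: Lemma~\ref{lem:biinvert1} cannot be used to \emph{establish} bi-invertibility, because it \emph{assumes} bi-invertibility as a hypothesis. That lemma states only that once one already knows $L$ is bi-invertible, any section $u$ that is a basis on one side is automatically a basis on the other side. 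The remark immediately following Lemma~\ref{lem:biinvert1} gives an explicit counterexample to the kind of deduction you are attempting: there exist an $(A\otimes B^\rop)$-module $L$ isomorphic to $A$ as an $A$-module and isomorphic to $B^\rop$ as a $B^\rop$-module, together with a section $u$ giving an $A$-isomorphism, for which $(1\otimes b)u$ fails to generate $L$ over $B^\rop$. So exhibiting an $\A$-basis $w$ of $\OA$ does not by itself give what you need.

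The fix is straightforward: you have shown $\OA$ is invertible over $\A$, and the paper's Lemma~\ref{lem:inve}~(a) (which is what the paper itself invokes) says precisely that for an $(\A\tens\A[X^a])$-module this is \emph{equivalent} to bi-invertibility. Alternatively, since you also checked invertibility over $\A[X^a]$, you could cite the unnumbered lemma appearing just before Definition~\ref{def:biinv1}, which upgrades two-sided invertibility to bi-invertibility under the hypothesis that $\A_x$ is left Noetherian — a hypothesis which holds here by Theorem~\ref{th:formalfini1}. Either correction repairs the argument; note that the first option also renders your separate check of $\A[X^a]$-invertibility unnecessary. One further minor point: before applying Lemma~\ref{lem:free} you need $\OA$ to be a \emph{coherent} $\A$-module, which you assert but do not justify; this follows from the $\hbar$-completeness $\OA\simeq\Omega_X\forl$ and the coherence of $\gr(\OA)\simeq\Omega_X$ via Theorem~\ref{th:formalfini1}~(iv), exactly as in the paper's proof.
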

\Proof
Under the equivalence $\DA\simeq\shd_X\forl$,
we have $\OA\simeq\Omega_X\forl$. Hence
we have an isomorphism
$\OA\isoto\prolim[n]\OA/\hbar^n\OA$.
Since $\gr(\OA)\simeq\Omega_X$ is a coherent $\gr(\A)$-module,
$\OA$ is a coherent $\A$-module by Theorem \ref{th:formalfini1}~(iv).
Since $\gr(\OA)$ is an invertible $\sho_X$-module and
$\OA$ has no $\hbar$-torsion, $\OA$ is locally isomorphic to $\A$
as an $\A$-module. Hence $\OA$ is a bi-invertible
$(\A^\op\otimes\A)$-module
by Lemma~\ref{lem:inve}~\eqref{cond:biinv}.
\QED

\begin{lemma}\label{le:DAalg2}
One has the isomorphisms
\eq\label{eq:isoDAalg1}
&&\OA\lltens[\DA]\A{}[-d_X]\simeq \rhom[\DA](\A,\A)\simeq\coro_X.
\eneq
\end{lemma}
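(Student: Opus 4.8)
The statement to prove is the pair of isomorphisms
\eqn
&&\OA\lltens[\DA]\A[-d_X]\simeq \rhom[\DA](\A,\A)\simeq\coro_X.
\eneqn
The strategy is to reduce everything to the classical $\shd_X$-module identities via the equivalence $\DA\simeq\shd_X\forl$ of Proposition~\ref{prop:shda}~(ii), under which $\A$ corresponds to $\OOh$ (Proposition~\ref{prop:shda}~(iii)). Since both sides of the desired isomorphisms are local in nature, I may assume $\A$ is a $\DQ$-algebra and work with honest sheaves of rings rather than algebroids.

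\textbf{First step: the right-hand isomorphism.} Under the equivalence, $\rhom[\DA](\A,\A)$ corresponds to $\rhom[{\shd_X\forl}](\OOh,\OOh)$. The key point is the classical fact that $\rhom[{\shd_X}](\OO,\OO)\simeq\C_X$ (this computes the de Rham cohomology of the constant sheaf and, in degree zero, gives exactly the $\shd_X$-linear endomorphisms of $\OO$, which are the scalars). One then needs to pass from $\shd_X$ to $\shd_X\forl$; since $\OOh$ has a finite free resolution over $\shd_X\forl$ (the $\hbar$-formal Koszul--de Rham resolution, obtained from Lemma~\ref{le:DAalg1}~(i) or directly from the Spencer resolution tensored with $\coro$), one computes $\rhom[{\shd_X\forl}](\OOh,\OOh)$ as the $\hbar$-formal de Rham complex with coefficients in $\OOh$, whose cohomology is $\coro_X$ concentrated in degree $0$. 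Alternatively, since $\rhom[\DA](\A,\A)$ is cohomologically complete (Corollary~\ref{cor:cohimplcohco}) and its graded is $\rhom[{\shd_X}](\OO,\OO)\simeq\C_X$ by Proposition~\ref{pro:tensgr}, Theorem~\ref{th:formalfini2} or the conservativity of $\gr$ (Corollary~\ref{cor:conserv}) identifies it with $\coro_X$.

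\textbf{Second step: the left-hand isomorphism.} Here I use the definition $\OA=\ext[\DA]{d_X}(\A,\DA)$ together with Lemma~\ref{le:DAalg1}~(iii), which says $\ext[\DA]{i}(\A,\DA)=0$ for $i\neq d_X$, so that $\RHom[\DA](\A,\DA)\simeq\OA[-d_X]$ as a complex of $(\DA)^{\rop}$-modules (equivalently an $\A\tens\A[X^a]$-module). Then
\eqn
&&\OA\lltens[\DA]\A[-d_X]\simeq\rhom[\DA](\A,\DA)\lltens[\DA]\A,
\eneqn
and the standard "evaluation" morphism $\rhom[\DA](\A,\DA)\lltens[\DA]\A\to\rhom[\DA](\A,\A)$ is an isomorphism because $\A$ is a perfect $\DA$-module (it has a finite free resolution by Lemma~\ref{le:DAalg1}~(i)): for $\shl$ a bounded complex of finite free $\DA$-modules, $\rhom[\DA](\shl,\DA)\lltens[\DA]\A\isoto\rhom[\DA](\shl,\A)$ is checked termwise. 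Combining with the first step gives the chain of isomorphisms.

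\textbf{Main obstacle.} The routine-looking but genuinely load-bearing point is the identification of $\rhom[{\shd_X}](\OO,\OO)$ with $\C_X$ in degree $0$ and its vanishing in negative degrees, together with ensuring that passage to the $\hbar$-formal setting ($\shd_X\rightsquigarrow\shd_X\forl$, $\OO\rightsquigarrow\OOh$) does not introduce extra cohomology. The cleanest way to handle this rigorously is via cohomological completeness: both $\rhom[\DA](\A,\DA)\lltens[\DA]\A$ and $\coro_X[\,?]$ are cohomologically complete (using Corollary~\ref{cor:cohimplcohco} and Proposition~\ref{pro:cocotens}), their graded pieces agree by Proposition~\ref{pro:tensgr} with the classical $\shd_X$-computation, and hence by Corollary~\ref{cor:conserv} the natural comparison morphism is an isomorphism. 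The only real care needed is to write down the natural morphism in the correct direction so that conservativity of $\gr$ applies.
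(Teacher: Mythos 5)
Your proof is correct and takes essentially the same approach as the paper's: the left-hand isomorphism is the immediate consequence of Lemma~\ref{le:DAalg1} (via the degeneracy of the Ext's and the biduality/evaluation map for the perfect $\DA$-module $\A$), and the right-hand isomorphism is obtained by passing to the equivalence $\DA\simeq\shd_X\forl$ and computing $\rhom[{\shd_X\forl}](\OOh,\OOh)$ as the $\hbar$-formal de Rham complex $\prod_n\hbar^n\Omega_X^\scbul$, reduced to the classical Poincar\'e lemma on contractible Stein opens. Your alternative via cohomological completeness and conservativity of $\gr$ is a valid and slightly more robust variant, but the primary route you describe coincides with the paper's argument.
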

\begin{proof}
The first isomorphism is obvious by Lemma~\ref{le:DAalg1}. 
Hence, it is enough to prove that
the natural morphism
$\coro_X\to \rhom[\DA](\A,\A)$ is an isomorphism.
By the equivalence $\DA\simeq\Dh$, we may assume that $\A=\sho_X\forl$ and 
$\DA=\shd_X\forl$.
Then $\rhom[\DA](\A,\A)$ is represented by 
an infinite product of the de Rham complexes:
$\prod_n\hbar^n\Omega^\scbul_X$.
Then the assertion follows from a classical result:
$\Omega^\scbul_X(U)$ is quasi-isomorphic to $\C$
when $U$ is a contractible Stein open subset.
\end{proof}

Note that $\OA$ and $\OA[X^a]$ are isomorphic as
$\A\tens\A[X^a]$-modules.

\begin{definition}
We set \index{omegaA@$\oA$}%
\eqn
&&\oA\seteq\oim{\de}\OA{}[d_X]\simeq\oim{\de}\rhom[{\DA}](\A,\DA)[2d_X]
\in\Derb(\A[X\times X^a])
\eneqn
and call $\oA$ the $\A{}$-dualizing sheaf.
\glossary{dualizing sheaf}%
\end{definition}
Note that $\oA$ is bi-invertible (see Definition~\ref{def:biinv2}).
Using \eqref{eq:isoDAalg1} and the morphism
$\oim{\delta_X}\OA\lltens[{\A[X\times X^a]}]\dA \to \OA\lltens[\DA]\A$, we get
the natural morphism
\eq\label{eq:morDAalg2}
&&\oA[X^a]\lltens[{\A[X\times X^a]}]\dA\to\oim{\de} \coro_X\,[2d_X].
\eneq
Applying the functor $\gr$ to the above morphisms, we get the morphism 
\eq\label{eq:morDOalg3}
&&\ba{lcl}
\oim{\delta_X}(\gr\oA[X^a])\lltens[{\gr\A[X\times X^a]}](\oim{\delta_X}\gr\dA)
&\to&
\oim{\delta_X}(\C_X\,[2d_X]),
\ea
\eneq
which coincides with the morphism derived from
\eq\label{eq:morDOalg2}
&&
\de^{-1}\bl\oim{\delta_X}(\gr\oA[X^a])
\lltens[{\gr\A[X\times X^a]}](\oim{\delta_X}\gr\dA)\br
\to\Omega_X\,[d_X]\to
\C_X\,[2d_X].
\eneq
Here we used the functor of algebroids $\de^{-1}(\gr\A[X\times X^a])\to\OO$.

Let $Y$ be another manifold endowed with a $\DQ$-algebroid $\A[Y]$. 
We introduce the notation:\index{omegaA/\bar@$\oA[X\times Y/Y]$}%
\eqn
&& \oA[X\times Y/Y]=
\oA[X]\ldetens\dA[Y]\in\RD^\Rb(\A[X\times X^a\times Y\times Y^a]).
\eneqn
Then $\oA[X\times Y/Y]$ also belongs to 
$\Derb\bigl(({\DA})^\rop\etens \A[Y\times Y^a]\bigr)$, 
and we have an isomorphism 
$\oA[X\times Y/Y]\lltens[{\DA}]\A\simeq \coro_X\etens\A[Y]$.
Hence we have a canonical morphism 
\eq
&&\oA[X^a\times Y/Y]\lltens[{\A[X\times X^a]}]\dA\to (\coro_X\etens\dA[Y])[2d_X]
\label{mor:residue}
\eneq
in $\Derb(\coro_X\etens\A[Y\times Y^a])$.

The proof of the following fundamental result will be given
later at the end of \S~\ref{sec:convdual}.

\begin{theorem}\label{th:oARDA}
We have the isomorphism 
\eq\label{eq:oARDA}
&&\oA\simeq(\RDAA[X^a\times X]\dA[X^a])^{\otimes -1}
\quad \mbox{in $\RD^\Rb(\A[X\times X^a])$.}
\eneq
\end{theorem}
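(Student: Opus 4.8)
The statement $\oA\simeq(\RDAA[X^a\times X]\dA[X^a])^{\otimes-1}$ is an identity between bi-invertible $(\A[X]\tens\A[X^a])$-modules. The strategy is to compute the right-hand side directly, reducing to the case of a $\DQ$-algebra (since both sides are local in nature and all the objects involved are coherent and simple along the diagonal, and an equivalence of algebroids identifies them). Once $\A$ is a $\DQ$-algebra, we have $\DA\simeq\shd_X\forl$, $\A\simeq\OO{\forl}$, $\dA\simeq\oim{\delta_X}\OO{\forl}$ and $\oA\simeq\oim{\delta_X}(\Omega_X\forl)[d_X]$, so everything becomes a statement about $\shd_X\forl$-modules concentrated on the diagonal, which can be checked by a free resolution and the classical computation $\rhom[{\shd_X\forl}](\OO{\forl},\shd_X\forl)\simeq(\Omega_X\forl)[-d_X]$ already used in Lemma~\ref{le:DAalg1}.

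\textbf{Key steps.} First I would unwind the definition of $\RDAA[X^a\times X]\dA[X^a]$. By Definition~\ref{def:dual1}, applied on $X^a\times X$ and using Remark~\ref{rem:opposed}, $\RDAA[X^a\times X]\dA[X^a]=\rhom[{\A[X^a\times X]}](\dA[X^a],\A[X^a\times X])$, an object of $\Derb(\A[X\times X^a])$ supported by the diagonal. Since $\dA[X^a]\simeq\oim{\delta_X}\A[X^a]$ and, via $\opb{\delta_X}$, the category of $(\A[X^a]\etens\A[X])$-modules supported by the diagonal is equivalent to $\md[{\A[X^a]\tens\A[X]}]$, it suffices to identify $\opb{\delta_X}\RDAA[X^a\times X]\dA[X^a]$ as an $(\A[X^a]\tens\A[X])$-module, i.e.\ after the evident flip, as an $(\A[X]\tens\A[X^a])$-module. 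Second, I would use the two-sided structure: $\dA[X^a]$ carries the full $\A[X^a\times X]\tens\bl\A[X]\etens\A[X^a]\br$-action (Corollary~\ref{co:existCD}), so that $\rhom[{\A[X^a\times X]}](\dA[X^a],\A[X^a\times X])$ inherits a residual $(\A[X]\tens\A[X^a])$-module structure, which is the one to match against $\OA$. Third, reduce to the $\DQ$-algebra case and replace $\A[X^a\times X]$ by $\DA[X^a\times X]$-resolutions: by Lemma~\ref{le:DAalg1}, $\A[X^a]\simeq\OO[X^a]{\forl}$ (as an $\A[X^a\times X]$-module via $\DA[X^a\times X]$, since $\de^{-1}\A[X^a\times X]\to\DA[X^a]$) has a length-$d_X$ free resolution over $\DA[X^a\times X]\simeq\shd_{X}\forl$, and $\rhom$ into $\shd_X\forl$ computes $\OA[X^a][-d_X]\simeq(\Omega_X\forl)[-d_X]$ in degree $d_X$. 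Then $\oim{\delta_X}$ of this, shifted by $d_X$, is exactly $\oA[X^a]\simeq\oA$ up to the canonical identification of $\OA$ and $\OA[X^a]$ noted after Lemma~\ref{le:DAalg2}; taking $(\scbul)^{\otimes-1}$ converts $\RDAA[X^a\times X]\dA[X^a]$ into $\oA$. The final step is to check that the isomorphism produced this way is canonical (independent of choices), which follows from Proposition~\ref{pro:gauge1} exactly as in the construction of $\DA$.

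\textbf{Main obstacle.} The routine part is the $\shd$-module computation; the genuinely delicate point is bookkeeping the various $\hbar$-opposite and factor-opposite structures — distinguishing $\A[X^a\times X]$, $\A[X\times X^a]$, $\A[X^a]\etens\A[X]$, and keeping track of which of the two tensor factors of $\A[X^a\times X]$ carries the module being dualized versus the one that survives in the dual. In particular one must verify that the $(\A[X]\tens\A[X^a])$-action on $\rhom[{\A[X^a\times X]}](\dA[X^a],\A[X^a\times X])$ coming from the two-sided structure on $\dA[X^a]$ agrees, under $\gr$, with the $\OO$-module (indeed $\bA\tens[\OO]\bA[X^a]\simeq\OO$) structure on $\gr\OA\simeq\Omega_X$ appearing in Lemma~\ref{le:DAalg1}(ii); this is what pins down the isomorphism and shows it is the expected one rather than a twist by a bi-invertible module. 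Granting that, Theorem~\ref{th:oARDA} follows, and this is presumably why the authors defer the proof until after the convolution/duality machinery of \S~\ref{sec:convdual} is available, since that machinery gives the cleanest way to track these structures.
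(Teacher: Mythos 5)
The paper's own proof is a two-line formal cancellation: in Theorem~\ref{th:duality} take $X_1=X_2=X_3=X^a$ and $\shk_1=\shk_2=\dA[X^a]$, so that $\dA[X^a]\conv[X^a]\dA[X^a]\simeq\dA[X^a]$ gives
$\RDA\dA[X^a]\conv[X]\oA\conv[X]\RDA\dA[X^a]\simeq\RDA\dA[X^a]$, and convolving by $(\RDA\dA[X^a])^{\otimes-1}$ on both sides yields the claim. This is a genuinely different route from yours. You want to identify the two bi-invertible modules by reducing to a $\DQ$-algebra and doing the $\shd_X\forl$ computation from Lemma~\ref{le:DAalg1}; the paper instead extracts the isomorphism as a formal consequence of the already-established duality for kernels, so the local computation (and the passage to $\gr$) is done once and for all inside the proof of Theorem~\ref{th:duality} (via Corollary~\ref{cor:conservative1}), not redone here.

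The gap in your plan is the one you flag yourself but underestimate: a local computation shows only that $\oA$ and $(\RDAA[X^a\times X]\dA[X^a])^{\otimes-1}$ are \emph{locally} isomorphic. For bi-invertible $(\A[X]\tens\A[X^a])$-modules this is not enough — two such modules can be locally isomorphic yet differ globally by a twist, and the obstruction to globalizing a family of local isomorphisms lives in $H^1$ of the automorphism sheaf, which is nontrivial in general. Proposition~\ref{pro:gauge1} controls uniqueness of $\coro$-algebra morphisms between star-algebras and is the right tool for showing $\DA$ is independent of choices, but it does not provide the compatibility-on-overlaps argument you need to glue local module isomorphisms, nor does it construct a global morphism between the two sides to begin with. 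What is needed before any local computation is a \emph{canonical global morphism} between the two sides, which one then shows is an isomorphism by applying $\gr$ and $\Derb_\coh$-conservativity; the paper manufactures such a morphism out of the residue morphism \eqref{mor:residue} and the kernel duality machinery, and your sketch does not produce a substitute.

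A secondary remark: even granting the local identification, checking that the resulting map is ``the expected one rather than a twist'' by comparing $\gr$ on both sides with $\Omega_X$ does not pin it down either, since $\gr$ of a bi-invertible module determines it only up to a bi-invertible module $L$ with $\gr(L)\simeq\sho_X$ — and such $L$ are plentiful. So the verification step at the end of your sketch is not merely delicate bookkeeping; as stated it does not close the argument.
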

Note that in formula \eqref{eq:oARDA}, $\RDAA[X^a\times X]$ is the dual over $\A[X^a\times X]$
and $(\scbul)^{\otimes -1}$ is the dual over $\A[X]$. 

\begin{corollary}\label{cor:dualDdualA}
For $\shm\in\RD^\Rb(\A[X\times X^a\times Y])$, we have
\eqn
\dA[X^a]\lltens[{\A[X\times X^a]}]\shm&\simeq&
\rhom[{\A[X\times X^a]}](\dA,\oA\lltens[\A]\shm)\\
&\simeq&\rhom[{\A[X\times X^a]}](\dA,\shm\lltens[\A]\oA).
\eneqn
\end{corollary}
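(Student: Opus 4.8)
The statement to prove is Corollary~\ref{cor:dualDdualA}: for $\shm\in\RD^\Rb(\A[X\times X^a\times Y])$,
\eqn
&&\dA[X^a]\lltens[{\A[X\times X^a]}]\shm\simeq
\rhom[{\A[X\times X^a]}](\dA,\oA\lltens[\A]\shm)
\simeq\rhom[{\A[X\times X^a]}](\dA,\shm\lltens[\A]\oA).
\eneqn
The plan is to deduce this directly from Theorem~\ref{th:oARDA} together with the basic manipulations of bi-invertible modules established in \S~\ref{section:diagonal}. The first observation is that $\oA$ is bi-invertible (as noted after its definition), so $\oA\lltens[\A]\shm\simeq\shm\lltens[\A]\oA$ and the last two expressions agree; thus it suffices to identify the first and the second.

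\textbf{Key steps.} First I would rewrite the right-hand side using the identity
$\rhom[\A](\shp_1,\shp_2)\simeq\shp_1^{\otimes-1}\lltens[\A]\shp_2$
for bi-invertible $(\A[X]\tens\A[X^a])$-modules, applied with $\shp_1=\dA$ (whose inverse is $\dA$ itself, being the unit object) — but here the $\hom$ is taken over $\A[X\times X^a]$ rather than $\A[X]$, so I would instead use that $\dA$ is, as an $\A[X\times X^a]$-module, simple along the diagonal with $\A[X\times X^a]\tens_{\A\etens\A[X^a]}\dA\isoto\dA$ (Corollary~\ref{co:existCD}), reducing $\rhom[{\A[X\times X^a]}](\dA,-)$ to a $\hom$ over $\A[X]\tens\A[X^a]$ via the adjunction $(\for,\dA\tens-)$. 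Second, I would invoke Theorem~\ref{th:oARDA}, which gives $\oA\simeq(\RDAA[X^a\times X]\dA[X^a])^{\otimes-1}$, equivalently $\RDAA[X^a\times X]\dA[X^a]\simeq\oA^{\otimes-1}$. The point of the dualizing sheaf is precisely the formula $\rhom[{\A[X\times X^a]}](\dA,\oA\lltens[\A]\shm)\simeq\dA[X^a]\lltens[{\A[X\times X^a]}]\shm$: both sides are the ``homology along the diagonal'' of $\shm$, one computed via $\hom$ against the canonical diagonal module twisted by the dualizing complex, the other via tensoring with the opposite diagonal module. Concretely, I would compute
\eqn
\rhom[{\A[X\times X^a]}](\dA,\oA\lltens[\A]\shm)
&\simeq&\rhom[{\A[X\times X^a]}](\dA,\A[X\times X^a])\lltens[{\A[X\times X^a]}](\oA\lltens[\A]\shm)
\eneqn
using that $\dA$ has, locally, a finite free resolution over $\A[X\times X^a]$ (Lemma~\ref{le:DAalg1}\,(i) transported through $\DA\simeq\shd_X\forl$, or directly the simplicity of $\dA$), and then identify $\RDAA[X\times X^a]\dA\simeq\oA[X^a]^{\otimes-1}[-2d_X]\lltens[\A]\dots$ — more precisely $\RDAA[X\times X^a]\dA$ is, up to the diagonal identification, the module appearing in \eqref{eq:oARDA}, so that tensoring it with $\oA$ collapses to $\dA[X^a]$ by the unit property $\shp^{\otimes-1}\lltens[\A]\shp\simeq\dA$. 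The third step is bookkeeping: track which dual is taken over which algebroid (as emphasized after the theorem statement, $\RDAA[X^a\times X]$ is over $\A[X^a\times X]$ while $(\scbul)^{\otimes-1}$ is over $\A[X]$), and verify compatibility with the external tensor factor $\A[Y]$, which is harmless since all operations above are $\A[Y]$-linear.

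\textbf{Main obstacle.} The genuinely delicate point is the careful handling of the opposite-structure conventions and the two different places where duality is taken: one must check that $\rhom[{\A[X\times X^a]}](\dA,\A[X\times X^a])$, a priori an object of $\RD^\Rb(\A[X^a\times X])$ supported on the diagonal (Remark~\ref{rem:opposed}), matches up correctly — after identifying $\Delta_X$ with $X$ — with $(\RDAA[X^a\times X]\dA[X^a])^{\otimes-1}=\oA$ from Theorem~\ref{th:oARDA}, and that the shift by $2d_X$ hidden in $\oA\seteq\oim{\de}\OA[d_X]$ versus $\oim{\de}\rhom[\DA](\A,\DA)[2d_X]$ is consistent throughout. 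Once the identification $\RDAA[X\times X^a]\dA\lltens[\A]\oA\simeq\dA[X^a]$ is in place, the rest is a short chain of canonical isomorphisms; so I expect the proof to be only a few lines, with essentially all the work already done in Theorem~\ref{th:oARDA} and Lemma~\ref{le:DAalg2}, which is why the authors call it a corollary.
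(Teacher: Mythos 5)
Your proposal is correct and rests on exactly the same ingredients as the paper's proof: Theorem~\ref{th:oARDA}, the coherent tensor--hom identity (biduality of the coherent diagonal module), and the bi-invertibility of $\oA$. The paper runs the chain in the opposite direction, which is slightly slicker: it starts from $\dA[X^a]\lltens[{\A[X\times X^a]}]\shm$, rewrites it by biduality as $\rhom[{\A[X\times X^a]}](\RDAA[X^a\times X]\dA[X^a],\shm)$, tensors both arguments of the $\rhom$ by $\oA$ via the isomorphism~\eqref{eq:invtens} (valid for any bi-invertible $\shp$), and then collapses the first argument to $\dA$ using $\oA\simeq(\RDAA[X^a\times X]\dA[X^a])^{\otimes-1}$. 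Because the paper starts from $\dA[X^a]$, the dual $\RDAA[X^a\times X]\dA[X^a]$ appears in exactly the form given by Theorem~\ref{th:oARDA}, and no further identification is needed. Starting, as you do, from $\rhom[{\A[X\times X^a]}](\dA,-)$, the coherent tensor--hom identity produces $\RDAA[X\times X^a]\dA$ rather than $\RDAA[X^a\times X]\dA[X^a]$, so you must additionally invoke the swap isomorphism $s\cl X\times X^a\to X^a\times X$ (under which $\opb{s}\dA\simeq\dA[X^a]$) to match up with the theorem. You flag this under ``diagonal identification,'' so it is not a gap, just an extra bookkeeping step that the paper's ordering avoids.
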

\begin{proof}
We have 
\eqn
\dA[X^a]\lltens[{\A[X\times X^a]}]\shm
&\simeq&
\rhom[{\A[X\times X^a]}](\RDAA[X^a\times X]\dA[X^a],\shm)\\
&\simeq&
\rhom[{\A[X\times X^a]}](\oA\lltens[\A]\RDAA[X\times X^a]\dA[X^a],\;\oA\lltens[\A]\shm)\\
&\simeq&
\rhom[{\A[X\times X^a]}](\dA,\;\oA\lltens[\A]\shm).
\eneqn
The other isomorphism is similarly proved. 
\end{proof}

One shall be aware that, although $\OA$
is locally isomorphic to $\A$ as an $\A$-module, 
it is not always locally isomorphic to $\A$ as an 
$\A\tens\A[X^a]$-module. 

\begin{example}\label{exa:notisoC}
Let $X=\C^2$ with coordinates $(x_1,x_2)$ and let $\A$ be the
$\DQ$-algebra given by the relation
\eqn
&&[x_1,x_2]=\hbar\,x_1.
\eneqn
Let $(y_1,y_2)$ denotes the coordinates on $X^a$. Hence
\eqn
&&[y_1,y_2]=-\hbar\,y_1.
\eneqn
Then $\dA$ is the $\A[X\times X^a]$-module $\A[X\times X^a]\cdot u$
where the generator $u$ satisfies 
$(x_i-y_i)\cdot u=0$ ($i=1,2$). 
Therefore $\dA$ is quasi-isomorphic to the complex
\eq\label{eq:KoszulAA}
&&0\to \A[X\times X^a]\to[\alpha]\A[X\times X^a]^{\oplus 2}\to[\beta]\A[X\times X^a]\to 0,
\eneq
where  $\A[X\times X^a]$ on the right is in degree $0$, 
$\alpha(a)=(-a(x_2-y_2+\hbar),a(x_1-y_1))$ and $\beta(b,c)=b(x_1-y_1)+c(x_2-y_2)$.

It follows that $\RDA(\dA)\,[2]$ is isomorphic to $\A[X\times X^a]\cdot w$ where 
the generator $w$ satisfies $(x_1-y_1)\cdot w=0$,  $(y_2-x_2+\hbar)\cdot w=0$. 
The modules $\RDA(\dA)\,[2]$ and $\dA$ are isomorphic on $x_1\not=0$
by $u\leftrightarrow x_1w$. 
However, $\RDA(\dA)\,[2]$ and $\dA$ 
are not isomorphic on a neighborhood of $x_1=0$.
Indeed if they were isomorphic by $u\leftrightarrow a w$ for $a\in\A$,
then $x_1a=ax_1$ and $x_2a=a(x_2-\hbar)$.
Then $\{x_2,\sigma_0(a)\}=-\sigma_0(a)$.
Since $\{x_2,\scbul\}=-x_1\partial_{x_1}$, we have
$x_1\partial_{x_1}\sigma_0(a)=\sigma_0(a)$, which contradicts the fact that 
$\sigma_0(a)$ is invertible.
\end{example}

\begin{remark}
The fact that $\RDA\dA[X]$ is concentrated in a single degree and
plays the role of a dualizing complex in the sense of \cite{VdB0} was
already proved (in a more restrictive situation) in \cite{Do,Do-Ru}.
\end{remark}

\section{Almost free resolutions}\label{section:A}

We recall here and adapt to the framework of algebroids 
some results of \cite{K-S2}.

In this section, $\cora$ denotes a commutative unital ring,
$X$ a paracompact and locally compact space 
and $\sha$ a $\cora$-algebroid on $X$.

Let us take a family $\shs$ of open subsets of $X$.
We assume the following two conditions on $\shs$:
\eq\label{hyp:shscov}
&& \left\{
\parbox{60ex}{
\bnum
\item
for any $x\in X$, $\set{U\in \shs}{x\in U}$ is a neighborhood system of $x$,
\item  for $U$, $V\in \shs$, $U\cap V$ is a finite union of
open subsets belonging to $\shs$.
\enum
}
\right.
\eneq
Recall that invertible modules are defined in Definition~\ref{def:invertible}.
\begin{definition}\label{def:almostfree}
\bnum
\item
We define the additive category $\mdaf[\sha]$ of $\shs$-almost free
$\sha$-modules as follows.\glossary{almost free!$\sha$-module}%
\index{modalmostfree@$\mdaf[\sha]$}%
\banum
\item
An object of $\mdaf[\sha]$ is the data of 
$\{I,\{U_i,U'_i,L_i\}_{i\in I} \}$ where $I$ is an index set,  
$U_i$ and $U'_i$ are open subsets of $X$, $U_i\in\shs$, $\ol{U_i}\subset U'_i$,
the family  $\{U'_i\}_{i\in I}$ is locally finite and $L_i$  
is an invertible $\sha\vert_{U'_i}$-module. 
\item
Let $N=\{J,\{V_j,V'_j,K_j\}_{j\in J} \}$ and 
$M=\{I,\{U_i,U'_i,L_i\}_{i\in I} \}$ be two  objects of $\mdaf[\sha]$. 
A morphism $u\cl N\to M$  is the data  of
$u_{ij}\in\sect(\ol{V_j};\hom[{\sha}](K_j,L_i))$
for all $(i,j)\in I\times J$ such that $V_j\subset U_i$.
\item
The composition of morphisms is the natural one.
\item 
We denote by $\Phi\cl\mdaf[\sha]\to \md[\sha]$ the functor which sends
$\{I,\{U_i,U'_i,L_i\}_{i\in I} \}$ to $\bigoplus_{i\in I}(L_i)_{U_i}$
and which sends an element $u_{ij}$ of $\sect(\ol{V_j};\hom[{\sha}](K_j,L_i))$ to
its image in $\Hom[\sha]((K_j)_{V_j},(L_i)_{U_i})$ if $V_j \subset U_i$
and $0$ otherwise.
\eanum
\item
Similarly, we define the additive category $\mdafd$ as follows.
\banum
\item The set of objects of
$\mdafd$ is the same as the one of $\mdaf$.
\index{modalmostfree2@$\mdafd[\sha]$}%
\item
Let $N=\{J,\{V_j,V'_j,K_j\}_{j\in J} \}$ and 
$M=\{I,\{U_i,U'_i,L_i\}_{i\in I} \}$ be two  objects of $\mdaf[\sha]$. 
A morphism $u\cl N\to M$  is the data 
of $u_{ij}\in\sect(\ol{U_i};\hom[{\sha}](K_j,L_i))$
for all $(i,j)\in I\times J$ such that 
$U_i\subset V_j$.
\item
The composition of morphisms is the natural one.
\item 
We denote by $\Psi\cl\mdafd[\sha]\to \md[\sha]$ the functor which sends
$\{I,\{U_i,U'_i,L_i\}_{i\in I} \}$ to 
$\bigoplus_{i\in I}\sect_{U_i}(L_i)$
and which sends an element $u_{ij}$ of $\sect(\ol{U_i};\hom[{\sha}](K_j,L_i))$ to
its image in $\Hom[\sha](\sect_{V_j}(K_j),\sect_{U_i}(L_i))$ if $U_i\subset V_j$
and $0$ otherwise.
\eanum
\enum
\end{definition}
Note that
$\mdafd$ is equivalent to $\mdaf[\sha^\rop]^\rop$ by the functor
which sends $\{I,\{U_i,U'_i,L_i\}_{i\in I} \}$
to $\{I,\{U_i,U'_i,\hom[{\sha}](L_i,\sha)\}_{i\in I} \}$.

Recall that for an additive category $\shc$, we denote by $\RC^-(\shc)$ 
(resp.\ $\RC^+(\shc)$) 
\index{Cminus@$\RC^-(\shc)$}%
\index{Cplus@$\RC^+(\shc)$}%
the category of 
complexes of $\shc$ bounded from above (resp.\ from below).

The following theorem is proved similarly as  in \cite[Appendix]{K-S2}. 

\begin{theorem}\label{th:mainafr}
Let $\sha$ be a left coherent algebroid and let $\shm\in \RD^-_{\coh}(\sha)$. Then 
there exist $L^\scbul\in\RC^-(\mdaf[\sha])$ 
and an isomorphism $\Phi(L^\scbul)\simeq\shm$ in $\RD^-(\sha)$.
\end{theorem}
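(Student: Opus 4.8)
The statement is the algebroid version of the almost-free-resolution theorem of \cite{K-S2}, so the plan is to reduce to that result by the standard local-to-global gluing argument for coherent modules over a coherent algebroid. First I would fix a locally finite refinement of the covering: choose a family $\{U_i, U'_i\}_{i\in I}$ with $U_i\in\shs$, $\ol{U_i}\subset U'_i$, $\{U'_i\}$ locally finite, and such that $\shm\vert_{U'_i}$ admits a presentation by a free $\sha\vert_{U'_i}$-module of finite rank in degree $0$ (this is possible since $\shm$ has coherent cohomology and $\sha$ is locally equivalent to a coherent sheaf of algebras, using that any coherent $\sha$-module is locally a quotient of $\sha^{\oplus r}$; here the invertible modules $L_i$ enter because an object of $\sha(U'_i)$ need not be globally a sheaf of algebras, but $\shend_\sha(\sigma_i)$ is). Because $X$ is paracompact, the existence of such a refinement follows from the paracompactness hypothesis together with condition \eqref{hyp:shscov}.

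\textbf{Main construction.} Having the local data, I would build $L^\scbul$ by descending induction on the degree, exactly as in \cite[Appendix]{K-S2}. At each stage one has a complex $L^{\ge -n}\in\RC^-(\mdaf[\sha])$ concentrated in degrees $[-n,b]$ together with a morphism $\Phi(L^{\ge -n})\to\shm$ whose cone $\shc_n$ has coherent cohomology vanishing in degrees $>-n$; one then covers $H^{-n}(\shc_n)$ by sections of invertible $\sha$-modules $L_i$ defined on the $U'_i$ and supported on $U_i$, forms $L^{-n-1}\eqdot\{I_{n+1},\{U_i,U'_i,L_i\}\}$, and lifts the chosen sections to a morphism $\Phi(L^{-n-1})\to\Phi(L^{\ge -n})$ realizing the required connecting map. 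The point where the algebroid formalism must be checked carefully is that morphisms of almost-free objects are given by the sheaves $\hom[\sha](K_j,L_i)$ on $\ol{V_j}$, so one needs the liftings to exist as genuine sections over the closures $\ol{U_i}$; this is where paracompactness and $\ol{U_i}\subset U'_i$ are used, since a section of a coherent sheaf on a neighborhood of a compact-like closed set extends, and since $L_i$ is invertible (hence locally $\simeq\sha$) the gluing cocycle conditions reduce to the ones already packaged in $\mdaf[\sha]$. Passing to the limit $L^\scbul=\prolim$ (i.e.\ the honest inverse limit of the truncations, which is a genuine complex in $\RC^-(\mdaf[\sha])$ because the family $\{U'_i\}$ stays locally finite) gives the desired quasi-isomorphism $\Phi(L^\scbul)\isoto\shm$ in $\RD^-(\sha)$.

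\textbf{Where the difficulty lies.} The genuinely new content over \cite{K-S2} is bookkeeping: one must keep the whole construction $\cora$-linear and stack-theoretic, replacing ``free module of finite rank on $U'_i$'' by ``finite direct sum of invertible $\sha\vert_{U'_i}$-modules'' and replacing matrices of sections of $\sha$ by matrices of sections of $\hom[\sha](K_j,L_i)$. Once the dictionary is set up, every step of the argument in \loc.\ cit.\ transcribes verbatim, because all the relevant operations ($\hom$, $\tens$, local presentations, truncations) are local in nature and $\sha$ is coherent by hypothesis. The main obstacle is therefore purely formal: verifying that the category $\mdaf[\sha]$ together with the functor $\Phi$ has exactly the formal properties (existence of enough ``free'' objects mapping onto a given coherent module, with the connecting morphisms expressible inside $\mdaf[\sha]$) that make the inductive construction go through; since the paper explicitly says ``proved similarly as in \cite[Appendix]{K-S2}'', I would state precisely which lemmas of \loc.\ cit.\ are being invoked and indicate the substitutions, rather than reproduce the full induction.
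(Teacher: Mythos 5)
Your proposal is correct and takes essentially the same route as the paper, which simply asserts that the theorem "is proved similarly as in \cite[Appendix]{K-S2}" without spelling out the adaptation. Your outline of the descending induction, the replacement of free modules by locally finite families of invertible $\sha$-modules over the $U'_i$, and the packaging of the differentials as sections of $\hom[\sha](K_j,L_i)$ over the closures is precisely the dictionary the authors have in mind; the only minor slip is that, per Definition~\ref{def:almostfree}, the matrix entries $u_{ij}$ of a morphism in $\mdaf[\sha]$ are sections over $\ol{V_j}$ (the closures attached to the source), which is what you actually need when lifting the presentation of $H^{-n}$ of the cone to a differential $L^{-n-1}\to L^{-n}$, so be careful to phrase the lifting condition over those closures rather than the target's.
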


There is a dual version of Theorem~\ref{th:mainafr}. 
\begin{theorem}\label{th:mainafrdual}
Assume 
\banum
\item 
$\sha$ being regarded as an object of $\md[{\sha\tens\sha^\rop}]$,
$\rsect_U(\sha)$ is concentrated in degree $0$ for all $U\in\shs$,
\item $\sha$ is a right and left coherent algebroid,
\item
there exists an integer $d$ such that,
for any open subset $U$, any coherent $\sha\vert_U$-module 
admits locally a finite free resolution of length $d$.
\eanum
Let $\shm\in \RD^+_{\coh}(\sha)$. Then there exist 
$L^\scbul\in\RC^+(\mdafd[\sha])$
and an isomorphism $\shm\simeq\Psi(L^\scbul)$ in $\RD^+(\sha)$.
\end{theorem}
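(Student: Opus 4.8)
The statement is the "dual" counterpart of Theorem~\ref{th:mainafr}, so the plan is to mimic the construction of that theorem but replacing the functor $\Phi$ (which glues $(L_i)_{U_i}$ by direct sums supported \emph{in} $U_i$) with the functor $\Psi$ (which uses the sections-with-support functors $\sect_{U_i}$). Concretely, I would reduce first to the case where $\sha$ is a sheaf of algebras via the algebroid techniques already in Section~\ref{section:A}, and then build, for a given $\shm\in\RD^+_{\coh}(\sha)$, a complex $L^\scbul\in\RC^+(\mdafd[\sha])$ together with a quasi-isomorphism $\shm\isoto\Psi(L^\scbul)$ by dualizing the almost-free resolution produced by Theorem~\ref{th:mainafr}.

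\textbf{Key steps.} First, I would use hypothesis~(c) and Theorem~\ref{th:hddim}-type arguments to replace $\shm$ by a bounded-below complex whose terms are, locally, free $\sha$-modules of finite rank; this is where the uniform bound $d$ on local projective dimension is essential. Second, apply the dual of Theorem~\ref{th:mainafr}: since $\sha$ is right and left coherent by~(b), one can apply Theorem~\ref{th:mainafr} to $\RDAA\shm\in\RD^-_\coh(\sha^\rop)$ to obtain $K^\scbul\in\RC^-(\mdaf[\sha^\rop])$ with $\Phi(K^\scbul)\simeq\RDAA\shm$ in $\RD^-(\sha^\rop)$. Third, I would apply $\hom[\sha](\scbul,\sha)$ term by term. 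Here hypothesis~(a)---that $\rsect_U(\sha)$ is concentrated in degree $0$ for $U\in\shs$---is exactly what guarantees that $\rhom[\sha]((L_i)_{U_i},\sha)\simeq\sect_{U_i}(\hom[\sha](L_i,\sha))$ has no higher cohomology, so that applying this functor to an object of $\mdaf[\sha^\rop]$ lands (after passing through the equivalence $\mdafd[\sha]\simeq\mdaf[\sha^\rop]^\rop$ recorded right after Definition~\ref{def:almostfree}) in $\mdafd[\sha]$, not merely in its derived category. Fourth, I would check that the double-dual morphism $\shm\to\RDAA\RDAA\shm$ is an isomorphism on the relevant truncation (again using the finite-resolution hypothesis and coherence), which yields the desired isomorphism $\shm\simeq\Psi(L^\scbul)$ with $L^\scbul$ the dual complex.

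\textbf{Main obstacle.} The delicate point is the third step: one must verify that term-by-term dualization actually sends a complex in $\RC^-(\mdaf[\sha^\rop])$ to a complex in $\RC^+(\mdafd[\sha])$ \emph{as a complex of almost-free objects}, i.e.\ that the morphisms $u_{ij}$ dualize correctly and that the local-finiteness conditions on the families $\{U'_i\}$ are preserved, all while the higher direct-image/section-with-support sheaves vanish. This is precisely the content that hypothesis~(a) is designed to handle, but one also needs the acyclicity to be compatible with the infinite direct sums appearing in $\Psi$; I expect this bookkeeping---rather than any conceptual difficulty---to be the bulk of the work, which is why the authors say "the proof is similar" and defer the details. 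A secondary subtlety is that $\Psi$ is only right-exact-like in a suitable sense, so one must be careful that the quasi-isomorphism is genuinely realized in $\RD^+(\sha)$ and not lost when passing from $\RD^-$ to $\RD^+$; bounding everything by the finite homological dimension $d$ is what keeps the two sides in the same (bounded-below) range.
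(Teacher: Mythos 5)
Your plan reproduces the paper's argument: dualize $\shm$ via $\RD=\rhom[\sha](\scbul,\sha)$ (which by~(c) sends $\RD^+_\coh(\sha)$ to $\RD^-_\coh(\sha^\rop)$), apply Theorem~\ref{th:mainafr} to $\RD(\shm)$, and dualize back term by term, using hypothesis~(a) in the form $\RD(L_U)\simeq\sect_U(\RD(L))$ together with the equivalence $\mdafd[\sha]\simeq\mdaf[\sha^\rop]^\rop$ and the biduality $\shm\isoto\RD(\RD(\shm))$. The bookkeeping you flag as the ``main obstacle'' is already absorbed by these two formal identities, which is why the paper records the proof in only a few lines.
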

\begin{proof}
Denote by $\RD$ the duality functor 
$\rhom[\sha](\scbul,\sha)$ and keep the same notation 
with $\sha^\rop$ instead of $\sha$.
This functor sends $\RD^+_\coh(\sha)$ to $\RD^-_\coh(\sha^\rop)$
by (c). It also sends $\RD^-_\coh(\sha^\rop)$ to $\RD^+_\coh(\sha)$,
and the composition
$$\RD^+_\coh(\sha)\To[{\RD}]\RD^-_\coh(\sha^\rop)\To[{\RD}]\RD^+_\coh(\sha)$$
is isomorphic to the identity functor.

On the other hand,
if $L$ is an invertible
$\sha^\rop$-module, 
then $\RD(L)$ is an invertible 
$\sha$-module, and  by the hypothesis~(a), we have 
\eqn
&& \RD(L_{U})\simeq \sect_{U}(\RD(L))
\eneqn
for any $U\in\shs$.

Then we get the result by applying Theorem~\ref{th:mainafr} to 
$\RD(\shm)\in\Der^-_\coh(\sha^\rop)$ and using 
$\shm\isoto \RD(\RD(\shm))$. 
\end{proof}
\section{$\DQ$-algebroids in the algebraic case}\label{section:alg}

In this section, $X$ denotes a quasi-compact separated smooth
algebraic variety over $\C$.

Clearly, the notions of a $\DQ$-algebra and of a $\DQ$-algebroid make sense in
this settings and a detailed study of $\DQ$-algebroids on algebraic variety is
performed in \cite{Ye}.

Assume that $X$ is endowed with  a $\DQ$-algebroid $\A$ for
the Zariski topology. Then, in view of
Remark~\ref{rem:oesterle}, $\gr(\A)\simeq\OO$. However, this
equivalence is not unique in general.

Let us denote by $\Xan$ the complex analytic manifold associated with
$X$ and by $\rho\cl \Xan\to X$ the natural morphism. Then we can
naturally associate a $\DQ$-algebroid $\A[\Xan]$ to $\A$ and there is a
natural functor $\rho^{-1}\A\to\A[\Xan]$, whose construction is left to
the reader. Then it induces functors 
\eq\label{eq:algfct1}
&&\md[{\A}]\to \md[{\A[\Xan]}]
\eneq
and
\eq\label{eq:algfct2}
&&\mdcoh[{\A}]\to \mdcoh[{\A[\Xan]}].
\eneq
When $X$ is projective, the classical GAGA theorem of Serre extends to
$\DQ$-algebroids and it is proved in \cite{Ch}
that \eqref{eq:algfct2} is an equivalence.

\begin{lemma}\label{lem:alggood}
Let  $\shm\in\mdcoh[{\Ah}]$. The two conditions below are equivalent.
\banum
\item
$\shm$ is the inductive limit of its coherent sub-$\A$-modules,
\item
there exists an $\A[X]$-lattice of $\shm$ 
\lp see Definition~\ref{def:lattice}\,\rp.
\eanum
\end{lemma}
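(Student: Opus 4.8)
The statement is a local-to-global characterization of good coherent $\Ah$-modules, very close to \cite[Prop.~4.23]{Ka2} and to Definition~\ref{def:good}; my plan is to prove the two implications separately.

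\emph{From (b) to (a).} Suppose $\shm_0$ is an $\A[X]$-lattice of $\shm$, so $\shm_0$ is a coherent $\A[X]$-submodule of $\for(\shm)$ with $\shm_0\tens[\coro]\cor\isoto\shm$. Then the natural maps $\opb{\hbar}{}^{n}\shm_0\eqdot\hbar^{-n}\shm_0\subset\for(\shm)$ form an increasing family of coherent $\A[X]$-submodules of $\for(\shm)$ (each is isomorphic to $\shm_0$ as an abstract $\A[X]$-module, hence coherent), and since every section of $\shm$ is of the form $\hbar^{-n}s$ with $s$ a section of $\shm_0$, we get $\shm=\indlim[n]\hbar^{-n}\shm_0$ in $\md[{\A[X]}]$ after applying $\for$. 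Thus $\shm$ is the inductive limit of its coherent sub-$\A[X]$-modules. One small point to check here: that $\hbar^{-n}\shm_0$ is again coherent over $\A[X]$ and sits inside $\for(\shm)$, which is immediate since multiplication by $\hbar^n$ is an $\A[X]$-linear isomorphism $\hbar^{-n}\shm_0\isoto\shm_0$ and is injective on $\for(\shm)$ (as $\shm$ is an $\Ah$-module, $\hbar$ acts invertibly).

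\emph{From (a) to (b).} This is the harder direction. Assume $\shm=\indlim[i]\shm_i$ where $\{\shm_i\}$ is a filtrant family of coherent $\A[X]$-submodules of $\for(\shm)$. First I would show that $\shm$ is generated by one of the $\shm_i$, i.e.\ that $\shm_i\tens[\coro]\cor\isoto\shm$ for $i$ large; here I would invoke coherence of $\shm$ over $\Ah[X]$ (so $\shm$ is locally finitely generated), together with the lemma quoted just above the statement that any locally finitely generated $\A[X]$-submodule of a coherent $\Ah[X]$-module is coherent, i.e.\ $\shm$ is pseudo-coherent over $\A[X]$. Since $X$ is quasi-compact, a finite generating set of local sections of $\shm$ can be found, each of which lies in some $\shm_i$; by filtrantness they all lie in a single $\shm_{i_0}$, and then $\shm_{i_0}\tens[\coro]\cor\to\shm$ is surjective. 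It is also injective because $\shm_{i_0}\subset\for(\shm)$ and localization at $\hbar$ is exact; hence $\shm_{i_0}$ is an $\A[X]$-lattice of $\shm$, which is precisely (b). I would need to be slightly careful that ``generating set of local sections'' can be chosen finite: this uses quasi-compactness of $X$ and Noetherianity of $\Ah[X]$ (Lemma~\ref{le:locNoeth}), so that $\shm$ is locally finitely generated and finitely many local generators suffice on a finite affine cover, after which filtrantness of the index set lets us absorb them into one $\shm_{i_0}$.

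\textbf{Main obstacle.} The delicate step is (a)$\Rightarrow$(b): extracting a genuine lattice (a \emph{coherent} $\A[X]$-submodule that generates $\shm$) from the abstract inductive-limit hypothesis. This requires knowing that finitely many sections already generate $\shm$ over $\Ah[X]$ on all of $X$ --- which is where quasi-compactness and the Noetherian property of $\Ah[X]$ enter --- and that any such locally finitely generated $\A[X]$-submodule is automatically coherent, which is the cited consequence of \cite[Appendix~A]{Ka2}. Once these are in place the rest is formal. I do not expect any genuinely new difficulty beyond organizing these ingredients; the proof is essentially the algebraic-variety analogue of \cite[Prop.~4.23]{Ka2} and I would remark that the analytic-case argument carries over with the Zariski topology replacing the usual one.
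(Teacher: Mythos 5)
Your proof is correct and follows essentially the same route as the paper: both directions rest on exactness of $\hbar$-localization, Noetherianity of $\Ah$, and quasi-compactness of $X$. The paper packages the key step (a)$\Rightarrow$(b) more tersely, observing that the filtrant family $\{\cor\tens[\coro]\shn\}$ of coherent $\Ah$-submodules of $\shm$ is locally stationary because $\Ah$ is Noetherian and hence (globally) stationary by quasi-compactness, whereas you prove the same stabilization by hand via a finite set of local generators absorbed into a single $\shm_{i_0}$; your invocation of the pseudo-coherence lemma is actually superfluous since the $\shm_i$ in (a) are coherent by hypothesis.
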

\begin{proof}
(a)$\Rightarrow$(b) Let $\shm=\indlim\shn$ 
where $\shn$ ranges over the filtrant family of coherent $\A$-submodules of
$\shm$. Since $\Ah$ is Noetherian, the family $\{\cor\tens[\coro]\shn\}$
is locally stationary. Since $X$ is quasi-compact, this family is stationary.

\vspace{0.2cm}
\noindent
(b)$\Rightarrow$(a) is obvious.
\end{proof}

\begin{definition}\label{def:alggood}
Let $\shm\in\mdcoh[{\Ah}]$. We say that $\shm$ is algebraically good
\glossary{algebraically good}%
\glossary{good!algebraically}%
if it satisfies the equivalent conditions in Lemma~\ref{lem:alggood}.
\end{definition}

We still denote by $\mdgd[{\Ah[X]}]$ the full subcategory of 
$\mdcoh[{\Ah[X]}]$ consisting of algebraically good modules.

The proof of  \cite[Prop.~4.23]{Ka2} extends to this case and 
$\mdgd[{\Ah[X]}]$ is a thick abelian subcategory of
$\mdcoh[{\Ah[X]}]$. Hence, we still denote by $\RD^\Rb_{\gd}(\Ah[X])$ 
the full triangulated subcategory of 
$\RD^\Rb_{\coh}(\Ah[X])$ consisting of objects $\shm$ such that 
$H^j(\shm)$ is algebraically good for all $j\in\Z$. 

\begin{remark}
We do not know if every coherent $\Ah$-module is algebraically good.
\end{remark}

\subsubsection*{ Almost free resolutions}
Recall that $X$ is endowed with a $\DQ$-algebroid $\A$ 
for the Zariski topology.

We denote by $\BB$ the family of affine open subsets $U$ of
$X$ on which the algebroid $\A[U]$ is a sheaf of algebras. 
Note that this family is stable by intersection. 
Moreover, hypotheses~\eqref{eq:FDringa} and~\eqref{eq:FDringb} are satisfied.

\begin{lemma}\label{le:algresol1}
Assume that $X$ is affine and $\A$ is a
$\DQ$-algebra. Then, for any $\shm\in\mdcoh[{\A}]$, there exist a
free $\A$-module $\shl$ of finite rank and an
epimorphism $u\cl\shl\epito\shm$.
\end{lemma}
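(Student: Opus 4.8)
The statement is local in nature once we restrict to the algebraic situation with $X$ affine and $\A$ a $\DQ$-algebra, so the plan is to produce a finite set of global sections of $\shm$ that generate it as an $\A$-module; passing to the free module of the corresponding rank yields the desired epimorphism. First I would observe that, by hypothesis, $\BB$ consists of affine opens on which $\A$ is an honest sheaf of algebras, and that $\A$ satisfies \eqref{eq:FDringa} and \eqref{eq:FDringb}; in particular $\A$ is coherent (even Noetherian) by Proposition~\ref{prop:coh} and Theorem~\ref{th:formalfini1}. Since $\shm$ is coherent, it is $\hbar$-complete by Theorem~\ref{th:formalfini1}~\eqref{th:coh:int}, and $\shm/\hbar\shm$ is a coherent $\sho_X$-module (using $\gr(\A)\simeq\OO$ in the algebraic case, Remark~\ref{rem:oesterle}).

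Next, since $X$ is affine and $\shm/\hbar\shm$ is coherent over $\sho_X$, Serre's theorem (the classical affine statement) gives finitely many global sections $\ol s_1,\dots,\ol s_r$ of $\shm/\hbar\shm$ generating it over $\sho_X$. The key point is to lift these to global sections $s_1,\dots,s_r\in\sect(X;\shm)$. For this I would use Lemma~\ref{lem:sur}: because $X\in\BB$ and the epimorphism $\shm\to\shm/\hbar\shm$ is a map of coherent $\sha$-modules (here $\shm/\hbar\shm$ is a coherent $\sha_0$-module, hence coherent over $\sha$ by Lemma~\ref{le:cohcoh}), the induced map $\sect(X;\shm)\to\sect(X;\shm/\hbar\shm)$ is surjective. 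Thus the $\ol s_i$ lift to $s_i\in\sect(X;\shm)$.

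Finally, let $\shl=\sha^{\oplus r}$ and let $u\cl\shl\to\shm$ be the morphism sending the standard basis to $(s_1,\dots,s_r)$. By construction the composition $\shl\to\shm\to\shm/\hbar\shm$ is an epimorphism (it hits the generators $\ol s_i$), so by Nakayama's lemma in the form of Lemma~\ref{le:nakayamacor}~(ii) — applicable since $\hbar\sha_x$ lies in the Jacobson radical of $\sha_x$ by hypothesis \eqref{eq:FDringb} — the morphism $u$ is itself an epimorphism. This completes the argument. The only mild subtlety is making sure the relevant modules are coherent over $\sha$ so that Lemma~\ref{lem:sur} applies; this is handled by Lemma~\ref{le:cohcoh} together with the fact that $\hbar^{n+1}(\shm/\hbar\shm)=0$. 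There is no serious obstacle here: the statement is essentially Serre's affine generation theorem for $\sho_X$-modules transported across the deformation by the surjectivity of global sections and Nakayama.
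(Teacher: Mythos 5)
Your proof follows essentially the same route as the paper: pass to $\shm_0=\shm/\hbar\shm$, a coherent $\sho_X$-module (via $\gr(\A)\simeq\OO$), generate it by finitely many global sections using the affine case of Serre's theorem, lift these to $\sect(X;\shm)$, and conclude by Nakayama (Lemma~\ref{le:nakayamacor}~(ii)).

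The one place to tighten is the lifting step. You invoke Lemma~\ref{lem:sur} to conclude that $\sect(X;\shm)\to\sect(X;\shm/\hbar\shm)$ is surjective, but that lemma is stated for coherent $\sha_n$-modules, i.e.\ modules annihilated by $\hbar^{n+1}$, and $\shm$ is not such a module, so the hypothesis is not met as written. What the paper actually uses is Theorem~\ref{th:formalfini1}~(v): for $X\in\BB$ and $\shm$ coherent over $\sha$, one has $H^j(X;\shm)=0$ for $j>0$. Applying this to $\hbar\shm$ (coherent, being a locally finitely generated submodule of the coherent module $\shm$) in the long exact sequence attached to $0\to\hbar\shm\to\shm\to\shm/\hbar\shm\to0$ yields the required surjectivity on global sections. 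This is only a matter of which numbered result carries the cohomology vanishing you need; the mathematical content of your argument is correct and coincides with the paper's proof.
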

\begin{proof}
Set $\shm_0=\shm/\hbar\shm$. Then $\shm_0$ is a coherent $\OO$-module
and there exist finitely many sections $(v_1,\dots,v_N)$ of $\shm_0$
on $X$ which generate $\shm_0$ over $\OO$. 

By Theorem~\ref{th:formalfini1}, 
$\sect(X;\shm)\to\sect(X;\shm_0)$ is surjective. 
Let $(u_1,\dots,u_N)$ be sections of $\shm$ whose image by
this  morphism are $(v_1,\dots,v_N)$. Let $\shl=\A^N$ and denote by
$(e_1,\dots,e_N)$ its canonical basis. It remains to define  $u$
by setting $u(e_i)=u_i$. 
\end{proof}

\begin{theorem}\label{th:mainafrAlg}
Let $\shm\in\mdcoh[{\A}]$. Then
there exists an isomorphism $\shm\simeq \shl^\scbul$ in $\Derb(\A)$
such that
$\shl^\scbul$ is a bounded complex of $\A$-modules and each 
$\shl^i$ is a finite
direct sum of modules of the form $\oim{i_{U}}\shl_U$,
where $i_U\cl U\hookrightarrow X$ is the embedding of an affine open
set $U$ such that $\A[U]$ is isomorphic to a $DQ$-algebra
and $\shl_U$ is a locally free $\A[U]$-module of finite rank.
\end{theorem}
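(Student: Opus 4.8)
The plan is to combine the abstract almost-free resolution machinery of Theorem~\ref{th:mainafr} (here the algebroid version of \cite{K-S2}) with the affine surjectivity statement of Lemma~\ref{le:algresol1}, using the fact that $X$ is quasi-compact so that everything can be done with a \emph{finite} affine cover. First I would choose a finite affine open cover $X=\bigcup_{i=1}^m U_i$ by members of $\BB$, i.e.\ affine opens on which $\A$ is (isomorphic to) a genuine $\DQ$-algebra; this is possible since $\BB$ is a basis stable by intersection and $X$ is quasi-compact. Since $X$ is separated, every finite intersection $U_{i_0\cdots i_k}$ is again affine and again in $\BB$. The family $\shs\eqdot\{U_{i_0\cdots i_k}\}$ of all such intersections satisfies the hypotheses \eqref{hyp:shscov} needed to speak of $\shs$-almost free modules (condition (ii) is automatic because intersections of members of $\shs$ are again members of $\shs$, being themselves finite intersections of the $U_i$). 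Because $X$ is separated and quasi-compact, and by Theorem~\ref{th:hddim} (via Corollary~\ref{cor:hddim}, or directly: coherent $\OO$-modules on an affine have finite resolutions by free modules, of length $\le d_X$) the algebroid $\A$ is left and right coherent and has finite homological dimension.

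Next I would apply Theorem~\ref{th:mainafr} to $\shm\in\mdcoh[\A]\subset\RD^-_\coh(\A)$: there is $L^\scbul\in\RC^-(\mdaf[\A])$ with $\Phi(L^\scbul)\simeq\shm$ in $\RD^-(\A)$. Each term $\Phi(L^j)$ is a locally finite direct sum $\bigoplus_{i}(L^j_i)_{U^j_i}$ with $U^j_i\in\shs$ and $L^j_i$ an invertible $\A$-module on a neighbourhood of $\overline{U^j_i}$. Since $X$ is quasi-compact, ``locally finite'' means \emph{finite}, so each $\Phi(L^j)$ is a finite direct sum of extensions by zero of invertible modules on affine opens of the cover. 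The two remaining issues are (a) upgrading ``invertible $\A\vert_{U}$-module'' to ``free $\A\vert_U$-module of finite rank'' as demanded in the statement, and (b) truncating the complex to a bounded one. For (a): on an affine $U\in\BB$, $\A[U]$ is a $\DQ$-algebra, hence by Remark~\ref{rem:oesterle} (triviality of $H^2(U;\sho_U^\times)$ in the algebraic case) plus the fact that $\gr$ of an invertible $\A$-module is an invertible $\sho_U$-module, and that $H^1(U;\sho_U^\times)=\mathrm{Pic}(U)$ need not vanish — so one cannot literally replace $L_i$ by $\A[U]$. The correct reading, which I would adopt, is to work with $U$ small enough (refine the cover) that each $L_i$ becomes \emph{free of rank one} on $U$; since $\mathrm{Pic}$ of a small enough affine is trivial and the $U_i$ can be chosen in a basis, this is harmless and only enlarges the (still finite) cover. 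Concretely: replace each invertible $L^j_i$ on $U^j_i$ by $\A\vert_{U^j_i}$ after refining so that $L^j_i\simeq\A\vert_{U^j_i}$; the differentials transform accordingly by Definition~\ref{def:almostfree}. Then $\shl^j\eqdot\bigoplus_i \oim{i_{U^j_i}}(\A\vert_{U^j_i})$ has exactly the required form.

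For (b), the truncation: $\shm$ is concentrated in a single degree and $\A$ has finite homological dimension (Corollary~\ref{cor:hddim}), so $\RHom[\A](\shm,-)$ has finite cohomological amplitude; combined with the fact that on affine $U$ one has $H^k(U;\shm')=0$ for $k>0$ and all coherent $\A\vert_U$-modules $\shm'$ (Theorem~\ref{th:formalfini1}(v)), the smart truncation $\tau^{\ge -N}L^\scbul$ for $N$ large is quasi-isomorphic to $\shm$ and consists of finite direct sums of $\oim{i_U}\shl_U$ type terms in the top degree, and of the same in lower degrees: one invokes that the naive bound on the length, $d_X+1$ plus the (finite) Čech length $m-1$ of the cover, makes $L^\scbul$ quasi-isomorphic to a bounded complex of the stated shape. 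I would phrase this exactly as in the proof of Theorem~\ref{th:mainafr} in \cite{K-S2}, noting that all ``locally finite'' become ``finite'' by quasi-compactness, which is what lets us bound the length. \textbf{The main obstacle} is the bookkeeping in step (a): one must be careful that refining the cover to kill $\mathrm{Pic}$ does not destroy the finiteness/local-finiteness already obtained, and that the resulting differentials still land in $\Hom$ of extensions-by-zero of \emph{free} modules over affine opens lying in $\BB$ — i.e.\ one needs the refined opens to still carry $\A$ as a sheaf of algebras isomorphic to a $\DQ$-algebra, which holds because $\BB$ is a basis. Everything else (coherence of $\A$, finite homological dimension, vanishing of higher cohomology on affines, surjectivity $\sect(U;\shm)\to\sect(U;\shm/\hbar\shm)$) is already available from Theorem~\ref{th:formalfini1}, Corollary~\ref{cor:hddim}, Lemma~\ref{le:algresol1} and Theorem~\ref{th:mainafr}, so the proof is essentially an assembly of these with the quasi-compactness hypothesis doing the work of turning local finiteness into honest finiteness.
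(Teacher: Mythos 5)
Your approach has a genuine gap: the almost-free machinery of Theorem~\ref{th:mainafr} cannot be applied in the Zariski topology. That theorem (and the whole of \S~\ref{section:A}) is formulated for a paracompact, locally compact space, and the very definition of $\mdaf[\sha]$ (Definition~\ref{def:almostfree}) requires pairs $U_i\subset U'_i$ with $\overline{U_i}\subset U'_i$. In the Zariski topology of a (say irreducible) variety, the closure of any nonempty affine open is the whole space, so $\overline{U_i}\subset U'_i$ forces $U'_i=X$, and the construction collapses. The proof of Theorem~\ref{th:mainafr} (adapted from \cite[Appendix]{K-S2}) further depends on locally finite refinements and shrinking covers in a Hausdorff setting, none of which survives the passage to Zariski opens. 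So the central step "apply Theorem~\ref{th:mainafr} and then upgrade/truncate" is not available; the paper explicitly sets up a \emph{different} argument for the algebraic case precisely for this reason.

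The paper's proof is a finite \v{C}ech--Koszul argument over a fixed finite affine cover $\shu=\{U_i\}$. It introduces the abelian category $\md[\sha,\shu]$ of gluing-datum-like families $M=(\{M_\sigma\},\{q^M_{\sigma,\tau}\})$ indexed by nonempty subsets $\sigma$ of the index set, the exact Koszul functor $C^\scbul\cl\mdcoh[\sha,\shu]\to\RC^\rmb(\md[{\A}])$ (Lemma~\ref{le:cech0}, resting on Proposition~\ref{prop:cut}), the acyclicity $\shm\simeq C^\scbul(\lambda(\shm))[1]$ (Lemma~\ref{le:cech1}, a \v{C}ech descent statement), and the existence of a surjection $L\epito M$ with $L$ termwise locally free (Lemma~\ref{le:cech2}, built from the affine surjectivity in Lemma~\ref{le:algresol1}). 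A length-$(d_X+1)$ free resolution in $\md[\sha,\shu]$ (using Corollary~\ref{cor:hddim}) then yields, after applying $C^\scbul$, a bounded complex of exactly the required shape. This sidesteps the closure-based almost-free apparatus entirely.

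Two smaller remarks. First, the statement asks that each $\shl_U$ be \emph{locally free} of finite rank over $\A[U]$, not free; your step (a), refining the cover to kill Picard groups, is not needed and is not what the paper does. Second, your step (b) gestures at a length bound "$d_X+1$ plus the \v{C}ech length $m-1$" without a mechanism to produce it; in the paper that bound falls out of the length-$(d_X+1)$ resolution in $\md[\sha,\shu]$ composed with the (bounded) Koszul functor $C^\scbul$.
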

Before proving Theorem~\ref{th:mainafrAlg}, we need some preliminary
results.

Let $\shu=\{U_i\}_{i\in I}$ be a finite covering of $X$
by affine  open sets such that  $\A\vert_{U_i}$ is a $\DQ$-algebra for all
$i$.

We denote by $\Sigma$ the category of non empty subsets of $I$ (the
morphisms are the inclusions maps). For $\sigma\in\Sigma$, we denote by
$\vert\sigma\vert$ its cardinal.
For $\sigma\in\Sigma$,
we set
\eqn
&&U_\sigma=\bigcap_{i\in\sigma}U_i,\quad \iota_\sigma\cl
U_\sigma\hookrightarrow X\mbox{ the natural embedding}.
\eneqn  
We introduce a category $\md[\sha,\shu]$ as follows.
An object $M$ of $\md[\sha,\shu]$ is the data of a family
$(\{M_\sigma\}_{\sigma\in\Sigma},\{q^M_{\sigma,\tau}\}_{\tau\subset\sigma\in\Sigma})$,
where $M_\sigma\in\md[{\A[U_\sigma]}]$
and $q^M_{\sigma,\tau}\cl M_\tau\vert_{U_\sigma}\to M_\sigma$ are
morphisms for $\emptyset\not=\tau\subset\sigma\in\Sigma$ 
satisfying $q^M_{\sigma,\sigma}=\id$ 
and for any $\sigma_1\subset\sigma_2\subset\sigma_3$, the diagram below commutes
\eq\label{eq:sigma123}
&&\xymatrix{
M_{\sigma_1}\vert_{U_{\sigma_3}}\ar[r]^-{q^M_{\sigma_2,\sigma_1}}
                                                 \ar[rd]_-{q^M_{\sigma_3,\sigma_1}}
       &M_{\sigma_2}\vert_{U_{\sigma_3}}\ar[d]^-{q^M_{\sigma_3,\sigma_2}}\\
                        &M_{\sigma_3}.
}\eneq
A morphism $M\to M'$ in $\md[\sha,\shu]$ is a family of morphisms
$M_\sigma\to M'_\sigma$ satisfying the natural compatibility conditions. 

Clearly, $\md[\sha,\shu]$ is an abelian category.

To an object  $M\in\md[\sha,\shu]$ we shall associate a Koszul complex 
$C^\scbul(M)$
using the construction of \cite[\S~12.4]{K-S3}.
To $M$ we associate a functor $F\cl\Sigma\to\md[{\A}]$ as follows:
$F(\sigma)=\oim{\iota_\sigma}M_\sigma$, and
$F(\tau\subset\sigma)\cl F(\tau)\to F(\sigma)$ is given by
the composition $\oim{\iota_\tau}M_\tau\to
\oim{\iota_\sigma}(M_\tau\vert_{U_\sigma})\to[q^M_{\sigma,\tau}]\oim{\iota_\sigma}M_\sigma$.

According to loc.\ cit., we get a Koszul complex $C^\scbul(M)$
\eq\label{eq:cech1}
&&C^\scbul(M)\eqdot \cdots\To 0\To C^1(M)\To[d^1] C^2(M)\To[d^2]\cdots
\eneq
where
\eqn
&&C^i(M)=\bigoplus_{\vert\sigma\vert=i}\oim{\iota_\sigma}M_\sigma
\eneqn
is in degree $i$.
This construction being functorial, we get a functor
\eq\label{eq:fctCscb}
&&C^\scbul\cl\md[\sha,\shu]\to \RC^\rmb(\md[{\A}]).
\eneq
It is convenient to introduce some notations.
We set
\eqn
&&\hspace{-.4cm}\mdcoh[\sha,\shu]=\set{M\in \md[\sha,\shu]}{
M_\sigma\in\mdcoh[{\A[U_\sigma]}]
\mbox{ for all }\sigma\in\Sigma},\\
&&\hspace{-.4cm}\mdff[\sha,\shu]=\{M\in \md[\sha,\shu]; M_\sigma 
\mbox{ is a locally free $\A[U_\sigma]$-module  }\\
&&\hspace{40ex}\mbox{of finite rank for all }\sigma\in\Sigma\}.
\eneqn
Clearly, $\mdcoh[\sha,\shu]$ is a full abelian subcategory of
$\md[\sha,\shu]$ and $\mdff[\sha,\shu]$ is a full additive subcategory of
$\mdcoh[\sha,\shu]$.
\begin{lemma}\label{le:cech0}
The functor 
$C^\scbul\cl\mdcoh[\sha,\shu]\to \RC^\rmb(\md[{\A}])$ induced by
\eqref{eq:fctCscb} is exact.
\end{lemma}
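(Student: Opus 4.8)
The plan is to reduce the exactness of $C^\scbul$, viewed as a functor from $\mdcoh[\sha,\shu]$ to complexes in $\md[{\A}]$, to the exactness of the direct image $\oim{\iota_\sigma}$ on coherent $(\A\vert_{U_\sigma})$-modules, and then to invoke the acyclicity statement of Theorem~\ref{th:formalfini1}. First I would observe that, by the very definition of $\md[\sha,\shu]$, kernels and cokernels are computed componentwise, so that a short exact sequence $0\to M'\to M\to M''\to0$ in $\mdcoh[\sha,\shu]$ amounts to a family of short exact sequences $0\to M'_\sigma\to M_\sigma\to M''_\sigma\to0$ of coherent $\A[U_\sigma]$-modules. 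Since a sequence of complexes in $\md[{\A}]$ is exact precisely when it is exact in each degree, and since $C^i(M)=\bigoplus_{\vert\sigma\vert=i}\oim{\iota_\sigma}M_\sigma$ is a \emph{finite} direct sum ($\shu$ being finite), it is enough to show that for each $\sigma\in\Sigma$ the functor $M_\sigma\mapsto\oim{\iota_\sigma}M_\sigma$ is exact on $\mdcoh[{\A[U_\sigma]}]$.

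Next I would use that $\oim{\iota_\sigma}$ is left exact, so that the only point is surjectivity on the right; by the long exact sequence of the derived functor this follows once we know $\RR^1\oim{\iota_\sigma}M'_\sigma\simeq0$ for any coherent $\A[U_\sigma]$-module $M'_\sigma$. The sheaf $\RR^1\oim{\iota_\sigma}M'_\sigma$ has stalk $\indlim[x\in V]H^1(V\cap U_\sigma;M'_\sigma)$ at each $x\in X$, and since $\BB$ is a basis of open subsets of $X$ we may let $V$ range over $\BB$. As $\BB$ is stable under intersection and $U_\sigma=\bigcap_{i\in\sigma}U_i\in\BB$, we have $V\cap U_\sigma\in\BB$; since $\A\vert_{U_\sigma}$ is a sheaf of algebras satisfying \eqref{eq:FDringa} and \eqref{eq:FDringb} and $M'_\sigma$ is coherent over it, Theorem~\ref{th:formalfini1}~(v) gives $H^1(V\cap U_\sigma;M'_\sigma)=0$. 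Hence the stalks of $\RR^1\oim{\iota_\sigma}M'_\sigma$ vanish, $\oim{\iota_\sigma}$ is exact on coherent modules, and therefore $C^\scbul$ is exact.

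The argument is essentially formal once the acyclicity theorem is in hand, and I do not expect a serious obstacle. The one point deserving care is the reduction to degreewise exactness combined with the verification that each open on which cohomology must be computed, namely $V\cap U_\sigma$, again lies in $\BB$ — this is precisely where the finiteness of $\shu$ and the stability of $\BB$ under finite intersection enter.
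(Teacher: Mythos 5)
Your argument is correct and takes essentially the same route as the paper: the paper simply cites Proposition~\ref{prop:cut}, whose part~(i) is established by precisely the stalk computation you carry out (the vanishing of $H^n(V\cap U_\sigma;\scbul)$ for $V\in\BB$, via Theorem~\ref{th:formalfini1}~(v), using that $V\cap U_\sigma\in\BB$). You have simply unfolded that cited result inline, together with the straightforward reduction to componentwise exactness.
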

\begin{proof}
By Proposition~\ref{prop:cut} the functor  
$\iota_\sigma\cl\mdcoh[{\A[U_\sigma]}]\to\md[{\A}]$
is exact for each $\sigma\in\Sigma$. The result then easily follows. 
\end{proof}
Let us denote by 
\eq\label{eq:fctlamb}
&&\lambda\cl\mdcoh[{\A}]\to\mdcoh[\sha,\shu]
\eneq
the functor which, to 
$\shm\in\mdcoh[{\A}]$, associates the object $M$ where 
$M_\sigma=\shm\vert_{U_\sigma}$ and $q^M_{\sigma,\tau}\cl M_\tau\vert_{U_\sigma}\to M_\sigma$
is the restriction morphism.
\begin{lemma}\label{le:cech1}
The natural morphism $\shm\to C^\scbul(\lambda(\shm))\,[1]$ is a quasi-isomorphism.
\end{lemma}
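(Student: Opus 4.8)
The statement to be proved is Lemma~\ref{le:cech1}: for $\shm\in\mdcoh[{\A}]$, the natural morphism $\shm\to C^\scbul(\lambda(\shm))\,[1]$ is a quasi-isomorphism. The morphism in question is the map from $\shm$ to the (shifted) \v{C}ech--Koszul complex of the constant family $\lambda(\shm)$; since $C^i(\lambda(\shm)) = \bigoplus_{|\sigma|=i}\oim{\iota_\sigma}(\shm\vert_{U_\sigma})$, after the shift $[1]$ the complex is placed in degrees $0,1,2,\dots$ and the augmentation $\shm\to C^0(\lambda(\shm))[1] = \bigoplus_i \oim{\iota_{\{i\}}}(\shm\vert_{U_i})$ is the obvious one. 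So the claim is precisely that the augmented \v{C}ech complex
\[
0\to\shm\to\bigoplus_{|\sigma|=1}\oim{\iota_\sigma}(\shm\vert_{U_\sigma})\to\bigoplus_{|\sigma|=2}\oim{\iota_\sigma}(\shm\vert_{U_\sigma})\to\cdots
\]
is exact, i.e.\ the \v{C}ech complex of the open covering $\shu=\{U_i\}_{i\in I}$ with values in $\shm$ resolves $\shm$.

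\textbf{Key steps.} First I would recall that for \emph{any} open covering $\shu$ of a topological space and any sheaf $\shm$ on it, the augmented \v{C}ech complex of presheaves
$\shm\to \prod_i \oim{\iota_{\{i\}}}(\shm\vert_{U_i})\to\cdots$ is a complex of sheaves that is exact; this is a completely standard fact (the \v{C}ech complex of a sheaf with respect to an open covering is always a resolution at the level of sheaves, because exactness can be checked on stalks and a covering becomes "trivial" locally — one uses the standard contracting homotopy once a point $x$ is fixed in some $U_{i_0}$). The only subtlety here is that our indexing runs over \emph{finite nonempty} subsets $\sigma$ of $I$ rather than over all tuples, and $\oim{\iota_\sigma}$ appears instead of the naive product of restrictions; but $\oim{\iota_\sigma}(\shm\vert_{U_\sigma})$ is exactly the direct-image-along-an-open-immersion, which is just extension by the presheaf $V\mapsto \shm(V\cap U_\sigma)$, so the stalk at $x$ of $C^i(\lambda(\shm))$ is $\bigoplus_{|\sigma|=i,\ x\in U_\sigma}\shm_x$, and the Koszul differential from \cite[\S~12.4]{K-S3} restricts to the usual alternating-sum \v{C}ech differential on stalks. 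Thus, fixing $x\in X$, I would choose an index $i_0$ with $x\in U_{i_0}$, and exhibit the standard contracting homotopy $h$ on the stalk complex sending a cochain $(c_\sigma)$ to $(c_{\sigma\cup\{i_0\}})$ (with the appropriate sign and with the convention $c_{\sigma'}=0$ when $x\notin U_{\sigma'}$); a direct check gives $dh+hd=\id$ in positive degrees and produces the augmentation in degree $0$, proving exactness of stalks, hence of the complex of sheaves.

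\textbf{Main obstacle.} There is no deep obstacle: the content is entirely the bookkeeping that the Koszul complex functor $C^\scbul$ of \cite[\S~12.4]{K-S3} applied to the functor $F(\sigma)=\oim{\iota_\sigma}(\shm\vert_{U_\sigma})$ indeed reproduces, after the shift $[1]$, the classical \v{C}ech resolution, and that the "natural morphism $\shm\to C^\scbul(\lambda(\shm))[1]$" referred to in the statement is the \v{C}ech augmentation. The one point requiring a little care is matching conventions: the Koszul complex in \emph{loc.\ cit.}\ is indexed so that $C^i$ sits in degree $i$ with $C^1$ the first term, whence the shift by $[1]$ is needed to put the augmentation term $\shm$ in degree $0$; and one should note that since $I$ is finite (we took a finite affine covering $\shu$), all these complexes are bounded, so there is no convergence issue and the quasi-isomorphism statement is literally the exactness of a finite complex of sheaves. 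I would therefore present the proof as: (1) identify the morphism as the \v{C}ech augmentation; (2) reduce exactness to stalks; (3) write the classical contracting homotopy at a stalk; and remark that no properties of $\A$ beyond $\shm$ being a sheaf of abelian groups are used here — the coherence and the algebroid structure play no role in this particular lemma, they only enter in the companion Lemma~\ref{le:cech0} and in the applications that follow.
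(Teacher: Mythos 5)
Your strategy of giving the classical \v{C}ech-resolution argument directly, rather than quoting the general \cite[Th.~18.7.4~(ii)]{K-S3} as the paper does (applied to the local epimorphism $u\colon A\to X$ with $A$ the disjoint union of the $U_i$), is reasonable and in the right spirit; but there is a genuine error in your stalk computation. The terms of $C^i(\lambda(\shm))$ are built from the \emph{direct image} $\oim{\iota_\sigma}$ along the open immersion $\iota_\sigma\colon U_\sigma\hookrightarrow X$, not from the extension by zero $\eim{\iota_\sigma}$. Hence the stalk of $\oim{\iota_\sigma}(\shm\vert_{U_\sigma})$ at $x$ is $\indlim[V\ni x]\shm(V\cap U_\sigma)$, and when $x\notin U_\sigma$ this is in general far from zero (take $X=\BBA^1$, $U_\sigma=X\setminus\{x\}$, $\shm=\sho_X$). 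So the stalk of $C^i(\lambda(\shm))$ at $x$ is \emph{not} $\bigoplus_{\vert\sigma\vert=i,\ x\in U_\sigma}\shm_x$, and your contracting homotopy as written acts on the wrong complex.

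The fix is a small shift of viewpoint: instead of passing to stalks, restrict the whole augmented complex to a fixed member $U_{i_0}$ of the covering. For an open $V\subset U_{i_0}$ and a nonempty $\tau\subset I$ with $i_0\notin\tau$ one has $V\cap U_{\tau\cup\{i_0\}}=V\cap U_\tau$, so the rule $(hc)_\tau=\pm\,c_{\tau\cup\{i_0\}}$ (together with $(hc)_\tau=0$ when $i_0\in\tau$, and the augmentation $h(c)=c_{\{i_0\}}$ in degree~$1$) defines a genuine morphism of sheaves $h\colon C^p\vert_{U_{i_0}}\to C^{p-1}\vert_{U_{i_0}}$ satisfying $dh+hd=\id$ with appropriate signs. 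Thus the augmented complex is contractible over each $U_{i_0}$, and since the $U_{i_0}$ cover $X$, it is exact on $X$. (Equivalently, one can compute the stalk at $x$ as the filtered colimit of $C^\scbul(V)$ over open $V\subset U_{i_0}$ with $x\in V$: each $C^\scbul(V)$ is contractible by the same homotopy, and exactness passes to filtered colimits.) With this correction your elementary argument becomes a valid alternative to the paper's appeal to \cite[Th.~18.7.4]{K-S3}, and your closing remark --- that only the finiteness of $I$ and the sheaf property of $\shm$ are used, not coherence or the algebroid structure --- stands.
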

\begin{proof}
Apply \cite[Th.~18.7.4~(ii)]{K-S3} with $A=\bbigsqcup_{i\in I}U_i$,
$u\cl A\to X$. By this result, the complex
\eqn
F^\scbul_u\eqdot 0\to \shm\to C^1(\lambda(\shm))\to[d^1] C^2(\lambda(\shm))\to[d^2]\cdots
\eneqn
is exact.
\end{proof}

\begin{lemma}\label{le:cech2}
Let $M\in\mdcoh[\sha,\shu]$. Then there exists an epimorphism
$L\epito M$ in $\md[\sha,\shu]$ with $L\in \mdff[\sha,\shu]$. 
\end{lemma}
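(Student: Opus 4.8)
The statement asserts that every $M\in\mdcoh[\sha,\shu]$ admits an epimorphism from an object $L\in\mdff[\sha,\shu]$, that is, from a diagram object whose every component $L_\sigma$ is a locally free $\A[U_\sigma]$-module of finite rank. The plan is to build $L$ one ``level'' at a time, running an induction on the cardinality $\vert\sigma\vert$ of subsets $\sigma\in\Sigma$, from the largest down to the smallest (or equivalently, from the maximal elements of $\Sigma$ down). At each stage we will be constructing, over the affine open set $U_\sigma$, a free cover of a coherent module and then \emph{propagate} the generators via the comparison morphisms $q^M_{\sigma,\tau}$. Since $\A[U_\sigma]$ is a $\DQ$-algebra on the affine $U_\sigma$ (recall $U_\sigma=\bigcap_{i\in\sigma}U_i$ is a finite intersection of members of the good family $\BB$, hence again in $\BB$ by the stability under intersection noted just before Lemma~\ref{le:algresol1}), Lemma~\ref{le:algresol1} applies: every coherent $\A[U_\sigma]$-module is a quotient of a finite free one. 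This is the only genuinely analytic input.

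\textbf{Key steps.} First I would fix an enumeration of $\Sigma$ compatible with the partial order so that when we reach $\sigma$ all $\tau\supsetneq\sigma$ have already been treated; equivalently, process subsets in order of decreasing cardinality. For $\sigma$ maximal in $\Sigma$ (i.e.\ $U_\sigma$ appears in no smaller intersection — in practice the top-cardinality $\sigma$'s), apply Lemma~\ref{le:algresol1} directly to $M_\sigma$ to get a finite free $\A[U_\sigma]$-module $L_\sigma$ with an epimorphism $L_\sigma\epito M_\sigma$. For a general $\sigma$, having already built $L_\tau\epito M_\tau$ for all $\tau\supsetneq\sigma$, consider the $\A[U_\sigma]$-module $\bigoplus_{\tau\supsetneq\sigma}(L_\tau)\vert_{U_\sigma}$ mapping to $M_\sigma$ through the composite $L_\tau\vert_{U_\sigma}\to M_\tau\vert_{U_\sigma}\to[q^M_{\sigma,\tau}]M_\sigma$; let $M'_\sigma$ be the cokernel of $\bigoplus_{\tau\supsetneq\sigma}(L_\tau)\vert_{U_\sigma}\to M_\sigma$, a coherent $\A[U_\sigma]$-module. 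Apply Lemma~\ref{le:algresol1} to $M'_\sigma$ to get a finite free $\A[U_\sigma]$-module $F_\sigma$ surjecting onto $M'_\sigma$, and lift this surjection (using freeness of $F_\sigma$ and surjectivity of $M_\sigma\to M'_\sigma$) to a map $F_\sigma\to M_\sigma$. Set $L_\sigma=F_\sigma\oplus\bigoplus_{\tau\supsetneq\sigma}(L_\tau)\vert_{U_\sigma}$; this is locally free of finite rank over $\A[U_\sigma]$, and by construction $L_\sigma\to M_\sigma$ is an epimorphism. The maps $q^L_{\sigma,\tau}\cl L_\tau\vert_{U_\sigma}\to L_\sigma$ are the obvious inclusions of the corresponding summands (for $\tau\supsetneq\sigma$, and composites thereof in general), which makes the cocycle-type diagrams \eqref{eq:sigma123} commute on the nose, so $L=(\{L_\sigma\},\{q^L_{\sigma,\tau}\})$ is a well-defined object of $\mdff[\sha,\shu]$. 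Finally one checks that the $L_\sigma\to M_\sigma$ are compatible with the $q$'s, i.e.\ define a morphism $L\to M$ in $\md[\sha,\shu]$, and that it is an epimorphism — the latter because epimorphisms in $\md[\sha,\shu]$ are tested componentwise and each $L_\sigma\to M_\sigma$ is surjective.

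\textbf{Main obstacle.} The bookkeeping is what requires care rather than any deep idea: one must arrange the summand structure of $L_\sigma$ so that the restriction maps $q^L_{\sigma,\tau}$ are genuine split inclusions and the associativity square \eqref{eq:sigma123} holds strictly, not merely up to isomorphism. Concretely, $(L_\tau)\vert_{U_\sigma}$ for $\tau\supsetneq\sigma$ already contains, as a sub-direct-summand, copies of $(L_{\tau'})\vert_{U_\sigma}$ for all $\tau'\supsetneq\tau$, so to avoid over-counting one should define $L_\sigma$ as $\bigoplus_{\rho\supseteq\sigma}(F_\rho)\vert_{U_\sigma}$, a single sum indexed by all $\rho$ containing $\sigma$, with $F_\rho$ the ``new free part'' introduced at stage $\rho$; then $q^L_{\sigma,\tau}$ is simply the inclusion of the sub-sum over $\rho\supseteq\tau$ into the sum over $\rho\supseteq\sigma$, and the diagrams commute trivially. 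With that formulation the induction goes through cleanly, and $L$ is manifestly in $\mdff[\sha,\shu]$ since each $F_\rho$ is finite free over $\A[U_\rho]$ and the family $\{U_\sigma\}$ is finite. Routine verifications — coherence of the cokernels $M'_\sigma$ (Lemma~\ref{le:cohcoh} and coherence of $\A$), exactness of restriction to affine opens (Proposition~\ref{prop:cut}) if one wants the cokernel computations to behave well — are left to the reader in the write-up.
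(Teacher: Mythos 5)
There is a systematic error in the direction of the subset relations that makes the construction, as written, fail to parse. Recall that $U_\sigma=\bigcap_{i\in\sigma}U_i$, so that $\tau\subseteq\sigma$ implies $U_\tau\supseteq U_\sigma$; the paper's comparison morphisms $q^M_{\sigma,\tau}\cl M_\tau\vert_{U_\sigma}\to M_\sigma$ are indexed by $\tau\subset\sigma$, i.e.\ they restrict from the bigger open down to the smaller one. Throughout your proposal you write $\tau\supsetneq\sigma$ and $\rho\supseteq\sigma$ (and you ``process in order of decreasing cardinality''), so you are asking to restrict $L_\tau$ from $U_\tau$ to $U_\sigma$ with $U_\tau\subsetneq U_\sigma$ --- that is an extension, not a restriction, and does not produce a coherent, let alone locally free, $\A[U_\sigma]$-module. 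Likewise $q^M_{\sigma,\tau}$ is simply not defined when $\tau\supsetneq\sigma$. Every instance of $\supset$ in your argument should be $\subset$, and the induction should run by \emph{increasing} cardinality of $\sigma$.

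Once the inclusions are flipped, your final formulation --- set $L_\sigma=\bigoplus_{\rho\subseteq\sigma}(F_\rho)\vert_{U_\sigma}$ with $q^L_{\sigma,\tau}$ the inclusion of the subsum over $\rho\subseteq\tau$ --- is exactly the shape of the paper's construction, and the verification that the diagrams \eqref{eq:sigma123} commute and that $L\to M$ is a componentwise epimorphism goes through. Two remarks on economy. First, the cokernel refinement you introduce (choosing $F_\sigma$ to cover only the cokernel of $\bigoplus_{\tau\subsetneq\sigma}L_\tau\vert_{U_\sigma}\to M_\sigma$) is not needed: the paper simply picks, for each $\sigma$ independently, a free epimorphism $L'_\sigma\epito M_\sigma$ via Lemma~\ref{le:algresol1}, and sets $L_\sigma=\bigoplus_{\tau\subseteq\sigma}L'_\tau\vert_{U_\sigma}$; surjectivity of $L_\sigma\to M_\sigma$ is then immediate, since the summand $L'_\sigma$ already surjects onto $M_\sigma$. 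This avoids both the induction and the check that the restrictions of the earlier $F_\rho$ continue to hit the ``old'' part of $M_\sigma$ (a check you would have to justify using the cocycle condition). Second, your cokernel optimization is in fact correct if carried out carefully, and would produce smaller ranks, but given that finiteness is all that is required here, the added bookkeeping buys nothing.
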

\begin{proof}
Applying Lemma~\ref{le:algresol1}, we choose for each $\sigma\in\Sigma$ an epimorphism 
$L'_\sigma\epito M_\sigma$ with 
a locally free $\A[U_\sigma]$-module $L'_\sigma$ of finite rank. 
Set
\eqn
&& L_\sigma\eqdot\bigoplus_{\emptyset\neq\tau\subset\sigma}L'_\tau\vert_{U_\sigma}
\eneqn
and define the morphism $L_\sigma\to M_\sigma$ by the commutative
diagrams in which $\tau\subset\sigma$:
\eqn
&&\xymatrix{
L_\sigma\ar[r]&M_\sigma\\
L'_\tau\vert_{U_\sigma}\ar[r]\ar[u]&M_\tau\vert_{U_\sigma}.\ar[u]
}\eneqn
For $\tau\subset\sigma$, the morphism  $q^L_{\sigma,\tau}\cl L_{\tau}\vert_{U_\sigma}\to L_{\sigma}$ 
is defined by the morphisms ($\lambda\subset\tau$):
\eqn
&&\xymatrix@C=8ex{
L_{\tau}\vert_{U_\sigma}\ar[r]^{q^L_{\sigma,\tau}}&L_{\sigma}.\\
L'_\lambda\vert_{U_\sigma}\ar[u]\ar[ur]&
}\eneqn
Clearly, the family of morphisms $q^L_{\sigma,\tau}$ satisfies the
compatibility conditions similar to those in diagram
\eqref{eq:sigma123}. We have thus constructed an object
$L\in\md[\sha,\shu]$, and the family of morphisms $L_\sigma\to M_\sigma$
defines the epimorphism $L\epito M$ in $\md[\sha,\shu]$.
\end{proof}
\begin{proof}[Proof of Theorem~\ref{th:mainafrAlg}]
By Lemma~\ref{le:cech2}, there exists an exact sequence in $\mdcoh[\sha,\shu]$
\eq\label{eq:cech3}
&&0\To L_{d_X+1}\To\cdots\to L_1 \to L_0\to\lambda(\shm)\to 0
\eneq
with the $L_i$'s in $\mdff[\sha,\shu]$ (see Corollary~\ref{cor:hddim}).
Consider the complex
\eq\label{eq:cech4}
&&L^\scbul \eqdot \cdots\to L_1 \to L_0\to0.
\eneq
Hence, we have a quasi-isomorphism
$L^\scbul\to[qis]\lambda(\shm)$. Using Lemma~\ref{le:cech0}, we find
a quasi-isomorphism
\eq\label{eq:cech5}
&&C^\scbul(L^\scbul)\to[qis]C^\scbul(\lambda(\shm)).
\eneq
Then, the result follows from Lemma~\ref{le:cech1}.
\QED

\chapter{Kernels}\label{chapter:Kern} 

\section{Convolution of kernels: definition}

Integral transforms, also called ``correspondences'', are of constant
use in algebraic and analytic geometry
and we refer to the book \cite{Hu} for an exposition.
Here, we shall develop a similar formalism in the framework of
$\DQ$-modules ({\em i.e.,} modules over $\DQ$-algebroids).
\index{DQmodule@$\DQ$-module}

Consider complex manifolds $X_i$ endowed with 
$\DQ$-algebroids  $\A[X_i]$ ($i=1,2,\dots$).
\begin{notation}\label{not:www1}
\bnum
\item
Consider a product of manifolds $X\times Y\times Z$. We denote by
$p_i$ the $i$-th projection and by $p_{ij}$ the $(i,j)$-th projection
({\em e.g.,} $p_{13}$ is the projection from 
$X_1\times X_1^a\times X_2$ to $X_1\times X_2$). 
We use similar notations for a product of four manifolds. 
\item
We write $\A[i]$ and $\A[ij^a]$
instead of $\A[X_i]$ and $\A[X_i\times X_j^a]$  and
similarly with other products. We use the same notations 
for $\dA[X_i]$. 
\item
When there is no risk of confusion, we do note write the symbols 
$\opb{p_{i}}$ and similarly with $i$ replaced with $ij$, etc.\ 
\enum
\end{notation}

Let $\shk_i\in\RD^\Rb(\A[X_i\times X_{i+1}^a])$ ($i=1,2$). We set
\eq
&&\ba{rcl}
\shk_1\lltens[{\A[2]}]\shk_2
&\seteq& \opb{p_{12}}\shk_1\lltens[{\opb{p_2}\A[2]}]\opb{p_{23}}\shk_2\\
&\simeq&
(\shk_1\letens \shk_2)\lltens[\rma_{2}\etens\rma_{2^a}]\dA[2]
\in \Derb(\A[1]\etens\coro_{X_2}\etens\A[3^a]).
\ea
\label{not:tens}
\eneq
Similarly, for $\shk_i\in\RD^\Rb(\A[X_i\times X_{i+1}])$ ($i=1,2$),
we set
\eq
&&\rhom[{\A[2]}](\shk_1,\shk_2)\seteq
\rhom[{p_2^{-1}\A[2]}](p_{12}^{-1}\shk_1,p_{23}^{-1}\shk_2).
\label{not:hom}
\eneq
Here we identify $X_1\times X_2\times X_3^a$
with the diagonal set of $X_1\times X_2^a\times X_2\times X_3^a$.

This tensor product is not well suited to treat $\DQ$-modules.
For example, $\A[X\times Y]\not=\A\etens\A[Y]$. 
This leads us to introduce a kind of completion
of the tensor product as follows.
\begin{definition}\label{def:tens1}
Let $\shk_i\in\RD^\Rb(\A[X_i\times X_{i+1}^a])$ ($i=1,2$). 
We set \index{tensor@$\dtens[{\A}]$}%
\eq\label{eq:tensor}
\shk_1\dtens[{\A[2]}]\shk_2&=& 
\opb{\delta_2}(\shk_1\ldetens \shk_2)\lltens[\rma_{22^a}]\dA[2]\\
&=& \opb{p_{12}}\shk_1\lltens[\opb{p_{12}}\rma_{1^a2}]\A[123]
\lltens[\opb{p_{23}}\rma_{23^a}]\opb{p_{23}}\shk_2.\nonumber
\eneq
It is an object of
$\Derb(\opb{p_{13}}\A[{13^a}])$ where $p_{13}\cl X_1\times X_2\times
X_3\to
X_1\times X_3$ is the projection.
\end{definition}

We have a morphism in $\Derb(\opb{p_1}\A[X_1]\tens \opb{p_3}\A[X_3^a])$:
\eq\label{eq:tenstodtens}
&&\shk_1\lltens[{\A[2]}]\shk_2\to\shk_1\dtens[{\A[2]}]\shk_2.
\eneq
Note that \eqref{eq:tenstodtens} is an isomorphism if $X_1=\rmpt$ or $X_3=\rmpt$.

\begin{definition}\label{def:kernel1}
Let $\shk_i\in\RD^\Rb(\A[X_i\times X_{i+1}^a])$ ($i=1,2$). 
We set
\eq\label{eq:kernandDelta}
\shk_1\conv[X_2]\shk_2&=& 
\reim{p_{13}}(\shk_1\dtens[{\A[2]}]\shk_2)\in\Derb(\A[X_1\times X_3^a]),\\
\shk_1\sconv[X_2]\shk_2
&=&\roim{p_{13}}(\shk_1\dtens[{\A[2]}]\shk_2)\in\Derb(\A[X_1\times X_3^a]).
\label{eq:kernandDeltab}
\eneq
We call $\conv[X_2]$ the  convolution
\index{conv@$\conv[X]$}\glossary{convolution}%
of $\shk_1$ and $\shk_2$ (over $X_2$).
If there is no risk of confusion, we write
$\shk_1\circ\shk_2$ for $\shk_1\conv[X_2]\shk_2$ and similarly with $\sconv$.
\end{definition}
Note that in case where $X_3=\rmpt$ we get:
\eqn
\shk_1\conv\shk_2&\simeq& 
\reim{p_{1}}(\shk_1\lltens[\rma_{2}]\opb{p_{2}}\shk_2),
\eneqn
and in the general case, we have:
\eq\label{eq:kernandDelta2}&&\ba{rcl}
\shk_1\conv[X_2]\shk_2&\simeq& (\shk_1\ldetens\shk_2)\conv[X_2\times X_2^a]\dA[X_2]\\
&\simeq&
\reim{p_{14}}\bl(\shk_1\ldetens\shk_2)\lltens[{\A[22^a]}]\dA[2]\br,
\ea\eneq
where $p_{14}$ is the projection
$X_1\times X_2\times X_2^a\times X_3^a\to X_1\times X_3^a$.
There are canonical isomorphisms 
\eq
\shk_1\conv[X_2]\dA[X_2]\simeq \shk_1\quad\text{and}\quad
\dA[X_1]\conv[X_1]\shk_1\simeq \shk_1.
\eneq
One shall be aware that
$\conv$ and $\sconv$ are not associative in general.
(See Proposition~\ref{pro:HH1}~\eqref{ass:conv}.)

However, if $\shl$ is a 
bi-invertible $\A[X_2]\tens\A[X^a_2]$-module 
and the $\shk_i$'s ($i=1,2$) are as above, there are natural isomorphisms
\eqn
&&\shk_1\conv[X_2]\shl\simeq \shk_1\lltens[{\A[X_2]}]\shl,
\quad \shl\conv[X_2]\shk_2\simeq\shl \lltens[{\A[X_2]}]\shk_2,\\
&&(\shk_1\conv[X_2]\shl)\conv[X_2]\shk_2\simeq \shk_1\conv[X_2](\shl\conv[X_2]\shk_2).
\eneqn

For a closed subset $\Lambda_{i}$ of $X_{i}\times X_{i+1}$ ($i=1,2$), 
we set \index{lambda@$\Lambda_{1}\circ\Lambda_{2}$}%
\eq\label{eq:lambdacicr}
\Lambda_{1}\circ\Lambda_{2}&\eqdot& 
p_{13}(\opb{p_{12}}\Lambda_1\cap\opb{p_{23}}\Lambda_2)\\
&=&p_{14}((\Lambda_1\times \Lambda_2)\cap(X_1\times\Delta_2\times X_3))
\subset X_1\times X_3.\nonumber
\eneq
Note that if 
$\Lambda_i$  is a closed  complex analytic subvariety of
$X_i\times X_{i+1}^a$ ($i=1,2$) and $p_{13}$ is proper on 
$\opb{p_{12}}\Lambda_1\cap\opb{p_{23}}\Lambda_2$, 
then $\Lambda_1\circ\Lambda_2$ is a closed  complex analytic subvariety of 
$X_1\times X_3^a$.

Let us still denote by $\circ$ the convolution of 
$\gr(\sha)$-modules. More precisely for 
$\shl_i\in \RD^\Rb(\bA[X_i\times X_{i+1}^a])$ ($i=1,2$), we set
\eqn
\shl_1\circ\shl_2&=& 
\reim{p_{14}}\bl(\shl_1\ldetens\shl_2)\lltens[{\bA[22^a]}]\gr(\dA[2])\br.
\eneqn

\begin{proposition}\label{pro:circgr}
For $\shk_i\in\RD^\Rb(\A[X_i\times X_{i+1}^a])$ \lp$i=1,2$\rp, we have
\eq\label{eq:circgr2}
&&\gr(\shk_1\circ\shk_2)\simeq \gr(\shk_1)\circ\gr(\shk_2).
\eneq
\end{proposition}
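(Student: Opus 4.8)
The strategy is to reduce \eqref{eq:circgr2} to the compatibility of the functor $\gr$ with the three elementary operations out of which $\conv[X_2]$ is built. Recall from \eqref{eq:kernandDelta2} that
$$\shk_1\conv[X_2]\shk_2\simeq\reim{p_{14}}\bl(\shk_1\ldetens\shk_2)\lltens[{\A[22^a]}]\dA[2]\br,$$
and that the graded convolution of $\gr(\shk_1)$ and $\gr(\shk_2)$ is given by literally the same formula with $\A[22^a]$ replaced by $\bA[22^a]$ and $\dA[2]$ replaced by $\gr(\dA[2])$. Hence it is enough to check that $\gr$ commutes, in the appropriate sense, with (i) the proper direct image $\reim{p_{14}}$, (ii) the derived tensor product $\lltens[{\A[22^a]}](\scbul)$, and (iii) the external product $\scbul\ldetens\scbul$, and that $\gr$ sends $\dA[2]$ to $\gr(\dA[2])$ (which is its definition) and the various algebroids $\A[\bullet]$ to $\bA[\bullet]$.

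First I would dispose of (ii): this is exactly Proposition~\ref{pro:tensgr}, which (as noted right after \eqref{eq:dergr}) continues to hold for $\coro$-algebroids; it gives $\gr\bl(\shk_1\ldetens\shk_2)\lltens[{\A[22^a]}]\dA[2]\br\simeq\gr(\shk_1\ldetens\shk_2)\lltens[{\bA[22^a]}]\gr(\dA[2])$. For (i), Proposition~\ref{pro:operationgr} is stated for $\roim{f}$, but its proof only uses that the functor is triangulated and that $\gr(\scbul)$ is represented by the mapping cone of multiplication by $\hbar$; since $\reim{f}$ is also triangulated and commutes with such a mapping cone, the same argument gives $\gr\reim{p_{14}}\shn\simeq\reim{p_{14}}\gr\shn$ for $\shn\in\RD^\Rb(\coro_{X_1\times X_2\times X_2^a\times X_3^a})$ (after pulling back the $\DQ$-algebroid structure). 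For (iii), I would unwind the definition $\shk_1\ldetens\shk_2\simeq\A[\bullet]\lltens[{\A[\bullet]\etens\A[\bullet]}](\shk_1\letens\shk_2)$ from \eqref{eq:detWst2b}, apply Proposition~\ref{pro:tensgr} once more, and use that $\gr$ commutes with $\opb{p_i}$ and with $\letens$ (Proposition~\ref{pro:operationgr}) and that $\gr(\A[\bullet])=\bA[\bullet]$; this yields $\gr(\shk_1\ldetens\shk_2)\simeq\gr(\shk_1)\ldetens\gr(\shk_2)$, which is again the external product used in the definition of the graded convolution.

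Combining (i)--(iii) in the displayed formula for $\shk_1\conv[X_2]\shk_2$ produces
$$\gr(\shk_1\conv[X_2]\shk_2)\simeq\reim{p_{14}}\bl(\gr(\shk_1)\ldetens\gr(\shk_2))\lltens[{\bA[22^a]}]\gr(\dA[2])\br=\gr(\shk_1)\circ\gr(\shk_2),$$
which is \eqref{eq:circgr2}. No genuinely new idea is required; the only point demanding care is the bookkeeping of which pullbacks $\opb{p_{ij}}$ and which of the external-product algebroids occur at each stage, and checking that $\gr$ of each such algebroid is the expected $\bA$-algebroid (equivalently, locally $\OO$), so that the isomorphisms of Propositions~\ref{pro:tensgr} and~\ref{pro:operationgr} can be chained together. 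This compatibility of the various identifications, rather than any hard analysis, is the main (mild) obstacle.
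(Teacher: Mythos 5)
Your proof is correct and follows essentially the same route as the paper's: the paper's argument is precisely the one-line observation that, after invoking Proposition~\ref{pro:tensgr}, it suffices to note that $\gr$ commutes with inverse images, proper direct images, and $\detens$, which is exactly the decomposition you carry out (with somewhat more bookkeeping). Nothing to add or correct.
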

\begin{proof}
Applying Proposition~\ref{pro:tensgr}, it remains to remark that the functor 
$\gr$ commutes with the functors of inverse images and proper direct images 
as well as with the functor $\detens$. 
\end{proof}

\section{Convolution of kernels: finiteness}\label{section:ker1}

In this section, we use  Notation~\ref{not:www1}

Consider  complex manifolds $X_i$ endowed with 
$\DQ$-algebroids  $\A[X_i]$ ($i=1,2,\dots$). 
We denote by $d_X$ the complex dimension of $X$ and we write for
short $d_i$ instead of $d_{X_i}$. 

We shall prove the following coherency theorem for $\DQ$-modules
by reducing it to the corresponding result
for $\sho$-modules due to Grauert (\cite{Gr}).
In the sequel, for a closed subset $\Lambda$ of $X$, we 
denote by $\Derb_{\coh,\Lambda}(\A)$
\index{DerCcoh@$\Derb_{\coh,\Lambda}(\A)$}%
the full triangulated subcategory of $\Derb_\coh(\A)$ 
consisting of objects supported by $\Lambda$. We define similarly 
$\Derb_{\gd,\Lambda}(\Ah)$.
\index{DerCgd@$\Derb_{\gd,\Lambda(\Ah)}$}%

\begin{theorem}\label{th:kernel1}
For $i=1,2$, let $\Lambda_i$ be a closed subset of $X_i\times X_{i+1}$
and $\shk_i\in\RD^\Rb_{\coh,\Lambda_i}(\A[X_i\times X_{i+1}^a])$.
Assume that
$\Lambda_1\times_{X_2}\Lambda_2$ is proper over $X_1\times X_3$,
and set $\Lambda=\Lambda_1\circ\Lambda_2$.
Then the object $\shk_1\circ\shk_2$ belongs to 
$\RD^\Rb_{\coh,\Lambda}(\A[X_1\times X_3^a])$.
\end{theorem}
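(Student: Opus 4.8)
The plan is to reduce both the coherence and the support assertions to the corresponding classical facts for $\OO$-modules (Grauert's theorem), using the formalism of cohomologically complete modules from Chapter~\ref{chapter:FD}. Concretely, I would argue in three steps, working with the description
$$
\shk_1\conv[X_2]\shk_2\simeq\reim{p_{14}}\bl(\shk_1\ldetens\shk_2)\lltens[{\A[22^a]}]\dA[2]\br
$$
given in \eqref{eq:kernandDelta2}, where $p_{14}\cl X_1\times X_2\times X_2^a\times X_3^a\to X_1\times X_3^a$.

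\emph{First step: $\shk_1\conv[X_2]\shk_2$ is cohomologically complete.} The external product $\shk_1\ldetens\shk_2$ is a coherent $\A[X_1\times X_2^a\times X_2\times X_3^a]$-module — its graded module is $\gr(\shk_1)\ldetens\gr(\shk_2)$, which is $\OO$-coherent, so coherence follows from Theorem~\ref{th:formalfini1} — hence it is cohomologically complete by Theorem~\ref{th:cohimplcohco}. Regarding it as an $\A[22^a]$-module and tensoring with the coherent module $\dA[2]$, Proposition~\ref{pro:cocotens} (whose hypotheses hold since $\gr(\A[22^a])\simeq\OO[X_2\times X_2^a]$ is Noetherian and complex manifolds have finite flabby dimension) shows that $(\shk_1\ldetens\shk_2)\lltens[{\A[22^a]}]\dA[2]$ is cohomologically complete. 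This object is supported by $S\eqdot(\Lambda_1\times\Lambda_2)\cap(X_1\times\Delta_{X_2}\times X_3)$, on which $p_{14}$ is proper by hypothesis; therefore $\reim{p_{14}}$ agrees with $\roim{p_{14}}$ on it, and Proposition~\ref{pro:cohcodirim} gives that $\shk_1\conv[X_2]\shk_2$ is cohomologically complete.

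\emph{Second step: $\gr(\shk_1\conv[X_2]\shk_2)$ is coherent and supported by $\Lambda$.} By Proposition~\ref{pro:circgr}, $\gr(\shk_1\conv[X_2]\shk_2)\simeq\gr(\shk_1)\circ\gr(\shk_2)$, with $\gr(\shk_i)\in\RD^\Rb_{\coh,\Lambda_i}(\bA[X_i\times X_{i+1}^a])$. Since $\bA$ is an invertible $\OO$-algebroid, hence locally equivalent to $\OO$, and $\gr(\dA[2])\simeq\oim{\delta_{X_2}}\OO[X_2]$ has finite Tor-dimension over $\bA[22^a]\simeq\OO[X_2\times X_2^a]$ ($\OO$ being regular), the object $(\gr\shk_1\ldetens\gr\shk_2)\lltens[{\bA[22^a]}]\gr(\dA[2])$ lies in $\RD^\Rb_{\coh}$ and is supported by $S$. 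As $p_{14}$ is proper on $S$, Grauert's direct image theorem \cite{Gr} (in its $\OO$-algebroid version) yields $\gr(\shk_1)\circ\gr(\shk_2)\in\RD^\Rb_{\coh,\Lambda}(\bA[X_1\times X_3^a])$, using $p_{14}(S)=\Lambda_1\circ\Lambda_2=\Lambda$ (cf.\ \eqref{eq:lambdacicr}).

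\emph{Third step: conclusion.} The object $\shk_1\conv[X_2]\shk_2$ is bounded, being obtained from bounded complexes by functors of finite cohomological dimension ($\A$ has finite homological dimension by Theorem~\ref{th:hddim}, and $\reim{p_{14}}$ has finite cohomological dimension). By the first two steps it is cohomologically complete with $\gr(\shk_1\conv[X_2]\shk_2)\in\RD^\Rb_\coh(\bA)$, so Theorem~\ref{th:formalfini2} (which also applies to algebroids, coherence being local) gives $\shk_1\conv[X_2]\shk_2\in\RD^\Rb_\coh(\A[X_1\times X_3^a])$. Finally, Corollary~\ref{cor:conservative1} gives $\Supp(\shk_1\conv[X_2]\shk_2)=\Supp(\gr(\shk_1\conv[X_2]\shk_2))\subset\Lambda$, which completes the proof. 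The essential point — and the main obstacle — is the first step: cohomological completeness is exactly what allows the graded reduction to transport finiteness information back to $\A$, and it is the only part not immediately reducible to classical results on coherent $\OO$-modules.
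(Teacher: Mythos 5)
Your proposal is correct and follows essentially the same strategy as the paper's own proof: establish cohomological completeness of the inner object, apply Proposition~\ref{pro:cohcodirim}, compute $\gr$ of the convolution and invoke Grauert's direct image theorem, then conclude via Theorem~\ref{th:formalfini2}. The one organizational difference is in how you establish cohomological completeness of $(\shk_1\ldetens\shk_2)\lltens[{\A[22^a]}]\dA[2]$: you appeal to Proposition~\ref{pro:cocotens} (noting first that $\shk_1\ldetens\shk_2$ is coherent over the big algebroid, hence cc), whereas the paper reduces locally to the case $\shk_i\simeq\A[X_i\times X_{i+1}^a]$, obtaining $\A[X_1\times X_2\times X_3^a]$ which is visibly coherent hence cc. Since Proposition~\ref{pro:cocotens} is itself proved by exactly that local reduction to bounded complexes of free modules, the underlying argument is the same; your version is a little cleaner to read but requires some care matching left/right module structures when identifying $\A[22^a]$ with $\A[22^a]^\rop$ (you want $\dA[X_2]$ to play the role of the coherent $\sha^\rop$-module $\shn$, not the role of $\shm$, so that the hypothesis of the proposition is actually met). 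You also spell out two points the paper leaves implicit — that properness on the support identifies $\reim{p_{14}}$ with $\roim{p_{14}}$, and that boundedness plus the support assertion follow from the graded side via Corollary~\ref{cor:conservative1} — both of which are correct.
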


\begin{proof}
Since the question is local in $X_1$ and $X_3$, we may assume from the
beginning that
$\A[X_1]$
and $\A[X_3]$ are \DQ-algebras.

We shall first show that
\eq
&&\text{$\shk_1\dtens[{\A[2]}]\shk_2$ is \cc.}\label{eq:cc}
\eneq
Since this statement is a local statement on $X_1\times X_2\times X_3$, 
we may assume that $\A[X_2]$ is a
\DQ-algebra.
Since $\shk_1$ and $\shk_2$ may be locally represented by finite complexes
of free modules of finite rank, in order to see \eqref{eq:cc},
we may assume $\shk_i\simeq\A[X_i\times X_{i+1}^a]$ ($i=1,2$).
Then 
$\shk_1\dtens[{\A[2]}]\shk_2\simeq\A[X_1\times X_2\times X_3^a]$ is
\cc\ by Theorem~\ref{th:cohimplcohco}.
Hence $\shk_1\conv\shk_2=\roim{p_{13}}(\shk_1\dtens[{\A[2]}]\shk_2)$
is also \cc\ by Proposition~\ref{pro:cohcodirim}.

On the other hand,
$\gr(\shk_1\conv\shk_2)
\simeq\roim{p_{13}}\bl p_{12}^*\gr\shk_1\ltens[{\OO[X_1\times X_2\times
X_3]}] p_{23}^*\gr\shk_2\br$
belongs to $\Db_\coh(\OO[X_1\times X_3])$ by 
Grauert's direct image theorem (\cite{Gr}).
Hence Theorem~\ref{th:formalfini2} implies that $\shk_1\conv\shk_2$ belongs to
$\Db_\coh(\A[X_1\times X_3^a])$.
\end{proof}

\begin{remark} 
In \cite{B-B-P}, its authors use a variant of
Theorem~\ref{th:kernel1} in the symplectic case. They assert that the proof
follows from Houzel's finiteness theorem on modules over 
sheaves  of multiplicatively convex nuclear Fr{\'e}chet algebras (see \cite{Ho}).
However, they do not give any proof, details being qualified of ``routine''. 
\end{remark}

\begin{corollary}\label{co:kernel1}
Let $\shm$ and $\shn$ be two objects of $\RD^\Rb_{\coh}(\A[X])$ and 
assume that $\Supp(\shm)\cap\Supp(\shn)$ is compact.  
Then the object $\RHom[{\A[X]}](\shm,\shn)$ belongs to 
$\RD^\Rb_{f}(\coro)$. 
\end{corollary}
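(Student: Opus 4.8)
The idea is to realize $\RHom[{\A[X]}](\shm,\shn)$ as a convolution of kernels and then apply Theorem~\ref{th:kernel1} together with the general principle that $\RHom$ over $\coro$ of complexes with coherent cohomology over a point is perfect. First I would use the last isomorphism of Lemma~\ref{le:homCdelta}, which gives
\eqn
&&\rhom[{\A[X]}](\shm,\shn)\simeq
\rhom[{\A[X\times X^a]}](\shm\ldetens\RDA\shn,\dA)
\quad\text{in }\Der(\coro_X).
\eneqn
Alternatively, and more in the spirit of the kernel calculus developed in this section, I would take $X_1=X_3=\rmpt$, so that $\A[X_1]$ and $\A[X_3]$ are trivially $\DQ$-algebras, view $\shm$ (resp.\ $\RDA\shn$, suitably placed) as a coherent kernel on $\rmpt\times X^a$ (resp.\ on $X\times\rmpt$), and recognize $\RHom[{\A[X]}](\shm,\shn)$ up to the identifications in Lemma~\ref{le:homCdelta} as a convolution $\shk_1\conv[X]\shk_2$ over $X$ of two objects of $\RD^\Rb_{\coh}$. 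Here the role of $\RDA$ is legitimate because Corollary~\ref{cor:hddim} (via Theorem~\ref{th:hddim}) guarantees $\RDA$ preserves $\Derb_\coh$.

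Next I would check the properness hypothesis of Theorem~\ref{th:kernel1}. With $X_1=X_3=\rmpt$ the relevant closed subsets are $\Lambda_1\subset\rmpt\times X$ and $\Lambda_2\subset X\times\rmpt$, essentially $\Supp(\shm)$ and $\Supp(\shn)$ (using Corollary~\ref{cor:grsupp} to know these are closed analytic subsets), and the fibre product $\Lambda_1\times_X\Lambda_2$ is $\Supp(\shm)\cap\Supp(\shn)$. Properness over $X_1\times X_3=\rmpt$ is exactly the hypothesis that $\Supp(\shm)\cap\Supp(\shn)$ is compact. Theorem~\ref{th:kernel1} then yields that the convolution belongs to $\RD^\Rb_{\coh,\Lambda}(\A[\rmpt])$ with $\Lambda=\Lambda_1\circ\Lambda_2\subset\rmpt$, i.e.\ to $\RD^\Rb_{\coh}(\coro)$ since the $\DQ$-algebroid on a point is just $\coro$. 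Because $\coro=\C\forl$ is Noetherian (indeed a complete local ring) of finite global dimension, $\RD^\Rb_{\coh}(\coro)=\RD^\Rb_f(\coro)$, which is the desired conclusion.

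The one point requiring care — and the main obstacle — is matching $\RHom[{\A[X]}](\shm,\shn)$ with the convolution \emph{as defined in this paper}, i.e.\ with the completed tensor $\dtens$ rather than the naive $\lltens$, and controlling the derived global sections $\roim{p_{13}}=\rsect(X;-)$ correctly. For this I would proceed in two steps: first, by Lemma~\ref{le:homCdelta}, $\rhom[{\A[X]}](\shm,\shn)\simeq\rhom[{\A[X\times X^a]}](\shm\ldetens\RDA\shn,\dA)$ in $\Der(\coro_X)$; second, applying $\rsect(X;-)$ and noting that $\RHom[{\A[X]}](\shm,\shn)=\rsect(X;\rhom[{\A[X]}](\shm,\shn))$, I would rewrite the right-hand side using Definition~\ref{def:kernel1} (and \eqref{eq:kernandDelta2}) as $\shk_1\sconv[X]\shk_2$ or $\shk_1\conv[X]\shk_2$ for appropriate kernels built from $\shm$ and $\RDA\shn$; since $\Supp(\shm)\cap\Supp(\RDA\shn)=\Supp(\shm)\cap\Supp(\shn)$ is compact, $\reim{p_{13}}\simeq\roim{p_{13}}$ and the two convolutions agree. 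Then Theorem~\ref{th:kernel1} applies verbatim. A mild subtlety is that $\RDA\shn$ may have cohomology in several degrees, but it still lies in $\RD^\Rb_{\coh}(\A[X^a])$, which is all that Theorem~\ref{th:kernel1} requires, so no boundedness issue arises. Finally the identification $\RD^\Rb_{\coh}(\coro)\simeq\RD^\Rb_f(\coro)$ is immediate from the Noetherianity of $\coro$ recorded in the preliminaries.
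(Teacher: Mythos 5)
Your approach is correct and matches the route the paper intends (the corollary carries no written proof, but is plainly an instance of Theorem~\ref{th:kernel1} with $X_1=X_3=\rmpt$, $X_2=X$, followed by $\RD^\Rb_\coh(\coro)=\RD^\Rb_f(\coro)$ since $\coro$ is Noetherian). The one small slip is in the kernel placement: to obtain $\RHom[{\A[X]}](\shm,\shn)$ you should take $\shk_1=\RDA\shm$ as the kernel on $\rmpt\times X^a$ (it is a left $\A[X^a]$-module) and $\shk_2=\shn$ on $X\times\rmpt^a$, not ``$\shm$ on $\rmpt\times X^a$ and $\RDA\shn$ on $X\times\rmpt$'' as written (neither of which carries the module structure required on that factor); the \emph{third} isomorphism of Lemma~\ref{le:homCdelta}, $\rhom[{\A[X]}](\shm,\shn)\simeq\RDA\shm\lltens[{\A[X]}]\shn$, then identifies $\shk_1\conv[X]\shk_2$ with $\RHom[{\A[X]}](\shm,\shn)$ once compactness of $\Supp(\shm)\cap\Supp(\shn)$ is used to replace compactly supported by ordinary global sections, and the remaining points you make (properness of $\Lambda_1\times_X\Lambda_2$ over $\rmpt$, and $\dtens=\lltens$ when an end factor is a point) are exactly right.
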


\begin{proposition} \label{pro:HH1}
Let $\shk_i\in\RD^\Rb_{\coh}(\A[X_i\times X_{i+1}^a])$ \lp$i=1,2,3$\rp\, 
and let $\shl\in \RD^\Rb_{\coh}(\A[X_4])$. Set $\Lambda_i=\supp(\shk_i)$  and assume that 
$\Lambda_i\times_{X_{i+1}}\Lambda_{i+1}$ is proper over $X_i\times X_{i+2}$ 
\lp$i=1,2$\rp. 
\bnum
\item
There is a canonical isomorphism
$(\shk_1\conv[X_2]\shk_2)\ldetens \shl\isoto \shk_1\conv[X_{2}](\shk_2\ldetens \shl)$.
\item
There is a canonical isomorphism
$(\shk_1\conv[X_2]\shk_2)\conv[X_3]\shk_3\simeq 
\shk_1\conv[X_2](\shk_2\conv[X_3]\shk_3)$.
\label{ass:conv}
\enum
\end{proposition}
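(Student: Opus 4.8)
The plan is to establish both isomorphisms by the same mechanism used throughout this chapter: first check that every object occurring in the statement is coherent, hence \cc\ by Theorem~\ref{th:cohimplcohco}, then produce a canonical comparison morphism and verify that it is an isomorphism after applying $\gr$, using the conservativity of $\gr$ on $\Derb_\coh$ (Corollary~\ref{cor:conservative1}) together with the fact that $\gr(\A[X_i\times X_{i+1}^a])$ is locally equivalent to $\OO$, so that the graded statement is the classical one for $\OO$-modules (the associativity and projection formulas for composition of integral kernels, see~\cite{Hu}). Statement (i) is in fact essentially formal: unwinding Definition~\ref{def:kernel1} and the projection formula for $\reim{p_{13}}$ (which needs no hypothesis) gives $(\shk_1\conv[X_2]\shk_2)\ldetens\shl\simeq\reim{(p_{13}\times\id_{X_4})}\bl(\shk_1\dtens[{\A[2]}]\shk_2)\ldetens\shl\br$, so it suffices to identify $(\shk_1\dtens[{\A[2]}]\shk_2)\ldetens\shl$ with $\shk_1\dtens[{\A[2]}](\shk_2\ldetens\shl)$; this follows by expanding the second formula in Definition~\ref{def:tens1} and using the associativity and commutativity of $\ldetens$ and $\lltens$, together with the algebra-level equivalence $\A[X_1\times X_2\times X_3\times X_4]\simeq\A[X_1\times X_2\times X_3]\detens\A[X_4]$ coming from Lemma~\ref{le:starprod}, under which $\shk_2\ldetens\shl$ acquires the correct module structure. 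Since $\shl$ is a spectator on $X_4$, nothing further has to be checked; alternatively, one may argue as in (ii) below, reducing by conservativity of $\gr$ to the classical projection-formula isomorphism for coherent $\OO$-modules.

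For (ii), the first step is a bookkeeping lemma on supports: if $\Lambda_1\times_{X_2}\Lambda_2$ is proper over $X_1\times X_3$ and $\Lambda_2\times_{X_3}\Lambda_3$ is proper over $X_2\times X_4$, then $(\Lambda_1\circ\Lambda_2)\times_{X_3}\Lambda_3$ and $\Lambda_1\times_{X_2}(\Lambda_2\circ\Lambda_3)$ are both proper over $X_1\times X_4$, and both equal the obvious set $\Lambda_1\circ\Lambda_2\circ\Lambda_3\eqdot p_{14}\bl(\Lambda_1\times\Lambda_2\times\Lambda_3)\cap(X_1\times\Delta_2\times\Delta_3\times X_4)\br$. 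Granting this elementary fact, Theorem~\ref{th:kernel1} applied twice (once to $\shk_1,\shk_2$ and once to the result, respectively once to $\shk_2,\shk_3$ and once to the result) shows that both $(\shk_1\conv[X_2]\shk_2)\conv[X_3]\shk_3$ and $\shk_1\conv[X_2](\shk_2\conv[X_3]\shk_3)$ belong to $\RD^\Rb_{\coh,\,\Lambda_1\circ\Lambda_2\circ\Lambda_3}(\A[X_1\times X_4^a])$, hence are \cc.

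Next I would construct a canonical morphism $\varphi\cl(\shk_1\conv[X_2]\shk_2)\conv[X_3]\shk_3\to\shk_1\conv[X_2](\shk_2\conv[X_3]\shk_3)$ by rewriting both sides on the fourfold product $X_1\times X_2\times X_3\times X_4$: using the morphism~\eqref{eq:tenstodtens}, the projection formula for the partial projections off $X_2$ and off $X_3$, and proper base change, one compares both sides with $\reim{p_{14}}\bl\shk_1\dtens[{\A[2]}]\shk_2\dtens[{\A[3]}]\shk_3\br$ and obtains $\varphi$. It then remains to see that $\varphi$ is an isomorphism. Both source and target being \cc\ with coherent cohomology, by Corollary~\ref{cor:conservative1} it is enough to prove that $\gr(\varphi)$ is an isomorphism. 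By Proposition~\ref{pro:circgr}, $\gr$ carries $\varphi$ to the analogous comparison morphism for the kernels $\gr(\shk_i)\in\RD^\Rb_\coh(\gr(\A[X_i\times X_{i+1}^a]))$, which, since $\gr(\A)$ is locally equivalent to $\OO$, is exactly the classical associativity isomorphism for the composition of integral kernels of coherent $\OO$-modules; this is known (\cite{Hu}). Hence $\gr(\varphi)$ is an isomorphism, and so is $\varphi$.

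The main obstacle will be the construction and correct identification of the comparison morphism $\varphi$: one must keep track of the several $X_i^a$-factors, of which partial projections are proper on which support, and of the order in which the projection formula and base change are applied, so that both sides are genuinely identified with the triple $\dtens$ and, crucially, so that $\gr(\varphi)$ is recognisably the classical $\OO$-module isomorphism — otherwise the reduction via $\gr$ is not legitimate. The geometric lemma on properness of composed correspondences, though indispensable, is routine.
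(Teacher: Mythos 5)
Your proposal contains one genuine misconception and one valid but different route, so let me address both.

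\textbf{Part (i).} Your claim that this is ``essentially formal'' is not correct. The point is precisely that $\ldetens$ and $\dtens$ involve the completed external algebroid $\A[X\times Y]$, which is \emph{not} the naive tensor $\A[X]\etens\A[Y]$. When you try to pass $\ldetens\shl$ through $\opb{\delta_2}$ and $\lltens[{\A[22^a]}]\dA[2]$ and then $\reim{p_{13}}$, you are comparing two completions: concretely, the natural morphism
$\A[13^a4]\tens[{\A[13^a]\etens\A[4]}]\A[12^a23^a]\to\A[12^a23^a4]$
on the fivefold product is \emph{not} an isomorphism of sheaves --- just as $\C[[x,y]]\tens[{\C[[y]]}]\C[[y,z]]\to\C[[x,y,z]]$ is not surjective. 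The paper in fact constructs exactly this chain, ending with an arrow that is only a morphism, and then observes that after applying the (derived) functor $\gr$ to coherent objects this arrow becomes an isomorphism, whence Corollary~\ref{cor:conservative1} finishes. The $\gr$-plus-conservativity step that you mention only as an ``alternative'' is thus the crux of the proof, not a dispensable second route. There is no formal argument here.

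\textbf{Part (ii).} Your strategy --- prove properness of the triple composition, deduce coherence and cohomological completeness of both sides via Theorem~\ref{th:kernel1}, then build a comparison morphism on the fourfold product and check $\gr$ of it by Proposition~\ref{pro:circgr} --- is correct and would go through. It is, however, genuinely different from the paper's route. The paper does not construct any new morphism for (ii): it invokes the identity $\shk_1\conv[X_2]\shk_2=(\shk_1\ldetens\shk_2)\conv[X_2\times X_2^a]\dA[X_2]$ (equation~\eqref{eq:kernandDelta2}) and part (i) to rewrite
$(\shk_1\conv[X_2]\shk_2)\conv[X_3]\shk_3$ and $\shk_1\conv[X_2](\shk_2\conv[X_3]\shk_3)$
both as
$(\shk_1\ldetens\shk_2\ldetens\shk_3)\conv[X_2\times X_2^a\times X_3\times X_3^a](\dA[X_2]\ldetens\dA[X_3])$,
so associativity follows by comparison with a single canonical object, with no extra $\gr$-check and no explicit bookkeeping on the fourfold product. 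Your approach buys nothing in generality, costs you the auxiliary properness lemma (which the paper never states) and the explicit construction of $\varphi$, and duplicates the $\gr$-argument already carried out for (i). The paper's reduction is the more efficient choice once (i) is in hand.
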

\begin{proof}
The morphism 
$(\shk_1\conv[X_2]\shk_2)\ldetens \shl\to\shk_1\conv[X_{2}](\shk_2\ldetens \shl)$
is deduced from the morphism (we do not write the 
functors $\opb{p_i},\opb{p_{ij}}$ for short):
\eqn
&&\hspace{-0.8cm}\A[13^a4]\tens[{\A[13^a]\etens\A[4]}]
\Bigr(\bl\bl\A[12^a23^a]\tens[{\A[12^a]\etens\A[23^a]}](\shk_1\letens\shk_2)\br
\lltens[{\A[22^a]}]\dA[2]\br\letens\shl\Bigr)\\
&&\hspace{1.cm}\simeq
\bl(\A[13^a4]\tens[{\A[13^a]\etens\A[4]}]\A[12^a23])\tens[{\A[12^a]\etens\A[23^a]\etens\A[4]}]
\shk_1\letens\shk_2\letens\shl\br\lltens[{\A[22^a]}]\dA[2]\\
&&\hspace{1.cm}\to 
\bl\A[12^a23^a4]\tens[{\A[12^a]\etens\A[23^a]\etens\A[4]}]\bl\shk_1\letens\shk_2
\letens\shl\br\br\lltens[{\A[22^a]}]\dA[2].
\eneqn

Applying the functor $\gr$ to this morphism 
in $\Db_\coh(\A[X_1\times X_3^a\times X^4])$,
we get an isomorphism. 
This proves the result in view of 
Corollary~\ref{cor:conservative1}.

\noindent
(ii) By (i), we have 
\eqn
(\shk_1\conv[X_2]\shk_2)\conv[X_3]\shk_3
&\simeq&\bl(\shk_1\conv[X_2]\shk_2)\ldetens\shk_3\br
\conv[X_3\times X_3^a]\dA[X_3]\\
&\simeq&\bl\shk_1\conv[X_2](\shk_2\ldetens\shk_3)\br
\conv[X_3\times X_3^a]\dA[X_3]\\
&\simeq&\Bigl(\dA[X_2]\conv[X_2\times X_2^a](\shk_1\ldetens\shk_2\ldetens\shk_3)\Bigr)
\conv[X_3\times X_3^a]\dA[X_3].
\eneqn
Then this object is isomorphic to
$(\shk_1\ldetens\shk_2\ldetens\shk_3)\conv[X_2\times X_2^a\times
X_3\times X_3^a](\dA[X_2]\ldetens\dA[X_3])$.
Similarly,
$\shk_1\conv[X_2](\shk_2\conv[X_3]\shk_3)$ is isomorphic to
$(\shk_1\ldetens\shk_2\ldetens\shk_3)\hs{-4ex}
\conv[{{\scriptsize\db{X_2\times X_2^a\times X_3\times X_3^a}}}]
\hs{-4ex}(\dA[X_2]\ldetens\dA[X_3])$.
\end{proof}

\section{Convolution of kernels: duality}\label{sec:convdual}

\subsubsection*{The duality morphism for kernels}

Denote as usual by $p_{13}\cl X_1\times X_2\times X_3^a\to X_1\times X_3^a$ 
the projection.

\begin{lemma}\label{le:duality}
For $\shk_i\in\Derb(\A[X_i\times X_{i+1}^a])$ \lp$i=1,2$\rp, 
we have a natural morphism in $\Derb(\A[X_1^a\times X_3])$:
\eq\label{mor:dual0}
&&(\RDAA[X_1\times X_2^a]\shk_1)\conv[X_2^a]\oA[X_2^a]\conv[X_2^a]
(\RDAA[X_2\times X_3^a]\shk_2) \to
\RDAA[X_1\times X_3^a](\shk_1\conv[X_2]\shk_2).
\eneq
\end{lemma}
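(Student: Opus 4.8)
Since the morphism is to be assembled from canonical operations only, all that is required is to produce it; it is then automatically independent of choices and local in nature (so one may, if convenient, assume the $\A[X_i]$ are $\DQ$-algebras). The natural starting point is the second description of the convolution in \eqref{eq:kernandDelta2},
\[
\shk_1\conv[X_2]\shk_2\simeq\reim{p_{14}}\bl(\shk_1\ldetens\shk_2)\lltens[{\A[22^a]}]\dA[2]\br,
\]
with $p_{14}\cl X_1\times X_2\times X_2^a\times X_3^a\to X_1\times X_3^a$ the projection. Setting $G\seteq(\shk_1\ldetens\shk_2)\lltens[{\A[22^a]}]\dA[2]$ and applying $\RDAA[X_1\times X_3^a]$, one uses a ``duality versus proper direct image'' morphism for $p_{14}$ — built from the projection formula and the counit $\reim{p_{14}}\epb{p_{14}}\to\id$ — which yields a natural arrow from $\roim{p_{14}}$ of a twisted relative dual of $G$ into $\RDAA[X_1\times X_3^a](\shk_1\conv[X_2]\shk_2)$. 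So the task reduces to computing the relative dual of $G$ and the twist coming from the dualizing complex along the $X_2\times X_2^a$-fibre of $p_{14}$.

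To handle the relative dual of $G$ I would combine two ingredients. The first is a Künneth morphism for the dual: for $\shm_i\in\Derb(\A[Y_i])$, the evaluation pairing
\[
\rhom[{\A[Y_1]}](\shm_1,\A[Y_1])\ldetens\rhom[{\A[Y_2]}](\shm_2,\A[Y_2])\to\rhom[{\A[Y_1]\etens\A[Y_2]}](\shm_1\ldetens\shm_2,\A[Y_1]\etens\A[Y_2]),
\]
followed by the morphism induced by the left adjoint $\A[Y_1\times Y_2]\tens[{\A[Y_1]\etens\A[Y_2]}]\scbul$ of $\for$ and by $\A[Y_1\times Y_2]\tens[{\A[Y_1]\etens\A[Y_2]}]\bl\A[Y_1]\etens\A[Y_2]\br\simeq\A[Y_1\times Y_2]$, gives a natural morphism $\RDA\shm_1\ldetens\RDA\shm_2\to\RDA(\shm_1\ldetens\shm_2)$; one also passes from $\ldetens$ to the completed product $\dtens$ via \eqref{eq:tenstodtens}, and checks compatibility with proper direct images. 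The second ingredient concerns the factor $\dA[X_2]$: it is bi-invertible, and its dual over $\A[X_2\times X_2^a]$ is identified, via Theorem~\ref{th:oARDA} (applied with $X$ replaced by $X_2^a$) together with Remark~\ref{rem:opposed}, with $\oA[X_2^a]^{\otimes-1}$, up to a shift.

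Threading these through the duality morphism for $p_{14}$, the a priori shifts and invertible-module twists — the one from the (topological) relative dualizing complex of the $X_2\times X_2^a$-fibre, the one carried by the diagonal module $\dA[X_2]$, and the factor $\oA[X_2^a]^{\otimes-1}$ above — should reorganize, with the help of $\oA[X_2]\simeq\oA[X_2^a]$, Corollary~\ref{cor:dualDdualA} and the unit identity $\dA\conv\shk\simeq\shk$, into precisely one copy of $\oA[X_2^a]$ placed between the two dualized kernels. Since $\oA[X_2^a]$ is bi-invertible, convolution with it over $X_2^a$ equals $\lltens[{\A[X_2^a]}]$ and is associative, so the target expression $(\RDAA[X_1\times X_2^a]\shk_1)\conv[X_2^a]\oA[X_2^a]\conv[X_2^a](\RDAA[X_2\times X_3^a]\shk_2)$ is unambiguous, and one obtains \eqref{mor:dual0}. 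An alternative, slightly more economical route is to run the same bookkeeping starting from the last isomorphism of Lemma~\ref{le:homCdelta}, which converts the outer $\rhom$ directly into an expression built from $\ldetens$, $\RDA$ and $\dA$.

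The formal ingredients — the Künneth morphism, the projection formula, the passage $\ldetens\to\dtens$, the unit/counit adjunctions — are routine. The hard part will be the bookkeeping in the previous paragraph: verifying that the three separate contributions recombine into the single clean factor $\oA[X_2^a]$ displayed in \eqref{mor:dual0}, keeping track throughout of which manifold carries which ($\DQ$- or opposite-) algebroid and of left versus right module structures. This is exactly where Theorem~\ref{th:oARDA}, which pins down $\oA$ as a dual of $\dA[X^a]$, and the calculus of bi-invertible modules are used in an essential way.
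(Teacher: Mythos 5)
Your approach is genuinely different from the paper's in its orientation: you start from $\RDAA[X_1\times X_3^a](\shk_1\circ\shk_2)$, unwind the dual through $\reim{p_{14}}$ via a ``duality versus proper direct image'' comparison, and then try to compute the relative dual of $(\shk_1\ldetens\shk_2)\lltens[{\A[22^a]}]\dA[X_2]$ by Künneth plus an identification of the dual of $\dA[X_2]$. The paper goes the other way: it starts from the left-hand side, recognizes it as $\rhom[{\A[{12^a23^a}]}](\shk_1\ldetens\shk_2,\oA[{12^a3^a}/{13^a}])$ (using that $\RDA\shk_1\ldetens\RDA\shk_2\simeq\RDA(\shk_1\ldetens\shk_2)$ and that $\oA$ is bi-invertible), applies the functor $\scbul\lltens[{\A[22^a]}]\dA[X_2]$, then the residue morphism~\eqref{mor:residue}, and finally $\reim{p_{13}}$ together with Poincar\'e duality. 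Read backwards, your plan is roughly the paper's, but it is organized around a different input.

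That input is the problem, and it makes your argument circular as written. You identify the $\A[X_2\times X_2^a]$-dual of $\dA[X_2]$ by invoking Theorem~\ref{th:oARDA}. But in the paper, Theorem~\ref{th:oARDA} is explicitly deferred (``the proof $\ldots$ will be given later at the end of~\S~\ref{sec:convdual}''), and that proof consists of applying Theorem~\ref{th:duality} with $X_1=X_2=X_3=X^a$ and $\shk_1=\shk_2=\dA[X^a]$; Theorem~\ref{th:duality} in turn cites Lemma~\ref{le:duality} (this lemma) for the existence of the morphism~\eqref{mor:dual0}. So you cannot use Theorem~\ref{th:oARDA} here. The tool actually available at this point in the development is the residue morphism~\eqref{mor:residue} (equivalently~\eqref{eq:morDAalg2}), a one-directional map
\begin{equation*}
\oA[X^a\times Y/Y]\lltens[{\A[X\times X^a]}]\dA\to (\coro_X\etens\dA[Y])[2d_X],
\end{equation*}
which is built by hand from the algebroid $\DA$ and Lemma~\ref{le:DAalg2}, not from any knowledge that $\RDA\dA[X^a]$ is an explicit twist of $\dA$. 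This is also the right-sized tool: the lemma makes no coherence assumption on the $\shk_i$, so what is wanted (and all that can be supplied at this stage) is a morphism, not an identification of a dual as an object. If you replace the appeal to Theorem~\ref{th:oARDA} by~\eqref{mor:residue} and then carry out the $\reim{p_{14}}$ bookkeeping you sketch, you essentially recover the paper's proof.
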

\begin{proof}
We have
\eqn
\RDA\shk_1\dtens[{\A[2^a]}]\oA[2^a]\dtens[{\A[2^a]}]\RDA\shk_2
&\simeq&(\RDA\shk_1\ldetens\RDA\shk_2)\lltens[{\A[{2^a2}]}]\oA[2^a]\\
&\simeq&(\RDA\shk_1\ldetens\RDA\shk_2)\lltens[{\A[{12^a23^a}]}]
\oA[{12^a3^a}/{13^a}]\\
&\simeq&\RDA(\shk_1\ldetens\shk_2)\lltens[{\A[{12^a23^a}]}]
\oA[{12^a3^a}/{13^a}]\\
&\simeq&\rhom[{\A[{12^a23^a}]}](\shk_1\ldetens\shk_2,\oA[{12^a3^a}/{13^a}]).
\eneqn
Hence we have morphisms
\eqn
\RDA\shk_1\dtens[{\A[2^a]}]\oA[2^a]\dtens[{\A[2^a]}]\RDA\shk_2
&\simeq&
\rhom[{\A[{12^a23^a}]}](\shk_1\ldetens\shk_2,\oA[{12^a3^a}/{13^a}])\\
&&\hspace{-4cm}\to
\rhom[\opb{p_{13}}{\A[{13^a}]}]
\bl(\shk_1\ldetens\shk_2)\lltens[{\A[{22^a}]}]\dA[2],\oA[{12^a3^a}/{13^a}]
\lltens[{\A[{22^a}]}]\dA[2]\br\\
&&\hspace{-4cm}\to
\rhom[{\opb{p_{13}}\A[{13^a}]}](\shk_1\dtens[{\A[2]}]\shk_2,\opb{p_{13}}\A[{13^a}][2d_2]).
\eneqn
The last arrow is induced by \eqref{mor:residue}.
Taking $\reim{p_{13}}$, we obtain
\eqn
(\RDA\shk_1)\conv[X_2^a]\oA[X_2^a]\conv[X_2^a](\RDA\shk_2)
&\simeq&
\reim{p_{13}}\bl(\RDA\shk_1)\dtens[{\A[2^a]}]\oA[2^a]
\dtens[{\A[2^a]}](\RDA\shk_2)\br\\
&\to&
\roim{p_{13}}\rhom[{\opb{p_{13}}\A[{13^a}]}]
(\shk_1\dtens[{\A[2]}]\shk_2,\opb{p_{13}}\A[{13^a}][2d_2])\\
&\isoto&
\rhom[{\A[{13^a}]}](\shk_1\conv[X_2]\shk_2,\A[{13^a}]).
\eneqn
Here the last isomorphism is given by the Poincar{\'e} duality.
\end{proof}

\subsubsection*{Serre duality}
Let us recall the Serre duality for $\sho$-modules.
Let $X$ and $Y$ be complex manifolds.
Denote by $f\cl X\times Y\to X$ the projection, by
$\oo[Y]=\Omega_Y^{d_Y}[d_Y]$ the dualizing complex on $Y$ and by 
$\oo[X\times Y/X]\eqdot\OO\ldetens \oo[Y]$ the relative dualizing
complex.
For 
$\shg\in\Derb_{\coh}(\OO)$, we set
\eqn
&&\epb{f}\shg=\opb{f}\shg\lltens[{\opb{f}\OO}]\oo[X\times Y/X].
\eneqn
\begin{theorem}\label{th:Serre}
For $\shf\in\Derb_{\coh}(\OO[X\times Y])$ and $\shg\in\Derb_{\coh}(\OO)$,
we have a morphism
\eq
&&
\roim{f}\rhom[{\OO[X\times Y]}](\shf,\epb{f}\shg)
\to \rhom[{\OO[X]}](\reim{f}\shf,\shg).
\label{mor:dualo}
\eneq
If the support of $\shf$ is proper over $X$, 
then this morphism is an isomorphism.
\end{theorem}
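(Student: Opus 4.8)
The statement to prove is the Serre duality theorem for $\sho$-modules (Theorem~\ref{th:Serre}): for $\shf\in\Derb_{\coh}(\OO[X\times Y])$ and $\shg\in\Derb_{\coh}(\OO)$ there is a natural morphism
\[
\roim{f}\rhom[{\OO[X\times Y]}](\shf,\epb{f}\shg)\to\rhom[{\OO[X]}](\reim{f}\shf,\shg),
\]
and this morphism is an isomorphism when $\supp(\shf)$ is proper over $X$. The plan is to reduce to the absolute case (where $X=\rmpt$) and to the case $\shg=\OO$ by standard projection-formula manipulations, and then to invoke the classical Serre duality / coherent duality for proper morphisms of complex manifolds. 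First I would construct the morphism: it is adjunction for $\reim{f}\dashv\epb{f}$ together with the trace. Concretely, $\epb{f}\shg=\opb{f}\shg\lltens[{\opb f\OO}]\oo[X\times Y/X]$ and, since $f$ is a projection (hence smooth of relative dimension $d_Y$), $\epb{f}$ is a right adjoint to $\reim f$ up to the shift and twist by $\oo[X\times Y/X]$; the map in question is the image of $\id$ under
\[
\Hom(\rhom(\shf,\epb f\shg),\rhom(\shf,\epb f\shg))\simeq\Hom(\reim f(\shf\lltens\rhom(\shf,\epb f\shg)),\shg)
\]
composed with the evaluation $\shf\lltens\rhom(\shf,\epb f\shg)\to\epb f\shg$ and the trace $\reim f\epb f\shg\to\shg$. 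This part is formal and I would not belabor it.

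Next I would prove that it is an isomorphism under the properness hypothesis. The question is local on $X$, so I may assume $X$ is a small Stein (or contractible) open set, and by dévissage on $\shg$ (using that both sides are exact triangulated functors in $\shg$ and that $\Derb_{\coh}(\OO)$ is generated by $\OO$ under the stupid truncation and shifts, since $X$ is Stein of finite cohomological dimension) I reduce to $\shg=\OO$. Then $\epb f\OO\simeq\oo[X\times Y/X]=\OO\ldetens\oo[Y]$ and the statement becomes
\[
\roim f\rhom[{\OO[X\times Y]}](\shf,\OO\ldetens\oo[Y])\isoto\rhom[{\OO[X]}](\reim f\shf,\OO).
\]
By a further dévissage on $\shf$ — using Corollary~\ref{cor:freeres} / the almost-free resolution results of Chapter~\ref{chapter:DQ} in the commutative case, or simply a local finite free resolution of $\shf$ by finite free $\OO[X\times Y]$-modules — and the fact that both functors are way-out in the appropriate sense, I reduce to $\shf$ a single finite free $\OO[X\times Y]$-module, and then to $\shf=\OO[X\times Y]$ (properness of the support then forces, after shrinking, that we are really looking at $\shf$ supported on $X\times K$ for $K\Subset Y$, which is what is used below). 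For $\shf=\OO[X\times Y]$ (with, implicitly, the section cut down to a relatively compact piece of $Y$) the left side is $\roim f(\OO\ldetens\oo[Y])\simeq\OO\otimes\roim{a_Y}\oo[Y]$, where $a_Y\colon Y\to\rmpt$, and the right side is $\OO$; so the claim is exactly $\roim{a_Y}\oo[Y]\simeq\C$ with compact supports replaced by the properness hypothesis, i.e. the classical statement that $H^j_c(Y;\Omega_Y^{d_Y})$ (or its proper-support analog over $X$) is dual to $H^{-j}(Y;\OO)$ — this is Serre's theorem for Stein / the coherent duality theorem of Ramis–Ruget–Verdier for proper morphisms.

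The main obstacle is precisely this last input: the hard analytic content is the classical Serre duality for a proper morphism of complex analytic spaces (equivalently, the construction and nondegeneracy of the trace $\reim f\oo[X\times Y/X]\to\OO[X]$), which I would quote from the literature (Ramis–Ruget–Verdier, or \cite{Hu} in the smooth algebraic case, or the Grauert–Grothendieck formalism) rather than reprove. Once that is available, the reduction steps are routine: the construction of the morphism is pure adjunction, compatibility of $\roim f$, $\reim f$, $\lltens$, and $\rhom$ with the dévissages is standard, and the finite-cohomological-dimension hypotheses needed for the dévissage hold since $X$ may be taken Stein. I would also note, as the excerpt does elsewhere (e.g.\ the proof of Lemma~\ref{le:duality} invokes ``Poincaré duality''), that the smooth relative case we need — $f$ a projection — is the simplest instance and the trace morphism is built from integration of $(d_Y,d_Y)$-forms along the fibers, so the isomorphism statement is the de Rham–Serre duality on the Stein fibers, consistent with the computation $\Omega^\scbul_X(U)\simeq\C$ used in Lemma~\ref{le:DAalg2}.
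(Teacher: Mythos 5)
The paper does not prove the isomorphism assertion at all: the proof text begins ``This result is classical and we shall only recall a construction of the morphism~\eqref{mor:dualo} adapted to our study,'' and then gives only the construction. So the only part that can meaningfully be compared with what you wrote is the construction of the morphism, and there your route is genuinely different from the paper's --- and, I would argue, weaker.

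The paper's construction uses no trace map and no Grothendieck--Serre duality at all. It uses the right $\shd_Y$-module structure of $\Omega_Y$, the de Rham isomorphism $\oo[X\times Y/X]\lltens[\shd_Y]\OO[Y]\simeq\opb f\OO[X][2d_Y]$ (just the holomorphic Poincar\'e lemma $\Omega_Y^{\scbul}\simeq\C_Y$), the resulting map $\oo[X\times Y/X]\to\opb f\OO[X][2d_Y]$ in $\Derb(\opb f\OO[X])$, and finally \emph{topological} Poincar\'e duality $\roim f\rhom[\opb f\OO[X]](\shf,\opb f\shg[2d_Y])\simeq\rhom[\OO[X]](\reim f\shf,\shg)$, which holds since the real fibers of $f$ are oriented of dimension $2d_Y$. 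No coherent trace enters. Your construction, by contrast, requires a trace morphism $\reim f\epb f\shg\to\shg$ (equivalently the full adjunction $\reim f\dashv\epb f$ with $\epb f$ given by twist-and-shift), which is precisely the deep classical content of Serre--Grothendieck duality that the theorem statement is about. You do acknowledge this (``the hard analytic content is \dots the construction and nondegeneracy of the trace''), but it means your construction of the morphism already presupposes the hard part, whereas the paper's construction is elementary, self-contained, and --- not incidentally --- visibly compatible with the Poincar\'e-duality step used in the proof of Lemma~\ref{le:duality}, which is the purpose of ``adapted to our study.''

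On the d\'evissage you sketch for the isomorphism claim (which, again, the paper does not attempt): the reduction in $\shg$ to $\shg=\OO$ is fine, but the reduction in $\shf$ to a finite free $\OO[X\times Y]$-module cannot work as stated, because a nonzero finite free module has support all of $X\times Y$, which is never proper over $X$ unless $Y$ is compact. Your parenthetical ``properness of the support then forces \dots $\shf$ supported on $X\times K$'' does not repair this: once you have replaced $\shf$ by a free resolution, the properness of the support is lost, and the two sides of~\eqref{mor:dualo} may not even be defined. To make such a d\'evissage correct one would need to resolve $\shf$ by complexes of modules of the form $\bigoplus_i(L_i)_{U_i}$ with the $U_i$ relatively compact and the resolution itself supported where $\shf$ is, exactly the ``almost-free'' machinery of \S~\ref{section:A} (Theorem~\ref{th:mainafr} / Theorem~\ref{th:mainafrdual}), rather than a naive local finite free resolution. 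Since the paper simply cites the classical result here, this gap does not put you at variance with the paper's argument, but it is a real gap in yours.
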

This result is classical and we shall only 
recall a construction of the morphism \eqref{mor:dualo}
adapted to our study. 
Since $\Omega_Y$ has a $\shd_Y^{\rop}$-module structure,
we may regard $\oo[X\times Y/X]$ as an object of 
$\Derb(\OO\etens\shd_Y^\rop)$.
By the de Rham theorem, we have an isomorphism:
\eqn
&&\oo[X\times Y/X]\lltens[\shd_Y]\OO[Y]
\simeq \opb{f}\OO{}[2d_Y].
\eneqn
By composing  with the morphism
$\oo[X\times Y/X]\to \oo[X\times Y/X]\lltens[\shd_{Y}]\OO[Y]$,
we get a morphism in $\Derb(\opb{f}\OO)$:
\eqn
&&\oo[X\times Y/X]\to \opb{f}\OO{}[2d_Y].
\eneqn

\medskip
Now we have a chain of morphisms in $\Derb(\opb{f}\OO)$
\eqn
\rhom[{\OO[X\times Y]}](\shf,\epb{f}\shg)
&=& \rhom[{\OO[X\times Y]}](\shf,\opb{f}\shg\lltens[{\opb{f}\OO}]\oo[X\times Y/X])\\
&\to&
\rhom[{\opb{f}\OO}](\shf,\opb{f}\shg\lltens[{\opb{f}\OO}]\opb{f}\OO{}[2d_Y])\\
&\simeq& \rhom[{\opb{f}\OO}](\shf,\opb{f}\shg[2d_Y]).
\eneqn
On the other hand, the Poincar{\'e} duality gives an isomorphism
\eqn
&&\roim{f}\rhom[{\opb{f}\OO[X]}](\shf,\opb{f}\shg[2d_Y])
\simeq \rhom[{\OO}](\reim{f}\shf,\shg).
\eneqn

\subsubsection*{Duality for kernels}
Let $X_i$  be complex manifolds of dimension $d_i$
and let $\A[X_i]$ be $\DQ$-algebroids on $X_i$ ($i=1,2,3$).

As in Notation~\ref{not:www1}, we often write for short 
$X_{ij}$ instead of $X_i\times X_j$,  $X_{ij^a}$ instead of
$X_i\times X_j^a$, etc. We also write $\A[ij]$ instead of
$\A[X_{ij}]$, etc. and ${ij}/{i}$ instead of 
$X_{ij}/X_{i}$  etc.

\begin{theorem}\label{th:duality}
Let $\shk_i\in\Derb_\coh(\A[X_i\times X_{i+1}^a])$ \lp$i=1,2$\rp. We  assume that  
$\Supp(\shk_1)\times_{X_2}\Supp(\shk_2)$
is proper over $X_1\times X_3^a$. Then the natural morphism \lp see \eqref{mor:dual0}\rp
\eq\label{mor:dual1}
&&(\RDA\shk_1)\conv[X_2^a]\oA[X_2^a]\conv[X_2^a](\RDA\shk_2) \to
\RDA(\shk_1\conv[X_2]\shk_2)
\eneq
is an isomorphism in $\Db_\coh(\A[X_1^a\times X_3])$.
\end{theorem}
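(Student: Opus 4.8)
The strategy is to reduce the duality statement for $\DQ$-kernels to the classical Serre duality for $\sho$-modules (Theorem~\ref{th:Serre}) by means of the conservativity of the functor $\gr$ (Corollary~\ref{cor:conservative1}). Since both sides of \eqref{mor:dual1} belong to $\Db_\coh(\A[X_1^a\times X_3])$ --- the left-hand side by Theorem~\ref{th:kernel1} applied to the coherent kernels $\RDA\shk_1$, $\oA[X_2^a]$ and $\RDA\shk_2$ (note $\RDA$ preserves $\Db_\coh$ by Corollary~\ref{cor:hddim}, and $\oA[X_2^a]$ is bi-invertible hence coherent), and the right-hand side because $\shk_1\conv[X_2]\shk_2\in\Db_\coh(\A[X_1\times X_3^a])$ again by Theorem~\ref{th:kernel1} --- it suffices to check that \eqref{mor:dual1} becomes an isomorphism after applying $\gr$. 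Here one must be a little careful about the compatibility of $\RDA$ with $\gr$: this is exactly Proposition~\ref{pro:dualgr}, $\gr(\RDA\shm)\simeq\RDO(\gr\shm)$, and $\gr$ commutes with $\conv$ by Proposition~\ref{pro:circgr}, with $\ldetens$, with $\reim{p_{13}}$ and with inverse images by Proposition~\ref{pro:operationgr}. One also needs $\gr(\oA[X_2^a])\simeq\omA[X_2^a]=\Omega_{X_2}^{d_2}[d_2]$ as an $\OO[X_2]$-module (equivalently a $\gr(\A[X_2])$-module), which follows from Lemma~\ref{le:DAalg1}~(ii) together with the definition of $\oA$; more precisely $\gr(\oA[X_2^a])\simeq \oim{\delta_{X_2}}\omA[X_2^a/\pt]$, so that $\gr$ of the middle convolution factor $\conv[X_2^a]\oA[X_2^a]\conv[X_2^a]$ is precisely the ``twist by the relative dualizing complex'' appearing in $\epb{p}$ in Theorem~\ref{th:Serre}.

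The second main step is therefore to verify that $\gr$ of the morphism \eqref{mor:dual1} is the Serre duality morphism \eqref{mor:dualo} for $\sho$-modules, up to the canonical identifications above. This requires unwinding the construction of \eqref{mor:dual0} in Lemma~\ref{le:duality}: that morphism is built from (a) the associativity/commutation isomorphisms for $\dtens$ and $\ldetens$ (which are compatible with $\gr$), (b) the functoriality morphism $\rhom(\shf,\shg)\to\rhom(\shf\lltens[{\A[22^a]}]\dA[2],\shg\lltens[{\A[22^a]}]\dA[2])$, and (c) the ``residue'' morphism \eqref{mor:residue}, $\oA[X_2^a\times Y/Y]\lltens[{\A[2\times 2^a]}]\dA[2]\to(\coro\etens\dA[Y])[2d_2]$, whose graded version is \eqref{mor:morDOalg2}, itself derived from the de Rham resolution $\omA[X_2]^\scbul\to\C$. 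Hence $\gr$ of the whole composite is exactly the chain of morphisms recalled after Theorem~\ref{th:Serre} (the de Rham-to-$\C$ map, followed by Poincaré duality $\roim{p_{13}}\rhom(\shf,\opb{}\shg[2d_2])\simeq\rhom(\reim{p_{13}}\shf,\shg)$). This identification is the technical heart of the argument; it is ``formal'' in that no new analysis is needed, but it demands bookkeeping to check that each step of the $\DQ$-construction degrades to the corresponding step of the $\sho$-construction under $\gr$, using Proposition~\ref{pro:tensgr} to commute $\gr$ past $\lltens$ and $\rhom$ throughout.

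Once this is done, the properness hypothesis $\Supp(\shk_1)\times_{X_2}\Supp(\shk_2)$ proper over $X_1\times X_3^a$ passes to supports of the graded modules (since $\Supp\gr=\Supp$), so the hypothesis of Theorem~\ref{th:Serre} is satisfied for $\gr\shk_1$, $\gr\shk_2$ over $X_2$, and the graded morphism is an isomorphism in $\Db_\coh(\OO[X_1^a\times X_3])$. Conservativity of $\gr$ then gives that \eqref{mor:dual1} is an isomorphism, as desired. I expect the main obstacle to be precisely step~(c) above: checking that the residue morphism \eqref{mor:residue} for $\DQ$-algebroids reduces under $\gr$ to the de Rham/trace morphism used in Serre duality --- this is where the particular normalizations (the shift by $2d_2$, the role of $\DA\simeq\shd_X\forl$, and the de Rham complex $\rhom[{\DA}](\A,\A)\simeq\coro$ from Lemma~\ref{le:DAalg2}) all have to be matched up consistently. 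As a byproduct, taking $\shk_2=\dA[X_2^a]$ (so that $X_3=X_2^a$ and $\shk_1\conv\dA[X_2^a]\simeq\shk_1$) and tracing through the isomorphism recovers the relation $\oA\simeq(\RDAA[X^a\times X]\dA[X^a])^{\otimes-1}$ of Theorem~\ref{th:oARDA}, which is how the excerpt promises that theorem will be proved at the end of this section.
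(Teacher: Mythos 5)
Your proposal is correct and takes the same route as the paper: apply $\gr$ to \eqref{mor:dual1}, identify the result with the Serre duality morphism of Theorem~\ref{th:Serre} via Propositions~\ref{pro:dualgr}, \ref{pro:circgr}, \ref{pro:tensgr} and the residue morphism \eqref{mor:residue}, and conclude by conservativity of $\gr$ (Corollary~\ref{cor:conservative1}). The paper localizes on $X_1\times X_3^a$ first so that $\bA[X_1]\simeq\OO[X_1]$ and $\bA[X_3]\simeq\OO[X_3]$, and leaves implicit the coherence of the left-hand side, which you verify explicitly before invoking conservativity.
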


\begin{proof}
Since the question is local on $X_1\times X_3^a$,
we may assume that $\bA[X_1]$
and $\bA[X_3]$ are isomorphic to $\OO[X_1]$ and $\OO[X_3]$, respectively.
Applying the functor $\gr$, we get 
\eqn
\gr(\RDA(\shk_2)\circ\oA[X_2]\circ\RDA(\shk_1))&&\\
&&\hspace{-4cm}\simeq\reim{p_{13}}(\rhom[\sho_{{123}}](
p_{12}^{*}\gr(\shk_1)\lltens[{\sho_{123}}]
p_{23}^{*}\gr(\shk_2),\oo[X_{123}/X_{13}]))\\
&&\hspace{-4cm}\simeq
\rhom[\sho_{X_{13}}](\reim{p_{13}}
\bl p_{12}^{*}\gr(\shk_1)\lltens[{\sho_{123}}]
p_{23}^{*}\gr(\shk_2)\br,\OO[13])\\
&&\hspace{-4cm}\simeq
\gr(\RDA(\shk_1\circ\shk_2)).
\eneqn
Here the second isomorphism follows from Theorem~\ref{th:Serre}.
Hence \eqref{mor:dual1} is an isomorphism by Corollary~\ref{cor:conservative1}.
\end{proof}
Recall that $\RDD_X$ denotes  the duality functor for $\coro_X$-modules:
(see \eqref{eq:rdddual})
and $(\scbul)^\star$ the duality functor on $\RD^\Rb_{f}(\coro)$ (see \eqref{eq:star=dual}).

\begin{corollary}\label{co:duality}
Let $\shm$ and $\shn$ be two objects of $\RD^\Rb_{\coh}(\A[X])$.
\bnum
\item
There is a natural morphism in $\RD^\Rb(\coro)$
\eq\label{mor:dual000}
&&\RHom[{\A[X]}](\shn,\oA\lltens[{\A[X]}]\shm)
\to (\RHom[{\A[X]}](\shm,\shn))^\star.
\eneq
\item
If $\Supp(\shm)\cap\Supp(\shn)$ is compact,
then \eqref {mor:dual000} is an isomorphism in $\RD^\Rb_f(\coro)$.
\enum
\end{corollary}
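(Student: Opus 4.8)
The plan is to deduce this from Theorem~\ref{th:duality} by the standard device of taking $X_1=X_3=\rmpt$, so that kernels become ordinary modules and convolution becomes the operation $\shm\conv[X]\shn\simeq\reim{a_X}(\shm\lltens[{\A[X]}]\shn)$ where $a_X\cl X\to\rmpt$. Concretely, I would take $X_2=X$, view $\shm\in\RD^\Rb_\coh(\A[X])$ as a kernel in $\RD^\Rb_\coh(\A[\rmpt\times X^a])$ (recall $\A[\rmpt]=\coro_\rmpt$), and view $\shn$ as a kernel in $\RD^\Rb_\coh(\A[X\times\rmpt^a])$. Then $\shm\conv[X]\shn\simeq\RHom[{\A[X]}](\RDA\shm,\shn)$; more precisely, using Lemma~\ref{le:homCdelta} together with the isomorphism $\rhom[{\A[X]}](\shl,\shm')\simeq\RDAA\shl\lltens[{\A[X]}]\shm'$, one identifies the convolution of these two kernels over $X$ with $\RHom[{\A[X]}](\shm,\shn)$ (after dualizing one of the two arguments appropriately), and similarly the left-hand side of \eqref{mor:dual000} with $(\RDA\shm)\conv[X^a]\oA[X^a]\conv[X^a](\RDA\shn)$ when the two kernels above are dualized.

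First I would set up the dictionary carefully. Writing $\shk_1\eqdot\RDAA[\rmpt\times X]\shn'$ and $\shk_2\eqdot\RDAA[X\times\rmpt^a]\shm'$ for suitable presentations of $\shm$ and $\shn$, Theorem~\ref{th:duality} gives an isomorphism $(\RDA\shk_1)\conv[X^a]\oA[X^a]\conv[X^a](\RDA\shk_2)\isoto\RDA(\shk_1\conv[X]\shk_2)$, valid because the properness hypothesis $\Supp(\shk_1)\times_X\Supp(\shk_2)$ proper over $\rmpt\times\rmpt^a=\rmpt$ is exactly the condition that $\Supp(\shm)\cap\Supp(\shn)$ be compact. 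Since over a point, the duality functor $\RDAA[\rmpt]$ on $\RD^\Rb_f(\coro)$ is just $(\scbul)^\star$ (see \eqref{eq:star=dual}), unwinding both sides of the Theorem~\ref{th:duality} isomorphism through Lemma~\ref{le:homCdelta} yields exactly \eqref{mor:dual000} as an isomorphism in $\RD^\Rb_f(\coro)$. For part (i), the morphism \eqref{mor:dual000} without any compactness assumption is obtained the same way from the natural morphism \eqref{mor:dual0} of Lemma~\ref{le:duality}, which requires no properness; one only needs to check that the target lands in $\RD^\Rb(\coro)$, which is automatic since all functors involved (tensor, $\rhom$, $\reim{a_X}$) preserve boundedness on the relevant categories once one knows $\RHom[{\A[X]}](\shm,\shn)$ has coherent, hence $\coro$-finite, cohomology when supports meet in a compact set (Corollary~\ref{co:kernel1}).

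The main technical point — and the step I expect to cost the most care — is the bookkeeping identifying $(\RDA\shm)\conv[X^a]\oA[X^a]\conv[X^a](\RDA\shn)$ with $\RHom[{\A[X]}](\shn,\oA\lltens[{\A[X]}]\shm)$. This requires: (a) the isomorphism $\shl\conv[X^a]\oA[X^a]\simeq\shl\lltens[{\A[X^a]}]\oA[X^a]$ for the bi-invertible module $\oA[X^a]$, together with the compatibility of $\conv$ with such tensoring (from the remarks following Definition~\ref{def:kernel1}); (b) the identity $(\RDA\shn)\lltens[{\A[X^a]}]\oA[X^a]\simeq\RDAA[X^a](\shn\lltens\cdots)$-type manipulations via the last lemma before Section~\ref{section:dual}; and (c) the key computation $\rhom[{\A[X]}](\shm,\shl)\simeq\rhom[{\A[X\times X^a]}](\shm\ldetens\RDA\shl,\dA)$ from Lemma~\ref{le:homCdelta}, combined with Theorem~\ref{th:oARDA} relating $\oA$ and $(\RDAA[X^a\times X]\dA[X^a])^{\otimes-1}$. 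Once these identifications are in place, the corollary is a formal consequence; I would present the chain of isomorphisms explicitly but defer the verification of each individual step to the lemmas cited above, since each is routine given the established machinery.
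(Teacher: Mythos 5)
Your proposal matches the paper's own (one-line) proof: the paper obtains part (i) by specializing Lemma~\ref{le:duality} to $X_1=X_3=\rmpt$, $X_2=X$ with $\shk_1=\shn$ and $\shk_2=\RDA\shm$, and part (ii) by the corresponding specialization of Theorem~\ref{th:duality}. Your choice of kernels differs only by wrapping both arguments in duals, and the identification bookkeeping you flag as the main technical work is exactly what the paper leaves implicit.
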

\begin{proof}
(i) In Lemma~\ref{le:duality}, take $X_1=X_3=\rmpt$,
$X_2=X$, $\shk_1=\shn$ and $\shk_2=\RDA\shm$.

\noindent
(ii) follows from Theorem~\ref{th:duality}.
\end{proof}

In particular, if $X$ is compact, then
$\shm\mapsto \oA\tens[{\A}]\shm$ is a Serre functor on the
triangulated category $\Derb_\coh(\A)$.

\begin{remark}
For $\shk_i\in\RD^\Rb(\Ah[X_i\times X_{i+1}^a])$ ($i=1,2$), 
one can define their product $\shk_1\dtens[{\Ah[2]}]\shk_2$
similarly as in Definition~\ref{def:tens1} and their convolution
similarly as in Definition~\ref{def:kernel1}. (Details are left to the
reader.) 
One introduces \index{omegaA@$\oAh$}%
\eq
&&\oAh\seteq\cor\tens[\coro]\oA
\eneq
and for $\shm\in\RD^\Rb(\Ah[X])$, one defines its dual by setting
\index{DA@$\RDA\shm$}%
\eq\label{def:hdual1}
&&\RDA\shm\eqdot \rhom[{\Ah[X]}](\shm,\Ah[X])\in\RD^\Rb(\Ah[X^a]).
\eneq
Then Theorems~\ref{th:kernel1} and~\ref{th:duality} 
extend to good $\Ah[]$-modules. 
\begin{theorem}\label{th:hker}
Let $\Lambda_i$ be a closed subset of $X_i\times X_{i+1}$ \lp$i=1,2$\rp\, and assume that
$\Lambda_1\times_{X_2}\Lambda_2$ is proper over $X_1\times X_3$. 
Set $\Lambda=\Lambda_1\circ\Lambda_2$.
Let $\shk_i\in\Derb_{\gd,\Lambda_i}(\Ah[X_i\times X_{i+1}^a])$ \lp$i=1,2$\rp. 
Then the object $\shk_1\circ\shk_2$ belongs to 
$\Derb_{\gd,\Lambda}(\Ah[X_1\times X_3^a])$ and we have a natural isomorphism
\eqn\label{mor:hdual}
\RDA(\shk_1)\conv[X_2^a]\oAh[X_2^a]\conv[X_2^a]\RDA(\shk_2) \isoto
\RDA(\shk_1\conv[X_2]\shk_2).
\eneqn
\end{theorem}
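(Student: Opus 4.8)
The strategy is to reduce Theorem~\ref{th:hker} to the already established results for $\A$-modules, namely Theorem~\ref{th:kernel1} (finiteness) and Theorem~\ref{th:duality} (duality), by working locally with $\A[X_i]$-lattices. First I would treat the coherence/goodness statement. Since the question of whether $\shk_1\circ\shk_2$ is good is local on $X_1\times X_3^a$ and involves only relatively compact open subsets, I may restrict to a relatively compact open subset $U$ of $X_1\times X_3^a$ and even assume $X_1$ and $X_3$ are affine (or that $\A[X_1]$, $\A[X_3]$ are $\DQ$-algebras). Using the definition of a good $\Ah$-module together with Proposition~\ref{pro:good}~(iii), there exist objects $\shk_{i,0}\in\RD^\Rb_{\coh}(\A[X_i\times X_{i+1}^a]\vert_{V_i})$ on suitable relatively compact open sets whose $\hbar$-localizations recover $\shk_i$. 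Shrinking and using the properness of $\Lambda_1\times_{X_2}\Lambda_2$ over $X_1\times X_3$, one arranges that $\Lambda_1\times_{X_2}\Lambda_2$ stays proper over the chosen relatively compact pieces; then $\shk_{1,0}\conv[X_2]\shk_{2,0}$ is defined and, by Theorem~\ref{th:kernel1}, belongs to $\RD^\Rb_{\coh,\Lambda}(\A[X_1\times X_3^a])$ on that piece. Since $\hbar$-localization commutes with $\reim{p_{13}}$, $\ldetens$, and $\lltens$ (the functors appearing in $\dtens$ and $\conv$, being exact or defined via finite complexes of free modules), we get $(\shk_{1,0}\conv[X_2]\shk_{2,0})^\loc\simeq\shk_1\conv[X_2]\shk_2$, which exhibits an $\A[X_1\times X_3^a]$-lattice on the relatively compact piece. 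Hence $\shk_1\circ\shk_2\in\RD^\Rb_{\gd,\Lambda}(\Ah[X_1\times X_3^a])$.

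For the duality isomorphism, the plan is simply to apply $\cor\tens[\coro](-)$ to the isomorphism of Theorem~\ref{th:duality}. More precisely, Lemma~\ref{le:duality} constructs the natural morphism \eqref{mor:dual0} functorially, and the same construction carries over verbatim to the $\Ah$-setting (replacing $\A$, $\oA$ by $\Ah$, $\oAh$ throughout, and using that $\oAh\seteq\cor\tens[\coro]\oA$ is the dualizing object for $\Ah$ by the very definition of $\RDA$ in \eqref{def:hdual1}), giving the morphism \eqref{mor:hdual}. To see it is an isomorphism, I would again localize: on a relatively compact piece choose lattices $\shk_{i,0}$ as above, so that $\shk_i\simeq\shk_{i,0}^\loc$. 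The morphism \eqref{mor:hdual} is then the $\hbar$-localization of the corresponding morphism \eqref{mor:dual1} for $\shk_{1,0}$, $\shk_{2,0}$; indeed $\RDA$ for $\Ah$-modules is the localization of $\RDA$ for $\A$-modules (this uses that each $\shk_{i,0}$ is perfect locally, by Corollary~\ref{cor:hddim}, so $\rhom$ commutes with $\loc$), and $\conv$, $\oA\mapsto\oAh$, all commute with $\loc$. Since \eqref{mor:dual1} is an isomorphism by Theorem~\ref{th:duality} under the stated properness hypothesis, applying the exact functor $\loc$ yields that \eqref{mor:hdual} is an isomorphism on the piece, hence globally.

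The main technical obstacle is the bookkeeping around properness and the passage from the good $\Ah$-module $\shk_i$ to $\A$-lattices $\shk_{i,0}$ defined only on relatively compact open subsets: one must check that the fibered product $\Supp(\shk_{1,0})\times_{X_2}\Supp(\shk_{2,0})$ can be arranged to be proper over the relatively compact piece of $X_1\times X_3^a$ under consideration, which requires choosing the relatively compact neighborhoods compatibly (shrinking $X_1$, $X_3$, and an auxiliary relatively compact open subset of $X_2$ that exhausts the relevant part of $\Lambda_1\circ\Lambda_2$). This is where the hypothesis that $\Lambda_1\times_{X_2}\Lambda_2$ is proper over $X_1\times X_3$ is used in an essential way; the rest is a routine verification that the relevant derived functors ($\reim{p_{13}}$, $\ldetens$, $\lltens[\A]$, $\rhom[\A]$ applied to coherent arguments) commute with the exact functor $\cor\tens[\coro](-)$. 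Given that Proposition~\ref{pro:good}~(iii) already packages the local-lattice description of $\RD^\Rb_{\gd}$, and that Theorems~\ref{th:kernel1} and~\ref{th:duality} are in hand, no further substantially new argument is needed, and details may reasonably be left to the reader as in the analogous situations earlier in this chapter.
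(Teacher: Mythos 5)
The paper does not actually supply a proof of Theorem~\ref{th:hker}: it is stated inside a Remark, following the assertion that ``Theorems~\ref{th:kernel1} and~\ref{th:duality} extend to good $\Ah[]$-modules'', with the details left implicit. Your proposal fills in exactly the intended argument --- restrict to a relatively compact piece, invoke Proposition~\ref{pro:good}~(iii) to replace each $\shk_i$ by an $\A$-lattice $\shk_{i,0}$, apply Theorems~\ref{th:kernel1} and~\ref{th:duality} to these, and then apply the exact localization $\cor\tens[\coro](\scbul)$, using that it commutes with $\reim{p_{13}}$, $\ldetens$, $\lltens$, and (on perfect objects, by Corollary~\ref{cor:hddim}) with $\rhom$. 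The one point worth spelling out slightly more carefully, which you already flag as the main technical obstacle, is that the derived lattice $\shk_{i,0}$ should be chosen supported on $\Lambda_i$ so that $\Supp(\shk_{1,0})\times_{X_2}\Supp(\shk_{2,0})$ inherits properness over the chosen relatively compact piece of $X_1\times X_3^a$; this can be arranged by building $\shk_{i,0}$ from genuine $\A$-submodule lattices of the cohomology sheaves $H^j(\shk_i)$ as in the proof of Proposition~\ref{pro:good}, which keeps the support inside $\Lambda_i$. With that precision, the proof is complete and matches the route the paper intends.
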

\end{remark}

\subsubsection*{Proof of Theorem~\ref{th:oARDA}}
We are now ready to give a proof of Theorem~\ref{th:oARDA}.
In Theorem~\ref{th:duality}, set 
$X_1=X_2=X_3=X^a$ and $\shk_1=\shk_2=\dA[X^a]$.
Then we obtain
$$\RDA\dA[X^a]\conv[X]\oA\conv[X]\RDA\dA[X^a]
\simeq
\RDA(\dA[X^a]\conv[X^a]\dA[X^a])\simeq\RDA(\dA[X^a]).$$
By applying $\conv(\RDA\dA[X^a])^{\otimes-1}$,
we obtain
$\RDA\dA[X^a]\conv[X]\oA\simeq\dA$.

\section{Action of kernels on Grothendieck groups}
\subsubsection*{Grothendieck group}
For an abelian or a triangulated category $\shc$, we denote as usual
by $\rmK(\shc)$ \index{K1@$\rmK(\shc)$}%
its Grothendieck group. 
\glossary{Grothendieck group}%
For an object $M$ of $\shc$, we denote by $[M]$ 
\index{K2@$[M]$}%
its image in $\rmK(\shc)$. 
Recall that if $\shc$ is abelian, then $\rmK(\shc)\simeq
\rmK(\Derb(\shc))$.

If $A$ is a ring, we write $\rmK(A)$ instead of $\rmK(\md[A])$ and
write $\rmK_\coh(A)$ instead of $\rmK(\mdcoh[A])$.

In this subsection, we will adapt to $\DQ$-modules well-known
arguments concerning the Grothendieck group of filtered
objects. References are made to \cite[Ch.~2.2]{Ka2}.

For a closed subset $\Lambda$ of $X$, we shall write for short:
\eqn
&&\rmK_{\coh,\Lambda}(\A)\eqdot \rmK(\Derb_{\coh,\Lambda}(\A)),
\quad 
\rmK_{\coh,\Lambda}(\gr\A)\eqdot \rmK(\Derb_{\coh,\Lambda}(\gr\A)),\nonumber\\
 &&\rmK_{\gd,\Lambda}(\Ah)\eqdot \rmK(\Derb_{\gd,\Lambda}(\Ah)).
\eneqn

Recall that for an open subset $U$ of $X$ and $\shm\in\mdcoh[{\Ah}]$,
an $\A[U]$-submodule $\shm_0$ of $\shm\vert_U$ is called a lattice of
$\shm$ on $U$ if $\shm_0$ is coherent over $\A[U]$ and generates $\shm\vert_U$.

\begin{lemma}\label{lem:grgr4}
Let $0\to\shl\to\shm\to\shn\to 0$ be an exact sequence in $\mdcoh[\Ah]$.   
Then there  locally exist lattices $\shl_0$, $\shm_0$ and $\shn_0$ of 
$\shl$, $\shm$ and $\shn$ respectively, such that this
sequence induces an exact sequence of $\A$-modules:
$0\to\shl_0\to\shm_0\to\shn_0\to 0$.
\end{lemma}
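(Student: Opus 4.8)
The plan is to work locally on $X$, where we may assume $\A[X]$ is a $\DQ$-algebra, and to build the three lattices by descent through the $\hbar$-filtration. First I would choose an arbitrary lattice $\shm_0$ of $\shm$ on a small relatively compact open set; such a lattice exists since $\shm$ is coherent over $\Ah$ and hence, restricted to a relatively compact set, admits an $\A$-lattice by definition of coherence over $\Ah$ (a finitely generated $\A$-submodule generating $\shm$, which is automatically coherent by the lemma in \S on extension of the base ring). Then set $\shn_0$ to be the image of $\shm_0$ in $\shn=\for(\shn)$ under the composite $\shm_0\to\for(\shm)\to\for(\shn)$; this $\shn_0$ is a quotient of $\shm_0$, hence locally finitely generated, hence coherent by the same lemma, and it clearly generates $\shn$ because $\shm_0$ generates $\shm$. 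Finally set $\shl_0\eqdot\shl\cap\shm_0$, the kernel of $\shm_0\to\shn_0$; by construction the sequence $0\to\shl_0\to\shm_0\to\shn_0\to0$ is exact in $\md[{\A[X]}]$.

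The remaining point is to verify that $\shl_0$ is a lattice of $\shl$, i.e.\ that it is $\A$-coherent and that it generates $\shl$, equivalently that $\shl_0^\loc\isoto\shl$. Coherence of $\shl_0$ follows because it is the kernel of a morphism of coherent $\A$-modules and $\A$ is coherent (it is even Noetherian, being a $\DQ$-algebra and hence satisfying \eqref{eq:FDringa} and \eqref{eq:FDringb}, so Theorem~\ref{th:formalfini1} applies). For the generating property, localize the exact sequence: since the functor $\loc=\cor\tens[\coro](\scbul)$ is exact (see \eqref{eq:WotoW2a}), we get an exact sequence $0\to\shl_0^\loc\to\shm_0^\loc\to\shn_0^\loc\to0$, which maps to the exact sequence $0\to\shl\to\shm\to\shn\to0$. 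The middle arrow $\shm_0^\loc\to\shm$ is an isomorphism because $\shm_0$ is a lattice, and the right arrow $\shn_0^\loc\to\shn$ is an isomorphism because $\shn_0$ generates $\shn$; by the five lemma the left arrow $\shl_0^\loc\to\shl$ is an isomorphism, so $\shl_0$ generates $\shl$.

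I do not expect any serious obstacle here: the statement is a routine lattice-chasing argument, and each ingredient (existence of lattices on relatively compact sets, coherence of kernels and quotients, exactness of $\loc$) has already been established in the text. The only mild subtlety is the word \emph{locally}: one must restrict to a relatively compact open subset in order to guarantee that the initial lattice $\shm_0$ exists, but since $\loc$ commutes with restriction and coherence is a local property, passing to such a subset costs nothing. The argument as sketched produces, over any such subset, the required commutative diagram of short exact sequences, which is exactly the assertion of the lemma.
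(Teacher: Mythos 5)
Your proof is correct and follows essentially the same route as the paper: take a lattice $\shm_0$ of $\shm$, let $\shn_0$ be its image in $\shn$ and $\shl_0=\shl\cap\shm_0$; then $\shn_0$ is coherent because it is a finitely generated $\A$-submodule of a coherent $\Ah$-module, $\shl_0$ is coherent as the kernel of a morphism of coherent $\A$-modules, and exactness of $(\scbul)^\loc$ gives $\shl_0^\loc\isoto\shl$. The only difference is that you spell out the five-lemma step and the appeal to the local existence of a lattice; the paper takes both as understood.
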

\begin{proof}
(i) Let $\shm_0$ be a lattice of $\shm$ and let $\shn_0$ be its
image in $\shn$. We set $\shl_0\eqdot\shm_0\cap\shl$.  These
$\A$-modules give rise to the exact sequence of the statement and it
remains to check that $\shl_0$ and $\shn_0$ are lattices of 
$\shl$ and $\shn$, respectively. 

\vspace{0.2cm}
\noindent
(ii) Clearly, $\shn_0$ generates $\shn$, and being finitely generated,
it is coherent over $\A$. 

\vspace{0.2cm}
\noindent
(iii) Let us show that $\shl_0$ is a lattice of $\shl$.
Being the kernel of the morphism $\shm_0\to\shn_0$, $\shl_0$ is
coherent. Since the functor $(\scbul)^\loc$ is exact, the sequence
$0\to\shl_0^\loc\to\shm_0^\loc\to\shn_0^\loc\to 0$ is exact. Therefore, 
$\shl_0^\loc\simeq \shl$.
\end{proof}

\begin{lemma}\label{lem:grgr2}
Let $\shm\in\mdcoh[{\Ah}]$, let $U$ be a relatively compact open
subset of $X$ and assume that there exists 
a lattice $\shm_0$ of $\shm$ in a neighborhood of  the closure $\overline U$ of $U$. 
Then the image of $\shm_0$ in 
$\rmK_\coh(\gr\A[U])$ depends only on $\shm$.
\end{lemma}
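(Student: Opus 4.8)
The statement asserts that, for $\shm\in\mdcoh[{\Ah}]$ and $U$ relatively compact, the class $[\gr\shm_0]\in\rmK_\coh(\gr\A[U])$ of a lattice $\shm_0$ of $\shm$ defined near $\overline U$ does not depend on the choice of $\shm_0$. The approach is the classical ``comparison of two filtrations'' argument, adapted to the $\hbar$-setting. First I would reduce to comparing two lattices at a time: given two lattices $\shm_0$ and $\shm_1$ of $\shm$ near $\overline U$, I want to show $[\gr(\shm_0\vert_U)]=[\gr(\shm_1\vert_U)]$ in $\rmK_\coh(\gr\A[U])$. By symmetry and an interpolation step (see below), it suffices to treat the case $\shm_1\subset\shm_0$; the general case follows by introducing $\shm_0+\shm_1$ and $\shm_0\cap\shm_1$, both of which are again lattices (coherent by the lemma that a locally finitely generated $\A$-submodule of a coherent $\Ah$-module is coherent, and generating $\shm$ since localization is exact), and using the two short exact sequences $0\to\shm_0\cap\shm_1\to\shm_i\to\shm_i/(\shm_0\cap\shm_1)\to0$ together with $0\to\shm_0\cap\shm_1\to\shm_0+\shm_1\to(\shm_0+\shm_1)/(\shm_0\cap\shm_1)\to 0$.

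So assume $\shm_1\subset\shm_0$ with both generating $\shm$. Then $\shm_0/\shm_1$ is a coherent $\A$-module whose localization vanishes, hence it is an $\hbar$-torsion coherent $\A$-module; being coherent over the Noetherian ring $\A$, it is locally annihilated by some power $\hbar^N$. Working locally on the relatively compact set $\overline U$ and using quasi-compactness, I would fix a single $N$ with $\hbar^N\shm_0\subset\shm_1\subset\shm_0$ on a neighborhood of $\overline U$. Now I interpolate: consider the chain of lattices
\eqn
&&\shm_1+\hbar^{N-1}\shm_0\ \subset\ \shm_1+\hbar^{N-2}\shm_0\ \subset\ \cdots\ \subset\ \shm_1+\hbar\shm_0\ \subset\ \shm_0,
\eneqn
each term being a lattice of $\shm$, with $\shm_1+\hbar^N\shm_0=\shm_1$. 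By additivity of the Grothendieck group along short exact sequences, $[\gr(\shm_0\vert_U)]-[\gr(\shm_1\vert_U)]$ is a sum of contributions from the successive quotients, so it is enough to treat the case where $\shm_1\supset\hbar\shm_0$, i.e. $\shm_0/\shm_1$ is killed by $\hbar$, hence is a coherent $\gr\A$-module $\shf$. The remaining task is to express $[\gr(\shm_0\vert_U)]-[\gr(\shm_1\vert_U)]$ in terms of $[\shf]$ and check it vanishes.

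For that last step I would use the two exact sequences obtained by applying $\gr=\C\lltens[\coro]\scbul$ (equivalently $\shf\mapsto\shf/\hbar\shf$ together with the $\Tor$ term, cf. Lemma~\ref{lem:grHa}) to $0\to\shm_1\to\shm_0\to\shf\to0$: since $\shm_0,\shm_1$ have no $\hbar$-torsion, one gets
\eqn
&&0\to\shf\to\gr(\shm_1)\to\gr(\shm_0)\to\shf\to0
\eneqn
(the outer $\shf$'s being $\tor^{\coro}_1(\shf,\C)\simeq\shf$ since $\hbar$ acts by $0$ on $\shf$, and $\shf/\hbar\shf\simeq\shf$). In $\rmK_\coh(\gr\A[U])$ this forces $[\gr(\shm_0\vert_U)]=[\gr(\shm_1\vert_U)]$, which is exactly the desired equality. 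The main obstacle, and the only genuinely nontrivial point, is the uniformity of the exponent $N$ over $\overline U$ and the verification that all the intermediate modules $\shm_1+\hbar^k\shm_0$ and $\shm_0\cap\shm_1$, $\shm_0+\shm_1$ are genuinely coherent lattices; this rests on the Noetherianity of $\A$ (Theorem~\ref{th:formalfini1}), the pseudo-coherence of coherent $\Ah$-modules over $\A$, and exactness of localization, all already available. The rest is bookkeeping in the Grothendieck group.
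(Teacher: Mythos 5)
Your proposal is correct and uses essentially the same argument as the paper: reduce to nested lattices, interpolate via a chain of lattices whose successive quotients are killed by $\hbar$, and close with the four-term exact sequence obtained by applying $\gr$ to $0\to\shm_1\to\shm_0\to\shf\to 0$. The only cosmetic difference is in the initial reduction to the nested case: you pass through $\shm_0\cap\shm_1$, whereas the paper notes $[\gr\shm_0]=[\gr(\hbar^m\shm_0)]$ and simply replaces one lattice by an $\hbar$-power of itself, which is slightly shorter and avoids having to check that the intersection is again a lattice.
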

\begin{proof}
(i) Recall that $[\gr\shm_0]$ denotes the image of $\gr\shm_0$ in $\rmK_\coh(\gr\A[U])$.
First, remark that for $N\in\N$, the two $\gr\A$-modules $\gr\shm_0$
and $\gr\hbar^N\shm_0$ are isomorphic, which implies
\eqn
&&[\gr\shm_0]=[\gr\hbar^N\shm_0].
\eneqn

\noindent
(ii) Now consider  another lattice $\shm_0'$ of $\shm$ on $\overline U$. 
Since $\shm$ is an $\Ah$-module of finite type and $\shm_0'$ generates $\shm$,
there exists $n>1$ such that $\shm_0\subset\hbar^{-n}\shm_0'$.
Similarly, there exists $m>1$ with $\shm_0'\subset\hbar^{-m}\shm_0$, so
that we have the inclusions
\eqn
&&\hbar^{m+n}\shm_0\subset\hbar^{m}\shm_0'\subset\shm_0 .
\eneqn
Using (i) we may replace $\shm'_0$ with $\hbar^{m}\shm_0'$. Hence,
changing our notations, we may assume
\eq\label{eq:Kgrgr1}
&&\hbar^{m}\shm_0\subset\shm_0'\subset\shm_0 .
\eneq

\noindent
(iii) Assume $m=1$ in \eqref{eq:Kgrgr1}. Using $\hbar^{m}\shm'_0\subset\hbar^{m}\shm_0$, 
we get the exact sequences
\eqn
&&0\to \shm'_0/\hbar\shm_0\to\shm_0/\hbar\shm_0\to\shm_0/\shm'_0\to0,\\
&&0\to \hbar\shm_0/\hbar\shm'_0\to\shm'_0/\hbar\shm'_0\to\shm'_0/\hbar\shm_0\to0,
\eneqn
and the result follows in this case.

\noindent
(iv) Now we argue by induction on $m$ in \eqref{eq:Kgrgr1} and we
assume the result is true for $m-1$ with $m\geq2$. Set
\eqn
&&\shm''_0\eqdot \hbar^{m-1}\shm_0+\shm'_0.
\eneqn
 Then
$\hbar\shm''_0\subset\shm'_0\subset\shm''_0$ and 
$\hbar^{m-1}\shm_0\subset\shm_0''\subset\shm_0$. Then the result
follows from (iii) and the induction hypothesis.
\end{proof}
We set \index{K4@$\rmhK_{\coh,\Lambda}(\gr\A)$}%
\eq\label{eq:rmhK}
&&\rmhK_{\coh,\Lambda}(\gr\A)\eqdot\prolim[U]\rmK_{\coh,\Lambda}(\gr\A[U]).
\eneq
where $U$ ranges over the family of relatively compact open subsets of $X$.
Using Lemma~\ref{lem:grgr2}, we get:
\begin{proposition}\label{pro:grgr1}
There is a
natural morphism of groups 
$$\gr\cl \rmK_{\gd,\Lambda}(\Ah)\to \rmhK_{\coh,\Lambda}(\gr\A).$$
\end{proposition}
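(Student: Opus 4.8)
The statement to be proved is Proposition~\ref{pro:grgr1}: that there is a natural group homomorphism $\gr\cl \rmK_{\gd,\Lambda}(\Ah)\to \rmhK_{\coh,\Lambda}(\gr\A)$, where the target is the projective limit $\prolim[U]\rmK_{\coh,\Lambda}(\gr\A[U])$ over relatively compact open subsets $U$. The idea is to define the map on generators using lattices, then check it is well defined (independent of the choice of lattice) and additive on short exact sequences. Since $\rmK_{\gd,\Lambda}(\Ah)=\rmK(\Derb_{\gd,\Lambda}(\Ah))\simeq\rmK(\mdgd[{\Ah}]_\Lambda)$ by the general fact that the Grothendieck group of a triangulated category built from an abelian one agrees with that of the abelian heart, it suffices to work at the level of the abelian category of good modules supported on $\Lambda$, and define the class $\gr[\shm]$ for $\shm\in\mdgd[{\Ah[X]}]$ supported by $\Lambda$.

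\textbf{Construction of the map on objects.} Fix a relatively compact open $U\subset X$. By the definition of a good module (Definition~\ref{def:good}) there exists an $\A[U']$-lattice $\shm_0$ of $\shm\vert_{U'}$ for some relatively compact open $U'\supset\ol U$, in particular a lattice on a neighborhood of $\ol U$. Then $\gr\shm_0\in\mdcoh[{\gr\A[U]}]$ is supported by $\Lambda\cap U$, so it has a class $[\gr\shm_0]\in\rmK_{\coh,\Lambda}(\gr\A[U])$. By Lemma~\ref{lem:grgr2} this class depends only on $\shm$ (and $U$), not on the choice of lattice; call it $\gr_U[\shm]$. For $V\subset U$ relatively compact open, restriction of a lattice on a neighborhood of $\ol U$ to a neighborhood of $\ol V$ is again a lattice, and $\gr$ commutes with restriction, so the classes $\gr_U[\shm]$ are compatible under the transition maps $\rmK_{\coh,\Lambda}(\gr\A[U])\to\rmK_{\coh,\Lambda}(\gr\A[V])$. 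Hence they assemble into an element $\gr[\shm]\in\rmhK_{\coh,\Lambda}(\gr\A)=\prolim[U]\rmK_{\coh,\Lambda}(\gr\A[U])$. (One point worth spelling out: to know that the system $\{\rmK_{\coh,\Lambda}(\gr\A[U])\}$ is indexed in a way compatible with the projective limit, note that the relatively compact open subsets of $X$ form a filtrant ordered set under inclusion, and every $x\in X$ has arbitrarily small such neighborhoods.)

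\textbf{Additivity and conclusion.} It remains to check that $\shm\mapsto\gr[\shm]$ factors through $\rmK_{\gd,\Lambda}(\Ah)$, i.e.\ that for any short exact sequence $0\to\shl\to\shm\to\shn\to0$ in $\mdgd[{\Ah}]$ with all terms supported by $\Lambda$ we have $\gr[\shm]=\gr[\shl]+\gr[\shn]$. This is where Lemma~\ref{lem:grgr4} enters: locally (hence, by the compactness of $\ol U$, on a neighborhood of $\ol U$ after passing to a finite cover and using that the classes glue) one can choose lattices $\shl_0,\shm_0,\shn_0$ of $\shl,\shm,\shn$ fitting into an exact sequence $0\to\shl_0\to\shm_0\to\shn_0\to0$ of $\A$-modules; applying the right-exact functor $\gr$, or rather using that $\shl_0\to\shm_0$ is injective with $\hbar$-torsion-free cokernel so that $0\to\gr\shl_0\to\gr\shm_0\to\gr\shn_0\to0$ is exact (by Lemma~\ref{lem:grHa} applied to the two-term complex, the relevant $\tor$ vanishes since $\shn_0$ has no $\hbar$-torsion as a lattice in an $\Ah$-module), we get $[\gr\shm_0]=[\gr\shl_0]+[\gr\shn_0]$ in $\rmK_{\coh,\Lambda}(\gr\A[U])$. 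The main subtlety — really the only nontrivial point beyond invoking the two cited lemmas — is that Lemma~\ref{lem:grgr4} produces the compatible lattices only locally, so one must check that the resulting local identities in $\rmK_{\coh,\Lambda}(\gr\A[U_i])$ for a finite cover $\{U_i\}$ of $\ol U$ are compatible and determine the identity over $U$; this follows because all three classes $\gr_{U_i}[\cdot]$ are restrictions of the globally defined $\gr_U[\cdot]$, so the equation $\gr_{U_i}[\shm]=\gr_{U_i}[\shl]+\gr_{U_i}[\shn]$ for all $i$ forces $\gr_U[\shm]=\gr_U[\shl]+\gr_U[\shn]$ once one knows the transition maps to a common refinement are injective on the relevant classes — or, more simply, one takes $U$ itself small enough that a single application of Lemma~\ref{lem:grgr4} suffices, and then passes to the projective limit. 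This gives the required additive map, and $\C$-linearity (hence $\Z$-linearity on $\rmK$) is automatic from the construction.
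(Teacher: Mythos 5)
Your proof is correct and follows essentially the same route as the paper, which simply invokes Lemma~\ref{lem:grgr2} (for well-definedness) and implicitly Lemma~\ref{lem:grgr4} (for additivity) — you have filled in the details the paper leaves to the reader. One small simplification: since $\shm$ is good, a lattice $\shm_0$ exists on a full neighborhood of $\ol U$, and the construction inside the proof of Lemma~\ref{lem:grgr4} (take $\shn_0=$ image, $\shl_0=\shm_0\cap\shl$) then produces all three compatible lattices on that same neighborhood, so the worry about gluing local choices over a finite cover never actually arises.
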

Remark that when $X=\rmpt$, the morphism in
Proposition~\ref{pro:grgr1} reduces to the isomorphism
\eq\label{eq:grrmpt}
&&\rmK_f(\cor)\isoto \rmK_f(\C),
\eneq
and both are isomorphic to $\Z$ by $[M]\mapsto\dim M$.

\subsubsection*{Kernels}
Consider the situation of Theorem~\ref{th:kernel1}. 
Let $\Lambda_i$ be a closed subset of $X_i\times X_{i+1}$ ($i=1,2$) and assume that
$\Lambda_1\times_{X_2}\Lambda_2$ is proper over $X_1\times X_3$. 
Set $\Lambda=\Lambda_1\circ\Lambda_2$.
Since the convolution of kernels
commutes with distinguished triangles, it factors through the
Grothendieck groups. 
Moreover, one can define the convolution of $\gr\A$-kernels and 
a variant of Theorem~\ref{th:kernel1} with $\A$ replaced with
$\gr\A$ is well-known. Since the functor $\gr$ commutes with the
convolution of kernels, the diagram below commutes:

\eq\label{diag:kerngrgr1}
&&\xymatrix{
{\Ob\bl\Derb_{\coh,\Lambda_1}(\A[{12^a}])\br\times\Ob\bl\Derb_{\coh,\Lambda_2}(\A[{23^a}])\br}
\ar[r]^-\conv\ar[d]
                    &{\Ob\bl\Derb_{\coh,\Lambda}(\A[{13^a}])\br}\ar[d]\\
\rmK_{\coh,\Lambda_1}(\A[{12^a}])\times
\rmK_{\coh,\Lambda_2}(\A[{23^a}])\ar[r]^-\conv\ar[d]^-{\gr\times\gr}
                   &\rmK_{\coh,\Lambda}(\A[{13^a}])\ar[d]^-\gr\\
\rmK_{\coh,\Lambda_1}(\gr\A[{12^a}])\times 
\rmK_{\coh,\Lambda_2}(\gr\A[{23^a}])\ar[r]^-{\conv}
                   &\rmK_{\coh,\Lambda}(\gr\A[{13^a}]).
}\eneq
Similarly to \eqref{diag:kerngrgr1}, the diagram below commutes:
\eq\label{diag:kerngrgr3}
&&\xymatrix{
\Ob\bl\Derb_{\gd,\Lambda_1}(\Ah[{12^a}])\br\times\Ob\bl\Derb_{\gd,\Lambda_2}(\Ah[{23^a}])\br
\ar[r]^-\conv\ar[d]
                       &\Ob\bl\Derb_{\gd,\Lambda}(\Ah[{13^a}])\br\ar[d]\\
\rmK_{\gd,\Lambda_1}(\Ah[{12^a}])\times 
\rmK_{\gd,\Lambda_2}(\Ah[{23^a}])\ar[r]^-\conv\ar[d]^-{\gr\times\gr}
                     &\rmK_{\gd,\Lambda}(\Ah[{13^a}])\ar[d]^-\gr\\
\rmhK_{\coh,\Lambda_1}(\gr\A[{12^a}])\times 
\rmhK_{\coh,\Lambda_2}(\gr\A[{23^a}])\ar[r]^-{\conv}
                   &\rmhK_{\coh,\Lambda}(\gr\A[{13^a}]).}
\eneq

\chapter{Hochschild classes}\label{chapter:HH} 

\section{Hochschild homology and Hochschild classes}
Let $X$ be a complex manifold and let $\A$ be a $\DQ$-algebroid.
Recall that $\de\cl X\to X\times X^a$ is the diagonal embedding.
We define the  Hochschild homology $\HHA[X]$
\index{Hochschild homology@$\HHA[X]$}%
of $\A$ by:
\eq\label{def:HHA}
&&\HHA[X]\eqdot\de^{-1}(\dA[X^a]\lltens[{\A[X\times X^a]}]\dA),
\mbox{ an object of $\RD^\Rb(\coro_X)$}.
\eneq
Note that by Theorem~\ref{th:oARDA}, we get the isomorphisms:
\eqn
\HHA[X]&\simeq&\de^{-1}\rhom[{\A[X\times X^a]}](\RDAA[X^a\times X]\dA[X^a],\dA)\\
&\simeq&\de^{-1}\rhom[{\A[X\times X^a]}](\oAI,\dA).
\eneqn
We have also the isomorphisms
\eqn
\rhom[{\A[X\times X^a]}](\oAI,\dA)
&\simeq& \rhom[{\A[X\times X^a]}](\oA\conv[X]\oAI,\oA\conv[X]\dA)\\
&\simeq&\rhom[{\A[X\times X^a]}](\dA,\oA).
\eneqn
One shall be aware
that the composition of these isomorphisms does not coincide in general with 
the composition of
\eqn
\rhom[{\A[X\times X^a]}](\oAI,\dA)&\simeq& 
                     \rhom[{\A[X\times X^a]}](\oAI\conv[X]\oA,\dA\conv[X]\oA)\\
&\simeq& \rhom[{\A[X\times X^a]}](\dA,\oA).
\eneqn
We shall see that they differ up to
$\hh_X(\omA)\conv$ (see Lemma \ref{le:HH4} below). For that reason,
we shall not identify $\HHA[X]$ and $\rhom[{\A[X\times X^a]}](\dA,\oA)$.

\begin{lemma}\label{lem:HHmor12}
Let $\shm\in\RD^\Rb_{\coh}(\A[X])$. 
There are natural morphisms in  $\RD^\Rb_{\coh}(\A[X\times X^a])$:
\eq
&&\oAI\To \shm\ldetens\RDA\shm,\label{eq:HHmor01}\\
&&\shm\ldetens\RDA\shm\To\dA.\label{eq:HHmor02}
\eneq
\end{lemma}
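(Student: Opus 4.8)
The plan is to construct the two morphisms \eqref{eq:HHmor01} and \eqref{eq:HHmor02} by working locally, where $\A[X]$ is a $\DQ$-algebra and $\DA\simeq\shd_X\forl$, reducing both to familiar trace and coevaluation morphisms for $\sho$-modules (or rather $\shd$-modules) and then checking that these local constructions glue. First I would recall from Lemma~\ref{le:homCdelta} that for $\shl\in\RD^\Rb_\coh(\A[X])$ one has canonical isomorphisms $\rhom[{\A[X]}](\shl,\shm)\simeq\rhom[{\A[X\times X^a]}](\shm\ldetens\RDA\shl,\dA)$ and $\rhom[{\A[X]}](\shm,\shl)\simeq\RDA\shl\lltens[{\A[X]}]\shm$ realized through the diagonal. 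Applying the first of these with $\shl=\shm$ and taking the section corresponding to $\id_\shm$ under
\[
\Hom[{\A[X\times X^a]}](\shm\ldetens\RDA\shm,\dA)\simeq\Hom[{\A[X]}](\shm,\shm)
\]
immediately yields the morphism \eqref{eq:HHmor02}. This is the easy half: it is essentially the evaluation pairing $\shm\tens\RDA\shm\to\A[X]$ pushed to the diagonal, and it is manifestly functorial, hence globally defined on the algebroid level.

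For \eqref{eq:HHmor01} the plan is to produce a coevaluation morphism $\A[X]\to\shm\lltens[{\A[X]}]\RDA\shm$ in $\RD^\Rb(\A[X]\tens\A[X]^\rop)$ — i.e.\ valued in the bimodule $\A[X]$ sitting on the diagonal — and then twist by $\oAI$. Concretely, since $\shm$ is coherent it admits locally a finite resolution by free $\A[X]$-modules of finite rank (Corollary~\ref{cor:hddim}), for which the coevaluation $\A[X]\to\shm\tens[{\A[X]}]\hom[{\A[X]}](\shm,\A[X])$ is the classical ``$1\mapsto\sum e_i\tens e_i^\vee$'' map; one checks it is independent of the resolution and compatible with localization, so it defines a global morphism. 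The subtlety is the appearance of $\oAI$: the natural target of coevaluation lives in $\md[{\A[X]\tens\A[X]^\rop}]$, i.e.\ is supported on the diagonal as a bimodule, whereas $\oAI$ and $\dA$ are $\A[X\times X^a]$-modules. The key identity here is the isomorphism $\rhom[{\A[X]}](\shl,\shm)\simeq\rhom[{\A[X\times X^a]}](\shm\ldetens\RDA\shl,\dA)$ together with the fact (used repeatedly in \S\ref{section:dual}) that $\RDAA[X^a\times X]\dA[X^a]\simeq(\oA)^{\otimes-1}=\oAI$ from Theorem~\ref{th:oARDA}; combining these, the coevaluation for $\shm$ translates into precisely a morphism $\oAI\to\shm\ldetens\RDA\shm$ in $\RD^\Rb_\coh(\A[X\times X^a])$.

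The step I expect to be the main obstacle is verifying that the local coevaluation morphisms, which a priori depend on the choice of a standard section / free resolution, actually glue to a well-defined morphism of $\A[X\times X^a]$-modules — and more precisely that the $\oAI$-twist is the correct one. The safest route is to avoid explicit cocycle bookkeeping: instead, apply the conservative functor $\gr$ (Corollary~\ref{cor:conservative1}) to reduce uniqueness to the $\sho_X$-module statement, where $\gr(\oAI)\simeq\Omega_X^{\otimes-1}$ and the coevaluation/trace for coherent $\sho_X$-modules with its $\Omega_X$-twist is the standard Serre-duality pairing, known to be canonical. Since $\gr$ of the two candidate morphisms agree with the classical ones, and $\Hom$ in $\RD^\Rb_\coh(\A[X\times X^a])$ is controlled by $\gr$, the morphisms are uniquely pinned down; functoriality in $\shm$ then follows by the same reduction. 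Finally I would note that composing \eqref{eq:HHmor01} and \eqref{eq:HHmor02} (after applying $\lltens[{\A[X\times X^a]}]\dA$ and $\de^{-1}$) produces exactly the Hochschild class $\hh_X(\shm)$ of the next section, which is the intended use of this lemma.
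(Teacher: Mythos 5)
Your construction of \eqref{eq:HHmor02} is exactly right and matches the spirit of the paper: the morphism is the image of $\id_\shm$ under the adjunction $\Hom[{\A[X\times X^a]}](\shm\ldetens\RDA\shm,\dA)\simeq\Hom[{\A[X]}](\shm,\shm)$ coming from Lemma~\ref{le:homCdelta}. The paper actually proceeds in the opposite order — it constructs \eqref{eq:HHmor01} this way and then obtains \eqref{eq:HHmor02} by applying $\RDAA[X\times X^a]$ — but the two routes are symmetric.

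For \eqref{eq:HHmor01}, however, you make the construction considerably more laborious than it needs to be, and the mechanism you propose to repair this has a gap. The morphism is again just an instance of adjunction of $\id_\shm$, with no local choices at all: combining Lemma~\ref{le:homCdelta} (for coherent $\shm$, $\rhom[{\A[X]}](\shm,\shm)\simeq\RDA\shm\lltens[{\A[X]}]\shm\simeq\dA[X^a]\lltens[{\A[X\times X^a]}](\shm\ldetens\RDA\shm)$) with Theorem~\ref{th:oARDA} (which identifies $\oAI$ with $\RDAA[X^a\times X]\dA[X^a]$, a coherent object, so that tensoring with $\dA[X^a]$ becomes $\rhom[{\A[X\times X^a]}](\oAI,\scbul)$), one gets a \emph{global} natural isomorphism
\[
\rhom[{\A[X]}](\shm,\shm)\simeq\rhom[{\A[X\times X^a]}](\oAI,\shm\ldetens\RDA\shm),
\]
and \eqref{eq:HHmor01} is the adjunct of $\id_\shm$. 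There is nothing to glue and no free resolution to choose, so the whole second and third paragraphs of your proposal (local coevaluation, independence of resolution, cocycle bookkeeping) are superfluous.

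The gap, should you nonetheless insist on the local-gluing route, is the appeal to $\gr$ to ``uniquely pin down'' the morphism. The functor $\gr\cl\Derb_\coh(\A)\to\Derb_\coh(\gr\A)$ is conservative (Corollary~\ref{cor:conservative1}) but it is \emph{not} faithful — the paper records this explicitly in Remark~\ref{rem:notff}. So two morphisms in $\RD^\Rb_\coh(\A[X\times X^a])$ with the same image under $\gr$ need not coincide, and ``$\Hom$ is controlled by $\gr$'' is false as stated. Conservativity lets you recognize isomorphisms after applying $\gr$; it does not determine morphisms. To prove gluing you would have to work at the level of natural transformations rather than object-by-object $\gr$-comparisons — which is precisely what the adjunction argument above does for free.
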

\begin{proof}
(i) We have
\eqn
\rhom[{\A[X]}](\shm,\shm)
&\simeq& (\RDA\shm)\lltens[{\A}]\shm\\
&\simeq& 
\dA[{X^a}]\lltens[{\A[X\times X^a]}](\shm\ldetens\RDA\shm)\\
&\simeq& \rhom[{\A[X\times X^a]}](\oAI,\shm\ldetens\RDA\shm).
\eneqn
The identity of $\Hom[{\A[X]}](\shm,\shm)$ defines the desired morphism.

\noindent
(ii) Applying the duality functor $\RDAA[X\times X^a]$ to 
\eqref{eq:HHmor01}, we get \eqref{eq:HHmor02}.
\end{proof}

Let $\shm\in\RD^\Rb_{\coh}(\A[X])$. 
We have the chain of morphisms
\eq
&&\ba{rcl}
\rhom[{\A[X]}](\shm,\shm)&\isofrom&\RDA\shm
\lltens[{\A[X]}]\shm\\
&\simeq&\dA[X^a]\lltens[{\A[X\times X^a]}](\shm\ldetens\RDA\shm)\\
&\to&\dA[X^a]\lltens[{\A[X\times X^a]}]\dA=\HHA[X].
\ea
\eneq
We get a map
\eqn
&&\Hom[{\A[X]}](\shm,\shm)\to H^0_{\Supp(\shm)}(X;\HHA[X]).
\eneqn
For $u\in\End(\M)$, the image of $u$ gives an element 
\eq\label{eq:defHhMu}
&&\hh_X((\shm,u))\in H^{0}_{\Supp(\shm)}(X;\HHA[X]).
\eneq
\begin{notation}\label{not:RHH}
For a closed subset $\Lambda$ of $X$, we set
\eq
&&\RHH[\Lambda]{X}\eqdot H^0\rsect_\Lambda(X;\HHA[X]).
\eneq
\end{notation}
\begin{definition}\label{def:Hh}
Let $\shm\in\RD^\Rb_{\coh,\Lambda}(\A[X])$. 
We set $\hh_X(\shm)=\hh_X((\shm,\id_{\shm}))\in\RHH[\Lambda]{X}$ and call it
the Hochschild class of $\shm$.
\glossary{Hochschild class!of an $\A$-module}%
\index{Hochschild class@$\hh_X(\shm)$}%
\end{definition}

\begin{lemma}\label{le:HH2}
Let $\shm\in\RD^\Rb_{\coh}(\A)$. 
The composition of 
the two morphisms \eqref{eq:HHmor01} and \eqref{eq:HHmor02}:
\eqn\label{eq:HHmor0}
\oAI\To \shm\ldetens\RDA\shm\To\dA \eneqn
coincides with the Hochschild class $\hh_X(\shm)$ 
when identifying $\HHA$ with $\rhom[{\A[X\times X^a]}](\oAI,\dA)$.
\end{lemma}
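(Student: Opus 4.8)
The statement asserts that the composition of the two morphisms \eqref{eq:HHmor01} and \eqref{eq:HHmor02}, viewed as an element of $\Hom[{\A[X\times X^a]}](\oAI,\dA)\simeq H^0\rsect_{\Supp(\shm)}(X;\HHA[X])$, equals $\hh_X(\shm)$. The key point is simply to unwind both constructions and observe they factor through the same chain of morphisms. The plan is to exhibit a single commutative diagram in $\RD^\Rb(\A[X\times X^a])$ (or after applying $\de^{-1}$, in $\RD^\Rb(\coro_X)$) relating the two descriptions.

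First I would recall precisely how $\hh_X(\shm)$ is built: by Definition~\ref{def:Hh} and the chain of morphisms just before Notation~\ref{not:RHH}, $\hh_X(\shm)$ is the image of $\id_\shm\in\Hom[{\A[X]}](\shm,\shm)$ under
\eqn
\Hom[{\A[X]}](\shm,\shm)\simeq\Hom[{\A[X\times X^a]}]\bl\oAI,\shm\ldetens\RDA\shm\br
\To\Hom[{\A[X\times X^a]}](\oAI,\dA),
\eneqn
where the first isomorphism is the one used in the proof of Lemma~\ref{lem:HHmor12}(i), namely the composite
$\rhom[{\A[X]}](\shm,\shm)\simeq\RDA\shm\lltens[{\A}]\shm\simeq\dA[X^a]\lltens[{\A[X\times X^a]}](\shm\ldetens\RDA\shm)\simeq\rhom[{\A[X\times X^a]}](\oAI,\shm\ldetens\RDA\shm)$
(using Lemma~\ref{le:homCdelta} and Corollary~\ref{cor:dualDdualA}), and the second morphism is induced by postcomposition with \eqref{eq:HHmor02}. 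Meanwhile, by the very definition in Lemma~\ref{lem:HHmor12}(i), the morphism \eqref{eq:HHmor01} $\oAI\to\shm\ldetens\RDA\shm$ is precisely the image of $\id_\shm$ under that same first isomorphism. Hence postcomposing \eqref{eq:HHmor01} with \eqref{eq:HHmor02} gives exactly the image of $\id_\shm$ under the full composite above, which is $\hh_X(\shm)$ by definition.

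So the proof reduces to checking that the identification $\HHA[X]\simeq\de^{-1}\rhom[{\A[X\times X^a]}](\oAI,\dA)$ used in the statement is the same one implicit in the construction of $\hh_X(\shm)$ — i.e. the one coming from $\HHA[X]\eqdot\de^{-1}(\dA[X^a]\lltens[{\A[X\times X^a]}]\dA)$ together with Corollary~\ref{cor:dualDdualA} (applied with $\shm$ replaced by $\dA$, $Y=\rmpt$) and Theorem~\ref{th:oARDA}. I would write out this identification explicitly: $\dA[X^a]\lltens[{\A[X\times X^a]}]\dA\simeq\rhom[{\A[X\times X^a]}](\RDAA[X^a\times X]\dA[X^a],\dA)\simeq\rhom[{\A[X\times X^a]}](\oAI,\dA)$, the last step by \eqref{eq:oARDA}. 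One then checks that the morphism $\rhom[{\A[X]}](\shm,\shm)\to\HHA[X]$ appearing in the definition of $\hh_X$, composed with this identification, coincides with the map ``image of $\id_\shm$ under the first isomorphism of Lemma~\ref{lem:HHmor12}(i), followed by postcomposition with \eqref{eq:HHmor02}.'' This is a diagram chase using naturality of $\lltens$, $\rhom$, and the adjunction isomorphisms; the relevant commutativities are formal consequences of the construction of \eqref{eq:HHmor02} as the dual of \eqref{eq:HHmor01}.

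\textbf{Main obstacle.} The only real subtlety — and the point the authors are plainly at pains to flag, given the remark right before Lemma~\ref{lem:HHmor12} about the two isomorphisms $\HHA[X]\simeq\rhom[{\A[X\times X^a]}](\dA,\oA)$ differing by $\hh_X(\omA)\conv$ — is to make sure one uses \emph{consistently} the isomorphism $\HHA[X]\simeq\rhom[{\A[X\times X^a]}](\oAI,\dA)$ and not the variant twisted by $\oA$; i.e. one must verify that the chain $\RDA\shm\lltens[\A]\shm\simeq\dA[X^a]\lltens[{\A[X\times X^a]}](\shm\ldetens\RDA\shm)$ in the definition of $\hh_X$ and the chain $\oAI\to\shm\ldetens\RDA\shm$ in Lemma~\ref{lem:HHmor12} are dual to each other in the \emph{correct} sense, so that no spurious factor of $\hh_X(\omA)$ is introduced. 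Concretely this means tracking the use of Corollary~\ref{cor:dualDdualA} versus its ``other'' form, and checking the duality $\RDAA[X\times X^a]$ of \eqref{eq:HHmor01} is literally \eqref{eq:HHmor02} (as asserted in the proof of Lemma~\ref{lem:HHmor12}(ii)) under the pairing that defines $\HHA[X]$. Once that bookkeeping is pinned down, the identity is immediate from the definitions.
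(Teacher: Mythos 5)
Your proof follows essentially the same route as the paper's (which is a one-line unwinding of the definition of $\hh_X(\shm)$ into the chain $\rhom[\A](\shm,\shm)\simeq\rhom[{\A[X\times X^a]}](\oAI,\shm\ldetens\RDA\shm)\to\rhom[{\A[X\times X^a]}](\oAI,\dA)\simeq\HHA$ and the observation that $\id_\shm$ maps to \eqref{eq:HHmor01} and then to its postcomposition with \eqref{eq:HHmor02}). The ``main obstacle'' you flag is handled automatically because the statement of the lemma already fixes the identification $\HHA\simeq\rhom[{\A[X\times X^a]}](\oAI,\dA)$ to be used, so no further bookkeeping is needed.
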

\begin{proof}
The Hochschild class $\hh_X(\shm)$ is the image of $\id_\shm$ by the composition
\eqn
\rhom[\A](\shm,\shm)
&\simeq& \rhom[{\A[X\times X^a]}](\oAI,\shm\ldetens\RDA\shm)\\
&\to& \rhom[{\A[X\times X^a]}](\oAI,\dA)\simeq\HHA.
\eneqn
\end{proof}
\begin{theorem}\label{th:addhh}
The Hochschild class is  additive with respect to distinguished 
triangles. In other words, for a distinguished triangle
$\shm'\to\shm\to\shm''\to[+1]$ in $\RD^\Rb_{\coh}(\A[X])$, we have
\eq
&&\hh_X(\shm)=\hh_X(\shm')+\hh_X(\shm'').
\eneq 
\end{theorem}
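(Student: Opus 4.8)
The Hochschild class of $\shm$ is obtained by transporting $\id_\shm$ through a chain of morphisms that, by Lemma~\ref{le:HH2}, amounts to composing the two morphisms $\oAI\to\shm\ldetens\RDA\shm\to\dA$ of Lemma~\ref{lem:HHmor12}. So the additivity of $\hh_X$ will follow once I understand how these two morphisms behave along a distinguished triangle $\shm'\to\shm\to\shm''\to[+1]$. The key point is that $\RDA$ sends this triangle to a distinguished triangle $\RDA\shm''\to\RDA\shm\to\RDA\shm'\to[+1]$, and the bi-functor $\scbul\ldetens\scbul$ is triangulated in each variable, so the ``diagonal'' objects $\shm\ldetens\RDA\shm$ fit into an octahedral-type picture built from $\shm',\shm,\shm''$.

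First I would set up the relevant commutative diagrams. Writing $\shl=\shm'$, $\shm$, $\shn=\shm''$, I would use the distinguished triangle in $\RD^\Rb_\coh(\A[X])$ together with its dual to produce a $3\times 3$ diagram of objects of the form $\shm_i\ldetens\RDA\shm_j$ (for $i,j\in\{'\,,\,\text{(none)},\,''\}$) whose rows and columns are distinguished triangles, obtained by applying $\scbul\ldetens\RDA\shm_j$ and then $\shm_i\ldetens\scbul$. The three ``diagonal'' objects $\shl\ldetens\RDA\shl$, $\shm\ldetens\RDA\shm$, $\shn\ldetens\RDA\shn$ then sit naturally in this diagram, and the morphisms $\oAI\to\shm_i\ldetens\RDA\shm_i$ of \eqref{eq:HHmor01} as well as the morphisms $\shm_i\ldetens\RDA\shm_i\to\dA$ of \eqref{eq:HHmor02} are compatible with it. The crucial identity to extract is that, at the level of $\Hom_{\A[X\times X^a]}(\oAI,\dA)$, the element attached to $\shm$ is the sum of those attached to $\shl$ and $\shn$, i.e.\ the ``off-diagonal'' contributions coming from $\shl\ldetens\RDA\shn$ and $\shn\ldetens\RDA\shl$ cancel against each other when one pushes forward to $\dA$ (they factor through $\oAI\to\shl\ldetens\RDA\shn\to\dA$ and $\oAI\to\shn\ldetens\RDA\shl\to\dA$, whose composites I expect to vanish, or to appear with opposite signs).

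A cleaner route, which I would pursue in parallel, is to reduce the statement to the additivity of traces in a triangulated category. The construction $\shm\mapsto\hh_X(\shm)$ is built from the canonical coevaluation $\oAI\to\shm\ldetens\RDA\shm$ and evaluation $\shm\ldetens\RDA\shm\to\dA$, i.e.\ it is exactly the categorical trace of $\id_\shm$ in the rigid-type structure given by $\detens$ over $\A[X\times X^a]$ (with $\dA$ as unit). The additivity of such a trace along distinguished triangles is a general fact (May's axioms for the trace in a closed symmetric monoidal triangulated category, or the explicit verification in \cite{K-S3}); the only thing to check is that the coevaluation/evaluation pair for $\shm$ is the direct sum, in the appropriate sense, of those for $\shm'$ and $\shm''$, modulo a nilpotent ``connecting'' term that dies after evaluation. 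I expect the main obstacle to be precisely the bookkeeping of this connecting term: one must show that the composite $\oAI\to\shm''\ldetens\RDA\shm'\to\dA$ induced by the boundary map $\shm''\to\shm'[1]$ of the dual triangle is zero (equivalently, that the relevant square in the $3\times 3$ diagram is null-homotopic after applying $\Hom(\oAI,\dA)$), which is where the interplay between the sign conventions in the octahedron and the biduality isomorphism $\shm\isoto\RDA\RDA\shm$ has to be handled carefully. Once that vanishing is in place, chasing $\id_\shm=\text{(}$image of $\id_{\shm'}\text{)}+\text{(}$image of $\id_{\shm''}\text{)}$ through the diagram gives $\hh_X(\shm)=\hh_X(\shm')+\hh_X(\shm'')$ in $\RHH[\Lambda]{X}$.
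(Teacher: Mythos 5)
Your ``cleaner route'' is exactly the paper's proof: the authors simply observe that $\hh_X(\shm)$ is the categorical trace of $\id_\shm$ built from the coevaluation \eqref{eq:HHmor01} and evaluation \eqref{eq:HHmor02}, and invoke May's theorem on additivity of traces along distinguished triangles, noting (as you do) that the only subtlety is that the bifunctor $\scbul\lltens[\A]\scbul$ is not internal, so May's argument must be adapted to the $\detens$-convolution setting with $\dA$ as unit. Your hands-on $3\times 3$ diagram is a plausible way to carry out that adaptation explicitly, but it is not needed as a separate route; the paper leaves the adaptation to the reader.
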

\begin{proof}
Although the bifunctor $\scbul\lltens[\A]\scbul$ is not internal to
our category, the theorem of May~\cite{My} is easily adapted to this situation.
\end{proof}
By this result, the  Hochschild class factorizes through the
Grothendieck group. Therefore, if $\Lambda$ is a closed subset of $X$,
we have the morphisms
\eq\label{eq:hhGr}
&&\Derb_{\coh,\Lambda}(\A)\to \rmK_{\coh,\Lambda}(\A)\to\RHH[\Lambda]{X}.
\eneq

\subsubsection*{Duality}
Denote by $s\cl X\times X^a\to X^a\times X$ the map 
$(x,y)\mapsto (y,x)$ and recall that  $\de$ is the diagonal embedding.
Then $s\circ\de=\de$, 
$\opb{s}\dA[X]\simeq\dA[X^a]$, $\opb{s}\A[X\times X^a]\simeq\A[X^a\times X]$
 and we obtain the isomorphisms
\eqn
\HHA[X]&=&\opb{\de}(\dA[X^a]\lltens[{\A[X\times X^a]}]\dA)\\
&\simeq&\opb{\de}\opb{s}(\dA[X^a]\lltens[{\A[X\times X^a]}]\dA)\\
&\simeq&\opb{\de}(\opb{s}\dA[X^a]\lltens[\opb{s}{\A[X\times X^a]}]\opb{s}\dA)\\
&\simeq&\opb{\de}(\dA\lltens[{\A[X^a\times X]}]\dA[X^a])
=\HHA[X^a]. 
\eneqn
After identifying 
$\HHA[X]$ and $\HHA[X^a]$ by the isomorphism above, we have:
\eq\label{eq:dualHochschild}
&&\hh_{X^a}(\RDA\shm)=\hh_X(\shm).
\eneq
\begin{remark}\label{rem:HHAP}
Let $\sha$ be a $\DQ$-algebroid and let $\shp$ be an inverstible $\coro$-algebroid on $X$.
Then
\eq\label{eq:algebDP}
&&\sha^\shp\eqdot\sha\tens[\coro_X]\shp
\eneq
is a $\DQ$-algebroid on $X$. We have the natural equivalences
\eqn
&&(\sha^\rop)^{\shp^\rop}\simeq (\sha^\shp)^\rop,\\
&&\opb{\delta_X}(\sha^\shp\detens(\sha^\shp)^\rop)
\simeq\opb{\delta_X}(\sha\detens(\sha^\shp)).
\eneqn
We deduce the isomorphism
\eq\label{eq:HHAP}
&&\HHA[X]\simeq\HHH[\sha^\shp_X].
\eneq
\end{remark}

\section{Composition of  Hochschild classes}

Let $X_i$ be complex manifolds endowed with $\DQ$-algebroids 
$\A[X_i]$ ($i=1,2,3$) and denote as usual by $p_{ij}$ the 
projection from $X_1\times X_2\times X_3$ to $X_i\times X_j$ ($1\leq i<j\leq 3$).
\begin{proposition}\label{pro:compHH}
There is a natural morphism
\eqn
&&\circ: \reim{p_{13}}(\opb{p_{12}}\HHA[X_1\times X^a_2]
\lltens\opb{p_{23}}\HHA[X_2\times X^a_3])\to \HHA[X_1\times X^a_3].
\eneqn
\end{proposition}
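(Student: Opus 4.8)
The plan is to construct the composition morphism by exhibiting $\HHA[X_i\times X_j^a]$ as a kernel on $X_i\times X_j^a$ (supported along the diagonal, or rather along $\Delta_{X_i}\times\Delta_{X_j}$ after the appropriate identification) and then apply the convolution formalism developed in Chapter~\ref{chapter:Kern}. More precisely, recall that by definition $\HHA[Y]=\de_Y^{-1}(\dA[Y^a]\lltens[{\A[Y\times Y^a]}]\dA[Y])$ for a manifold $Y$; taking $Y=X_i\times X_j^a$ and using the canonical isomorphism $\dA[(X_i\times X_j^a)^a]\simeq\dA[X_i^a]\ldetens\dA[X_j]$ together with $\A[(X_i\times X_j^a)\times(X_i\times X_j^a)^a]\simeq\A[X_i\times X_i^a]\detens\A[X_j\times X_j^a]$, one sees $\HHA[X_i\times X_j^a]$ naturally as (the diagonal pushforward of) an object built from the Hochschild kernels of $X_i$ and $X_j^a$. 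The key point is that $\HHA[X_i\times X_j^a]$, viewed on $X_i\times X_j^a$, can be rewritten so that it ``is'' a $\coro$-kernel to which $\conv[X_2]$ applies.

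First I would set up the precise dictionary: for each pair $(i,j)$ write $\HHA[X_i\times X_j^a]\simeq\oim{\delta}(\dA[X_i^a]\ldetens\dA[X_j]\lltens[{\A[X_i\times X_i^a\times X_j\times X_j^a]}]\dA[X_i\times X_j^a])$, and use $\dA[X_i\times X_j^a]\simeq\dA[X_i]\ldetens\dA[X_j^a]$ to reorganize the tensor factors. The upshot should be a canonical isomorphism of $\coro_{X_i\times X_j^a}$-modules identifying $\HHA[X_i\times X_j^a]$ with a kernel of the form ``$\mathcal{HH}$ along the diagonal in the $X_i$-variable, convolved with $\mathcal{HH}$ in the $X_j$-variable''. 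Second, with that identification in hand, the composition morphism $\circ$ is \emph{defined} to be the convolution $\conv[X_2]$ of kernels (in the sense of Definition~\ref{def:kernel1}, or rather its $\sho$/$\coro$-linear analogue since there is no genuine algebroid action once we pass to $\mathcal{HH}$): given classes over $X_1\times X_2^a$ and $X_2\times X_3^a$ we form $\reim{p_{13}}$ of the appropriate derived tensor product over (the middle copy of) $X_2\times X_2^a$, and the middle kernel contracting the two factors is precisely $\dA[X_2\times X_2^a]$, i.e. it is the canonical module that makes the $X_2$-variables of the two Hochschild homologies cancel. The output lands in $\de_{X_1\times X_3^a}^{-1}(\cdots)\simeq\HHA[X_1\times X_3^a]$.

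Concretely I would write the composite as
\eqn
&&\reim{p_{13}}\bl\opb{p_{12}}\HHA[X_1\times X_2^a]\lltens\opb{p_{23}}\HHA[X_2\times X_3^a]\br\\
&&\hs{10ex}\simeq\reim{p_{13}}\bl(\dA[X_1^a]\ldetens\dA[X_2]\ldetens\dA[X_2^a]\ldetens\dA[X_3])\lltens\dA[12^a]\lltens\dA[23^a]\br\\
&&\hs{10ex}\to(\dA[X_1^a]\ldetens\dA[X_3])\lltens\dA[13^a]\simeq\HHA[X_1\times X_3^a],
\eneqn
where the displayed arrow is induced by the contraction $\dA[X_2]\ldetens\dA[X_2^a]\lltens[{\A[X_2\times X_2^a]}]\dA[X_2]\to\mathcal{HH}$-type map, i.e. by the unit $\dA[X_2]\conv[X_2]\dA[X_2^a]\to\coro$ coming from $\de_{X_2}^{-1}$ applied to $\dA[X_2^a]\lltens[{\A[X_2\times X_2^a]}]\dA[X_2]$ together with the canonical morphism to $\coro_{X_2}$ — but in fact the cleanest route is just to observe that $\HHA[X_2\times X_2^a]$ composed with itself maps canonically to $\HHA[X_2\times X_2^a]$ and then trace it through, so I would instead phrase the construction so that the $X_2$-factors are eliminated by the same contraction that appears in the definition of $\mathcal{HH}$ of a product. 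The main obstacle will be bookkeeping: keeping track of the many copies of $X_2$ (four of them: the two ``diagonal'' copies inside each Hochschild homology and the two ``a''-copies), checking that the reorganization isomorphisms $\dA[Y\times Z]\simeq\dA[Y]\ldetens\dA[Z]$ and $\A[Y\times Z]\simeq\A[Y]\detens\A[Z]$ are compatible with the various base changes $\opb{p_{ij}}$, $\reim{p_{ij}}$, and verifying (for use later, in Theorem~\ref{th:HH1}) that the resulting morphism is the ``right'' one — i.e. that it is compatible with the convolution of kernels. For the mere \emph{existence} asserted in Proposition~\ref{pro:compHH}, though, once the identifications are fixed the morphism is simply the evident contraction, so the proof reduces to writing out these canonical isomorphisms carefully and composing; I would present it as a short chain of isomorphisms and one structural map, relegating the compatibility of base changes to a remark since those follow from Proposition~\ref{pro:operationgr} and the projection formula.
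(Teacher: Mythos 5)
There is a genuine gap. Your construction hinges on a ``contraction'' morphism that you describe as ``the unit $\dA[X_2]\conv[X_2]\dA[X_2^a]\to\coro$ coming from $\de_{X_2}^{-1}$ applied to $\dA[X_2^a]\lltens[{\A[X_2\times X_2^a]}]\dA[X_2]$ together with the canonical morphism to $\coro_{X_2}$'' --- in other words, you need a morphism $\HHA[X_2]\to\coro_{X_2}$ (a trace or counit). No such morphism exists in general: the canonical morphism runs the \emph{other} way, $\coro_{X_2}\to\HHA[X_2]$ (it is $\hh_{X_2}(\A[X_2])$, the unit). A map $\HHA[X_2]\to\coro_{X_2}$ with a shift is constructed only in the symplectic case (Theorem~\ref{th:symp1}, the morphism $\tau_X$), and even there it requires the identification $\oA\simeq\hbar^{d_X/2}\dA\,[d_X]$. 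Relying on it in the general Poisson setting is circular and in fact false. Your fallback remark --- ``observe that $\HHA[X_2\times X_2^a]$ composed with itself maps canonically to $\HHA[X_2\times X_2^a]$'' --- presupposes exactly the composition morphism being constructed.

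The paper's proof sidesteps this by \emph{not} contracting the $X_2$-factors through a single tensor-contraction. Instead it reformulates each $\HHA[X_i\times X^a_j]$ as a $\rhom$,
$$\HHA[X_i\times X^a_j]\simeq\rhom[{\A[Z_i\times Z_j^a]}](S_{ij},K_{ij}),\quad
S_{ij}=\oAI[X_i]\ldetens\dA[X_j^a],\quad K_{ij}=\dA[X_i]\ldetens\oA[X_j^a],$$
so the two $X_2$-copies land in different variances: one in the source $S$, the other in the target $K$. The composition is then assembled from two canonical morphisms going in \emph{opposite} directions: a \emph{unit} morphism
$S_{13}\to\roim{p_{13}}\bl S_{12}\dtens[{\A[Z_2]}]S_{23}\br$, built from
$\coro_{X_2}\to\rhom[{\A[Z_2^a]}](\dA[X_2^a],\dA[X_2^a])\simeq\dA[X_2^a]\dtens[{\A[Z_2]}]\oAI[X_2]$,
and a \emph{residue} morphism
$\reim{p_{13}}\bl K_{12}\dtens[{\A[Z_2]}]K_{23}\br\to K_{13}$,
built from the morphism~\eqref{mor:residue} involving the dualizing complex $\oA$. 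The residue morphism is exactly the ingredient your contraction implicitly invokes but never constructs; it has a nontrivial degree shift $[2d_2]$ and depends on the relation $\OA\lltens[\DA]\A[-d_X]\simeq\coro_X$ (Lemma~\ref{le:DAalg2}), not on any counit for $\HHA[X_2]$. Without separating the variances and without the residue map, the ``contraction'' step does not exist. A secondary issue: your displayed composite uses $\lltens$ throughout, whereas the argument must run over the completed tensor $\dtens[{\A[Z_2]}]$ to account for $\A[Z_2\times Z_2^a]\neq\A[Z_2]\etens\A[Z_2^a]$; this is essential for the unit and residue maps above to typecheck.
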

\begin{proof}
(i) Set $Z_i=X_i\times X_{i}^a$.
We shall denote by the same letter $p_{ij}$
the projection from $Z_1\times Z_2\times Z_3$ to $Z_i\times Z_j$.

We have 
\eqn
\HHA[X_i\times X^a_j]\\
&&\hspace{-1.5cm}\simeq
(\dA[X_i^a]\ldetens\dA[{X_j}])\lltens[{\A[Z_i\times Z_j^a]}](\dA[{X_i}]\ldetens\dA[{X_j^a}])\\
&&\hspace{-1.5cm}\simeq
\rhom[{\A[Z_i\times Z_j^a]}](\oAI[{X_i}]\ldetens\oAI[{X_j^a}],\dA[X_i]\ldetens\dA[{X_j}^a])\\
&&\hspace{-1.5cm}\simeq
\rhom[{\A[Z_i\times Z_j^a]}]\bl
(\oAI[{X_i}]\ldetens\oAI[{X_j^a}])\dtens[{\A[X_j^a]}]\oA[X_j^a],
(\dA[X_i]\ldetens\dA[{X_j}^a])\dtens[{\A[X_j^a]}]\oA[X_j^a]\br\\
&&\hspace{-1.5cm}\simeq
\rhom[{\A[Z_i\times Z_j^a]}](\oAI[{X_i}]\ldetens\dA[{X_j^a}],\dA[X_i]\ldetens\oA[{X_j}^a]).
\eneqn
Set $S_{ij}\seteq\oAI[{X_i}]\ldetens\dA[{X_j^a}]
\in\Derb_\coh(\A[Z_i\times Z_j^a])$
and $K_{ij}\seteq\dA[{X_i}]\ldetens\oA[{X_j^a}]\in\Derb_\coh(\A[Z_i\times Z_j^a])$.
Then we get
\eqn
&&\HHA[X_i\times X^a_j]
\simeq \rhom[{\A[Z_i\times Z_j^a]}](S_{ij},K_{ij}).
\eneqn
Thus we obtain a morphism in $\Db(\coro_{Z_1\times Z_2\times Z_3})$
\eqn
&&\ba{ll}
\hs{-3ex}\opb{p_{12}}\HHA[X_1\times X^a_2]\lltens\opb{p_{23}}\HHA[X_2\times X^a_3]
\\[1ex]
\hs{9ex}\simeq
\opb{p_{12}}\rhom[{\A[Z_1\times Z_2^a]}](S_{12},K_{12})\lltens\opb{p_{23}}
\rhom[{\A[Z_2\times Z_3^a]}](S_{23},K_{23})\\[1ex]
\hs{9ex}\to\opb{p_{13}}\rhom[{\A[{Z_1\times Z_3^a}]}]
\bl S_{12}\dtens[{\A[Z_2]}]S_{23}, K_{12}\dtens[{\A[Z_2]}]K_{23}\br.
\ea
\eneqn
We get a morphism
\eq\label{eq:compHH1}
&&\ba{ll}
\hs{-3ex}\reim{p_{13}}(\opb{p_{12}}\HHA[X_1\times X^a_2]
\lltens\opb{p_{23}}\HHA[X_2\times X^a_3])\\[1ex]
\hs{5ex}\to\reim{p_{13}}\rhom[{\A[{Z_1\times Z_3^a}]}]
\bl S_{12}\dtens[{\A[Z_2]}]S_{23}, K_{12}\dtens[{\A[Z_2]}]K_{23}\br.
\ea
\eneq

\vspace{0.4em}
\noindent
(ii) We have a morphism
\eqn
&&\coro_{X_2}\to
\rhom[{\A[Z_2^a]}](\dA[X_{2}^a],\dA[X_{2}^a])
\simeq\dA[X_{2}^a]\dtens[{\A[Z_2]}]\oAI[{X_2}],
\eneqn
which induces the morphism:
\eqn
&&
\opb{p_{13}}(\oAI[{X_1}]\ldetens\dA[X_3^a])
\to (\oAI[{X_1}]\ldetens\dA[X_{2}^a])
\dtens[{\A[Z_2]}](\oAI[{X_2}]\ldetens\dA[X_3^a]),
\eneqn
that is, the morphism  in $\Derb(\A[Z_1\times Z_3^a])$:
\eq\label{eq:compHH2}
&&S_{13}\to \roim{p_{13}}(S_{12}\dtens[{\A[Z_2]}]S_{23}).
\eneq

\vspace{0.4em}
\noindent
(iii) We have a morphism (see \eqref{mor:residue}):
\eqn
&&(\dA[{X_1}]\ldetens\oA[{X_2^a}])\dtens[{\A[Z_2]}]
(\dA[{X_2}]\ldetens\oA[{X_3^a}])
\to 
\opb{p_{13}}(\dA[{X_1}]\ldetens\oA[X_3^a])[2d_2],
\eneqn
which induces the morphism in $\Derb(\A[Z_1\times Z_3^a])$:
\eq\label{eq:compHH3}
&&\reim{p_{13}}(K_{12}\dtens[{\A[Z_2]}]K_{23})\to K_{13}.
\eneq

\vspace{0.4em}
\noindent
(iv) Using \eqref{eq:compHH2} and \eqref{eq:compHH3} we obtain
\eq\label{eq:compHH4}
&&\ba{l}\reim{p_{13}}\rhom[{\A[{Z_1\times Z^a_3}]}]
\bl S_{12}\dtens[{\A[Z_2]}]S_{23}, K_{12}\dtens[{\A[Z_2]}]K_{23}\br\\
\hs{5ex}\to\,\rhom[{\A[{Z_1\times Z^a_3}]}]
\bl \roim{p_{13}}(S_{12}\dtens[{\A[Z_2]}]S_{23}),
\reim{p_{13}}( K_{12}\dtens[{\A[Z_2]}]K_{23})\br\\
\hs{5ex}\to\rhom[{\A[{Z_1\times Z^a_3}]}](S_{13},K_{13})
\simeq
\HHA[X_1\times X^a_{3}].
\ea
\eneq
Combining \eqref{eq:compHH1} and \eqref{eq:compHH4}, we get the result.
\end{proof}
Let us denote by $X_\R$ the real underlying manifold to $X$ and by 
$\omega_{X_\R}^{\rm top}$ 
\index{omegatop@$\omega_{X_\R}^{\rm top}$}%
the topological dualizing complex of the
space $X_\R$ with coefficients in $\coro$. Note that $X$ being smooth and oriented,
$\omega_{X_\R}^{\rm top}$ is isomorphic to $\coro_X\,[2d_X]$. 
\begin{corollary}\label{cor:hoschdualizcp}
There is a canonical morphism $\HHA[X]\tens\HHA[X]\to\omega_{X_\R}^{\rm top}$.
\end{corollary}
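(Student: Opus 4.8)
The plan is to obtain the pairing $\HHA[X]\tens\HHA[X]\to\omega_{X_\R}^{\rm top}$ as a special case of the composition morphism of Proposition~\ref{pro:compHH}. Concretely, I would apply that proposition with $X_1=X_3=\rmpt$ and $X_2=X$. Since $\pt$ is a point endowed with its trivial $\DQ$-algebroid $\coro$, one has $X_1\times X_2^a=X^a$, $X_2\times X_3^a=X$, and $X_1\times X_3^a=\rmpt$, so that Proposition~\ref{pro:compHH} supplies a morphism
\eqn
&&\reim{p}(\HHA[X^a]\lltens\HHA[X])\to\HHH[\coro_\rmpt],
\eneqn
where $p\cl X\to\rmpt$ is the (proper-if-$X$-compact, but here we only need $\reim{p}$) projection and $\HHH[\coro_\rmpt]\simeq\coro$.

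Next I would identify the two sides with the desired objects. On the target side, $\HHH[\coro_\rmpt]=\coro$ is clear from \eqref{def:HHA}. On the source side, the key point is the duality identification $\HHA[X^a]\simeq\HHA[X]$ established in the paragraph preceding \eqref{eq:dualHochschild} (using the swap map $s$ and $\opb s\dA[X]\simeq\dA[X^a]$). Thus $\reim p(\HHA[X^a]\lltens\HHA[X])\simeq\reim p(\HHA[X]\lltens\HHA[X])$. Finally, by adjunction $(\reim p,\epb p)$ together with the fact that $X$ is smooth and oriented, a morphism $\reim p(\HHA[X]\lltens\HHA[X])\to\coro$ in $\RD^\Rb(\coro_\rmpt)$ is the same as a morphism $\HHA[X]\lltens\HHA[X]\to\epb p\coro_\rmpt=\omega_{X_\R}^{\rm top}\simeq\coro_X[2d_X]$ in $\RD^\Rb(\coro_X)$. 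Composing the three identifications yields the canonical morphism $\HHA[X]\tens\HHA[X]\to\omega_{X_\R}^{\rm top}$, where I also use that for complexes of $\coro_X$-modules the derived tensor product $\lltens$ over $\coro_X$ may be written $\tens$ after choosing flat representatives, or simply interpret $\tens$ in the statement as $\lltens$.

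The only genuinely substantive ingredient is Proposition~\ref{pro:compHH}, which is already proved in the excerpt, so the present corollary is a direct specialization; the one thing I would check carefully is the bookkeeping of the opposite-algebroid conventions (Remark~\ref{rem:opposed}) when specializing $X_1,X_3$ to a point — in particular that the factor $\HHA[X_1\times X_2^a]$ becomes $\HHA[X^a]$ and not $\HHA[X]$, so that the duality isomorphism is genuinely needed to symmetrize. I expect this convention check to be the main (minor) obstacle; once it is dispatched, the statement follows. One could also streamline the argument by applying Proposition~\ref{pro:compHH} with $X_1=\rmpt$, $X_2=X$, $X_3=X$ and then convolving with a trace-type map, but the above route via $X_1=X_3=\rmpt$ is the most economical.
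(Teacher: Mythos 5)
Your proof is correct and follows the paper's own argument: apply Proposition~\ref{pro:compHH} with $X_1=X_3=\rmpt$ and $X_2=X$, then dualize by the $(\reim{a_X},\epb{a_X})$ adjunction. The only difference is that you spell out the identification $\HHA[X^a]\simeq\HHA[X]$ from the paragraph preceding~\eqref{eq:dualHochschild}, which the paper's proof uses implicitly when it writes $\reim{a_X}(\HHA[X]\tens\HHA[X])$; your extra care on this bookkeeping point is correct and makes the argument tighter.
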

\begin{proof}
Let us apply Proposition~\ref{pro:compHH} with $X_2=X$,
$X_1=X_3=\rmpt$.  Denoting by $a_X$ the map $X\to\rmpt$, we get the
morphism $\reim{a_X}(\HHA[X]\tens\HHA[X])\to\coro_\rmpt$. By adjunction
we get the desired morphism.
\end{proof}
\section{Main theorem}
Consider five manifolds $X_i$ endowed with $\DQ$-algebroids $\A[X_i]$ ($i=1,\dots,5$). 

\begin{notation}\label{not:nota1}
In the sequel and until the end of this section, when there is no risk of confusion, 
we use the following conventions.
\bnum
\item For $i,j\in\{1,2,3,4,5\}$, we set 
$X_{ij}\eqdot X_i\times X_j$, $X_{ij^a}\eqdot X_i\times X_j^a$ and similarly 
with $X_{ijk}$, etc.
\item 
We sometimes omit the symbols $p_{ij},\oim{p_{ij}},\opb{p_{ij}}$, etc.
\item 
We write $\A[i]$ instead of $\A[X_i]$, $\A[ij^a]$ instead of $\A[X_{ij^a}]$
and similarly with $\dA[i],\oA[i]$, etc., and we write 
$\conv[i]$ instead of $\conv[X_i]$, $\sconv[i]$ instead of $\sconv[X_i]$,
$\hom[i]$ instead of $\hom[{\A[i]}]$ and $\tens[i]$ instead of
$\tens[{\A[i]}]$  and similarly with $ij^a$, $ijk$, etc. 
\item 
We write $\RDD$ instead of $\RDA$ and $\omA$ instead of $\oA$.
\item 
We often identify an invertible object of $\Derb(\A\otimes\A[X^a])$ with an object of 
$\Derb(\A[X\times X^a])$ supported by the diagonal.
\item 
We identify $(X_i\times X_j^a)^a$ with $X_i^a\times X_j$.
\enum
\end{notation}

Let $\Lambda_{ij}\subset X_{ij}$ ($i=1,2$, $j=i+1$) be a closed subset
and assume that $\Lambda_{12}\times_{X_2}\Lambda_{23}$ 
is proper over $X_1\times X_3$. 
Using Proposition~\ref{pro:compHH}, we get a map
\eq\label{eq:RHH}
&&\conv[2]\ \cl\RHH[\Lambda_{12}]{X_{12^a}}\times \RHH[\Lambda_{23}]{X_{23^a}}
\To \RHH[\Lambda_{12}\circ\Lambda_{23}]{X_{13^a}}.
\eneq
For $C_{ij}\in \RHH[\Lambda_{ij}]{X_{ij^a}}$ ($i=1,2$, $j=i+1$),
we obtain a class
\eq\label{eq:C1circC2}
&&C_{12}\conv[2] C_{23}\in  
\RHH[\Lambda_{12}\circ\Lambda_{23}]{X_{13^a}}.
\eneq

The morphism $(\dA[1^a]\dtens[11^a]\dA[1])\letens
(\dA[2^a]\dtens[22^a]\dA[2])\to(\dA[1^a2^a]\dtens[121^a2^a]\dA[12])$
induces the exterior product
\eq
&&\etens:\RHH[\Lambda_1]{X_1}\times\RHH[\Lambda_2]{X_2}
\to\RHH[\Lambda_1\times \Lambda_2]{X_1\times X_2}
\eneq
for $\Lambda_{i}\subset X_{i}$ ($i=1,2$).

\begin{lemma}\label{le:assoHH}
Let $\Lambda_{ij}\subset X_{ij}$ \ro$i=1,2,3$, $j=i+1$\rf and assume that 
$\Lambda_{ij}\times_{X_j}\Lambda_{jk}$ is proper over $X_{ik}$
\ro$i=1,2$, $j=i+1$, $k=j+1$\rf. 
Let $C_{ij}\in \RHH[\Lambda_{ij}]{X_{ij^a}}$ \lp$i=1,2,3$, $j=i+1$\rp.
\banum
\item
One has $(C_{12}\conv[2] C_{23})\conv[3] C_{34}=
C_{12}\conv[2]( C_{23}\conv[3] C_{34})$.
\item 
For $C_{245}\in\RHH{X_{245^a}}$ we have
$$(C_{12}\boxtimes C_{34})\conv[24]C_{245}=
C_{12}\conv[2](C_{34}\conv[4]C_{245}).$$
\item
Set $\rC_{\Delta_i}=\hh_{X_{ii^a}}(\dA[X_i])$. Then 
$C_{12}\conv[2]\rC_{\Delta_2}=\rC_{\Delta_1}\conv[1]C_{12}=C_{12}$.
\label{eq:unit}
\item
$(C_{12}\boxtimes\rC_{\Delta_3})\conv[23^a]C_{23}
=C_{12}\conv[2]C_{23}$.
Here $C_{12}\boxtimes\rC_{\Delta_3}
\in\RHH[\Lambda_{12}\times\Delta_3]{X_{12^a33^a}}$ 
is regarded as an element of
$\RHH[\Lambda_{12}\times\Delta_3]{X_{(13^a)(23^a)^a}}$.
\eanum
\end{lemma}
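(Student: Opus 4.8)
The statement is a package of associativity/unit compatibilities for the composition $\conv$ on Hochschild classes, and the natural strategy is to trace each assertion back to a corresponding identity at the level of the kernels and dualizing complexes that enter the construction in Proposition~\ref{pro:compHH}. I would first set up notation as in Notation~\ref{not:nota1} and recall that, for $i<j$, one has $\HHA[X_i\times X_j^a]\simeq\rhom[{\A[Z_i\times Z_j^a]}](S_{ij},K_{ij})$ with $S_{ij}=\oAI[X_i]\ldetens\dA[X_j^a]$ and $K_{ij}=\dA[X_i]\ldetens\oA[X_j^a]$, so that a class $C_{ij}$ is a morphism $S_{ij}\to K_{ij}$ in $\Derb(\A[Z_i\times Z_j^a])$, and the composition $C_{12}\conv[2]C_{23}$ is obtained by the three-step recipe: apply $\dtens[{\A[Z_2]}]$ to the two morphisms, precompose with the canonical $S_{13}\to\roim{p_{13}}(S_{12}\dtens[{\A[Z_2]}]S_{23})$ of \eqref{eq:compHH2}, and postcompose with $\reim{p_{13}}(K_{12}\dtens[{\A[Z_2]}]K_{23})\to K_{13}$ of \eqref{eq:compHH3}. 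Once this picture is fixed, each of (a)--(d) becomes a diagram chase.

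For (a), I would observe that $(C_{12}\conv[2]C_{23})\conv[3]C_{34}$ and $C_{12}\conv[2](C_{23}\conv[3]C_{34})$ are both computed from the triple tensor product $S_{12}\dtens[{\A[Z_2]}]S_{23}\dtens[{\A[Z_3]}]S_{34}$ (and similarly for the $K$'s), using the associativity of $\dtens$ established in Proposition~\ref{pro:HH1}~\eqref{ass:conv} together with the compatibility of $\reim{}$, $\roim{}$ with the projection formula along $p_{14}$; the key point is that the two ways of bracketing the ``unit'' morphisms \eqref{eq:compHH2} along $X_2$ and along $X_3$ commute, which is a cocycle-type identity that reduces, after applying the conservative functor $\gr$ (Corollary~\ref{cor:conservative1}), to the corresponding well-known statement for $\sho$-modules via Proposition~\ref{pro:circgr} and the Serre duality morphism of Theorem~\ref{th:Serre}. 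For (b), the identity $(C_{12}\etens C_{34})\conv[24]C_{245}=C_{12}\conv[2](C_{34}\conv[4]C_{245})$ follows from the Fubini-type compatibility $\dA[X_2]\ldetens\dA[X_4]\simeq\dA[X_{24}]$ (the lemma after Corollary~\ref{co:existCD}) and the fact that $\conv[24]$ over the product $X_2\times X_4$ factors through successive convolutions over $X_2$ and $X_4$; again $\gr$ plus the $\sho$-module case closes it. For (c), using the canonical isomorphisms $\shk\conv[X]\dA[X]\simeq\shk$ and $\dA[X]\conv[X]\shk\simeq\shk$ together with the fact that the three-step recipe sends $\rC_{\Delta_2}=\hh_{X_{22^a}}(\dA[X_2])$ to the ``identity'' insertion, the composition $C_{12}\conv[2]\rC_{\Delta_2}$ collapses to $C_{12}$; here I would use Lemma~\ref{le:HH2} to identify $\rC_{\Delta_2}$ with the composite $\oAI[X_2]\to\dA[X_2]\ldetens\RDA\dA[X_2]\to\dA[X_2]$ and then unwind. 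Finally (d) is a formal consequence of (b) and (c): expand $C_{12}\etens\rC_{\Delta_3}$, move $\rC_{\Delta_3}$ past the convolution using (b) to reorganize the product so that $\rC_{\Delta_3}\conv[3]$ (via the identification of $X_{(13^a)(23^a)^a}$ with $X_{13^a23^a}$) acts on the $X_3$-factor, and apply the unit property (c).

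\textbf{Main obstacle.} The genuine difficulty is not in (c)--(d), which are essentially bookkeeping, but in (a): one must check that the two composites of the canonical morphisms \eqref{eq:compHH2} and \eqref{eq:compHH3}, inserted at the $X_2$-slot and at the $X_3$-slot, actually agree on the nose in the derived category, including all the shifts by $2d_{X_2}$, $2d_{X_3}$ coming from \eqref{mor:residue} and the signs lurking in the Poincar\'e/Serre duality isomorphisms. The cleanest route is to reduce everything, via the conservativity of $\gr$ and Proposition~\ref{pro:tensgr}/Proposition~\ref{pro:circgr}, to the corresponding statement for the commutative algebroid $\gr\A$, i.e.\ for $\sho$-modules on the $X_{ij}$, where the analogous compatibility of Hochschild classes under composition of kernels is classical (and is precisely what Chapter~\ref{chapter:Com} is set up to handle); the only thing to verify by hand is that the $\DQ$-level morphisms $\gr$-specialize to the $\sho$-level ones, which follows because $\gr$ commutes with $\detens$, $\reim{}$, $\roim{}$, and sends $\dA$ to $\oim{\delta}\OO$ and $\oA$ to $\oim{\delta}\oo[X]$. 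So the real work is checking that diagram commutativity at the graded level, and that is where I would spend the effort.
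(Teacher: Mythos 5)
Your handling of (c) and (d) is close to what the paper does: (d) is deduced formally from (b) and (c), and for (c) the essential step is to show that convolution with $\rC_{\Delta_i}$ is the identity. The precision you are missing is that this is exactly Lemma~\ref{le:HH4} (stated and proved just \emph{after} this lemma in the text), which identifies the abstract convolution $\hh_{X_{12^a}}(\shk)\conv(\scbul)$ from Proposition~\ref{pro:compHH} with the explicit kernel-level operator $\Phi_\shk$ of \eqref{eq:defPhiK}; the paper then observes that $\Phi_\shk$ is the identity when $\shk=\dA$. Your appeal to Lemma~\ref{le:HH2} alone does not give this bridge --- ``unwinding the three-step recipe'' is precisely the content of Lemma~\ref{le:HH4} and is nontrivial.

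For (a) and (b), the paper leaves the proofs to the reader, but the closing step you propose --- applying $\gr$ and invoking conservativity (Corollary~\ref{cor:conservative1}) to reduce to the $\sho$-module case --- does not work. Conservativity of $\gr$ detects that a \emph{morphism} is an isomorphism; it gives no information about the \emph{equality} of two morphisms, and the classes you are trying to compare are elements of $H^0\rsect_\Lambda(\scbul;\HHA)$, i.e.\ morphisms in a derived category. In fact the paper's own Remark~\ref{rem:notff} notes that the adjoint functor $\for$ is not full and not faithful in general, and the same is true of $\gr$: from the triangle $K\to[\hbar]K\to\gr K\to[+1]$ one sees that the map $\Hom[\Derb(\A)](S,K)\to\Hom[\Derb(\gr\A)](\gr S,\gr K)$ kills at least $\hbar\cdot\Hom[\Derb(\A)](S,K)$, so two composites $S_{14}\to K_{14}$ can agree after $\gr$ without being equal. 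The commutativity of the relevant associativity diagram must therefore be verified directly in $\Derb(\A)$, by coherence of $\dtens$, the projection formula and the $(\opb{p},\roim{p})$ and $(\reim{p},\epb{p})$ adjunctions, without passing through $\gr$. (Your Fubini ingredient $\dA[X_2]\ldetens\dA[X_4]\simeq\dA[X_2\times X_4]$ for (b) is correct --- it is the lemma immediately following Corollary~\ref{co:existCD} --- but the $\gr$-reduction you use to close the argument is the gap.)
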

\Proof
The proof of (a) and (b) is left to the reader and
\eqref{eq:unit} follows from
Lemma \ref{le:HH4} below. Indeed, $\Phi_\shk$ in \eqref{eq:defPhiK}
is equal to the identity when $\shk=\dA$ since
the functor
$\shl\mapsto
\shk\sconv[2]\shl\conv[2]\omA[2]\sconv[2]\RDD\shk$
is isomorphic to the identity functor.

\noindent
(d) follows from (b) and (c).
\QED

In order to prove Theorem~\ref{th:HH1} below, we need some lemmas.

\begin{lemma}\label{le:HH3}
Let $\shk\in\RD^\Rb_{\coh}(\A[X_{12^a}])$. 
Then, there are natural morphisms in 
$\RD^\Rb(\A[X_{11^a}]):$
\eq
&&\omAI[1]\to \shk\sconv[2]\RDA\shk,\label{eq:HHmor1}\\
&&\shk\conv[2]\omA[2]\conv[2]\RDA\shk\to\dA[1].\label{eq:HHmor2}
\eneq
\end{lemma}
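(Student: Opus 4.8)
\textbf{Plan of proof for Lemma~\ref{le:HH3}.}
The two morphisms \eqref{eq:HHmor1} and \eqref{eq:HHmor2} are the ``relative'' versions over $X_2$ of the morphisms \eqref{eq:HHmor01} and \eqref{eq:HHmor02} from Lemma~\ref{lem:HHmor12}, which treat the absolute case $X_1=X_2=\rmpt$. The basic idea is that the construction of those absolute morphisms is entirely functorial (it uses only the canonical biduality isomorphisms, the identity of an endomorphism ring, and the isomorphisms $\rhom[{\A[X]}](\shm,\shm)\simeq\RDA\shm\lltens[\A]\shm$ and the $\dA$-description of this object), so it carries over verbatim after replacing tensor products by convolutions $\conv[2]$, $\sconv[2]$ over $X_2$. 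The plan is therefore to mimic the proof of Lemma~\ref{lem:HHmor12} in the relative setting.

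First I would establish the relative analogue of the chain of isomorphisms used in the proof of Lemma~\ref{lem:HHmor12}. Namely, for $\shk\in\RD^\Rb_\coh(\A[X_{12^a}])$, one has
\eqn
&&\rhom[{\A[12^a]}](\shk,\shk)
\isofrom \RDA\shk\conv[2]\shk
\simeq\dA[1^a]\dtens[11^a](\shk\ldetens[2]\RDA\shk),
\eneqn
where $\shk\ldetens[2]\RDA\shk$ is viewed as an object of $\RD^\Rb_\coh(\A[X_{11^a}\times X_{22^a}])$ and the last isomorphism is obtained exactly as in Lemma~\ref{le:homCdelta}, applied over $X_1$ instead of over a point, using Corollary~\ref{co:existCD} to rewrite the convolution over $X_2$ in terms of $\dA[2^a]$. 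Using Theorem~\ref{th:oARDA}, which gives $\oA[2^a]\simeq(\RDAA[2^a2]\dA[2^a])^{\otimes-1}$, one further identifies $\dA[2^a]$ with $\rhom[{\A[22^a]}](\omAI[2],\dA[2])$, and hence
\eqn
&&\shk\sconv[2]\RDA\shk\simeq
\rhom[{\A[22^a]}]\bl\omAI[2],\,\shk\ldetens[2]\RDA\shk\br\quad\text{(suitably convolved over $X_1$).}
\eneqn
Then \eqref{eq:HHmor1} is defined as the image of $\id_\shk\in\Hom[{\A[12^a]}](\shk,\shk)$ under this chain, exactly as in part (i) of Lemma~\ref{lem:HHmor12}; and \eqref{eq:HHmor2} is obtained from \eqref{eq:HHmor1} by applying the duality functor $\RDAA[X_{11^a}]$ together with the biduality isomorphism on $\RD^\Rb_\coh(\A[X_{11^a}])$ and the identifications $\RDAA[X_{11^a}](\omAI[1])\simeq\dA[1]$ and $\RDAA[X_{11^a}](\shk\ldetens[2]\RDA\shk)\simeq\shk\conv[2]\omA[2]\conv[2]\RDA\shk$ (the latter using that $\RDD$ commutes with $\detens$ and with convolution of coherent kernels, Theorem~\ref{th:duality}, together with $\RDD\RDD\shk\simeq\shk$).

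The main obstacle I anticipate is purely bookkeeping rather than conceptual: keeping track of the various external products, the duals over $\A[X_i^a\times X_j]$ versus the duals $(\scbul)^{\otimes-1}$ over $\A[X_i]$ (cf.\ Remark~\ref{rem:opposed}), and the precise placement of the shifts $[2d_2]$ coming from the residue/Poincar\'e morphism \eqref{mor:residue} when passing between $\dA[2^a]\dtens[22^a]\omAI[2]$ and $\coro_{X_2}$. One must check that these shifts cancel so that \eqref{eq:HHmor1} and \eqref{eq:HHmor2} land in the stated unshifted objects; this is guaranteed by Lemma~\ref{le:DAalg2} and the isomorphism \eqref{eq:isoDAalg1}, but verifying it requires care. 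I would handle this by first writing everything in the $X_1=X_3=\rmpt$ (absolute) form to confirm degree matching with Lemma~\ref{lem:HHmor12}, and then inserting the external factor over $X_1$, which is flat and contributes no shift. Since all the functors involved (convolution of coherent kernels, external product, duality) commute with $\gr$ by Proposition~\ref{pro:circgr} and Proposition~\ref{pro:dualgr}, one could alternatively verify well-definedness and the degree count after applying $\gr$, reducing to the classical $\sho_X$-module statement, and then conclude by Corollary~\ref{cor:conservative1}; but for merely constructing the morphisms this is not needed.
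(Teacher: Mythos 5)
Your plan for \eqref{eq:HHmor1} is in the right spirit and is essentially the paper's argument, once one tightens the bookkeeping: the paper applies Lemma~\ref{lem:HHmor12}~(i) to $\shk$, viewed as a coherent module over $\A[X_{12^a}]$, obtaining $\omAI[12^a]\to \shk\detens\RDA\shk$ in $\RD^\Rb(\A[X_{12^a21^a}])$; it then applies $\scbul\lltens[22^a]\dA[2]$, using $\coro_{X_2}\to\rhom[{\A[22^a]}](\dA[2],\dA[2])$ and the identification of $\omAI[2]\lltens[22^a]\dA[2]$, and finally adjoins via $\opb{p_{11^a}}\dashv\roim{p_{11^a}}$. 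Note that the intermediate chain of isomorphisms you wrote, e.g.\ $\rhom[{\A[12^a]}](\shk,\shk)\isofrom\RDA\shk\conv[2]\shk\simeq\cdots$, does not type-check: the left-hand side is a complex of $\coro$-modules on $X_1\times X_2$ while the right-hand side lives on $X_{1^a1}$, so some collapsing over $X_2$ must be made explicit before one can talk about the image of $\id_\shk$. The paper's ``$\lltens[22^a]\dA[2]$ then adjoin'' is precisely that collapsing.

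The plan for \eqref{eq:HHmor2} has a genuine gap. You propose to obtain it by applying $\RDAA[X_{11^a}]$ to \eqref{eq:HHmor1}, invoking biduality on $\RD^\Rb_\coh(\A[X_{11^a}])$ and Theorem~\ref{th:duality}. But the lemma carries no properness hypothesis: $\shk\sconv[2]\RDA\shk$ and $\shk\conv[2]\omA[2]\conv[2]\RDA\shk$ need \emph{not} be coherent over $\A[X_{11^a}]$, so you cannot invoke biduality; and the duality isomorphism of Theorem~\ref{th:duality} requires exactly the properness hypothesis you do not have. Without properness one only has the morphism of Lemma~\ref{le:duality}, but it has the wrong shape: for $\shk_1=\shk$, $\shk_2=\RDA\shk$ it lands in $\RDAA[X_{11^a}](\shk\conv[2]\RDA\shk)$ rather than in $\RDAA[X_{11^a}](\shk\sconv[2]\RDA\shk)$, and the natural comparison map $\shk\conv[2]\RDA\shk\to\shk\sconv[2]\RDA\shk$ points the wrong way after dualizing. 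In other words, the $\sconv$/$\conv$ asymmetry (``$\roim$'' in \eqref{eq:HHmor1} versus ``$\reim$'' in \eqref{eq:HHmor2}) is not something one can transpose through $\RDAA$ in general; it is an intrinsic feature of working without support conditions. The paper therefore does not deduce \eqref{eq:HHmor2} from \eqref{eq:HHmor1}; it runs a parallel, independent argument: apply Lemma~\ref{lem:HHmor12}~(ii) to $\shk$ to get $\shk\detens\RDD\shk\to\dA[12^a]$, apply $\scbul\lltens[22^a]\omA[2]$, compose with the residue morphism \eqref{mor:residue} $\dA[12^a]\lltens[22^a]\omA[2]\to\dA[1]\dtens\coro_{X_2}\,[2d_2]$, and adjoin via $\reim{p_{11^a}}\dashv\epb{p_{11^a}}$. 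You should follow that route; the residue morphism here is not mere degree bookkeeping, it is the substance of the map to $\dA[1]$.

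Finally, your remark about checking things after $\gr$ and using conservativity does not help here, as you yourself note: $\gr$ plus Corollary~\ref{cor:conservative1} detects when a given morphism is an isomorphism, but the issue in this lemma is the \emph{construction} of the two morphisms, where nothing needs to be shown to be an isomorphism.
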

\begin{proof}
(i) \quad
By \eqref{eq:HHmor01}, we have a morphism in $\RD^\Rb(\A[X_{12^a21^a}])$
\eqn
&&\omAI[12^a]\to \shk\detens\RDA\shk.
\eneqn
Applying the functor $\scbul\lltens[22^a]\dA[2]$, we obtain 
\eqn
&&\hs{-2ex}\opb{p_{11^a}}\omAI[1]
\to\omAI[1]\etens (\RDA\dA[2]\ltens[22^a]\dA[2])
\to\omAI[12^a]\lltens[22^a]\dA[2]
\to(\shk\ldetens\RDA\shk)\tens[22^a]\dA[2].
\eneqn
By adjunction, we get \eqref{eq:HHmor1}.

\noindent
(ii) \ By \eqref{eq:HHmor02}, we have a morphism in $\RD^\Rb(\A[X_{12^a21^a}])$
\eqn
&&\shk\detens\RDD\shk\to \dA[12^a].
\eneqn
Applying the functor $\scbul\lltens[22^a]\omA[2]$, we obtain 
\eqn
&&(\shk\ldetens\RDA\shk)\tens[22^a]\omA[2]
\to \dA[12^a]\lltens[22^a]\omA[2]\to \dA[1]\dtens\coro_{X_2}[2d_2].
\eneqn
Here the last arrow is given by \eqref{mor:residue}.
By adjunction, we get \eqref{eq:HHmor2}.
\end{proof}

For the sake of brevity, 
we shall write $\Gamma_\Lambda\Hom$ instead of
$H^0(\rsect_\Lambda\rhom)$. 

Let $\Lambda_{12}$ be a closed subset of $X_1\times X_2^a$ and
$\Lambda_2$ a closed subset of $X_2$.
Let $\shk\in\RD^\Rb_{\coh}(\A[X_{12^a}])$ with support $\Lambda_{12}$.
We assume
\eq\label{hyp:lambdapproper}
&&\parbox{60ex}{  
$\Lambda_{12}\times_{X_2}\Lambda_{2}$ is proper over $X_{1}$.
}
\eneq
We define the map
\eq
&&\Phi_{\shk}
\cl\RHH[\Lambda_2]{X_{2}}\To\RHH[\Lambda_{12}\circ\Lambda_2]{X_{1}}\label{eq:defPhiK}
\eneq
as the composition of the sequence of maps 
\eqn
&&\hs{-3.5ex}\RHH[\Lambda_2]{2}
\simeq \Gamma_{\Lambda_2}\Hom[22^a](\omAI[2],\dA[2])\\
&&\hs{-2.5ex}\to
\Gamma_{\Lambda_{12}\times_{X_2}\Lambda_2}\Hom[11^a](
\shk\dtens[2](\omAI[2]\conv[2]\omA[2]\conv[2]\RDA\shk),
\shk\dtens[2](\dA[2]\conv[2]\omA[2]\conv[2]\RDA\shk))\\
&&\hs{-2.5ex}\to
\Gamma_{\Lambda_{12}\circ\Lambda_2}\Hom[11^a]\bl
\roim{p_{1}}(\shk\dtens[2](\omAI[2]\conv[2]\omA[2]\conv[2]\RDA\shk)),
\reim{p_1}(\shk\dtens[2](\dA[2]\conv[2]\omA[2]\conv[2]\RDA\shk))\br\\
&&\hs{-2.5ex}\simeq
\Gamma_{\Lambda_{12}\circ\Lambda_2}
\Hom[11^a](\shk\sconv[2]\RDA\shk,\shk\conv[2]\omA[2]\conv[2]\RDA\shk)\\
&&\hs{-2.5ex}\to
\Gamma_{\Lambda_{12}\circ\Lambda_2}\Hom[11^a]
(\omAI[1],\dA[1])\simeq
\RHH[{\Lambda_{12}\circ\Lambda_2}]{1}.
\eneqn
The first arrow is obtained by applying the functor
$\shl\mapsto
\shk\dtens[2](\shl\conv[2]\omA[2]\conv[2]\RDA\shk)$,
The last arrow is associated with the morphisms in Lemma~\ref{le:HH3}.
\begin{lemma}\label{le:HH4}
The map 
$\Phi_{\shk}\cl \RHH[\Lambda_2]{X_{2}}\To
 \RHH[\Lambda_{12}\circ\Lambda_2]{X_{1}}$ 
in \eqref{eq:defPhiK} is the map $\hh_{X_{12^a}}(\shk)\conv\;$ given
in \eqref{eq:C1circC2}. 
\end{lemma}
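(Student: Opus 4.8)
The statement asserts that the elaborately constructed map $\Phi_{\shk}$ of \eqref{eq:defPhiK} coincides with the convolution operator $\hh_{X_{12^a}}(\shk)\conv[2]\scbul$ of \eqref{eq:C1circC2}. The strategy is to unwind both sides into morphisms between the same pair of $\rhom$-complexes over $\A[11^a]$ and check that the two composites of elementary morphisms agree. First I would replace $\RHH[\Lambda_2]{X_2}$ by $\Gamma_{\Lambda_2}\Hom[22^a](\omAI[2],\dA[2])$, which is legitimate by the very definition of $\HHA[X_2]$ together with Lemma~\ref{le:HH2}: an element $C_2\in\RHH[\Lambda_2]{X_2}$ is a morphism $\omAI[2]\to\dA[2]$. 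Both $\Phi_{\shk}(C_2)$ and $\hh_{X_{12^a}}(\shk)\conv[2]C_2$ will then be exhibited as morphisms $\omAI[1]\to\dA[1]$, and I would trace each through the canonical identifications.

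The core of the argument is the observation that, by Proposition~\ref{pro:compHH} (more precisely, by the construction of $\conv[2]$ in its proof), the composition $\hh_{X_{12^a}}(\shk)\conv[2]C_2$ is literally built from $\hh_{X_{12^a}}(\shk)$ regarded, via the identifications in step (i) of that proof, as a morphism $S_{12}\to K_{12}$ in $\Derb(\A[Z_1\times Z_2^a])$ where $S_{12}=\omAI[1]\ldetens\dA[2^a]$ and $K_{12}=\dA[1]\ldetens\oA[2^a]$, convolved over $Z_2$ with the corresponding morphism for $C_2$, and then sandwiched between morphisms \eqref{eq:compHH2} and \eqref{eq:compHH3}. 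On the other hand, by Lemma~\ref{le:HH2}, $\hh_{X_{12^a}}(\shk)$ as a morphism $\omAI[12^a]\to\dA[12^a]$ factors as $\omAI[12^a]\to\shk\ldetens\RDA\shk\to\dA[12^a]$, the two arrows being \eqref{eq:HHmor01} and \eqref{eq:HHmor02} for $\shk$. Applying $\scbul\dtens[22^a]\dA[2]$ (resp.\ $\scbul\dtens[22^a]\omA[2]$) to these two arrows is exactly how the morphisms \eqref{eq:HHmor1}, \eqref{eq:HHmor2} of Lemma~\ref{le:HH3} were produced; so the "last arrow" appearing in the definition of $\Phi_{\shk}$ is, after adjunction, the same datum as the pair of arrows $\shk\ldetens\RDA\shk\rightrightarrows$ used in the convolution formula. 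The plan is therefore to write out the commutative diagram relating: (a) the factorization $\omAI[12^a]\to\shk\ldetens\RDA\shk\to\dA[12^a]$, (b) its image under $\scbul\dtens[22^a]\dA[2]$, $\scbul\dtens[22^a]\omA[2]$, and the relative residue morphism \eqref{mor:residue}, and (c) the definition of $\conv[2]$ via Proposition~\ref{pro:compHH}; commutativity is then a bookkeeping check using functoriality of $\ldetens$, $\dtens$, $\roim{p_1}$, $\reim{p_1}$, the projection formula, and the compatibility of \eqref{mor:residue} with $\conv$.

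\textbf{Main obstacle.} The genuine difficulty is purely organizational: matching, arrow by arrow, the five-step composition defining $\Phi_{\shk}$ in \eqref{eq:defPhiK} with the four-step composition \eqref{eq:compHH1}--\eqref{eq:compHH4} defining $\conv[2]$ on Hochschild classes, through the web of canonical isomorphisms $\HHA[X_i\times X_j^a]\simeq\rhom[{\A[Z_i\times Z_j^a]}](S_{ij},K_{ij})$ and the several occurrences of $\omA[2]\conv[2]$, $\omA[2]^{\otimes-1}$. One must be careful that the particular identification of $\HHA$ with $\rhom[{\A[X\times X^a]}](\omAI,\dA)$ used throughout this section (and flagged as not canonically identifiable with $\rhom[{\A[X\times X^a]}](\dA,\oA)$) is the one used consistently; the warning preceding Lemma~\ref{lem:HHmor12} is precisely what makes the sandwiching by $\omA[2]\conv[2]\scbul\conv[2]\omA[2]$ in \eqref{eq:defPhiK} the right normalization, and verifying that the extra $\omA[2]\conv[2]$ factors cancel correctly (using $\omA[2]^{\otimes-1}\conv[2]\omA[2]\simeq\dA[2]$ and the unit property $\dA[2]\conv[2]\scbul\simeq\scbul$ from Lemma~\ref{le:assoHH}\,\eqref{eq:unit}) is where all the care goes. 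No new idea is needed beyond the functorialities already established; I expect the write-up to be a single large compatible diagram together with a short verification that each cell commutes by a previously proved lemma.
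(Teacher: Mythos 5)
Your plan takes essentially the same route as the paper: regard $\lambda_2\in\RHH[\Lambda_2]{X_{2}}$ as a morphism $\omAI[2]\to\dA[2]$, use Lemma~\ref{le:HH2} to factor $\hh_{X_{12^a}}(\shk)$ as $\omAI[12^a]\to\shk\ldetens\RDA\shk\to\dA[12^a]$, and then verify that the two resulting morphisms $\omAI[1]\to\dA[1]$ coincide via one large commutative diagram in $\Db(\A[11^a]\etens\coro_{X_2\times X_2^a})$ built out of the arrows of Lemma~\ref{le:HH3} and Proposition~\ref{pro:compHH}; this is precisely the diagram chase the paper writes out, the only extra technical point there being the flip $\omA[2]\conv[2]L\simeq L\conv[2^a]\omA[2^a]$ used to keep everything on the same side of $\dtens[22^a]$. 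One caution: invoking Lemma~\ref{le:assoHH}~(c) for the unit property of $\dA[2]$ under $\conv[2]$ would be circular, since that part of Lemma~\ref{le:assoHH} is itself proved using the present lemma; what you actually need (and what your diagram already uses implicitly) is the module-level isomorphism $\dA[X_2]\conv[X_2]\shk\simeq\shk$ recorded just after Definition~\ref{def:kernel1}, not its Hochschild-class incarnation.
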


\begin{proof}
In the proof, 
we do not write $\Lambda_{12}$ and $\Lambda_2$.
Let $\lambda=\hh_{12^a}(\shk)\in \RHH{12^a}$ and let $\lambda_2\in\RHH{2}$.
We regard $\lambda$ as a morphism on $X_{12^a21^a}$:
\eqn
&&\lambda\cl\omAI[12^a]\to \shk\detens\RDA\shk\to \dA[12^a].
\eneqn
We regard $\lambda_2$ as a morphism  $\omAI[2]\to\dA[2]$.
Then $\Phi_{\shk}(\lambda_2)$ is obtained as the composition
\eqn
&&\omAI[1]\to\shk\sconv[2]\RDA\shk\to
\roim{p_{1}}\bl\shk\dtens[2](\omAI[2]\conv[2]\omA[2]\conv[2]\RDA\shk)\br\\
&&\quad\To[\lambda_2]
\reim{p_1}\bl\shk\dtens[2](\dA[2]\conv[2]\omA[2]\conv[2]\RDA\shk)\br
\to\shk\conv[2]\omA[2]\conv[2]\RDA\shk\to\dA[_1].
\eneqn
In the following diagram in the category $\Db(\A[11^a]\etens\coro_{X_2\times X_2^a})$, 
we write $\RDD$ instead of $\RDA$ for
sake of brevity:
{\scriptsize
\eqn
\xymatrix{
p_{11^a}^{-1}\omAI[1]\ar[d]\\ 
(\omAI[1]\detens\dA[2^a])\dtens[22^a]\omAI[2]\ar[r]^-{\lambda_2}\ar[d]^\bwr
        &(\omAI[1]\detens\dA[2^a])\dtens[22^a]\dA[2]\ar[d]^\bwr\ar[d]\\
((\omAI[1]\detens\omAI[2^a])\conv[2^a]\omA[2^a])\dtens[22^a]\omAI[2]
 \ar[r]^-{\lambda_2}\ar[d]
     &((\omAI[1]\detens\omAI[2^a])\conv[2]\omA[2^a])\dtens[22^a]\dA[2]
\ar[d]\ar[dr]\\
(\omA[2]\conv[2]\omAI[12^a])\dtens[22^a]\omAI[2]
 \ar[r]^-{\lambda_2}\ar[d]
     &(\omA[2]\conv[2]\omAI[12^a])\dtens[22^a]\dA[2]\ar[d]
&\bl(\shk\detens\RDD\shk)\conv[2^a]\omA[2^a]\br\dtens[22^a]\dA[2]
\ar[d]\ar[dl]_\sim\\
(\shk\detens\omA[2]\conv[2]\RDD\shk)\dtens[22^a]\omAI[2]
\ar[r]^-{\lambda_2}\ar[d]^-\bwr
&(\shk\detens\omA[2]\conv[2]\RDD\shk)\dtens[22^a]\dA[2]\ar[d]^-\bwr&
\bl\dA[12^a]\conv[2^a]\omA[2^a]\br\dtens[22^a]\dA[2]\ar[d]^\bwr\\
\shk\dtens[2](\omAI[2]\conv[2]\omA[2]\conv[2]\RDD\shk)
\ar[r]^-{\lambda_2}
   &\shk\dtens[2](\dA[2]\conv[2]\omA[2]\conv[2]\RDD\shk)\ar[d]
&(\dA[1]\detens\omA[2^a])\dtens[22^a]\dA[2]\ar[d]\\
&\shk\dtens[2]\RDD\shk\ar[r]&\dA[1]\etens\coro_{X_2}[2d_2].
}\eneqn
}
Here we used $\omA[2]\conv[2]L\simeq L\conv[2^a]\omA[2^a]$.
This diagram commutes, and the rows on the top and the right columns 
$p_{11^a}^{-1}\omAI[1]\to
\bl\omA[2]\conv[2](\shk\detens\RDA\shk)\br\dtens[22^a]\dA[2]
\to p_{11^a}^!\dA[1]$ induce
$\lambda\circ\lambda_2\cl \omAI[1]\to \dA[1]$ by adjunction.
Therefore, the diagram 
\eqn
\xymatrix{
\omAI[1]\ar[r]\ar[rrdd]|-{{\rule[-1ex]{0ex}{3ex}\lambda\circ\lambda_2}}
                                                     &\shk\sconv[2]\RDA\shk\ar[r]^-\sim
            &\shk\sconv[2](\omAI[2]\conv[2]\omA[2]\conv[2]\RDA\shk)\ar[d]^-{\lambda_2}\\
            && \shk\conv[2]\dA[2]\conv[2]\omA[2]\conv[2]\RDA\shk\ar[d]\\
           &&\dA[1]
}\eneqn
commutes, which gives the result since the composition of the rows on
the top and the vertical arrows is $\Phi_\shk(\lambda_2)$.
\end{proof}

\begin{theorem}\label{th:HH1}
Let $\Lambda_i$ be a closed subset of $X_i\times X_{i+1}$
\lp$i=1,2$\rp\, 
and assume that
$\Lambda_1\times_{X_2}\Lambda_2$ is proper over $X_1\times X_3$.
 Set $\Lambda=\Lambda_1\circ\Lambda_2$.
Let $\shk_i\in\RD^\Rb_{\coh,\Lambda_i}(\A[X_i\times X_{i+1}^a])$ \lp$i=1,2$\rp. Then
\eq\label{eq:RR10}
&&\hh_{X_{13^a}}(\shk_1\circ\shk_2)=\hh_{X_{12^a}}(\shk_1)\circ \hh_{X_{23^a}}(\shk_2)
\eneq
as elements of $\RHH[\Lambda]{X_1\times X_3^a}$.
\end{theorem}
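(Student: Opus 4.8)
The strategy is to reduce the general composition formula \eqref{eq:RR10} to the special case of the unit kernel $\dA[2]$ by means of the associativity properties of the convolution operation on Hochschild classes established in Lemma~\ref{le:assoHH}, together with the interpretation of ``composition with $\hh_{X_{12^a}}(\shk)$'' as the explicit map $\Phi_\shk$ given by Lemma~\ref{le:HH4}. First I would observe that, by Lemma~\ref{le:assoHH}~(b) applied with $C_{12}\eqdot\hh_{X_{12^a}}(\shk_1)$, $C_{34}\eqdot\hh_{X_{23^a}}(\shk_2)$ (after the appropriate relabelling of factors) and $C_{245}\eqdot\hh_{X_{22^a}}(\dA[2])=\rC_{\Delta_2}$, we have
\eqn
&&\hh_{X_{12^a}}(\shk_1)\conv[2]\hh_{X_{23^a}}(\shk_2)
=\bl\hh_{X_{12^a}}(\shk_1)\etens\hh_{X_{23^a}}(\shk_2)\br\conv[{22^a}]\rC_{\Delta_2},
\eneqn
using part~(c) of the same lemma to insert the unit. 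On the other hand, by Proposition~\ref{pro:HH1}~\eqref{ass:conv} and the defining formula \eqref{eq:kernandDelta2}, $\shk_1\conv[X_2]\shk_2\simeq(\shk_1\ldetens\shk_2)\conv[{X_2\times X_2^a}]\dA[X_2]$. Hence it is enough to prove two things: first, that $\hh$ is multiplicative for the external product, namely $\hh_{X_{12^a21^a}}(\shk_1\ldetens\shk_2)=\hh_{X_{12^a}}(\shk_1)\etens\hh_{X_{23^a}}(\shk_2)$ (with the identification of $(X_1\times X_2^a)\times(X_2\times X_3^a)$ with $X_1\times X_2^a\times X_2\times X_3^a$ and the diagonal identifications of Notation~\ref{not:nota1}); and second, the ``$\conv$ by a general kernel = $\Phi_{\shk}$'' statement of Lemma~\ref{le:HH4} applied with $\shk\eqdot\shk_1\ldetens\shk_2$ viewed as an $\A[X_{12^a}\times X_{2}\times X_3^a}]$-kernel acting on $\rC_{\Delta_2}\in\RHH{X_2\times X_2^a}$.

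The multiplicativity of $\hh$ under $\ldetens$ follows formally from the constructions in Section~\ref{section:diagonal}: the morphisms \eqref{eq:HHmor01} and \eqref{eq:HHmor02} are defined via the identity endomorphism and the canonical $\A$-linear isomorphisms, all of which are compatible with $\ldetens$ because $\dA[X\times Y]\simeq\dA[X]\ldetens\dA[Y]$, $\oA[X\times Y]\simeq\oA[X]\ldetens\oA[Y]$ (up to the relevant shift by $d_X+d_Y$), and $\RDAA$ commutes with $\ldetens$. Concretely one checks that the composite $\omAI[12^a21^a]\to(\shk_1\ldetens\shk_2)\ldetens\RDA(\shk_1\ldetens\shk_2)\to\dA[12^a21^a]$ is the exterior product of the corresponding composites for $\shk_1$ and $\shk_2$; this is a diagram chase that only uses the functoriality of all the canonical morphisms, and in particular uses that the duality $\RDA$ and the external product are interchanged as in the lemma of Section~\ref{section:diagonal}. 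Then Lemma~\ref{le:HH2} identifies these composites with the Hochschild classes.

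Next I would apply Lemma~\ref{le:HH4} with $\shk$ replaced by $\shk_1\ldetens\shk_2\in\RD^\Rb_{\coh,\Lambda_1\times\Lambda_2}(\A[X_{12^a}\times X_{22^a}^a/\cdots}])$ — precisely, regarding $X_{12^a}$ in the role of $X_1$ of that lemma, $X_2\times X_2^a$ in the role of $X_2$, so that the target space $X_{1}$ of the lemma becomes $X_1\times X_3^a$ after the identification $X_{12^a}\conv[{X_2\times X_2^a}]$ with $X_1\times X_3^a$. The lemma asserts that $\Phi_{\shk}(\lambda_2)=\hh(\shk)\conv\lambda_2$; taking $\lambda_2=\rC_{\Delta_2}=\hh_{X_{22^a}}(\dA[2])$ and unwinding the left-hand side $\Phi_{\shk}(\rC_{\Delta_2})$, one obtains exactly $\hh_{X_{13^a}}\bl(\shk_1\ldetens\shk_2)\conv[{X_2\times X_2^a}]\dA[2]\br=\hh_{X_{13^a}}(\shk_1\conv[X_2]\shk_2)$, where the crucial point is that the functor $\shl\mapsto\shk\dtens(\shl\conv[2]\omA[2]\conv[2]\RDA\shk)$ appearing in the construction of $\Phi_\shk$ evaluated at $\shl=\dA[2]$ reproduces the convolution, and the morphisms \eqref{eq:HHmor1}, \eqref{eq:HHmor2} supplied by Lemma~\ref{le:HH3} are precisely \eqref{eq:HHmor01}, \eqref{eq:HHmor02} for the composed kernel after applying $\reim{p_{13}}$. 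Combining this with the multiplicativity step and with Lemma~\ref{le:assoHH}~(b),(c) then gives \eqref{eq:RR10}.

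\textbf{The main obstacle.} The delicate point is not the formal reduction but the verification, implicit in the appeal to Lemma~\ref{le:HH4}, that the various identifications of $\HHA$ with $\rhom[{\A[X\times X^a]}](\omAI,\dA)$ used in \eqref{eq:HHmor1}--\eqref{eq:HHmor2} and in the definition of $\Phi_\shk$ are \emph{mutually compatible} and do not introduce a spurious factor of $\hh_X(\omA)\conv$; the warning right before Lemma~\ref{lem:HHmor12}, and the remark following the statement of $\HHA$, show that one must track carefully which of the two non-coinciding isomorphisms $\HHA\simeq\rhom[{\A[X\times X^a]}](\dA,\oA)$ is being used at each stage. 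Concretely, the large commutative diagram in the proof of Lemma~\ref{le:HH4} (the one with $\RDD$ abbreviating $\RDA$) is where all of this is absorbed, and verifying that it indeed commutes — in particular the triangle identities coming from $\oA\conv[2]L\simeq L\conv[2^a]\oA[2^a]$ and the residue morphism \eqref{mor:residue} — is the genuinely technical heart. Once that diagram is in hand, the passage from Lemma~\ref{le:HH4} to the theorem is a bookkeeping exercise with Notation~\ref{not:nota1}.
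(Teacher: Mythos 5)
Your proposal takes a genuinely different route from the paper's, and I believe the detour creates problems rather than saving work. The paper's own proof is shorter and more direct: after reducing to $X_3=\rmpt$, it applies Lemma~\ref{le:HH4} directly to $\shk_1$, so that $\hh(\shk_1)\conv\hh(\shk_2)=\Phi_{\shk_1}(\hh(\shk_2))$, and then verifies by one commutative diagram (the only remaining work) that $\Phi_{\shk_1}(\hh(\shk_2))=\hh(\shk_1\circ\shk_2)$, using Lemma~\ref{le:HH2} to identify the bottom row of the diagram with $\hh(\shk_1\circ\shk_2)$. No external-product multiplicativity and no auxiliary Lemma~\ref{le:assoHH} manipulations are used. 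You instead want to first establish $\hh(\shk_1)\conv[2]\hh(\shk_2)=(\hh(\shk_1)\etens\hh(\shk_2))\conv[22^a]\rC_{\Delta_2}$, then that $\hh(\shk_1\ldetens\shk_2)=\hh(\shk_1)\etens\hh(\shk_2)$, and finally apply Lemma~\ref{le:HH4} to $\shk_1\ldetens\shk_2$ and ``unwind'' $\Phi_{\shk_1\ldetens\shk_2}(\rC_{\Delta_2})$.

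There are two concrete problems with this plan. First, your Step~1 does not follow from Lemma~\ref{le:assoHH} in the form stated: part (b) requires $C_{245}\in\RHH{X_{245^a}}$, and no relabelling makes $\rC_{\Delta_2}\in\RHH{X_{22^a}}$ fit that shape while keeping $C_{34}=\hh(\shk_2)\in\RHH{X_{23^a}}$; part (d) inserts $\rC_{\Delta_3}$ as a bystander on the $X_3$ factor and convolves over $X_{23^a}$, which is a structurally different identity from the one you need. Either you must re-prove (a)--(b) in a stronger form, or your Step~1 is simply asserted without justification. Second, and more seriously, your Step~4 is \emph{not} mere ``unwinding'': the assertion $\Phi_{\shk_1\ldetens\shk_2}(\rC_{\Delta_2})=\hh\bl(\shk_1\ldetens\shk_2)\conv[22^a]\dA[2]\br$ is precisely Theorem~\ref{th:HH1} for the pair of kernels $(\shk_1\ldetens\shk_2,\;\dA[2])$ with the internal variable $X_{22^a}$, and note that $\dA[2]$ here lives in $\Derb(\A[X_{22^a}])$ (i.e.\ a kernel with target $\rmpt$), not the diagonal kernel $\dA[X_{22^a}]\in\Derb(\A[X_{22^a}\times X_{22^a}^a])$ that would make the convolution trivial via Lemma~\ref{le:assoHH}~(c). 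So the ``special case'' is not actually degenerate, and verifying it needs a commutative diagram of the same kind the paper draws. You have therefore traded one diagram for another, plus added the burden of Steps~1 and~2. Your claim that ``once Lemma~\ref{le:HH4} is in hand, the passage to the theorem is bookkeeping'' misses that the paper's proof does require a further, non-trivial commutative diagram beyond Lemma~\ref{le:HH4} (the one relating $\Phi_{\shk_1}(\lambda_2)$ to $\hh(\shk_1\circ\shk_2)$ via the naturality of the morphisms of Lemma~\ref{lem:HHmor12} applied to $\shk_2$ and to $\shk_1\conv\shk_2$). Finally, the $\etens$-multiplicativity of $\hh$ is presented in the text \emph{after} Theorem~\ref{th:HH1} as a consequence of it (see the discussion following Definition~\ref{def:hhc}), so using it as an input would at minimum require you to supply an independent proof, which itself comes down to the same kind of naturality diagram you were hoping to avoid.
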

\begin{proof}
For the sake of simplicity, we assume that $X_3=\rmpt$.
Consider the diagram in which we set $\lambda_2=\hh_{2}(\shk_2)
\in\RHH{X_2}\simeq\Hom(\omAI[2],\dA[2])$ and
we write $\RDD$ instead of $\RDA$:
\eqn
\xymatrix{
\omAI[1]\ar[r]\ar `d[dddr] [dddr]
   &{\db{\shk_1\conv[2]\omAI[2]\conv[2]\omA[2]\conv[2]\RDD\shk_1}}
\ar[r]^-{\lambda_2}\ar[d]
&{\db{\shk_1\conv[2]\dA[2]\conv[2]\omA[2]\conv[2]\RDD\shk_1}}\ar[r]&\dA[1]\\
&\shk_1\conv[2](\shk_2\detens\RDD\shk_2)\conv[2]\omA[2]\conv[2]\RDD\shk_1
                                               \ar[ru]\ar[d]^-\bwr&&\\
&(\shk_1\conv[2]\shk_2)\detens\RDD\shk_2\conv[2]\omA[2]\conv[2]
\RDD\shk_1\ar[d]^-\bwr&&\\
&{\db{(\shk_1\conv[2]\shk_2)\detens \RDD(\shk_1\conv[2]\shk_2)}}
\ar`r[rruuu][rruuu]
}\eneqn
Here, the left horizontal arrow on the top is the composition
of the morphisms
$\omAI[1]\to
\shk_1\conv[2]\RDA\shk_1\to
\shk_1\conv[2]\omAI[2]\conv[2]\omA[2]\conv[2]\RDA\shk_1$.
The composition of the arrows on the bottom is 
$\hh_{1}(\shk_1\circ \shk_2)$ 
by Lemma~\ref{le:HH2} and the  composition of the 
arrows on the top is $\Phi_{\shk_1}(\hh_{2}(\shk_2))$.
Hence, the assertion follows from
the commutativity of the diagram by Lemma~\ref{le:HH4}.
\end{proof}
Recall Diagram~\ref{diag:kerngrgr1}. Using \eqref{eq:hhGr}, we get the
commutative diagram
\eq\label{diag:kernhh1}
&&\xymatrix{
\rmK_{\coh,\Lambda_1}(\A[12^a])\times
\rmK_{\coh,\Lambda_2}(\A[23^a])\ar[r]^-\conv\ar[d]^-{\hh_{12^a}\times\hh_{23^a}}
    &\rmK_{\coh,\Lambda}(\A[13^a])\ar[d]^-{\hh_{13^a}}\\
\RHH[\Lambda_1]{12^a}\times \RHH[\Lambda_2]{23^a}\ar[r]^-\conv
 &\RHH[\Lambda]{13^a}.
}\eneq

\begin{remark}
(i) The fact that  Hochschild homology of $\OO[]$-modules
 is functorial seems to be well-known, although we do not know 
any paper in which it is explicitly stated 
(for closely related results, see {\em e.g.,} \cite{Sh,Ca2,Hu}). 

\noindent
(ii) In \cite{Ca-W}, its authors 
interpret Hochschild homology as a morphism of
functors and the action of kernels as a $2$-morphism in a suitable
$2$-category. Its authors claim that the the relation 
$\Phi_{\shk_1}\conv\Phi_{\shk_1}=\Phi_{\shk_1\conv\shk_2}$ follows by
general arguments on $2$-categories. Their result applies
 in a general framework 
including in particular  $\OO[]$-modules in the algebraic case and
presumably $\DQ$-modules
but the precise axioms are not specified in loc.\ cit. See also
\cite{Sh} for related results. Note that, as far as we understand, 
 these authors do not introduce the convolution of
Hochschild homologies and they
did not consider Lemma~\ref{le:HH4} nor Theorem~\ref{th:HH1}. 
\end{remark}

\subsubsection*{Index}
Let $\cora$ be a field, let
$M\in\RD^\Rb_{f}(\cora)$ and  let $u\in\End(M)$. One sets
\eqn
&&\tr(u,M)=\sum_{i\in\Z}(-1)^i\tr(H^i(u)\cl H^i(M)\to H^i(M)),\\
&&\chi(M)=\sum_{i\in\Z}(-1)^i\dim_{\cora}(H^i(M)).
\eneqn
If $X=\rmpt$, then $\HHA$ is isomorphic to $\coro$, and 
$\RD^\Rb_{\coh}(\A[X])=\RD^\Rb_{f}(\coro)$.

Recall that we have set $\shm^\loc=\cor\tens[\coro]\shm$. 
For $M\in\RD^\Rb_{f}(\coro)$ and $u\in \End(M)$, we have
\eq
&&\hh_\rmpt((M,u))=\tr(u^\loc,M^\loc).
\eneq
In particular,
\eqn
\hh_\rmpt(M)&=&\chi(M^\loc).
\eneqn
Moreover, we have
\eqn
\chi(M^\loc)&=&\chi(\gr(M))\\
&=&\sum_{i\in\Z}(-1)^i\bl\dim_{\C}(\C\tens[\coro] H^i(M))-\dim_{\C}
\Tor[\coro]{1}(\C, H^i(M))\br.
\eneqn
In the sequel, we set 
$$\chi(M)\seteq\chi(M^\loc).$$

As a particular case of Theorem~\ref{th:HH1}, 
consider two objects $\shm$ and $\shn$ in $\RD_\coh^\Rb(\A[X])$ 
and assume that $\Supp(\shm)\cap\Supp(\shn)$ is compact. 
Then 
$\RHom[{\A[X]}](\shm,\shn)$ belongs to $\Derb_f(\coro)$ and
\eqn
\chi(\RHom[{\A[X]}](\shm,\shn))&=&\hh_\rmpt(\RDA\shm\conv[X]\shn)\\
&=&\hh_{X^a}(\RDA\shm)\conv[X]\hh_X(\shn)\\
&=&\hh_{X}(\shm)\conv[X]\hh_X(\shn).
\eneqn
Note that we have
\eqn
\chi\bl\RHom[{\A[X]}](\shm,\shn)\br
&=&\chi\bl\RHom[{\Ah}](\shm^\loc,\shn^\loc)\br\\
&=&\chi\bl\RHom[{\gr(\A)}](\gr(\shm),\gr(\shn))\br.
\eneqn

\section{Graded  and localized  Hochschild classes}

\subsubsection*{Graded Hochschild classes}

Similarly to the case of $\A[X]$, one defines 
\index{Hochschild graded@$\HHGA$}%
\eqn
\HHGA&\eqdot&\dGA[X^a]\lltens[\gr({\A[X\times X^a]})]\dGA.
\eneqn
Note that $\HHGA\simeq\C\lltens[\coro]\HHA$
and there is a natural morphism
\eqn
\gr\cl\HHA\to\HHGA.
\eneqn

\begin{notation}\label{not:RHHg}
For a closed subset $\Lambda$ of $X$, we set
\eq\label{eq:HHG1}\index{HHG@$\RHHg[\Lambda]{X}$}%
&&\RHHg[\Lambda]{X}\eqdot H^0\rsect_\Lambda(X;\HHGA[X]).
\eneq
\end{notation}
We also need to introduce
\eq\label{eq:HHG2}\index{HHG@$\RHHhg[\Lambda]{X}$}%
&&\RHHhg[\Lambda]{X}\eqdot \prolim[U]\RHHg[\Lambda]{U},
\eneq
where $U$ ranges over the family of relatively compact open  subsets of $X$.

For $\shf\in\RD^\Rb_{\coh}(\gr(\A))$, one defines its 
Hochschild class $\hh_X(\shf)$  by the same
construction as for $\A$-modules. 
For $\shm\in\RD^\Rb_{\coh}(\A[X])$, we have:
\eqn
&&\gr(\hh_X(\shm))=\hh_X(\gr(\shm)).
\eneqn
Theorem~\ref{th:HH1} obviously also holds when replacing $\A$ with
$\gr(\A)$.
\begin{corollary}\label{cor:HH1gr}
Let $\Lambda_i$ be a closed subset of $X_i\times X_{i+1}$ \lp$i=1,2$\rp\, and assume that
$\Lambda_1\times_{X_2}\Lambda_2$
is proper over $X_1\times X_3$. Set $\Lambda=\Lambda_1\conv\Lambda_2$.
Let $\shk_i\in\RD^\Rb_{\coh,\Lambda_i}(\gr(\A[X_i\times X_{i+1}^a]))$ \lp$i=1,2$\rp. Then
\eq\label{eq:RR11}
&&\hh_{X_{13^a}}(\shk_1\circ\shk_2)
=\hh_{X_{12^a}}(\shk_1)\circ\hh_{X_{23^a}}(\shk_2)
\eneq
as elements of $\RHHg[\Lambda]{X_1\times X_3^a}$.
\end{corollary}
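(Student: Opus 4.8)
\textbf{Proof plan for Corollary~\ref{cor:HH1gr}.}
The statement is the graded analogue of Theorem~\ref{th:HH1}: taking Hochschild classes of $\gr(\A)$-kernels commutes with convolution. My plan is to observe that \emph{nothing in the proof of Theorem~\ref{th:HH1} used the presence of the parameter $\hbar$}; every ingredient has an exact counterpart over $\gr(\A)$. Concretely, the construction of the composition morphism $\conv$ in Proposition~\ref{pro:compHH}, the duality morphisms \eqref{eq:HHmor01}, \eqref{eq:HHmor02}, the residue morphism \eqref{mor:residue}, the $\gr(\A)$-dualizing complex $\oA$ (which for $\gr(\A)$ reduces to the classical relative dualizing complex of $\sho_X$-modules via Proposition~\ref{prop:shda}), the identification of $\HHGA$ with $\rhom[{\gr(\A[X\times X^a])}](\omAI,\dGA)$, and the key diagram-chase of Lemma~\ref{le:HH4}: all of these are purely homological statements about modules over a sheaf of rings (or an algebroid) locally isomorphic to $\sho_X$, $\shd_X$, etc., and remain valid verbatim with $\A$ replaced by $\gr(\A)$. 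The text already asserts this immediately before the corollary (``Theorem~\ref{th:HH1} obviously also holds when replacing $\A$ with $\gr(\A)$''), so the corollary is simply that theorem applied to $\gr(\A[X_i\times X_{i+1}^a])$-modules, with $X_3=\rmpt$ reduced to the general case exactly as in the proof of Theorem~\ref{th:HH1}.

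The one-line formal argument I would actually write is therefore: apply Theorem~\ref{th:HH1} with the $\DQ$-algebroids $\A[X_i]$ replaced throughout by their graded algebroids $\gr(\A[X_i])$ (which are $\OS$-algebroids, hence in particular $\coro$-algebroids in the trivial way, so all the formalism of Chapters~\ref{chapter:DQ}--\ref{chapter:HH} applies), noting that the properness hypothesis $\Lambda_1\times_{X_2}\Lambda_2$ proper over $X_1\times X_3$ is unchanged and that $\Supp$ is insensitive to passing to $\gr$ by Corollary~\ref{cor:conservative1}. The coherency of $\shk_1\circ\shk_2$ over $\gr(\A)$ needed to even state \eqref{eq:RR11} follows from the $\gr(\A)$-version of Theorem~\ref{th:kernel1}, which in turn is the classical Grauert direct-image theorem for $\sho_X$-modules invoked in the proof of Theorem~\ref{th:kernel1}. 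Then equation \eqref{eq:RR11} is precisely \eqref{eq:RR10} read in $\RHHg[\Lambda]{X_1\times X_3^a}=\RHH[\Lambda]{\gr(\A)_{X_1\times X_3^a}}$.

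Alternatively, and perhaps more elegantly, one can deduce the corollary from Theorem~\ref{th:HH1} itself (not merely from its proof) by functoriality of $\gr$. Any $\shk_i\in\RD^\Rb_{\coh,\Lambda_i}(\gr(\A[X_i\times X_{i+1}^a]))$ can be viewed, via the faithful functor $\for$ of \eqref{eq:derforgr}, inside $\RD^\Rb_{\coh}(\A[X_i\times X_{i+1}^a])$; but this does not immediately respect Hochschild classes because $\for$ is not full (Remark~\ref{rem:notff}), so this route requires more care. Thus I prefer the direct route: \emph{run the proof of Theorem~\ref{th:HH1} with $\A$ replaced by $\gr(\A)$}, which is legitimate since that proof only manipulates the six-operations formalism, bi-invertible modules, dualizing complexes, and the adjunctions \eqref{eq:adjlltensrhom}, \eqref{mor:residue}, all of which were established for arbitrary $\coro$-algebroids (and $\gr(\A)$ is one).

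\textbf{Main obstacle.} There is essentially no new difficulty; the only thing to be genuinely careful about is bookkeeping — that the graded dualizing sheaf $\gr(\oA)$ indeed plays, for $\gr(\A)$-modules, the role that $\oA$ plays for $\A$-modules (this is Proposition~\ref{pro:dualgr} together with Lemma~\ref{le:DAalg2} applied to $\shd_X$ in place of $\shd_X\forl$), and that the residue morphism \eqref{mor:residue} specializes correctly under $\gr$, which is exactly \eqref{eq:morDOalg3}--\eqref{eq:morDOalg2}. Once these identifications are in place, the diagram of Lemma~\ref{le:HH4} and the final diagram of Theorem~\ref{th:HH1} commute for the same reasons as before, and \eqref{eq:RR11} follows.
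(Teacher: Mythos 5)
Your proposal is correct and follows the same route the paper takes: the paper offers no separate proof of Corollary~\ref{cor:HH1gr}, merely the preceding sentence "Theorem~\ref{th:HH1} obviously also holds when replacing $\A$ with $\gr(\A)$," and your elaboration of why the proof of Theorem~\ref{th:HH1} carries over verbatim to $\gr(\A)$-algebroids (via Proposition~\ref{pro:dualgr}, Lemma~\ref{le:DAalg2}, \eqref{eq:morDOalg2}, and Grauert's theorem for the coherency) is exactly the intended justification. You are also right to reject the detour through $\for$, since that functor is not full (Remark~\ref{rem:notff}).
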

It follows that the diagram below commutes
\eq\label{diag:kernhh2}
&&\xymatrix{
\rmK_{\coh,\Lambda_1}(\gr\A[{12^a}])\times 
\rmK_{\coh,\Lambda_2}(\gr\A[{23^a}])\ar[r]^-{\conv}\ar[d]_-{\hh}
                             &\rmK_{\coh,\Lambda}(\gr\A[{13^a}])\ar[d]_-{\hh}\\
\RHHg[\Lambda_1]{{12^a}}\times\RHHg[\Lambda_2]{{23^a}}\ar[r]^-{\conv}
                             &\RHHg[\Lambda]{{23^a}}.
}\eneq

We shall study 
the Hochschild class of $\sho$-modules
with some details in Chapter~\ref{chapter:Com}.
 
\subsubsection*{Hochschild classes for $\A^\loc$}
One defines 
\index{Hochschild local@$\HHAh$}%
\eqn
\HHAh&\eqdot&\dA[X^a]^\loc\lltens[{\Ah[X\times X^a]}]\dA^\loc.
\eneqn
We have $\HHAh\simeq\cor\tens[\coro]\HHA$.
and there is a natural morphism
\eqn
(\scbul)^\loc\cl\HHA\to\HHAh.
\eneqn
For  $\shf\in\RD^\Rb_{\coh}(\Ah)$, one defines its 
Hochschild class $\hh_X(\shf)$  by the same
construction as for $\A$-modules.
For $\shm\in\RD^\Rb_{\coh}(\A[X])$, setting
 $\shm^\loc=\cor\tens[\coro]\shm$,
we have
\eqn
&&(\hh_X(\shm))^\loc=\hhl_X(\shm^\loc).
\eneqn

Recall that the notion of good modules and the category $\RD^\Rb_{\gd}(\Ah[X])$
have been given in Definition~\ref{def:good}.
One immediately deduces from Theorem~\ref{th:HH1} the following:
\begin{corollary}\label{th:HH1loc}
Let $\Lambda_i$ be a closed subset of $X_i\times X_{i+1}$ \lp$i=1,2$\rp\, and assume that
$\Lambda_1\times_{X_2}\Lambda_2$ is proper over $X_1\times X_3$. 
Set $\Lambda=\Lambda_1\circ\Lambda_2$.
Let $\shk_i\in\RD^\Rb_{\gd,\Lambda_i}(\Ah[X_i\times X_{i+1}^a])$ \lp$i=1,2$\rp. Then
\eq\label{eq:RR12}
&&\hhl_{X_{13^a}}(\shk_1\circ\shk_2)
=\hhl_{X_{12^a}}(\shk_1)\circ\hhl_{X_{23^a}}(\shk_2)
\eneq           
as elements of $\RHHl[\Lambda]{X_1\times X_3^a}$.
\end{corollary}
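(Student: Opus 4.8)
The statement to be proved, Corollary~\ref{th:HH1loc}, is the $\hbar$-localized analogue of Theorem~\ref{th:HH1}. The plan is to deduce it from Theorem~\ref{th:HH1} by a straightforward localization argument, using Proposition~\ref{pro:good}~(c) to reduce to the non-localized statement. First I would recall that for a good module $\shm\in\RD^\Rb_{\gd}(\Ah[X])$, its Hochschild class $\hhl_X(\shm)\in\RHHl[\Lambda]{X}$ has been defined, and that for $\shm_0\in\RD^\Rb_{\coh}(\A[X])$ one has $\hhl_X(\shm_0^\loc)=(\hh_X(\shm_0))^\loc$, the image of $\hh_X(\shm_0)$ under the natural map $(\scbul)^\loc\colon\HHA\to\HHAh$. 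Since $\HHAh\simeq\cor\tens[\coro]\HHA$ and the localization functor is exact, this map behaves well with the convolution morphism of Proposition~\ref{pro:compHH}: the diagram relating $\conv[2]$ on $\RHH{}$ and on $\RHHl{}$ commutes, because $\conv[2]$ is built from $\opb{p_{ij}}$, $\reim{p_{ij}}$ and $\dtens$, all of which commute with $\cor\tens[\coro]\scbul$.

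Next I would carry out the reduction. The identity \eqref{eq:RR12} is an identity in $\RHHl[\Lambda]{X_1\times X_3^a}$, and since the problem is local on $X_1\times X_3^a$ and both sides commute with restriction to relatively compact open subsets, I may assume $X_1$ and $X_3$ are such that there is a relatively compact open $U\subset X_1\times X_3^a$ over which everything is defined; more importantly, by Proposition~\ref{pro:good}~(c), on each relatively compact open subset there exist $\shk_{i,0}\in\RD^\Rb_{\coh,\Lambda_i}(\A[X_i\times X_{i+1}^a])$ with $\shk_{i,0}^\loc\simeq\shk_i$ (after shrinking). Then $\shk_{1,0}\circ\shk_{2,0}\in\RD^\Rb_{\coh,\Lambda}(\A[X_1\times X_3^a])$ by Theorem~\ref{th:kernel1}, and $(\shk_{1,0}\circ\shk_{2,0})^\loc\simeq\shk_1\circ\shk_2$ since convolution commutes with $\loc$. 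Applying Theorem~\ref{th:HH1} to $\shk_{1,0},\shk_{2,0}$ gives
\[
\hh_{X_{13^a}}(\shk_{1,0}\circ\shk_{2,0})=\hh_{X_{12^a}}(\shk_{1,0})\circ\hh_{X_{23^a}}(\shk_{2,0})
\]
in $\RHH[\Lambda]{X_1\times X_3^a}$, and applying the localization map $(\scbul)^\loc$ to both sides, together with the compatibilities $(\hh(\shk_{i,0}))^\loc=\hhl(\shk_i)$ and the commutativity of $\conv[2]$ with $(\scbul)^\loc$, yields \eqref{eq:RR12} on that open subset. Gluing over a cover by relatively compact opens (using that $\RHHl[\Lambda]{X_1\times X_3^a}$ is the sections of a sheaf, hence that a local identity that holds on each member of an open cover holds globally) finishes the argument.

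The only genuine point requiring care — and hence the main obstacle, though it is a mild one — is checking that the comparison map $(\hh_X(\shk_{i,0}))^\loc=\hhl_{X_{i,i+1^a}}(\shk_i)$ is independent of the choice of lattice $\shk_{i,0}$, and that it is compatible with the morphism $\Phi_\shk$ of Lemma~\ref{le:HH4} on which the proof of Theorem~\ref{th:HH1} rests. Independence of the lattice follows because any two choices $\shk_{i,0}$, $\shk_{i,0}'$ become isomorphic after $\loc$, hence their Hochschild classes have the same image in $\RHHl$; alternatively one notes that $\hhl$ factors through the Grothendieck group $\rmK_{\gd,\Lambda}(\Ah)$ by the additivity Theorem~\ref{th:addhh} (localized), and the map $\gr\colon\rmK_{\gd,\Lambda}(\Ah)\to\rmhK_{\coh,\Lambda}(\gr\A)$ of Proposition~\ref{pro:grgr1} together with Diagram~\eqref{diag:kerngrgr3} already packages exactly this compatibility. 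So in fact the cleanest writeup is: the whole construction of $\hhl$ and of $\conv[2]$ on localized Hochschild homology is obtained from the $\A$-level construction by the exact functor $\cor\tens[\coro]\scbul$, every morphism in the proof of Theorem~\ref{th:HH1} (the maps of Lemmas~\ref{le:HH3} and~\ref{le:HH4}, the residue morphism \eqref{mor:residue}, Lemma~\ref{le:HH2}) localizes, so the commutative diagram proving \eqref{eq:RR10} localizes verbatim to one proving \eqref{eq:RR12}. I would state it in this form, remarking that the argument is identical to that of Theorem~\ref{th:HH1} after applying $(\scbul)^\loc$, and leave the routine verification to the reader as the text does for the analogous Corollary~\ref{cor:HH1gr}.
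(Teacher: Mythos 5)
Your ultimate conclusion is correct and matches the paper: the text gives no proof, simply asserting that the corollary is ``immediately deduced'' from Theorem~\ref{th:HH1}, and the intended argument is precisely your second (``cleanest'') framing — every ingredient of the proof of Theorem~\ref{th:HH1} (the duality functor, the dualizing object $\oA$, the morphisms of Lemmas~\ref{le:HH3} and~\ref{le:HH4}, the residue morphism, the finiteness theorem) has its $\Ah$-counterpart given in the text, notably Theorem~\ref{th:hker} and the formulas around~\eqref{def:hdual1}, so the proof goes through verbatim with $\A$, $\coro$, $\oA$, $\dA$ replaced by $\Ah$, $\cor$, $\oAh$, $\dA^\loc$.

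One caution about the first, lattice-based route you sketch before abandoning it: the gluing step is not justified as stated. You assert that $\RHHl[\Lambda]{X_1\times X_3^a}=H^0\rsect_\Lambda(X_1\times X_3^a;\HHAh)$ ``is the sections of a sheaf,'' but this requires $\shh^q_\Lambda(\HHAh)$ to vanish for $q<0$, which is not guaranteed for an arbitrary bounded complex and an arbitrary closed $\Lambda$ (it does hold in the symplectic case, where $\HHAh\simeq\cor_X[d_X]$ and $\Lambda$ has real codimension $d_X$, but the corollary is stated in general). The element-equality~\eqref{eq:RR12} cannot therefore be checked openwise without further argument, and the lattice-plus-gluing reduction should not be presented as an alternative complete proof. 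Since you discard it in favour of the direct argument, the proposal as a whole is fine, but the remark that the first route ``finishes the argument'' is not accurate.
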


Using Proposition~\ref{pro:grgr1} and the additivity of the Hochschild class 
in Theorem~\ref{th:addhh}, we find that there is a natural map
\eq
&&\rmhK_{\coh,\Lambda}(\gr\A)\to\RHHhg[\Lambda]{X}.
\eneq
For $\shm\in\RD^\Rb_{\gd,\Lambda}(\Ah[X])$, we denote by $\hhhg_X(\shm)$
the image of $\shm$ by the sequence of maps
\eqn\index{hochschildh@$\hhhg_X(\shm)$}%
&&\RD^\Rb_{\gd,\Lambda}(\Ah[X])\to
\rmhK_{\coh,\Lambda}(\gr\A)\to\RHHhg[\Lambda]{X}.
\eneqn

Let $\Lambda_i$ be a closed subset of $X_i\times X_{i+1}$ \lp$i=1,2$\rp\, and assume that
$\Lambda_1\times_{X_2}\Lambda_2$ is proper over $X_1\times X_3$. 
Set $\Lambda=\Lambda_1\circ\Lambda_2$.

Using the commutativity of Diagrams~\ref{diag:kerngrgr3}, 
we get that the diagram below commutes
\eq\label{diag:kernhh3}
&&\xymatrix{
\Ob\bl\Derb_{\gd,\Lambda_1}(\Ah[{12^a}])\br\times 
            \Ob\bl\Derb_{\gd,\Lambda_2}(\Ah[{23^a}])\br\ar[r]^-{\conv}\ar[d]_-{\gr}
                            &\Ob\bl\Derb_{\gd,\Lambda}(\Ah[{13^a}])\br\ar[d]_-{\gr}\\
\rmhK_{\coh,\Lambda_1}(\gr\A[{12^a}])\times 
\rmhK_{\coh,\Lambda_2}(\gr\A[{23^a}])\ar[r]^-{\conv}\ar[d]
                             &\rmhK_{\coh,\Lambda}(\gr\A[{13^a}])\ar[d]\\
\RHHhg[\Lambda_1]{{12^a}}\times\RHHhg[\Lambda_2]{{23^a}}\ar[r]^-{\conv}
                             &\RHHhg[\Lambda]{{23^a}}.
}\eneq
In other words,
\eq\label{eq:RR13}
&&\hhhg_{{13^a}}(\shk_1\circ\shk_2)
=\hhhg_{{12^a}}(\shk_1)\circ\hhhg_{{23^a}}(\shk_2).
\eneq

\begin{corollary}\label{cor:indexhh1}
Let $\shm,\shn\in\Derb_\gd(\Ah[X])$ 
and assume that $\Supp(\shm)\cap\Supp(\shn)$ is compact.
Then $\RHom[{\A[X]}](\shm,\shn)$ belongs to $\Derb_f(\coro)$ and
\eqn
\chi(\RHom[{\Ah[X]}](\shm,\shn))&=&\hhhg_{X^a}(\RDA\shm)\conv\hhhg_X(\shn)\\
&=&\hhhg_{X}(\shm)\conv\hhhg_X(\shn).
\eneqn
\end{corollary}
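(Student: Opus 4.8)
The plan is to deduce this statement from the $\hhhg$-version of the main theorem, namely from the commutativity of Diagram~\ref{diag:kernhh3} (equivalently from formula~\eqref{eq:RR13}), exactly as Corollary~\ref{cor:indexhh1} is the ``$\hhhg$-analogue'' of the index formula obtained at the end of Section~5.3 from Theorem~\ref{th:HH1}. First I would explain the finiteness: since $\shm,\shn\in\Derb_\gd(\Ah[X])$ and $\Supp(\shm)\cap\Supp(\shn)$ is compact, we may replace $\shm$ and $\shn$ locally over a relatively compact neighbourhood of $\Supp(\shm)\cap\Supp(\shn)$ by coherent $\A[X]$-modules (lattices), apply Corollary~\ref{co:kernel1} to see that $\RHom[{\A[X]}](\shm,\shn)$ belongs to $\RD^\Rb_f(\coro)$, and pass to $\cor$-coefficients; alternatively invoke Theorem~\ref{th:hker}. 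The point is that $\RHom[{\Ah[X]}](\shm,\shn)\simeq\shn\sconv[X]\RDA\shm$ evaluated at the point, so it computes $\hh_\rmpt$ of a convolution of good kernels.

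Then I would run the computation
\[
\chi\bl\RHom[{\Ah[X]}](\shm,\shn)\br
=\hhhg_\rmpt\bl\RDA\shm\conv[X]\shn\br
=\hhhg_{X^a}(\RDA\shm)\conv[X]\hhhg_X(\shn),
\]
where the first equality identifies $\hhhg_\rmpt$ with $\chi$ over the point (using $\HHGA[\rmpt]\simeq\C$ and \eqref{eq:grrmpt}, together with $\chi(M)=\chi(M^\loc)=\chi(\gr M)$ as recorded just before Section~5.4), and the second equality is \eqref{eq:RR13} applied with $X_1=X_3=\rmpt$, $X_2=X$, $\shk_1=\shn$, $\shk_2=\RDA\shm$; here one must check that the properness hypothesis of Theorem~\ref{th:hker} is satisfied, which holds because $\Supp(\shn)\times_X\Supp(\RDA\shm)\simeq\Supp(\shm)\cap\Supp(\shn)$ is compact, hence proper over $\rmpt\times\rmpt$. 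Finally the identity $\hhhg_{X^a}(\RDA\shm)=\hhhg_X(\shm)$ follows from the duality invariance of the Hochschild class together with the compatibility of $\gr$ and $(\scbul)^\loc$ with duality; concretely, $\RDA$ commutes with passing to $\gr\A$ (Proposition~\ref{pro:dualgr}) and \eqref{eq:dualHochschild} gives $\hh_{X^a}(\RDA\shm)=\hh_X(\shm)$, which descends through the maps $\RD^\Rb_{\gd,\Lambda}(\Ah[X])\to\rmhK_{\coh,\Lambda}(\gr\A)\to\RHHhg[\Lambda]{X}$ defining $\hhhg$.

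The only genuinely delicate point is keeping the bookkeeping of the various ``$\gr$ then $\loc$'' identifications consistent: one has $\hhhg_X(\shm)$ defined via the Grothendieck group of $\gr\A$, whereas the index is most naturally $\hhl_X(\shm^\loc)$ via $\Ah$; reconciling these requires Proposition~\ref{pro:grgr1} and the fact that $\gr$ commutes with convolution of kernels (Proposition~\ref{pro:circgr}, and its $\gr\A$-version used in Corollary~\ref{cor:HH1gr}). I expect this to be routine given Diagram~\ref{diag:kernhh3}, so the ``hard part'' is really just citing the right earlier results in the right order rather than proving anything new; the substantive input, Theorem~\ref{th:HH1} and its localized/graded corollaries, is already available.
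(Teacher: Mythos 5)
Your proof follows exactly the route of the paper: reduce the Euler--Poincar\'e index to $\hhhg_\rmpt$ of a convolution (via the identification $\rmK_f(\cor)\isoto\rmK_f(\C)$ of \eqref{eq:grrmpt}), apply the $\hhhg$-form of the multiplicativity of Hochschild classes (Diagram~\ref{diag:kernhh3} / formula~\eqref{eq:RR13}) with $X_1=X_3=\rmpt$, $X_2=X$, and use \eqref{eq:dualHochschild} for the duality step $\hhhg_{X^a}(\RDA\shm)=\hhhg_X(\shm)$; the paper's proof is exactly this three-line chain. Two purely notational slips worth fixing: you wrote ``$\shk_1=\shn$, $\shk_2=\RDA\shm$'' but to produce $\RDA\shm\conv[X]\shn$ one needs $\shk_1=\RDA\shm\in\Derb(\A[\rmpt\times X^a])$ and $\shk_2=\shn\in\Derb(\A[X\times\rmpt^a])$, and likewise the sentence ``$\RHom[{\Ah[X]}](\shm,\shn)\simeq\shn\sconv[X]\RDA\shm$'' has the factors in the wrong order (it should read $\RDA\shm\conv[X]\shn$, cf.\ Lemma~\ref{le:homCdelta}); your displayed computation gets the order right, so these are merely slips in the prose.
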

\begin{proof}
One has by \eqref{eq:grrmpt}
\eqn
\chi(\RHom[{\Ah[X]}](\shm,\shn))&=&\hh_\rmpt(\RDA\shm\conv\shn)\\
&=&\hhhg_\rmpt(\RDA\shm\conv\shn)=\hhhg_{X^a}(\RDA\shm)\conv\hhhg_X(\shn)
\eneqn 
and the last equality follows from \eqref{eq:dualHochschild}.
\end{proof}

\begin{remark}\label{rem:alg3}
In the algebraic case, that is, in the situation of \S~\ref{section:alg}, 
one should replace $\rmhK_{\coh,\Lambda}$ with $\rmK_{\coh,\Lambda}$
and $\RHHhg[\Lambda]{X}$ with  $\RHHg[\Lambda]{X}$.
\end{remark}

We shall explain how to calculate $\hhhg_X$ in Chapter~\ref{chapter:Com}.

\chapter{The commutative case}\label{chapter:Com} 

We shall make the link between the Hochschild class and
the
Chern and Euler classes of coherent $\OO$-modules, following~\cite{Ka91}, an
unpublished letter from the first named author (M.K) to the second
(P.S), dated 18/11/1991.

\section{Hochschild class of $\sho$-modules}
In this section, we shall study the Hochschild class in the particular
case of a trivial deformation. In this case, the formal parameter
$\hbar$ doesn't play any role, and we may work with $\OO[]$-modules.
We shall use the same notations for $\OO$-modules as for 
$(\OO\forl,\star)$-modules where $\star$ is the usual commutative product.

Note that the results of this section are well known from the
specialists.
Let us quote in particular \cite{Ca2,Ca-W,Hu,Ma,Ra1,Sh,Ye0}.

\medskip

\medskip
Let $(X,\OO[X])$ be a complex manifold of complex dimension $d_X$. 
As usual, we denote by $\de[X]\cl X\hookrightarrow X\times X$ the
diagonal embedding. We denote by $\Omega^i_X$ the sheaf of holomorphic
$i$-forms and one sets  $\Omega_X\eqdot\Omega_X^{d_X}$.
We set
\eqn
&&\oo\eqdot \Omega_X\,[d_X].
\eneqn
We denote by $\RDO$ and $\RDDO$ the duality functors
\eqn
&&\RDO(\shf)=\rhom[{\OO}](\shf,\OO),\quad
\RDDO(\shf)=\rhom[{\OO}](\shf,\oo).
\eneqn
When there is no risk of confusion, we write $\RDD$ and $\RD$ instead
of $\RDO$ and $\RDDO$, respectively.

Let $f\cl X\to Y$ be a morphism of complex manifolds. For 
$\shg\in \RD^\Rb(\OO[Y])$, we set
\eqn
&&\spb{f}\shg\eqdot\OO[X]\lltens[{\opb{f}\OO[Y]}]\opb{f}\shg.
\eneqn
We use the notation 
$H^0(\spb{f})\cl\md[{\OO[Y]}]\to\md[{\OO}]$ for the (non derived) inverse image functor.

The  Hochschild homology \glossary{Hochschild homology!of $\OO[]$}%
of $\OO$ is given by:\index{HH5@$\HHO$}%
\eq\label{def:HHO1}
&&\HHO\eqdot\spb{\de}\oim{\de}\OO,
\mbox{ an object of $\RD^\Rb(\OO)$.}
\eneq
Note that 
$\eim{\de}\simeq\oim{\de}\simeq\roim{\de}$, and moreover
\eq
&&\oim{\de}\HHO\simeq\oim{\de}(\OO\lltens\spb{\de}(\oim{\de}\OO))
\simeq
\dO\lltens[{\OO[X\times X]}]\dO.
\eneq
By reformulating the construction of the Hochschild class for modules
over $\DQ$-algebroids, we get
\begin{definition}\label{def:hhc}
For $\shf\in\Derb_\coh(\OO)$, we define its Hochschild class 
\glossary{Hochschild class!of an $\sho$-module}\index{HHc3@$\hh_X(\shf)$}%
$\hh_X(\shf)\in H^0_{\Supp\shf}(X;\spb{\de}\oim{\de}\OO)$ as the composition
\eq
&&\OO\to\rhom[{\OO}](\shf,\shf)\isoto\spb{\de}(\shf\detens\RDD\shf)
\to \spb{\de}\oim{\de}\OO.
\eneq
Here the morphism 
$\shf\detens\RDD\shf\to \oim{\de}\OO$ is deduced from the morphism
$\spb{\de}(\shf\detens\RDD\shf)\isoto
\shf\lltens[{\OO}]\RDD\shf\to\OO$
by adjunction.
\end{definition}
Applying Theorem~\ref{th:HH1}, we get that for two complex manifolds 
$X$ and $Y$ and for   $\shf\in\RD^\Rb_\coh(\OO[X])$ and
$\shg\in\RD^\Rb_\coh(\OO[Y])$, we have
\eqn
&&\hh_{X\times Y}(\shf\detens\shg)=\hh_X(\shf)\etens\hh_Y(\shg).
\eneqn

Let $f\cl X\to Y$ be a morphism of complex manifolds and denote by
$\Gamma_f\subset X\times Y$ its graph.
We denote by $\hh_{X\times Y}(\OO[\Gamma_f])$ the Hochschild class of the 
coherent $\OO[X\times Y]$-module $\OO[\Gamma_f]$. Hence
\eqn
&&\hh_{X\times Y}(\OO[\Gamma_f])\in H^0(X\times Y;\HHO[X\times Y]).
\eneqn
Applying Theorem~\ref{th:HH1}, we get

\begin{corollary}\label{co:chernO}
\bnum
\item
Let $\shg\in\RD^\Rb_\coh(\OO[Y])$. Then
\eqn
&& \hh_X(\spb{f}\shg)=\hh_{X\times Y}(\OO[\Gamma_f])\circ\hh_Y(\shg).
\eneqn
\item
Let $\shf\in\RD^\Rb_\coh(\OO[X])$ and assume that $f$ is proper on
$\Supp(\shf)$. Then
\eqn
&& \hh_Y(\reim{f}\shf)=\hh_X(\shf)\circ\hh_{X\times Y}(\OO[\Gamma_f]).
\eneqn
\enum
\end{corollary}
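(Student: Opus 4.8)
The statement is an immediate consequence of Theorem~\ref{th:HH1} applied to well-chosen kernels. The main observation is that, under the identification of $\Gamma_f$ with $X$ via the first projection, the $\OO[X\times Y]$-module $\OO[\Gamma_f]$ plays the role of the kernel that realizes inverse image along $f$ in one direction and proper direct image along $f$ in the other. So the plan is: first recall the two elementary isomorphisms $\OO[\Gamma_f]\circ\shg\simeq\spb{f}\shg$ (for $\shg\in\RD^\Rb_\coh(\OO[Y])$) and $\shf\circ\OO[\Gamma_f]\simeq\reim{f}\shf$ (for $\shf\in\RD^\Rb_\coh(\OO[X])$, with $f$ proper on $\supp\shf$); then invoke Theorem~\ref{th:HH1} in the commutative case to transport these to the level of Hochschild classes; and finally match the convolution $\circ$ of Hochschild classes on the two sides with the composition appearing in Corollary~\ref{co:chernO}.

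\textbf{Step 1: kernel identities.} For (i), take $X_1=\pt$, $X_2=X$, $X_3=Y$ in Notation~\ref{not:www1} applied to $\sho$-modules, so that a kernel in $\RD^\Rb_\coh(\OO[X\times Y])$ convolved over $X$ with an object of $\RD^\Rb_\coh(\OO[Y])$\dots more precisely, to get $\spb{f}\shg$ one convolves $\OO[\Gamma_f]\in\RD^\Rb_\coh(\OO[X\times Y])$ with $\shg\in\RD^\Rb_\coh(\OO[Y])$ over $Y$:
\eqn
&&\OO[\Gamma_f]\conv[Y]\shg
\simeq\reim{p_1}\bl\OO[\Gamma_f]\lltens[{\opb{p_2}\OO[Y]}]\opb{p_2}\shg\br
\simeq\reim{p_1}\bl(p_2\vert_{\Gamma_f})^{-1}\shg\otimes_{\OO[\Gamma_f]}\OO[\Gamma_f]\br.
\eneqn
Since $\Gamma_f\isoto X$ via $p_1$ and $p_2\vert_{\Gamma_f}$ corresponds to $f$, and since $p_1\vert_{\Gamma_f}$ is an isomorphism (hence proper), this is just $\spb{f}\shg$. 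Dually, for (ii) one convolves $\shf\in\RD^\Rb_\coh(\OO[X])$ with $\OO[\Gamma_f]$ over $X$; the properness of $f$ on $\supp\shf$ gives that $p_1$ restricted to the relevant support is proper, and one obtains $\shf\conv[X]\OO[\Gamma_f]\simeq\reim{f}\shf$ by the projection/base-change formula for $\sho$-modules. These are standard facts about integral kernels (cf.~\cite{Hu}) and I would state them with a short verification rather than a long computation.

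\textbf{Step 2: apply the main theorem and conclude.} Now Theorem~\ref{th:HH1}, valid verbatim for $\sho$-modules since $\OO$ is the trivial $\DQ$-algebroid, gives
\eqn
&&\hh_X(\spb{f}\shg)=\hh_X(\OO[\Gamma_f]\conv[Y]\shg)
=\hh_{X\times Y}(\OO[\Gamma_f])\conv[Y]\hh_Y(\shg),\\
&&\hh_Y(\reim{f}\shf)=\hh_Y(\shf\conv[X]\OO[\Gamma_f])
=\hh_X(\shf)\conv[X]\hh_{X\times Y}(\OO[\Gamma_f]).
\eneqn
Writing $\circ$ for $\conv$ as in the statement, this is exactly (i) and (ii). I do not expect any serious obstacle here: the only points requiring care are bookkeeping of which space one convolves over and checking the properness hypotheses needed to invoke Theorem~\ref{th:HH1} (for (i) the graph projection is an isomorphism so properness is automatic; for (ii) it is exactly the hypothesis that $f$ is proper on $\supp\shf$). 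The substance of the corollary is entirely contained in Theorem~\ref{th:HH1}; this is a routine specialization, so the write-up is short.
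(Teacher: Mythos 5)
Your proposal is correct and matches the paper's own proof, which is precisely "Applying Theorem~\ref{th:HH1}, we get" followed by the statement of the corollary. You have merely filled in the two kernel identities $\OO[\Gamma_f]\conv[Y]\shg\simeq\spb{f}\shg$ and $\shf\conv[X]\OO[\Gamma_f]\simeq\reim{f}\shf$ together with the properness checks (automatic in (i) since $p_1\vert_{\Gamma_f}$ is an isomorphism, and exactly the stated hypothesis in (ii)), which is the intended unwinding.
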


In Proposition~\ref{pro:HHOspb} and ~\ref{pro:HHOeim} below, we give
a more direct description of the maps 
$\hh_{X\times Y}(\OO[\Gamma_f])\,\circ\;$ and 
$\circ\,\hh_{X\times Y}(\OO[\Gamma_f])$.

\begin{proposition}\label{pro:HHOspb}
Let $f\cl X\to Y$ be a morphism of complex manifolds.
\bnum
\item
There is a canonical morphism
\eq
&&\spb{f}\spb{\de[Y]}\oim{\de[Y]}\OO[Y]\to \spb{\de}\oim{\de}\OO\label{eq:HHOspb}.
\eneq
\item
This morphism together with the isomorphism 
$\OO[X]\isofrom \spb{f}\OO[Y]$ induces a morphism
\eq\label{eq:hhspbf}
&&\spb{f}\cl H^0(\rsect(Y;\spb{\de[Y]}\oim{{\de[Y]}}\OO[Y]))\to 
H^0(\rsect(X;\spb{\de}\oim{\de}\OO))
\eneq
and for $\shg\in\Derb_\coh(\OO[Y])$, we have
\eq\label{eq:HHspb}
&&\hh_X(\spb{f}\shg)=\spb{f}\hh_Y(\shg).
\eneq
\enum
\end{proposition}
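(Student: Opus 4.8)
The statement to prove is Proposition~\ref{pro:HHOspb}: the construction of a canonical morphism \eqref{eq:HHOspb}, the resulting pullback map \eqref{eq:hhspbf} on Hochschild homology, and the compatibility \eqref{eq:HHspb} with Hochschild classes. The strategy is to realize \eqref{eq:HHOspb} as a special case of the composition morphism for Hochschild homology already available in the $\DQ$-setting (Proposition~\ref{pro:compHH}), applied with trivial deformation, and then to deduce \eqref{eq:HHspb} from the functoriality theorem (Theorem~\ref{th:HH1}, or rather Corollary~\ref{co:chernO}) together with the identification of $\hh_{X\times Y}(\OO[\Gamma_f])$ under this composition.

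\emph{Construction of \eqref{eq:HHOspb}.} First I would note that, since $f$ is the composition $X \xrightarrow{\ \Gamma_f\ } X\times Y \xrightarrow{\ p_Y\ } Y$ realized via the graph kernel $\OO[\Gamma_f]\in\Derb_\coh(\OO[X\times Y])$, Corollary~\ref{co:chernO}~(i) already reads $\hh_X(\spb{f}\shg)=\hh_{X\times Y}(\OO[\Gamma_f])\circ\hh_Y(\shg)$. The content of the present proposition is to unwind the abstract $\circ$ into the explicit pullback $\spb{f}$. Concretely, the morphism \eqref{eq:HHOspb} is built by base change: apply $\spb{f}$ to $\de[Y]$ and use that the diagram
\eqn
&&\xymatrix{
X\ar[r]^-{\de}\ar[d]_-f&X\times X\ar[d]^-{f\times f}\\
Y\ar[r]^-{\de[Y]}&Y\times Y
}
\eneqn
is Cartesian. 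One then has the canonical base-change morphism $\spb{f}\roim{\de[Y]}\OO[Y]\to\roim{\de}\spb{(f\times f)}\OO[Y]\to\roim{\de}\OO[X]$ (the second arrow coming from the algebra morphism $\spb{(f\times f)}\OO[Y]\simeq\OO[X]\ltens[\OO[X\times Y]]\OO[X\times Y]\cdots\to\OO[X\times X]$, i.e.\ the unit of $\OO[X\times X]$-algebra structure), and applying $\spb{\de}$ and using $\spb{\de}\spb{(f\times f)}\simeq\spb{f}\spb{\de[Y]}$ gives \eqref{eq:HHOspb}. I would present this as the composite
\eqn
\spb{f}\spb{\de[Y]}\oim{\de[Y]}\OO[Y]
&\simeq&\spb{\de}\spb{(f\times f)}\oim{\de[Y]}\OO[Y]\\
&\to&\spb{\de}\oim{\de}\spb{(f\times f)}\OO[Y]\\
&\to&\spb{\de}\oim{\de}\OO[X],
\eneqn
where the first arrow is the base-change morphism for the Cartesian square above and the second is induced by $\spb{(f\times f)}\OO[Y]\to\OO[X\times X]$.

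\emph{The map \eqref{eq:hhspbf} and the compatibility \eqref{eq:HHspb}.} Taking $H^0\rsect(X;-)$ of \eqref{eq:HHOspb} and precomposing with the pullback on global sections $H^0(\rsect(Y;\spb{\de[Y]}\oim{\de[Y]}\OO[Y]))\to H^0(\rsect(X;\spb{f}\spb{\de[Y]}\oim{\de[Y]}\OO[Y]))$ yields $\spb{f}$ in \eqref{eq:hhspbf}; the identification $\OO[X]\isofrom\spb{f}\OO[Y]$ is used to see this is the ``right'' normalization (it sends the image of $1$ to the image of $1$). For \eqref{eq:HHspb}, I would chase the defining diagram of $\hh_Y(\shg)$ through $\spb{f}$: $\hh_Y(\shg)$ is the image of $\id_\shg$ under $\OO[Y]\to\rhom[\OO[Y]](\shg,\shg)\isoto\spb{\de[Y]}(\shg\detens\RDDO\shg)\to\spb{\de[Y]}\oim{\de[Y]}\OO[Y]$; applying the exact functor $\spb{f}$ (in the derived sense) and using $\spb{f}\rhom[\OO[Y]](\shg,\shg)\to\rhom[\OO[X]](\spb{f}\shg,\spb{f}\shg)$, $\spb{f}\RDDO\shg\to\RDDO(\spb{f}\shg)$ (wait — one must be careful here: pullback commutes with $\RDO$ but $\RDDO$ involves twisting by $\oo$, so actually the natural statement uses $\RDO$ on both sides, or one tracks the shift $d_Y$ versus $d_X$ carefully), one matches term by term with the defining diagram of $\hh_X(\spb{f}\shg)$. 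The commutativity of the resulting diagram, together with the compatibility of \eqref{eq:HHOspb} with the adjunction morphisms $\shg\detens\RDDO\shg\to\oim{\de[Y]}\OO[Y]$ under $\spb{(f\times f)}$, gives \eqref{eq:HHspb}.

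\textbf{Main obstacle.} The delicate point is the bookkeeping around dualizing complexes and shifts: on $Y$ the Hochschild class lives naturally via $\RDDO^Y$ with its $\Omega_Y[d_Y]$ twist, whereas pullback along $f$ does not carry $\oo[Y]$ to $\oo[X]$ (the relative dualizing complex $\oo[X/Y]$ intervenes). So one must either phrase the whole argument with the untwisted dual $\RDO$ — for which $\spb{f}\RDO\shg\isoto\RDO\spb{f}\shg$ holds cleanly when $\shg$ is, say, locally free, and in general after a resolution — or insert the canonical morphism $\spb{f}\oo[Y]\to\oo[X][\text{shift}]$ coming from $\oo[X/Y]\to\opb{f}\OO[Y][\ldots]$ as in the Serre-duality discussion preceding Theorem~\ref{th:Serre}. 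I expect the cleanest route is to reduce to $\shg$ a bounded complex of free $\OO[Y]$-modules (possible by Corollary~\ref{cor:freeres}/Corollary~\ref{cor:hddim} and additivity of $\hh$ from Theorem~\ref{th:addhh}), where $\spb{f}$ is literally term-wise tensoring and every identification above is tautological, and then invoke additivity and the already-established functoriality to conclude in general. The rest is a routine, if somewhat lengthy, diagram chase identifying \eqref{eq:HHOspb} with the $\circ\,\hh_{X\times Y}(\OO[\Gamma_f])$-independent description — essentially checking that the base-change morphism and the convolution with the graph kernel agree, which is standard.
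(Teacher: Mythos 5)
Your construction of \eqref{eq:HHOspb} is essentially the paper's: you write the same composite $\spb{f}\spb{\de[Y]}\oim{\de[Y]}\OO[Y]\simeq\spb{\de}\spb{\tw f}\oim{\de[Y]}\OO[Y]\to\spb{\de}\oim{\de}\spb{f}\OO[Y]\simeq\spb{\de}\oim{\de}\OO[X]$, with the middle arrow being the canonical morphism $\spb{\tw f}\oim{\de[Y]}\to\oim{\de}\spb{f}$, and part~(ii) is the same diagram chase tracking the defining morphism of $\hh_Y(\shg)$ under $\spb{\tw f}$. Two points are worth correcting, though.

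First, the square relating $\de$, $\de[Y]$ and $\tw f=f\times f$ is \emph{not} Cartesian (its fiber product is $\{(x_1,x_2): f(x_1)=f(x_2)\}$, which strictly contains the diagonal of $X$ unless $f$ is injective). The morphism $\spb{\tw f}\oim{\de[Y]}\to\oim{\de}\spb{f}$ is the one that exists for any commutative square, obtained by adjunction from $\oim{\de[Y]}\to\oim{\de[Y]}\roim{f}\spb{f}\simeq\roim{\tw f}\oim{\de}\spb{f}$ — the paper says exactly this — and Cartesian-ness plays no role; it is also not an isomorphism. Likewise the middle term should read $\spb{\de}\oim{\de}\spb{f}\OO[Y]$, not $\spb{\de}\oim{\de}\spb{(f\times f)}\OO[Y]$ (which does not typecheck, since $\OO[Y]$ is a sheaf on $Y$, not $Y\times Y$).

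Second, the ``main obstacle'' you flag does not arise. The Hochschild class in Definition~\ref{def:hhc} is built from the \emph{untwisted} dual $\RDD=\RDO$, not from $\RDDO=\RDD(-)\tens\oo$; the twist by $\oo$ appears only in the co-Hochschild class $\thh$ of Definition~\ref{def:hhe}, which is the one compatible with proper pushforward (Proposition~\ref{pro:HHOeim}), not with pullback. Since $\spb{f}\RDO\shg\isoto\RDO\spb{f}\shg$ for $\shg\in\Derb_\coh(\OO[Y])$, the diagram chase for \eqref{eq:HHspb} goes through directly and your proposed detour via locally free resolutions and additivity is unnecessary. (It would work, but it is a heavier tool than the situation requires.) Finally, your opening plan of deducing the result from Corollary~\ref{co:chernO} and then ``unwinding $\circ$'' is circular in spirit: the paper explicitly states, in the Remark following the proposition, that the identification of \eqref{eq:hhspbf} with $\hh_{X\times Y}(\OO[\Gamma_f])\circ-$ is itself omitted, so one cannot lean on it; the direct construction you then give is the right move.
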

\begin{proof}
(i) Consider the diagram 
\eq\label{diag:fXY}
&&\xymatrix@C=8ex{
X\ar[r]^-{{\de}}\ar[d]_-f&X\times X\ar[d]_-{\tw f}\\
Y\ar[r]^-{{\de[Y]}}&Y\times Y.
}
\eneq
Then we have  morphisms
\eqn
&&\spb{f}\spb{\de[Y]}\oim{{\de[Y]}}\OO[Y]
\simeq \spb{\de}\spb{\tw f}\oim{{\de[Y]}}\OO[Y]
\to\spb{\de}\oim{{\de}}\spb{f}\OO[Y]\simeq \spb{\de}\oim{{\de}}\OO.
\eneqn
Here the arrow $\spb{\tw f}\oim{{\de[Y]}}\to\oim{{\de}}\spb{f}$
is deduced by adjunction from
\eqn
\oim{{\de[Y]}}&\to&\oim{{\de[Y]}}\roim{f}\spb{f}
\simeq\roim{\tw f}\oim{{\de}}\spb{f}.
\eneqn

\noindent
(ii) The diagram
\eqn
\xymatrix{
\spb{\tw f}(\shg\etens\RDD\shg)\ar[d]^-\sim\ar[r]
                              &\spb{\tw f}\oim{\de[Y]}\OO[Y]\ar[d]\\
         \spb{f}\shg\etens\spb{f}\RDD\shg\ar[d]^-\sim
                             &\oim{\de[X]}\spb{f}\OO[Y]\ar[d]^-\sim\\
\spb{f}\shg\etens\RDD\spb{f}\shg\ar[r]
                            &\oim{\de[X]}\OO[X]
}\eneqn
commutes. 
It follows that the diagram below commutes.
\eqn
\xymatrix{
\spb{f}\OO[Y]\ar[r]\ar[ddd]^-\sim
         &\spb{f}\spb{\de[Y]}(\shg\etens\RDD\shg)\ar[d]^-\sim\ar[r]
                               &\spb{f}\spb{\de[Y]}\oim{\de[Y]}\OO[Y]\ar[d]^-\sim\\
         &\spb{\de[X]}\spb{\tw f}(\shg\etens\RDD\shg)\ar[d]^-\sim\ar[r]
                              &\spb{\de[X]} \spb{\tw f}\oim{\de[Y]}\OO[Y]\ar[d]\\
         &\spb{\de[X]}(\spb{f}\shg\etens\spb{f}\RDD\shg)\ar[d]^-\sim
                             &\spb{\de[X]}\oim{\de[X]}\spb{f}\OO[Y]\ar[d]^-\sim\\
\OO[X]\ar[r]
        &\spb{\de[X]}(\spb{f}\shg\etens\RDD\spb{f}\shg)\ar[r]
                            &\spb{\de[X]}\oim{\de[X]}\OO[X].
}\eneqn
Therefore, the image of 
$\hh_Y(\shg)\in\Hom[{\OO[Y]}](\OO[Y], \spb{\de[Y]}\oim{\de[Y]}\OO[Y])$ 
by the maps
\eqn
\Hom[{\OO[Y]}](\OO[Y], \spb{\de[Y]}\oim{\de[Y]}\OO[Y])&\to&
\Hom[{\OO[X]}](\spb{f}\OO[Y], \spb{f}\spb{\de[Y]}\oim{\de[Y]}\OO[Y])\\
&\to&
\Hom[{\OO[X]}](\OO[X], \spb{\de[X]}\oim{\de[X]}\OO[X])
\eneqn
is $\hh_X(\spb{f}\shg)$.
\end{proof}

\begin{remark}
Although we omit the proof, the map in~\eqref{eq:hhspbf}
coincides with $\hh_{X\times Y}(\OO[\Gamma_f])\,\circ\;$.
\end{remark}

\subsubsection*{Ring structure}

For an exposition on tensor categories, we refer to \cite{K-S3}.
 
\begin{proposition}\label{pro:HHOring}
\bnum
\item
The object $\spb{\de}\oim{\de}\OO$ is a ring in the tensor category
$(\Derb_\coh(\OO[X]),\lltens[{\OO}])$. More precisely, 
\banum
\item
the map $\mu$ obtained as the composition 
\eqn
\spb{\de}\oim{\de}\OO\lltens[{\OO}]\spb{\de}\oim{\de}\OO
               &\isoto&\spb{\de}(\oim{\de}\OO\lltens[{\OO[X\times X]}]\oim{\de}\OO)\\
               &\to&\spb{\de}\oim{\de}\OO.
\eneqn
is associative. Here the last arrow is induced by
$\oim{\de}\OO\tens\oim{\de}\OO\to \oim{\de}\OO$.
\item 
$\hh_X(\OO)$ is a unit of this ring. More precisely, 
the natural morphism $\epsilon$ defined as the composition
\eqn
&&\epsilon\cl \OO\isoto\spb{\de}\OO[X\times X]\to\spb{\de}\oim{\de}\OO
\eneqn
has the property that the composition
\eqn
\spb{\de}\oim{\de}\OO\simeq \spb{\de}\oim{\de}\OO\lltens[\sho_X]\OO
&\To[\epsilon]&\spb{\de}\oim{\de}\OO\lltens[{\OO[X]}]\spb{\de}\oim{\de}\OO\\
&\To[\mu]&\spb{\de}\oim{\de}\OO
\eneqn
is the identity.
\eanum
\item
The ring $(\spb{\de}\oim{\de}\OO,\mu)$ is commutative. More precisely, 
we have $\mu\circ\sigma=\mu$, where
$\sigma\in\Aut[{\Db(\OO)}](\spb{\de}\oim{\de}\OO
\lltens[\sho_X]\spb{\de}\oim{\de}\OO)$
is the morphism associated with $x\tens x'\mapsto x'\tens x$. 
\item
The object $\epb{\de}\eim{\de}\oo$ has a structure of
a $\spb{\de}\oim{\de}\OO$-module.
More precisely, the composition
\eqn
\spb{\de}\oim{\de}\OO\lltens[\sho_X]\epb{\de}\eim{\de}\oo
               &\to&\epb{\de}(\oim{\de}\OO\lltens[{\OO[X\times X]}]\eim{\de}\oo)\\
               &\to&\epb{\de}\eim{\de}\oo.
\eneqn
is associative and preserves the unit.
Here, the last arrow is induced by
$\oim{\de}\OO\lltens[{\OO[X\times X]}]\eim{\de}\oo
\simeq\oim{\de}(\spb{\de}\oim{\de}\OO\tens_{\OO}\oo)
\to\oim{\de}(\OO\tens_{\OO}\oo)\simeq\oim{\de}\oo$ by adjunction.

\label{HOassoc}
\enum
\end{proposition}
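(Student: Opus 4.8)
\textbf{Plan of proof for Proposition~\ref{pro:HHOring}.}
The statement is a bundle of coherences asserting that $\spb{\de}\oim{\de}\OO$ is a commutative ring object in $(\Derb_\coh(\OO),\lltens[\OO])$, with $\epb{\de}\eim{\de}\oo$ as a module over it. My plan is to obtain all of them by a single mechanism: unwinding everything into adjunction data for $\de$ and $\tw{\de}\eqdot\de[X\times X]\cl X\times X\to(X\times X)\times(X\times X)$, and invoking the base change and projection formulas together with functoriality of adjunction. The basic observation is that $\spb{\de}\oim{\de}\OO\simeq\dO\lltens[{\OO[X\times X]}]\dO$ (after applying $\oim{\de}$, as recorded in the excerpt) is itself the Hochschild object of $\sho_X$ viewed as a kernel on $X\times X^a$; so the ring structure on it should be a shadow of the composition of kernels being associative and unital with $\dO$, exactly as in Proposition~\ref{pro:HH1} and Lemma~\ref{le:assoHH} for $\DQ$-algebroids.

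First I would construct $\mu$ and $\epsilon$ precisely as in part (i)(a)--(b): $\mu$ is the composite of the canonical isomorphism $\spb{\de}\oim{\de}\OO\lltens[\OO]\spb{\de}\oim{\de}\OO\isoto\spb{\de}(\oim{\de}\OO\lltens[{\OO[X\times X]}]\oim{\de}\OO)$ (which is just the projection formula for $\de$, using that $\oim{\de}\OO$ is coherent so all the derived tensor products are well-behaved), followed by the multiplication $\oim{\de}\OO\lltens[{\OO[X\times X]}]\oim{\de}\OO\to\oim{\de}\OO$ induced by the ring structure of $\OO$ pushed forward along the diagonal. Associativity of $\mu$ then reduces to two facts: associativity of the ring multiplication on $\sho_{X\times X}$-level, and the cocycle/naturality of the projection-formula isomorphism (i.e.\ that the two ways of bracketing $\spb{\de}(\oim{\de}\OO\lltens\oim{\de}\OO\lltens\oim{\de}\OO)$ agree). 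Unit: I would check that $\epsilon\cl\OO\isoto\spb{\de}\OO[X\times X]\to\spb{\de}\oim{\de}\OO$ is a two-sided unit for $\mu$ by noting that the composite in (i)(b) is obtained by applying $\spb{\de}$ to $\oim{\de}\OO\simeq\OO[X\times X]\lltens[{\OO[X\times X]}]\oim{\de}\OO\to\oim{\de}\OO\lltens[{\OO[X\times X]}]\oim{\de}\OO\to\oim{\de}\OO$, which is the identity because $\OO[X\times X]$ is the unit for $\lltens[{\OO[X\times X]}]$. Commutativity (ii): this follows from the commutativity of the ring $\OO$ (so $\oim{\de}\OO\lltens[{\OO[X\times X]}]\oim{\de}\OO\to\oim{\de}\OO$ is symmetric) together with the fact that the braiding $\sigma$ on the tensor category $(\Derb_\coh(\OO),\lltens[\OO])$ is compatible, under the projection-formula isomorphism, with the obvious swap on $\spb{\de}(\oim{\de}\OO\lltens\oim{\de}\OO)$; this last compatibility is a standard property of the symmetric monoidal structure on a derived category of modules. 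For the module structure (iii): the action $\spb{\de}\oim{\de}\OO\lltens[\OO]\epb{\de}\eim{\de}\oo\to\epb{\de}\eim{\de}\oo$ is built the same way, with the projection formula for $\epb{\de}$ in place of $\spb{\de}$ (there is a natural morphism $\spb{\de}G\lltens[\OO]\epb{\de}F\to\epb{\de}(\oim{\de}\OO\lltens[{\OO[X\times X]}]F)$ coming from $\epb{\de}\eim{\de}\simeq\mathrm{id}$ on a closed embedding and the projection formula), followed by the pairing $\oim{\de}\OO\lltens[{\OO[X\times X]}]\eim{\de}\oo\to\eim{\de}\oo$ described in the statement; associativity and the unit property are then inherited from those of $\mu$ by the same naturality arguments.

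The main obstacle, as usual in such ``ring object'' statements, is not any single identity but the bookkeeping: one must pin down \emph{canonical} choices of the projection-formula and base-change isomorphisms and then verify that the relevant pentagon/triangle/hexagon diagrams commute on the nose, not merely up to an unspecified isomorphism. The cleanest route is to reduce each diagram to a diagram of morphisms between $\spb{\de}$ (resp.\ $\epb{\de}$) applied to explicit objects on $X\times X$, where everything becomes a statement about the symmetric monoidal structure of $\Derb_\coh(\OO[X\times X])$ and the ring object $\oim{\de}\OO$ therein — and there one can cite the general theory (e.g.\ \cite{K-S3}). I would also remark that, up to the identification $\HHO\simeq\spb{\de}\oim{\de}\OO$, all these structures are compatible with the convolution of Hochschild classes of Chapter~\ref{chapter:HH}, so that part (i) can alternatively be deduced from Lemma~\ref{le:assoHH} applied with $X_i$ suitable products of copies of $X$; I would present the direct argument as the primary one and mention this as a consistency check. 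Finally, functoriality of $\hh_X$ in \eqref{eq:HHspb} of Proposition~\ref{pro:HHOspb} shows $\epsilon=\hh_X(\OO)$ is indeed the unit, closing the loop with part (i)(b).
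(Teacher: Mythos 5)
Your proposal follows essentially the same route as the paper's own (very terse) proof: the paper simply remarks that associativity and commutativity are "consequences of the corresponding properties of $\oim{\de}\OO$" — in other words, everything reduces to the (commutative, associative, unital) ring object $\oim{\de}\OO$ in $\Derb_\coh(\OO[X\times X])$ and the naturality of the adjunction/monoidality isomorphisms relating $\spb{\de}$, $\epb{\de}$, $\oim{\de}$, $\eim{\de}$ and $\lltens$, which is exactly your reduction. One small terminological quibble: the isomorphism $\spb{\de}F\lltens[\OO]\spb{\de}G\isoto\spb{\de}(F\lltens[{\OO[X\times X]}]G)$ is the monoidality of the pullback functor rather than the projection formula (the latter intertwines pullback and pushforward); this does not affect the soundness of your argument.
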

\begin{proof}
The verification of these assertions is left to the reader.
We only remark that the commutativity and associativity are
consequences of the
corresponding properties of $\oim{\de}\OO$. For example, the commutativity
is the consequence of
the commutativity of
$\oim{\de}\OO\lltens[{\OO[X\times X]}]\oim{\de}\OO\to\oim{\de}\OO$.
\end{proof}
\begin{notation}
For $\lambda_i\in H^0_{\Lambda_i}(X;\spb{\de}\oim{\de}\OO)$ ($i=1,2$), 
we define their product $\lambda_1\scbul\lambda_2$ as the
composition
\eqn
\OO\isoto\OO\lltens[\sho_X]\OO
\to[{\lambda_1\tens\lambda_2}]
\spb{\de}\oim{\de}\OO\lltens[\sho_X]\spb{\de}\oim{\de}\OO
\to[\mu]\spb{\de}\oim{\de}\OO.\eneqn
\end{notation}
\begin{proposition}
Let $\shf_i\in\Derb_\coh(\OO)$ \lp$i=1,2$\rp. Then 
\eq
&&\hh_X(\shf_1\lltens[\sho_X]\shf_2)=\hh_X(\shf_1)\scbul\hh_X(\shf_2).
\eneq
\end{proposition}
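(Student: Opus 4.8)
The plan is to deduce the multiplicativity $\hh_X(\shf_1\lltens[\sho_X]\shf_2)=\hh_X(\shf_1)\scbul\hh_X(\shf_2)$ from the functoriality of the Hochschild class under convolution, i.e.\ from Theorem~\ref{th:HH1} (in its commutative incarnation, Corollary~\ref{co:chernO}), exactly the way pullback was handled in Proposition~\ref{pro:HHOspb}. The key observation is that $\shf_1\lltens[\sho_X]\shf_2$ is the pullback $H^0(\spb{\de[X]})$-type operation applied to $\shf_1\detens\shf_2$: more precisely, $\shf_1\lltens[\sho_X]\shf_2\simeq \spb{\de[X]}(\shf_1\detens\shf_2)$, where $\de[X]\cl X\hookrightarrow X\times X$ is the diagonal embedding. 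Since we already know $\hh_{X\times X}(\shf_1\detens\shf_2)=\hh_X(\shf_1)\etens\hh_X(\shf_2)$ from the external product compatibility of the Hochschild class (stated right after Definition~\ref{def:hhc}), the problem reduces to computing how $\spb{\de[X]}$ acts on Hochschild classes, and to checking that this matches the product $\scbul$ defined via the ring structure $\mu$ on $\spb{\de[X]}\oim{\de[X]}\OO$ from Proposition~\ref{pro:HHOring}.

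Concretely, I would proceed in the following steps. First, record the isomorphism $\shf_1\lltens[\sho_X]\shf_2\isoto\spb{\de[X]}(\shf_1\detens\shf_2)$ in $\Derb_\coh(\OO[X])$. Second, apply Proposition~\ref{pro:HHOspb}~(ii) with $f=\de[X]\cl X\to X\times X$: formula~\eqref{eq:HHspb} gives $\hh_X(\spb{\de[X]}\shg)=\spb{\de[X]}\hh_{X\times X}(\shg)$ for $\shg\in\Derb_\coh(\OO[X\times X])$, and taking $\shg=\shf_1\detens\shf_2$ together with the external product formula yields $\hh_X(\shf_1\lltens[\sho_X]\shf_2)=\spb{\de[X]}\bl\hh_X(\shf_1)\etens\hh_X(\shf_2)\br$. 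Third, unwind the definition of the pullback map $\spb{\de[X]}$ on $H^0(\rsect(X\times X;\spb{\de[X\times X]}\oim{\de[X\times X]}\OO[X\times X]))$ from the construction in the proof of Proposition~\ref{pro:HHOspb}~(i), using the cartesian-type square~\eqref{diag:fXY} for $f=\de[X]$, and identify the composite $\spb{\de[X]}\circ(\scbul\etens\scbul)$ with the map $\lambda_1\tens\lambda_2\mapsto \lambda_1\scbul\lambda_2$, i.e.\ with the multiplication $\mu$ of Proposition~\ref{pro:HHOring}~(i). This is the heart of the matter: one must check that the canonical morphism $\spb{\de[X]}\spb{\de[X\times X]}\oim{\de[X\times X]}\OO[X\times X]\to\spb{\de[X]}\oim{\de[X]}\OO[X]$ of~\eqref{eq:HHOspb}, precomposed with the K\"unneth isomorphism $\spb{\de[X]}(\spb{\de[X]}\oim{\de[X]}\OO\detens\spb{\de[X]}\oim{\de[X]}\OO)\simeq \spb{\de[X]}\oim{\de[X]}\OO\lltens[\sho_X]\spb{\de[X]}\oim{\de[X]}\OO$, agrees with $\mu$. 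Both arrows ultimately come from the diagonal-restriction of the product morphism $\oim{\de[X]}\OO\tens\oim{\de[X]}\OO\to\oim{\de[X]}\OO$, so the identification is a diagram chase.

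The main obstacle I expect is precisely this last bookkeeping step: matching the pullback-along-the-diagonal map with the ring multiplication $\mu$ requires carefully composing several base-change and K\"unneth isomorphisms and verifying the commutativity of a fairly large diagram relating $\tw f$ for $f=\de[X]$ (so $\tw{\de[X]}\cl X\times X\to (X\times X)\times(X\times X)$ is essentially the diagonal in a shuffled order) with the multiplication morphisms on the various $\oim{\de}\OO$'s. None of the individual steps is deep — each is a standard adjunction or projection-formula manipulation of the kind already carried out in Propositions~\ref{pro:HHOspb} and~\ref{pro:HHOring} — but assembling them coherently, keeping track of the correct identifications of $X\times X\times X\times X$ with a product over which the diagonal acts, is where care is needed. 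Once that diagram is seen to commute, the three displayed equalities chain together to give $\hh_X(\shf_1\lltens[\sho_X]\shf_2)=\hh_X(\shf_1)\scbul\hh_X(\shf_2)$, and the proof is complete.
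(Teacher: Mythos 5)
Your route is valid and genuinely different from the paper's. You factor $\shf_1\lltens[\sho_X]\shf_2$ as $\spb{\de[X]}(\shf_1\detens\shf_2)$ and chain three already-established facts: the external-product formula for $\hh$, the pullback formula $\hh_X(\spb{f}\shg)=\spb{f}\hh_Y(\shg)$ of Proposition~\ref{pro:HHOspb}, and the residual compatibility $\spb{\de[X]}(\lambda_1\etens\lambda_2)=\lambda_1\scbul\lambda_2$ between the pullback-on-classes map of~\eqref{eq:hhspbf} and the ring multiplication $\mu$ of Proposition~\ref{pro:HHOring}. The paper instead proves the statement by a single commutative diagram in $\Derb(\OO)$, without climbing up to $X\times X$ at all: one path (top, then right column) computes $\hh_X(\shf_1)\scbul\hh_X(\shf_2)$, the other (left, then bottom) computes $\hh_X(\shf_1\lltens[\sho_X]\shf_2)$, and the crucial square is the one exchanging $\spb{\de}$ with $\tens[\sho_X]$. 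Interestingly, the paper records your intermediate identity $\hh_X(\shf_1\lltens[\sho_X]\shf_2)=\spb{\de}(\hh_X(\shf_1)\etens\hh_X(\shf_2))$ as a remark \emph{after} the proof, so the decomposition you propose was clearly known to the authors. What your route buys is modularity, reusing the external-product and pullback functoriality already proved; what it costs is that the compatibility $\spb{\de}\circ(\scbul\etens\scbul)=\mu$ — which you correctly single out as the crux — is itself a diagram chase of about the same weight as the paper's direct one, and it additionally forces you to track the diagonal of $X\times X$ inside $(X\times X)\times(X\times X)$ up to a shuffle of factors, which the paper's in-place argument avoids. As written, your proposal gestures at that final check as bookkeeping without carrying it out; to be complete it would need that diagram written explicitly, at which point there is no net saving over the paper's more direct route.
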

\begin{proof}
Consider  the commutative diagram below (in which $\tens$ stands for $\tens[\sho]$):
\eqn
\xymatrix{
\OO\ar[r]\ar `d[dddr][dddr]&
\OO\tens\OO\ar[d]&&\\
&\spb{\de}(\shf_1\etens\RDD\shf_1)\tens\spb{\de}(\shf_2\etens\RDD\shf_2)
                                                          \ar[r]\ar[d]^-\sim
   &\spb{\de}\oim{\de}\OO\tens\spb{\de}\oim{\de}\OO\ar[d]^-\bwr\\
&\spb{\de}\bl(\shf_1\etens\RDD\shf_1)\tens(\shf_2\etens\RDD\shf_2)\br\ar[d]\ar[r]
  &\spb{\de}(\oim{\de}\OO\tens\oim{\de}\OO)\ar[d]\\
&\spb{\de}\bl(\shf_1\tens\shf_2)\etens\RDD(\shf_1\tens\shf_2)\br\ar[r]
  &\spb{\de}(\oim{\de}\OO).
}\eneqn
The composition of the arrows on the top and the right 
gives $\hh_X(\shf_1)\scbul\hh_X(\shf_2)$
and the composition of the arrows on the left and the bottom gives
$\hh_X(\shf_1\lltens[\sho_X]\shf_2)$. 
\end{proof}
Note that
\eqn
&&\hh_X(\shf_1\lltens[\sho_X]\shf_2)=\spb{\de}(\hh_X(\shf_1)\etens\hh_X(\shf_2)).
\eneqn

\section{co-Hochschild class}
\begin{definition}\label{def:hhe}
For $\shf\in\Derb_\coh(\OO)$, we define its co-Hochschild class 
\glossary{co-Hochschild class}%
\index{thh@$\thh_X(\shf)$}%
$\thh_X(\shf)\in H^0_{\Supp\shf}(X;\epb{\de}\eim{\de}\oo)$ as the composition
\eq
&&\OO\to\rhom[{\OO}](\shf,\shf)\simeq\epb{\de}(\shf\detens\RDDO\shf)
\to \epb{\de}\eim{\de}\oo.
\eneq
Here, the morphism $(\shf\detens\RDDO\shf)\to\eim{\de}\oo$ is induced from
$\spb{\de}(\shf\detens\RDDO\shf)\simeq
\shf\lltens[{\OO}]\RDDO\shf\to\oo$
by adjunction.
\end{definition}

Consider the sequence of isomorphisms
\eqn
\spb{\de}\oim{\de}\OO
&\isoto&\OO\lltens[{\OO}]\spb{\de}\oim{\de}\OO
\isoto\epb{\de}(\OO\detens\omega_X)\lltens[{\OO}]\spb{\de}\oim{\de}\OO\\
&\isoto&\epb{\de}((\OO\detens\omega_X)\lltens[{\OO}]\oim{\de}\OO)
\isoto\epb{\de}\oim{\de}(\spb{\de}(\OO\detens\omega_X)\lltens[{\OO}]\OO)\\
&\isoto&\epb{\de}\eim{\de}\omega_X.
\eneqn
We denote by $\htd$ the isomorphism 
\eq\label{eq:td1}
&& \htd\cl \spb{\de}\oim{\de}\OO\isoto \epb{\de}\eim{\de}\omega_X
\eneq
constructed above. For  a closed subset $S\subset X$, we keep the same
notation $\htd$ to denote the isomorphism 
\eq\label{eq:td2}
&& \htd\cl H^0_S(X;\spb{\de}\oim{\de}\OO)\isoto H^0_S(X;\epb{\de}\eim{\de}\omega_X).
\eneq

\begin{proposition}\label{pro:td1}
For $\shf\in\Derb_\coh(\OO[X])$, we have
\eq
&&\thh_X(\shf)=\htd\circ\hh_X(\shf).
\eneq
\end{proposition}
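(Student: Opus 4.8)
\textbf{Proof proposal for Proposition~\ref{pro:td1}.}

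The plan is to reduce the claimed identity $\thh_X(\shf)=\htd\circ\hh_X(\shf)$ to the commutativity of one large diagram, built entirely out of the adjunction and base-change isomorphisms already used to define $\hh_X$, $\thh_X$, and $\htd$. First I would unwind all three definitions simultaneously: $\hh_X(\shf)$ is the image of $\id_\shf$ under $\OO[X]\to\rhom[{\OO[X]}](\shf,\shf)\isoto\spb{\de}(\shf\detens\RDDO\shf)\to\spb{\de}\oim{\de}\OO[X]$, while $\thh_X(\shf)$ is the image of $\id_\shf$ under the parallel chain $\OO[X]\to\rhom[{\OO[X]}](\shf,\shf)\isoto\epb{\de}(\shf\detens\RDDO\shf)\to\epb{\de}\eim{\de}\oo[X]$. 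The only difference between the two chains is that the middle isomorphism $\rhom[{\OO[X]}](\shf,\shf)\simeq\spb{\de}(\shf\detens\RDDO\shf)$ is replaced by $\rhom[{\OO[X]}](\shf,\shf)\simeq\epb{\de}(\shf\detens\RDDO\shf)$, and that the last arrow lands in $\epb{\de}\eim{\de}\oo[X]$ rather than $\spb{\de}\oim{\de}\OO[X]$. So the heart of the matter is to check that the isomorphism $\htd$ of \eqref{eq:td1}, which is defined purely in terms of $\OO[X]$ (no reference to $\shf$), is compatible with these two factorizations — i.e.\ that applying $\htd$ to the $\spb{\de}\oim{\de}$-chain yields exactly the $\epb{\de}\eim{\de}$-chain.

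Concretely, I would write $\htd$ as the composite of five canonical isomorphisms appearing just before the statement (using $\OO[X]\isoto\epb{\de}(\OO[X]\detens\omega_X)$ and the projection-formula/base-change isomorphisms $\epb{\de}(-)\lltens\spb{\de}(-)\isoto\epb{\de}((-)\lltens(-))$, $\epb{\de}\oim{\de}(\spb{\de}(-)\lltens(-))\isoto\epb{\de}\eim{\de}(-)$). Then I would assemble a diagram whose top row is the $\spb{\de}\oim{\de}$-chain defining $\hh_X(\shf)$, whose bottom row is the $\epb{\de}\eim{\de}$-chain defining $\thh_X(\shf)$, and whose vertical arrows are the constituent isomorphisms of $\htd$ applied at each stage. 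Each square of this diagram commutes for a formal reason: the squares involving the passage from $\rhom[{\OO[X]}](\shf,\shf)$ to $\spb{\de}(\shf\detens\RDDO\shf)$ versus $\epb{\de}(\shf\detens\RDDO\shf)$ commute by the compatibility of the Grothendieck duality/adjunction isomorphisms with the canonical morphism $\epb{\de}\to\spb{\de}\lltens\epb{\de}(\OO[X\times X])$ (equivalently, $\epb{\de}\simeq\spb{\de}(-\otimes\oo[X/X\times X])$ and $\oo[X/X\times X]\simeq\OO[X]\detens\omega_X[-2d_X]$ restricted along $\de$), while the squares relating the two ``trace'' evaluation maps $\shf\lltens[{\OO[X]}]\RDDO\shf\to\OO[X]$ and $\shf\lltens[{\OO[X]}]\RDDO\shf\to\oo[X]$ commute because $\RDDO\shf=\RDO\shf\lltens\oo[X]$ and the two evaluations differ precisely by tensoring with $\oo[X]$, which is exactly the twist encoded in $\htd$.

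The main obstacle, as usual in this circle of arguments, is bookkeeping: one must be careful with the identification $(X\times X)/X$-relative dualizing complex, with the shift conventions ($\oo[X]=\Omega_X[d_X]$, $\eim{\de}\oo[X]$ versus $\oim{\de}\OO[X]$, the $[2d_X]$ coming from Poincaré duality on the diagonal), and with keeping the supports $\Supp\shf$ correctly threaded through so that the final identity holds in $H^0_{\Supp\shf}(X;\epb{\de}\eim{\de}\oo[X])$ rather than only after restriction. I would handle the supports by noting that every arrow in the diagram is $\OO[X]$-linear and hence preserves the relevant local-cohomology groups, so it suffices to check commutativity in the plain (non-supported) global setting and then invoke functoriality of $\rsect_{\Supp\shf}$. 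Once the diagram is verified to commute, chasing $\id_\shf\in\Hom[{\OO[X]}](\shf,\shf)$ around it gives $\htd(\hh_X(\shf))=\thh_X(\shf)$, which is the assertion. I expect no genuinely hard step — the content is entirely the coherence of Grothendieck duality isomorphisms already established (implicitly) in the earlier sections — so I would present it as a (large but routine) diagram chase and leave the individual square verifications to the reader, exactly as is done for the analogous lemmas earlier in the chapter.
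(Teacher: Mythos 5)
Your approach is essentially the one the paper takes: the proof there exhibits a single large commutative diagram whose top row is the chain defining $\hh_X(\shf)$, whose bottom edge is the chain defining $\thh_X(\shf)$, whose right-hand column reproduces the five-step construction of $\htd$, and whose intermediate rows come from tensoring with $\epb{\de}(\OO\etens\oo)$ and applying the projection-formula isomorphisms --- precisely the diagram chase you describe. One small slip in your unwinding of the definitions: the middle term of the $\hh_X$ chain is $\spb{\de}(\shf\detens\RDD\shf)$ with $\RDD=\RDO=\rhom[\OO](\scbul,\OO)$, not $\RDDO$; the two chains therefore differ both in the $\spb{\de}/\oim{\de}$ versus $\epb{\de}/\eim{\de}$ flavor and in the choice of dual, and it is exactly the relation $\RDDO\shf\simeq\RDD\shf\lltens\oo$, which you note correctly later, that the tensoring-with-$\epb{\de}(\OO\etens\oo)$ column of the paper's diagram implements.
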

\begin{proof}
The proof follows from the commutativity of the diagram below in which
we use the natural morphism
$\OO\to \epb{\de[X]}(\OO[X]\etens\oo[X])$
\eqn
\xymatrix{
\OO[X]\ar[r]\ar `d[ddddr][ddddr]
&\spb{\de[X]}(\shf\etens\RDD\shf)\ar[r]\ar[d]
 &\spb{\de[X]}\oim{\de[X]}\OO[X]\ar[d]^-\bwr\\
&\epb{\de[X]}(\OO[X]\etens\oo[X])\tens[]
                                  \spb{\de[X]}(\shf\etens\RDD\shf)\ar[r]\ar[d]
 &\epb{\de[X]}(\OO[X]\etens\oo[X])\tens[]\spb{\de[X]}\oim{\de[X]}\OO[X]
                                                                  \ar[d]^-\bwr\\
&\epb{\de[X]}((\OO[X]\etens\oo[X])\tens[](\shf\etens\RDD\shf)\ar[r]\ar[dd])
 &\epb{\de[X]}((\OO[X]\etens\oo[X])\tens[]\oim{\de[X]}\OO[X])\ar[d]^-\bwr\\
&&\epb{\de[X]}\oim{\de[X]}
\bl\epb{\de[X]}(\OO[X]\etens\oo[X])\tens[]\OO[X]\br
                   \ar[d]^-\bwr\\
&\epb{\de[X]}(\shf\etens\RD\shf)\ar[r]
 &\epb{\de[X]}\eim{\de[X]}\oo[X].
}\eneqn
\end{proof}

For a morphism $f\cl X\to Y$  of complex manifolds, we
denote by $\sect_{\rm f-pr}(X;\scbul)$ the functor of global sections
with $f$-proper supports.

\begin{proposition}\label{pro:HHOeim}
Let $f\cl X\to Y$ be a morphism of complex manifolds.
\bnum
\item
There is a canonical morphism
\eq
&&\reim{f}\epb{\de}\eim{\de}\omega_X\to\epb{\de[Y]}\eim{\de[Y]}\omega_Y.
\label{eq:HHOeim}
\eneq
\item
This morphism  together with the morphism $\OO[Y]\to\roim{f}\OO[X]$ induces a morphism
\eq\label{eq:hheimf}
&&\eim{f}\cl H^0(\rsect_{\rm f-pr}(X;\epb{\de}\eim{\de}\oo))\to 
H^0(\rsect(Y;\epb{\de[Y]}\eim{\de[Y]}\omega_Y))
\eneq
and for $\shf\in\RD^\Rb_\coh(\OO[X])$ such that $f$ is proper on $\Supp(\shf)$, we have
\eq\label{eq:HHeim}
&&\thh_Y(\reim{f}\shf)=\eim{f}\,\thh_X(\shf).
\eneq
\enum
\end{proposition}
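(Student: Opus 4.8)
The plan is to mimic, on the level of co-Hochschild classes, exactly the argument already used for the Hochschild class in Proposition~\ref{pro:HHOspb}, but now dualizing everything (replacing $\spb{f}$ by $\reim{f}$, $\oim{\de}$ by $\eim{\de}$, and $\OO$ by $\oo$). First I would construct the morphism in \eqref{eq:HHOeim}. Consider again the Cartesian-type diagram \eqref{diag:fXY} relating $\de[X]$, $\de[Y]$ and the map $\tw f\cl X\times X\to Y\times Y$. The key base-change isomorphism is $\opb{\tw f}\,\eim{\de[Y]}\simeq\eim{\de[X]}\,\opb{f}$ (proper base change for the closed embeddings of the diagonals; equivalently $\epb{\tw f}\,\eim{\de[Y]}\simeq\eim{\de[X]}\,\epb{f}$), and one has the relative Poincaré/duality identity $\epb{\tw f}\,\omega_Y\simeq\omega_X$ along $\tw f$ restricted to a neighbourhood of the diagonal, more precisely $\epb{f}\oo[Y]\simeq\oo[X]\ldetens(\text{relative dualizing factor})$ which on the diagonal produces $\epb{\de[X]}\eim{\de[X]}(\epb{f}\oo[Y])\to\epb{\de[X]}\eim{\de[X]}\oo[X]$. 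Chaining
\[
\reim{f}\epb{\de[X]}\eim{\de[X]}\oo[X]
\simeq\reim{f}\epb{\de[X]}\epb{\tw f}\eim{\de[Y]}\oo[Y]
\]
and then using the adjunction $\reim{f}\,\epb{f}\to\id$ (together with $\roim{\tw f}\,\eim{\de[X]}\simeq\eim{\de[Y]}\,\roim{f}$, which gives by adjunction a morphism $\reim{\tw f}\,\epb{\tw f}\eim{\de[Y]}\to\eim{\de[Y]}$) yields the desired arrow to $\epb{\de[Y]}\eim{\de[Y]}\oo[Y]$. This is the dual statement to part (i) of Proposition~\ref{pro:HHOspb} and the construction is the mirror image of the one given there for $\spb{\tw f}\,\oim{\de[Y]}\to\oim{\de[X]}\,\spb{f}$.

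Next, for part (ii), I would compose \eqref{eq:HHOeim} with the trace/unit morphism $\OO[Y]\to\roim{f}\OO[X]$; applying $H^0\rsect(Y;\scbul)$ (and noting $\rsect(Y;\reim{f}\scbul)\simeq\rsect_{\mathrm{f\text{-}pr}}(X;\scbul)$) produces the map $\eim{f}$ of \eqref{eq:hheimf}. The functoriality statement \eqref{eq:HHeim} is then proved, exactly as in Proposition~\ref{pro:HHOspb}(ii), by writing down the large diagram expressing $\thh_X(\shf)$ and $\thh_Y(\reim{f}\shf)$ through the two morphisms $\shf\ldetens\RDDO\shf\to\eim{\de[X]}\oo[X]$ and its pushforward, and checking that every square commutes. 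The squares are: the base-change square for $\eim{\de}$ versus $\tw f$; the square expressing compatibility of $\RDDO$ with $\reim{f}$ (i.e. the Grothendieck duality isomorphism $\reim{f}\RDDO\shf\simeq\RDDO\reim{f}\shf$, valid since $f$ is proper on $\Supp(\shf)$); and the square coming from the definition of the evaluation $\shf\ldetens\RDDO\shf\to\eim{\de}\oo$ by adjunction. Granting these, the image of $\thh_X(\shf)\in\Hom[{\OO[X]}](\OO[X],\epb{\de[X]}\eim{\de[X]}\oo[X])$ under $\eim{f}$ is by construction $\thh_Y(\reim{f}\shf)$.

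Alternatively — and this is probably the cleanest route to avoid re-drawing a diagram — I would deduce \eqref{eq:HHeim} from the already-established Corollary~\ref{co:chernO}(ii), namely $\hh_Y(\reim{f}\shf)=\hh_X(\shf)\circ\hh_{X\times Y}(\OO[\Gamma_f])$, by transporting it through the Todd-twist isomorphism $\htd$ of \eqref{eq:td2} and Proposition~\ref{pro:td1}, $\thh_X=\htd\circ\hh_X$; one only needs that $\eim{f}$ as defined via \eqref{eq:HHOeim} corresponds under $\htd$ to the operation $\circ\,\hh_{X\times Y}(\OO[\Gamma_f])$, which is a compatibility between the duality morphism \eqref{eq:HHOeim} and the integration morphism \eqref{mor:residue} (the de Rham / residue morphism). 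I expect the main obstacle to be exactly this last bookkeeping point: pinning down that the canonically constructed arrow \eqref{eq:HHOeim} is the Todd-conjugate of the kernel-composition map, i.e. that the relative-dualizing-factor morphism used in building \eqref{eq:HHOeim} is literally the one appearing in \eqref{mor:residue} and Theorem~\ref{th:Serre}. Once the morphism \eqref{eq:HHOeim} is correctly identified, \eqref{eq:HHeim} follows formally; constructing that morphism with the right normalization is the crux, and it is where I would spend most of the effort.
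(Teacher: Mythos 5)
Your main route to \eqref{eq:HHOeim} and the ensuing diagram chase for \eqref{eq:HHeim} match the paper's argument, which mirrors Proposition~\ref{pro:HHOspb}: the paper constructs \eqref{eq:HHOeim} as
\[
\reim{f}\epb{\de[X]}\eim{\de[X]}\oo[X]\to\epb{\de[Y]}\reim{\tw f}\eim{\de[X]}\oo[X]
\simeq\epb{\de[Y]}\eim{\de[Y]}\reim{f}\oo[X]\to\epb{\de[Y]}\eim{\de[Y]}\oo[Y],
\]
the first arrow coming by adjunction from $\epb{\de[X]}\to\epb{\de[X]}\epb{\tw f}\reim{\tw f}\simeq\epb{f}\epb{\de[Y]}\reim{\tw f}$, and then proves \eqref{eq:HHeim} by verifying the analogue of the commutative diagram of Proposition~\ref{pro:HHOspb}~(ii). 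There is, however, a concretely false ingredient in your construction: the square \eqref{diag:fXY} is not Cartesian (its fibre product is $X\times_Y X$, not $X$), so proper base change gives neither $\opb{\tw f}\eim{\de[Y]}\simeq\eim{\de[X]}\opb{f}$ nor $\epb{\tw f}\eim{\de[Y]}\simeq\eim{\de[X]}\epb{f}$; the two sides already have different supports ($X\times_YX$ versus $\Delta_X$) unless $f$ is a monomorphism. Consequently your chain $\reim{f}\epb{\de[X]}\eim{\de[X]}\oo[X]\simeq\reim{f}\epb{\de[X]}\epb{\tw f}\eim{\de[Y]}\oo[Y]$ is only a morphism, not an isomorphism. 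The direction is correct — it is the unit $\eim{\de[X]}\to\epb{\tw f}\reim{\tw f}\eim{\de[X]}\simeq\epb{\tw f}\eim{\de[Y]}\reim{f}$ composed with the trace $\reim{f}\oo[X]\to\oo[Y]$ — so the argument can be repaired without changing its overall shape, but you should not invoke a base-change isomorphism that does not exist.

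Your alternative route (transport Corollary~\ref{co:chernO}~(ii) through $\htd$ via Proposition~\ref{pro:td1}) is a genuinely different idea, and you correctly identify where it breaks: it needs the identification of the map $\eim{f}$ in \eqref{eq:hheimf} with $\scbul\circ\hh_{X\times Y}(\OO[\Gamma_f])$. The paper records exactly this identification in the Remark immediately following the proposition but explicitly declines to prove it, so the alternative route cannot replace the diagram chase here without first filling that gap.
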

\begin{proof}
(i) Consider the diagram \eqref{diag:fXY}.
Then we have  morphisms
\eqn
&&\reim{f}\epb{\de[X]}\eim{\de[X]}\oo
\to 
\epb{\de[Y]}\reim{\tw f}\eim{\de[X]}\oo
\simeq 
\epb{\de[Y]}\eim{\de[Y]}\reim{f}\oo
\to\epb{\de[Y]}\eim{\de[Y]}\oo[Y].
\eneqn
Here, the first morphism is deduced by adjunction from 
\eqn
&&\epb{\de[X]}\to\epb{\de[X]}\epb{\tw f}\reim{\tw f}
\simeq \epb{f}\epb{\de[Y]}\reim{\tw f}.
\eneqn

\noindent
(ii) The proof is similar to that of  Proposition~\ref{pro:HHOspb}
and follows from the commutativity of the diagram below in which we
write for short $\eim{f}$ and $\oim{f}$ instead of $\reim{f}$ and
$\roim{f}$ and similarly with $\tw f$.
\eqn
\xymatrix{
\oim{f}\OO[X]\ar[r]
         &\oim{f}\epb{\de[X]}(\shf\etens\RD\shf)\ar[d]^-\sim\ar[r]
                               &\eim{f}\epb{\de[X]}\eim{\de[X]}\oo[X]\ar[d]\\
         &\epb{\de[Y]}\eim{\tw f}(\shf\etens\RDD\shf)\ar[d]^-\sim\ar[r]
                        &\epb{\de[Y]} \eim{\tw f}\eim{\de[X]}\oo[X]\ar[d]^-\sim\\
         &\epb{\de[Y]}(\eim{f}\shf\etens\eim{f}\RD\shf)\ar[d]^-\sim
                             &\epb{\de[Y]}\eim{\de[Y]}\eim{f}\oo[X]\ar[d]\\
\OO[Y]\ar[r]\ar[uuu]&\epb{\de[Y]}(\eim{f}\shf\etens\RD\eim{f}\shf)\ar[r]
                            &\epb{\de[Y]}\eim{\de[Y]}\oo[Y].
}\eneqn
Therefore, the image of 
$\thh_X(\shf)\in\Hom[{\OO[X]}](\OO[X], \epb{\de[X]}\eim{\de[X]}\oo[X])$ 
by the maps
\eqn
\sect_{\rm f-pr}(X;\hom[{\OO[X]}](\OO[X], \epb{\de[X]}\eim{\de[X]}\oo[X]))&\to&
\Hom[{\OO[X]}](\roim{f}\OO[X], \reim{f}\epb{\de[X]}\eim{\de[X]}\oo[X])\\
&\to&
\Hom[{\OO[Y]}](\OO[Y], \epb{\de[Y]}\eim{\de[Y]}\oo[Y])
\eneqn
is $\thh_Y(\eim{f}\shf)$.
\end{proof}

\begin{remark}
Although we omit the proof, the map in \eqref{eq:hheimf}
coincides with $\circ\,\;\hh_{X\times Y}(\OO[\Gamma_f])$.
\end{remark}

\section{Chern and Euler classes of $\sho$-modules}

The  Hodge cohomology of $\OO$ is given by:
\glossary{Hodge cohomology}\index{Hodge@$\HOD[X]$}%
\eq\label{def:HOD}
&&\HOD[X]\eqdot\bigoplus_{i= 0}^{d_X}\Omega_X^i\,[i],
\text{ an object of }\RD^\Rb(\OO).
\eneq
\begin{lemma}\label{le:HHO1}
Let $f\cl X\to Y$ be a morphism of complex manifolds.
There are canonical morphisms
\eq
\detens&\cl&\HOD[X]\etens\HOD[Y]\to\HOD[X\times Y],\label{eq:etensHOD}\\
\spb{f}&\cl&\spb{f}\HOD[Y]\to \HOD[X],\label{eq:spbHOD}\\
\eim{f}&\cl&\reim{f}\HOD[X]\to \HOD[Y].\label{eq:eimHOD}
\eneq
\end{lemma}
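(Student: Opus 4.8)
The statement to prove is Lemma~\ref{le:HHO1}, which asserts the existence of three canonical morphisms for the Hodge cohomology object $\HOD[X]=\bigoplus_{i=0}^{d_X}\Omega_X^i[i]$: an external product $\detens$, an inverse image $\spb{f}$, and a direct image $\eim{f}$ (integration). The plan is to construct each morphism directly from standard structures on the sheaves of holomorphic forms, in the order $\detens$, $\spb{f}$, $\eim{f}$, since the last uses the first two conceptually (and in any case the compatibilities among them will be used later).

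First I would treat the external product \eqref{eq:etensHOD}. For complex manifolds $X$ and $Y$ one has the classical wedge/K\"unneth map $\opb{p_1}\Omega_X^i\tens[{\OO[X\times Y]}]\opb{p_2}\Omega_Y^j\to\Omega_{X\times Y}^{i+j}$ coming from the decomposition $\Omega_{X\times Y}^n\simeq\bigoplus_{i+j=n}\opb{p_1}\Omega_X^i\tens\opb{p_2}\Omega_Y^j$ (an isomorphism, since $T^*(X\times Y)=\opb{p_1}T^*X\oplus\opb{p_2}T^*Y$). Summing over $i,j$ and remembering the shifts $[i]$ and $[j]$ compose to $[i+j]$, this gives $\HOD[X]\etens\HOD[Y]\to\HOD[X\times Y]$; in fact it is an isomorphism, but only the morphism is claimed. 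Next, for \eqref{eq:spbHOD}, I would use the canonical $\OO[X]$-linear morphism $\spb{f}\Omega_Y^i=\OO[X]\tens[{\opb{f}\OO[Y]}]\opb{f}\Omega_Y^i\to\Omega_X^i$ induced by the cotangent map $f^*\colon\opb{f}\Omega_Y^1\to\Omega_X^1$ (pullback of forms), extended to higher exterior powers; this is a morphism in $\RD^\Rb(\OO[X])$ after noting $\spb{f}$ is the derived inverse image but $\Omega_Y^i$ is locally free so $\spb{f}\Omega_Y^i$ is concentrated in degree zero. Summing over $i$ with the shifts gives the desired map.

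The main work is \eqref{eq:eimHOD}, the integration morphism $\reim{f}\HOD[X]\to\HOD[Y]$, which should be constructed so as to be dual to $\spb{f}$ under Serre duality. I would proceed by factoring $f$ through its graph $X\hookrightarrow X\times Y\to Y$: for a closed embedding one uses the Gysin morphism on holomorphic forms (coming from the fundamental class, or from the transfer $\oim{i}\epb{i}$), and for the projection $p\colon X\times Y\to Y$ one uses fiber integration $\reim{p}\Omega_{X\times Y}^{i}[i]\to\Omega_Y^{i-d_X}[i-d_X][?]$, more precisely the trace map $\RR p_*\bigl(\Omega_{X\times Y}^\scbul\bigr)\to\Omega_Y^{\scbul-d_X}[-2d_X]$ furnished by the relative dualizing complex $\oo[X\times Y/Y]$ and the de Rham quasi-isomorphism already invoked in the proof of Theorem~\ref{th:Serre} (the morphism $\oo[X\times Y/X]\to\opb{f}\OO[X][2d_Y]$ there is exactly of this type). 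Concretely: $\Omega_{X\times Y}^n\simeq\bigoplus_{p+q=n}\opb{p_1}\Omega_X^p\tens\opb{p_2}\Omega_Y^q$, and the component with $p=d_X$ admits, via the relative de Rham/trace map $\RR (p_2)_!\,\Omega_X^{d_X}[2d_X]\to\C$ (Poincar\'e duality on $X$, $X$ compact-free handled by the $!$ direct image), an integration $\reim{p_2}\bigl(\opb{p_1}\Omega_X^{d_X}\tens\opb{p_2}\Omega_Y^q[d_X+q]\bigr)\to\Omega_Y^q[q]$; the other components map to zero. I expect the bookkeeping of shifts and the verification that this is well-defined and canonical to be the principal obstacle—one must be careful that the trace/residue map lands in the right degree and that compatibility with the graph factorization holds—but since the lemma only asks for the existence of these canonical morphisms (their functoriality and compatibility being used later, presumably in the subsequent propositions), I would keep the construction at the level of these standard maps and not verify further identities here. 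The details, being routine adaptations of Serre duality and de Rham theory as in \cite{Hu} and in the proof of Theorem~\ref{th:Serre}, are left to the reader.
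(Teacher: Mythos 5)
Your proposal follows the same componentwise strategy as the paper's proof, which simply lists the three constituent morphisms $\Omega_X^i[i]\etens\Omega_Y^j[j]\to\Omega_{X\times Y}^{i+j}[i+j]$, $\spb{f}\Omega_Y^i[i]\to\Omega_X^i[i]$, and $\reim{f}\Omega_X^{i+d_X}[i+d_X]\to\Omega_Y^{i+d_Y}[i+d_Y]$ without further elaboration; your extra detail on the third one (factoring through the graph, then constructing the trace for the projection via the K\"unneth decomposition and integration of $\Omega_X^{d_X}$) is a reasonable way to fill in what the paper treats as standard. One small slip: the integration map for the projection $p_2\cl X\times Y\to Y$ should read $\reim{p_2}\opb{p_1}\Omega_X^{d_X}[d_X]\to\C_Y$ rather than with shift $[2d_X]$ (the latter would be the Poincar\'e duality map $\reim{p_2}\C_{X\times Y}[2d_X]\to\C_Y$, which differs from the former by the de Rham quasi-isomorphism $\C_X\simeq\Omega_X^\scbul$), though your final target $\Omega_Y^q[q]$ is nonetheless computed correctly.
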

\begin{proof}
The morphisms~\eqref{eq:etensHOD},~\eqref{eq:spbHOD} and~\eqref{eq:eimHOD}
are respectively associated with the morphisms
\eqn
&&\Omega_X^i\,[i]\etens\Omega_Y^j\,[j]\To\Omega_{X\times Y}^{i+j}\,[i+j],\\
&&\spb{f}\Omega_Y^i\,[i]\To\Omega_X^i\,[i],\\
&&\reim{f}\Omega_X^{i+d_X}\,[i+d_X]\To \Omega_Y^{i+d_Y}\,[i+d_Y].
\eneqn
\end{proof}

\begin{theorem}\label{th:RR1}
\banum
\item
There is an isomorphism 
\eqn
&&\alpha_X\cl\spb{\de}\oim{\de}\OO\isoto \HOD[X]
\eneqn
which commutes with the functors  $\etens$ and  $\spb{f}$. 
\item
There is an isomorphism 
\eqn
&&\beta_X\cl\HOD[X]\isoto \epb{\de}\eim{\de}\omega_X
\eneqn
which commutes with the functors  $\etens$ and  $\eim{f}$. 
\eanum
\end{theorem}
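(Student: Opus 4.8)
The statement is the Hochschild--Kostant--Rosenberg--type isomorphism for the commutative case, identifying the Hochschild homology object $\spb{\de}\oim{\de}\OO$ with the Hodge cohomology object $\HOD[X]=\bigoplus_{i=0}^{d_X}\Omega_X^i[i]$, together with the compatibility of this identification with external products and with inverse (resp.\ direct) images. The plan is to build $\alpha_X$ locally from the classical HKR quasi-isomorphism and then to show it glues, is independent of coordinates, and enjoys the stated functorialities; the statement for $\beta_X$ then follows from $\alpha_{X^a}$ together with the isomorphism $\htd$ of \eqref{eq:td1}, since $\epb{\de}\eim{\de}\omega_X\simeq\spb{\de}\oim{\de}\OO$ via $\htd$ and $\HOD[X]$ is self-dual up to the top form in a way compatible with the three operations.

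\textbf{Step (a): local model.} First I would compute $\spb{\de}\oim{\de}\OO$ explicitly. Since $\oim{\de}\OO\simeq\dO$, we have $\spb{\de}\oim{\de}\OO\simeq\dO\lltens[\OO[X\times X]]\dO$, which is the derived self-intersection of the diagonal. Locally, choosing coordinates $x=(x_1,\dots,x_{d_X})$ on $X$ and $(x',x'')$ on $X\times X$, the $\OO[X\times X]$-module $\dO$ is resolved by the Koszul complex on the regular sequence $(x'_1-x''_1,\dots,x'_{d_X}-x''_{d_X})$; restricting to the diagonal yields the Koszul complex with zero differential, whose cohomology in degree $-i$ is $\bigwedge^i(\OO[X]^{d_X})\simeq\Omega_X^{-i}\otimes\omega_X^{\otimes-1}$ — more precisely, reindexing, one gets $\bigoplus_i\Omega_X^i[i]$ after the canonical identification of $\bigwedge^i N_{\Delta_X/X\times X}$ with $\Omega_X^i$. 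This gives the local isomorphism $\alpha_X$; the HKR antisymmetrization map provides a canonical choice.

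\textbf{Step (b): globalization and functoriality.} The local isomorphism must be shown to be independent of the coordinate choice and hence to glue. The cleanest route is to characterize $\alpha_X$ intrinsically: the degree-$0$ part is the canonical map $\OO[X]\to H^0(\spb{\de}\oim{\de}\OO)$ (i.e.\ $\hh_X(\OO[X])$), and the whole map is the unique ring homomorphism (for the ring structure of Proposition~\ref{pro:HHOring}) extending it and sending the degree-$1$ summand $\Omega_X^1[1]$ to the natural class coming from the conormal module $I_{\Delta}/I_{\Delta}^2\simeq\Omega_X^1$. Once $\alpha_X$ is pinned down intrinsically, compatibility with $\etens$ follows from the multiplicativity of the HKR map and the identity $\Omega_X^i\etens\Omega_Y^j\to\Omega_{X\times Y}^{i+j}$, and compatibility with $\spb{f}$ is checked using the commutative square \eqref{diag:fXY} relating $\de[X]$, $\de[Y]$ and $\widetilde f=f\times f$, plus the base-change morphism $\spb{\widetilde f}\oim{\de[Y]}\to\oim{\de[X]}\spb{f}$ already used in Proposition~\ref{pro:HHOspb}. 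For (b), i.e.\ $\beta_X$, I would define $\beta_X=\htd\circ\alpha_X^{-1}$ composed with the canonical self-duality $\HOD[X]\simeq\RDDO(\HOD[X])\otimes$-type identification, or more directly transport $\alpha_{X^a}$ through the identification $\oim{\de}\OO\simeq\eim{\de}\omega_X\otimes\omega_X^{\otimes-1}$; compatibility with $\eim{f}$ then follows from that of $\alpha$ with $\spb{f}$ by Serre duality (Theorem~\ref{th:Serre}) and the morphism \eqref{eq:HHOeim}.

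\textbf{Main obstacle.} The genuinely delicate point is not the pointwise HKR computation but verifying that the chosen isomorphism is the \emph{canonical} one and that all the compatibility diagrams (with $\etens$, $\spb{f}$, $\eim{f}$, and the ring structure) commute \emph{on the nose} in the derived category rather than merely up to a scalar or an automorphism. This is exactly the kind of coherence issue that makes the literature on HKR-type statements subtle; here it is manageable because one can work with the explicit Koszul resolutions, which are themselves functorial in $X$ for the relevant operations, so the diagrams reduce to identities of Koszul complexes. I expect the proof to be a careful but essentially formal diagram chase once the local model and its naturality are set up; since the authors state that the results of this section are well known, it is reasonable to present the local computation in detail and indicate the functoriality checks, leaving the routine verifications to the reader.
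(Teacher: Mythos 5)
Your plan for part (a) captures the right local picture — the Koszul resolution of the diagonal, leading to $\bigoplus_i\Omega^i_X[i]$ — and the paper does reduce the verification to exactly that local computation. But the paper's mechanism for globalizing is different and worth noting: rather than pinning $\alpha_X$ down by an intrinsic ring-theoretic characterization (whose uniqueness you assert but do not prove), the paper builds a \emph{global}, coordinate-free complex $P_\scbul$ of $\OO[X\times X]$-modules with $P_k=\oim{\de}\Omega^k_X\oplus\oim{\de}\Omega^{k+1}_X$, endowed with a specific $\opb{p_2}\OO$-action ($\spb{p_2}(a)(\omega\oplus\theta)=a\omega\oplus(a\theta-da\wedge\omega)$), quasi-isomorphic to $\oim{\de}\OO$. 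The map $\alpha_X$ is then the concrete comparison $\spb{\de}\oim{\de}\OO\to H^0(\spb{\de})(P_\scbul)$, and there is no gluing step at all; the local Koszul complex is only used to verify that this global map is a quasi-isomorphism. Your ``intrinsic characterization'' route could in principle work, but it would demand a genuine rigidity argument in the derived category (why should a graded-ring map from $\spb{\de}\oim{\de}\OO$ be determined by its behavior in degrees $0$ and $1$ at the level of derived objects?), and you do not supply one.

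For part (b) there is a genuine gap, and it is not a matter of detail. You propose to obtain $\beta_X$ by combining $\htd$ (the isomorphism $\spb{\de}\oim{\de}\OO\isoto\epb{\de}\eim{\de}\omega_X$ of \eqref{eq:td1}) with $\alpha_X^{-1}$ and a ``canonical self-duality of $\HOD[X]$ compatible with the three operations.'' But look at Diagram~\ref{diag:RR1}: the composite $\tau\eqdot\opb{\beta_X}\circ\htd\circ\opb{\alpha_X}$ is an \emph{automorphism} of $\HOD[X]$, and it is precisely the Todd-class operator of Definition~\ref{def:cherneuOmd} and the conjecture/theorem of Ramadoss--Grivaux (note $\eu(\OO)=\tau(1)=\td_X(TX)$). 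If you were to take $\beta_X=\htd\circ\opb{\alpha_X}$, i.e.\ $\tau=\id$, the resulting $\beta_X$ would \emph{not} commute with $\eim{f}$: that failure is exactly what Grothendieck--Riemann--Roch measures. So your ``canonical self-duality compatible with all three operations'' is the entire nontrivial content of part (b) — it cannot be assumed, and constructing it is just as hard as constructing $\beta_X$ directly. The paper instead builds $\beta_X$ from a second explicit global complex $Q_\scbul$, which is \emph{not} obtained from $P_\scbul$ by an abstract duality: the terms are shifted ($Q_k=P_{k-1}$ for $1\le k\le d_X$, $Q_0=\oim{\de}\OO$), and crucially the differentials carry the scalars $(k-1-d_X)\,d^{\stan}_{k-1}$, whereas $P_\scbul$ uses $k\,d^{\stan}_k$. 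These differing normalizations are exactly what encode the Todd-class discrepancy; omitting them — as your duality route would — produces the wrong map. To fix your approach you would need to either construct $\tau$ independently (circular here) or construct $\beta_X$ from an explicit resolution as the paper does.
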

Setting $\tau\eqdot\opb{\beta_X}\circ\htd\circ\opb{\alpha_X}$, 
we get a commutative diagram in $\RD^\Rb(\OO)$:
\eq\label{diag:RR1}
&&\xymatrix{
\spb{\de}\oim{\de}\OO\ar[d]^-\sim_-{\alpha_X}\ar[rr]^-\sim_{\htd}
                                         &&\epb{\de}\eim{\de}\omega_X\\
\HOD[X]\ar[rr]^-\sim_{\tau}&&\HOD[X]\ar[u]^-\sim_-{\beta_X}.
}\eneq
The construction of $\alpha_X$ and $\beta_X$
and the proof are given in the next section.

\begin{definition}\label{def:cherneuOmd}
For $\shf\in\RD^\Rb_\coh(\OO[X])$, we set
\eq
&&\ch(\shf)=\alpha_X\circ\hh_X(\shf)
\in\bigoplus_{i= 0}^{d_X}H^i_{\Supp(\shf)}(X;\Omega_X^i),
\label{def:chernOmd}\\
&&\eu(\shf)=\beta_X^{-1}\circ\thh_X(\shf)
\in\bigoplus_{i= 0}^{d_X}H^i_{\Supp(\shf)}(X;\Omega_X^i).\label{def:euOmd}
\eneq
We call $\ch(\shf)$ the Chern class 
\glossary{Chern class}%
\index{Chern@$\ch(\shf)$}%
of $\shf$ and $\eu(\shf)$ the
Euler class of $\shf$.
\glossary{Euler class!of $\sho$-modules}%
\index{Euler@$\eu(\shf)$}%
\end{definition}
Of course, $\ch(\shf)$ coincides with the classical Chern character
and the morphism $\alpha_X$ is the so-called
Hochschild-Kostant-Rosenberg map.
\glossary{Hochschild-Kostant-Rosenberg map}%

The following conjecture was stated in~\cite{Ka91}. 
\begin{conjecture}
One has $\eu(\OO)=\td_X(TX)$, where $\td_X(TX)$ is the Todd
\index{Todd@$\td_X$}%
class of the tangent bundle $TX$. \glossary{Todd class}%
\end{conjecture}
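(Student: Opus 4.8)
The claim is that $\eu_X(\sho_X) = \td_X(TX)$, i.e.\ that the Euler class of the structure sheaf, defined via the co-Hochschild class $\thh_X(\sho_X)$ transported through the isomorphism $\beta_X\colon \HOD[X]\isoto\epb{\de}\eim{\de}\omega_X$, equals the Todd class of the tangent bundle. The natural strategy is to reduce everything to a computation on the diagonal $\Delta_X\subset X\times X$ and to identify the relevant Koszul/de Rham complexes with exterior powers of the conormal bundle, which is isomorphic to $\Omega^1_X$ along $\Delta_X$. The key inputs are Theorem~\ref{th:RR1} (existence of $\alpha_X$ and $\beta_X$, compatible with $\etens$, $\spb f$, $\eim f$), Proposition~\ref{pro:td1} (relating $\thh_X$ and $\hh_X$ through $\htd$), and the multiplicativity of the Chern/Euler classes under $\lltens[\sho_X]$ established above.

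\textbf{First steps.} First I would unwind the definition: by Definition~\ref{def:euOmd}, $\eu_X(\sho_X)=\beta_X^{-1}\circ\thh_X(\sho_X)$, and $\thh_X(\sho_X)$ is the image of $\id_{\sho_X}$ under $\sho_X\to\rhom[{\sho_X}](\sho_X,\sho_X)\simeq\epb{\de}(\sho_X\detens\RDDO\sho_X)\to\epb{\de}\eim{\de}\omega_X$, which is just the canonical section coming from the adjunction unit for $\de$. Concretely, $\thh_X(\sho_X)$ is the class of the natural morphism $\sho_X\to\epb{\de}\eim{\de}\omega_X$ obtained from $\sho_X\isofrom\spb{\de}\sho_{X\times X}\to\spb{\de}\oim{\de}\sho_X$ composed with $\htd$. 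So by Proposition~\ref{pro:td1}, $\thh_X(\sho_X)=\htd\circ\hh_X(\sho_X)$, hence $\eu_X(\sho_X)=\tau\circ\ch(\sho_X)=\tau\circ\alpha_X\circ\hh_X(\sho_X)$ where $\tau=\beta_X^{-1}\circ\htd\circ\alpha_X^{-1}$ is the automorphism of $\HOD[X]$ appearing in Diagram~\eqref{diag:RR1}. Since $\ch(\sho_X)=1$ (the degree-zero component is the identity, higher components vanish because $\hh_X(\sho_X)$ comes from the unit section), the whole problem collapses to computing the automorphism $\tau$ of $\HOD[X]=\bigoplus_i\Omega^i_X[i]$ and showing that it is multiplication by $\td_X(TX)$ in $\bigoplus_i H^i(X;\Omega^i_X)$.

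\textbf{The core computation.} The automorphism $\tau$ measures the discrepancy between the two ways of identifying $\spb{\de}\oim{\de}\sho_X$ with $\epb{\de}\eim{\de}\omega_X$: one via $\alpha_X$ then $\htd$ then $\beta_X^{-1}$, which amounts to the difference between ``multiplying by $\omega_X$ on the left inside $\spb\de$'' versus ``using the de Rham quasi-isomorphism to move $\omega_X$ past $\oim\de\sho_X$''. This is precisely where the Todd class enters: the relative dualizing complex $\omega_{X\times X/X}$, restricted to a formal neighborhood of the diagonal and compared with the naive Koszul resolution of $\sho_{\Delta_X}$, differs by the Todd class of the normal bundle $N_{\Delta_X/X\times X}\simeq TX$. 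I would make this precise by choosing local coordinates and an explicit Koszul resolution of $\sho_{\Delta_X}$ by $\bigwedge^\bullet(\opb{p_1}\Omega^1_X)$, computing $\spb{\de}$ and $\epb{\de}$ of the resulting complexes, and tracking how the HKR map $\alpha_X$ (whose local form is the antisymmetrization/contraction map) interacts with the de Rham differential used to build $\htd$. The appearance of $\td$ then follows from the classical computation of the difference between the trivialization of $\det N$ coming from the Koszul complex and the one coming from $\omega_{X\times X/X}$; this is exactly the content of the Hochschild--Kostant--Rosenberg theorem with its Todd-twist, as in Markarian/Ramadoss/C\u ald\u araru.

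\textbf{Main obstacle.} The hard part is the explicit identification of $\tau$ with multiplication by $\td_X(TX)$. Everything else is formal bookkeeping with adjunctions and the already-established functoriality of $\alpha_X,\beta_X$; but pinning down that the precise normalization of the two isomorphisms produces $\td$ rather than some other invertible power series in the Chern classes requires a careful local computation (or an appeal to the known results of Ramadoss~\cite{Ra1} in the algebraic case and Grivaux~\cite{Gri} in the analytic case, which the introduction already flags as the proofs of this conjecture). If one does not wish to redo that computation, the honest path is to observe that $\tau$ is a universal cohomology operation — a fixed invertible element of $\prod_i H^{2i}$ built functorially from $TX$ — and then to cite \cite{Gri} (or \cite{Ra1} in the algebraic setting) for its value. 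Thus the proof reduces to: (1) unwind definitions to get $\eu_X(\sho_X)=\tau\cdot 1$; (2) show $\tau$ is universal and multiplicative; (3) invoke the HKR-with-Todd theorem to conclude $\tau=\td_X(TX)$.
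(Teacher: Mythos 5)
The statement you are asked to prove is stated in the paper \emph{as a conjecture}, not as a theorem, and the paper does not supply a proof: immediately after the statement, the authors observe only that the conjecture implies $\eu(\shf)=\ch(\shf)\cup\td_X(TX)$ (using Proposition~\ref{pro:HHOring}~(d) and Lemma~\ref{lem:commod} to establish the module structure), and then cite Ramadoss~\cite{Ra1} (algebraic case) and Grivaux~\cite{Gri} (analytic case) for the actual proof. Your proposal does essentially the same thing: you correctly unwind $\eu_X(\sho_X)=\tau(1)$ with $\tau=\beta_X^{-1}\circ\htd\circ\alpha_X^{-1}$ as in Diagram~\eqref{diag:RR1}, correctly observe $\ch(\sho_X)=1$, correctly flag that the hard content is the identification of $\tau$ (equivalently the twist of the HKR isomorphism), and then honestly conclude that one must either redo the local Todd computation or cite~\cite{Ra1,Gri}. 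This is the same posture the paper itself takes. Your reduction steps (1) and (2) match the paper's own remarks after the conjecture; step (3) is, as you say, exactly the content of the cited references. So the proposal is faithful to the paper's treatment, with the understanding that neither the paper nor your proposal proves the conjecture from scratch.
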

This conjecture implies that $\eu(\shf)=\ch(\shf)\cup\td_X(TX)$.
Indeed, 
for $a,b\in H^*(X;\spb{\de}\oim{\de}\OO)$, we have
$\td(a\circ b)=a\circ\td(b)$
by Proposition~\ref{pro:HHOring}~\eqref{HOassoc}
and Lemma~\ref{lem:commod} below.

This conjecture has recently been proved by
 A.~Ramadoss~\cite{Ra1} in the algebraic case and by J.~Grivaux~\cite{Gri}
in the analytic case.

\subsubsection*{An index theorem}

Consider the particular case of two coherent $\OO$-modules $\shl_i$
($i=1,2$) such that $\Supp(\shl_1)\cap\Supp(\shl_2)$ is
compact. In this case we have (see \cite{Hu,Ra1}):
\eq\label{eq:ChTdRa}
\hh_\rmpt(\shl_1\conv\shl_2)&=&\chi(\rsect(X;\shl_1\lltens[\sho_X]\shl_2))\nonumber\\
&=&\int_X(\ch(\shl_1)\cup\ch(\shl_2)\cup\td_X(TX)).
\eneq
We consider the situation of Corollary~\ref{cor:indexhh1}. Hence,
$\A$ is a $\DQ$-algebroid on $X$.

\begin{corollary}\label{cor:indexAh1}
Let $\shm,\shn\in\Derb_{gd}(\Ah)$ and assume that
$K\eqdot\Supp(\shm)\cap\Supp(\shn)$ is compact. 
Let $U$ be a relatively compact open subset of $X$ containing $K$.
Then $\RHom[{\Ah}](\shm,\shn)$ belongs to $\Derb_f(\cor)$ and
its Euler-Poincar{\'e} index is given by the formula
\eqn
&&\chi(\RHom[{\Ah}](\shm,\shn))
=\int_U\ch_U((\gr^U\RDA\shm))\cup\ch_U(\gr^U(\shn))\cup\td_U(TU).
\eneqn
\end{corollary}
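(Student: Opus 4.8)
The strategy is to combine the functoriality of the Hochschild class under convolution (Corollary~\ref{cor:indexhh1}) with the explicit computation of the graded Hochschild class provided by Theorem~\ref{th:RR1} and the Todd-class conjecture of \cite{Ka91} (now a theorem of Ramadoss and Grivaux). First I would invoke Corollary~\ref{co:kernel1} (or rather its $\Ah$-version, Theorem~\ref{th:hker}, applied with $X_1=X_3=\rmpt$, $X_2=X$) to see that $\RHom[{\Ah}](\shm,\shn)$ indeed belongs to $\Derb_f(\cor)$; this uses that $K=\Supp(\shm)\cap\Supp(\shn)$ is compact and that $\shm,\shn$ are good. Then Corollary~\ref{cor:indexhh1} gives
\eqn
&&\chi(\RHom[{\Ah}](\shm,\shn))=\hhhg_{X}(\shm)\conv\hhhg_X(\shn),
\eneqn
where the right-hand side lies in $\RHHhg[K]{X}$ and, after choosing a relatively compact open $U\supset K$, is computed in $\RHHg[K]{U}$ using lattices (Definitions~\ref{def:good}, \ref{def:lattice}).

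The next step is to replace everything by graded objects: choosing $\A[U]$-lattices $\shm_0,\shn_0$ of $\shm\vert_U,\shn\vert_U$, the class $\hhhg_U(\shm)$ is by construction the image of $[\gr^U\shm_0]\in\rmK_{\coh,K}(\gr\A[U])$ under $\hh$, and similarly for $\shn$. Since $\gr\A[U]$ is (by Remark~\ref{rem:oesterle} / the analytic analogue) locally equivalent to $\OO[U]$, and since the convolution of Hochschild classes commutes with $\gr$ (Corollary~\ref{cor:HH1gr}, diagram~\eqref{diag:kernhh2}), the index is reduced to an identity for coherent $\OO[U]$-modules, namely $\chi$ of the (derived) local $\hom$ pairing equals $\hh_\rmpt$ of the convolution of the two Hochschild classes. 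Here I would use Lemma~\ref{le:homCdelta} to rewrite $\RHom[{\OO[U]}](\gr\shm_0,\gr\shn_0)$ as $\RD'_{\rmo}(\gr\shm_0)\conv[U]\gr\shn_0$, apply Theorem~\ref{th:HH1} (its $\sho$-module incarnation, Corollary~\ref{co:chernO}), and note that $\hh_\rmpt(M)=\chi(M)$ for $M\in\Derb_f$.

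Finally I would translate the Hochschild classes into Chern/Euler data. Using the Hochschild--Kostant--Rosenberg isomorphism $\alpha_U$ of Theorem~\ref{th:RR1}, $\hh_U(\gr\shn_0)$ becomes $\ch_U(\gr^U\shn_0)$, and $\hh_U(\RD'_{\rmo}\gr\shm_0)$ becomes $\ch_U(\gr^U\RDA\shm)$ (where the duality is taken compatibly — note $\hh_{X^a}(\RDA\shm)=\hh_X(\shm)$ by \eqref{eq:dualHochschild}, but here one keeps $\RDA\shm$ to match the $\hom$-pairing). The convolution $\conv[U]$ over a compact piece, followed by pushforward to a point, is exactly integration $\int_U$ against the relative dualizing data; by Proposition~\ref{pro:td1} and the diagram~\eqref{diag:RR1} this integration picks up the Todd class, and by the Ramadoss--Grivaux theorem ($\eu(\OO[U])=\td_U(TU)$, hence $\eu(\shf)=\ch(\shf)\cup\td_U(TU)$) one obtains
\eqn
&&\chi(\RHom[{\Ah}](\shm,\shn))=\int_U\ch_U(\gr^U\RDA\shm)\cup\ch_U(\gr^U\shn)\cup\td_U(TU),
\eneqn
which is the claimed formula (compare \eqref{eq:ChTdRa}). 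The main obstacle is bookkeeping: one must check that the choices of lattices on $U$ are immaterial (Lemma~\ref{lem:grgr2}), that $K\subset U$ relatively compact lets one pass from $\RHHhg$ to $\RHHg[K]{U}$ without loss, and — most delicate — that the duality functor $\RDA$ on $\Ah$-modules graded-reduces to $\RD'_{\rmo}$ on $\gr\A[U]$-modules compatibly with the pairing used in Corollary~\ref{cor:indexhh1}, which requires Proposition~\ref{pro:dualgr} together with care about the shift $\omA\simeq\dA[d_X]$ being trivial at the graded level only locally. Once the graded reduction is in place, the analytic Riemann--Roch input is a black box and the rest is formal.
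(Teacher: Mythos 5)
Your proposal follows the same route as the paper: apply Corollary~\ref{cor:indexhh1} to reduce the index to a convolution of (dual) Hochschild classes, pass to $\A[U]$-lattices $\shm_0,\shn_0$ and take $\gr$, then conclude by the Hochschild--Riemann--Roch identity \eqref{eq:ChTdRa}. The only difference is that you unpack \eqref{eq:ChTdRa} via Theorem~\ref{th:RR1}, Proposition~\ref{pro:td1} and the Ramadoss--Grivaux theorem, whereas the paper simply cites it to \cite{Hu,Ra1}; this unpacking (and the care you take with $\gr\circ\RDA$ via Proposition~\ref{pro:dualgr}) is sound and consistent with the paper's intent.
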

\begin{proof}
Applying Corollary~\ref{cor:indexhh1}, we have
\eqn
\chi(\RHom[{\Ah}](\shm,\shn))&=&\hh_{\rmpt}(\RDA\shm\conv\shn)\\
&=&\hh_{\rmpt}(\gr\RDA\shm_0\conv\gr\shn_0),
\eneqn
where $\shm_0$ (resp.\ $\shn_0$) is an object of $\Derb_{\coh}(\A[U])$
which generates $\shm$ (resp.\ $\shn$) on $U$.
Then, the result follows from \eqref{eq:ChTdRa}.
\end{proof}

\section{Proof of Theorem \ref{th:RR1}}

As usual, we denote by $p_i\cl X\times X\to X$ the $i$-th projection ($i=1,2$).
The following lemma is well-known.
\begin{lemma}\label{lem:ChEuK0}
Let  $\shf$ be an $(\OO\boxtimes\OO)$-module supported by the diagonal. Then 
the following conditions are equivalent:
\bnum
\item
$\oim{p_1}\shf$ is a coherent $\OO$-module,
\item
$\oim{p_2}\shf$ is a coherent $\OO$-module.
\enum
If these conditions are satisfied, then the map
$\shf\to \OO[X\times X]\tens_{\OO\boxtimes\OO}\shf$
is an isomorphism. In particular, the $(\OO\boxtimes\OO)$-module structure on
$\shf$ extends uniquely to an $\OO[X\times X]$-module structure.
\end{lemma}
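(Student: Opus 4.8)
The plan is to work locally: since all statements are about $\sho_X$-module structures and are of local nature, I would pick a point of $X$, a coordinate neighborhood, and reduce to the purely algebraic situation of a module over $\sho_X$ supported on $\Delta_X$. The key observation is that the diagonal $\Delta_X$ is a closed submanifold defined by the ideal $\shi_\Delta\subset\sho_{X\times X}$, and that near the diagonal the two projections $p_1,p_2$ are isomorphisms. So a sheaf $\shf$ supported on $\Delta_X$ with $\oim{p_1}\shf$ coherent over $\sho_X$ can be viewed, via $p_1$, as a coherent $\sho_X$-module sitting on the diagonal; the point is to understand the extra $\sho_{X\times X}$-structure.

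First I would prove the equivalence of (i) and (ii). The functor $\oim{\de}$ identifies $(\OO\etens\OO)$-modules supported on $\Delta_X$ with $(\OO\tens_{\C}\OO)$-modules — more precisely with $\OO\tens_{\C_X}\OO$-modules on $X$, where the two factors act through $p_1^{-1}\sho_X$ and $p_2^{-1}\sho_X$. Under this identification $\oim{p_1}\shf$ corresponds to $\shf$ viewed as a module over the first $\sho_X$, and $\oim{p_2}\shf$ to $\shf$ viewed over the second. So I need: if $\shf$ is finitely generated over the first copy of $\sho_X$, then it is finitely generated over the second, and vice versa. This is where I expect the main work: one should use that $\shi_\Delta$ locally is generated by $x_i\tens 1 - 1\tens x_i$ ($i=1,\dots,d_X$), that these elements are topologically nilpotent in the relevant completed sense on a module supported on the diagonal, and a Nakayama-type / Artin–Rees argument to move generators across. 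The cleanest route is probably: a coherent module supported set-theoretically on $\Delta_X$ need not be annihilated by $\shi_\Delta$, but one reduces by dévissage — filtering $\shf$ by $\shi_\Delta^k\shf$ — to the case where $\shi_\Delta$ acts by zero, in which case $\shf$ is an $\sho_{\Delta_X}\simeq\sho_X$-module and both structures literally coincide; then one lifts the conclusion back up the filtration, using that each graded piece $\shi_\Delta^k\shf/\shi_\Delta^{k+1}\shf$ is a coherent $\sho_{\Delta_X}$-module and that the filtration is locally finite because $\shf$ is coherent (hence locally $\shi_\Delta^N\shf=0$ for some $N$ near a given point, by the Nullstellensatz applied to the supporting ideal). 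The symmetry of the two $\sho_X$-structures on each graded piece then propagates.

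For the last assertion — that when (i)/(ii) hold the natural map $\shf\to\sho_{X\times X}\tens_{\OO\etens\OO}\shf$ is an isomorphism — I would again use the $\shi_\Delta$-adic dévissage. The point is that $\sho_{X\times X}$ is $\shi_\Delta$-adically complete (it is the completion of $\sho_X\etens\sho_X$ along the diagonal being an isomorphism onto a Stein-local ring, or more directly: $\sho_{X\times X}/\shi_\Delta^k\simeq(\OO\etens\OO)/\shi_\Delta^k$ for all $k$ since both sides are the $k$-th infinitesimal neighborhood of the diagonal), and a coherent $\shf$ supported on $\Delta_X$ is locally annihilated by some power of $\shi_\Delta$, so tensoring with $\sho_{X\times X}$ changes nothing: $\sho_{X\times X}\tens_{\OO\etens\OO}\shf\simeq(\sho_{X\times X}/\shi_\Delta^N)\tens_{(\OO\etens\OO)/\shi_\Delta^N}\shf\simeq\shf$. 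The uniqueness of the extended $\sho_{X\times X}$-structure is then formal: any $\sho_{X\times X}$-structure on $\shf$ compatible with the $(\OO\etens\OO)$-structure factors through $\sho_{X\times X}/\shi_\Delta^N\simeq(\OO\etens\OO)/\shi_\Delta^N$ acting on $\shf$, hence is determined. The main obstacle, as noted, is the coherence-transfer across the two $\sho_X$-structures; everything else is bookkeeping with the $\shi_\Delta$-adic filtration and the fact that coherent sheaves have locally nilpotent action of the ideal of their (analytic) support.
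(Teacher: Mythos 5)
The paper does not prove this lemma --- it is labeled ``well-known'' and stated without argument --- so there is no paper proof to compare against; I can only assess the proposal on its own terms. Your overall plan (reduce by $\shi_\Delta$-adic d\'evissage to the case $\shi_\Delta\shf=0$, where both $\sho_X$-structures coincide, and deduce the final assertion from $\sho_{X\times X}/\shi_\Delta^N\simeq(\OO\etens\OO)/\shi_\Delta^N$) is the right strategy. However, the justification of the step that drives everything, namely that $\shi_\Delta^N\shf=0$ locally, has a genuine gap.

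You invoke ``the Nullstellensatz applied to the supporting ideal.'' The analytic Nullstellensatz compares topological support with annihilator for \emph{coherent $\sho_{X\times X}$-modules}; but $\shf$ is a priori only an $\OO\etens\OO$-module, and that its module structure prolongs to $\sho_{X\times X}$ is exactly the last assertion of the lemma, so this citation is circular. Hypothesis (i) gives coherence of $\oim{p_1}\shf$ over $\sho_X$, not of $\shf$ over $\sho_{X\times X}$, and there is no ready-made Nullstellensatz for $\OO\etens\OO$-modules. The nilpotency is also not a stalk-level fact: the $\sho_{\C,0}\otimes_{\C}\sho_{\C,0}$-module $M=\sho_{\C,0}$ with first action multiplication and second action $g\mapsto g(0)\cdot\mathrm{id}_M$ has $t=x\otimes1-1\otimes x$ acting as multiplication by the coordinate, which is injective, not nilpotent, even though $\bar t=0$ on the fiber $M/\mathfrak{m}M$. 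What rules out such examples is that the second $\sho_X$-structure is a morphism of \emph{sheaves} of $\C$-algebras $\sho_X\to\shend_{\sho_X}(\oim{p_1}\shf)$; the stalk example does not extend to a neighborhood because ``evaluation at $0$'' is not a sheaf morphism (cf.\ Lemma~\ref{le:endosho}). A correct argument must therefore use the locality of $\sho_{X,x}$ for \emph{every} $x$ near $x_0$ --- which gives nilpotency of the $t_i$ on every fiber $(\oim{p_1}\shf)_x/\mathfrak{m}_x(\oim{p_1}\shf)_x$, not just at $x_0$ --- together with coherence and the Noetherian property of $\sho_X$, and the passage from fiberwise to honest nilpotency is the real work here. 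You do gesture at a ``Nakayama-type / Artin--Rees argument'' earlier, which is the right instinct, but the fallback to the Nullstellensatz papers over the gap rather than filling it.
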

We define the $\opb{p_1}\OO$-module
\eqn
&&P_k\seteq\oim{\de}\Omega^k_X\oplus\oim{\de}\Omega^{k+1}_X\mbox{ for }k\ge0,\quad P_k=0
\mbox{ for }k<0.
\eneqn
We endow the $P_k$'s with a structure of $\opb{p_2}\OO$-module by setting
\eqn
\spb{p_2}(a)(\omega_k\oplus\theta_{k+1})
&=&a\omega_k\oplus(a\theta_{k+1}-da\wedge\omega_k)
\eneqn
for $a\in\OO$, $\omega_k\in\Omega^k_X$, 
$\theta_{k+1}\in\Omega_X^{k+1}$.
This defines an action of $\opb{p_2}\OO$ since
\eqn
\spb{p_2}(a_1)\spb{p_2}(a_2)(\omega_k\oplus\theta_{k+1})
&=&\spb{p_2}(a_1)(a_2\omega_k\oplus(a_2\theta_{k+1}-da_2\wedge\omega_k))\\
&=&a_1a_2\omega_k\oplus(a_1a_2\theta_{k+1}-a_1da_2\wedge\omega_k
-da_1\wedge a_2\,\omega_k)\\
&=&a_1a_2\omega_k\oplus(a_1a_2\theta_{k+1}-d(a_1a_2)\wedge\omega_k)\\
&=&\spb{p_2}(a_1a_2)(\omega_k\oplus\theta_{k+1}).
\eneqn
By Lemma~\ref{lem:ChEuK0}, we get that $P_k$ has a structure of $\OO[X\times X]$-module
and we have an exact sequence:
\eq
&&0\to \oim{\de}\Omega_X^{k+1}\To[\alpha_k] P_k\To [\beta_k]\oim{\de}\Omega^k_X\to0.
\eneq
Hence 
$\oim{\de}\Omega^k[k]\isofrom(\oim{\de}\Omega_X^{k+1}\to P_k)\to
\oim{\de}\Omega_X^{k+1}[k+1]$
defines the morphism
\eqn
&&\xi_k\cl\oim{\de}\Omega^k[k]\to\oim{\de}\Omega_X^{k+1}[k+1].
\eneqn
It induces a morphism
\eq\label{mor:xi}
&&\xi\cl \soplus_k\oim{\de}\Omega^k_X[k]\To\soplus_k\oim{\de}\Omega^k_X[k].
\eneq
Let $d^\stan_k\cl P_k\to P_{k-1}$ be the composition
\eq\label{mor:stan}
&&d^\stan_k\cl P_k\To [\beta_k] \de{}_*\Omega^k_X\To [\alpha_{k-1}]
P_{k-1}.
\eneq
We define the complex $P_\scbul$
whose differential $d_P^{-k}\cl P_k\to P_{k-1}$ is given by $kd^\stan_k$.
Then $\im d^\stan_k\simeq\im\beta_k\simeq\oim{\de}\Omega_X^{k}$ and 
$\ker d^\stan_k\simeq\ker\beta_k\simeq\oim{\de}\Omega_X^{k+1}$. 
Therefore we have a quasi-isomorphism $P_\scbul\to\oim{\de}\OO$.
\begin{lemma}\label{lem:alpha}
The morphism
\eq\label{iso:ch}
&&
\alpha_X\cl\spb{\de}\oim{\de}\OO\to H^0(\spb{\de})(P_\scbul)\simeq\soplus_{k}\Omega^k_X[k]
\eneq
is an isomorphism in $\Db(\OO)$.
\end{lemma}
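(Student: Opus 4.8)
The plan is to reduce everything to a local coordinate computation: being an isomorphism in $\Db(\OO[X])$ is local, so I would work in coordinates $(x_1,\dots,x_n)$ on $X$ (with $n=d_X$), denote by $(y_1,\dots,y_n)$ the corresponding coordinates on the second factor, and set $u_i\seteq x_i-y_i$, so that the ideal $\shi_\Delta\subset\OO[X\times X]$ of the diagonal $\Delta=\de(X)$ is generated near $\Delta$ by the regular sequence $u_1,\dots,u_n$. The two ingredients are then: (i) an explicit description of the complex $H^0(\spb{\de})(P_\scbul)$, and (ii) a comparison of $P_\scbul$ with the Koszul resolution of $\oim{\de}\OO$ attached to $\shi_\Delta$.

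For (i): using the $\opb{p_2}\OO$-module structure on $P_k$ from the construction, for $\omega\in\Omega_X^k$, $\theta\in\Omega_X^{k+1}$ one computes $u_i\cdot(\omega\oplus\theta)=0\oplus(dx_i\wedge\omega)$, since $x_i$ acts by multiplication via $\opb{p_1}\OO$ and $y_i$ acts via $\opb{p_2}\OO$, i.e. as $\spb{p_2}(x_i)$ on the diagonal. Because $\sum_i dx_i\wedge\Omega_X^k=\Omega_X^{k+1}$ and because $\shi_\Delta$ kills the second summand, this shows $\shi_\Delta P_k$ equals the second summand $\oim{\de}\Omega_X^{k+1}=\im\alpha_k$, whence $H^0(\spb{\de})(P_k)=P_k/\shi_\Delta P_k\isoto\Omega_X^k$, sitting in degree $-k$. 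Moreover $H^0(\spb{\de})(\alpha_{k-1})=0$, since $\alpha_{k-1}$ has image $\shi_\Delta P_{k-1}$, hence $H^0(\spb{\de})(d_P^{-k})=k\,H^0(\spb{\de})(\alpha_{k-1}\circ\beta_k)=0$ (the factor $k$ is irrelevant here; it only matters for the precise HKR normalization, which is the content of Theorem~\ref{th:RR1}). Thus $H^0(\spb{\de})(P_\scbul)$ is the complex with zero differential and $k$-th term $\Omega_X^k$ in degree $-k$, i.e. $\soplus_k\Omega_X^k[k]$, which also pins down the identification in the statement.

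For (ii): since $P_\scbul\to\oim{\de}\OO$ is a quasi-isomorphism, $\spb{\de}\oim{\de}\OO$ is computed by any bounded-above complex of flat $\OO[X\times X]$-modules quasi-isomorphic to $P_\scbul$, and $\alpha_X$ is the morphism induced by such a flat resolution via the termwise map $H^0(\spb{\de})(\text{resolution})\to H^0(\spb{\de})(P_\scbul)$. Locally I would take the flat resolution to be the Koszul complex $K_\scbul=\bigl(\bigwedge^\scbul\OO[X\times X]^{\,n},d_K\bigr)$ on $(u_1,\dots,u_n)$, with $\bigwedge^k\OO[X\times X]^{\,n}$ in degree $-k$; it is a finite free resolution of $\oim{\de}\OO$, and lifting $\id_{\oim{\de}\OO}$ gives a morphism of complexes $\varphi\cl K_\scbul\to P_\scbul$, so that $\alpha_X$ is identified with $H^0(\spb{\de})(\varphi)$. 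Now $H^0(\spb{\de})(K_\scbul)$ has zero differential because $H^0(\spb{\de})(u_i)=x_i-x_i=0$, so it is $\soplus_k\bigwedge^k\OO[X]^{\,n}[k]$, and it remains to check that $H^0(\spb{\de})(\varphi)$ is an isomorphism in each degree $-k$, i.e. $\bigwedge^k\OO[X]^{\,n}\to\Omega_X^k$ is an isomorphism. This I would get by constructing $\varphi$ in low degrees: $\varphi_0(1)=(\ol 1,0)\in P_0$; the relation $d_P^{-1}\circ\varphi_1=\varphi_0\circ d_K$ together with $d_P^{-1}(\omega^1\oplus\omega^2)=0\oplus\omega^1$ forces $\varphi_1(e_i)=(\ol{dx_i},0)$; inductively the first component of $\varphi_k(e_{i_1}\wedge\cdots\wedge e_{i_k})$ is $\ol{dx_{i_1}\wedge\cdots\wedge dx_{i_k}}$, so $H^0(\spb{\de})(\varphi)$ sends the standard basis $\{\ol e_I\}_{|I|=k}$ of $\bigwedge^k\OO[X]^{\,n}$ to the basis $\{dx_I\}_{|I|=k}$ of $\Omega_X^k$ and is therefore an isomorphism.

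The routine but slightly delicate part is the bookkeeping in step (i) — the module structures on $P_k$ and the identification $\shi_\Delta P_k=\im\alpha_k$ — and the explicit construction of the comparison chain map $\varphi$ in step (ii). I do not expect a genuine obstacle: the only substantive input is that the Koszul complex on $(x_i-y_i)$ resolves $\oim{\de}\OO$, which reduces the whole computation of $\spb{\de}\oim{\de}\OO$ to the zero-differential complex $H^0(\spb{\de})(K_\scbul)$; this is exactly the Hochschild–Kostant–Rosenberg isomorphism, and everything else is tracking that the comparison of the two resolutions induces, after $H^0(\spb{\de})$, the identity on associated graded modules.
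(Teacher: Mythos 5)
Your proof is correct and takes essentially the same route as the paper: reduce to a local computation, resolve $\oim{\de}\OO$ by the Koszul complex on $(x_i-y_i)$, build a comparison chain map $K_\scbul\to P_\scbul$ over $\oim{\de}\OO$, and observe that $H^0(\spb{\de})$ turns both complexes into zero-differential complexes which the chain map identifies. The only cosmetic difference is that the paper specifies the comparison map directly via the $\C$-linear maps $\bigwedge^kV^*\to\Omega^k_X\to P_k$, while you produce it inductively by lifting $\id_{\oim{\de}\OO}$ and note (correctly) that only the first component of each $\varphi_k$ is forced and that this is all $H^0(\spb{\de})$ sees.
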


\begin{proof}[Proof of Lemma \ref{lem:alpha}]
Since the question is local, we may assume that $X$ is a vector space $V$.
Then we have a Koszul complex 
\eqn
&&\OO[X\times X]\otimes\bigwedge^\scbul V^* 
\simeq
\Bigl(\cdots\to  \OO[X\times X]\otimes\bigwedge^2 V^*\to
\OO[X\times X]\otimes V^*\to\OO[X\times X]\Bigr)
\eneqn
and an isomorphism $\OO[X\times X]\otimes\bigwedge^\scbul V^*\to\oim{\de}\OO$
in $\Db(\OO[X\times X])$.
Then applying $H^0(\spb{\de})$, we obtain an isomorphism in $\Db(\OO)$:
\eqn
&&\spb{\de}\oim{\de}\OO\isoto H^0(\spb{\de})(\OO[X\times X]\otimes\bigwedge^\scbul V^*).
\eneqn
The $\C$-linear maps $\bigwedge^kV^*\to\Omega^k_X(V)\to P_k(X\times X)$
induce 
a morphism of complexes
$\OO[X\times X]\otimes\bigwedge^\scbul V^*\to P_\scbul$
such that the diagram below commutes:
\eqn
&&\xymatrix@R=.5ex{
\OO[X\times X]\otimes\bigwedge^\scbul V^*\ar[dr]\ar[dd]\\
           &\oim{\de}\OO.\\
{P_\scbul}\ar[ru]
}
\eneqn
Since 
$H^0(\spb{\de})(\OO[X\times X]\otimes\bigwedge^\scbul V^*)[d_X]\to H^0(\spb{\de})(P_\scbul)$
is an isomorphism, we obtain the desired result.
\end{proof}

\begin{remark}
\bnum
\item
Let $I\subset \OO[X\times X]$ be the defining ideal of the diagonal
set $\de(X)$.
Then the morphism $\xi_0\cl\oim{\de}\OO\to\oim{\de}\Omega_X^1[1]$
is given by the exact sequence
$0\to\oim{\de}\Omega_X^1\to\OO[X\times X]/I^2\to\oim{\de}\OO\to0$.
Indeed, we have a commutative diagram
\[
\xymatrix{
0\ar[r]&I/I^2\ar[r]\ar[d]^\bwr&\OO[X\times X]/I^2\ar[r]\ar[d]^\bwr
&\oim{\de}\OO\ar[r]\ar[d]^{\id}&0\\
0\ar[r]&\oim{\de}\Omega_X^1\ar[r]^{\beta_0}&P_0\ar[r]^{\alpha_0}&\oim{\de}\OO\ar[r]&0.
}\]
Here, the left vertical isomorphism is given by 
$$I/I^2\ni p_1^*(a)-p_2^*(a)\longleftrightarrow
da\in\oim{\de}\Omega^1_X\quad\text{($a\in\OO$).}$$
\item
Moreover the morphism 
$\xi_k\cl \oim{\de}\Omega_X^k[k]\to
\oim{\de}\Omega_X^{k+1}[k+1]$ coincides with
the composition
\eqn
&&\oim{\de}\Omega_X^k[k]\simeq\oim{\de}\Omega_X^k[k]
\lltens[{\OO[X\times X]}]\OO[X\times X]\to
\oim{\de}\Omega_X^k[k]
\lltens[{\OO[X\times X]}]\oim{\de}\OO\\
&&\hs{7ex}\To[\xi_0]
\oim{\de}\Omega_X^k[k]\lltens[{\OO[X\times X]}]\oim{\de}\Omega_X^1[1]
\to \oim{\de}(\Omega_X^k[k]\lltens[{\OO}]\Omega_X^1[1])\\
&&\hs{14ex}\to\oim{\de}\Omega_X^{k+1}[k+1].
\eneqn
\item
Note that the morphism
$\alpha_X\cl\spb{\de}\oim{\de}\OO\isoto\soplus_{k}\Omega^k_X[k]$
coincides with the morphism obtained from
$\oim{\de}\OO\to
\soplus_k\oim{\de}\Omega^k_X[k]\To[\exp(\xi)]\soplus_k\oim{\de}\Omega^k_X[k]$
by adjunction.

\enum
\end{remark}
\begin{lemma}\label{lem:ChernEul1}
The morphism $\alpha_X$ in \eqref{iso:ch} interchanges the 
composition of the ring $\spb{\de}\oim{\de}\OO$ 
given in {\rm Proposition~\ref{pro:HHOring}~(a)}
with the composition 
\eqn
&&\Omega_X^i[i]\lltens[\sho_X]\Omega_X^j[j]\simeq
(\Omega_X^i\lltens[\sho_X]\Omega_X^j)[i+j]\To[\wedge]\Omega_X^{i+j}[i+j].
\eneqn
\end{lemma}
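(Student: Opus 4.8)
The plan is to reduce the statement to a purely local computation in which both sides are expressed through the same explicit model, namely the complex $P_\scbul$ of Lemma~\ref{lem:alpha} and its tensor powers. Since $\alpha_X$ is an isomorphism in $\Db(\OO)$ and both multiplication maps are $\sho_X$-bilinear, it suffices to check that $\alpha_X$ transports the product $\mu$ on $\spb{\de}\oim{\de}\OO$ into the wedge product after identifying the target with $\soplus_k\Omega_X^k[k]$; this being a local question, we may assume $X$ is a vector space $V$ as in the proof of Lemma~\ref{lem:alpha}, so that $\oim{\de}\OO$ is represented by the Koszul complex $\OO[X\times X]\otimes\bigwedge^\scbul V^*$ and $\alpha_X$ comes from the exponential $\exp(\xi)$ applied to $\soplus_k\oim{\de}\Omega_X^k[k]$.

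First I would recall from the remark following Lemma~\ref{lem:alpha} that $\xi_0\cl\oim{\de}\OO\to\oim{\de}\Omega_X^1[1]$ is the class of the extension $0\to\oim{\de}\Omega_X^1\to\OO[X\times X]/I^2\to\oim{\de}\OO\to0$, with $I/I^2\ni p_1^*(a)-p_2^*(a)\leftrightarrow da$, and that the higher $\xi_k$ are obtained from $\xi_0$ by tensoring. The key point is then that on the level of the Koszul model the multiplication $\oim{\de}\OO\lltens[{\OO[X\times X]}]\oim{\de}\OO\to\oim{\de}\OO$ is the standard product on $\OO[X\times X]\otimes\bigwedge^\scbul V^*$, i.e.\ the exterior algebra multiplication. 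Under $\exp(\xi)$, which is an algebra isomorphism of graded objects because $\xi$ is a derivation of the wedge product (this follows from the Leibniz compatibility of $\xi_0$ with tensor products spelled out in the same remark), the Koszul multiplication is carried precisely to the wedge product on $\soplus_k\Omega_X^k[k]$. So the heart of the argument is to verify that $\xi$ acts as a degree one derivation, and that $\mu$ under $\alpha_X$ is computed by $\exp(\xi)\circ(\text{Koszul mult})\circ\exp(-\xi)^{\otimes2}=\wedge$.

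Concretely, the steps I would carry out are: (1) recall the Koszul resolution and the morphism of complexes $\OO[X\times X]\otimes\bigwedge^\scbul V^*\to P_\scbul$, and note that the multiplicative structure of the Koszul complex is strictly compatible with that of $\oim{\de}\OO$; (2) identify $\mu$, via Proposition~\ref{pro:HHOring}~(a), with $H^0(\spb{\de})$ applied to the composite $\oim{\de}\OO\lltens[{\OO[X\times X]}]\oim{\de}\OO\to\oim{\de}\OO$, and rewrite this on the Koszul model; (3) show that the identification $\alpha_X$ intertwines $H^0(\spb{\de})$ of the Koszul product with $\exp(\xi)\circ(\cdot\wedge\cdot)\circ(\exp(-\xi)\otimes\exp(-\xi))$; (4) verify that $\xi$ is a derivation for $\wedge$, so that this conjugated product is again $\wedge$. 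The main obstacle, I expect, is step (3)–(4): one must be careful that the twist $\exp(\xi)$ appearing in $\alpha_X$ is compatible on the nose with the twist on the tensor product $\spb{\de}\oim{\de}\OO\lltens[\sho_X]\spb{\de}\oim{\de}\OO$, i.e.\ that there is no discrepancy between $\exp(\xi)\otimes\exp(\xi)$ and $\exp(\xi)$ applied after multiplication — and this is exactly where the derivation property of $\xi_0$ (equivalently, the Leibniz rule in the last remark before the lemma) is used. Once this compatibility is in place, the rest is bookkeeping with the exterior algebra, and the result follows.
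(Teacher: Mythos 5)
The reduction to a local Koszul model is a reasonable starting point and is genuinely different from the paper's proof, but the central claim of your steps (3)--(4) --- that $\xi$ is a derivation of the wedge product and hence $\exp(\xi)$ an algebra automorphism --- is incorrect. Remark (ii) after Lemma~\ref{lem:alpha} asserts only that $\xi_k$ is obtained from $\xi_0$ by ``tensoring on the identity'' of $\Omega_X^k[k]$; concretely, $\xi_k(\omega)=\omega\wedge\xi_0(1)$, i.e.\ $\xi$ is, up to sign conventions, right cup-product by a degree-one class, not a derivation. For such a map one has
$(\omega_i\wedge\omega_j)\wedge\xi_0 \neq (\omega_i\wedge\xi_0)\wedge\omega_j \pm \omega_i\wedge(\omega_j\wedge\xi_0)$
in general, so $\exp(\xi)$ is not a ring homomorphism of $(\soplus_k\Omega_X^k[k],\wedge)$, and the conjugation identity
$\exp(\xi)\circ\wedge\circ(\exp(-\xi)\otimes\exp(-\xi))=\wedge$
fails. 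There is no ``Leibniz compatibility'' stated in the remark, so the needed fact is not merely left to check --- it is false.

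The paper's proof avoids $\exp(\xi)$ entirely. It equips the globally defined complex $P_\scbul$ with an explicit chain-level multiplication $\mu_{ij}\cl P_i\tens[{\OO[X\times X]}]P_j\to P_{i+j}$ given by the closed formula
$\mu_{ij}\bl(\omega_i\oplus\theta_{i+1})\otimes(\omega_j\oplus\theta_{j+1})\br = (\omega_i\wedge\omega_j)\oplus(\theta_{i+1}\wedge\omega_j+(-1)^i\omega_i\wedge\theta_{j+1})$,
and then verifies by hand that $\mu$ is $\opb{p_2}(\OO)$-bilinear and commutes with the differential $d_P^{-k}=kd^{\stan}_k$ --- the integer factor $k$ in the differential is precisely what makes the Leibniz-type computation close up. After applying $H^0(\spb{\de})$, only the $\omega_i\wedge\omega_j$-component survives, giving the wedge product on the nose. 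Your proposal does not reproduce this; it replaces the explicit chain-level verification with an unproved (and incorrect) conjugation principle.

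If you insist on the Koszul route, the honest argument is shorter than what you wrote: the Koszul complex $K=\OO[X\times X]\otimes\bigwedge^\scbul V^*$ is a DG algebra quasi-isomorphic to $\oim{\de}\OO$, hence $\spb{\de}K$ with its induced product is a model for the ring $(\spb{\de}\oim{\de}\OO,\mu)$; one computes $H^0(\spb{\de})K\simeq\soplus_k\Omega_X^k[k]$ with the wedge product, since the Koszul differential dies on the diagonal; and the chain map $K\to P_\scbul$ used in the proof of Lemma~\ref{lem:alpha} is multiplicative for $\mu_{ij}$ (it lands in the first summand $\oim{\de}\Omega_X^k$ of $P_k$, and $\mu_{ij}$ restricted to first summands is exactly $\wedge$), so the Koszul identification agrees with $\alpha_X$. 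No exponential, no derivation property.
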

Note that the unit
$\OO\to\spb{\de}\oim{\de}\OO$ is given by
$\OO\simeq\spb{\de}\OO[X\times X]\to \spb{\de}\oim{\de}\OO$,
where the last arrow is induced by $\OO[X\times X]\to\oim{\de}\OO$.
\begin{proof}
We define
$$\mu_{ij}\cl P_i\tens[{\OO[X\times X]}] P_j\to P_{i+j}$$
by 
\eq&&
\ba{l}
\mu_{ij}\bl((\omega_i\oplus\theta_{i+1})
\otimes(\omega_j\oplus\theta_{j+1}))\br\\[1ex]
\hs{5ex}=(\omega_i\wedge\omega_j)\oplus(\theta_{i+1}\wedge\omega_j
+(-1)^i\omega_i\wedge\theta_{j+1}).\ea
\label{def:mu}
\eneq
This map is $\opb{p_2}(\OO)$-bilinear since: 
\eqn
&&\mu_{ij}\Bigl(\bl p_2^*(a)(\omega_i\oplus\theta_{i+1})\br
\otimes (\omega_j\oplus\theta_{j+1})\Bigr)\\
&&\hs{5ex}=\mu_{ij}\Bigl(\bl a\omega_i\oplus(a\theta_{i+1}-da\wedge\omega_i)\br
\otimes (\omega_j\oplus\theta_{j+1})\Bigr)\\
&&\hs{5ex}=(a\omega_i\wedge\omega_j)\oplus\bl(a\theta_{i+1}-da\wedge\omega_i)
\wedge\omega_j
+(-1)^ia\omega_i\wedge\theta_{j+1}\br\\
&&\hs{5ex}=p_2^*(a)
\bl(\omega_i\wedge\omega_j)\oplus(\theta_{i+1}\wedge\omega_j
+(-1)^i\omega_i\wedge\theta_{j+1})\br\\
&&\hs{5ex}=p_2^*(a)\mu_{ij}\bl(\omega_i\oplus\theta_{i+1})
\otimes (\omega_j\oplus\theta_{j+1})\br,
\eneqn
and
\eqn
&&\mu_{ij}\bl(\omega_i\oplus\theta_{i+1})
\otimes p_2^*(a)(\omega_j\oplus\theta_{j+1})\br\\
&&\hs{5ex}=\mu_{ij}\Bigl(\bl\omega_i\oplus\theta_{i+1}\br
\otimes \bl a\omega_j\oplus(a\theta_{j+1}-da\wedge\omega_j\br\Bigr)\\
&&\hs{5ex}=\bl a\omega_i\wedge\omega_j\br\oplus\bl\theta_{i+1}\wedge 
a\omega_j
+(-1)^i\omega_i\wedge(a\theta_{j+1}-da\wedge\omega_j)\br\\
&&\hs{5ex}=(a\omega_i\wedge\omega_j)\oplus(a\theta_{i+1}\wedge 
\omega_j
+(-1)^ia\omega_i\wedge \theta_{j+1}-da\wedge\omega_i\wedge\omega_j)\\
&&\hs{5ex}=p_2^*(a)
\bl\omega_i\wedge\omega_j\oplus(\theta_{i+1}\wedge\omega_j
+(-1)^i\omega_i\wedge\theta_{j+1})\br\\
&&\hs{5ex}=p_2^*(a)\mu_{ij}\bl(\omega_i\oplus\theta_{i+1})
\otimes (\omega_j\oplus\theta_{j+1})\br.
\eneqn
The morphism $\mu$ commutes with the differentials since:
\eqn
&&\mu d\bl(\omega_i\oplus\theta_{i+1})
\otimes(\omega_j\oplus\theta_{j+1})\br\\
&&\hs{5ex}=\mu_{i-1,j}\bl(0\oplus i\omega_{i})
\otimes (\omega_j\oplus\theta_{j+1})\br
+(-1)^i\mu_{i,j-1}
\bl(\omega_i\oplus\theta_{i+1})\otimes(0\oplus j\omega_{j})\br\\
&&\hs{5ex}=0\oplus\bl i\omega_{i}\wedge\omega_j
+(-1)^i(-1)^ij\omega_{i}\wedge\omega_j\br
=0\oplus(i+j)\omega_{i}\wedge\omega_j\\
&&\hs{5ex}=d\mu\bl(\omega_i\oplus\theta_{i+1})
\otimes(\omega_j\oplus\theta_{j+1})\br.
\eneqn
Hence we have a commutative diagram in $\Derb(\sho_{X\times X})$
\eqn
&&\xymatrix{
{\oim{\de}\OO\lltens[{\OO[X\times X]}]
  \oim{\de}\OO}\ar[r]\ar[d]&{\oim{\de}\OO}\\
P_\scbul\tens[{\OO[X\times X]}]  P_\scbul\ar[r]^-\mu&P_\scbul\ar[u]\,.
}
\eneqn
Therefore, applying $\spb{\de}$, the morphism
$\spb{\de}\oim{\de}\OO\lltens\spb{\de}\oim{\de}\OO\to\spb{\de}\oim{\de}\OO$
is represented by
\eqn
&&H^0(\spb{\de})P_\scbul\tens[\sho_X]  H^0(\spb{\de})P_\scbul\to H^0(\spb{\de})P_\scbul.
\eneqn
Thus we obtain the desired result.
\end{proof}

\begin{lemma}
Consider a morphism $f\cl X\to Y$. Then the diagram below commutes:
\eqn
&&\xymatrix{
\spb{f}\spb{\de[Y]}\oim{\de[Y]}\OO[Y]\ar[r]\ar[d]^{{\alpha_Y}}
&\spb{\de}\oim{\de}\OO\ar[d]^{{\alpha_X}}\\
\spb{f}(\soplus_k \Omega_Y^k[k])\ar[r]&\soplus_k\Omega_X^k[k]\,.
}\eneqn
\end{lemma}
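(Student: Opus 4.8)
The statement asserts the compatibility of the Hochschild--Kostant--Rosenberg isomorphism $\alpha_X$ with pullback along a morphism $f\cl X\to Y$. The plan is to reduce everything to the explicit resolution $P_\scbul$ constructed just before the statement and to the compatibility of $\alpha$ with the morphisms $\xi_k$, already established in the preceding remarks. First I would recall that $\alpha_X$ was identified (in the remark following Lemma~\ref{lem:alpha}) with the morphism obtained by adjunction from
\eqn
&&\oim{\de}\OO\to\soplus_k\oim{\de}\Omega^k_X[k]\To[\exp(\xi)]\soplus_k\oim{\de}\Omega^k_X[k],
\eneqn
where $\xi=\soplus_k\xi_k$ and $\xi_k$ is the connecting morphism attached to the short exact sequence $0\to\oim{\de}\Omega_X^{k+1}\to P_k\to\oim{\de}\Omega^k_X\to0$. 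Since $\exp(\xi)$ is built from the single degree-one piece $\xi_0\cl \oim{\de}\OO\to\oim{\de}\Omega^1_X[1]$ by the multiplicative structure of $\spb{\de}\oim{\de}\OO$ (third point of the remark), it suffices to prove the naturality of $\xi_0$ with respect to $f$ and the compatibility of $\alpha_X$ with the ring structures (Lemma~\ref{lem:ChernEul1}), which is already known, together with the fact that $\spb{f}$ commutes with the ring structure on $\spb{\de}\oim{\de}\OO$ --- this last being Proposition~\ref{pro:HHOring}~(a) combined with Proposition~\ref{pro:HHOspb}~(i), whose morphism \eqref{eq:HHOspb} is visibly multiplicative by its construction via the square \eqref{diag:fXY}.

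Concretely, I would proceed as follows. First, using the identification $I/I^2\simeq\oim{\de}\Omega^1_X$ (first point of the remark after Lemma~\ref{lem:alpha}), describe $\xi_0$ as the class of the extension $0\to\oim{\de}\Omega_X^1\to\OO[X\times X]/I^2\to\oim{\de}\OO\to0$. Then, writing $I_X$ and $I_Y$ for the diagonal ideals in $X\times X$ and $Y\times Y$ and $\tw f\cl X\times X\to Y\times Y$ for $f\times f$, note that $\tw f$ sends $I_Y$ into $I_X$ and hence induces $\OO[Y\times Y]/I_Y^2\to\roim{\tw f}(\OO[X\times X]/I_X^2)$ compatibly with the differential $d\cl \OO\to\Omega^1$; pulling back the $Y$-extension along $\spb{\tw f}$ and comparing with the $X$-extension gives a commutative square relating $\spb{f}\xi_0^Y$ and $\xi_0^X$ modulo the canonical identifications $\spb{f}\spb{\de[Y]}\oim{\de[Y]}\OO[Y]\isoto\spb{\de}\spb{\tw f}\oim{\de[Y]}\OO[Y]\to\spb{\de}\oim{\de}\OO$ already used in the proof of Proposition~\ref{pro:HHOspb}. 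Second, exponentiate: since both $\alpha_X$ and $\alpha_Y$ are the adjoints of $\exp(\xi)$ applied to the respective diagonal structure sheaves, and since the morphism \eqref{eq:HHOspb} intertwines the two ring structures and sends $\xi^Y_0$ to $\xi^X_0$ by the previous step, it automatically sends $\exp(\xi^Y)$ to $\exp(\xi^X)$. Taking adjoints yields the claimed commutative diagram. Third, I would also record (or leave to the reader) the parallel compatibility with $\etens$, which is even more direct since $\alpha_{X\times Y}$ on the external product factors through $\alpha_X\etens\alpha_Y$ by construction of $P_\scbul$ on a product, as already noted in Theorem~\ref{th:RR1}~(a).

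The main obstacle I anticipate is purely bookkeeping: one must be careful that the various canonical isomorphisms $\spb{f}\spb{\de[Y]}\simeq\spb{\de}\spb{\tw f}$, $\spb{\tw f}\oim{\de[Y]}\to\oim{\de}\spb{f}$, and $\spb{f}\OO[Y]\isoto\OO[X]$ used to define \eqref{eq:HHOspb} are exactly the ones implicitly used in the definition of $\alpha$ on both sides, so that no hidden sign or normalization discrepancy creeps in when passing from $\xi_0$ to $\exp(\xi)$. A clean way to avoid sign headaches is to work at the level of the complexes $P_\scbul$ themselves: one builds an explicit morphism of complexes $\spb{\tw f}\mathrm{(resolution\ on\ }Y\mathrm{)}\to\mathrm{(resolution\ on\ }X\mathrm{)}$ compatible with the multiplications $\mu_{ij}$ of \eqref{def:mu} (the pullback of forms is an algebra map for the wedge product, and commutes with $d$, hence with the $\opb{p_2}\OO$-structures defined via $\spb{p_2}(a)(\omega\oplus\theta)=a\omega\oplus(a\theta-da\wedge\omega)$), apply $H^0(\spb{\de})$, and read off the diagram. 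Once this cochain-level map is in place, commutativity of the square is immediate, so the only real work is checking that this map induces the morphism \eqref{eq:HHOspb} --- which is a diagram chase essentially identical to the one in the proof of Proposition~\ref{pro:HHOspb}~(ii).
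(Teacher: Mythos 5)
Your proposal is correct, and in fact it offers two viable strategies, the second of which is exactly the paper's own argument.

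The paper's proof (which is quite terse) constructs an explicit morphism of complexes $\phi\cl H^0(\spb{\tw f})P^Y_\scbul\to P^X_\scbul$ fitting into a commutative square over $\oim{\de}\OO$, then applies $H^0(\spb{\de})$ and identifies the resulting horizontal arrows with the canonical ones. This is precisely your ``clean way to avoid sign headaches'': the explicit cochain-level map is built from the functoriality of $\Omega^\scbul$ (pullback of forms is a $d$-commuting algebra map, hence compatible with the $\spb{p_2}\OO$-action $a\cdot(\omega\oplus\theta)=a\omega\oplus(a\theta-da\wedge\omega)$) and, as you note, the only non-trivial point is to check it represents the morphism \eqref{eq:HHOspb}, which is the same diagram chase as in Proposition~\ref{pro:HHOspb}~(ii).

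Your primary strategy — reduce to the naturality of the single class $\xi_0$ (via the $I/I^2$ description of the extension) plus multiplicativity of \eqref{eq:HHOspb}, and then exponentiate using Lemma~\ref{lem:ChernEul1} and the second remark after Lemma~\ref{lem:alpha} — is a genuine alternative and also valid. Its advantage is that it pushes the hard geometry into a single degree-one computation, at the cost of requiring the (true, but unproved in the text) statement that \eqref{eq:HHOspb} is a map of ring objects; you flag this correctly and your justification (the underlying morphism $\spb{\tw f}\oim{\de[Y]}\OO[Y]\to\oim{\de}\OO$ is a ring map because it comes from a monoidal base-change transformation applied to the unit multiplication) is the right one, though it would need to be spelled out a bit more than ``visibly multiplicative''. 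The paper's cochain-level construction avoids this overhead, which is presumably why it was chosen.
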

\begin{proof}
Let $\tilde{f}\cl X\times X\to Y\times Y$ be the morphism associated with $f$.
Let us denote by $P^X_\scbul$ the complex on $X$ constructed above.
Then we easily construct  a commutative diagram
\eqn
&&\xymatrix{
H^0(\spb{\tilde{f}})P^Y_\scbul\ar[r]\ar[d]_\phi
                        &H^0(\spb{\tilde{f}})\oim{\de[Y]}\OO[Y]\ar[d]\\
P^X_\scbul\ar[r]        &\oim{\de}\OO
}\eneqn
such that
\eqn
&&\xymatrix@C=7ex{
H^0(\spb{\de}\spb{\tilde{f}})P^Y_\scbul\ar[r]\ar[d]_{\spb{\de}\phi}
        &H^0(\spb{f}\spb{\de[Y]})P^Y_\scbul\ar[r]^\sim
                                 &\spb{f}(\soplus_k\Omega_Y^k[k])\ar[d]^\psi\\
H^0(\spb{\de})P^X_\scbul\ar[rr]&&\soplus_k\Omega_X^k[k]
}
\eneqn
commutes 
where $\psi$ is given in \eqref{eq:spbHOD}.
\end{proof}

Now we set 
\eq
Q_k=\begin{cases}
P_{k-1}&\text{for $1\le k\le d_X$,}\\
\de{}_*\OO&\text{for $k=0$,}\\
0&\text{otherwise.}
\end{cases}
\eneq
and define  the differential $d^Q$ with $d_k^Q=(k-1-d_X)\;d_{k-1}^{\stan}$,
where $d_{k-1}^{\stan}$ is given by~\eqref{mor:stan} and $d_0^{\stan}\cl\OO\oplus\Omega_X^1\to\OO$
is the canonical morphism.
Then $Q_\scbul$ is a complex of $\OO[X\times X]$-modules and the canonical homomorphism
$\Omega_X^{d_X}\to\Omega_X^{d_X-1}\oplus\Omega_X^{d_X}$ induces a morphism of complexes
$\oim{\de}\omega_X\to Q_\scbul$, 
which is an isomorphism in $\Db(\OO[X\times X])$.

Let us denote by $H^0(\epb{\de})$ the functor
$\opb{\delta_X}\hom[\sho_{X\times X}](\oim{\delta}\sho_X,\scbul)$.
\begin{lemma} 
The morphism
\eqn
&&\beta_X\cl\soplus_k\Omega_X^k\simeq
H^0(\epb{\de})Q_\scbul\to\epb{\de}\oim{\de}\omega_X
\eneqn
is an isomorphism in $\Db(\OO)$.
\end{lemma}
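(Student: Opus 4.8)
The plan is to mimic, in the dual direction, the argument that was just carried out for $\alpha_X$ and the complex $P_\scbul$. The morphism $\beta_X$ has already been presented as the composition
$\soplus_k\Omega_X^k\simeq H^0(\epb{\de})Q_\scbul\to\epb{\de}\oim{\de}\omega_X$, so what remains is precisely to check that this composition is an isomorphism in $\Db(\OO)$. First I would make the problem local, as in the proof of Lemma~\ref{lem:alpha}: since being an isomorphism in the derived category can be checked on stalks, we may assume $X$ is a vector space $V$. Then $\oim{\de}\OO$ is resolved by the Koszul complex $\OO[X\times X]\otimes\bigwedge^\scbul V^*$, and consequently $\oim{\de}\omega_X\simeq\oim{\de}\OO\lltens[{\OO[X\times X]}]\oim{\de}\Omega_X^{d_X}$ is resolved (up to shift) by $\OO[X\times X]\otimes\bigwedge^\scbul V^*\otimes\Omega_X^{d_X}$.

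Next I would exhibit a morphism of complexes of $\OO[X\times X]$-modules from this Koszul-type resolution of $\oim{\de}\omega_X$ to $Q_\scbul$, compatible with the augmentations to $\oim{\de}\omega_X$, just as the $\C$-linear maps $\bigwedge^k V^*\to\Omega_X^k\to P_k$ produced a quasi-isomorphism $\OO[X\times X]\otimes\bigwedge^\scbul V^*\to P_\scbul$ in the proof of Lemma~\ref{lem:alpha}. Here the natural candidate is induced by $\bigwedge^{d_X-k}V^*\otimes\Omega_X^{d_X}\to\Omega_X^k$ (contraction, or equivalently Hodge-type identification $\bigwedge^{d_X-k}V^*\otimes\bigwedge^{d_X}V^*{}^\vee$... more simply the canonical $\Omega_X^k\to\Omega_X^{k-1}\oplus\Omega_X^k=P_{k-1}=Q_k$ used to define $\oim{\de}\omega_X\to Q_\scbul$), checking that it intertwines the Koszul differential with $d^Q$ up to the scalar factors $(k-1-d_X)$ built into $d^Q$. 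Once we have a commutative triangle relating the Koszul resolution, $Q_\scbul$, and $\oim{\de}\omega_X$, applying the functor $H^0(\epb{\de})=\opb{\delta_X}\hom[\sho_{X\times X}](\oim{\delta_X}\sho_X,\scbul)$ and using that on the Koszul side this computes $\soplus_k\Omega_X^k$ (Hom out of the Koszul complex against $\OO$, exactly dual to what happened for $\spb{\de}$ in Lemma~\ref{lem:alpha}) gives the desired isomorphism $\soplus_k\Omega_X^k\isoto\epb{\de}\oim{\de}\omega_X$.

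I would also want to record the compatibility of $\beta_X$ with $\etens$ and with $\eim{f}$ claimed in Theorem~\ref{th:RR1}~(b); these follow the same pattern as the corresponding statements for $\alpha_X$ (the external product statement from the multiplicativity of the $Q_\scbul$ construction, and the $\eim{f}$ statement from a diagram relating $Q^X_\scbul$ and $Q^Y_\scbul$ via $\tilde f$, dualizing the lemma just proved for $\alpha_X$). The commutativity of Diagram~\eqref{diag:RR1}, defining $\tau$, is then automatic once $\alpha_X$, $\beta_X$ and $\htd$ are all constructed, since all three are built from the same complexes $P_\scbul$, $Q_\scbul$ and the standard exact sequences, and $\htd$ in \eqref{eq:td1} was defined precisely by the chain of isomorphisms moving $\omega_X$ across $\spb{\de}$.

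The main obstacle I expect is the bookkeeping of the scalar factors and signs: the differentials of $P_\scbul$ carry the factor $k$, those of $Q_\scbul$ carry $(k-1-d_X)$, and the contraction/wedge maps introduce signs like $(-1)^i$ (cf. the computation of $\mu_{ij}$). One must verify that the chosen comparison map with the Koszul complex is genuinely a chain map with these normalizations, and that no factor vanishes in the relevant range $0\le k\le d_X$ (indeed $k-1-d_X\ne 0$ for $k\le d_X$, which is exactly why the $Q_\scbul$ differential is well-normalized). The rest is a routine transcription of the $\alpha_X$ argument, with $\spb{\de}$ replaced by $\epb{\de}$ and $\oim{\de}\OO$ by $\oim{\de}\omega_X$.
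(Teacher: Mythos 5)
Your plan---localize to a vector space, bring in a Koszul complex, and mimic the proof of Lemma~\ref{lem:alpha}---is in spirit what the paper intends when it says the proof is similar and omits it. But the specific dualization you describe fails at a structural point, not merely one of signs or scalars. You propose resolving $\oim{\de}\omega_X$ by a free Koszul-type complex $L^\scbul$, constructing a morphism $L^\scbul\to Q_\scbul$ over $\oim{\de}\omega_X$, and applying $H^0(\epb{\de})$. This cannot work: $H^0(\epb{\de})=\opb{\delta_X}\hom[\sho_{X\times X}](\oim{\de}\OO,\scbul)$ annihilates every locally free $\sho_{X\times X}$-module, since a locally free sheaf has no nonzero $I_\Delta$-torsion. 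Hence $H^0(\epb{\de})L^\scbul=0$, and the induced map into $H^0(\epb{\de})Q_\scbul\simeq\soplus_k\Omega_X^k$ is the zero map; your claim that on the Koszul side $H^0(\epb{\de})$ computes $\soplus_k\Omega_X^k$ is not correct. The asymmetry with Lemma~\ref{lem:alpha} is real: $\spb{\de}$ is a left derived functor, so flat Koszul resolutions are the right tool and $H^0(\spb{\de})$ applied to a flat complex computes the derived functor, whereas $\epb{\de}$ is a right derived functor, for which free modules are exactly the wrong kind of resolution.

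The Koszul complex does enter the correct argument, but in the \emph{first} argument of the $\rhom$. Locally, with $X=V$, one writes $\epb{\de}\oim{\de}\omega_X\simeq\opb{\delta_X}\rhom[\sho_{X\times X}](\oim{\de}\OO,\oim{\de}\omega_X)\simeq\opb{\delta_X}\hom[\sho_{X\times X}](K^\scbul,\oim{\de}\omega_X)$, where $K^\scbul=\OO[X\times X]\otimes\bigwedge^\scbul V^*$ is the Koszul resolution of $\oim{\de}\OO$. Each $\hom(K_p,\oim{\de}\omega_X)\simeq\oim{\de}\omega_X\otimes\bigwedge^p V$ and the induced Koszul differential vanishes (multiplication by $x_i-y_i$ kills $\oim{\de}\omega_X$); contraction against the top form then identifies the result with $\soplus_k\Omega_X^k$. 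It then remains to show that the natural map $H^0(\epb{\de})Q_\scbul\to\epb{\de}Q_\scbul\simeq\epb{\de}\oim{\de}\omega_X$ is an isomorphism, e.g.\ by comparing the two filtrations of the double complex $\hom(K^\scbul,Q_\scbul)$; this is where the real content of the dual argument lies. Your parenthetical remark about Hom out of the Koszul complex gestures at this, but it is not what your main construction does, and as written the proof has a genuine gap.
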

Since the proof is similar to that of Lemma~\ref{lem:alpha}, we
omit it.

Note that the morphism $\beta_X$ coincides with the morphism obtained 
by adjunction from
\eqn
&&\soplus_k\eim{\de}\Omega_X^k\To[{\exp(-\xi)}]\soplus_k\eim{\de}\Omega_X^k
\to\eim{\de}\Omega_X^n[n]\simeq\eim{\de}\omega_X.
\eneqn

\begin{lemma} \label{lem:commod}
The morphism
$\spb{\de}\oim{\de}\OO\lltens[\sho_X]\epb{\de}\eim{\de}\oo\to\epb{\de}\eim{\de}\oo$
in {\rm Proposition~\ref{pro:HHOring}~(d)} coincides with
$\Omega_X^i[i]\tens[\sho_X]\Omega_X^j[j]\To[\bigwedge]\Omega_X^{i+j}[i+j]$. 
\end{lemma}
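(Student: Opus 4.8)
The statement to prove (Lemma~\ref{lem:commod}) is that, under the isomorphisms $\alpha_X$ and $\beta_X$ constructed above, the $\spb{\de}\oim{\de}\OO$-module structure on $\epb{\de}\eim{\de}\oo$ from Proposition~\ref{pro:HHOring}~\eqref{HOassoc} is transported to the cup product (wedge) pairing
$\Omega_X^i[i]\tens[\sho_X]\Omega_X^j[j]\To[\wedge]\Omega_X^{i+j}[i+j]$.
Since this is entirely parallel to Lemma~\ref{lem:ChernEul1}, the plan is to construct an explicit $\OO[X\times X]$-bilinear map $Q_\scbul\tens[{\OO[X\times X]}]P_\scbul\to Q_\scbul$ at the level of the explicit complexes $P_\scbul$ and $Q_\scbul$ introduced just before, which represents the module action, and then to check that on cohomology it becomes the wedge product.

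First I would write down the candidate pairing. The module $Q_k=P_{k-1}$ (for $1\le k\le d_X$) and the module $P_j=\oim{\de}\Omega_X^j\oplus\oim{\de}\Omega_X^{j+1}$, both with the $\opb{p_2}\OO$-structure recalled above. I would define, for each $k,j$, a map $Q_k\tens[{\OO[X\times X]}]P_j\to Q_{k+j}$ by essentially the same formula as $\mu_{ij}$ in~\eqref{def:mu}, i.e. sending $(\omega\oplus\theta)\otimes(\omega'\oplus\theta')$ to $(\omega\wedge\omega')\oplus(\theta\wedge\omega'\pm\omega\wedge\theta')$, with the sign dictated by the degree shift $Q_k=P_{k-1}$. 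Then I would verify: (a) this map is $\opb{p_2}\OO$-bilinear — a computation identical in structure to the one carried out in the proof of Lemma~\ref{lem:ChernEul1}, using $\spb{p_2}(a)(\omega\oplus\theta)=a\omega\oplus(a\theta-da\wedge\omega)$, so by Lemma~\ref{lem:ChEuK0} it is $\OO[X\times X]$-bilinear; (b) it is compatible with the differentials $d^Q$ and $d_P$ — here one must be careful with the different normalizations ($d_k^Q=(k-1-d_X)d_{k-1}^{\stan}$ versus $d_P^{-k}=k\,d^{\stan}_k$), but the verification is the same telescoping/Leibniz computation as for $\mu$, with the numerical coefficients working out because $(i+j-1-d_X)=(i-1-d_X)+j$ up to the way the wedge distributes; (c) under the quasi-isomorphisms $\oim{\de}\omega_X\to Q_\scbul$, $P_\scbul\to\oim{\de}\OO$, $\oim{\de}\omega_X\to Q_\scbul$ it recovers the pairing $\oim{\de}\OO\lltens[{\OO[X\times X]}]\eim{\de}\oo\to\eim{\de}\oo$ used to define the module structure in Proposition~\ref{pro:HHOring}~\eqref{HOassoc}.

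Having this, I would record the resulting commutative diagram in $\Derb(\sho_{X\times X})$
$$\xymatrix{
\oim{\de}\omega_X\lltens[{\OO[X\times X]}]\oim{\de}\OO\ar[r]\ar[d]&\oim{\de}\omega_X\\
Q_\scbul\tens[{\OO[X\times X]}]P_\scbul\ar[r]&Q_\scbul\ar[u]
}$$
and apply the functor $H^0(\epb{\de})$ on the left factor and $H^0(\spb{\de})$ on the right factor. Since $H^0(\epb{\de})Q_\scbul\simeq\soplus_k\Omega_X^k$ via $\beta_X$ and $H^0(\spb{\de})P_\scbul\simeq\soplus_k\Omega_X^k[k]$ via $\alpha_X$, the induced pairing is computed directly from the formula for $\mu$, which is the wedge product up to the expected Koszul sign; this gives exactly the assertion. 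The whole argument is a transcription of the proof of Lemma~\ref{lem:ChernEul1} with $P_\scbul$ on the left replaced by $Q_\scbul$, so I would state it concisely and leave the sign-bookkeeping to the reader, exactly as is done for the preceding lemmas.

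\textbf{Main obstacle.} The only genuinely delicate point is matching the two differing normalizations of the standard differential on $P_\scbul$ and $Q_\scbul$ (the factors $k$ versus $k-1-d_X$) when checking that the pairing $\mu$ is a morphism of complexes; one must verify that the combinatorial coefficients are compatible with the wedge product's additivity of degrees, and get the Koszul signs right. This is purely a bookkeeping matter, completely analogous to the verification already performed for $\mu$ in Lemma~\ref{lem:ChernEul1}, so no new idea is needed — but it is the step where a careless computation would fail.
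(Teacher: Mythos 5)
Your plan is correct and follows the paper's own proof almost verbatim: define the bilinear pairing $\mu$ on the explicit representatives $P_\scbul$ and $Q_\scbul$ by the same formula \eqref{def:mu}, check $\opb{p_2}\OO$-bilinearity and compatibility with the differentials, and conclude via $H^0(\spb{\de})$, $H^0(\epb{\de})$. The one place the paper singles out — the degree $i+j=d_X+1$ where $Q_{i+j}=0$, saved by the vanishing of the coefficient $i+j-d_X-1$ — is subsumed in the identity you state, and your only deviation (writing the module on the left and the ring on the right, i.e.\ $Q_\scbul\otimes P_\scbul$ rather than the paper's $P_\scbul\otimes Q_\scbul$, which matches the module-structure morphism in Proposition~\ref{pro:HHOring}~(d)) is harmless.
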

\begin{proof}
We define the morphism 
$\mu_{ij}\cl P_i\tens[{\OO[X\times X]}] Q_j\to Q_{i+j}$ by the same formula as 
in~\eqref{def:mu}.
Then it commutes with the differential.
Indeed the proof is similar to that of Lemma~\ref{lem:ChernEul1}
except when $i+j=d_X+1$. In this case, 
\eqn
&&\mu d\bl(\omega_i\oplus\theta_{i+1})
\otimes(\omega_{j-1}\oplus\theta_{j})\br
=0\oplus(i+j-d_X-1)\omega_{i}\wedge\omega_{j-1}=0.
\eneqn
With this morphism $\mu\cl P_\scbul\tens[{\OO[X\times X]}] Q_\scbul\to
Q_\scbul$,
the following diagram in the category of complexes is commutative:
\eqn
\xymatrix{
{P_\scbul\tens[{\OO[X\times X]}] Q_\scbul}\ar[r]^(.6)\mu\ar[d]
&{Q_\scbul}\ar[d]\\
{P_\scbul\tens[{\OO[X\times X]}] \de{}_!\oo}\ar[r]&{\de{}_!\oo}\,.
}
\eneqn
Thus we have a commutative diagram in $\Db(\OO)$:
\eqn
&&\xymatrix@C=2.8ex{
{H^0(\spb{\de})P_\scbul\tens[\sho_X] H^0(\epb{\de})Q_\scbul}\ar[r]\ar[d]^{\bwr}
&{H^0(\epb{\de})(P_\scbul\tens[{\OO[X\times X]}] Q_\scbul)}\ar[r]\ar[d]
&{H^0(\epb{\de})(Q_\scbul)}\ar[d]\\
{\spb{\de}\oim{\de}\OO\lltens[\sho_X]\epb{\de}\eim{\de}\oo}\ar[r]
&{\epb{\de}(\oim{\de}\OO\lltens[{\OO[X\times X]}]\eim{\de}\oo)}\ar[r]
&{\epb{\de}\eim{\de}\oo\,.}\\
}
\eneqn
\end{proof}

Recall that in Corollary~\ref{cor:hoschdualizcp}, we have constructed
a morphism $\HHA[X]\tens\HHA[X]\to\omega_{X_\R}^{\rm top}$. Let us
describe its image via the isomorphisms $\alpha_X$ and $\beta_X$.
Consider the diagram
\eq\label{diag:hoschdualizcp} 
&&\ba{c}\xymatrix{
\HHO\tens\HHO\ar[d]_-\lambda\ar[rd]^-u&\\
\HOD[X]\tens\HOD[X]\ar[r]_-v    &\omega_{X_\R}^{\rm top}\,.
}\ea\eneq
Here, $u$ is the map given by Corollary~\ref{cor:hoschdualizcp},
$\lambda$ is the isomorphism $\alpha_X\tens\opb{\beta_X}$ and $v$ is the
composition
\eqn
&&\soplus_k\Omega_X^k[k]\tens\soplus_k \Omega_Y^k[k]\To\soplus_k\Omega_Y^k[k]
\to\omega_{X_\R}^{\rm top},
\eneqn
where the first morphism is given by the wedge product and the last
one by the map $\Omega_X^{d_X}[d_X]\to \omega_{X_\R}^{\rm top}$.
Then 
diagram~\eqref{diag:hoschdualizcp} commutes.

\chapter{Symplectic case and $\shd$-modules}\label{chapter:Symp}

\section{Deformation quantization on cotangent bundles}
\label{section:DQcotg}

Consider the case where $X$ is an open subset of the cotangent bundle $T^*M$
of a complex manifold $M$. We denote by $\pi\cl T^*M\to M$ the
projection. As usual, we denote by $\shd_M$ 
\index{D@$\shd_M$}%
the  $\C$-algebra of differential operators on $M$. This is a right
and left Noetherian  sheaf of rings.
 
The space $T^*M$ is endowed with the filtered sheaf of $\C$-algebras 
$\HE[T^*M]$ 
\index{Ehat@$\HE[T^*M]$}%
of formal microdifferential operators of \cite{S-K-K}, 
and its subsheaf $\HEo[T^*M]$ of operators of order $\leq 0$. 

On $T^*M$, there is also a $\DQ$-algebra, denoted by $\HWo[T^*M]$
\index{What@$\HWo[T^*M]$}%
and constructed in~\cite{P-S} as follows. 
Consider the complex line $\C$ endowed with the coordinate $t$ and
denote by $(t;\tau)$ the associated symplectic coordinates on
$T^*\C$. Let $T^*_{\tau\neq0}(M\times\C)$ be the open subset of
$T^*(M\times\C)$ defined by $\tau\neq0$ and consider the map
\eqn
&&\rho\cl T^*_{\tau\neq0}(M\times\C)\to T^*M,
\quad(x,t;\xi,\tau)\mapsto (x; {\tau}^{-1}\xi).
\eneqn
Denote by $\HEo[T^*(M\times\C),\widehat t]$ the subalgebra of $\HEo[T^*(M\times\C)]$
consisting of operators not depending on $t$, that is, commuting with
$\partial_t$. Setting $\hbar=\opb{\partial_t}$, the  $\DQ$-algebra $\HWo[X]$
is defined as
\eqn
&&\HWo[X]=\oim{\rho}\HEo[T^*(M\times\C),\widehat t].
\eneqn
One denotes by $\HW[T^*M]$ 
\index{W@$\HW[T^*M]$}%
the localization of
$\HWo[T^*M]$, that is, $\HW[T^*M]=\cor\tens[\coro]\HWo[T^*M]$.
\begin{remark}
One shall be aware that $\HE[T^*M]$ and $\HEo[T^*M]$ are denoted by 
$\HE[M]$ and $\HEo[M]$, respectively, in \cite{S-K-K}. Similarly,  
 $\HW[T^*M]$ and  $\HWo[T^*M]$ are denoted by 
$\HW[M]$ and $\HWo[M]$, respectively, in \cite{P-S}.
\end{remark}

There are natural morphisms of algebras
\eq\label{eq:DtoW}
&&\pi_M^{-1}\shd_M\hookrightarrow \HE[T^*M]\hookrightarrow  \HW[T^*M].
\eneq
\begin{lemma}\label{lem:WfffE}
\banum
\item
The algebra $\HWo[T^*M]$ is faithfully flat over $\HEo[T^*M]$.
\item
The algebra $\HW[T^*M]$ is faithfully flat over $\HE[T^*M]$.
\item
 $\HE[T^*M]$ is flat over $\pi_M^{-1}\shd_M$.
\eanum
\end{lemma}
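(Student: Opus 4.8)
The statement has three parts, and the plan is to treat (a) and (b) together, deducing (b) from (a) by $\hbar$-localization, and to treat (c) by the classical theory of microdifferential operators.

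For part (a), the key point is that $\HWo[T^*M]$ is, by its very construction via $\rho$ and the subsheaf of $\HEo[T^*(M\times\C)]$ commuting with $\partial_t$, built from $\HEo[T^*M]$ essentially by adjoining the central parameter $\hbar=\opb{\partial_t}$ and $\hbar$-completing. More precisely, I would argue as follows. First, recall (from \cite{S-K-K}) that the graded ring $\gr_\hbar\HWo[T^*M]\simeq\HEo[T^*M]$ — or rather that $\HWo[T^*M]/\hbar\HWo[T^*M]\simeq\HEo[T^*M]$ — and that $\HWo[T^*M]$ satisfies the hypotheses \eqref{eq:FDringa} (no $\hbar$-torsion, $\hbar$-complete) with $\shao=\HEo[T^*M]$. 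Then apply Theorem~\ref{th:flat}: for an $\HEo$-module to lift to a flat $\HWo$-module it suffices that the base change $\HWo\tens[\HEo]\HEo = \HWo$ be $\hbar$-torsion free, cohomologically complete, and flat modulo $\hbar$ — but modulo $\hbar$ we recover $\HEo$ itself, which is trivially $\HEo$-flat. The cohomological completeness of $\HWo$ as a module over itself follows from Theorem~\ref{th:cohimplcohco} (it is coherent over itself by the Noetherianity discussion following the $\DQ$-algebra definition, since $\HWo$ is a $\DQ$-algebra). This gives flatness; for \emph{faithful} flatness one then invokes the Corollary to Theorem~\ref{th:flat}, noting that $\HWo/\hbar\HWo\simeq\HEo$ is faithfully flat over $\HEo$ (it is the identity). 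The main obstacle here is checking carefully that the construction of $\HWo$ in \cite{P-S} really does identify $\HWo/\hbar\HWo$ with $\HEo$ and that $\HWo$ has no $\hbar$-torsion; this is where one must unwind the definition $\HWo[X]=\oim{\rho}\HEo[T^*(M\times\C),\widehat t]$ and the role of $\hbar=\opb{\partial_t}$.

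For part (b), I would simply tensor with $\cor$ over $\coro$: since $\HW[T^*M]=\cor\tens[\coro]\HWo[T^*M]$ and $\HE[T^*M]$ is likewise the $\hbar$-localization (in the sense of order filtration) of $\HEo[T^*M]$, flatness and faithful flatness are preserved under the exact localization functor, which commutes with tensor products; faithfulness is preserved because the localization functor is exact and conservative on the relevant subcategories, as in \eqref{eq:grrmpt}-type arguments. One must be mildly careful that $\HW$ is obtained from $\HWo$ by inverting $\hbar$ whereas $\HE$ is obtained from $\HEo$ by inverting the order, but these two localizations are compatible under the morphisms \eqref{eq:DtoW}, so flatness descends.

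For part (c), this is the classical faithful flatness of $\HE[T^*M]$ over $\pi_M^{-1}\shd_M$, originally due to \cite{S-K-K}. The plan is just to cite it: $\HEo[T^*M]$ is flat over $\pi^{-1}F_0(\shd_M)$ by the theory of the order filtration, the associated graded being $\pi^{-1}\sho_{T^*M}$, and passing to the localization (inverting order) one gets flatness of $\HE[T^*M]$ over $\pi^{-1}\shd_M$; faithfulness follows since $\shd_M\hookrightarrow\HE[T^*M]$ and the support of any nonzero coherent $\shd_M$-module meets the zero section, where $\HE$ restricts to $\shd_M$. I expect part (a) to be the genuine content and the place where one has to invest the work; (b) and (c) should follow formally or by citation.
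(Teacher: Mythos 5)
Your plan for part (a) rests on the claim that $\HWo[T^*M]/\hbar\HWo[T^*M]\simeq\HEo[T^*M]$, and this is false. Since $\HWo[T^*M]$ is a $\DQ$-algebra on $X=T^*M$, we have $\HWo[X]/\hbar\HWo[X]\simeq\sho_X$, not $\HEo[X]$. The ring $\HEo[X]$ is a $\coro$-subalgebra of $\HWo[X]$, not its mod-$\hbar$ reduction, and $\hbar$ is not even a section of $\HEo[X]$ (it lies in $\HEo[X](-1)$, roughly). Because of this, the framework of \eqref{eq:FDringa}/Theorem~\ref{th:flat} that you want to invoke — with $\shao=\HEo$ — simply does not apply: those results require $\shao=\sha/\hbar\sha$ to be the actual quotient by a central section $\hbar$, and in your setting that quotient is $\sho_X$. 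The entire content of proving (a) is precisely in relating the $\hbar$-adic filtration of $\HWo$ to the order filtration of $\HEo$, and you have conflated the two. The correct relationship, which the paper exploits, is the isomorphism
\[
\gr_\hbar\bl\HWo[X]\lltens[{\HEo[X]}]\shm\br\simeq\sho_X\lltens[{\sho_X(0)}]\gre(\shm),
\]
where $\sho_X(0)$ is the sheaf of functions homogeneous of degree $0$ in the fiber and $\gre$ is the graded module for the order filtration on $\HEo$. Flatness then follows by splitting coherent $\HEo$-modules into order-torsion-free and order-torsion pieces, using Proposition~\ref{pro:grHa} for the former and a direct computation $\HWo\lltens[{\HEo}]\shm\simeq\sho_X\lltens[{\sho_X(0)}]\shm$ for the latter, plus the dummy-variable trick near the zero section where $\sho_X(0)$ does not make sense. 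Faithfulness then uses conservativity of $\gre$ and faithful flatness of $\sho_X$ over $\sho_X(0)$, not the Corollary to Theorem~\ref{th:flat}.

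Your outline for (b) is also too loose: $\HE[X]$ is not obtained from $\HEo[X]$ by inverting $\hbar$ but by allowing positive order, and the two localizations are genuinely different; the paper deduces (b) from (a) away from the zero section by the explicit formula $\HW[X]\simeq\HWo[X]\tens[{\HEo[T^*\C]}]\HE[T^*\C]$ (and the dummy-variable trick near it), not by any general ``compatibility of localizations'' argument. Your (c), citing the classical flatness of $\HE$ over $\pi^{-1}\shd_M$, matches the paper, which references \cite[Th.~7.25]{Ka2}.
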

\begin{proof}
In the sequel, we set $X=T^*M$.
For an $\HEo[X]$-module $\shm$, we set 
\eqn
&&\shm^\rmW\eqdot \HWo[X]\tens[{\HEo[X]}]\shm, \\
&&\gre(\shm)=\bl\HEo[X]/\HE[X](-1)\br\lltens[{\HEo[X]}]\shm.
\eneqn
Note that the  analogue of Corollary~\ref{cor:conservative1} holds for
$\HEo[X]$-modules, that is, the functor $\gre$ above is conservative
on $\Derb_\coh(\HEo[X])$. We have
\eq\label{eq:grrmWgr}
&&\gr(\shm^\rmW)\simeq \OO[X]\tens[\sho_X(0)]\gre(\shm),
\eneq
where $\sho_X(0)$ denotes the subsheaf of $\OO[X]$ of sections
homogeneous of degree $0$ in the fiber variable of the vector bundle $T^*M$, 
and $\OO[X]$ is faithfully flat over $\sho_X(0)$.

\vspace{0.2cm}
\noindent
(a)~(i) Let us first prove the result outside of the zero-section, that
is, on $T^*M\setminus T_M^*M$.
Let us show that 
\eq
&&H^j(\HWo[X]\lltens[{\HEo[X]}]\shm)=0\quad\text{
for any $j<0$}
\label{eq:weplat}
\eneq
holds for any coherent $\HEo[X]$-module $\shm$.
\vspace{0.1cm}
\noindent
First assume that $\shm$ is torsion-free, i.e.,
$\HE(-1)\tens[{\HEo}]\shm\to\shm$ is a monomorphism.
Since $\OO[X]$ is flat over $\sho_X(0)$, 
$$\gr(\HWo[X]\lltens[{\HEo[X]}]\shm)\simeq
\OO[X]\lltens[\sho_X(0)]\gre(\shm)$$
has zero cohomologies in degree $<0$.
Hence Proposition~\ref{pro:grHa} implies
\eqref{eq:weplat}.

\vspace{0.1cm}
\noindent
Now assume that
$\HE(-1)\shm=0$. Then we have
\eqn
\HWo[X]\lltens[{\HEo[X]}]\shm
&\simeq&
\HWo[X]\lltens[{\HEo[X]}]\HEo[X]\lltens[{\HEo[X]}]\sho_X(0)\lltens[\sho_X(0)]
\shm\\
&\simeq&
\HWo[X]\lltens[{\HEo[X]}]\sho_X(0)\lltens[\sho_X(0)]\shm\\
&\simeq&
\OO\lltens[\sho_X(0)]\shm,
\eneqn
which implies \eqref{eq:weplat}.

Since any coherent $\HEo$-module is a successive extension of 
torsion-free $\HEo$-modules and $(\HEo/\HE(-1))$-modules,
we obtain 
\eqref{eq:weplat} for any coherent $\HEo$-module.

\vspace{0.1cm}
\noindent

Consider a coherent $\HEo$-module $\shm$ and assume that
$\shm^\rmW\simeq0$. Then $\gr(\shm^\rmW)\simeq0$ and this implies that 
$\gre(\shm)\simeq0$ in view of~\eqref{eq:grrmWgr}
since $\OO[X]$ is faithfully flat over $\sho_X(0)$. Since $\gre$ is conservative, the
result follows.

\vspace{0.2cm}
\noindent
(a)~(ii) To prove the result in a neighborhood of the zero section, we use
the classical trick of the dummy variable. Let $(t;\tau)$ denote a
homogeneous symplectic coordinate system on $T^*\C$.
Consider the functors
\eqn
\alpha\cl\mdcoh[{\OO[M]}]&\to&\mdcoh[{\HEo[{X\times T^*\C}]\vert_{\tau\not=0}}],\\
 \shm&\mapsto&\shm\detens(\HEo[\C]/\HEo[\C]\cdot t),\\
\beta\cl\mdcoh[{\HWo[X\vert_M]}]&\to&\mdcoh[{\HWo[{X\times T^*\C}]\vert_{\tau\not=0}}],\\
 \shm&\mapsto&\shm\detens(\HWo[T^*\C]/\HWo[T^*\C]\cdot t).
\eneqn
These two functors $\alpha$ and $\beta$ are exact and faithful. Then
the result follows from (a)~(i).

\vspace{0.2cm}
\noindent
(b)~(i) Here again, we prove the result first on $T^*M\setminus T^*_MM$. 
In this case, it follows from the isomorphism
\eqn
&&\HW[X]\simeq \HWo[X]\tens[{\HEo[T^*\C]}]\HE[T^*\C].
\eneqn 

\vspace{0.2cm}
\noindent
(b)~(ii) The case of the zero-section is deduced from (b)~(i)
similarly as for (a).

\vspace{0.2cm}
\noindent
(c) is proved for example in~\cite[Th.~7.25]{Ka2}. 
\end{proof}
Recall that for a coherent $\shd_M$-module $\shm$, the support of 
$\HE[T^*M]\tens[\opb{\pi_M}\shd_M]\opb{\pi_M}\shm$ is called its
characteristic variety and denoted by $\chv(\shm)$. 
\glossary{characteristic variety}\index{chv@$\chv$}%
It is a closed
$\C^\times$-conic complex analytic involutive subset of $T^*M$.

Now assume that $M$ is open in some finite-dimensional 
$\C$-vector space.
Denote by $(x)$ a linear coordinate system on $M$ and by 
$(x;u)$ the associated  symplectic coordinate system on $T^*M$.
Let $f,g\in\OO[X]\forl$. In this case, the $\DQ$-algebra $\HWo[X]$ is
isomorphic to the star algebra $(\OO[X]\forl,\star)$ where:
\eq\label{eq:wstar}
f\star g
&=&\sum_{\alpha\in\N^n} \dfrac{\hbar^{\vert\alpha\vert}}{\alpha !} 
(\partial^{\alpha}_uf)(\partial^{\alpha}_xg).
\eneq
This product is similar to the product of the total symbols of 
differential operators on $M$ and indeed, the
morphism of $\C$-algebras $\pi_M^{-1}\shd_M\To \HW[X]$ is given by
\eqn
f(x)\mapsto f(x),&&\partial_{x_i}\mapsto \hbar^{-1}u_i.
\eneqn

Note that there also exists an analytic version of $\HE[T^*M]$ and $\HW[T^*M]$,
obtained by using the $\C$-subalgebra  of $(\OO[X]\forl,\star)$ consisting of 
sections $f=\sum_{j\geq 0}f_j\hbar^j$ of $\sho_X\forl(U)$ 
($U$ open in $T^*M$) satisfying:
\eq\label{eq:defW2}
&&\left\{  \parbox{58ex}{
for any compact subset $K$ of $U$ there exists a positive 
constant $C_K$ such that 
$\sup\limits_{K}\vert f_{j}\vert \leq C_K^{j}j!$ for all $j> 0$.
}\right. 
\eneq
They are the total symbols of the analytic 
(no more formal) microdifferential operators of \cite{S-K-K}.

\begin{remark}\label{rem:canonicA}
(i) Let $X$ be a complex symplectic manifold. Then
 $X$ is locally isomorphic to an open subset of a cotangent 
bundle $T^*M$, for a complex manifold $M$ (Darboux's theorem), and it
is  a well-known fact that if $\A$ is a $\DQ$-algebra and the
associated Poisson structure is the symplectic structure of $X$, then 
$\A$ is locally isomorphic to $\HWo[T^*M]$.

\noindent
(ii) On $X$, there is a canonical $\DQ$-algebroid, still denoted
by $\HWo[X]$. 
It has been constructed in \cite{P-S}, after 
\cite{Ka1} had first treated  the contact case. Clearly, any $\DQ$-algebroid $\sha$ is equivalent to 
$\HWo[X]\tens[\coro_X]\shp$, where $\shp$ is an invertible $\coro_X$-algebroid.
It follows that the $\DQ$-algebroids on $X$ are
classified by  $H^2(X;(\coro_X)^\times)$. See~\cite{Po}  for a detailed study.

\noindent
(iii) Using~\eqref{eq:HHAP}, we get the isomorphism
\eq
&&\HHA[X]\simeq\HHH[{\HWo[X]}].
\eneq
\end{remark}

\section{Hochschild homology of $\A[]$}

Throughout this section, $X$ denotes a complex manifold 
endowed with a $\DQ$-algebroid $\A$ such that the associated Poisson structure
is symplectic. Hence, $X$ is symplectic and we
denote by $\alpha_X$ the symplectic $2$-form on $X$. 

We set $2n=d_X$, $Z=X\times X^a$ and we denote by $dv$ the volume form
on $X$ given by $dv=\alpha_X^{n}/n!$.

\begin{lemma}\label{le:simpleAmod1}
Let $\Lambda$ be a smooth Lagrangian submanifold of $X$ and let 
$\shl_i$ \lp$i=0,1$\rp\, be simple $\A$-modules along $\Lambda$. Then:
\bnum
\item
$\shl_0$ and $\shl_1$ are locally isomorphic,
\item
the natural morphism $\coro\to\hom[{\A}](\shl_0,\shl_0)$ is an isomorphism.
\enum
\end{lemma}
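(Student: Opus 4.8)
The statement is local, so I would immediately reduce to the case where $\A$ is a $\DQ$-algebra on $X$, and by Darboux's theorem together with Remark~\ref{rem:canonicA}~(i) I may assume $X$ is an open subset of a cotangent bundle $T^*M$ and $\A\simeq\HWo[X]$. Moreover, after a further (local) symplectic change of coordinates I may assume that $\Lambda$ is the zero-section, so that $\gr(\shl_i)$ is locally isomorphic to $\sho_\Lambda$ by the definition of a simple module (Definition~\ref{def:simplemod}). The two facts to prove then become: (i) any two such $\shl_0,\shl_1$ are locally isomorphic as $\A$-modules, and (ii) $\hom[{\A}](\shl_0,\shl_0)\simeq\coro$.

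\textbf{Key steps for (i).} Since the question is local, I would construct an isomorphism by successive approximation in the $\hbar$-adic filtration. First, $\gr(\shl_0)\simeq\gr(\shl_1)$ locally (both being invertible $\sho_\Lambda$-modules of rank one), so there is an isomorphism $\ol u_0\cl\gr(\shl_0)\isoto\gr(\shl_1)$. Because $\shl_0$ has no $\hbar$-torsion and is $\hbar$-complete (Theorem~\ref{th:formalfini1}~\eqref{th:coh:int}) and is coherent, I would lift $\ol u_0$ to a morphism $u\cl\shl_0\to\shl_1$ using that $\shl_0$ is locally of finite presentation and $\sect(U;\shl_1)\to\sect(U;\shl_1/\hbar\shl_1)$ is surjective on an appropriate base; then $\gr(u)=\ol u_0$ is an isomorphism, so $u$ is an isomorphism by Nakayama's lemma (Lemma~\ref{le:nakayamacor}) applied to $\ker u$ and $\coker u$, exactly as in Lemma~\ref{lem:free}. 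This gives the local isomorphism $\shl_0\simeq\shl_1$.

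\textbf{Key steps for (ii).} There is a canonical morphism $\coro\to\hom[{\A}](\shl_0,\shl_0)$, and by (i) it suffices to compute $\hom[{\A}](\shl,\shl)$ for $\shl$ a chosen simple module along $\Lambda$; taking $X=T^*M$, $\Lambda=T^*_MM$ the zero-section and $\shl$ the obvious simple module (corresponding to $\OO[M]$, i.e.\ $\HWo[X]/\HWo[X]\cdot(u_1,\dots,u_n)$ in the coordinates of \S~\ref{section:DQcotg}), I would use that $\A\simeq\HWo[X]$ is flat over $\HEo[T^*M]$ and over $\pi_M^{-1}\shd_M$ (Lemma~\ref{lem:WfffE}), and reduce to the known computation $\hom[{\shd_M}](\OO[M],\OO[M])\simeq\C_M$, i.e.\ the de Rham statement that the only differential operators commuting with all $\partial_{x_i}$ acting trivially on $1$ are the scalars. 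More directly, one can argue $\hbar$-adically: $\hom[{\A}](\shl,\shl)$ is $\hbar$-complete and torsion-free (it is a submodule of $\prolim[n]\hom[{\A}](\shl,\shl/\hbar^n\shl)$), and $\gr\hom[{\A}](\shl,\shl)\hookrightarrow\hom[{\gr\A}](\gr\shl,\gr\shl)\simeq\hom[{\OO[\Lambda]}](\sho_\Lambda,\sho_\Lambda)\simeq\sho_\Lambda$; but an endomorphism of $\shl$ which commutes with the $\A$-action and which in $\gr$ is a non-constant function on $\Lambda$ is obstructed — this is precisely the computation carried out in Example~\ref{exa:notisoC}, where an equation of the form $\{x_2,\sigma_0(a)\}=0$-type forces $\sigma_0(a)$ to be locally constant because $\Lambda$ is Lagrangian and the Hamiltonian flows of the coordinate functions span the tangent directions. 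Hence $\gr\hom[{\A}](\shl,\shl)\simeq\C_\Lambda$ and by conservativity of $\gr$ and $\hbar$-completeness the natural map $\coro\to\hom[{\A}](\shl,\shl)$ is an isomorphism.

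\textbf{Main obstacle.} The genuinely nontrivial point is (ii): showing that a $\gr$-class of an endomorphism which is a \emph{non-constant} holomorphic function on the Lagrangian $\Lambda$ cannot be lifted to an actual $\A$-linear endomorphism, equivalently that $\hom[{\gr\A}](\gr\shl,\gr\shl)\to\hom[{\A/\hbar^2}](\shl/\hbar^2,\shl/\hbar^2)$ already kills the non-constant part. The clean way is to pin down the first obstruction class as a Hamiltonian derivative $\{f,\scbul\}$ and use that $\Lambda$ being Lagrangian means the Hamiltonian vector fields of a local coordinate system transverse to $\Lambda$ generate $T\Lambda$, so $\{f,\scbul\}|_\Lambda=0$ forces $f|_\Lambda$ constant — the same mechanism as in Example~\ref{exa:notisoC}. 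Once this is established, everything else is the routine $\hbar$-adic bootstrapping already used repeatedly in Chapter~\ref{chapter:FD}.
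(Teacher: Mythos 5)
Your approach to part (ii) is essentially in the right spirit (the paper computes $\hom[\A](\shl_0,\shl_0)$ directly as the kernel of the Koszul map $(\hbar\partial_1,\dots,\hbar\partial_n)$, which is $\coro$ by the $\hbar$-scaled de Rham fact; your detour through flatness over $\shd_M$ isn't quite right as stated, but the core computation is the right one). However, your proposed proof of part (i) has a genuine gap that cannot be repaired by the argument you outline.

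The $\hbar$-adic bootstrap you sketch for (i) would, if valid, require no symplectic hypothesis at all: you propose to lift the $\gr$-isomorphism $\gr(\shl_0)\isoto\gr(\shl_1)$ to an $\A$-linear morphism $\shl_0\to\shl_1$ and finish with Nakayama. But lifting a $\gr$-isomorphism to a genuine $\A$-linear morphism is exactly where the obstruction lies, and it is nonzero in general. Finite presentation plus surjectivity of $\sect(U;\shl_1)\to\sect(U;\shl_1/\hbar\shl_1)$ only lets you lift a generator of $\shl_0$ to a \emph{section} of $\shl_1$; it does not ensure that section is annihilated by the defining ideal of $\shl_0$, which is what would make the assignment $\A$-linear. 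Indeed, Example~\ref{exa:notisoC} (which you yourself cite for part (ii)) exhibits two coherent modules simple along the diagonal, with the same $\gr$, that are \emph{not} locally isomorphic near $x_1=0$; and the remark immediately following the statement of Lemma~\ref{le:simpleAmod1} in the paper points out that the lemma fails without the symplectic hypothesis precisely because of this example. So any purely $\hbar$-adic argument that does not use the Lagrangian/symplectic structure cannot prove (i). The paper's proof uses a genuinely different and nontrivial ingredient: after writing $\shl_0=\A/\shi_0$ with $\shi_0=\A(\hbar\partial_1,\dots,\hbar\partial_n)$ and $\shl_1=\A/\shi_1$ with $\shi_1\supset\{\hbar\partial_i+\hbar P_i\}$, it identifies $\HWo[T^*M]$ with microdifferential operators of order $\le 0$ and invokes the classical division theorem of \cite{S-K-K} (a quantized-contact / Spath-Weierstrass type result) to produce an invertible $P\in\A$ with $\shi_0=\shi_1 P$, hence $\shl_0\simeq\shl_1$. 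This division theorem is where the Lagrangian geometry enters; your proposal omits it entirely, and without it the argument does not go through.
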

Note that the lemma above does not hold if one removes the hypothesis 
that $X$ is symplectic (see Example~\ref{exa:notisoC}).
\begin{proof}
(i) We may assume that $X=T^*M$ for a complex manifold $M$, $\A=\HWo[T^*M]$.
Choose a local coordinate system $(x_1,\dots,x_n)$ on $M$, 
and denote by $(x;u)$ the associated coordinates on $X$. 
We shall identify the section $u_i$ of $\A$ with the differential
operator $\hbar\partial_i$. 

We may assume that $\Lambda$ is the zero-section $T^*_MM$ and 
$\shl_0=\OO[M]\forl\simeq\A/\shi_0$, where $\shi_0$ is the left ideal generated by 
$(\hbar\partial_1,\dots,\hbar\partial_n)$.
Since $\shl_1$ is simple, it locally admits a generator, say $u$. 
Denote by $\shi_1$ the annihilator ideal of $u$ in $\A$. Since
$\shi_1/\hbar\shi_1$ is reduced, there exist 
sections $(P_1,\cdots,P_n)$ of $\A$ such that 
\eqn
&&\{\hbar\partial_1+\hbar P_1,\dots,\hbar\partial_n+\hbar P_n\}\subset \shi_1.
\eneqn
By identifying $\HWo[T^*M]$ with the sheaf of microdifferential
operators of order $\leq 0$ in the variable 
$(x_1,\dots,x_n,t)$ not depending on $t$ and $\hbar$ with
$\opb{\partial_t}$, a classical result of \cite{S-K-K}
(see also \cite[Th~6.2.1]{Sc} for an exposition) shows that
there exists an invertible section $P\in\A$ such that $\shi_0=\shi_1P$. 
Hence, $\shl_1\simeq\shl_0$.

\vspace{0.2cm}
\noindent
(ii) We may assume $\shl_0=\OO[M]\forl$. Then 
$\hom[{\A}](\OO[M]\forl,\OO[M]\forl)$ is isomorphic to the kernel of the map
\eqn
&&u\cl \OO[M]\forl\to (\OO[M]\forl)^n,\quad u=(\hbar\partial_1,\dots,\hbar\partial_n).
\eneqn
\end{proof}
Recall that the objects $\OA$ and $\oA$ are defined in \S~\ref{section:dual}.

\begin{lemma}\label{le:symp1}
There exists a  local system $L$ of rank one over
$\coro_X$ such that $\OA\simeq L\tens[\coro_X]\dA$ in $\md[{\A[X\times X^a]}]$.
\end{lemma}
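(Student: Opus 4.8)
The goal is to produce a rank-one local system $L$ over $\coro_X$ together with an isomorphism $\OA\simeq L\tens[\coro_X]\dA$ of $(\A[X]\tens\A[X^a])$-modules. The starting point is that both $\OA$ and $\dA$ (viewed, via $\oim{\de}$, as objects of $\md[{\A[X\times X^a]}]$ supported by the diagonal) are simple $\A[X\times X^a]$-modules along $\Delta_X$: for $\dA$ this is Corollary~\ref{co:existCD}, and for $\OA$ it is the content of the lemma preceding the statement (the one asserting $\OA$ is bi-invertible, where it is shown that $\gr(\OA)\simeq\Omega_X$ is an invertible $\sho_X$-module, $\OA$ has no $\hbar$-torsion, and $\OA\isoto\prolim[n]\OA/\hbar^n\OA$; together with Theorem~\ref{th:formalfini1} this makes $\oim{\de}\OA$ simple along the diagonal). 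Note however that $X\times X^a$ is \emph{not} symplectic, so Lemma~\ref{le:simpleAmod1} cannot be invoked directly on $X\times X^a$; instead I would work with the sheaf $\shh\eqdot\hom[{\A[X\times X^a]}](\dA,\OA)$ directly.

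\textbf{Key steps.} First, I would set $\shh\eqdot\hom[{\A[X\times X^a]}](\dA,\OA)$ and show that $\shh$ is a local system of rank one over $\coro_X$ (supported on the diagonal, hence identified with a sheaf on $X$). Locally, where $\A$ is a $\DQ$-algebra, one has $\dA\simeq\oim{\de}\OO[X]\forl$ and $\OA\simeq\oim{\de}\Omega_X\forl$ as $(\A\tens\A^\rop)$-modules, both simple along $\Delta_X$; by Lemma~\ref{lem:inve} each is locally isomorphic to $\A$ as an $\A$-module, so locally $\hom[{\A[X\times X^a]}](\dA,\OA)$ is isomorphic to $\hom[{\A[X\times X^a]}](\dA,\dA)$, and I would compute the latter. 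Using Corollary~\ref{co:existCD}, $\A[X\times X^a]\tens[{\A\etens\A[X^a]}]\dA\isoto\dA$, so $\hom[{\A[X\times X^a]}](\dA,\dA)\simeq\hom[{\A\tens\A^\rop}](\A,\A)$, which is the center of $\A$; since $\A$ is a $\DQ$-algebra with associated symplectic Poisson structure, its center is $\coro_X$ (this uses that $\{\scbul,\scbul\}$ is nondegenerate: an element $a$ with $\sigma_0(a)$ Poisson-central must have $\sigma_0(a)$ locally constant, then one lifts order by order as in the proof of Proposition~\ref{pro:OstartoD}). Hence $\shh$ is locally free of rank one over $\coro_X$; being a subsheaf-type $\hom$ with no $\hbar$-torsion, it is in fact a $\coro_X$-local system $L$.

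\textbf{Conclusion.} Second, I would build the evaluation morphism $L\tens[\coro_X]\dA=\shh\tens[\coro_X]\dA\to\OA$ in $\md[{\A[X\times X^a]}]$, sending $\phi\tens m\mapsto\phi(m)$. This is $\A[X\times X^a]$-linear by construction. To check it is an isomorphism it suffices to check it locally, where $L$ is trivialized by a generator $\phi_0$ which (by the rank-one computation above) is an isomorphism $\dA\isoto\OA$ of $\A[X\times X^a]$-modules; then the evaluation map is literally $\phi_0\tens(\scbul)$, an isomorphism. Alternatively, one can apply $\gr$ and invoke Corollary~\ref{cor:conservative1}: $\gr(L\tens[\coro_X]\dA)\simeq L_0\tens[\C_X]\oim{\de}\OO[\Delta_X]$ and $\gr(\OA)\simeq\oim{\de}\Omega_X$, both invertible, and the induced morphism is an isomorphism fiberwise.

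\textbf{Main obstacle.} The delicate point is the rank-one computation of $\hom[{\A[X\times X^a]}](\dA,\OA)$ as a $\coro_X$-module, i.e.\ the identification of the center of a symplectic $\DQ$-algebra with $\coro_X$: one must rule out ``extra'' endomorphisms that could appear at positive $\hbar$-order. This is where the symplectic (nondegeneracy) hypothesis on the Poisson bracket is genuinely used, via an order-by-order argument analogous to the verification in Proposition~\ref{pro:OstartoD} and Lemma~\ref{le:gauge1}; away from this, the argument is formal bookkeeping with $\gr$, conservativity, and simplicity along the diagonal.
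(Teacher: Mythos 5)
Your premise that Lemma~\ref{le:simpleAmod1} cannot be applied on $X\times X^a$ is incorrect, and that error propagates into a genuine gap in your argument. If the Poisson structure on $X$ associated to $\A$ is symplectic with form $\omega_X$, then the opposite algebroid $\A[X^a]$ induces the opposite Poisson structure, so the external product on $X\times X^a$ carries the non-degenerate Poisson bivector corresponding to $\omega_X\boxplus(-\omega_X)$. Thus $X\times X^a$ \emph{is} symplectic, and the diagonal $\Delta_X$ is a smooth Lagrangian submanifold of it. The paper's proof simply observes that $\dA$ and $\OA$ are both simple $\A[X\times X^a]$-modules along $\Delta_X$ and applies Lemma~\ref{le:simpleAmod1} (with $X$ replaced by $X\times X^a$ and $\Lambda=\Delta_X$): part~(i) gives the local isomorphism, part~(ii) gives $\shend\simeq\coro$, and together they yield $L\eqdot\hom[{\A[X\times X^a]}](\dA,\OA)$ of rank one with $\OA\simeq L\tens[\coro_X]\dA$.

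Because you decline to use Lemma~\ref{le:simpleAmod1}, your reduction of $\hom[{\A[X\times X^a]}](\dA,\OA)$ to $\hom[{\A[X\times X^a]}](\dA,\dA)$ is not justified. You infer this from "each is locally isomorphic to $\A$ as an $\A$-module" (via Lemma~\ref{lem:inve}), but being locally isomorphic as $\A$-modules does \emph{not} imply being locally isomorphic as $\A[X\times X^a]$-modules (equivalently as $\A\tens\A[X^a]$-modules). The paper makes precisely this warning after Theorem~\ref{th:oARDA} and illustrates the failure in Example~\ref{exa:notisoC}, where $\OA$ and $\dA$ are \emph{not} locally isomorphic as $\A[X\times X^a]$-modules for a non-symplectic $\DQ$-algebra. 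The local isomorphism of $\dA$ and $\OA$ as $\A[X\times X^a]$-modules is exactly the content of Lemma~\ref{le:simpleAmod1}~(i), a nontrivial result using the Lagrangian hypothesis (ultimately resting on the \cite{S-K-K} structure theorem for simple systems). So your "key step" either silently uses the lemma you are trying to avoid, or is incomplete. Once this reduction is granted, your center computation $\hom[{\A\tens\A^\rop}](\A,\A)\simeq Z(\A)\simeq\coro_X$ is correct (and is the content of Lemma~\ref{le:simpleAmod1}~(ii)), and the construction of the evaluation isomorphism $L\tens[\coro_X]\dA\to\OA$ at the end is sound.
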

\begin{proof}
Both $\OA$ and $\dA$ are simple $\A[X\times X^a]$-modules
along the diagonal $\Delta$.
By Lemma~\ref{le:simpleAmod1}, 
$L\seteq\hom[{\A[Z]}](\dA,\OA)$ is a local system of rank one over
$\coro$ and we have $\OA\simeq L\tens[\coro_X]\dA$. 
\end{proof}
Note that this implies the isomorphisms
\eq
\RDAA[X\times X^a]\dA&\simeq& L^{\otimes-1}\tens\dA\,[-d_X].\label{eq:symp0}
\eneq
Hence we obtain the chain of morphisms
\eqn
L&\to&L\tens\rhom[{\A[Z]}](\dA,\dA)
\simeq L\tens\RDAA[X\times X^a]\dA\lltens[{\A[Z]}]\dA\\
&\simeq&\dA\lltens[{\A[Z]}]\dA\,[-d_X]=\HHA\,[-d_X]
\simeq L^{\otimes-1}\tens\OA\lltens[{\A[Z]}]\dA\,[-d_X]\\
&\to&L^{\otimes-1}\tens\OA\lltens[{\DA}]\dA\,[-d_X]
\simeq L^{\otimes-1}.
\eneqn
Therefore, we get the morphism:
\eq\label{eq:symp1}
&&L\isoto H^{-d_X}(\HHA)\to L^{\otimes-1}.
\eneq

\begin{lemma}\label{le:symp2}
\bnum
\item $\gr(L)\to\hom[{\gr(\A[Z])}](\gr(\dA),\gr(\OA))\simeq\Omega_X$
gives an isomorphism 
$\gr(L)\isoto\C_X\cdot dv$.
\item
The morphism 
$L^{\otimes 2}\to\coro_X$ induced by \eqref{eq:symp1} decomposes as
$L^{\otimes 2}\to[{\;\phi\;}]\hbar^{2n}\coro_X\hookrightarrow \coro_X$ and
$\phi$ is an isomorphism.
\item
The diagram below commutes:
\eqn
&&\xymatrix@C=7ex{\gr(L^{\otimes2})\ar[r]^(.45)\sim\ar[d]^\bwr&
\gr(\hbar^{2n}\coro_X)&\gr(\coro_X)\ar[l]_(.4){\sim}^(.4){\hbar^{2n}}
\ar[d]^\bwr\\
(\gr(L))^{\otimes2}\ar[r]^\sim&\C_X^{\otimes2}\ar@{-}[r]^\sim&\C_X
}\eneqn
\enum
\end{lemma}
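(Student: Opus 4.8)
The three assertions are all "graded" versions of the morphism \eqref{eq:symp1}, so the strategy is to reduce everything to local computations on $X=T^*M$ with $\A=\HWo[T^*M]$ (using Darboux's theorem and Remark~\ref{rem:canonicA}), where all the objects $\dA$, $\OA$, $\oA$, $L$ become explicit. First I would unwind the construction of $L$: by Lemma~\ref{le:symp1}, $L=\hom[{\A[Z]}](\dA,\OA)$, so $\gr(L)$ maps naturally to $\hom[{\gr\A[Z]}](\gr\dA,\gr\OA)$ via the functoriality of $\gr$ and the fact that $\gr$ commutes with $\hom$ between coherent modules up to the term computed in Lemma~\ref{lem:grHa} (here there is no $\tor^1$ contribution because $L$ has no $\hbar$-torsion, being a local system over $\coro_X$). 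Under the equivalence $\gr\DA\simeq\shd_X$ and $\gr\A\simeq\OO$, one has $\gr\dA\simeq\dO$ and $\gr\OA\simeq\oim{\de}\Omega_X$ (Lemma~\ref{le:DAalg1}(ii)), so $\hom[{\gr\A[Z]}](\gr\dA,\gr\OA)\simeq\de^{-1}\Omega_X\simeq\Omega_X$. Tracing the image of the canonical generator of $\gr(L)$ through these identifications gives a nonzero constant multiple of the volume form $dv=\alpha_X^n/n!$; one must check the constant is nonzero, which follows from the fact that all the maps involved are isomorphisms or injective on the relevant associated graded pieces. This proves (i).

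For (ii), I would track the $\hbar$-order (the "degree" in the $\hbar$-adic filtration) along the chain of morphisms defining \eqref{eq:symp1}. The composite $L^{\otimes2}\to\coro_X$ is built from: the isomorphism $L\isoto H^{-d_X}(\HHA)$ (order $0$), the duality isomorphism \eqref{eq:symp0} $\RDAA[X\times X^a]\dA\simeq L^{\otimes-1}\tens\dA[-d_X]$, and the evaluation $\OA\lltens[\DA]\dA[-d_X]\simeq\coro_X$ from Lemma~\ref{le:DAalg2}. The key point is that, with $d_X=2n$, passing from the dualizing module $\oA$ (which is $\OA$ shifted) to $\dA$ costs exactly a factor $\hbar^{2n}$: this is precisely what Theorem~\ref{th:oARDA} together with Lemma~\ref{le:symp1} encodes, since $\oA\simeq(\RDAA[X^a\times X]\dA[X^a])^{\otimes-1}$ and $\OA\simeq L\tens\dA$, so $L^{\otimes2}$ must map to the ideal $\hbar^{2n}\coro_X$ and the induced map to $\hbar^{2n}\coro_X$ is an isomorphism because after $\gr$ it is the identity $\C_X\isoto\C_X$ — which is again an application of Corollary~\ref{cor:conservative1}. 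So (ii) reduces to the assertion that $\phi$ becomes an isomorphism after applying $\gr$, and that is exactly (iii).

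Statement (iii) is then the commutativity of a square of associated graded maps. I would establish it by the same local model: over $T^*M$, one writes down the explicit Koszul-type resolutions of $\dA$ (as in Example~\ref{exa:notisoC} but now with the symplectic relations) and of $\OA$, computes $\HHA$ explicitly as a de Rham-type complex (as in the proof of Lemma~\ref{le:DAalg2}, $\rhom[\DA](\A,\A)\simeq\prod_n\hbar^n\Omega^\scbul_X$), and checks that the self-pairing $L^{\otimes2}\to\hbar^{2n}\coro_X$ is compatible with the pairing $\gr(L)^{\otimes2}\simeq\C_X^{\otimes2}\to\C_X$ given by $dv\otimes dv\mapsto 1$ (top-degree wedge, suitably normalized). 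The commutativity is a diagram chase using that $\gr$ is a monoidal functor (Proposition~\ref{pro:tensgr}) and commutes with the various adjunctions entering \eqref{eq:symp1}.

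\textbf{Main obstacle.} The hard part will be bookkeeping the precise normalizing constant and the $\hbar$-powers through the long chain of isomorphisms in \eqref{eq:symp1} — in particular verifying that the factor picked up is exactly $\hbar^{2n}$ (not merely some power $\hbar^{m}$ with $m$ of the right parity) and that no spurious nonzero scalar obstructs the claim that $\phi$ is an isomorphism rather than merely injective. Reducing this to the statement "$\gr\phi$ is the identity $\C_X\isoto\C_X$" via Corollary~\ref{cor:conservative1} is what makes the argument tractable: once (iii) is proved by the explicit local de Rham computation, (ii) follows formally, and (i) is a direct identification of the generator of $\gr(L)$ with $dv$. So the genuine computational content lives entirely in (iii), in the explicit comparison of the two resolutions over the cotangent bundle.
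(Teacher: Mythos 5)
Your approach is broadly the same as the paper's: reduce to a local model on $T^*M$, use explicit Koszul resolutions, and compare $\OA\lltens[{\A[Z]}]\dA$ with $\OA\lltens[{\DA}]\dA$ via a de Rham-type complex. For (i) your outline is essentially the paper's proof, which indeed shows directly that $a\mapsto a\,dv$ furnishes an $\A[Z]$-linear isomorphism $\dA\isoto\OA$ locally, so $L\simeq\coro_X$ and $\gr(L)\simeq\C_X\cdot dv$.

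There is, however, a genuine gap in your treatment of (ii). You write that the factor $\hbar^{2n}$ ``is precisely what Theorem~\ref{th:oARDA} together with Lemma~\ref{le:symp1} encodes,'' and that consequently ``$L^{\otimes2}$ must map to the ideal $\hbar^{2n}\coro_X$.'' This is not correct: neither $\oA\simeq(\RDAA[X^a\times X]\dA[X^a])^{\otimes-1}$ nor $\OA\simeq L\tens\dA$ carries any information about which power of $\hbar$ appears. Those statements are compatible with the map $L^{\otimes2}\to\coro_X$ landing in $\hbar^m\coro_X$ for any $m$. The specific power $2n$ is a quantitative output of the explicit comparison of Koszul resolutions: $\dA$ is resolved over $\A[Z]$ by the Koszul complex on the $2n$ sections $(x_i-y_i)$, and over $\DA$ by the Koszul complex on $\delta_i$, and the comparison $\Phi$ sends $(x_i-y_i)\mapsto\hbar\delta_i$. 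Each of the $2n$ generators contributes one power of $\hbar$, so the induced morphism of complexes is multiplication by $\hbar^{2n},\hbar^{2n-1},\dots,\hbar^0$ in successive degrees — this is the de Rham picture you mention, and it is where the number $2n$ actually comes from. You recognize this in your ``Main obstacle'' paragraph, but it contradicts the logical structure you propose earlier, where (ii) is claimed to follow formally and only a conservativity argument is left for (iii). As the paper's proof shows, all three assertions (the factorization through $\hbar^{2n}\coro_X$, the isomorphism $\phi$, and the commutativity in (iii)) are read off simultaneously from the single explicit local computation; you cannot separate the existence of the factorization from the computation that determines the exponent.
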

\begin{proof}
The question being local,
we may assume to be given a local coordinate system
 $x=(x_1,\dots,x_{2n})$  on $X$ and a scalar-valued 
non-degenerate skew-symmetric 
matrix $B=(b_{ij})_{1\leq i,j\leq 2n}$ such that the symplectic
 form $\alpha_X$ is given by 
\eqn
&&\alpha_X=\sum_{i,j}b_{ij}\,dx_i\wedge dx_j.
\eneqn
We set  
$$ A= (a_{ij})_{1\leq i,j\leq 2n}=\opb{B}.$$

We may assume that
$\A=(\OO\forl,\star)$ is a star-algebra with a star product
$$f\star g=\Bigl(\exp\bl\ssum_{ij}\dfrac{\hbar a_{ij}}{2}
\dfrac{\partial^2}{\partial x_i\partial x'_j}\br f(x)g(x')\Bigr)\rule[-1.5ex]{.1ex}{4ex}_{\;x'=x}.$$

Set
$$\delta_i= \sum_{j=1}^{2n}a_{ij}\partial_{x_j} \,\lp i=1,\dots,2n\rp.$$
Then,
the $\coro$-linear morphisms from $\OO{\forl}$ to  $\shd_X\forl$ 
\eq\label{eq:OstartoDlrb}
&&\Phi^l\cl f\mapsto f\,\star,\quad\Phi^r\cl f\mapsto \star\,f
\eneq
are given by
\eqn
&&\Phi^l(x_i)= x_i+ \dfrac{\hbar}{2}\delta_i,\quad
\Phi^r(x_i)= x_i- \dfrac{\hbar}{2}\delta_i.
\eneqn
These morphisms define the morphism 
\eq\label{eq:morPHI}
&&\Phi\cl \A\tens\A[X^a]\to\shd_X\forl\\
&&x_i\mapsto x_i+\dfrac{\hbar}{2}\delta_i,\quad y_i\mapsto
x_i-\dfrac{\hbar}{2}\delta_i.
\nonumber\eneq
where we denote by  $y=(y_1,\dots,y_{2n})$ a copy of the local coordinate system
on $X^a$.

We identify $\OA$ with the $(\shd_X\forl)^\rop$-module $\Omega_X\forl$. 
Then, regarding $\Omega_X\forl$ as an $\A[Z]$-module through 
$\A[Z]\vert_X\to\A[Z]^\rop\vert_X\to(\shd_X\forl)^\rop$, we have
\eqn
x_i(a\,dv)&=&(a\,dv)\Phi^r(x_i)=(a\,dv)(x_i-\dfrac{\hbar}{2}\delta_i)\\
          &=&((x_i+\dfrac{\hbar}{2}\delta_i)a)dv
\eneqn
and similarly
\eqn
&&y_i(adv)=((x_i-\dfrac{\hbar}{2}\delta_i)a)dv.
\eneqn
Hence, $a\mapsto a\,dv$ gives an $\A[Z]$-linear isomorphism
\eqn
&&\dA\simeq\OO[X]\forl\isoto\Omega_X\forl\simeq\OA.
\eneqn
Hence it gives an isomorphism
$L\seteq\hom[{\A[Z]}](\dA,\OA)\simeq\hom[{\A[Z]}](\dA,\dA)\simeq\coro_X$,
and the induced morphism
$\gr(L)\to\hom[{\gr(\A[Z])}](\gr(\dA),\gr(\OA))\simeq\Omega_X$
gives an isomorphism $\gr(L)\isoto\C_X\, dv$.
Hence we obtain (i).

For a sheaf of $\coro$-modules $\shf$, we set
\eqn
&&\shf^{(p)}=\bl\bigwedge^p(\coro_X)^{2n}\br\tens[{\coro_X}]\shf.
\eneqn
Let $(e_1,\dots,e_{2n})$ be the basis of $(\coro)^{2n}$. 
Consider the Koszul complex $K^\scbul(\A[Z];b)$ where
$b=(b_1,\dots,b_{2n})$, $b_i=(x_i-y_i)$ is the right
 multiplication by $(x_i-y_i)$ on $\A[Z]$:
\eqn
K^\scbul(\A[Z];b)&\eqdot&
0\to\A[Z]^{(0)}\to[b]\cdots\to[b] \A[Z]^{(2n)}\to 0,\\
b&=&\sum_i\scbul\wedge b_ie_i\cl K^p(\A[Z];b)\to K^{p+1}(\A[Z];b).
\eneqn

On the other hand, consider the Koszul complex $K^\scbul(\shd_X\forl;\delta)$ 
where $\delta=(\delta_1,\dots,\delta_{2n})$:
\eqn
K^\scbul(\shd_X\forl;\delta)&\eqdot&
0\to(\shd_X\forl)^{(0)}\to[\delta]\cdots\to[\delta](\shd_X\forl)^{(2n)}\to 0,\\
 \delta&=&(\delta_1,\dots,\delta_{2n}).
\eneqn
There is a quasi-isomorphism
$K^\scbul(\A[Z];b)\to[qis]\dA\,[-2n]$ in 
the category of complexes in $\md[{\A[Z]}]$.

Set
$\delta=(\delta^l_1-\delta^r_1,\dots,\delta^l_{2n}-\delta^r_{2n})$.
Then the morphism
$\Phi$ in \eqref{eq:morPHI} sends $(x_i-y_i)$ to
$\hbar\delta_i$. 
Consider the Koszul complex $K^\scbul(\shd_X\forl;\delta)$:
\eqn
K^\scbul(\shd_X\forl;\delta)&\eqdot&
0\to(\shd_X\forl)^{(0)}\to[\delta]\cdots\to[\delta](\shd_X\forl)^{(2n)}\to 0,\\
 \delta&=&(\delta_1,\dots,\delta_{2n}).
\eneqn
There is a quasi-isomorphism 
$K^\scbul(\shd_X\forl;\delta)\to[qis]\OO[X]\forl\,[-2n]$. Therefore we get a commutative diagram in $\md[{\A[Z]}]$:
\eqn&&
\xymatrix@C=4ex{
0\ar[r]&\A[Z]^{(0)}\ar[r]^-{b}\ar[d]^-{\hbar^{2n}\Phi}
                        &\cdots\ar[r]&\A[Z]^{(2n-1)}\ar[r]^-b\ar[d]^-{\hbar\Phi}
                            &\A[Z]^{(2n)}\ar[d]^-{\hbar^0\Phi}\ar[r]&0\\
0\ar[r]&(\shd_X\forl)^{(0)}\ar[r]^-{\delta}
     &\cdots\ar[r]&(\shd_X\forl)^{(2n-1)}\ar[r]^-{\delta}&(\shd_X\forl)^{(2n)}\ar[r]&\,0.
}\eneqn
The object $\OA\lltens[{\A[Z]}]\dA$ is obtained by applying the
functor $\OA\tens[{\A[Z]}]\scbul$ to the row on the top and the 
object $\OA\lltens[{\DA}]\dA$ is obtained by applying the
functor $\OA\tens[{\DA}]\scbul$ to the row on the bottom.
By identifying $\OA$ with $\Omega_X\forl$, the morphism 
$\OA\lltens[{\A[Z]}]\dA\,[-d_X]\to \OA\lltens[{\DA}]\dA\,[-d_X]$
is described by the morphism of complexes:
\eq\label{eq:oAtensAtooAtensD}
&&\ba{c}
\xymatrix{
0\ar[r]&\Omega^{0}_X\forl\ar[r]^-{\hbar d}\ar[d]^-{\hbar^{2n}}
                   &\cdots\ar[r]&\Omega^{2n-1}_X\forl\ar[r]^-{\hbar d}\ar[d]^-{\hbar}
                        &\Omega^{2n}_X\forl\ar[d]^-{\hbar^0}\ar[r]&0\\
0\ar[r]&\Omega^{0}_X\forl\ar[r]^-{d}
               &\cdots\ar[r]&\Omega^{2n-1}_X\forl\ar[r]^-{d}&\Omega^{2n}_X\forl\ar[r]&\,0.
}\ea\eneq
Here $d$ denotes the usual exterior derivative. 

Therefore, we find the commutative diagram with exact rows:
\eqn
\xymatrix{&L^{\otimes 2}\ar[d]^\bwr\\
0\ar[r]&\coro_X\ar[r]\ar[d]^{\hbar^{2n}}
&\Omega^0_X\forl\ar[r]^-{\hbar d}\ar[d]^-{\hbar^{2n}}
                     &\Omega^{1}_X\forl\ar[d]^-{\hbar^{2n-1}}\\
0\ar[r]&\coro_X\ar[r]&\Omega^0_X\forl\ar[r]^-{d}&\Omega^{1}_X\forl
}\eneqn
in which the morphism $L^{\otimes 2}\to\coro_X$ corresponds to the
morphism $L[d_X]\to L^{\otimes -1}\tens\OA\tens[{\A[Z]}]\dA$.

This completes the proof.
\end{proof}

\begin{theorem}\label{th:symp1}
Assume that $X$ is symplectic. 
\bnum
\item
Let $L$ be the local system given by 
Lemma~\ref{le:symp1}. Then there is a canonical $\coro$-linear isomorphism 
$ L\isoto\hbar^{d_X/2}\coro_X$,
hence, a canonical $\A[Z]$-linear isomorphism 
\eq\label{eq:OAisodA}
&&\OA\isoto\hbar^{d_X/2}\coro_X\tens[\coro_X]\dA. 
\eneq
\item
The isomorphism~\eqref{eq:OAisodA}
together with \eqref{eq:symp1} induce  canonical morphisms
\eq
&&\hbar^{d_X/2}\coro_X\,[d_X]\to[\iota_X]\HHA\to[\tau_X]
\hbar^{-d_X/2}\coro_X\,[d_X]
\eneq
and the composition $\tau_X\circ\iota_X $ is the
canonical morphism
$\hbar^{d_X/2}\coro_X\,[d_X]\to\hbar^{-d_X/2}\coro_X\,[d_X]$.
\item
$H^j(\HHA)\simeq 0$ unless $-d_X\leq j\leq 0$ and the morphism
$\iota_X$  induces an isomorphism
\eq
&&\iota_X\cl\hbar^{d_X/2}\coro_X\isoto H^{-d_X}(\HHA).\label{eq:tracedens}
\eneq
In particular, there is a canonical non-zero section in $H^{-d_X}(X;\HHA)$. 
\enum
\end{theorem}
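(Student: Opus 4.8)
The statement is essentially a collection of local computations that have already been carried out in Lemmas~\ref{le:symp1}, \ref{le:symp2} and in the chain of morphisms \eqref{eq:symp1}; what remains is to organize them and check that they glue. First I would prove (i). By Lemma~\ref{le:symp1} we have a local system $L$ of rank one over $\coro_X$ with $\OA\simeq L\tens[\coro_X]\dA$. Lemma~\ref{le:symp2}~(i) identifies $\gr(L)$ canonically with $\C_X\cdot dv$, where $dv=\alpha_X^n/n!$ and $2n=d_X$; since $dv$ is a globally defined nowhere-vanishing section of $\Omega_X^{\otimes?}$ (in fact of the appropriate power of the canonical bundle, trivialized by the symplectic form), this gives a canonical trivialization of $\gr(L)$. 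Now $L$ is a locally free $\coro_X$-module of rank one equipped with a chosen isomorphism $\gr(L)\simeq\C_X$; such an object is canonically isomorphic to $\hbar^{k}\coro_X$ for a unique $k$ once we pin down the "order" normalization. The normalization is forced by Lemma~\ref{le:symp2}~(ii)--(iii): the pairing $L^{\otimes2}\to\coro_X$ of \eqref{eq:symp1} factors through $\hbar^{2n}\coro_X$ and the induced $\phi\cl L^{\otimes2}\isoto\hbar^{2n}\coro_X$ is an isomorphism compatible with the trivialization of $\gr$. Taking "square roots" in the Picard groupoid of invertible $\coro_X$-modules (equivalently, using that $L$ is locally free of rank one with $L^{\otimes 2}\simeq\hbar^{2n}\coro_X$ canonically, hence $L\simeq\hbar^{n}\coro_X$ canonically since the local system has trivial monodromy by the $\gr$-trivialization) yields the canonical isomorphism $L\isoto\hbar^{d_X/2}\coro_X$, and tensoring with $\dA$ gives \eqref{eq:OAisodA}.

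For (ii), I would feed the isomorphism \eqref{eq:OAisodA} into the chain of morphisms displayed just before \eqref{eq:symp1}. That chain produces $L\isoto H^{-d_X}(\HHA)\to L^{\otimes-1}$; substituting $L\simeq\hbar^{d_X/2}\coro_X$ (hence $L^{\otimes-1}\simeq\hbar^{-d_X/2}\coro_X$) gives the two morphisms $\iota_X\cl\hbar^{d_X/2}\coro_X[d_X]\to\HHA$ and $\tau_X\cl\HHA\to\hbar^{-d_X/2}\coro_X[d_X]$. The fact that the composite $\tau_X\circ\iota_X$ is the canonical inclusion $\hbar^{d_X/2}\coro_X[d_X]\hookrightarrow\hbar^{-d_X/2}\coro_X[d_X]$ is precisely the statement that the composite $L^{\otimes2}\to\coro_X$ of \eqref{eq:symp1} agrees with $\phi$ of Lemma~\ref{le:symp2}~(ii) up to the normalization, which is recorded in the commutative diagram of Lemma~\ref{le:symp2}~(iii). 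So (ii) reduces to unwinding these diagrams and checking signs/orders.

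For (iii), the vanishing $H^j(\HHA)\simeq0$ for $j\notin[-d_X,0]$ and the claim that $\iota_X$ induces an isomorphism on $H^{-d_X}$ are local statements (Hochschild homology commutes with restriction), so by Remark~\ref{rem:canonicA} I may assume $X$ is an open subset of $T^*M$ and $\A=\HWo[T^*M]$, or even work with the star algebra $(\OO\forl,\star)$ of \eqref{eq:wstar}. Then, as in the proof of Lemma~\ref{le:symp2}, $\dA$ has the explicit Koszul resolution $K^\scbul(\A[Z];b)$ of length $2n=d_X$, and $\HHA=\de^{-1}(\dA[X^a]\lltens[{\A[Z]}]\dA)$ is computed by applying $\de^{-1}$ and $\dA[X^a]\tens$ to that resolution; concretely one gets the de Rham type complex $(\Omega_X^\scbul\forl,\hbar d)$ shifted, whose cohomology vanishes outside degrees $[-d_X,0]$ and whose lowest cohomology $H^{-d_X}$ is the kernel of $\hbar d\cl\Omega^0_X\forl\to\Omega^1_X\forl$, namely $\coro_X$, matched up to the factor $\hbar^{d_X/2}$ with $\iota_X$ via \eqref{eq:oAtensAtooAtensD}. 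The last sentence of (iii), the existence of a canonical nonzero section of $H^{-d_X}(X;\HHA)$, then follows by applying $H^0\rsect(X;\scbul)$ to the isomorphism \eqref{eq:tracedens} and taking the image of $\hbar^{d_X/2}\cdot 1$.

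\textbf{Main obstacle.} The genuinely delicate point is (i): extracting the \emph{canonical} square root $L\simeq\hbar^{d_X/2}\coro_X$ and verifying it is globally well-defined and compatible with all the identifications. One must check that the local trivializations coming from Lemma~\ref{le:symp2} (which use a Darboux chart and a choice of constant skew matrix) patch to a global isomorphism independent of those choices — this is where the symplectic form $\alpha_X$, and hence the canonical volume form $dv$, does the work, and one has to make sure the ambiguity in "taking a square root of $\hbar^{2n}$" is killed by the $\gr$-normalization rather than merely reduced to a sign. The rest, including all of (ii) and (iii), is then a matter of transporting the diagrams of Lemma~\ref{le:symp2} through the definitions, together with the length-$d_X$ Koszul/de~Rham computation which is already essentially written out in the proof of Lemma~\ref{le:symp2}.
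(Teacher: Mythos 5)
Your proposal follows essentially the same path as the paper's proof: part (i) is deduced from Lemma~\ref{le:symp2} by observing that the $\gr$-trivialization kills the sign ambiguity in the square root (the paper states this precisely as: the only $a\in\coro$ with $a^2=1$ and $\sigma_0(a)=1$ is $a=1$, which is cleaner than your ``trivial monodromy'' phrasing but amounts to the same thing), and parts (ii)--(iii) are read off from the explicit length-$d_X$ de~Rham/Koszul representation of $\HHA$ appearing in the proof of Lemma~\ref{le:symp2}, exactly as you do. The paper's proof of (ii)--(iii) is in fact terser than yours — it simply records that $\iota_X$ and $\tau_X$ are represented by $L[d_X]\to L^{\otimes-1}\tens(\Omega_X^\scbul\forl,\hbar d)[d_X]$ and the comparison map to $L^{\otimes-1}\tens(\Omega_X^\scbul\forl,d)[d_X]$ — so your extra detail about reducing to a local Darboux chart and computing the kernel of $\hbar d$ is correct but already implicit in Lemma~\ref{le:symp2}.
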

\begin{proof}
(i) By Lemma~\ref{le:symp2}, we have an isomorphism  
$(\hbar^{-d_X/2}L)^{\tens 2}\simeq\coro_X$ together with a compatible isomorphism
$\gr(\hbar^{-d_X/2}L)\simeq\C_X$. This  implies $\hbar^{-d_X/2}L\simeq\coro_X$
since the only invertible element $a\in\coro$ satisfying
$a^2=1$, $\sigma_0(a)=1$ is $a=1$. 

\vspace{0.2cm}
\noindent
(ii)--(iii)\quad Denote by $(\Omega_X^\scbul\forl,\hbar d)$ and  
$(\Omega_X^\scbul\forl, d)$ 
the complexes given by the top row and the bottom row of 
\eqref{eq:oAtensAtooAtensD}, respectively.
The morphism $\iota_X$ is represented by 
\eqn
&&L[d_X]\to L^{\otimes -1}\tens(\Omega_X^\scbul\forl,\hbar d)[d_X]
\eneqn
and the morphism $\tau_X$ is the composition 
\eqn
&&L^{\otimes -1}\tens(\Omega_X^\scbul\forl,\hbar d)[d_X]\to
L^{\otimes-1}\tens(\Omega_X^\scbul\forl, d)[d_X]\isoto L^{\otimes-1}[d_X].
\eneqn
\end{proof}
Applying Theorem~6.2.4 together with Corollary~\ref{co:duality}, we obtain:

\begin{corollary}\label{cor:CY}
Let $X$ be a compact complex symplectic manifold. 
Then $\RD^\Rb_{\gd}(\Ah[X])$ is a Calabi-Yau triangulated category of dimension $d_X$ over $\cor$.
\end{corollary}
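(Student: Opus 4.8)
The plan is to recall what it means for a triangulated category $\shc$ over a field $\field$ to be Calabi--Yau of dimension $d$: it should be $\Hom$-finite, and there should be a bifunctorial isomorphism $\Hom[\shc](A,B)\simeq \Hom[\shc](B,A[d])^*$ (equivalently, the shift functor $[d]$ is a Serre functor). We must verify both halves for $\shc=\RD^\Rb_{\gd}(\Ah[X])$ with $\field=\cor$ and $d=d_X$. The finiteness is already in hand: by Corollary~\ref{co:kernel1} (or rather its localized analogue, Corollary~\ref{cor:indexhh1} and Theorem~\ref{th:hker}), when $X$ is compact every $\Supp(\shm)\cap\Supp(\shn)$ is automatically compact, so $\RHom[{\Ah[X]}](\shm,\shn)\in\Derb_f(\cor)$; in particular each $\Hom[{\RD^\Rb_{\gd}(\Ah[X])}](\shm,\shn[k])$ is finite-dimensional over $\cor$. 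So the heart of the matter is the Serre duality isomorphism.

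First I would pass to the localized dualizing sheaf $\oAh\eqdot\cor\tens[\coro]\oA$ and invoke Theorem~\ref{th:symp1}~(i): since the associated Poisson structure is symplectic, $\oA\simeq \hbar^{d_X/2}\coro_X\tens[\coro_X]\dA\,[d_X]$ as $\A[X\times X^a]$-modules (combining the isomorphism $\OA\isoto\hbar^{d_X/2}\coro_X\tens[\coro_X]\dA$ with the shift in the definition of $\oA$). After localization, $\hbar$ becomes invertible, so $\oAh\simeq\dA[X]^\loc\,[d_X]$ in $\Derb(\Ah[X\times X^a])$. Then I would apply Corollary~\ref{co:duality}~(ii) in its localized form (the extension of Theorems~\ref{th:kernel1} and~\ref{th:duality} to good $\Ah$-modules recorded in Theorem~\ref{th:hker}, together with the obvious $\hbar$-localization of Corollary~\ref{co:duality}): for $\shm,\shn\in\RD^\Rb_{\gd}(\Ah[X])$ with $\Supp(\shm)\cap\Supp(\shn)$ compact --- automatic here --- one has a natural isomorphism
\eq
\RHom[{\Ah[X]}](\shn,\oAh\lltens[{\Ah[X]}]\shm)\isoto \bigl(\RHom[{\Ah[X]}](\shm,\shn)\bigr)^\star
\eneq
in $\RD^\Rb_f(\cor)$. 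Substituting $\oAh\lltens[{\Ah[X]}]\shm\simeq\shm[d_X]$ gives $\RHom[{\Ah[X]}](\shn,\shm[d_X])\simeq\RHom[{\Ah[X]}](\shm,\shn)^\star$, which upon taking $H^0$ is exactly the Serre duality pairing in the form required for a Calabi--Yau category of dimension $d_X$. The bifunctoriality of the pairing is inherited from the naturality of the morphism \eqref{mor:dual000} in Corollary~\ref{co:duality}.

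There is one point that needs care and which I expect to be the main obstacle: one must check that $\RD^\Rb_{\gd}(\Ah[X])$ really is stable under the operation $\shm\mapsto\oAh\lltens[{\Ah[X]}]\shm$ (so that the Serre functor lands in the subcategory) and that it is a triangulated subcategory closed under $[1]$ and $[-1]$; the former follows because $\oAh$ is bi-invertible and tensoring by a bi-invertible module preserves coherence and goodness, and the latter is Proposition~\ref{pro:good}~(ii). A second, more bookkeeping-type obstacle is to make sure all the duality statements of \S\ref{sec:convdual} --- in particular Theorem~\ref{th:oARDA}, Theorem~\ref{th:duality} and Corollary~\ref{co:duality} --- are being used in their $\Ah$-localized, good-module form; this is asserted in the ``Remark'' containing Theorem~\ref{th:hker}, so I would cite that and note that the symplectic simplification $\oAh\simeq\dA[X]^\loc[d_X]$ is what turns the general Serre functor $\shm\mapsto\oAh\tens[{\Ah}]\shm$ into the plain shift $[d_X]$, which is the defining property of a Calabi--Yau triangulated category. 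Finally I would remark that $\RD^\Rb_{\gd}(\Ah[X])$ is $\cor$-linear because $\Ah[X]$ is a $\cor$-algebroid, completing the verification.
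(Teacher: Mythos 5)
Your proposal is correct and follows essentially the same route as the paper's one-line proof, which invokes Theorem~\ref{th:symp1} (to identify $\oAh\simeq\dA^\loc[d_X]$ after $\hbar$-localization) together with Corollary~\ref{co:duality} (which already records that $\shm\mapsto\oA\tens[\A]\shm$ is a Serre functor on a compact $X$). You simply spell out the details --- finiteness via the localized Theorem~\ref{th:hker}, the passage $\OA\isoto\hbar^{d_X/2}\coro_X\tens\dA$ becoming $\oAh\simeq\dA^\loc[d_X]$, and the good-module bookkeeping --- that the paper compresses into "Applying Theorem~6.2.4 together with Corollary~\ref{co:duality}."
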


\begin{remark}
The statement in Theorem~9.2~(ii) of~\cite{KS08} is not correct. 
If $Y$ is a compact complex contact manifold of dimension $d_Y$, then the
 dimension of the Calabi-Yau category associated 
to it in loc.\ cit.\ is $d_Y$, not $d_Y-1$.
\end{remark}

\section{Euler classes of $\Ah[]$-modules}

\begin{theorem}\label{th:symp2}
The complex $\HHAh$ is concentrated in degree $-d_X$ and 
the morphisms $\iota_X$  and $\tau_X$ in Theorem~\ref{th:symp1}
induce isomorphisms
\eq
&&\cor_X\,[d_X]\isoto[\iota_X]\HHAh\isoto[\tau_X]\cor_X[d_X].
\label{eq:tracedenslh}
\eneq
\end{theorem}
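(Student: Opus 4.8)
The plan is to deduce the statement from Theorem~\ref{th:symp1} by inverting $\hbar$. Since $\cor=\C\Ls$ is flat (indeed a localization) over $\coro=\C\forl$, the functor $\cor\tens[\coro](\scbul)$ is exact, hence commutes with cohomology functors and with mapping cones. Applying it to $\HHA$ gives $H^j(\HHAh)\simeq\cor\tens[\coro]H^j(\HHA)$, so Theorem~\ref{th:symp1}~(iii) yields $H^j(\HHAh)=0$ for $j\notin[-d_X,0]$ and an isomorphism $\cor_X\isoto H^{-d_X}(\HHAh)$ induced by $\iota_X$. Applying it to the morphisms $\iota_X,\tau_X$ of Theorem~\ref{th:symp1}~(ii), and using $\hbar^{\pm d_X/2}\cor_X=\cor_X$, we obtain morphisms $\cor_X[d_X]\to[\iota_X]\HHAh\to[\tau_X]\cor_X[d_X]$ whose composition $\tau_X\circ\iota_X$ is the localization of the canonical morphism $\hbar^{d_X/2}\coro_X[d_X]\to\hbar^{-d_X/2}\coro_X[d_X]$, hence an isomorphism of $\cor_X[d_X]$.

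It remains to prove that $H^j(\HHAh)=0$ for $-d_X<j\le0$, equivalently that $H^j(\HHA)$ is annihilated by $\hbar$-localization in that range; this is the core of the argument. The assertion is local, so by Darboux's theorem and Remark~\ref{rem:canonicA}~(i) we may assume $X$ is an open subset of a symplectic vector space and $\A=(\OO[X]\forl,\star)$ a star algebra. Then, by the computations in the proofs of Lemma~\ref{le:symp2} and Theorem~\ref{th:symp1}~(ii)--(iii) — concretely, by the description of $\iota_X$ as $L[d_X]\to L^{\otimes-1}\tens(\Omega_X^\scbul\forl,\hbar d)[d_X]$, the right-hand complex being the top row of \eqref{eq:oAtensAtooAtensD} — the object $\HHA\,[-d_X]$ is, locally and up to tensoring by the rank-one $\coro_X$-local system $L^{\otimes-1}$, isomorphic to the twisted de Rham complex $(\Omega_X^\scbul\forl,\hbar d)$. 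Now the automorphism of $\cor\tens[\coro]\Omega_X^\scbul\forl$ acting by $\hbar^{-k}$ on the term of degree $k$ conjugates $\hbar d$ into $d$, so $\cor\tens[\coro](\Omega_X^\scbul\forl,\hbar d)\simeq\cor\tens[\coro](\Omega_X^\scbul\forl,d)$; since $(\Omega_X^\scbul\forl,d)$ is a resolution of $\coro_X$ by the $\hbar$-adically completed holomorphic Poincaré lemma, the latter is $\simeq\cor_X$ concentrated in degree $0$. Hence the cohomology sheaves of $(\Omega_X^\scbul\forl,\hbar d)$ in strictly positive degrees vanish after inverting $\hbar$, and therefore $H^j(\HHAh)=0$ for all $j>-d_X$.

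Combining the two steps, $\HHAh$ is concentrated in degree $-d_X$. A morphism in $\RD^\Rb(\cor_X)$ between objects concentrated in one and the same degree is an isomorphism as soon as it induces an isomorphism on that single cohomology sheaf; since $\iota_X$ does so by the first step, $\iota_X\cl\cor_X[d_X]\isoto\HHAh$ is an isomorphism, and then, $\tau_X\circ\iota_X$ being an isomorphism, so is $\tau_X\cl\HHAh\isoto\cor_X[d_X]$. The only genuinely new input beyond Theorem~\ref{th:symp1} is the exactness of $\hbar$-localization together with the Poincaré-lemma computation showing that inverting $\hbar$ kills the higher twisted de Rham cohomology; the subtle point is that Theorem~\ref{th:symp1}~(iii) only bounds the \emph{support} of the cohomology of $\HHA$ and not its vanishing, so one must pass to the local model rather than merely localize that statement.
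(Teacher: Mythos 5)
Your proof is correct and follows essentially the same route as the paper's: both reduce to the explicit local description of $\HHA[-d_X]$ as the twisted de Rham complex $(\Omega_X^\scbul\forl,\hbar d)$ established in the proof of Lemma~\ref{le:symp2}, and then observe that after inverting $\hbar$ this becomes quasi-isomorphic to the ordinary de Rham complex, which resolves $\cor_X$ in degree $0$. The only cosmetic difference is that the paper notes that the existing morphism $(\Omega_X^\scbul\forl,\hbar d)\to(\Omega_X^\scbul\forl,d)$ from~\eqref{eq:oAtensAtooAtensD} (degreewise multiplication by $\hbar^{2n-k}$) becomes a quasi-isomorphism after localization, while you instead produce a fresh isomorphism by conjugating by $\hbar^{-k}$ in degree $k$; these give the same conclusion, and the remaining formal steps (exactness of localization, Theorem~\ref{th:symp1}(iii) giving the isomorphism on $H^{-d_X}$, and the computation of $\tau_X\circ\iota_X$) are exactly what the paper implicitly uses.
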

\begin{proof}
This follows from the fact that
$(\Omega_X^\scbul\forl,\hbar d)\to(\Omega_X^\scbul\forl,d)$
becomes a quasi-isomorphism after applying the functor 
$(\scbul)^\loc=\cor\tens[\coro](\scbul)$.
\end{proof}

\begin{definition}\label{def:eulerclforAh}
Let $\shm\in\Derb_\coh(\Ah)$. We set
\eq
&&\eu_X(\shm)=\tau_X(\hh_X(\shm))\in H^{d_X}_{\Supp(\shm)}(X;\cor_X)
\eneq
and call $\eu_X(\shm)$ the Euler class of $\shm$.
\glossary{Euler class!of $\Ah$-modules}%
\index{Euler class@$\eu_X(\shm)$}%
\end{definition}
\begin{remark}\label{rem:BGFT1}
(i) The existence of  a canonical section in $H^{-d_X}(X;\HHAh)$ is well
known when $X=T^*M$ is a cotangent bundle, see in particular \cite{B-G,F-T,W}. 
It is intensively used in  \cite{B-N-T} where  these authors call it  
the ``trace density map''.

\noindent
(ii) The Hochschild and cyclic homology of an algebroid stack have
been defined in \cite{B-G-N-T2} where the Chern character of a perfect
complex is constructed in the negative cyclic homology. It gives in
particular an alternative construction of the Hochschild class of a
coherent $\DQ$-module, but it is not clear whether the two
constructions give the same class.
\end{remark}

Consider the diagram
\eq\label{diag:DR}
&&\xymatrix{
\eim{p_{13}}(\opb{p_{12}}\HHAh[X_1\times X^a_2]\tens\opb{p_{23}}\HHAh[X_2\times X^a_3])
\ar[r]^-{\star}\ar[d]^-{\tau_{12^a}\tens\tau_{23^a}}
               & \HHAh[X_1\times X^a_3]\ar[d]^-{\tau_{13^a}}\\
\eim{p_{13}}(\opb{p_{12}}\cor_{X_{12}}{}[d_{12}]\tens\opb{p_{23}}
\cor_{X_{23}}{}[d_{23}])\ar[r]^-{\int_2(\cdot\cup\cdot)}
               &\cor_{X_{13}}{}[d_{13}].
}\eneq
Here, the horizontal arrow in the bottom denoted by 
$\int_2(\scbul\cup\scbul)$ is obtained 
by taking the cup product and integrating on $X_2$ (Poincar{\'e} duality), 
using the fact that 
the manifold $X_2$ has real dimension $2\,d_{2}$ and is oriented.
The arrow in the top denoted by $\star$ 
is obtained by Proposition~\ref{pro:compHH}.
\begin{proposition}\label{pro:HHtoDRcirc}
{\rm Diagram~\ref{diag:DR}} commutes.
\end{proposition}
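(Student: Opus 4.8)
\textbf{Proof proposal for Proposition~\ref{pro:HHtoDRcirc}.}
The plan is to reduce the commutativity of Diagram~\ref{diag:DR} to a statement about the underlying de Rham complexes, via the explicit description of the isomorphisms $\iota_X$ and $\tau_X$ obtained in the proof of Theorem~\ref{th:symp1} (the morphism of Koszul/de~Rham complexes~\eqref{eq:oAtensAtooAtensD}). First I would observe that all the maps in sight are compatible with localization $(\scbul)^\loc$, so by Theorem~\ref{th:symp2} it suffices to work with the honest isomorphisms $\tau_X\colon\HHAh\isoto\cor_X[d_X]$ rather than the mere morphisms $\tau_X$ at the $\coro$-level; since $\HHAh$ is concentrated in a single degree, we are comparing two morphisms in $\Derb(\cor_{X_{13}})$, hence two genuine sheaf morphisms after passing to cohomology.

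Next I would unwind the construction of the convolution morphism $\conv[X_2]$ on Hochschild homology from Proposition~\ref{pro:compHH}. Recall that the key ingredients there are: the morphism $\coro_{X_2}\to\rhom[{\A[Z_2^a]}](\dA[X_{2}^a],\dA[X_{2}^a])\simeq\dA[X_{2}^a]\dtens[{\A[Z_2]}]\omAI[{X_2}]$ producing \eqref{eq:compHH2}, and the residue morphism $\eqref{mor:residue}$ producing \eqref{eq:compHH3}. The plan is to show that, after applying $\tau_{12^a}\etens\tau_{23^a}$ and $\tau_{13^a}$, the morphism \eqref{eq:compHH3} becomes (a shift of) the product-then-integrate map: this is precisely where the factor $\hbar^{2n}$ appearing in Lemma~\ref{le:symp2}~(ii) and in \eqref{eq:oAtensAtooAtensD} does its work, converting the algebraic residue $\oA[X_2^a]\dtens[{\A[Z_2]}]\dA[X_2]\to\coro_{X_2}[2d_2]$ into the topological integration $\cor_{X_2}\tens\cor_{X_2}\to\omega_{X_2,\R}^{\rm top}$; likewise \eqref{eq:compHH2} becomes the restriction/cup-product direction. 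Concretely, I would identify $\HHAh[X_2\times X_2^a]$ with (the localization of) the de Rham complex $(\Omega_{X_2}^\scbul\Ls, d)$ of $X_2$ shifted appropriately, using \eqref{eq:oAtensAtooAtensD}, and then check that the composite morphism $S_{12}\dtens[{\A[Z_2]}]S_{23}\to S_{13}$ and $K_{12}\dtens[{\A[Z_2]}]K_{23}\to K_{13}$ from the proof of Proposition~\ref{pro:compHH}, when translated through these identifications, are exactly the de Rham wedge product followed by integration along $X_2$ — which on cohomology is Poincaré duality $\int_2(\scbul\cup\scbul)$. The comparison with the classical de~Rham-theoretic integration can then be invoked (it is the statement that the algebraic trace map for $\shd_{X_2}$-modules realizes topological integration; cf.\ the construction of $\oo[X\times Y/X]\to\opb{f}\OO[2d_Y]$ in \S~\ref{sec:convdual} and the Poincaré duality used there).

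The main obstacle, as I see it, is the bookkeeping of signs, shifts, and the power of $\hbar$ in passing from the $\A$-linear Koszul resolutions to the $\shd$-linear de~Rham complexes: one must verify that the two paths around the square differ by no sign and no spurious power of $\hbar$, i.e.\ that the normalizations in Theorem~\ref{th:symp1}~(i) (the canonical trivialization $L\isoto\hbar^{d_X/2}\coro_X$, forced by $a^2=1,\ \sigma_0(a)=1$) are the ones making the residue and the integration agree on the nose. This is a local computation: by Darboux's theorem (Remark~\ref{rem:canonicA}) one may assume $X_i=T^*M_i$ with $\A[X_i]=\HWo[T^*M_i]$, write everything in the star-product coordinates of Lemma~\ref{le:symp2}, and reduce to the compatibility of the Koszul complexes $K^\scbul(\A[Z];b)$ and $K^\scbul(\shd_X\Ls;\delta)$ established there. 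Once the local model is pinned down, functoriality of all the constructions (inverse images, proper direct images, $\detens$, and $\gr$, which commute with convolution by Proposition~\ref{pro:circgr}) and Corollary~\ref{cor:conservative1} (conservativity of $\gr$) allow one to conclude the commutativity globally; as in Lemma~\ref{le:HH4} and Theorem~\ref{th:HH1}, a large but essentially routine commutative diagram assembles the pieces, the only genuinely new input being the identification of the residue with topological integration carried out in the local model.
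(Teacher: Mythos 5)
Your strategy --- localize via Theorem~\ref{th:symp2} so the objects sit in a single degree, then unwind $\conv[X_2]$ down to the explicit Koszul/de~Rham picture of Lemma~\ref{le:symp2} in Darboux coordinates and identify the algebraic residue with topological integration --- is in principle viable, but it differs genuinely from the paper's proof and omits a key simplification. The paper first observes that $X_1$ and $X_3$ are merely parameter spaces, so one may assume $X_1=X_3=\rmptt$ and $X_2=X$; this collapses Diagram~\ref{diag:DR} to a single pairing square (Diagram~\ref{diag:DR3}), namely that the morphism $\HHAh[X]\tens\HHAh[X]\to\cor_X\,[2d_X]$ coming from $\star$ agrees with $\tau\tens\tau$ followed by cup product. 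Lemma~\ref{le:HHtoDR2} then proves this by a short, coordinate-free diagram chase: both sides are rewritten as compositions $L\tens L\,[2d_X]\to\dA[X^a]\tens[{\A[Z]}]\oA\to\cor_X\,[2d_X]$ using the local system $L\simeq\hbar^{d_X/2}\coro_X$ and the several $\rhom$-descriptions of $\HHA[X]$, and seen to coincide by inspection of the intermediate identifications; applying $\reim{a_X}$ then produces the Poincar\'e-duality integral $\int_X(\scbul\cup\scbul)$ on the bottom row of Diagram~\ref{diag:DR2}.

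Your route buys an explicit, coordinate-level realization of the identity, but it pays exactly in the bookkeeping of signs, shifts and powers of $\hbar$ --- the delicate point you already flag --- which is the genuinely hard part of any such local computation and is circumvented entirely by staying in the abstract $L$-picture. If you want to carry out your plan, note two practical points: first, without the reduction to $X_1=X_3=\rmptt$ you would have to compare morphisms built from $S_{ij}$ and $K_{ij}$ over the full product $X_1\times X_2\times X_3$, which makes the translation to de~Rham complexes considerably more cumbersome than necessary; and second, it is $\HHAh[X_2]$, not $\HHAh[X_2\times X_2^a]$, that is identified with the localized de~Rham complex of $X_2$ via \eqref{eq:oAtensAtooAtensD}.
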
 
\begin{proof}
Since $X_1$ and $X_3$ play the role of parameter spaces, we may assume that 
$X_1=X_3=\rmptt$. 
We set $X_2=X$ and denote by $a_X$ the projection $X\to\rmptt$.
We are reduce to prove the commutativity of the
diagram below:
\eq\label{diag:DR2}
&&\xymatrix{
\eim{a_X}(\HHAh[X]\tens\HHAh[X])\ar[d]_-{\tau\tens\tau}\ar[rrd]^-\star&&\\
\eim{a_X}(\cor_X\,[d_X]\tens\cor_X\,[d_X])\ar[rr]_-{\int_X(\cdot\cup\cdot)}&&\cor.
}
\eneq
This will follow by applying the functor $\eim{a_X}$ to Diagram~\ref{diag:DR3} below. 
\end{proof}
\begin{lemma}\label{le:HHtoDR2}
The diagram below commutes.
\eq\label{diag:DR3}
&&\xymatrix{
\HHAh[X]\tens\HHAh[X]\ar[rd]^-\star\ar[d]_-{\tau\tens\tau}&\\
\cor_X\,[d_X]\tens\cor_X\,[d_X]\ar[r]&\cor_X\,[2d_X].
}\eneq
\end{lemma}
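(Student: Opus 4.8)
\textbf{Plan of proof for Lemma~\ref{le:HHtoDR2}.}

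The statement to prove is the commutativity of Diagram~\ref{diag:DR3}, which (after unwinding the definitions) compares two morphisms $\HHAh[X]\tens\HHAh[X]\to\cor_X[2d_X]$: one obtained by first applying the convolution morphism $\star$ of Proposition~\ref{pro:compHH} (with $X_1=X_3=\rmpt$, $X_2=X$) and then the identification $\HHAh[X]\simeq\cor_X[d_X]$ of Theorem~\ref{th:symp2}; the other obtained by first applying $\tau_X\tens\tau_X$ to land in $\cor_X[d_X]\tens\cor_X[d_X]$ and then taking the cup product. The strategy is to trace both composites through the explicit complexes constructed in the proof of Lemma~\ref{le:symp2}, namely the Koszul/de~Rham presentation $(\Omega_X^\scbul\forl,\hbar d)$ of $\OA\lltens[{\A[Z]}]\dA$ and its localization $(\Omega_X^\scbul\Ls, d)$, which after inverting $\hbar$ becomes a genuine de~Rham complex. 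Over $\cor$, the composition morphism $\conv[X]$ on Hochschild homology, once transported through $\iota_X$ and $\tau_X$, must reduce to an operation on de~Rham cohomology classes, and the only candidate that is $\cor$-linear, compatible with restriction to opens, and correctly normalized on the unit class $\rC_{\Delta_X}=\hh_{X\times X^a}(\dA)$ is the cup-product-then-integrate map.

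Concretely, first I would reduce to the local situation $X=T^*M$, $\A=\HWo[T^*M]$ with a Darboux coordinate system, as in Remark~\ref{rem:canonicA} and the proof of Lemma~\ref{le:symp2}, so that $\A[Z]\to\DA\simeq\shd_X\forl$ is given by the explicit formula \eqref{eq:morPHI} and $\OA$ is identified with $\Omega_X\forl$ via the volume form $dv=\alpha_X^n/n!$. Second, I would give an explicit chain-level representative of the morphism $\star$: using the description $\HHA[X]\simeq\rhom[{\A[X\times X^a]}](\oAI,\dA)$ from \S~\ref{section:algebroid} and the residue morphism \eqref{mor:residue}, the convolution $\conv[X]$ becomes, at the chain level, the composition of an exterior product of de~Rham-type complexes followed by the integration map $\Omega_X^{2d_X}[2d_X]\to\omega_{X_\R}^{\rm top}$. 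Third, I would check that after applying $(\scbul)^\loc$ and the quasi-isomorphisms $(\Omega_X^\scbul\Ls,\hbar d)\simeq(\Omega_X^\scbul\Ls,d)$ of Theorem~\ref{th:symp2}, this chain-level map is precisely the wedge product of differential forms followed by $\int_X$; the factors $\hbar^{\pm d_X/2}$ bookkept in Theorem~\ref{th:symp1}(i)--(ii) and Lemma~\ref{le:symp2}(ii)--(iii) are exactly what makes the normalization match, since $\tau_X$ absorbs the discrepancy between $(\Omega_X^\scbul\forl,\hbar d)$ and $(\Omega_X^\scbul\forl,d)$. Finally, the local computation glues: both sides of Diagram~\ref{diag:DR3} are morphisms in $\Derb(\cor_X)$ between objects all of whose cohomology sheaves are local systems, so agreement on a cover of coordinate charts, together with compatibility with the gluing data of the $\DQ$-algebroid (which is controlled by the functoriality already established in Proposition~\ref{pro:compHH} and Lemma~\ref{le:assoHH}), forces global agreement.

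The main obstacle I anticipate is the sign and normalization bookkeeping in the chain-level identification: the residue/trace-density morphism \eqref{mor:residue}, the exterior product $\detens$, and the Koszul differentials $kd^\stan_k$ (weighted by $k$, as in the proof of Lemma~\ref{lem:alpha}) all introduce combinatorial factors, and one must verify that the composite reproduces the cup product with no spurious scalar. A clean way to control this is to exploit Lemma~\ref{le:assoHH}\eqref{eq:unit}: the unit constraint $C\conv[X]\rC_{\Delta_X}=C$ pins down the normalization of $\star$ on Hochschild homology, and on the de~Rham side the cup product with the image of $\rC_{\Delta_X}$ under $\tau_X$ is likewise the identity, so testing against the unit already fixes the overall constant; the remaining verification is then purely functorial. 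An alternative, if the direct chain-level comparison proves too cumbersome, is to deduce the lemma from Theorem~\ref{th:RR1} and the commutativity of Diagram~\ref{diag:RR1} in the commutative case, extended to the symplectic $\DQ$-setting via the isomorphism $\HHAh[X]\simeq\cor_X[d_X]$ and the compatibility of $\alpha_X,\beta_X$ with $\etens$ — but the explicit Koszul computation is the more self-contained route and is the one I would carry out.
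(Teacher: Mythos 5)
Your proposed route is genuinely different from the paper's. You plan an explicit chain-level computation: localize to Darboux coordinates, represent $\HHAh[X]$ by the de~Rham-type Koszul complexes $(\Omega_X^\scbul\forl,\hbar d)$ and $(\Omega_X^\scbul\forl,d)$ from the proof of Lemma~\ref{le:symp2}, trace the residue/external-product morphisms through these resolutions, and verify that the composite is wedge-then-integrate. The paper does none of this. Its proof is a short, coordinate-free diagram chase: writing $\HHAh[X]$ on one factor as $\rhom[{\A[Z^a]}](\RDA\dA,\dA[X^a])$ and on the other as $\rhom[{\A[Z]}](\dA,\oA)\simeq\RDA(\dA)\tens[{\A[Z]}]\oA$, it observes that both $L\tens L[2d_X]\to\cor_X[2d_X]$ composites factor through the same intermediate object $\dA[X^a]\tens[{\A[Z]}]\oA$ via the same morphism, then through the residue map $\oA[X^a]\lltens[{\A[X\times X^a]}]\dA\to\coro_X[2d_X]$. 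That argument is about five lines and avoids all the sign and combinatorial bookkeeping you are (rightly) worried about. Your chain-level approach would likely work if carried through, but at a higher cost, and the paper's is clearly the more economical route here.

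One part of your plan I would push back on is the proposed normalization shortcut via Lemma~\ref{le:assoHH}~(c). The unit constraint $C\conv[2]\rC_{\Delta_2}=C$ concerns classes $C$ in $\RHH{X_1\times X_2^a}$ paired against $\rC_{\Delta_2}\in\RHH{X_2\times X_2^a}$, whereas the bilinear form you are trying to pin down is on $\RHHl{X}\tens\RHHl{X}$; the unit class does not live in $\RHHl{X}$, so the constraint does not directly normalize the pairing in Diagram~\ref{diag:DR3}. You would still have to trace $\rC_{\Delta_X}$ through the identifications $\HHAh[X\times X^a]\simeq\cor_{X\times X^a}[2d_X]$ and relate the resulting class to the Thom class of $\Delta_X$, which is essentially as much work as the direct Koszul computation — so the shortcut does not save you what you hope. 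Similarly, the suggested alternative via Theorem~\ref{th:RR1} is not a genuine shortcut: Theorem~\ref{th:RR1} concerns the commutative (untwisted) case, and there is no formal transfer to the symplectic $\DQ$ setting; the isomorphism $\HHAh[X]\simeq\cor_X[d_X]$ does not by itself reduce Diagram~\ref{diag:DR3} to Diagram~\ref{diag:RR1}.
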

\begin{proof}
The morphism $L\tens L[2d_X]
\simeq\cor_X\,[d_X]\tens\cor_X\,[d_X]\to\cor_X\,[2d_X]$
is given by
\eqn
L\tens L[2d_X]&\to& L[d_X]\tens\rhom[{\A[Z]}](\dA,\OA)[d_X]\\
&\simeq& L\tens
\RDA\dA{[d_X]}\tens[{\A[Z]}]\oA
\simeq\dA[X^a]\tens[{\A[Z]}]\oA\to\C^\hbar_X[2d_X].
\eneqn
On the other hand,
$L\tens L[2d_X]\to \HHA[X]\tens\HHA[X]\to\C^\hbar_X[2d_X]$
is given by
\eqn
L\tens L[2d_X]&\to& \rhom[{\A[Z^a]}](\RDA\dA,\dA[X^a])\tens\rhom[{\A[Z]}](\dA,\oA)\\
&\simeq &
\rhom[{\A[Z^a]}](\RDA\dA,\dA[X^a])\tens\bigl(\RDA(\dA)\tens[{\A[Z]}]\oA\bigr)\\
&\to&\dA[X^a]\tens[{\A[Z]}]\oA\to\C^\hbar_X[2d_X].
\eneqn
These two morphisms give the same morphism from $L\tens L[2d_X]$ to
$\C^\hbar_X[2d_X]$.
\end{proof}
\begin{corollary}\label{cor:EPindAhmod}
Let $\shk_i\in\RD^\Rb_{\coh}(\Ah[X_i\times X_{i+1}^a])$ \lp$i=1,2$\rp. 
Assume that the projection $p_{13}$ defined on
$X_1\times X_2\times X_3$ is proper on 
$\opb{p_{12}}\Supp(\shk_1)\cap\opb{p_{23}}\Supp(\shk_2)$. 
Then
\eq\label{eq:EPindAhmod}
&&
\eu_{X_{13^a}}(\shk_1\conv[2]\shk_2)
=\int_{X_2}\eu_{X_{12^a}}(\shk_1)\cup\eu_{X_{23^a}}(\shk_2).
\eneq
\end{corollary}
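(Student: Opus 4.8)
The statement is Corollary~\ref{cor:EPindAhmod}, asserting that the Euler class of a composition of kernels over $\Ah$ equals the integral over $X_2$ of the cup product of the Euler classes. The strategy is to combine three ingredients already available: (a) the functoriality of the Hochschild class under convolution in the localized setting, which is Corollary~\ref{th:HH1loc} (or rather the statement \eqref{eq:RR12}), giving $\hhl_{X_{13^a}}(\shk_1\conv[2]\shk_2)=\hhl_{X_{12^a}}(\shk_1)\conv[2]\hhl_{X_{23^a}}(\shk_2)$ as classes in the localized Hochschild homology; (b) the explicit description, in the symplectic case, of $\HHAh[X_i\times X_{i+1}^a]$ as $\cor_{X_i\times X_{i+1}^a}[d_{X_i\times X_{i+1}^a}]$ via the isomorphism $\tau$ from Theorem~\ref{th:symp2}, which is what turns a Hochschild class into an Euler class by Definition~\ref{def:eulerclforAh}; and (c) the compatibility of the convolution $\conv$ on localized Hochschild homology with the de Rham/Poincar\'e-duality operation $\int_{X_2}(\scbul\cup\scbul)$, which is exactly the commutativity of Diagram~\ref{diag:DR} established in Proposition~\ref{pro:HHtoDRcirc}.

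\textbf{Key steps.} First I would apply \eqref{eq:RR12} to the two kernels $\shk_1$ and $\shk_2$ (whose supports satisfy the required properness over $X_1\times X_3$, since $p_{13}$ is proper on $\opb{p_{12}}\Supp(\shk_1)\cap\opb{p_{23}}\Supp(\shk_2)$), obtaining the identity $\hhl_{X_{13^a}}(\shk_1\conv[2]\shk_2)=\hhl_{X_{12^a}}(\shk_1)\conv[2]\hhl_{X_{23^a}}(\shk_2)$ inside $\RHHl[\Lambda]{X_{13^a}}$. Second, by Definition~\ref{def:eulerclforAh} the Euler class is obtained by pushing the Hochschild class through the isomorphism $\tau_{X}$ of Theorem~\ref{th:symp2}; so I would apply $\tau_{X_{13^a}}$ to both sides. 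Third, the left-hand side becomes $\eu_{X_{13^a}}(\shk_1\conv[2]\shk_2)$ by definition. For the right-hand side, I invoke the commutative square of Proposition~\ref{pro:HHtoDRcirc}: the horizontal arrow on top is $\conv[2]$ on localized Hochschild homology (the map $\star$), the vertical arrows are $\tau_{12^a}\otimes\tau_{23^a}$ and $\tau_{13^a}$, and the bottom arrow is $\int_{X_2}(\scbul\cup\scbul)$. Chasing $\hhl_{X_{12^a}}(\shk_1)\otimes\hhl_{X_{23^a}}(\shk_2)$ around this square shows that $\tau_{13^a}\bigl(\hhl_{X_{12^a}}(\shk_1)\conv[2]\hhl_{X_{23^a}}(\shk_2)\bigr)$ equals $\int_{X_2}\bigl(\tau_{12^a}(\hhl_{X_{12^a}}(\shk_1))\cup\tau_{23^a}(\hhl_{X_{23^a}}(\shk_2))\bigr)$, which by Definition~\ref{def:eulerclforAh} is exactly $\int_{X_2}\eu_{X_{12^a}}(\shk_1)\cup\eu_{X_{23^a}}(\shk_2)$. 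Putting the three steps together yields \eqref{eq:EPindAhmod}.

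\textbf{Main obstacle.} The only genuinely delicate point is the bookkeeping of supports and of the dimension shifts: one must check that the class $\tau_{13^a}$ applied to the composition lands in $H^{d_{X_1\times X_3^a}}_{\Lambda}(X_{13^a};\cor_{X_{13^a}})$ with $\Lambda=\Lambda_1\circ\Lambda_2$, and that the Poincar\'e-duality integration over $X_2$ in the bottom row of Diagram~\ref{diag:DR} matches the degree shift $d_{12^a}+d_{23^a}-2d_2=d_{13^a}$ (using $d_{X_i\times X_{i+1}^a}=d_i+d_{i+1}$ and that $X_2$, being a complex, hence real oriented, manifold of real dimension $2d_2$, carries a canonical orientation). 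All of this is already encoded in the statement of Proposition~\ref{pro:HHtoDRcirc}, so in practice the proof is a short diagram chase; the work has effectively been done in establishing that proposition and Corollary~\ref{th:HH1loc}. I would therefore write the proof as: apply \eqref{eq:RR12}, apply $\tau$, and invoke Proposition~\ref{pro:HHtoDRcirc}, with a sentence noting that the properness hypothesis here is precisely the one needed for $\Lambda_1\times_{X_2}\Lambda_2$ to be proper over $X_1\times X_3$.
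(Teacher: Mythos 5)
Your proposal is correct and follows precisely the route the paper leaves implicit: the paper gives no written proof of this corollary, and it does follow immediately from Corollary~\ref{th:HH1loc} (the composition law \eqref{eq:RR12}), Definition~\ref{def:eulerclforAh}, and the commutativity of Diagram~\ref{diag:DR} from Proposition~\ref{pro:HHtoDRcirc}, exactly as you describe. One small remark: to apply \eqref{eq:RR12} you need the kernels to lie in $\RD^\Rb_{\gd}$ (so that the convolution is coherent over $\Ah$ by Theorem~\ref{th:hker}); the paper's statement only says $\RD^\Rb_{\coh}$, which appears to be a slight imprecision inherited from the source rather than a gap in your argument.
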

\begin{remark}\label{rem:BGFT2}
Consider an object $\shm\in\Derb_\coh(\Ah)$. Then, according to
Definition~\ref{def:eulerclforAh}, its Euler class is well-defined in
the de Rham cohomology of $X$ with values in $\cor$. Now assume that $\shm$ is generated
by $\shm_0\in\Derb_\coh(\A)$ and consider $\gr(\shm_0)$. Assume for
simplicity that $\gr(\A)=\OO$ (the general case can be treated with
suitable modifications). Then $\gr(\shm_0)\in\Derb_\coh(\OO)$ and we
may consider its Chern class in de Rham cohomology. 
A natural question is to compare these
two classes. A precise conjecture had been made in the case of
$\shd$-modules by one of the authors (PS) and J-P.~Schneiders in~\cite{S-Sn} 
and proved by P.~Bressler, R.~Nest and B.~Tsygan in \cite{B-N-T}. These authors,
together with A.~Gorokhovsky, recently treated the general case of
$\DQ$-algebroids in the symplectic setting in \cite{B-G-N-T3}. The
formula they obtain makes use of a cohomology class naturally
associated to the deformation $\A$.
\end{remark}
\section{Hochschild classes of $\shd$-modules}\label{section:HHD}

We shall apply the preceding result to the study of the Euler class of $\shd$-modules.

Recall after \cite{Ka2} that  a coherent $\shd_M$-module $\shm$ is 
{\em good} 
\glossary{good!$\shd$-modules}%
if, for any open relatively compact set $U\subset M$, 
there exists a coherent sub-$\sho_U$-module $\shf$ of $\shm\vert_U$ which
generates it on $U$ as a $\shd_M$-module. One denotes by 
$\RD^b_\gd(\shd_{M})$ the full sub-triangulated category of 
$\RD^b_\coh(\shd_{M})$ consisting of objects with good cohomology.
 
{}From now on, we set
\eqn
&&X=T^*M.
\eneqn
We introduce the functor
\eq\label{eq:DtoW2}
(\scbul)^\rmW\cl \md[\shd_M]&\to&\md[{\HW[X]}]\\
\shm&\mapsto&\HW[X]\tens[\opb{\pi_M}\shd_M]\opb{\pi_M}\shm.\nonumber
\eneq
\index{MW@$\shm^\rmW$}
The next result shows that one can, in some
sense,  reduce the
study of $\shd$-modules to that of $\HW[X]$-modules.

\begin{proposition}\label{pro:DtoW2}
The functor $\shm\mapsto \shm^\rmW\vert_{T^*_MM}$ is exact and faithful.
\end{proposition}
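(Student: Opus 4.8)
The statement to prove is that the functor $\shm\mapsto\shm^\rmW\vert_{T^*_MM}$ from $\md[\shd_M]$ to $\md[{\HW[X]\vert_{T^*_MM}}]$ is exact and faithful. Here $X=T^*M$ and $\shm^\rmW=\HW[X]\tens[\opb{\pi_M}\shd_M]\opb{\pi_M}\shm$.

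\textbf{Overall strategy.} The functor $(\scbul)^\rmW$ factors as the composition of the inverse image $\opb{\pi_M}$ from $\md[\shd_M]$ to $\md[\opb{\pi_M}\shd_M]$, followed by the extension of scalars $\HW[X]\tens[\opb{\pi_M}\shd_M]\scbul$, followed by restriction to the zero-section $T^*_MM$. The plan is to establish exactness and faithfulness of each stage, or rather to handle the composite directly by exhibiting it as tensoring with a faithfully flat algebra after an inverse-image step that is itself well-behaved along the zero-section. First I would invoke Lemma~\ref{lem:WfffE}: part (c) gives that $\HE[X]$ is flat over $\opb{\pi_M}\shd_M$, and part (b) gives that $\HW[X]$ is faithfully flat over $\HE[X]$; composing, $\HW[X]$ is flat over $\opb{\pi_M}\shd_M$. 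Hence the functor $\HW[X]\tens[\opb{\pi_M}\shd_M]\scbul$ on $\md[\opb{\pi_M}\shd_M]$ is exact.

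\textbf{Exactness.} Since $\opb{\pi_M}$ is an exact functor (pullback of sheaves along a continuous map is exact), and since $\HW[X]\tens[\opb{\pi_M}\shd_M]\scbul$ is exact by the flatness just noted, the composite $\shm\mapsto\shm^\rmW$ on all of $T^*M$ is exact. Restriction to the open-or-locally-closed subset $T^*_MM$ (i.e.\ the functor $\scbul\vert_{T^*_MM}$, which is $\opb{j}$ for the inclusion $j$) is exact as well. Therefore $\shm\mapsto\shm^\rmW\vert_{T^*_MM}$ is exact. This part is routine and I do not expect any obstacle; one only needs to be careful that ``flat'' for a sheaf of rings on a topological space means the tensor functor is exact on the category of modules, which is exactly what Lemma~\ref{lem:WfffE} provides.

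\textbf{Faithfulness.} For faithfulness it suffices, since the functor is exact and additive, to show that $\shm^\rmW\vert_{T^*_MM}\simeq0$ implies $\shm\simeq0$ for $\shm\in\md[\shd_M]$. Fix $x\in M$ and let $\xi_x=(x,0)\in T^*_MM$ be the point of the zero-section over $x$. By the flatness of $\HW[X]$ over $\opb{\pi_M}\shd_M$ and the standard stalk description, $(\shm^\rmW)_{\xi_x}\simeq\HW[X]_{\xi_x}\tens[{(\opb{\pi_M}\shd_M)_{\xi_x}}](\opb{\pi_M}\shm)_{\xi_x}\simeq\HW[X]_{\xi_x}\tens[{(\shd_M)_x}]\shm_x$, using $(\opb{\pi_M}\shd_M)_{\xi_x}=(\shd_M)_{\pi_M(\xi_x)}=(\shd_M)_x$ and likewise for $\shm$. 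The hard part is then: the ring homomorphism $(\shd_M)_x\to\HW[X]_{\xi_x}$ is faithfully flat (equivalently, flat with the going-up/surjectivity-on-spectra property, or concretely: $N\mapsto\HW[X]_{\xi_x}\tens[{(\shd_M)_x}]N$ is exact and takes nonzero modules to nonzero modules). Flatness is clear from Lemma~\ref{lem:WfffE}; the substantive point is that this stalk-level extension reflects zero. I would prove this by a Nakayama-type argument at the zero-section: $\HW[X]$ is a $\DQ$-algebra on the symplectic manifold $X$ with $\gr(\HW[X])=\OO[X]$, and the morphism $\opb{\pi_M}\shd_M\to\HW[X]$ lifts the inclusion $\opb{\pi_M}\sho_M\hookrightarrow\OO[X]$ of the degree-zero part; faithful flatness of $\HE[X]$ (and hence $\HW[X]$) over $\opb{\pi_M}\shd_M$ at points of the zero-section is a classical fact, provable because $\gr\HE[X]$ at $\xi_x$ is $\sho_{X}(0)_{\xi_x}$, a faithfully flat extension of $(\sho_M)_x$ coming from the graded pieces of the order filtration, and faithful flatness then propagates to the filtered/completed rings by the argument in the proof of Lemma~\ref{lem:WfffE}~(a). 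Concretely, if $\shm_x\neq0$ then by Nakayama there is a nonzero finitely generated quotient, which by faithful flatness gives a nonzero quotient of $(\shm^\rmW)_{\xi_x}$; hence $(\shm^\rmW)_{\xi_x}\neq0$, contradicting $\shm^\rmW\vert_{T^*_MM}\simeq0$. Thus $\shm_x=0$ for all $x\in M$, i.e.\ $\shm=0$. The main obstacle is precisely the stalk-level faithful flatness of $(\shd_M)_x\to\HW[X]_{\xi_x}$, which is where the microlocal structure theory of \cite{S-K-K} enters; everything else is formal.
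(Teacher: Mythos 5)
Your exactness argument is correct and matches the paper: $\HW[X]$ is flat over $\opb{\pi_M}\shd_M$ by composing Lemma~\ref{lem:WfffE}~(b) and (c), and restriction to $T^*_MM$ is exact.

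For faithfulness, however, your argument has a real gap. You reduce (correctly) to showing that $(\shd_M)_x\to\HW[X]_{\xi_x}$ is faithfully flat at a point $\xi_x=(x,0)$ of the zero-section, but then the justification you offer is both circular and misattributed. The sentence ``if $\shm_x\neq 0$ then by Nakayama there is a nonzero finitely generated quotient, which \emph{by faithful flatness} gives a nonzero quotient\ldots'' invokes the very faithful flatness you set out to establish. And the preceding explanation --- that ``$\gr\HE[X]$ at $\xi_x$ is $\sho_X(0)_{\xi_x}$, a faithfully flat extension of $(\sho_M)_x$'' --- does not hold up: the graded object $\gre$ used in the proof of Lemma~\ref{lem:WfffE}~(a) is taken with respect to the order filtration by $\HE(-1)$, and on the zero-section all graded pieces of negative degree vanish, so the graded ring there is simply $\gr\shd_M$ (not a nontrivial faithfully flat extension of $\sho_M$). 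Moreover, faithful flatness of $\HE[X]$ over $\opb{\pi_M}\shd_M$ away from the zero-section is \emph{false} (a $\shd_M$-module dies on the complement of its characteristic variety), so the claim ``faithful flatness of $\HE[X]$\dots over $\opb{\pi_M}\shd_M$ at points of the zero-section'' must mean something very precisely localized, and you never isolate the correct statement.

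The missing ingredient is the classical identification that the paper records as its one-line proof: the natural morphism
$\shm\to\bl\HE[T^*M]\tens[\opb{\pi_M}\shd_M]\opb{\pi_M}\shm\br\vert_{T^*_MM}$
is an isomorphism for every $\shd_M$-module $\shm$ (equivalently, $\shd_M\isoto\HE[T^*M]\vert_{T^*_MM}$, because $\HE(-1)$ restricted to the zero-section vanishes and the filtered graded pieces of $\HE$ there are exactly those of $\shd_M$). Once this is in hand, $\shm^\rmW\vert_{T^*_MM}\simeq\HW[X]\vert_{T^*_MM}\tens[{\HE[X]\vert_{T^*_MM}}]\shm$, and faithfulness falls out immediately from the faithful flatness of $\HW[X]$ over $\HE[X]$ (Lemma~\ref{lem:WfffE}~(b)). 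Your outline never states this isomorphism, and without it your ``classical fact'' is not available and the Nakayama step has nothing to stand on.
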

\begin{proof}
The morphism
\eqn
\shm&\to&(\HE[T^*M]\tens[\opb{\pi_M}\shd_M]\opb{\pi_M}\shm)\vert_{T^*_MM}.
\eneqn
is an isomorphism, and hence the result is a particular case of
Lemma~\ref{lem:WfffE}. 
\end{proof}
It follows that $(\scbul)^\rmW$ sends $\Derb_\coh(\shd_M)$ to
$\Derb_\coh(\HW[X])$ and $\Derb_\gd(\shd_M)$ to
$\Derb_\gd(\HW[X])$.

\begin{definition}
Let $\shm\in \Derb_\gd(\shd_M)$. We set 
\index{Hochschild class@$\hhg_X(\shm)$}%
\glossary{Hochschild class!of an $\shd$-module}%
\eq\label{eq:hhgDmod}
&&\hhg_X(\shm)=\hhg_X(\shm^\rmW)\in \RHHO[{\chv(\shm)}]{X}.
\eneq
\end{definition}

For $\Lambda$ a closed subset of $T^*M$, we denote by
$\rmK_{\gd,\Lambda}(\shd_M)$ the Grothendieck group of the full abelian
subcategory of $\mdgd[\shd_M]$ consisting of $\shd$-modules whose
characteristic is contained in $\Lambda$. 

Let $V$ be an open relatively compact subset of $M$.
By slightly modifying the proof of 
 Proposition~\ref{pro:grgr1}, we get morphisms of groups \eq\label{eq:KDmodW}
&& \rmK_{\gd,\Lambda}(\shd_M)
\To \rmK_{\coh,\Lambda}(\OO[\opb{\pi}V]).
\eneq

Let $M_i$ ($i=1,2,3$) be three complex manifolds and set $X_i=T^*M_i$. Denote by 
$q_{ij}$ the $ij$-th projection defined on $M_1\times M_2\times M_3$ and by 
$p_{ij}$ the $ij$-th projection defined on $X_1\times X_2\times X_3$ ($1\leq i<j\leq 3$).
We set, as for $\DQ$-algebras,
$\shd_{M^a}\eqdot(\shd_M)^\rop$ and we write for short 
$M_{ij}$ or $M_{ij^a}$ instead of $M_i\times M_j$ or $M_i\times M_j^a$ and
similarly with  $X_{ij}$. We also write $\shd_{ij}$ instead of $\shd_{M_{ij}}$
and similarly with $ij^a$, etc. For example,
\eqn
&&\shd_{12^a}=
\sho_{M_{12}}\tens[(\sho_{M_1}\etens\sho_{M_2})](\shd_{M_1}\etens(\shd_{M_2})^\rop).
\eneqn
Then $\shd_1$ may be regarded as a $\shd_{11^a}$-module supported on
the diagonal of $X_1\times X_{1^a}$.
Let $\shk_i\in\Derb(\shd_{ij^a})$ ($i=1,2$, $j=i+1$).
Set 
\eqn
&&\shk_1\conv[M_2]\shk_2\eqdot
\reim{q_{13^a}}
\Bigl(\shd_{2}\tens[{\shd_{2^a2}}]\shd_{12^a23^a}
\lltens[{\shd_{12^a}\etens\shd_{23^a}}]
(\shk_1\etens\shk_2)\Bigr).
\eneqn

\begin{theorem}\label{th:DtoW}
Let $\Lambda_i$ be a closed subset of $X_i\times X_{i+1}$ \lp$i=1,2$\rp\, and assume that
the projection $p_{13}$ defined on $X_1\times X_2\times X_3$ is proper on 
$\opb{p_{12}}\Lambda_1\cap\opb{p_{23}}\Lambda_2$. Set $\Lambda=\Lambda_1\circ\Lambda_2$.
Let $\shk_i\in\Derb_\gd(\shd_{ij^a})$ \lp$i=1,2$, $j=i+1$\rp\, with
$\chv(\shk_i)\subset\Lambda_i$ \lp$i=1,2$\rp.
 Then 
$\shk_1\conv[M_2]\shk_2\in\Derb_\gd(\shd_{13^a})$,
$\chv(\shk_1\conv[M_2]\shk_2)\subset\Lambda$  and
\eq\label{eq:DtoW3}
&&(\shk_1\conv[M_2]\shk_2)^\rmW\isoto \shk_1^\rmW\conv[X_2]\shk_2^\rmW.
\eneq
\end{theorem}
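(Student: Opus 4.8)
\textbf{Plan of proof for Theorem~\ref{th:DtoW}.}
The strategy is to reduce the statement to the corresponding statement for $\DQ$-modules, namely Theorem~\ref{th:hker} (finiteness and compatibility with duality, hence with convolution) applied to the $\DQ$-algebroids $\HW[X_i\times X_{i+1}^a]$, together with the flatness properties of Lemma~\ref{lem:WfffE} and the identification of the convolution functor for $\shd$-modules with a kernel composition. First I would observe that the claim is local on $X_1$ and $X_3$ (equivalently on $M_1$ and $M_3$), so that after shrinking we may assume the $\A[X_i]$ are $\DQ$-algebras and, more importantly, that we are working with honest sheaves of rings rather than algebroids; the functor $(\scbul)^\rmW$ of \eqref{eq:DtoW2} is then given by $\HW[X]\tens[\opb{\pi_M}\shd_M]\opb{\pi_M}(\scbul)$, which by Lemma~\ref{lem:WfffE}~(ii)--(iii) is the composition of the exact inverse image $\opb{\pi_M}$ with an extension of scalars along the flat (even faithfully flat) ring morphism $\opb{\pi_M}\shd_M\to\HW[X]$. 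Hence $(\scbul)^\rmW$ is exact and, by Proposition~\ref{pro:DtoW2}, faithful, and it sends $\Derb_\gd(\shd_M)$ to $\Derb_\gd(\HW[X])$; the statements ``$\shk_1\conv[M_2]\shk_2\in\Derb_\gd(\shd_{13^a})$'' and ``$\chv(\shk_1\conv[M_2]\shk_2)\subset\Lambda$'' will follow once we prove \eqref{eq:DtoW3}, since on the right-hand side we may apply Theorem~\ref{th:hker} (whose properness hypothesis $\Lambda_1\times_{X_2}\Lambda_2$ proper over $X_1\times X_3$ is exactly what is assumed) to conclude that $\shk_1^\rmW\conv[X_2]\shk_2^\rmW$ is good with support in $\Lambda$, and then invoke faithful flatness of $(\scbul)^\rmW$ over the zero section together with the fact that goodness and support of a $\shd_M$-module can be detected microlocally via $(\scbul)^\rmW$ (cf.\ Proposition~\ref{pro:DtoW2} and Lemma~\ref{lem:WfffE}).

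The core computation is thus the isomorphism \eqref{eq:DtoW3}. The plan is to write both sides explicitly as kernel convolutions and compare them term by term. On the $\shd$-module side, $\shk_1\conv[M_2]\shk_2$ is by definition $\reim{q_{13^a}}\bl\shd_2\tens[{\shd_{2^a2}}]\shd_{12^a23^a}\lltens[{\shd_{12^a}\etens\shd_{23^a}}](\shk_1\etens\shk_2)\br$, so applying $(\scbul)^\rmW$ amounts to base-changing this whole expression along $\opb{\pi}\shd\to\HW$ over the relevant products of cotangent bundles and commuting $(\scbul)^\rmW$ past $\reim{q_{13^a}}$, $\etens$ and $\lltens$. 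On the $\DQ$-module side, by Definitions~\ref{def:tens1} and~\ref{def:kernel1}, $\shk_1^\rmW\conv[X_2]\shk_2^\rmW$ equals $\reim{p_{13}}\bl(\shk_1^\rmW\ldetens\shk_2^\rmW)\lltens[{\HW[22^a]}]\dA[2]^\rmW\br$ where $\dA[2]^\rmW$ denotes the canonical diagonal module for $\HW[X_2\times X_2^a]$. The matching then rests on three ingredients which I would establish as separate lemmas: (1) $(\scbul)^\rmW$ intertwines external products, i.e.\ $(\shk_1\etens\shk_2)^\rmW\simeq\shk_1^\rmW\ldetens\shk_2^\rmW$ (note the completed $\detens$ on the right: this is where $\HW[X\times Y]\ne\HW[X]\etens\HW[Y]$ forces the completed tensor product, and one checks it using the explicit star-product formula \eqref{eq:wstar} and the relation $\HW[X\times Y]\simeq\HW[X]\detens\HW[Y]$ of Lemma~\ref{le:starprod}); (2) $(\shd_2)^\rmW\simeq\dA[2]$, i.e.\ the $\rmW$-image of the diagonal $\shd$-bimodule is the canonical diagonal $\DQ$-module $\dA[X_2]$ for $\HW[X_2\times X_2^a]$, which follows because both are simple along the diagonal (apply Corollary~\ref{co:existCD} and Lemma~\ref{lem:inve}, the $\rmW$-image being coherent simple by $\gr$-computation since $\gr$ of $(\shd_2)^\rmW$ is $\OO$ of the diagonal); and (3) $(\scbul)^\rmW$ commutes with $\reim{q_{13^a}}$ (here $q$ and the cotangent-bundle projection $p$ are compatible in the evident way, and this uses flatness of $\HW$ over $\opb{\pi}\shd$ from Lemma~\ref{lem:WfffE}~(iii) together with the projection formula). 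Combining (1)--(3) and the flatness/base-change statements, one transports the $\shd$-module expression into the $\DQ$-module expression.

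The main obstacle I expect is ingredient (3) together with the bookkeeping of base change: commuting $(\scbul)^\rmW$ past the proper direct image $\reim{q_{13^a}}$ is not formal, because $(\scbul)^\rmW$ involves the microlocalization $\opb{\pi_M}$ and the passage $M\rightsquigarrow T^*M$, so the relevant base-change diagram is not a plain Cartesian square of spaces but a square relating $T^*M_1\times T^*M_2\times T^*M_3$ to $M_1\times M_2\times M_3$; one has to check that the relevant base-change morphism for quasi-coherent-type modules over these sheaves of rings is an isomorphism, which ultimately reduces to faithful flatness of $\HW$ over $\opb{\pi}\shd$ (Lemma~\ref{lem:WfffE}~(iii)) plus a standard argument that $\opb{\pi_M}$ of a good $\shd_M$-module behaves well under $\reim{}$. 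A secondary, more technical point is making sense of the completed tensor product $\ldetens$ in ingredient (1) at the level of derived categories and verifying it is compatible with the $\lltens[{\HW[22^a]}]\dA[2]^\rmW$ operation; here I would lean on the exactness of $\shm\detens\scbul$ for $\hbar$-torsion-free $\shm$ (the lemma stated just before $\ldetens$ is introduced in \S~\ref{section:diagonal}) to avoid derived subtleties. Once these are in place, the identity \eqref{eq:DtoW3} follows, and the remaining assertions of the theorem are immediate consequences as explained above. \QED
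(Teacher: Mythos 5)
Your decomposition into three ingredients — compatibility of $(\scbul)^\rmW$ with external products, with the diagonal bimodule, and with proper direct images — identifies the right pieces, and since the paper leaves the proof to the reader there is no source argument to compare against. The real difficulty is in your ingredient~(3). You state that commuting $(\scbul)^\rmW$ past $\reim{q_{13^a}}$ ``ultimately reduces to faithful flatness of $\HW$ over $\opb{\pi}\shd$ plus a standard argument,'' but this underestimates the issue. The $\shd$-module direct image $\reim{q_{13^a}}$ integrates over $M_2$, while the $\DQ$-module direct image $\reim{p_{13}}$ integrates over $X_2=T^*M_2$: the square comparing $q_{13}\cl M_1\times M_2\times M_3\to M_1\times M_3$ with $p_{13}\cl X_1\times X_2\times X_3\to X_1\times X_3$ via the cotangent projections $\pi$ is \emph{not} Cartesian — the fibre product is $T^*(M_1\times M_3)\times M_2$, not $T^*(M_1\times M_2\times M_3)$, the discrepancy being precisely the cotangent directions of $M_2$. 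No flatness hypothesis makes the naive base-change morphism an isomorphism across a non-Cartesian square with positive-dimensional excess. The identity holds instead because the $\rmW$-image of the inner expression is concentrated on the characteristic variety, which lies in $\opb{p_{12}}\Lambda_1\cap\opb{p_{23}}\Lambda_2$ and is thus proper over $X_1\times X_3^a$; one has to exploit this properness to restrict the comparison of $\reim{p_{13}}$ and $\reim{q_{13^a}}$ to a set where the extra cotangent fibre directions remain bounded. That is the actual content of the theorem, and deferring it to a ``standard argument'' leaves the main step unproved.

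A second, smaller point: deriving assertions about goodness and $\chv(\shk_1\conv[M_2]\shk_2)\subset\Lambda$ from \eqref{eq:DtoW3} and Theorem~\ref{th:hker} requires a converse to the stated fact that $(\scbul)^\rmW$ preserves goodness — namely, descent of coherence and goodness from $\shm^\rmW$ back to $\shm$. This can be arranged (use faithful flatness of $\HW$ over $\HE$ from Lemma~\ref{lem:WfffE} to descend to $\HE$, then the isomorphism $\shm\isoto(\HE\tens\shm)\vert_{T^*_MM}$ appearing in the proof of Proposition~\ref{pro:DtoW2} to get back to $\shd_M$), but this descent is nowhere stated in the paper and would have to be written out carefully. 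The presumably intended route is more direct: goodness of $\shk_1\conv[M_2]\shk_2$ and the estimate on its characteristic variety are classical facts about proper direct images of good $\shd$-modules (cf.\ \cite{Ka2}), established independently of \eqref{eq:DtoW3}; only the compatibility isomorphism \eqref{eq:DtoW3} is what the theorem adds, and it is that isomorphism which rests on the properness argument sketched above.
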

The proof is straightforward and is left to the reader.
By using Diagram~\ref{diag:kernhh3}, we get:
\begin{theorem}\label{th:hhgDkern}
In the situation of Theorem~\ref{th:DtoW}, let $V_{ij}$ be a
relatively compact open subset of $M_i\times M_j$ \lp$i=1,2$, $j=i+1$\rp\,
and assume that $\opb{\pi}V_{12^a}\times_{M_2}\opb{\pi}V_{23^a}$ contains 
$(\Lambda_1\times_{X_2}\Lambda_2)\cap\opb{q_{13^a}}\opb{\pi}V_{13^a}$.
Then the diagram below commutes
\eqn
&&\xymatrix{
\Derb_{\gd,\Lambda_1}(\D[12^a])\times 
            \Derb_{\gd,\Lambda_2}(\D[23^a])\ar[r]^-{\conv}\ar[d]_-{\gr}
                            &\Derb_{\gd,\Lambda}(\D[13^a])\ar[d]_-{\gr}\\
\rmK_{\coh,\Lambda_1}(\OO[\opb{\pi}V_{12^a}])\times 
\rmK_{\coh,\Lambda_2}(\OO[\opb{\pi}V_{23^a}])\ar[r]^-{\conv}\ar[d]_-{\hh\times\hh}
                          &\rmK_{\coh,\Lambda}(\OO[\opb{\pi}V_{13^a}])\ar[d]_-{\hh}\\
\RHHO[\Lambda_1]{\opb{\pi}V_{12^a}}\times\RHHO[\Lambda_2]{\opb{\pi}V_{23^a}}\ar[r]^-{\conv}
                             &\RHHO[\Lambda]{\opb{\pi}V_{13^a}}.
}\eneqn
In particular
\eq
&&\hhg_{\opb{\pi}V_{13^a}}(\shk_1\conv[2]\shk_2)
=\hhg_{\opb{\pi}V_{12^a}}(\shk_1)\circ\hhg_{\opb{\pi}V_{23^a}}(\shk_2)
\eneq
in $\RHHO[\Lambda]{\opb{\pi}V_{13^a}}$.
\end{theorem}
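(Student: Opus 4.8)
The statement in question is Theorem~\ref{th:hhgDkern}, which asserts that the Hochschild class of $\shd$-modules is functorial with respect to convolution, and that it is compatible with the passage from $\shd$-modules to $\OO$-modules via the graded functor $\gr\circ(\scbul)^\rmW$. The plan is to deduce everything from results already established, so that very little new work is needed; the statement is essentially a bookkeeping exercise assembling Theorem~\ref{th:DtoW}, Theorem~\ref{th:hhgDkern}'s $\DQ$-algebroid analogue (Corollary~\ref{th:HH1loc} and Diagram~\ref{diag:kernhh3}), and the identifications between $\HHA[X]$ and $\HHH[{\HWo[X]}]$ and between $\HHAh$ and $\HHO$ in the symplectic case.

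First I would set up the diagram: the top square is the assertion that $(\scbul)^\rmW$ commutes with convolution of kernels, which is precisely Theorem~\ref{th:DtoW} (formula \eqref{eq:DtoW3}), together with the fact, from Proposition~\ref{pro:DtoW2} and the flatness statements of Lemma~\ref{lem:WfffE}, that $(\scbul)^\rmW$ sends good $\shd$-modules to good $\HW$-modules with the expected characteristic variety behaviour. The middle square, relating the Grothendieck groups $\rmK_{\coh,\Lambda}$ of $\OO$-modules under convolution, is a consequence of Diagram~\ref{diag:kerngrgr3} specialized to the symplectic case, using that $\gr(\HWo[X])\simeq\OO[X]$ on the base manifold $T^*M$ and that $\gr$ commutes with convolution (Proposition~\ref{pro:circgr}) and with inverse/proper direct images. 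The bottom square is then exactly Corollary~\ref{cor:HH1gr} (or Corollary~\ref{th:HH1loc} together with \eqref{eq:hhGr}), which gives the multiplicativity of the Hochschild class of $\gr(\A)$-modules under convolution.

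The one point requiring genuine care — and the main obstacle — is the compatibility of the maps $(\scbul)^\rmW$ and $\gr$ with the restriction to the relevant relatively compact open subsets $\opb{\pi}V_{ij}$, and making sure the properness hypothesis $\Lambda_1\times_{X_2}\Lambda_2$ proper over $X_1\times X_3$ together with the containment hypothesis on $V_{12^a}\times_{M_2}V_{23^a}$ guarantees that the convolution computed on these opens agrees with the global convolution. This is the analogue of the care needed in Proposition~\ref{pro:grgr1} and Lemma~\ref{lem:grgr2}, where one checks that the image in $\rmK_{\coh,\Lambda}(\OO[\opb{\pi}V])$ is independent of choices and that the resulting diagram \eqref{eq:KDmodW} is well-defined. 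I would argue as in those proofs: choose lattices (i.e.\ good $\OO$-sub-$\HWo$-modules) on the closures of the $V_{ij}$, use Lemma~\ref{lem:grgr4} to get compatible lattices on the convolution, and invoke Lemma~\ref{lem:grgr2} for independence.

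Finally, to obtain the displayed formula $\hhg_{\opb{\pi}V_{13^a}}(\shk_1\conv[2]\shk_2)=\hhg_{\opb{\pi}V_{12^a}}(\shk_1)\circ\hhg_{\opb{\pi}V_{23^a}}(\shk_2)$, I would simply trace $(\shk_1,\shk_2)$ through the commuting diagram: the top horizontal map sends it to $\shk_1\conv[M_2]\shk_2$; applying $\gr\circ(\scbul)^\rmW$ and then $\hh$ along the left-then-bottom path gives $\hhg_{\opb{\pi}V_{12^a}}(\shk_1)\circ\hhg_{\opb{\pi}V_{23^a}}(\shk_2)$ by definition \eqref{eq:hhgDmod} and the bottom square; along the top-then-right path it gives $\hhg_{\opb{\pi}V_{13^a}}(\shk_1\conv[2]\shk_2)$, again by \eqref{eq:hhgDmod} and \eqref{eq:DtoW3}. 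Commutativity of the whole diagram then yields the identity. Since all three squares commute by the cited results, the proof is complete; I would remark, as the authors do for Theorem~\ref{th:DtoW}, that the verification is straightforward and the details of the diagram chase can be left to the reader.
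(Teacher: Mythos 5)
Your proposal is correct and follows essentially the same strategy as the paper, which proves the theorem in a single line by invoking the commutativity of Diagram~\eqref{diag:kernhh3}; you have simply unwound that citation into its constituent squares (Theorem~\ref{th:DtoW} for the passage to $\HW$-modules, Diagram~\eqref{diag:kerngrgr3} and Lemma~\ref{lem:grgr2} for the K-theory layer, Corollary~\ref{cor:HH1gr} for the Hochschild layer) and made explicit the role of the hypothesis on the $V_{ij}$, which the paper leaves implicit.
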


As a particular case, and using Corollary~\ref{cor:indexAh1}, 
we recover a theorem of Laumon \cite{La} in the analytic
framework. 

\section{Euler classes of  $\shd$-modules}\label{section:EulerDmod}

We keep the notations of \S~\ref{section:HHD} and we set $X=T^*M$.
One defines the Hochschild homology $\HHE[X]$ of $\HE[X]$ and the
Hochschild class $\hh_X(\shm)$ of a coherent $\HE[X]$-module $\shm$
similarly as for $\HHA$.

In the sequel, we identify a coherent $\shd_M$-module $\shm$ with 
$\HE\tens[\pi^{-1}\shd_M]\pi^{-1}\shm$. 
In particular, 
we define by this way the  Hochschild class $\hh_X(\shm)$ of a coherent $\shd$-module 
$\shm$.
Hence
\eq
&&\hh_X(\shm)\in H^{d_X}_{\chv(\shm)}(X;\HHE[X]).
\eneq
\begin{lemma}\label{le:euEeuW}
There is a natural isomorphism 
\eq\label{eq:HHEtoC}
&&\HHE[X]\isoto\C_X\,[d_X]
\eneq
 which makes
the diagram below commutative:
\eqn
\xymatrix{
{\HHE[X]}\ar[d]\ar[r]^-\sim&{\C_X{}[d_X]}\ar[d]\\
{\HHW[X]}\ar[r]^-\sim_-{\tau}&\cor_X\,[d_X].
}
\eneqn 
\end{lemma}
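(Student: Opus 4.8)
The goal is to produce the isomorphism $\HHE[X]\isoto\C_X[d_X]$ and to check its compatibility with the corresponding isomorphism $\tau\cl\HHW[X]\isoto\cor_X[d_X]$ of Theorem~\ref{th:symp2} under the morphism $\HHE[X]\to\HHW[X]$ induced by $\HE[X]\hookrightarrow\HW[X]$. The overall strategy is to repeat, for $\HE[X]$, the computation of Hochschild homology that was carried out for the $\DQ$-algebroid $\A$ in Section~\ref{section:dual} and in the proof of Theorem~\ref{th:symp1}: namely, realize $\HHE[X]$ as $\de^{-1}(\dA[X^a]_{\mathscr E}\lltens\dA_{\mathscr E})$ where $\dA_{\mathscr E}$ is the analogue of the canonical diagonal module for $\HE[X\times X^a]$, and identify it using the analogue of the ring $\DA$ — here the sheaf of (finite-order) microdifferential operators together with its Spencer/de~Rham type resolution of $\HE[X]$. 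The key structural input is that $\HE[X]$ is, like a $\DQ$-algebra, $\hbar$-complete (with $\hbar=\partial_t^{-1}$ after the dummy-variable construction of \S~\ref{section:DQcotg}) and has $\gr$ isomorphic to $\sho_X(0)$ or, after localization, to $\OO[X]$; more precisely one works with the homogeneous analogue $\sho_X(0)$ and the Euler-vector-field grading.

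\textbf{Key steps.} First I would set up the diagonal module $\dA_{\mathscr E}\in\md[\HE[X\times X^a]]$ supported on the diagonal, corresponding to $\HE[X]$ viewed as an $(\HE[X]\tens\HE[X^a])$-bimodule, exactly as $\dA$ was built in \S~\ref{section:diagonal}; the analogue of Lemma~\ref{lem:inve} and Corollary~\ref{co:existCD} goes through since $\HE[X]$ shares properties \eqref{cond:DQ} in the appropriate homogeneous sense (this is where the dummy-variable reduction to $T^*(M\times\C)$ and the results of \cite{S-K-K} enter). Second, I would introduce the sheaf $\shd^{\mathscr E}_X\subset\shend(\HE[X])$ playing the role of $\DA$ — concretely built from $\shd_X$ acting on total symbols — and establish the analogues of Lemma~\ref{le:DAalg1} and Lemma~\ref{le:DAalg2}: $\HE[X]$ has a Koszul/Spencer resolution of length $d_X$ by free $\shd^{\mathscr E}_X$-modules, $\ext[\shd^{\mathscr E}_X]{d_X}(\HE[X],\shd^{\mathscr E}_X)$ is the microdifferential analogue of $\Omega_X$, and the residue/de~Rham computation gives $\oA_{\mathscr E}\lltens[\shd^{\mathscr E}_X]\HE[X][-d_X]\simeq\C_X$. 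Third, I would run the symplectic computation of Theorem~\ref{th:symp1}: since $X=T^*M$ is symplectic, the local system $L_{\mathscr E}\eqdot\hom[\HE[X\times X^a]](\dA_{\mathscr E},\oA_{\mathscr E})$ of rank one over $\C_X$ is trivial — here the relevant base ring is $\C$ (not $\coro$), so no $\hbar^{d_X/2}$ twist appears and one gets directly $\HHE[X]\simeq\C_X[d_X]$ concentrated in one degree. Finally, I would check the commutative square: the morphism $\HHE[X]\to\HHW[X]$ is induced functorially from $\HE[X]\hookrightarrow\HW[X]$ and from the compatibility $\dA_{\mathscr E}\mapsto\dA[X]$ (with $\A=\HWo[X]$), so it fits into the diagram of resolutions \eqref{eq:oAtensAtooAtensD}; after applying $(\scbul)^\loc=\cor\tens[\coro](\scbul)$ the complex $(\Omega_X^\scbul\forl,\hbar d)$ becomes quasi-isomorphic to $(\Omega_X^\scbul\forl,d)$, which is exactly the map realizing $\tau$, and chasing through shows the two identifications agree on the nose.

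\textbf{Main obstacle.} I expect the hardest part to be not the formal homological bookkeeping — which is a faithful transcription of Sections~\ref{section:dual} and of the proof of Theorem~\ref{th:symp1} — but rather the precise setup of the microdifferential analogues: one must be careful that $\HE[X]$ is filtered (by order) rather than carrying an honest central parameter $\hbar$, so the role of $\gr_\hbar$ is played by the graded ring $\gr^{\mathscr E}$ with respect to the order filtration, whose associated graded is $\sho_X(0)$ (or $\OO[X]$ after inverting), and one invokes the conservativity of $\gre$ (noted in the proof of Lemma~\ref{lem:WfffE}) in place of Corollary~\ref{cor:conservative1}. The classical input that $\shd^{\mathscr E}_X$ has the de~Rham complex $\prod_n\hbar^n\Omega^\scbul_X$ — or rather its microlocal counterpart — computing $\rhom(\HE[X],\HE[X])$, together with the structure theorem for simple microdifferential modules along a Lagrangian from \cite{S-K-K}, is what makes the triviality of $L_{\mathscr E}$ work; stating these cleanly and checking they glue to algebroids over $X$ (not merely $T^*M$) is the delicate point. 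Once these analogues are in place, the compatibility with $\tau$ follows by inspecting the single commutative diagram relating the two Koszul resolutions of the diagonal, as in Lemma~\ref{le:HHtoDR2}, so I would not expect further difficulty there.
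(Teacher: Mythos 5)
Your proposal is sound in outline, but it takes a much heavier road than the paper does, and it is worth noting the difference. You plan to transplant the entire abstract apparatus of \S~\ref{section:dual} and Theorem~\ref{th:symp1} to the microdifferential setting: build the diagonal module for $\HE[X\times X^a]$, introduce a filtered analogue $\shd^{\mathscr E}_X$ of $\DA$, establish its Spencer resolution and dualizing module, trivialize the rank-one local system $L_{\mathscr E}$ via \cite{S-K-K}, and then compare the resulting identification with $\tau$ by tracking compatibilities between two parallel dualizing-complex constructions. The paper instead gives a short coordinate computation: fixing symplectic coordinates $(x,u)$ on $X=T^*M$, it writes $\HEo\simeq\widetilde\OO\seteq\prod_{k\le 0}\hbar^{-k}\sho_X(k)$ as a sheaf, observes that $\HHE[X]$ is computed by the Koszul complex of the commuting operators $\partial_{x_i},\hbar\partial_{u_i}$ on $\widetilde\OO$ while $\HHW[X]$ is computed by the Koszul complex of $\hbar\partial_{x_i},\hbar\partial_{u_i}$ on $\sho_X\Ls$ (already established in the proof of Lemma~\ref{le:symp2}), and exhibits an explicit degreewise map $\hbar^{-n},\dots,\hbar^0$ intertwining the two. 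Both the existence of the isomorphism to $\C_X[d_X]$ and the commutativity of the square fall out of a single concrete commutative diagram, with no dualizing complexes, no algebroids, no local system $L_{\mathscr E}$, and no [S-K-K] structure theorem. The gain of the paper's route is brevity and the avoidance of precisely the delicate setup you correctly identify as the ``main obstacle''; the gain of your route, if carried out, would be a uniform abstract treatment of $\HE$ on equal footing with the $\DQ$-algebroid case, at the cost of re-proving several chapters' worth of infrastructure. Since the statement is local on the cotangent bundle, the coordinate shortcut is fully adequate; your way is not wrong, but it does considerably more than is needed.
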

\Proof[Sketch of proof]
We take coordinates $(x_1,\ldots,x_n,u_1,\ldots u_n)$,
and set $\widetilde{\OO}\seteq\prod_{k\le 0}\hbar^{-k}\sho_X(k)$,
where $\sho_X(k)$ is the sheaf of holomorphic functions on $X$ 
homogeneous of degree $k$ with respect to the variables
$(u_1,\ldots,u_n)$.
Then $\widetilde{\OO}$ is isomorphic to $\HEo$ as a sheaf.
Moreover, $\HHE[X]$ is represented by the Koszul complex
of $\partial/\partial x_i$, $\hbar\partial/\partial
u_i\in\shend(\widetilde{\OO})$ ($i=1,\ldots,n$).
On the other hand, as we have seen,
$\HHW[X]$ 
is represented by the Koszul complex
of $\hbar\partial/\partial x_i$, $\hbar\partial/\partial
u_i\in\shend(\sho_X((\hbar)))$ ($i=1,\ldots,n$).
Hence we have a commutative diagram
$$
\xymatrix{
0\ar[r]&{\widetilde{\OO}}\ar[r]\ar[d]^-{\hbar^{-n}}
                   &\cdots\ar[r]&{\widetilde{\OO}{}^{2n}}\ar[r]\ar[d]
                        &{\widetilde{\OO}{}}\ar[d]^-{\hbar^0}\ar[r]&0\\
0\ar[r]&{\sho_X((\hbar))}\ar[r]
               &\cdots\ar[r]&{\sho_X((\hbar))^{2n}}\ar[r]&{\sho_X((\hbar))}
\ar[r]&\,0,
}
$$
in which the top row represents $\HHE[X]$
and the bottom row represents $\HHW[X]$.
\QED
\begin{definition}
Let $\shm\in\Derb_\coh(\HE[X])$. We denote by $\eu_X(\shm)$
the image of $\hh_X(\shm)$ in $H^{d_X}_{\chv(\shm)}(X;\C_X)$ by the morphism in \eqref{eq:HHEtoC}
and call it the Euler class of $\shm$. 
\glossary{Euler class!of $\shd$-modules}%
\end{definition}

The next result immediately follows from Lemma~\ref{le:euEeuW}. 
\begin{proposition}
For  $\shm\in \Derb_\coh(\shd_M)$,  $\eu_X(\shm^W)$ is the image of $\eu_X(\shm)$
by the natural map 
$H^{d_X}_{\chv(\shm)}(X;\C_X)\to H^{d_X}_{\chv(\shm)}(X;\cor_X)$.
\end{proposition}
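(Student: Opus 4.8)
The statement to prove is that for $\shm\in\Derb_\coh(\shd_M)$, the Euler class $\eu_X(\shm^W)$ is the image of $\eu_X(\shm)$ under the natural comparison map $H^{d_X}_{\chv(\shm)}(X;\C_X)\to H^{d_X}_{\chv(\shm)}(X;\cor_X)$ induced by $\C_X\to\cor_X$. The plan is to trace the definitions of both classes back through the Hochschild classes and the identifications of Hochschild homologies, and to check that every morphism in sight is compatible with the comparison map. Concretely, $\eu_X(\shm)$ is obtained from $\hh_X(\shm)\in H^{d_X}_{\chv(\shm)}(X;\HHE[X])$ via the isomorphism $\HHE[X]\isoto\C_X[d_X]$ of Lemma~\ref{le:euEeuW}, while $\eu_X(\shm^W)=\tau_X(\hh_X(\shm^W))$ is obtained from $\hh_X(\shm^W)\in H^{d_X}_{\chv(\shm)}(X;\HHAh[X])$ via $\tau_X\colon\HHAh[X]\isoto\cor_X[d_X]$ of Theorem~\ref{th:symp2} (here $\A=\HWo[X]$). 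So the heart of the argument is the commutative square in Lemma~\ref{le:euEeuW}, relating $\HHE[X]\isoto\C_X[d_X]$ and $\HHW[X]\isoto\cor_X[d_X]$ through the natural morphism $\HHE[X]\to\HHW[X]$.

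First I would make precise the morphism $\HHE[X]\to\HHW[X]$ at the level of Hochschild homology: it is induced by the algebra morphism $\HE[X]\hookrightarrow\HW[X]$ of \eqref{eq:DtoW}, functorially in the construction \eqref{def:HHA} of Hochschild homology of the corresponding algebroid (or algebra), exactly as the comparison maps $(\scbul)^\loc\colon\HHA\to\HHAh$ and $\gr\colon\HHA\to\HHGA$ were defined in Section~8.4. Then I would check that under this morphism, the class $\hh_X(\shm)$ of a coherent $\shd_M$-module (identified with $\HE[X]\tens_{\pi^{-1}\shd_M}\pi^{-1}\shm$) maps to $\hh_X(\shm^W)$ — this is the same type of functoriality statement, following because the chain of morphisms defining the Hochschild class (the identity of $\RHom$, the isomorphism with $\shk\letens\RDD\shk$, the trace morphism to $\dA$) is natural with respect to base change along $\HE[X]\hookrightarrow\HW[X]$ and $\shm^W = \HW[X]\tens_{\HE[X]}\shm$; flatness of $\HW[X]$ over $\HE[X]$ (Lemma~\ref{lem:WfffE}(b)) ensures the relevant tensor products are underived. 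Finally, combining this with the commutative square of Lemma~\ref{le:euEeuW} gives that $\eu_X(\shm)$ and $\eu_X(\shm^W)$ correspond under the composite $\C_X[d_X]\isofrom\HHE[X]\to\HHW[X]\isoto\cor_X[d_X]$, and it remains to identify this composite with the map $\C_X[d_X]\to\cor_X[d_X]$ coming from $\C_X\to\cor_X$; but that is precisely the content of the commutative diagram displayed in Lemma~\ref{le:euEeuW}, once one notes the bottom isomorphism there is $\tau$, i.e. $\tau_X$. Passing to $H^{d_X}_{\chv(\shm)}(X;\scbul)$ then yields the claim.

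The main obstacle I expect is the bookkeeping in the second step: verifying that the isomorphism $\rhom_{\HE[X]}(\shm,\shm)\isoto\spb{\delta}(\shm\letens\RDD\shm)$ and the trace morphism behave correctly under $(\scbul)^W$ requires knowing that $\RDD$ (duality over $\HE[X]$) is intertwined with $\RDD$ over $\HW[X]$ by $(\scbul)^W$, which in turn uses that $\shm$ admits locally finite free resolutions over $\HE[X]$ (so the dual commutes with flat base change) and that $\HW[X]$ is flat over $\HE[X]$. This is routine but must be stated carefully, parallel to what is done for the pair $(\A,\Ah)$ in Section~8.4 and for the morphism $\gr$; indeed the cleanest route is to invoke the already-established naturality of the Hochschild class under change of rings (implicit in Corollary~\ref{th:HH1loc} and the surrounding diagrams) rather than re-deriving it. The remaining compatibility — that the explicit Koszul-complex identifications in Lemma~\ref{le:euEeuW} really do fit into the stated commutative square, with vertical maps $\hbar^{-n},\dots,\hbar^0$ on the left column matching $\hbar^{2n},\dots,\hbar^0$ on the $(\Omega^\scbul_X\forl,\hbar d)\to(\Omega^\scbul_X\forl,d)$ picture from \eqref{eq:oAtensAtooAtensD} — is already granted by that lemma, so no further work is needed there.
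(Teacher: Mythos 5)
Your proposal is correct and follows the same route the paper takes: the paper states that the proposition "immediately follows from Lemma~\ref{le:euEeuW}," which is exactly the commutative square relating $\HHE[X]\isoto\C_X[d_X]$ and $\HHW[X]\isoto\cor_X[d_X]$ that you invoke at the end. The only thing the paper leaves tacit — and you correctly single out as the one point needing an argument — is the functoriality of the Hochschild class under the flat ring morphism $\HE[X]\hookrightarrow\HW[X]$, i.e.\ that $\hh_X(\shm)\mapsto\hh_X(\shm^\rmW)$; your appeal to flatness of $\HW$ over $\HE$ and to the naturality of the defining chain of morphisms is exactly what fills that gap.
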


Applying Theorem~\ref{th:HH1}, we get:
\begin{theorem}\label{th:euDkern}
In the situation of Theorem~\ref{th:DtoW}, one has:
\eq
&&\eu_{13^a}(\shk_1\conv[2]\shk_2)=\eu_{12^a}(\shk_1)\circ\eu_{23^a}(\shk_2)
\eneq
in $H^{d_1+d_3}_{\Lambda_1\circ\Lambda_2}(X_{13};\C_{X_{13}})$.
\end{theorem}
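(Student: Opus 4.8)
The plan is to deduce Theorem~\ref{th:euDkern} directly from the functoriality of Hochschild classes established in Theorem~\ref{th:HH1}, combined with the comparison between $\HHE[X]$ and the topological coefficient sheaf $\C_X[d_X]$ provided by Lemma~\ref{le:euEeuW}. Since $\eu_{ij^a}(\shk)$ is by definition the image of $\hh_X(\shk)$ under the isomorphism~\eqref{eq:HHEtoC}, the content of the theorem is that the composition morphism
\eqn
&&\reim{p_{13}}(\opb{p_{12}}\HHE[X_1\times X_2^a]\lltens\opb{p_{23}}\HHE[X_2\times X_3^a])\to \HHE[X_1\times X_3^a]
\eneqn
— the $\HE$-analogue of Proposition~\ref{pro:compHH} — is intertwined, via the isomorphisms of Lemma~\ref{le:euEeuW}, with the ordinary cup-product-and-integrate map $\int_{X_2}(\scbul\cup\scbul)$ on the topological sheaves $\C_{X_{12}}[d_{12}]$, $\C_{X_{23}}[d_{23}]$, $\C_{X_{13}}[d_{13}]$. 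Granting that compatibility, the equality
$$\eu_{13^a}(\shk_1\conv[2]\shk_2)=\eu_{12^a}(\shk_1)\circ\eu_{23^a}(\shk_2)$$
follows by applying Theorem~\ref{th:HH1} (in its $\HE$-version, which holds by exactly the same proof) and transporting along these isomorphisms, observing that the convolution $\circ$ on $H^{*}(X_{13};\C_{X_{13}})$ on the right-hand side is literally $\int_{X_2}(\scbul\cup\scbul)$ by the definition~\eqref{eq:lambdacicr} of $\Lambda_1\circ\Lambda_2$ and the definition of $\circ$ on cohomology classes used in Theorem~\ref{th:symp2} and Corollary~\ref{cor:EPindAhmod}.

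Concretely I would proceed as follows. First, set up the Hochschild homology $\HHE[X]$ and the composition morphism for $\HE$-kernels, mimicking Section~\ref{chapter:HH}: everything there used only that $\HE[X\times X^a]$ behaves like a $\DQ$-algebroid with a canonical diagonal module $\dA$, a bi-invertible dualizing object, and the cohomological completeness/coherence machinery — all available for $\HE$. In particular Theorem~\ref{th:HH1} and Proposition~\ref{pro:compHH} carry over verbatim, giving $\hh_{X_{13^a}}(\shk_1\conv[2]\shk_2)=\hh_{X_{12^a}}(\shk_1)\circ\hh_{X_{23^a}}(\shk_2)$ in $\RHHO[\Lambda]{X_{13^a}}$ — more precisely in the $\HE$-Hochschild cohomology group $H^{d_1+d_3}_\Lambda(X_{13};\HHE[X_{13^a}])$. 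Second, I would establish the commutative square
\eq\label{diag:euDkern1}
&&\xymatrix{
\reim{p_{13}}(\opb{p_{12}}\HHE[X_{12^a}]\lltens\opb{p_{23}}\HHE[X_{23^a}])\ar[r]^-{\circ}\ar[d]_-{\sim}
   & \HHE[X_{13^a}]\ar[d]^-{\sim}\\
\reim{p_{13}}(\opb{p_{12}}\C_{X_{12}}{}[d_{12}]\lltens\opb{p_{23}}\C_{X_{23}}{}[d_{23}])\ar[r]^-{\int_2(\cdot\cup\cdot)}
   &\C_{X_{13}}{}[d_{13}].
}
\eneq
This is the exact analogue of Proposition~\ref{pro:HHtoDRcirc}; its proof reduces, as there, to the case $X_1=X_3=\rmptt$, i.e.\ to checking that the pairing $\HHE[X]\tens\HHE[X]\to\omega_{X_\R}^{\rm top}$ of Corollary~\ref{cor:hoschdualizcp} (in its $\HE$-form) corresponds under Lemma~\ref{le:euEeuW} to the wedge-then-integrate map on $\C_X[d_X]$. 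Here I can use that Lemma~\ref{le:euEeuW} already matches the relevant Koszul-complex presentations of $\HHE[X]$ and $\HHW[X]$ with the de~Rham complex, so the computation is the same as the one performed for $\HHAh$ in Lemma~\ref{le:HHtoDR2} / Proposition~\ref{pro:HHtoDRcirc}, carried out over $\C$ rather than $\cor$.

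Finally, applying~\eqref{diag:euDkern1} to the classes $\hh(\shk_1)$ and $\hh(\shk_2)$ and using the $\HE$-version of Theorem~\ref{th:HH1}, one gets $\eu_{13^a}(\shk_1\conv[2]\shk_2)=\int_{X_2}\eu_{12^a}(\shk_1)\cup\eu_{23^a}(\shk_2)$; since by definition $\circ$ on the cohomology groups $H^{*}_{\Lambda_i}(X_{i(i+1)};\C)$ is precisely $\int_{X_2}(\scbul\cup\scbul)$ composed with the pullbacks along the $p_{ij}$ and push-forward along $p_{13}$, supported on $\Lambda_1\circ\Lambda_2$, this is the claimed formula in $H^{d_1+d_3}_{\Lambda_1\circ\Lambda_2}(X_{13};\C_{X_{13}})$. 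The main obstacle, as elsewhere in the paper, is not any single deep step but verifying the commutativity of the large coherence diagram~\eqref{diag:euDkern1} at the level of complexes — i.e.\ that all the structural isomorphisms (duality, residue morphism~\eqref{mor:residue}, the de~Rham quasi-isomorphism, and the identification of Lemma~\ref{le:euEeuW}) are compatible. I expect this to be entirely parallel to the $\DQ$-case already treated in Lemmas~\ref{le:HH3}, \ref{le:HH4} and Proposition~\ref{pro:HHtoDRcirc}, so the honest work is bookkeeping rather than a new idea; for that reason the proof can reasonably be stated as ``the same as that of Theorem~\ref{th:HH1} together with Lemma~\ref{le:euEeuW}'', with the details left to the reader as is done for Theorem~\ref{th:DtoW}.
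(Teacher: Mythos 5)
Your proof is correct in substance, and the essential idea --- transporting the Hochschild-class functoriality of Theorem~\ref{th:HH1} across the identification of Hochschild homology with topological cohomology --- is exactly what the paper does. The route you take, however, is more laborious than necessary.

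You propose to redevelop the whole of Chapter~\ref{chapter:HH} (the convolution $\circ$ on $\HHE$, the $\HE$-analogue of Proposition~\ref{pro:compHH}, the $\HE$-analogue of Theorem~\ref{th:HH1}, and an $\HE$-analogue of Proposition~\ref{pro:HHtoDRcirc}) from scratch for the filtered ring $\HE$. This is in principle doable --- the paper's remark at the end of Chapter~\ref{chapter:FD} explains that the coherence/cohomological-completeness machinery of \S~1 applies to $\HEo$ --- but it requires re-checking the $\DQ$-specific parts of Chapters~2--4 in a setting where the deforming ideal is no longer $\hbar\A$. The paper sidesteps all of that by \emph{reducing to $\HW$}: Theorem~\ref{th:DtoW} identifies the $\shd$-module convolution with the $\HW$-module convolution $(\shk_1\conv[M_2]\shk_2)^\rmW\isoto\shk_1^\rmW\conv[X_2]\shk_2^\rmW$; Corollary~\ref{cor:EPindAhmod}, which is already proved from Theorem~\ref{th:HH1} together with Proposition~\ref{pro:HHtoDRcirc}, gives $\eu_{X_{13}}(\shk_1^\rmW\conv\shk_2^\rmW)=\int_{X_2}\eu(\shk_1^\rmW)\cup\eu(\shk_2^\rmW)$ in $H^{d_1+d_3}_\Lambda(X_{13};\cor)$; the Proposition immediately preceding the statement says that $\eu_X(\shm^\rmW)$ is the image of $\eu_X(\shm)$ under the coefficient map $\C_X\to\cor_X$; and since $H^{d_1+d_3}_\Lambda(X_{13};\C)\to H^{d_1+d_3}_\Lambda(X_{13};\cor)$ is injective (tensoring with $\cor$ over $\C$ is faithfully flat) and commutes with $\int_{X_2}(\scbul\cup\scbul)$, the equality descends to the $\C$-valued Euler classes. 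The real work is thus already contained in Lemma~\ref{le:euEeuW} (which is precisely the bridge between $\HHE$ and $\HHW$ that makes this reduction possible) and in the $\DQ$-case machinery; nothing new has to be re-derived for $\HE$. So: your argument works, but it builds a piece of theory the paper deliberately avoids building by exploiting the injection $\C\hookrightarrow\cor$.
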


This formula is equivalent to  the results of \cite{S-Sn} on the functoriality of
the Euler class of $\shd$-modules. Note that the results of loc.\
cit.\ also deal with constructible sheaves.

\chapter{Holonomic $\DQ$-modules}\label{chapter:Hol}

The aim of this chapter is to study holonomic $\DQ$-modules  on
symplectic manifolds. 
More precisely, we will prove that, if
$\shl$ and $\shm$ are two  holonomic $\Ah$-modules on a symplectic manifold $X$,
then the complex $\rhom[{\Ah}](\shm,\shl)$ is 
perverse (hence, in particular, $\C$-constructible) over the field $\cor$.
It follows from the preceding results in Chapter~\ref{chapter:Symp} that if
 the intersection of the supports of $\shm$ and $\shl$ is compact,
 then the Euler-Poincar{\'e} index of this complex is given by the
 integral $\int_X\eu_X(\shm)\cdot\eu_X(\shl)$. We show here that 
the Euler class of a holonomic module is  a Lagrangian cycle, which
makes its calculation easy.

If moreover $\shl$ and $\shm$ are  simple holonomic modules supported
on smooth 
Lagrangian submanifolds $\Lambda_0$ and $\Lambda_1$, then the microsupport 
of the complex $\rhom[{\Ah}](\shm,\shl)$ is contained in 
the normal cone $\rmC(\Lambda_0,\Lambda_1)$.
This last result was first obtained in \cite{KS08} in the analytic framework,
that is, using $\W[X]$-modules, not $\HW$-modules, 
which made the proofs much more intricate. 

Finally we prove that, in some sense, 
the complex $\rhom[{\Ah}](\shm,\shl)$ is invariant by Hamiltonian symplectomorphism.

\section{$\A[]$-modules along a Lagrangian submanifold}
Let $X$ be a complex symplectic manifold endowed with a
$\DQ$-algebroid $\A$.

\subsubsection*{The algebra $\AL$}
Let $\Lambda$ be a smooth Lagrangian
submanifold of $X$ and let $\shl$ be  a coherent $\A$-module
simple along $\Lambda$.

Locally, $X$ is isomorphic as a symplectic manifold to  $T^*\Lambda$, the cotangent
bundle to $\Lambda$.
We set for short 
\eqn
\index{Oh@$\Oh[M]$}\index{Ohloc@$\Ohl[M]$}\index{Omegah@$\ohh[M]$}%
&&\Oh[\Lambda]\eqdot\sho_\Lambda\forl,\quad \Ohl[\Lambda]\eqdot\sho_\Lambda\Ls. 
\eneqn
There are local isomorphisms 
\eqn
&&\A\simeq\HWo,\quad \shl\simeq \Oh[\Lambda].
\eneqn
Then $\shend_{\coro}(\shl)\simeq\shend_{\coro}(\Oh[\Lambda])$ 
(see Lemma~\ref{lem:endstack}) and  
the subalgebroid 
of $\shend_{\coro}(\shl)$ corresponding to the
subring 
$\Dh[\Lambda]$ of $\shend_{\coro}(\Oh[\Lambda])$
is well-defined. We denote it by $\DL$. 

\begin{lemma}
\bnum
\item
$\DL$ is equivalent to $\Dh[\Lambda]$ as a $\coro$-algebroid.
\item
The $\coro$-algebra $\DL$ satisfies \eqref{eq:FDringa} and
\eqref{eq:FDringc}. In particular, it is right and left Noetherian.
\enum
\end{lemma}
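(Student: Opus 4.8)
The statement asserts two things about the algebroid $\DL$ associated to a simple holonomic module $\shl$ along a smooth Lagrangian submanifold $\Lambda$: first, that $\DL$ is equivalent, as a $\coro$-algebroid, to $\Dh[\Lambda]=\shd_\Lambda\forl$; and second, that $\DL$ (equivalently $\Dh[\Lambda]$) satisfies the ring-theoretic hypotheses \eqref{eq:FDringa} and \eqref{eq:FDringc}, so in particular it is right and left Noetherian. The plan is to deduce (i) from the construction already carried out for $\DA$ in \S\ref{section:dual} and (ii) from Example~\ref{exa:DL} together with the results of \S\ref{section:variant}.

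For part (i): locally we may assume $X=T^*\Lambda$, $\A=\HWo[T^*\Lambda]$, and $\shl\simeq\Oh[\Lambda]$. By definition, $\DL$ is the $\coro$-subalgebroid of $\shend_{\coro}(\shl)$ corresponding, under the equivalence $\shend_{\coro}(\shl)\simeq\shend_{\coro}(\Oh[\Lambda])$ of Lemma~\ref{lem:endstack}, to the subring $\Dh[\Lambda]\subset\shend_{\coro}(\Oh[\Lambda])$. First I would invoke Lemma~\ref{lem:endstack}: since $\shl$ and $\Oh[\Lambda]$ are locally isomorphic as $\coro_X$-modules, the stacks $\shend_{\coro}(\shl)$ and $\shend_{\coro}(\Oh[\Lambda])$ are equivalent as $\coro$-algebroids, and this equivalence is realized by the bi-invertible module $\shl'(\sigma)=\hom[\coro](\Oh[\Lambda],\hom[\sha](\sigma_2,\sigma_1))$ as in the proof of Proposition~\ref{prop:shda}~(i). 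One then restricts this bi-invertible module to the subalgebroids, exactly as in Proposition~\ref{prop:shda}~(ii), where the analogous statement $\DA\simeq\shd_X\forl$ is proved: one picks local standard isomorphisms and checks that the submodule generated by the distinguished section $f$ does not depend on the choices (by Proposition~\ref{pro:gauge1}) and is a bi-invertible $(\DL\tens(\Dh[\Lambda])^\rop)$-module. By Lemma~\ref{lem:algiso}, such a bi-invertible module is precisely the data of an equivalence $\DL\simeq\Dh[\Lambda]$. This is essentially a verbatim transcription of the argument for $\DA$, with $X$ replaced by $\Lambda$ and the roles of $\A$ and $\shl$ in place of $\A$ and $\A$ itself.

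For part (ii): by (i) it suffices to treat $\Dh[\Lambda]=\shd_\Lambda\forl$. The parameter $\hbar$ is central and $\shd_\Lambda\forl$ has no $\hbar$-torsion (it is an infinite product $\prod_{n\ge0}\shd_\Lambda\hbar^n$) and is $\hbar$-complete by construction, while $\shd_\Lambda\forl/\hbar\,\shd_\Lambda\forl\simeq\shd_\Lambda$ is left Noetherian by the classical theory of differential operators; this gives \eqref{eq:FDringa}. For \eqref{eq:FDringc}, Example~\ref{exa:DL} already records that, taking $\BB$ to be the family of Stein compact subsets of $\Lambda$ and $\shao=\shd_\Lambda$, the prestack of good $\shd_\Lambda$-modules (in the sense of \cite{Ka2}) satisfies hypothesis (iv$'$). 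Hence $\Dh[\Lambda]$ satisfies \eqref{eq:FDringa} and \eqref{eq:FDringc}, and Theorem~\ref{th:formalfini1b}~(i) gives that it is left Noetherian; applying the same to $(\shd_\Lambda\forl)^\rop\simeq\shd_{\Lambda^a}\forl$ (the opposite ring of differential operators again being a ring of differential operators) yields right Noetherianity as well.

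The main obstacle is a bookkeeping one rather than a conceptual one: in part (i) one must be careful that all the constructions in Proposition~\ref{prop:shda}~(ii) — the choice of a local isomorphism $\psi\cl\sigma_1\isoto\sigma_2$, the standard isomorphism $\tw\phi$, and the resulting section $f\cl\Oh[\Lambda]\isoto\hom[\sha](\sigma_2,\sigma_1)$ — go through when $\A=\HWo[T^*\Lambda]$ and $\shl$ is an \emph{arbitrary} simple module along $\Lambda$ rather than $\A$ itself, and that the independence of choices still follows from Proposition~\ref{pro:gauge1}; this is where one uses that $X$ is symplectic so that, by Lemma~\ref{le:simpleAmod1}, any two simple modules along $\Lambda$ are locally isomorphic with endomorphism ring $\coro$. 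Given that, the remainder is routine. Since these verifications are parallel to those already written for $\DA$, I would state them and refer the reader to the proof of Proposition~\ref{prop:shda}, leaving the details to the reader.
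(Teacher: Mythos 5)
Your proposal is correct and matches the paper's approach: the paper proves part (i) by referring to the argument of Proposition~\ref{prop:shda}~(ii) and part (ii) by referring to Example~\ref{exa:DL}, which is exactly what you do (with the expected expansion of detail). Your additional remark that the independence of choices relies on the symplectic hypothesis via Lemma~\ref{le:simpleAmod1} is a correct and useful observation that the paper leaves implicit.
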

\begin{proof}
(i) follows by similar arguments as in Proposition~\ref{prop:shda}~(ii).

\noindent
(ii) follows from Example~\ref{exa:DL}.
\end{proof}
The functor $\A\vert_\Lambda\to \shend_{\coro}(\shl)$ factorizes as
\eq\label{eq:AtoDh}
&&\A\vert_\Lambda\to\D[\shl],
\eneq
and setting $\Dl[\shl]\eqdot(\D[\shl])^\loc$, this functor
induces a functor
\eq
&&\Ah\vert_\Lambda\to\Dl[\shl].
\eneq

We denote by $I_\Lambda\subset \OO$ the defining ideal of $\Lambda$.
Let $\shi$ be the kernel of the composition
\eqn
&&\opb{\hbar}\A\To[\hbar]\A\To[\sigma]\OO\to\OO[\Lambda].
\eneqn
Then we have $\shi/\A\simeq I_\Lambda$.
\begin{definition}
We denote by $\AL$ \index{AL@$\AL$}%
the $\coro$-subalgebroid of $\Ah$ generated by $\shi$.
\end{definition}
Note that the algebra $\AL$ is the analogue in the framework of
$\DQ$-algebras of the algebra $\she_\Lambda$ constructed
in~\cite{K-O}.

The ideal $\hbar\shi$ is contained in $\A$, hence acts on $\shl$ and
one sees easily that $\hbar\shi$ sends $\shl$ to $\hbar\shl$. 
Hence, $\shi$ acts on $\shl$ and defines a 
functor $\AL\to\DL$. 
We thus have the functors of algebroids
\eqn
&&\xymatrix{
{\A\vert_\Lambda}\ar[r]\ar[dr]&{\AL\vert_\Lambda}\ar[d]\ar[r]&{\Ah\vert_\Lambda}\ar[d]\\
&{\DL}\ar[r]&{\DLl}\,.
}\eneqn
In particular, $\shl$ is naturally an $\AL$-module.
\begin{lemma}\label{le:ALpropert1}
\bnum
\item
$\shi^k=\AL\cap\hbar^{-k}\A$ for any $k\ge0$,
\item
$\shi^k/\shi^{k-1}\simeq I_\Lambda^k$ for $k>0$,
\item
$\AL$ is a right and left Noetherian algebroid,
\item
$\gr(\AL)\vert_\Lambda\isoto\gr\DL\simeq\D[\Lambda]$,
\item
$(\AL)^\loc\simeq\Ah$ and $\Ah$ is flat over $\AL$.
\enum
\end{lemma}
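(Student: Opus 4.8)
The statement is a package of five local-geometric facts about the algebroid $\AL$ generated inside $\Ah$ by the ideal $\shi$, where $\shi$ is the kernel of $\opb{\hbar}\A\to\OO[\Lambda]$. Since everything is local on $\Lambda$, the plan is to reduce immediately to the model situation $X=T^*\Lambda$, $\A=\HWo[T^*\Lambda]$, $\shl=\Oh[\Lambda]$, in which $\A\vert_\Lambda$ and the $\DQ$-algebroids become honest sheaves of rings, and $\AL$ becomes a genuine subsheaf of $\Ah=\HW[T^*\Lambda]$. In that model, with a local coordinate system $(x;u)$ on $T^*\Lambda$ adapted so that $\Lambda=\{u=0\}$, one has $I_\Lambda=(u_1,\dots,u_n)$ and $\shi$ is generated over $\A$ by $u_1,\dots,u_n$ together with $\hbar$; writing $\hbar=\opb{\partial_t}$ and identifying with microdifferential operators as in Section~\ref{section:DQcotg}, $\shi$ is the "section-of-order-$\le1$-vanishing-on-$\Lambda$" ideal, and $\AL$ is the classical $\she_\Lambda$-type algebra of \cite{K-O}, now over $\coro$.

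\textbf{Order of the steps.} I would prove (i) first: the inclusion $\shi^k\subset\AL\cap\hbar^{-k}\A$ is immediate from $\hbar\shi\subset\A$ (so $\hbar^k\shi^k\subset\A$), and the reverse inclusion is the substantive point — an element of $\AL$ of $\hbar$-order $\le k$ must be expressible as a sum of products of at most $k$ generators from $\shi$; this is checked by the usual symbol/Bezout argument, filtering by the order in $u$ and $\hbar^{-1}$ and peeling off one generator at a time, exactly as for $\she_\Lambda$. Step (ii) then follows formally: the map $\shi^k\to\hbar^{-k}\A/\hbar^{-k+1}\A\simeq\A/\hbar\A=\OO$ lands in $I_\Lambda^k$ by construction, is surjective since $\shi$ surjects onto $I_\Lambda$ (its generators $u_j$ map to $u_j$), and has kernel $\shi^k\cap\hbar^{-k+1}\A=\shi^{k-1}$ by (i); hence $\shi^k/\shi^{k-1}\simeq I_\Lambda^k$. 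Step (iv) is obtained from (i)–(ii) by assembling the graded pieces: $\gr(\AL)=\bigoplus_k\shi^k/\shi^{k-1}\simeq\bigoplus_k I_\Lambda^k$, which is the Rees-type algebra of $\OO$ along $I_\Lambda$, and this is exactly $\gr\D[\Lambda]$ under the symbol filtration of $\D[\Lambda]$ (the filtration by order of differential operators, whose associated graded is $\bigoplus_k$ (functions on $T^*\Lambda$ homogeneous of degree $k$) $=\bigoplus_k I_\Lambda^k$ locally); the factorizations $\A\vert_\Lambda\to\AL\vert_\Lambda\to\DL$ identify the two filtered algebroids, so one gets $\gr(\AL)\vert_\Lambda\isoto\gr\DL\simeq\D[\Lambda]$. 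Step (iii): $\AL$ satisfies \eqref{eq:FDringa} ($\hbar$-torsion-freeness is inherited from $\Ah$, $\hbar$-completeness from the filtration by the $\shi^k$) and, by (iv), $\gr(\AL)\vert_\Lambda\simeq\D[\Lambda]$ is Noetherian with the good-module prestack of Example~\ref{exa:DL}, so hypothesis \eqref{eq:FDringc} holds; Theorem~\ref{th:formalfini1b}~(i) (and its $\sha^\rop$ version, using Lemma~\ref{le:ALpropert1} applied to the opposite, or directly the symmetry of the construction) gives that $\AL$ is right and left Noetherian. Step (v): from (i), $(\AL)^\loc=\indlim[k]\hbar^{-k}\AL=\indlim[k]\hbar^{-k}(\AL\cap\hbar^{-k}\A)$ exhausts $\indlim[k]\hbar^{-k}\A=\Ah$, so $(\AL)^\loc\simeq\Ah$; flatness of $\Ah$ over $\AL$ then follows from Theorem~\ref{th:flat} applied to $\Ah$ as an $\AL$-module — it has no $\hbar$-torsion, it is cohomologically complete (being coherent over itself, hence over $\AL$, one checks $\Ah\simeq(\AL)^\loc$ has the relevant completeness, or argues directly that $\hbar$ acts invertibly so it is an $(\AL)^\loc$-module and use Lemma~\ref{lem:locA} together with Proposition~\ref{pro:cocotens}), and $\Ah/\hbar\Ah=0$ is trivially flat over $\gr(\AL)$, whence $\Ah$ is flat over $\AL$. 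Alternatively, flatness is seen stalkwise from $\gr(\AL)=\D[\Lambda]$ being Noetherian and $\Ah$ being the $\hbar$-localization, via the Rees-algebra picture.

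\textbf{Main obstacle.} The real work is concentrated in the reverse inclusion of (i), i.e.\ that $\AL\cap\hbar^{-k}\A$ is generated by $k$-fold products from $\shi$ and nothing more — equivalently that the multiplication map $\shi\tens_{\A}\cdots\tens_{\A}\shi\to\hbar^{-k}\A$ has image exactly $\shi^k$, with the expected graded quotient $I_\Lambda^k$. This is precisely the content that makes $\AL$ the "microlocalization along $\Lambda$" and it requires the model-case analysis with microdifferential operators (invertibility of $u_j-\hbar P_j$-type operators, the Spath/Weierstrass-division input from \cite{S-K-K}), just as in the construction of $\she_\Lambda$ in \cite{K-O}; once this is in place, (ii), (iii), (iv), (v) are formal consequences via the finiteness and flatness theorems of Chapter~\ref{chapter:FD}. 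I would therefore present (i) with a careful model-case proof and treat the remaining four items as corollaries, flagging that (iii) and (v) invoke Theorems~\ref{th:formalfini1b} and~\ref{th:flat} respectively.
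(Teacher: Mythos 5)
Your plan for parts (iii), (iv) and (v) contains genuine errors, all traceable to one mistaken premise: that $\AL$ fits into the framework of Chapter~\ref{chapter:FD}. In fact $\AL$ is \emph{not} $\hbar$-complete. It is an increasing union $\bigcup_k\shi^k$ of $\hbar$-complete coherent $\A$-modules, but the union itself is not: a compatible sequence of partial sums $\sum_{m<n}\hbar^m b_m$ with $b_m\in\shi^{2m}\setminus\shi^{2m-1}$ has $\hbar$-order tending to $-\infty$ and therefore has no limit in $\Ah$ at all, let alone in $\AL$ (the same phenomenon as for the classical $\she_\Lambda$ of~\cite{K-O}, whose elements have bounded order). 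Consequently condition~\eqref{eq:FDringa}~(ii) fails for $\AL$, so neither Theorem~\ref{th:formalfini1b} nor Theorem~\ref{th:flat} applies, and your proofs of (iii) and (v) break down. For (v) there is a second, independent obstruction: $\gr(\Ah)\simeq\hbar\Ah/\hbar\Ah=0$, so if $\Ah$ were cohomologically complete Corollary~\ref{cor:conserv} would force $\Ah\simeq0$; thus $\Ah$ is definitely not cohomologically complete and hypothesis~(b) of Theorem~\ref{th:flat} cannot be met. The paper avoids all this: for (iii) it invokes the classical filtered-ring criterion of~\cite[Th.~A.32]{Ka2} applied to the filtration $\{\shi^k\}_{k\ge0}$ whose associated graded $\bigoplus_k I_\Lambda^k$ is Noetherian; for (v) it simply observes $\A\subset\AL\subset\Ah$, whence $(\AL)^\loc=\Ah$ and the flatness of $\Ah$ over $\AL$ is just the flatness of a localization (Ore localization at the central element $\hbar$).

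Two further points. In (iv) you identify $\gr(\AL)$ with $\bigoplus_k\shi^k/\shi^{k-1}$; this is the graded object for the $\shi$-filtration, whereas the $\gr$ of the statement (Definition~\ref{def:grad}) is the $\hbar$-adic one, $\AL/\hbar\AL$. These are different objects, and your subsequent identification of $\bigoplus_k I_\Lambda^k$ (a Rees algebra) with $\gr_{\rm ord}\D[\Lambda]$ (which is $\bigoplus_k I_\Lambda^k/I_\Lambda^{k+1}$) compounds the confusion. For (i), your route via Weierstrass/Sp\"ath division and invertibility of $u_j-\hbar P_j$ would probably work, but you are overestimating the difficulty: the paper simply exhibits the explicit subalgebra $\sha'=\{\sum_k f_k\hbar^k\in\Ah\,;\,f_k\in I_\Lambda^{-k}\text{ for }k<0\}$, checks directly that it is closed under the star product and contains $\shi$ (hence $\AL\subset\sha'$), and compares the images of $\shi^k$, $\AL\cap\hbar^{-k}\A$ and $\sha'\cap\hbar^{-k}\A$ in $\hbar^{-k}\A/\hbar^{-k+1}\A$ to conclude by induction on $k$. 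No division theorem is needed.
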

\begin{proof}
Since the question is local, we may assume that
$X=T^*\C^{n}$ with coordinates $(x,u)$,  $\Lambda=\{u=0\}$
 and $\A$ is the star-algebra as
in \eqref{eq:wstar}.
Set 
\eqn
&&\sha'\seteq\{\sum_kf_k(x,u)\hbar^k\in\Ah;\,
f_k(x,u)\in I_\Lambda^{-k} \mbox{ for }k<0\}.
\eneqn
Then we can check that $\sha'$ is a subalgebra of $\Ah$ and it contains $\shi$.
Hence it contains $\AL$.
It is easy to see that 
the image of $\shi^k\to\hbar^{-k}\A/\hbar^{-k+1}\A$ contains $\hbar^{-k}I_\Lambda^k$.
On the other hand, 
the image of $\sha'\cap\hbar^{-k}\A\to \hbar^{-k}\A/\hbar^{-k+1}\A$
coincides with $\hbar^{-k}I_\Lambda^k$.
Hence, $\AL\cap\hbar^{-k}\A$ and $\sha'\cap\hbar^{-k}\A$
have the same image $\hbar^{-k}I_\Lambda^k$  in $\hbar^{-k}\A/\hbar^{-k+1}\A$.
We conclude that $\AL=\sha'$ and
$\AL\cap\hbar^{-k}\A\subset \shi^{k}+\hbar^{-k+1}\A$.
Hence, an induction on $k$ shows (i).

\smallskip
\noindent
(ii) is now obvious.

\smallskip
\noindent
(iii) Considering the filtration $\{\AL\cap\hbar^{-k}\A\}_{k\ge0}$ of $\AL$,
the result follows by \cite[Theorem A.32]{Ka2}.

\smallskip
\noindent
(iv) is obvious.

\smallskip
\noindent
(v) follows from $\A\subset\AL\subset\A^\loc$.
\end{proof}

By this lemma, for a coherent $\AL$-module $\shn$, we may regard $\gr(\shn)$ 
as an object of $\Derb_\coh(\shd_\Lambda)$. Recall that
$\Db_\hol(\shd_\Lambda)$ denotes the full triangulated category of 
$\Db_\coh(\shd_\Lambda)$ consisting of objects with holonomic cohomology.

\begin{lemma}\label{le:DLflatAL}
The algebroid $\DL$ is flat over $\AL$ and $\DLl$ is flat over $\Ah$.
\end{lemma}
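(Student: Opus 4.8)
The statement to establish is that $\DL$ is flat over $\AL$ and, consequently, that $\DLl$ is flat over $\Ah$. The natural approach is to reduce the flatness over $\AL$ to a graded statement via the conservativity/flatness criteria developed in Chapter~\ref{chapter:FD}, exactly as was done repeatedly for $\DQ$-algebras. First I would observe that since the question is local on $\Lambda$, we may assume $X = T^*\C^n$ with coordinates $(x,u)$, that $\Lambda = \{u = 0\}$, that $\A$ is the star-algebra of~\eqref{eq:wstar}, and that $\shl \simeq \Oh[\Lambda]$, so that $\DL$ is identified with $\Dh[\Lambda]$ and $\AL = \sha'$ in the notation of the proof of Lemma~\ref{le:ALpropert1}. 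In particular $\AL$ is a $\coro$-algebra satisfying \eqref{eq:FDringa} and \eqref{eq:FDringc} (Example~\ref{exa:DL} applied to $\Lambda$, via Lemma~\ref{le:ALpropert1}~(iv), since $\gr(\AL) \simeq \shd_\Lambda$), and the same holds for $\AL^\rop$; moreover the flabby dimension of $\Lambda$ is finite. Thus the hypotheses of Theorem~\ref{th:flat} are in force once we check $\DL$ has no $\hbar$-torsion, is cohomologically complete (as an $\AL$-module), and $\DL/\hbar\DL = \gr(\DL)$ is flat over $\gr(\AL)$.

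The three conditions I would verify in turn. That $\DL$ has no $\hbar$-torsion is immediate since $\Dh[\Lambda] = \prod_{n\ge 0}\shd_\Lambda\hbar^n$. That $\gr(\DL) \simeq \shd_\Lambda$ is flat over $\gr(\AL) \simeq \shd_\Lambda$ is trivial, the map being the identity (this is Lemma~\ref{le:ALpropert1}~(iv)). The only substantive point is cohomological completeness of $\DL$ viewed as an $\AL$-module: by Corollary~\ref{cor:cccrt} it suffices to check that $\DL$ is $\hbar$-complete — which it is, being $\prod_{n\ge0}\shd_\Lambda\hbar^n$ — and that $\inddlim_{U\ni x}H^i(U;\DL) = 0$ for $i \neq 0$, where $U$ ranges over a neighborhood system of $x \in \Lambda$. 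Since $\DL \simeq \prod_{n\ge0}\shd_\Lambda\hbar^n$ as a sheaf and $\shd_\Lambda$ has no higher cohomology on Stein open subsets of $\Lambda$, and since the derived limit computing cohomology of the product behaves well (the relevant $\prolim^1$ vanishes because the transition maps are surjective on a Stein basis, by Lemma~\ref{lem:cohpro}), the vanishing follows; in fact $\DL$ is cohomologically complete as a $\Z_\Lambda[\hbar]$-module, which is all that is needed since cohomological completeness over $\Z_\Lambda[\hbar]$ and over $\AL$ coincide (Remark after Proposition~\ref{prop:cccr}). Applying Theorem~\ref{th:flat} then gives that $\DL$ is flat over $\AL$.

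For the second assertion, I would simply localize: $\DLl = (\DL)^\loc = \cor \tens[\coro] \DL$ and $\Ah = (\AL)^\loc$ by Lemma~\ref{le:ALpropert1}~(v), and flatness is preserved by the exact localization functor $(\scbul)^\loc$ — more precisely, for any coherent $\Ah^\rop$-module $\shn$, choosing an $\AL$-lattice $\shn_0$ and using $\shn \tens[\Ah]\DLl \simeq (\shn_0 \tens[\AL]\DL)^\loc$ together with the flatness of $\DL$ over $\AL$ already proved, one gets $\Tor^{\Ah}_i(\shn,\DLl) = 0$ for $i > 0$. Alternatively one may invoke directly that a flat module stays flat after inverting a central element.

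\textbf{Main obstacle.} The delicate step is the cohomological completeness of $\DL$ as an $\AL$-module. The subtlety is that $\AL$ is genuinely larger than $\A$ (it contains $\hbar^{-1}I_\Lambda$), so one must be careful that the $\hbar$-adic and cohomological-completeness arguments are run with respect to the $\hbar$-filtration on $\AL$ and not confused with the order filtration; but since cohomological completeness only depends on the underlying $\Z[\hbar]$-module structure, and $\DL$ is $\hbar$-complete with the appropriate acyclicity on a Stein basis, this goes through. The remaining verifications are routine applications of the machinery of Chapter~\ref{chapter:FD}.
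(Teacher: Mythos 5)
There is a genuine gap: your invocation of Theorem~\ref{th:flat} with $\sha = \AL$ rests on the claim that $\AL$ satisfies hypotheses~\eqref{eq:FDringa} and~\eqref{eq:FDringc}, but $\AL$ is not $\hbar$-complete, so condition~(ii) of~\eqref{eq:FDringa} fails. Concretely, in the local model $\AL = \set{\sum_{k\ge -N}f_k\hbar^k\in\Ah}{f_k\in I_\Lambda^{-k}\text{ for }k<0}$, pick $u\in I_\Lambda$ and set $a_n = \sum_{m=0}^{n-1}(u^{2m}/m!)\hbar^{-m}$. Then $a_{n+1}-a_n = (u^{2n}/n!)\hbar^{-n}$ lies in $\hbar^n\AL$ because $u^{2n}\in I_\Lambda^{2n}$, so $(a_n)_n$ defines an element of $\prolim_n\AL/\hbar^n\AL$; but the would-be limit $\sum_{m\ge0}(u^{2m}/m!)\hbar^{-m}$ has infinitely many negative powers of $\hbar$, hence does not lie in the Laurent ring $\Ah$, hence not in $\AL$. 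Since Theorem~\ref{th:flat} (and Proposition~\ref{pro:cocotens}, on which its proof relies) are stated under the standing assumption that $\sha$ satisfies~\eqref{eq:FDringa}, your appeal to them is not justified as written, and the sentence asserting that $\AL$ satisfies those hypotheses is simply false.

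This is exactly the obstacle the paper's own proof is built to sidestep: it never applies the Chapter~\ref{chapter:FD} $\hbar$-completeness machinery with base ring $\AL$. Instead it shows $H^j(\DL\ltens[\AL]\shm)\simeq0$ for $j<0$ and any coherent $\AL$-module $\shm$ by a three-case analysis (no $\hbar$-torsion; annihilated by $\hbar$; general). In the torsion-free case it uses the same key observation you make — $\gr\DL\simeq\gr\AL$, so $\gr(\DL\ltens[\AL]\shm)\simeq\gr\shm$ lives in degree $0$ — but then concludes via Proposition~\ref{pro:grHa} applied to the complex regarded as an object of $\Derb_\coh(\DL)$, which is legitimate because $\DL\simeq\Dh[\Lambda]$ does satisfy the Chapter~\ref{chapter:FD} hypotheses. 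In other words, the Nakayama-style conservativity step is run over $\DL$, not over $\AL$. Your route could in principle be rescued by reopening the proofs of Theorem~\ref{th:flat} and Proposition~\ref{pro:cocotens} and checking that $\hbar$-completeness of $\sha$ itself is never invoked (only coherence of $\sha$, finite flabby dimension, and Noetherianity of $\gr\sha^\rop$), but then it is no longer a clean citation of a packaged theorem, and the direct argument of the paper is both shorter and safer.
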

\begin{proof}
It is enough to prove the first statement.

Let us show that $H^j(\DL\ltens[\AL]\shm)\simeq0$ for 
any coherent $\AL$-module $\shm$ and any $j<0$.

\noindent
(i) Assume that $\shm$ has no $\hbar$-torsion. Using Lemma~\ref{le:ALpropert1}~(iv),
we have for $j<0$, $H^j\gr(\DL\ltens[\AL]\shm) \simeq H^j\gr\shm\simeq0$, 
and hence $H^j(\DL\ltens[\AL]\shm)\simeq0$ by Proposition~\ref{pro:grHa}.

\smallskip
\noindent
(ii) Assume that $\hbar\shm=0$. Then
\eqn
\DL\ltens[\AL]\shm \simeq \DL\ltens[\AL]\gr\AL\ltens[\gr\AL]\shm 
\simeq \gr\DL\ltens[\gr\AL]\shm \simeq \shm.
\eneqn

\noindent 
(iii)  In the general case, set ${}_n\shn\eqdot\ker(\hbar^n\cl\shm\to\shm)$ and 
$\shm_{tor}\eqdot\bigcup\limits_n{}_n\shn$. Note that this union is 
locally stationary. Defining $\shm_{tf}$ by the exact sequence,
\eqn
0\to\shm_{tor}\to\shm\to\shm_{tf}\to 0,
\eneqn
this module has no $\hbar$-torsion.
It is thus enough to prove the result for the ${}_n\shn$'s and this
follows from (ii) by induction on $n$, using the exact sequence
\eqn
0 \to {}_n\shn \to {}_{n+1}\shn \to {}_{n+1}\shn/{}_n\shn \to 0.
\eneqn
\end{proof}

\begin{definition}
An object $\shn$ of  $\Db_\coh(\AL)$ is holonomic 
\glossary{holonomic!ALmodule@$\AL$-module}%
if $\gr(\shn)$ is Lagrangian in $T^*\Lambda$, that is, if
$\gr(\shn)$ belongs to $\Db_\hol(\shd_\Lambda)$.
\end{definition}
Note that this condition is equivalent to saying that
$H^i(\shn)/\hbar H^i(\shn)$ and $\ker(\hbar\cl H^i(\shn)\to H^i(\shn))$ 
are holonomic $\shd_\Lambda$-modules for any $i$ (see Lemma~\ref{lem:grHa}).

\subsubsection*{Microsupport and constructible sheaves}
Let us recall some notions and results of \cite{K-S1}.

Let $M$ be a {\em real analytic} manifold and $\cora$ a Noetherian 
commutative ring of finite global dimension.
\glossary{microsupport}\index{SS@$\SSi$}%
For $F\in\RD^\Rb(\cora_M)$, we denote by $\SSi(F)$ its microsupport, a
closed $\R^+$-conic ({\em i.e.,} invariant by the $\R^+$-action on
$T^*M$)
\glossary{Rconic@$\R^+$-conic}\glossary{conic!R@$\R^+$-}%
subset of $T^*M$. Recall that this set 
is involutive (one also says {\em co-isotropic}), see~\cite[Def.~6.5.1]{K-S1}.

An object $F$ of $\Derb(\cora_M)$ is
\glossary{constructible!weakly $\R$-}%
\glossary{weakly $\R$-constructible}%
{\em weakly $\R$-constructible} if there exists a subanalytic stratification
$M=\bigsqcup_{\alpha\in A}M_\alpha$ such that
$H^j(F)\vert_{M_\alpha}$ is  locally constant for all $j\in\Z$ and all
$\alpha\in A$. The object $F$ is {\em $\R$-constructible}
\glossary{constructible!$\R$-}%
\glossary{Rconstructible@$\R$-constructible}%
if moreover $H^j(F)_x$ is finitely generated for all
$x\in M$ and all $j\in\Z$.
One denotes by  $\Derb_\Rc(\cora_M)$ 
\index{Dbw@$\Derb_\Rc(\cora_M)$}%
the full subcategory of  $\Derb(\cora_M)$ consisting of 
$\R$-constructible objects. Recall that the duality functor 
$\RDD_X(\scbul)$ (see \eqref{eq:rdddual}) is an anti-auto-equivalence of
the category $\Derb_\Rc(\cora_M)$.

If $M$ is complex analytic, one defines similarly the notions of 
(weakly) $\C$-constructible sheaf, replacing ``subanalytic''
 with ``complex analytic''.
\glossary{constructible!$\C$-}%
\glossary{Cconstructible@$\C$-constructible}%
We denote by  $\Derb_\wCc(\cora_M)$ 
\index{Dbw@$\Derb_\wCc(\cora_M)$}%
the full subcategory of  $\Derb(\cora_M)$ 
consisting of weakly-$\C$-constructible  objects 
and by $\Derb_\Cc(\cora_M)$ 
\index{Dbc@$\Derb_\Cc(\cora_M)$}%
the full subcategory consisting of
$\C$-constructible objects.
Also recall (\cite{K-S1}) that $F\in\Derb(\cora_M)$ is
weakly-$\C$-constructible if and only if its microsupport is a closed
$\C^\times$-conic ({\em i.e.,} invariant by the $\C^\times$-action on $T^*M)$)
\glossary{Cconic@$\C^\times$-conic}\glossary{conic!C@$\C^\times$-}%
complex analytic Lagrangian subset of $T^*M$ or, 
equivalently, if it is contained in a closed
$\C^\times$-conic 
complex analytic isotropic subset of $T^*M$.

\begin{proposition}\label{pro:sscohco}
Let $F\in\Derb(\Z_M[\hbar])$ and assume that $F$ is cohomologically
complete. Then 
\eq\label{eq:SSgr}
&&\SSi(F)=\SSi(\gr(F)).
\eneq
\end{proposition}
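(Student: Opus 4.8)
The goal is to show that for a cohomologically complete $F\in\Derb(\Z_M[\hbar])$, the microsupports of $F$ and of $\gr(F)$ in $T^*M$ coincide. One inclusion should be almost formal: since $\gr(F)$ is represented by the mapping cone of $F\xrightarrow{\hbar}F$, one has a distinguished triangle $F\to[\hbar]F\to\gr(F)\to[+1]$ in $\Derb(\Z_M[\hbar])$, and by the triangle inequality for microsupports (see \cite[Th.~6.3.1 and Prop.~5.1.3]{K-S1}) together with the fact that multiplication by $\hbar$ does not enlarge the microsupport, we get $\SSi(\gr(F))\subset\SSi(F)$. So the whole content is the reverse inclusion $\SSi(F)\subset\SSi(\gr(F))$.

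For that I would argue by contradiction, using the microlocal cut-off / vanishing characterization of $\SSi$: if $p=(x_0;\xi_0)\notin\SSi(\gr(F))$, then there is an open conic neighborhood $U$ of $p$ and, for a suitable family of ``test'' functions $\phi$ with $\phi(x_0)=0$, $d\phi(x_0)\in U$, the vanishing $(\roim{\Gamma}_{\{\phi\ge0\}}\gr(F))_{x_0}\simeq 0$ (equivalently the relevant local cohomology with supports vanishes). I want to transfer this vanishing from $\gr(F)$ back to $F$. The key point is that the functor computing the microlocal germ (an inverse image composed with sections with support, all along real analytic maps) commutes with $\gr$: indeed by Proposition~\ref{pro:operationgr} the functor $\gr$ commutes with $\opb{f}$ and $\roim{f}$ for morphisms of topological spaces, and it evidently commutes with $\roim{\Gamma_Z}$ since the latter is built from such operations; moreover for a $\Z_M[\hbar]$-complex $G$, $\gr(G)\simeq 0$ forces, via the triangle $G\to[\hbar]G\to\gr(G)$, that $\hbar\colon H^i(G)\to H^i(G)$ is an isomorphism for all $i$, i.e. $G\in\Der((\Z_M[\hbar])^\loc)$. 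Thus, denoting by $G$ the microlocal germ of $F$ at $p$ (an object of $\Derb(\Z[\hbar])$), we obtain $\gr(G)\simeq 0$, hence $G\in\Der((\Z[\hbar])^\loc)$.

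Now the cohomological completeness of $F$ enters decisively. Cohomological completeness is a local and stable property (it is preserved by $\opb{f}$ up to the relevant derived functors, and by $\roim{f}$ by Proposition~\ref{pro:cohcodirim}; more to the point it is preserved by the inverse images and sections-with-support functors used to form the microlocal germ, since by Proposition~\ref{prop:cccr}(i) it is a pointwise condition on the stalks $H^i(U;F)$ expressed by the vanishing of $\Ext^j_{\Z[\hbar]}(\Z[\hbar,\hbar^{-1}],-)$, which is inherited by these operations). Hence $G$ is cohomologically complete. But $G\in\Der((\Z[\hbar])^\loc)$ means $G$ lies in the essential image of $\Der((\Z[\hbar])^\loc)$ in $\Der(\Z[\hbar])$, while cohomological completeness means $G$ lies in the right orthogonal $\Der((\Z[\hbar])^\loc)^{\perp r}$; these two subcategories intersect only in $0$ (this is the content of Corollary~\ref{cor:conserv}: $\gr(G)\simeq 0$ and $G$ cohomologically complete imply $G\simeq 0$). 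Therefore $G\simeq 0$, which says precisely that $p\notin\SSi(F)$, completing the reverse inclusion.

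\textbf{Main obstacle.} The delicate point is the bookkeeping in the middle paragraph: one must check carefully that the concrete functor whose vanishing at $p$ detects $\SSi$ (in \cite{K-S1} this is phrased through $\opb{f}\roim{\Gamma_{\{s\ge 0\}}}$ for germs of real analytic maps $f$ and functions $s$, or equivalently through the Sato microlocalization) genuinely commutes with $\gr$ and genuinely preserves cohomological completeness — the former from Proposition~\ref{pro:operationgr}, the latter from the stability statements around Proposition~\ref{prop:cccr} and Proposition~\ref{pro:cohcodirim}, noting that a bounded complex of $\Z_M[\hbar]$-modules with the stated finiteness is handled by a limit argument as in the proof of Theorem~\ref{th:cohimplcohco}. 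Once this compatibility is established, the conclusion is immediate from Corollary~\ref{cor:conserv}; the triangle-inequality inclusion $\SSi(\gr(F))\subset\SSi(F)$ is routine and needs no completeness hypothesis.
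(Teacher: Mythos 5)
Your overall strategy coincides with the paper's: prove the easy inclusion $\SSi(\gr F)\subset\SSi(F)$ from the triangle $F\to[\hbar]F\to\gr F\to[+1]$, then for the converse reduce to a test object $G$ with $\gr(G)\simeq 0$, show $G$ is cohomologically complete, and conclude $G\simeq 0$ by Corollary~\ref{cor:conserv}.

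However, there is a real gap in the middle step as you have set it up. You form $G$ as the microlocal germ $\bl\RR\Gamma_{\{\phi\ge 0\}}F\br_{x_0}$, which is a filtered colimit $\sindlim[W\ni x_0]\rsect_{\{\phi\ge 0\}\cap W}(W;F)$. Cohomological completeness is \emph{not} stable under filtered colimits: $\RHom[{\Z[\hbar]}](\Z[\hbar,\opb{\hbar}],\scbul)$ does not commute with $\sindlim$ because $\Z[\hbar,\opb{\hbar}]$ is not finitely presented over $\Z[\hbar]$. Your justification misreads Proposition~\ref{prop:cccr}~(i)~(c), which asserts the vanishing of $\sindlim[U\ni x]\Ext[{\Z[\hbar]}]{j}\bl\Z[\hbar,\opb{\hbar}],H^i(U;F)\br$, not the vanishing of $\Ext[{\Z[\hbar]}]{j}\bl\Z[\hbar,\opb{\hbar}],\sindlim[U]H^i(U;F)\br$ — these differ precisely because of the failure of commutation just mentioned, so cohomological completeness is not ``a pointwise condition on stalks.'' Symmetrically, the hypothesis $p\notin\SSi(\gr F)$ only gives the vanishing of the colimit $\bl\RR\Gamma_{\{\phi\ge0\}}\gr F\br_{x_0}$, not of each $\rsect_{\{\phi\ge 0\}\cap W}(W;\gr F)$, so you also cannot deduce $\gr(G_W)\simeq 0$ at finite stage. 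The paper sidesteps both problems by using the characterization of $\SSi$ in terms of the morphisms $\rsect(V;F)\to\rsect(U;F)$ for specific open sets $U\subset V$: then $\rsect(V;F)$ and $\rsect(U;F)$ are cohomologically complete by Proposition~\ref{pro:cohcodirim}, hence so is the cone $G$ (the right orthogonal $\Der(\shr^\loc)^{\perp r}$ being triangulated), and $\gr(G)$ is the cone of $\rsect(V;\gr F)\to\rsect(U;\gr F)$, which vanishes by hypothesis — no colimit is ever taken before applying Corollary~\ref{cor:conserv}.
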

\begin{proof}
The inclusion 
\eqn
&&\SSi(\gr(F)) \subset \SSi(F)
\eneqn
follows from the distinguished triangle
$F\to[\hbar]F\to\gr(F)\to[+1]$.
Let us prove the converse inclusion.

Using the definition of the microsupport, it is enough to prove that given two
open subsets $U\subset V$ of $M$,
$\rsect(V;F)\to\rsect(U;F)$ is an isomorphism
as soon as  
$\rsect(V;\gr(F))\to \rsect(U;\gr(F))$ is an isomorphism.
Consider a distinguished triangle $\rsect(V;F)\to\rsect(U;F)\to G\to[+1]$.
Then we get a distinguished triangle 
$\rsect(V;\gr(F))\to \rsect(U;\gr(F))\to \gr(G)\to[+1]$. Therefore,
$\gr(G)\simeq0$. On the other hand, 
$G$ is cohomologically complete, thanks to
Proposition~\ref{pro:cohcodirim} and  $G\simeq0$ by Corollary~\ref{cor:conserv}.
\end{proof}

\begin{proposition}\label{pro:RCcohco}
Let $F\in \Derb_\Rc(\coro_X)$. Then $F$ is cohomologically complete.
\end{proposition}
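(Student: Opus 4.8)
The statement to prove is: if $F\in\Derb_\Rc(\coro_X)$ then $F$ is cohomologically complete, i.e.\ $\rhom[{\Z[\hbar]}](\Z[\hbar,\opb{\hbar}],F)\simeq0$ (equivalently, $F$ belongs to $\Der(\coro_X{}^\loc)^{\perp r}$ by Definition~\ref{def:cohco} and Proposition~\ref{prop:cccr}). The basic idea is to reduce to the criterion in Corollary~\ref{cor:cccrt}, or more directly to check the local vanishing condition in Proposition~\ref{prop:cccr}~(i)~(c): $\indlim[U\ni x]\Ext[{\Z[\hbar]}]{j}\bl\Z[\hbar,\opb{\hbar}],H^i(U;F)\br\simeq0$ for all $x\in X$, $j=0,1$ and $i\in\Z$, where $U$ ranges over an open neighborhood system of $x$.

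\textbf{First reduction.} Since $F$ is $\R$-constructible, the question is local and we may argue one cohomology sheaf at a time; by truncation and the five lemma (using that cohomologically complete objects form a triangulated subcategory) it suffices to treat the case $F=\shg$, a single $\R$-constructible sheaf of $\coro_X$-modules placed in degree $0$. The key structural input from \cite{K-S1} is that an $\R$-constructible sheaf admits, locally near any point $x$, a finite resolution by finite direct sums of constant sheaves $\coro_{Z}$ on locally closed subanalytic subsets $Z$ (or, dually, one may use that $\rsect(U;\shg)$ stabilizes as $U$ shrinks to $x$ through a cofinal family of subanalytic open sets). Concretely, I would use the fact that for $x\in X$ there is a cofinal system of open neighborhoods $U$ — for instance a fundamental system of open subanalytic sets on which $\shg$ is ``conic'' near $x$ — such that the restriction maps $\rsect(U';\shg)\to\rsect(U;\shg)$ are isomorphisms for $U\subset U'$ in the system; this is exactly the statement that $x\notin\SSi(\shg)$ fails to obstruct local sections, packaged via the local finiteness of $\R$-constructible sheaves. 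Then $H^i(U;\shg)$ is, along this cofinal system, a constant pro-object equal to a finitely generated $\coro$-module $M^i$.

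\textbf{Main point.} Thus the problem comes down to: for a finitely generated $\coro=\C\forl$-module $M$, one has $\Hom[{\Z[\hbar]}](\Z[\hbar,\opb{\hbar}],M)=0$ and $\Ext[{\Z[\hbar]}]{1}(\Z[\hbar,\opb{\hbar}],M)=0$. This is elementary: $\Z[\hbar,\opb{\hbar}]\simeq\indlim[n](\hbar^{-n}\Z[\hbar])$, so $\rhom[{\Z[\hbar]}](\Z[\hbar,\opb{\hbar}],M)\simeq\prolim[n]\rhom[{\Z[\hbar]}](\hbar^{-n}\Z[\hbar],M)$, and since multiplication by $\hbar$ is already invertible on $\Z[\hbar,\opb{\hbar}]$ one gets that this computes $\prolim[n]\bl\hbar^n\tens[{\Z[\hbar]}]M\br$ together with a $\RR^1\prolim$ term; because $M$ is a finitely generated $\coro$-module it is $\hbar$-adically separated and complete, the maps $\hbar^{n+1}M\to\hbar^{n}M$ are surjective so the Mittag-Leffler condition holds and the $\RR^1\prolim$ vanishes, while the remaining limit is $\bigcap_n\hbar^nM=0$. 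Equivalently and even more cleanly: a finitely generated $\coro$-module, being $\hbar$-complete and without need of further hypotheses beyond finite generation over the complete local ring $\coro$, satisfies the hypotheses of Corollary~\ref{cor:cccrt} trivially at the level of stalks/sections, hence is cohomologically complete as a $\Z[\hbar]$-module. Then Proposition~\ref{prop:cccr}~(i) (or directly Corollary~\ref{cor:cccrt} applied to the sheaf $\shg$, whose sections on the cofinal system are $\hbar$-complete and whose higher cohomology on that system vanishes in the limit) gives that $\shg$ is cohomologically complete.

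\textbf{Expected obstacle.} The only genuinely nontrivial ingredient is the local stabilization of $\rsect(U;\shg)$ to a finitely generated $\coro$-module along a cofinal subanalytic neighborhood system of each point; everything else is formal manipulation of $\hbar$-adic completions and the triangulated-subcategory property of cohomological completeness. This stabilization is where $\R$-constructibility is used in an essential way, and it should be quoted from \cite{K-S1} (local finiteness of $\R$-constructible sheaves, e.g.\ the existence of conic/subanalytic fundamental neighborhood systems adapted to a subanalytic stratification) rather than reproved. I would write the proof as: (1) reduce to a single $\R$-constructible sheaf by the triangulated-subcategory property; (2) invoke local finiteness to see $H^i(U;\shg)$ is eventually a fixed finitely generated $\coro$-module as $U\downarrow x$; (3) observe such a module is $\hbar$-complete, so the $\Ext^{0},\Ext^{1}$ against $\Z[\hbar,\opb{\hbar}]$ vanish; (4) conclude by Proposition~\ref{prop:cccr}~(i)~(c).
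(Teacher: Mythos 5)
Your overall plan matches the paper's argument: verify the local criterion of Proposition~\ref{prop:cccr}~(i)~(c) by noting that $H^i(U;F)$ stabilizes to the stalk $F_x$ along a cofinal system of neighborhoods of $x$ (this is where $\R$-constructibility enters), and then use that the stalk, as an object of $\Derb_f(\coro)$, is cohomologically complete; the paper carries out exactly this and handles the last step by applying Theorem~\ref{th:cohimplcohco} at $X=\rmpt$. Your preliminary reduction to a single sheaf via truncation triangles is unnecessary, since criterion~(i)~(c) already applies directly to complexes.

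There is, however, a genuine slip in your stalk-level computation. For a finitely generated $\coro$-module $M$, the projective system $\bl\cdots\to[\hbar]M\to[\hbar]M\br$ does \emph{not} satisfy the Mittag-Leffler condition in general: the images of the composed transition maps are $\hbar^nM$, which strictly decrease (take $M=\coro$ itself). So you cannot deduce $\RR^1\prolim=0$ from ML. The vanishing of $\RR^1\prolim$ is nonetheless true, but the correct reason is that $M$ is $\hbar$-adically complete: given $(a_n)_n$, the equations $m_n-\hbar m_{n+1}=a_n$ are solved by the convergent series $m_n=\sum_{k\ge0}\hbar^k a_{n+k}$, so the map $\prod_nM\to\prod_nM$, $(m_n)\mapsto(m_n-\hbar m_{n+1})$, is onto. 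Similarly, your alternate appeal to Corollary~\ref{cor:cccrt} is not quite legitimate as stated, since that corollary assumes the module has no $\hbar$-torsion, which a finitely generated $\coro$-module need not satisfy. Both issues are cleanly bypassed, as the paper does, by noting $F_x\in\Derb_f(\coro)=\Derb_\coh(\coro)$ and invoking Theorem~\ref{th:cohimplcohco} for the one-point space.
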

\begin{proof} 
One has
\eqn
\inddlim[U\ni x]\Ext[{\Z[\hbar]}]{j}\bl\Z[\hbar,\hbar^{-1}],H^i(U;F)\br&\simeq&
\Ext[{\Z[\hbar]}]{j}\bl\Z[\hbar,\hbar^{-1}],\inddlim[U\ni x]H^i(U;F)\br\\
&\simeq&\Ext[{\Z[\hbar]}]{j}\bl\Z[\hbar,\hbar^{-1}],F_x\br
\simeq0
\eneqn
where the last isomorphism follows from the fact that $F_x$ is
cohomologically complete when taking $X=\rmpt$. 

Hence, the hypothesis~(i)~(c) of Proposition~\ref{prop:cccr} is satisfied.
\end{proof}

\subsubsection*{Propagation for solutions of $\AL$-modules}

\begin{proposition}\label{pro:SScar2}
Let $\shn$ be a coherent $\AL$-module.  Then 
\eq
&&\SSi(\rhom[\AL](\shn,\shl))\subset\chv(\gr\shn).
\eneq
\end{proposition}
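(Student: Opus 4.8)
The plan is to deduce the propagation estimate from the corresponding estimate for $\shd_\Lambda$-modules via the functor $\gr$, using cohomological completeness as the bridge. First I would observe that the statement is local, so I may assume $X\simeq T^*\Lambda$, $\A\simeq\HWo[X]$ and $\shl\simeq\Oh[\Lambda]$; in particular, by Lemma~\ref{le:ALpropert1}~(iv), $\gr(\AL)\simeq\D[\Lambda]$ and $\gr(\shn)$ is naturally an object of $\Derb_\coh(\shd_\Lambda)$. The strategy is then to show
\eqn
&&\gr\bl\rhom[\AL](\shn,\shl)\br\simeq\rhom[{\shd_\Lambda}](\gr\shn,\sho_\Lambda),
\eneqn
and to combine this with Proposition~\ref{pro:sscohco} (the equality of microsupports for cohomologically complete objects) and the classical Cauchy--Kovalevskaya--Kashiwara estimate $\SSi(\rhom[{\shd_\Lambda}](\gr\shn,\sho_\Lambda))\subset\chv(\gr\shn)$.

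The main steps, in order, are as follows. (1) Verify that $\rhom[\AL](\shn,\shl)$ is cohomologically complete: since $\shl$ is a coherent $\A$-module, hence coherent over $\AL$ after restriction (or more directly, one reduces to $\shl$ without $\hbar$-torsion and applies Theorem~\ref{th:cohimplcohco} together with Corollary~\ref{cor:cohimplcohco} / Proposition~\ref{prop:homcc}, noting that cohomological completeness of the $\shr$-module side passes to $\rhom$). This allows us to invoke Proposition~\ref{pro:sscohco}, so that $\SSi(\rhom[\AL](\shn,\shl))=\SSi(\gr(\rhom[\AL](\shn,\shl)))$. (2) Compute $\gr(\rhom[\AL](\shn,\shl))$. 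Here the point is that, because $\shl\simeq\Oh[\Lambda]$ has no $\hbar$-torsion and $\gr(\AL)\simeq\shd_\Lambda$, $\gr(\shl)\simeq\sho_\Lambda$, one gets $\gr\rhom[\AL](\shn,\shl)\simeq\rhom[{\gr\AL}](\gr\shn,\gr\shl)\simeq\rhom[{\shd_\Lambda}](\gr\shn,\sho_\Lambda)$; the first isomorphism is the algebroid analogue of Proposition~\ref{pro:tensgr}~(ii), which holds here since we may locally replace $\shn$ by a bounded complex of free $\AL$-modules of finite rank (using that $\AL$ is Noetherian and has finite global dimension, cf.\ Lemma~\ref{le:ALpropert1} and Theorem~\ref{th:hddim}-type arguments applied to $\D[\Lambda]$). (3) Apply the $\shd_\Lambda$-module propagation estimate: $\SSi(\rhom[{\shd_\Lambda}](\gr\shn,\sho_\Lambda))\subset\chv(\gr\shn)$, which is a standard result of \cite{K-S1} for the de Rham / solution complex of a coherent $\shd$-module (here $\chv(\gr\shn)$ denotes the characteristic variety of $\gr\shn$, a closed $\C^\times$-conic involutive subset of $T^*\Lambda$). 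Chaining these three steps gives the claim.

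The step I expect to be the main obstacle is step (2), the identification of $\gr$ of the $\rhom$: one must be careful that the natural base-change morphism $\gr\rhom[\AL](\shn,\shl)\to\rhom[{\gr\AL}](\gr\shn,\gr\shl)$ is an isomorphism, which really uses both that $\shn$ is coherent (to reduce to a perfect complex over $\AL$) and that $\gr\shl\simeq\sho_\Lambda$ is concentrated in degree $0$ with no higher $\Tor$ against $\shn$ interfering — i.e.\ one needs the analogue of the hypotheses of Proposition~\ref{pro:tensgr} in the algebroid, $\hbar$-filtered setting, together with the compatibility of the local free resolutions with the functor $\gr$. The boundedness and coherence bookkeeping (making sure everything stays in $\Derb_\coh$ and that $\rhom$ commutes with the relevant localizations) is routine but must be handled with the finite-global-dimension statements already established for $\D[\Lambda]$ and transported to $\AL$ via Lemma~\ref{le:ALpropert1}. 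Once (2) is in place, (1) follows from the cohomological completeness theory of Chapter~\ref{chapter:FD} and (3) is quoted from \cite{K-S1}.
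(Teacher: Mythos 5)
Your overall strategy (compute $\gr$ of the solution complex, use cohomological completeness to transport the microsupport estimate from $\gr$, then invoke the $\shd$-module estimate from \cite{K-S1}) is exactly the one the paper uses, and your identification $\gr\bl\rhom[\AL](\shn,\shl)\br\simeq\rhom[{\shd_\Lambda}](\gr\shn,\sho_\Lambda)$ is correct; note, however, that Proposition~\ref{pro:tensgr}~(ii) holds for arbitrary objects of $\Der(\shr)$, so the free-resolution detour you invoke for step~(2) is not needed. What \emph{is} missing is the step that lets you apply Proposition~\ref{pro:sscohco} at all.

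The paper does not work with $\rhom[\AL](\shn,\shl)$ directly. It first applies Lemma~\ref{le:DLflatAL} ($\DL$ is flat over $\AL$) to rewrite
$\rhom[\AL](\shn,\shl)\simeq\rhom[\DL](\DL\tens[\AL]\shn,\shl)$,
and then applies the argument (in the form of Proposition~\ref{pro:SScar2b}, quoted from \cite{DGS}) to a coherent $\DL$-module. The point is that $\DL\simeq\Dh[\Lambda]$ \emph{does} satisfy the hypotheses \eqref{eq:FDringa} and \eqref{eq:FDringc}, so the machinery of Chapter~\ref{chapter:FD} — in particular local finite free resolutions via Theorem~\ref{th:hddim} — is available and yields that $\rhom[\DL](\DL\tens[\AL]\shn,\shl)$ is a \emph{bounded} complex, as required by Proposition~\ref{pro:sscohco}. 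By contrast, $\AL$ is \emph{not} $\hbar$-complete (it contains $\hbar^{-k}I_\Lambda^k$ for all $k$, and these do not form an $\hbar$-adically complete ring), so it fails condition \eqref{eq:FDringa}~(ii). Hence Theorem~\ref{th:hddim} and the other Chapter~\ref{chapter:FD} finiteness statements do not apply to $\AL$, and your appeal to ``Theorem~\ref{th:hddim}-type arguments applied to $\D[\Lambda]$'' does not establish finite global dimension for $\AL$.

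Concretely, the gap is this: you never verify that $\rhom[\AL](\shn,\shl)$ lies in $\Derb(\Z_X[\hbar])$, which is a hypothesis of Proposition~\ref{pro:sscohco}. (Your step~(1), verifying cohomological completeness via Proposition~\ref{prop:homcc}, is fine — this uses only that $\shl$ is coherent over $\A$, hence cohomologically complete, and that $\AL$ has no $\hbar$-torsion; but the parenthetical claim that $\shl$ is ``coherent over $\AL$ after restriction'' is neither correct nor needed.) The cleanest fix is precisely what the paper does: use Lemma~\ref{le:DLflatAL} to pass to $\DL$ before applying the $\gr$/cohomological completeness argument. If you insist on working with $\AL$ throughout, you would have to independently prove that $\AL$ has finite homological dimension, which is not provided by the cited lemmas.
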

\begin{proof}
By Lemma~\ref{le:DLflatAL}, we  have 
\eqn
&&\rhom[\AL](\shn,\shl)\simeq \rhom[\DL](\DL\tens[\AL]\shn,\shl).
\eneqn
Since $\gr(\DL\tens[\AL]\shn)=\gr(\shn)$, Proposition~\ref{pro:SScar2} will follow
from Proposition~\ref{pro:SScar2b} below, already obtained in \cite{DGS}.
\end{proof}
\begin{proposition}\label{pro:SScar2b}
Let $\shn$ be a coherent $\DL$-module.  Then 
\eq\label{eq:prop3}
&&\SSi(\rhom[\DL](\shn,\shl))=\chv(\gr\shn).
\eneq
\end{proposition}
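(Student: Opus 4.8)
The plan is to reduce the statement for $\DL$-modules to the corresponding, completely classical statement for $\shd_\Lambda$-modules, using the $\hbar$-graduation together with the cohomological completeness machinery developed in Chapter~\ref{chapter:FD}. The key objects are: the coherent $\DL$-module $\shn$, the simple module $\shl$ (viewed as a $\DL$-module via \eqref{eq:AtoDh}), and the complex $\rhom[\DL](\shn,\shl)$, which is an object of $\Derb(\coro_\Lambda)$ after identifying $X$ locally with $T^*\Lambda$. Since the statement is purely local on $X$, I would begin by fixing a local symplectic isomorphism $X\simeq T^*\Lambda$ and the attendant identifications $\A\simeq\HWo$, $\shl\simeq\Oh[\Lambda]$, $\DL\simeq\Dh[\Lambda]$; under these, $\gr(\DL)\simeq\shd_\Lambda$ and $\gr(\shl)\simeq\sho_\Lambda$.

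The central computation is that $\gr$ commutes with $\rhom[\DL](\shn,\scbul)$ when applied to $\shl$. More precisely, first I would establish $\gr\bigl(\rhom[\DL](\shn,\shl)\bigr)\simeq\rhom[{\gr(\DL)}]\bigl(\gr(\shn),\gr(\shl)\bigr)$. Since $\shl$ has no $\hbar$-torsion and is $\hbar$-complete, this is an instance of Proposition~\ref{pro:tensgr}~(ii) once one knows $\shn$ admits locally a finite free resolution over $\DL$ (guaranteed by left Noetherianness and finite homological dimension of $\Dh[\Lambda]$, cf.\ Example~\ref{exa:DL} and Theorem~\ref{th:hddim}); one reduces to the free case where both sides are manifestly identified. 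Thus
\[
\gr\bigl(\rhom[\DL](\shn,\shl)\bigr)\simeq\rhom[{\shd_\Lambda}]\bigl(\gr(\shn),\sho_\Lambda\bigr),
\]
and the microsupport of the right-hand side is $\chv(\gr\shn)$ by the classical theorem of Kashiwara--Schapira on the characteristic variety of $\shd$-modules and propagation of holomorphic solutions (this is exactly the $\hbar$-free version of \eqref{eq:prop3}).

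Next I would invoke Proposition~\ref{pro:sscohco}: if $F\in\Derb(\Z_\Lambda[\hbar])$ is cohomologically complete, then $\SSi(F)=\SSi(\gr F)$. Applied to $F=\rhom[\DL](\shn,\shl)$, this gives $\SSi(\rhom[\DL](\shn,\shl))=\SSi(\gr(\rhom[\DL](\shn,\shl)))=\chv(\gr\shn)$, which is the desired equality. To apply the Proposition I must verify cohomological completeness of $\rhom[\DL](\shn,\shl)$; this follows from Corollary~\ref{cor:cohimplcohco} (or Proposition~\ref{prop:homcc}) together with Theorem~\ref{th:cohimplcohco}, since $\shl\in\Db_\coh(\DL)$ and $\DL$ satisfies \eqref{eq:FDringa} and \eqref{eq:FDringc}. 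The main obstacle I anticipate is bookkeeping rather than conceptual: making sure the graded comparison isomorphism $\gr\rhom\simeq\rhom\gr$ is valid here (it requires $\shl$ to be $\hbar$-flat over $\coro$ — which holds since $\Oh[\Lambda]$ is $\hbar$-torsion-free and $\hbar$-complete — and a local finite free resolution of $\shn$), and correctly importing the classical $\shd_\Lambda$-module statement $\SSi(\rhom[{\shd_\Lambda}](\gr\shn,\sho_\Lambda))=\chv(\gr\shn)$ in the form stated in \cite{K-S1}. One inclusion ($\subset$) is the propagation theorem; the reverse inclusion ($\supset$) uses that holomorphic solutions propagate exactly along the characteristic variety, equivalently the Cauchy--Kovalevskaya--Kashiwara theorem, and I would cite it directly rather than reprove it.
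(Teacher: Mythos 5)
Your proposal follows exactly the same strategy as the paper's proof: show $F=\rhom[\DL](\shn,\shl)$ is cohomologically complete (Corollary~\ref{cor:cohimplcohco}), then use Proposition~\ref{pro:sscohco} to reduce $\SSi(F)$ to $\SSi(\gr F)$, identify $\gr F\simeq\rhom[{\shd_\Lambda}](\gr\shn,\sho_\Lambda)$ via Proposition~\ref{pro:tensgr}, and conclude by the classical theorem $\SSi(\rhom[{\shd_\Lambda}](\gr\shn,\sho_\Lambda))=\chv(\gr\shn)$ from \cite[Th.~11.3.3]{K-S1}. The only difference is presentational: you preface the argument with an explicit local identification $X\simeq T^*\Lambda$ and spell out the finite-free-resolution hypothesis for the graded comparison, both of which the paper's four-line proof leaves implicit.
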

\begin{proof}
Set $F=\rhom[\DL](\shn,\shl)$. Then $F$ is cohomologically
complete by Corollary~\ref{cor:cohimplcohco}
and $\SSi(F)=\SSi(\gr(F))$ by Proposition~\ref{pro:sscohco}. On the other hand, 
$\gr(F)\simeq \rhom[{\D[\Lambda]}](\gr\shn,\sho_\Lambda)$ by
Proposition~\ref{pro:tensgr} and the microsupport of this complex is equal
to $\chv(\gr\shn)$ by~\cite[Th~11.3.3]{K-S1}. 
\end{proof}

\subsubsection*{Constructibility of solutions}

Theorem~\ref{th:constDhh0} below has already been obtained  in \cite{DGS} in the
framework of $\Dh[M]$-modules.

Recall that $\shl$ is a coherent $\A$-module, simple along $\Lambda$.

\begin{theorem}\label{th:constDhh0}
Let $\shn$ be a holonomic $\AL$-module.
\banum
\item
The objects $\rhom[\AL](\shn,\shl)$ and $\rhom[\AL](\shl,\shn)$
belong to $\Derb_\Cc(\coro_\Lambda)$ and their microsupports 
are contained in $\chv(\gr\shn)$.
\item
There is a natural isomorphism in $\Derb_\Cc(\coro_\Lambda)$
\eq\label{eq:dualDhh}
&&\rhom[\AL](\shn,\shl)\isoto\RDD_X\bl\rhom[\AL](\shl,\shn)\br\,[d_X].
\eneq
\eanum
\end{theorem}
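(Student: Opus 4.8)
The statement to be proved is Theorem~\ref{th:constDhh0}: for a holonomic $\AL$-module $\shn$ and $\shl$ a coherent $\A$-module simple along $\Lambda$, the complexes $\rhom[\AL](\shn,\shl)$ and $\rhom[\AL](\shl,\shn)$ are $\C$-constructible over $\coro_\Lambda$ with microsupport in $\chv(\gr\shn)$, and the duality isomorphism \eqref{eq:dualDhh} holds. The plan is to combine three ingredients already available: the flatness of $\DL$ over $\AL$ (Lemma~\ref{le:DLflatAL}), the propagation result Proposition~\ref{pro:SScar2} together with Proposition~\ref{pro:SScar2b}, and the graded/cohomological-completeness machinery of \S\S\ref{chapter:FD}, especially Proposition~\ref{pro:sscohco} and Corollary~\ref{cor:cohimplcohco}.

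First I would treat part (a) for $\rhom[\AL](\shn,\shl)$. By Lemma~\ref{le:DLflatAL} we may rewrite this as $\rhom[\DL](\DL\tens[\AL]\shn,\shl)$, and since $\gr(\DL\tens[\AL]\shn)\simeq\gr(\shn)$ is holonomic over $\shd_\Lambda$ by hypothesis, we are reduced to the case of a holonomic $\DL$-module. For such a module, $\rhom[\DL](\shn,\shl)$ is cohomologically complete by Corollary~\ref{cor:cohimplcohco} (both $\shn$ and $\shl$ have bounded coherent cohomology over the Noetherian ring $\DL$), so Proposition~\ref{pro:sscohco} gives $\SSi(\rhom[\DL](\shn,\shl))=\SSi(\gr\rhom[\DL](\shn,\shl))$. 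Now $\gr$ commutes with $\rhom$ over $\DL$ by Proposition~\ref{pro:tensgr}~(ii), and $\gr(\shl)\simeq\sho_\Lambda$ since $\shl$ is simple along $\Lambda$, so $\gr\rhom[\DL](\shn,\shl)\simeq\rhom[{\shd_\Lambda}](\gr\shn,\sho_\Lambda)$, which by the Cauchy--Kowalevski--Kashiwara theorem (\cite[Th.~11.3.3]{K-S1}) is $\C$-constructible with microsupport exactly $\chv(\gr\shn)$. Since $\gr$ of a cohomologically complete object determines its constructibility (a bounded complex of $\coro_\Lambda$-modules whose $\gr$ is $\C$-constructible is itself $\C$-constructible — one checks this on stalks using that $\C$-constructibility of $F$ over $\coro$ is equivalent to $\C$-constructibility of $\gr F$ over $\C$, which follows from Proposition~\ref{pro:grHa} applied to local $\Ext$-sheaves and Lemma~\ref{lem:grHa}), we conclude $\rhom[\AL](\shn,\shl)\in\Derb_\Cc(\coro_\Lambda)$ with the asserted microsupport bound. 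The case of $\rhom[\AL](\shl,\shn)$ reduces to the previous one via the duality isomorphism \eqref{eq:dualDhh}, so the two parts are intertwined and I would prove (b) before completing (a).

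For part (b), the plan is to construct the morphism and check it is an isomorphism by passing to $\gr$. Using Corollary~\ref{cor:dualDdualA} or directly the dualizing complex $\oA$ together with the symplectic identification $\oA\simeq\hbar^{d_X/2}\coro_X\tens\dA$ from Theorem~\ref{th:symp1}, one builds a canonical morphism $\rhom[\AL](\shn,\shl)\to\RDD_X(\rhom[\AL](\shl,\shn))[d_X]$; concretely this comes from the pairing $\rhom[\AL](\shn,\shl)\tens\rhom[\AL](\shl,\shn)\to\rhom[\AL](\shn,\shn)\to\coro_\Lambda[\text{shift}]$ induced by the trace $\shl\lltens[{\A}]\RDA\shl\to\oA$ and the fact that $\shl$ is simple (so $\RDA\shl$ is again simple along $\Lambda$ by Proposition~\ref{pro:vanishsimple}, and $\shl\lltens\RDA\shl$ computes a shifted $\coro_\Lambda$). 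Both sides are cohomologically complete, so by Corollary~\ref{cor:conserv} it suffices to show the morphism is an isomorphism after applying $\gr$; but $\gr$ of both sides gives, respectively, $\rhom[{\shd_\Lambda}](\gr\shn,\sho_\Lambda)$ and $\RDD_X\rhom[{\shd_\Lambda}](\sho_\Lambda,\gr\shn)[d_X]$, and the isomorphism between these is the classical biduality for holonomic $\shd_\Lambda$-modules (again \cite[Ch.~11]{K-S1}). Compatibility of the morphism one has written with this classical one under $\gr$ is a diagram chase.

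\textbf{Main obstacle.} The routine-looking but genuinely delicate point is the passage between the $\coro$-linear and $\C$-linear constructibility statements: one must show that a bounded complex $F$ of sheaves of $\coro_\Lambda$-modules with $\gr(F)$ $\C$-constructible is itself $\C$-constructible, and that $\SSi(F)=\SSi(\gr F)$ is the right bound — this is where cohomological completeness (Proposition~\ref{pro:RCcohco}, Proposition~\ref{pro:sscohco}) does the work, and one has to be careful that the relevant $\rhom$ sheaves really are cohomologically complete and have coherent cohomology, which is why the reduction to $\DL$ via Lemma~\ref{le:DLflatAL} and the finiteness results of Chapter~\ref{chapter:FD} are essential. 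A secondary subtlety is checking that the duality morphism in (b) is the \emph{canonical} one and that it degenerates correctly under $\gr$; since the simplicity of $\shl$ forces $\shl\lltens[{\A}]\RDA\shl$ to be a shift of $\coro_\Lambda$ (by Lemma~\ref{le:symp2}-type computations and Theorem~\ref{th:symp1}), this should go through, but the bookkeeping of shifts and the half-density twist $\hbar^{d_X/2}$ needs care.
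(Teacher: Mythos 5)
Your overall strategy — reduce to $\DL$-modules via Lemma~\ref{le:DLflatAL}, apply $\gr$, and use cohomological completeness together with Kashiwara's classical constructibility theorem for holonomic $\shd$-modules — is exactly the paper's route. However, there is a real gap in your treatment of the finiteness of stalks in part~(a), and a secondary over-complication in part~(b).

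\textbf{The gap.} You assert that ``a bounded complex of $\coro_\Lambda$-modules whose $\gr$ is $\C$-constructible is itself $\C$-constructible,'' and justify this by saying ``$\C$-constructibility of $F$ over $\coro$ is equivalent to $\C$-constructibility of $\gr F$ over $\C$, which follows from Proposition~\ref{pro:grHa} applied to local $\Ext$-sheaves and Lemma~\ref{lem:grHa}.'' This is not an adequate argument. Propositions~\ref{pro:grHa} and Lemma~\ref{lem:grHa} are purely homological statements relating cohomology of $\shm$ and $\gr\shm$ for objects of $\Derb_\coh(\sha)$; they say nothing about finiteness of stalks of a complex of sheaves of $\coro_\Lambda$-modules. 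Moreover, the ``equivalence'' you invoke is false in general: a complex with $\gr F\simeq 0$ need not vanish (take any object of $\Der(\cor_\Lambda)$); it is precisely cohomological completeness that rules this out, and even granted that, concluding $F_x\in\Derb_f(\coro)$ from $\gr(F_x)\in\Derb_f(\C)$ requires a concrete argument. The missing ingredients are: (1) once weak $\C$-constructibility is established (which you do get, via the microsupport estimate from Proposition~\ref{pro:SScar2}), one can use the propagation theorem of~\cite{K-S1} to replace the stalk $F_x$ with $\rsect(U;F)$ for $U$ a sufficiently small ball around $x$; (2) $\rsect(U;F)$ is cohomologically complete because $F$ is, by Proposition~\ref{pro:cohcodirim}; (3) $\gr\rsect(U;F)\simeq\rsect(U;\gr F)$ belongs to $\Derb_f(\C)$ by Kashiwara's finiteness theorem for holonomic $\shd$-modules; and (4) Theorem~\ref{th:formalfini2} then yields $\rsect(U;F)\simeq F_x\in\Derb_f(\coro)$. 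Without steps (1), (2) and (4) the finiteness of stalks is not established, and your citations do not supply them.

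\textbf{Part~(b).} Your construction of the duality morphism by passing through $\oA$, Corollary~\ref{cor:dualDdualA}, and the identification $\oA\simeq\hbar^{d_X/2}\coro_X\tens\dA$ of Theorem~\ref{th:symp1} is more machinery than is needed and introduces a bookkeeping of half-density twists that the statement does not actually require. The morphism can be built directly from the pairing $\rhom[\AL](\shn,\shl)\tens\rhom[\AL](\shl,\shn)\to\rhom[\AL](\shl,\shl)\to\rhom[{\D[\shl]}](\shl,\shl)\simeq\coro_\Lambda\to\coro_X\,[d_X]$, using the fact that the natural map $\coro_\Lambda\to\rhom[{\D[\shl]}](\shl,\shl)$ is an isomorphism (this is Lemma~\ref{le:DAalg2} transported via the equivalence $\D[\shl]\simeq\Dh[\Lambda]$). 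Once the morphism exists, your idea of checking it is an isomorphism after $\gr$ and invoking conservativity (together with the classical biduality for holonomic $\shd_\Lambda$-modules) is correct and matches the paper. But note that to apply conservativity of $\gr$ you again need to know both sides are cohomologically complete (or, more simply, that they lie in $\Derb_\Cc(\coro_\Lambda)$, which you should have established in part~(a) first — as you yourself observe, the two parts are intertwined, so the logical order matters).
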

The morphism in (b) is similar to the morphism in
Lemma~\ref{le:duality} and is associated with
\eqn
&&\rhom[\AL](\shn,\shl)\otimes\rhom[\AL](\shl,\shn)\\
&&\hspace{5ex}\to\rhom[\AL](\shl,\shl)\to\rhom[{\D[\shl]}](\shl,\shl)
\simeq\coro_\Lambda\to \coro_X\,[d_X].
\eneqn
\begin{proof}
(a) It is enough to treat $F\eqdot\rhom[\AL](\shn,\shl)$.
In view of Proposition~\ref{pro:SScar2}, 
$F$ is weakly $\C$-constructible and it remains to show that 
for each $x\in \Lambda$, $F_x$ belongs to $\Derb_f(\coro)$. 

If $U$ is a sufficiently small open ball centered at $x$,
then $\rsect(U;F)\to F_x$ is an isomorphism (\cite{K-S1}).
The finiteness of the complex $\gr(F_x)$
follows from the classical finiteness theorem 
for  holonomic $\shd$-modules of \cite{Ka3}. Since $F$ is
cohomologically complete, Proposition~\ref{pro:cohcodirim} implies that
$\rsect(U;F)$ is \cc.
Hence the result follows from Theorem~\ref{th:formalfini2}.

\vspace{0.2cm}
\noindent
(b) follows from Corollary~\ref{cor:conservative1}, 
since we know  by~\cite{Ka3}  that
\eqref{eq:dualDhh} is an isomorphism after applying the functor $\gr$.
\end{proof}

\subsubsection*{$\AL$ modules and $\Ah$-modules}
\begin{definition}
A coherent $\AL$-submodule $\shn$ of a coherent $\Ah$-module $\shm$ is called
an $\AL$-lattice 
\glossary{lattice@$\AL$-lattice}%
of $\shm$ if $\shn$ generates $\shm$ as an $\Ah$-module.
\end{definition}

\begin{lemma}
Let $\shm$ be a coherent $\Ah$-module and let $\shn\subset\shm$ be
an $\AL$-lattice of $\shm$.
Then $\chv\bl\gr(\shn)\br\subset T^*\Lambda$ does not depend on the choice of 
 $\shn$. 
\end{lemma}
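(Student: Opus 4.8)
The statement asserts that for a coherent $\Ah$-module $\shm$ with an $\AL$-lattice $\shn$, the characteristic variety $\chv(\gr(\shn))\subset T^*\Lambda$ is independent of the choice of $\shn$. The plan is to reduce to the analogous (already known) independence statement for $\she_\Lambda$-lattices in the literature on microdifferential modules — or, more directly, to adapt the classical argument comparing two lattices, exactly as in the proof of Lemma~\ref{lem:grgr2}. First I would observe that since any two $\AL$-lattices $\shn$ and $\shn'$ of $\shm$ are both coherent $\AL$-submodules generating the same $\Ah$-module, and $\Ah=(\AL)^\loc$ by Lemma~\ref{le:ALpropert1}~(v), there exist integers $m,n>0$ with $\hbar^n\shn\subset\shn'$ and $\hbar^m\shn'\subset\shn$, so we may squeeze one lattice between scalar multiples of the other: $\hbar^{m+n}\shn\subset\hbar^m\shn'\subset\shn$. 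Since $\gr(\shn)\simeq\gr(\hbar^N\shn)$ as $\gr(\AL)$-modules for any $N$ (multiplication by $\hbar^N$ is an isomorphism of $\AL$-modules $\shn\isoto\hbar^N\shn$ respecting the $\hbar$-filtration up to shift), it suffices to treat the case $\hbar^m\shn\subset\shn'\subset\shn$.

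In that situation I would argue by induction on $m$, following the pattern of Lemma~\ref{lem:grgr2}~(iii)--(iv). For $m=1$, from $\hbar\shn\subset\shn'\subset\shn$ one gets short exact sequences
\[
0\to\shn'/\hbar\shn\to\shn/\hbar\shn\to\shn/\shn'\to0,\qquad
0\to\hbar\shn/\hbar\shn'\to\shn'/\hbar\shn'\to\shn'/\hbar\shn\to0
\]
of coherent $\gr(\AL)$-modules; using $\gr(\AL)\vert_\Lambda\simeq\D[\Lambda]$ (Lemma~\ref{le:ALpropert1}~(iv)) and the fact that the characteristic variety of a holonomic — or merely coherent — $\D[\Lambda]$-module is additive in short exact sequences (indeed $\chv$ depends only on the class in the Grothendieck group, see \cite{Ka2}), one concludes $\chv(\gr(\shn))=\chv(\gr(\shn'))$ in this case, because $\shn/\shn'$ and $\hbar\shn/\hbar\shn'\simeq\shn/\shn'$ cancel. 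For general $m\geq2$, set $\shn''\eqdot\hbar^{m-1}\shn+\shn'$; then $\hbar\shn''\subset\shn'\subset\shn''$ and $\hbar^{m-1}\shn\subset\shn''\subset\shn$, and the $m=1$ case together with the induction hypothesis applied to the pair $(\shn,\shn'')$ gives the result.

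The main technical point — which I would expect to be the principal, though not deep, obstacle — is justifying that $\chv$, a priori defined for coherent $\D[\Lambda]$-modules, factors through the Grothendieck group $\rmK_{\coh}(\D[\Lambda])$ (equivalently: $\chv$ of an extension is the union of the $\chv$'s of the sub and quotient, counted with multiplicities, so that equal classes force equal supports). This is classical, but one must be slightly careful that the modules $\shn/\hbar\shn$ etc.\ are only asserted to be coherent over $\gr(\AL)\simeq\D[\Lambda]$, not holonomic, when $\shm$ is an arbitrary coherent $\Ah$-module — however $\chv$ and its additivity are available for all coherent $\D[\Lambda]$-modules, so this causes no real difficulty. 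A final bookkeeping step checks that the isomorphism $\gr(\hbar^N\shn)\simeq\gr(\shn)$ is compatible with the identification $\gr(\AL)\vert_\Lambda\simeq\D[\Lambda]$, so that the characteristic varieties computed via either lattice live canonically in the same space $T^*\Lambda$.
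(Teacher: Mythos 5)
Your proof is correct and follows exactly the route the paper intends: the paper's own argument simply invokes the proof of Lemma~\ref{lem:grgr2} (squeeze one lattice between $\hbar$-powers of the other, reduce to $\hbar^m\shn\subset\shn'\subset\shn$, induct on $m$, and for $m=1$ compare the two short exact sequences). One small simplification you could make: the detour through the Grothendieck group is unnecessary, since the two exact sequences already show directly that $\chv(\gr\shn)$ and $\chv(\gr\shn')$ both equal $\chv(\shn'/\hbar\shn)\cup\chv(\shn/\shn')$, using only that $\chv$ of an extension is the union of the $\chv$'s of sub and quotient.
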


The proof is similar to the one of Lemma ~\ref{lem:grgr2}, and we shall
not repeat it.

\begin{definition}\label{def:chl}
Let $\shm$ be a coherent $\Ah$-module and let $\shn\subset\shm$ be
an $\AL$-lattice of $\shm$. We set
\eqn
&&\Chl(\shm)\eqdot\chv(\gr\shn).\index{caract@$\Chl(\shm)$}%
\eneqn
\end{definition}
\begin{example}
Let $X=\C^{2}$ endowed with the symplectic coordinates $(x;u)$ and let $\Lambda$ be the 
Lagrangian manifold given by the equation $\{u=0\}$. In this case, 
$\AL=\sha_X[u\opb{\hbar}]$. 

Now let $\alpha\in\C$  and consider the modules 
$\shm=\A^\loc/\A^\loc(xu-\alpha\hbar)$ and $\shn=\AL/\AL(xu\opb{\hbar}-\alpha)$.
Then $\shn$ is an $\AL$-lattice of $\shm$ and  
$\gr\shn\simeq\shd_\Lambda/\shd_\Lambda(x\partial_x-\alpha)$.
\end{example}
\begin{lemma}
Let $\shm$ be a coherent $\Ah$-module.
\bnum
\item
$\Chl(\shm)$ is  a closed conic complex analytic subset of
$T^*\Lambda$ and this set is involutive. 
\item 
Let $0\to\shm'\to\shm\to\shm''\to0$ be an exact sequence of $\Ah$-modules.
Then 
$\Chl(\shm)=\Chl(\shm')\cup\Chl(\shm'')$.
\enum
\end{lemma}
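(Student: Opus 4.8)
The two assertions concern $\Chl(\shm)$ for a coherent $\Ah$-module $\shm$, which by Definition~\ref{def:chl} is computed from the characteristic variety of $\gr\shn$ for any $\AL$-lattice $\shn$ of $\shm$ (well-defined by the preceding lemma). The plan is to reduce everything to classical facts about $\shd_\Lambda$-modules via the equivalences of Lemma~\ref{le:ALpropert1}, in particular $\gr(\AL)\vert_\Lambda\simeq\shd_\Lambda$, so that $\gr\shn$ is a genuine coherent $\shd_\Lambda$-module and $\Chl(\shm)=\chv(\gr\shn)\subset T^*\Lambda$.

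For (i), I would first invoke the fact that the characteristic variety of a coherent $\shd_\Lambda$-module is a closed conic complex analytic subset of $T^*\Lambda$; this gives that $\Chl(\shm)$ is closed, conic and analytic. The involutivity is then precisely Gabber's theorem applied to $\gr\shn$ (the same tool already used in the proof of Proposition~\ref{pro:invol}): the characteristic variety of a coherent module over the ring of differential operators on $\Lambda$ is co-isotropic for the canonical symplectic structure of $T^*\Lambda$. Since $\Chl(\shm)$ does not depend on the choice of lattice, this suffices.

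For (ii), given an exact sequence $0\to\shm'\to\shm\to\shm''\to0$ of coherent $\Ah$-modules, I would choose a lattice as follows: pick an $\AL$-lattice $\shn$ of $\shm$, set $\shn''$ to be its image in $\shm''$ and $\shn'\eqdot\shn\cap\shm'$, exactly as in Lemma~\ref{lem:grgr4}. Since $(\scbul)^\loc$ is exact and $\AL^\loc\simeq\Ah$ (Lemma~\ref{le:ALpropert1}~(v)), one checks just as in that lemma that $\shn'$ and $\shn''$ are $\AL$-lattices of $\shm'$ and $\shm''$ respectively, so that $0\to\shn'\to\shn\to\shn''\to0$ is exact in $\mdcoh[\AL]$. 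Applying $\gr$ gives a long exact sequence of $\shd_\Lambda$-modules, from which $\chv(\gr\shn)\subset\chv(\gr\shn')\cup\chv(\gr\shn'')$; this uses that $\gr$ is right exact together with the behaviour of $\chv$ under exact sequences. The reverse inclusions: $\chv(\gr\shn'')\subset\chv(\gr\shn)$ follows because $\gr\shn''$ is a quotient of $\gr\shn$, and $\chv(\gr\shn')\subset\chv(\gr\shn)$ is obtained by noting that the image of $\gr\shn'$ in $\gr\shn$ is a submodule with the same characteristic variety as $\gr\shn'$ modulo a submodule supported in smaller dimension — more precisely one argues via the standard fact $\chv(\shl)=\chv(\shl')\cup\chv(\shl'')$ for an exact sequence of coherent $\shd$-modules, applied carefully because $\gr$ need not be left exact. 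This last point is the main subtlety: the kernel of $\gr\shn'\to\gr\shn$ is a $\tor_1^\AL$-term which is still a coherent $\shd_\Lambda$-module, and one must see that its characteristic variety is already contained in $\chv(\gr\shn')$, so that passing to the image of $\gr\shn'$ changes nothing. I expect this bookkeeping with the $\tor$-terms — rather than any deep geometry — to be the only real obstacle; everything else is a direct transcription of known $\shd$-module statements through the graded equivalence.
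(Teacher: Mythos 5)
Your plan for (i) is correct and matches the paper, which simply refers to Kashiwara's book for the corresponding $\shd_\Lambda$-module facts.

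For (ii) you have located the right technical point but your proposed fix does not close it. You set up the lattice exactly as the paper does ($\shn'=\shn\cap\shm'$, $\shn''$ the image of $\shn$ in $\shm''$) and correctly observe that the issue is whether $\gr$ applied to $0\to\shn'\to\shn\to\shn''\to0$ stays exact. But the resolution you sketch is circular: the kernel $K=\tor^{\AL}_1(\gr\AL,\shn'')\subset\gr\shn'$ automatically has $\chv(K)\subset\chv(\gr\shn')$, since it is a submodule; that is not what you need. From the four-term sequence $0\to K\to\gr\shn'\to\gr\shn\to\gr\shn''\to0$ one gets $\chv(\gr\shn')\cup\chv(\gr\shn'')=\chv(K)\cup\chv(\gr\shn)$, so what you actually need is $\chv(K)\subset\chv(\gr\shn)$, and neither "$K$ is a submodule of $\gr\shn'$" nor "support in smaller dimension" delivers that. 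The correct observation — used implicitly in the paper — is that $K$ vanishes: $\tor^{\AL}_1(\gr\AL,\shn'')\simeq\ker(\hbar\cl\shn''\to\shn'')$, and $\shn''$ is an $\AL$-submodule of $\shm''$, which is an $\Ah$-module on which $\hbar$ acts invertibly, so $\shn''$ has no $\hbar$-torsion. Hence $0\to\shn'/\hbar\shn'\to\shn/\hbar\shn\to\shn''/\hbar\shn''\to0$ is genuinely short exact, and additivity of $\chv$ under short exact sequences of coherent $\shd_\Lambda$-modules gives the claim directly, with no bookkeeping left to do. Once you add this one sentence your argument coincides with the paper's.
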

\begin{proof}
(i) is a well-known result of $\shd$-module theory, see \cite{Ka2}.

\noindent
(ii) Let $\shn$ be an $\AL$-lattice of $\shm$.
Set $\shn'=\shm'\cap\shn$ and $\shn''\subset \shm''$ be the image of
$\shn$. Then $\shn'$ and $\shn''$ are $\AL$-lattices of $\shm'$ and
$\shm''$, respectively.
Since we have an exact sequence
$$0\to\shn'/\hbar\shn'\to \shn/\hbar\shn\to\shn''/\hbar\shn''\to0,$$
we have
$\Chl(\shm)=\chv(\shn/\hbar\shn)
=\chv(\shn'/\hbar\shn')\cup\chv(\shn''/\hbar\shn'')
=\Chl(\shm')\cup\Chl(\shm'')$.
\end{proof}

\begin{proposition}\label{pro:codimchl}
For a coherent $\Ah$-module $\shm$, we have
\eqn
&&\codim\Chl(\shm)\ge\codim\Supp(\shm).
\eneqn
\end{proposition}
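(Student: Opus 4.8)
The statement $\codim\Chl(\shm)\ge\codim\Supp(\shm)$ should follow from the analogous statement for $\shd_\Lambda$-modules by passing to a lattice and taking $\gr$. First I would reduce to the case where $\A$ is a $\DQ$-algebra and $X$ is locally symplectically isomorphic to $T^*\Lambda$, so that $\shl\simeq\Oh[\Lambda]$ and $\AL$ is identified with (an algebroid equivalent to) $\Dh[\Lambda]$ via Lemma~\ref{le:ALpropert1}~(iv). By definition, choose an $\AL$-lattice $\shn\subset\shm$, so that $\Chl(\shm)=\chv(\gr\shn)$, where $\gr\shn\in\Derb_\coh(\shd_\Lambda)$ (indeed $\gr\shn$ is a coherent $\shd_\Lambda$-module since $\shn$ is a coherent $\AL$-module concentrated in degree $0$). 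Then I would invoke the classical estimate from $\shd$-module theory (Bernstein's inequality / the codimension estimate, see \cite{Ka2}): for a coherent $\shd_\Lambda$-module $\shg$, one has $\codim\chv(\shg)\ge\codim\Supp(\shg)$ — in fact $\chv(\shg)$ is involutive, which gives the codimension bound, and this is already recorded in the lemma preceding the statement.

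\textbf{Key steps.} The second ingredient is to relate $\Supp(\shm)$ and $\Supp(\shg)$ where $\shg=\gr\shn$. We have $\Supp(\shm)=\Supp(\for(\shm))=\Supp(\shn)$ since $\shn$ generates $\shm$ over $\Ah=(\AL)^\loc$ and $\shm\simeq\shn^\loc$. By Corollary~\ref{cor:conservative1} applied to $\AL$ (which is Noetherian by Lemma~\ref{le:ALpropert1}~(iii), and for which the conservativity of $\gr$ holds by the same argument as in that corollary), $\Supp(\shn)=\Supp(\gr\shn)$. Now $\pi\colon T^*\Lambda\to\Lambda$ being the projection, one has $\pi(\chv(\shg))=\Supp(\shg)$, and since $\chv(\shg)$ is conic over $\Lambda$, $\codim_{T^*\Lambda}\chv(\shg)\le\codim_\Lambda\pi(\chv(\shg))=\codim_\Lambda\Supp(\shg)$; combining this with the $\shd$-module estimate $\codim\chv(\shg)\ge\codim\Supp(\shg)$ forces $\codim\Chl(\shm)=\codim\chv(\shg)=\codim\Supp(\shg)=\codim\Supp(\shm)$ — so in fact equality holds, but the stated inequality is what we need. (Note also that $\Chl(\shm)$ lives in $T^*\Lambda$ while $\Supp(\shm)$ lives in $X$, and via the local symplectic isomorphism $X\simeq T^*\Lambda$ both codimensions are computed inside a space of the same dimension $d_X=2\,d_\Lambda$, so the comparison makes sense.)

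\textbf{Main obstacle.} The only delicate point is checking that the codimension of $\Supp(\shm)$ as a subset of $X$ agrees with the codimension of $\Supp(\gr\shn)$ as a subset of $\Lambda$ once one takes into account the fiber directions: a priori $\Supp(\shm)\subset X$ can have components transverse to $\Lambda$, and one must verify that under the identification $X\simeq T^*\Lambda$ the set $\Supp(\shm)$ is precisely the image of $\Chl(\shm)$ under the projection, no more. This requires the observation that $\shm$, being an $\Ah$-module with an $\AL$-lattice, is supported on $\Lambda$-conic sets in the appropriate sense; more precisely $\Supp(\shm)\cap\Lambda = \pi(\Chl(\shm))$ and $\Supp(\shm)$ is the "conic saturation" of this, which has the same codimension. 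This is a local, essentially bookkeeping argument once the symplectic normal form is fixed, so I expect it to be routine but worth spelling out. The involutivity/codimension estimate for $\shd_\Lambda$-modules itself is quoted, not reproved.
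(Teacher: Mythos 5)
Your proposed argument has a genuine and serious gap, and the paper's own proof takes a completely different route.

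The central problem is the claim "$\Supp(\shn)=\Supp(\gr\shn)$" and the resulting identification "$\codim_X\Supp(\shm)=\codim_\Lambda\Supp(\gr\shn)$". These do not make sense as stated: $\Supp(\shm)$ is an involutive subvariety of $X$ (in general not contained in $\Lambda$; think of $\shm$ holonomic along a Lagrangian transverse to $\Lambda$), while $\gr\shn$ is a coherent $\shd_\Lambda$-module, so $\Supp(\gr\shn)\subset\Lambda$. Conservativity of $\gr$ (Corollary~\ref{cor:conservative1}) was stated and proved for $\Derb_\coh(\sha)$ where $\hbar$ lies in the Jacobson radical; for $\AL$, on $X\setminus\Lambda$ the parameter $\hbar$ becomes invertible and $\gr(\AL)$ vanishes, so $\gr\shn=0$ there even though $\shn$ can be nonzero. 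So these two supports live in different spaces and are not equal. You flagged the fact that passing between them is "a local, essentially bookkeeping argument", but that passage is actually the whole content of the proposition; it is precisely what forces the paper to use a nontrivial strategy. Moreover, the inequality "$\codim_{T^*\Lambda}\chv(\shg)\le\codim_\Lambda\pi(\chv(\shg))$" that you use to derive equality goes the wrong way: since the fibers of $T^*\Lambda\to\Lambda$ over $\Supp(\shg)$ have dimension at most $d_\Lambda$, one gets $\dim\chv(\shg)\le\dim\Supp(\shg)+d_\Lambda$, i.e.\ $\codim_{T^*\Lambda}\chv(\shg)\ge\codim_\Lambda\Supp(\shg)$, not $\le$. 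So the argument claiming equality collapses.

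The paper argues quite differently. It proceeds by descending induction on $r=\codim\Supp(\shm)$. Using Proposition~\ref{pro:dimext1l}, $\rhom[\Ah](\shm,\Ah)$ is concentrated in degrees $\ge r$ and the pieces in degrees $>r$ have supports of codimension $>r$. Applying biduality, this yields a morphism $\phi\cl\shm\to\shm_1\eqdot\ext[{\Ah[X^a]}]{r}(\shm',\Ah[X^a])$ (with $\shm'\eqdot\ext[\Ah]{r}(\shm,\Ah)$) whose kernel has codimension $>r$; the induction hypothesis handles $\ker\phi$, and the additivity of $\Chl$ on short exact sequences reduces the problem to $\shm_1$. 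For modules of the form $\ext[{\Ah[X^a]}]{r}(\shm',\Ah[X^a])$, one then takes an $\ALa$-lattice $\shn'$ of $\shm'$ and shows that a suitable $\AL$-lattice $\shn$ of $\shm$ satisfies $\shn/\hbar\shn\hookrightarrow\ext[{\gr(\ALa)}]{r}(\gr\shn',\gr(\ALa))$, and finally invokes the Auslander-type codimension estimate $\codim\chv\bigl(\ext[{\shd_\Lambda}]{r}(M,\shd_\Lambda)\bigr)\ge r$ for coherent $\shd_\Lambda$-modules (see \cite[Theorem~2.19]{Ka2}). This is the essential $\shd$-module input, not the elementary dimension count you cite, and the biduality maneuver is what replaces your missing bridge between $\codim_X\Supp(\shm)$ and $\codim_{T^*\Lambda}\Chl(\shm)$.
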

\begin{proof}
In the course of the proof, we 
shall have to consider the analogue of the algebra $\AL$ but with
$\A[X^a]$ instead of $\A$. We shall denote by $\ALa$ this algebra.
\index{ALa@$\ALa$}%
We shall show that
$\codim\Supp(\shm)\ge r$ implies $\codim\Chl(\shm)\ge r$
by descending induction on $r$. Applying
Proposition~\ref{pro:dimext1l}~(a), we have
$\rhom[\Ah](\shm,\Ah)\simeq\tau^{\ge r}\rhom[\Ah](\shm,\Ah)$, where 
$\tau^{\ge r}$ is the truncation functor.
Hence we have a distinguished triangle in $\Db_\coh(\Ah[X^a])$:
\eq
&&\ext[\Ah]{r}(\shm,\Ah)[-r]\to \rhom[\Ah](\shm,\Ah)
\to \shk\To[+1],\label{eq:78}
\eneq
where $\shk=\tau^{>r}\rhom[\Ah](\shm,\Ah)$.
Note that $\codim(\Supp(\shk))>r$ by Proposition~\ref{pro:dimext1l}~(b).
Setting $\shm'=\ext[\Ah]{r}(\shm,\Ah)$, 
the distinguished triangle \eqref{eq:78} induces a distinguished
triangle in $\Db_\coh(\Ah)$:
\eqn
&&\rhom[{\Ah[X^a]}](\shk,\Ah[X^a])\to\shm\to\rhom[{\Ah[X^a]}](\shm',\Ah[X^a])[r]\To[+1].
\eneqn
Setting $\shm_1=\ext[{\Ah[X^a]}]{r}(\shm',\Ah[X^a])$, we obtain a morphism
$\phi\cl\shm\to \shm_1$
and $\ker(\phi)$ has codimension greater than $r$.
Hence, $\codim\Chl(\ker(\phi))>r$ by the induction hypothesis.
Since
$\Chl(\shm)\subset\Chl(\shm_1)\cup\Chl(\ker(\phi))$,
it is enough to show that
$\codim\Chl(\shm_1)\ge r$.

\medskip
Hence we may assume from the beginning that 
$\shm=\ext[{\Ah[X^a]}]{r}(\shm',\Ah[X^a])$ for a coherent
$\Ah[X^a]$-module $\shm'$. 
Let us take an $\ALa$-lattice $\shn'$ of $\shm'$.
Set $\shn_0=\ext[\ALa]{r}(\shn',\ALa)$.
Then we have $\shn_0^\loc\simeq\shm$, 
and it induces a morphism
$\shn_0\to\shm$.
Let $\shn$ be the image of the morphism
$\shn_0\to\shm$.
Then $\shn$ is an $\AL$-lattice of $\shm$.
Hence we have
$\Chl(\shm)=\chv(\shn/\hbar\shn)$,
which implies 
\eq\label{eq:chlsubsupp}
&&\Chl(\shm)\subset\chv(\shn_0/\hbar\shn_0).
\eneq
On the other hand, we have an exact sequence
\eqn
&&\ext[\ALa]{r}(\shn',\ALa)\To[\hbar]
\ext[{\ALa}]{r}(\shn',\ALa)\to
\ext[{\ALa}]{r}(\shn',\gr(\ALa)).
\eneqn
Since we have
$\ext[{\ALa}]{r}(\shn',\gr(\ALa))
\simeq\ext[{\gr(\ALa)}]{r}(\gr\shn',\gr(\ALa))$, we have a
monomorphism
$$\shn_0/\hbar\shn_0\monoto\ext[{\gr(\ALa)}]{r}(\gr\shn',\gr(\ALa)).$$
Hence we obtain
$\chv(\shn_0/\hbar\shn_0)\subset\chv\Bigl(\ext[{\gr(\ALa)}]{r}(\gr\shn',\gr(\ALa))\Bigr)$.
Since $\chv\Bigl(\ext[{\gr(\ALa)}]{r}(\gr\shn',\gr(\ALa))\Bigr)$has
codimension $\ge r$ by {\em e.g.,}~\cite[Theorem 2.19]{Ka2}, we conclude that
 $\codim\chv(\shn_0/\hbar\shn_0)\ge r$. 
By \eqref{eq:chlsubsupp}, we obtain $\codim\Chl(\shm)\ge r$.
\end{proof}

\section{Holonomic $\DQ$-modules}

In a complex symplectic manifold $X$, an 
\glossary{isotropic subvariety}%
isotropic subvariety $\Lambda$ is a locally closed complex analytic
subvariety such that $\Lambda_{\reg}$ is isotropic, {\em i.e.,} the
$2$-form defining the symplectic structure vanishes on  $\Lambda_{\reg}$.
Here, $\Lambda_\reg$ \index{Lambdar@$[\Lambda_\reg]$}%
denotes the smooth part of $\Lambda$. 

A Lagrangian subvariety 
\glossary{Lagrangian subvariety}%
$\Lambda$ is an  isotropic subvariety
of pure dimension $d_X/2$. Equivalently, $\Lambda$ is a  
subvariety of pure dimension $d_X/2$ such that $\Lambda_{\reg}$ is involutive. 
\begin{definition}\label{def:simpleWmod}
\banum
\item
An $\Ah$-module $\shm$ is holonomic 
\glossary{holonomic}%
if it is coherent and its support is a Lagrangian subvariety of $X$. 
\item
An $\A$-module $\shn$ is holonomic if it is coherent, without
$\hbar$-torsion and $\shn^\loc$ is a holonomic $\Ah$-module.
\item
Let $\Lambda$ be a smooth Lagrangian submanifold of $X$. 
\glossary{simple!holonomic}%
\glossary{holonomic!simple}%
We say that an $\Ah$-module $\shm$ is simple holonomic along $\Lambda$ 
if there exists locally an $\A$-module $\shm_0$
simple along $\Lambda$ 
such that $\shm\simeq\shm_0^\loc$.
\eanum
\end{definition}

\begin{lemma}\label{lem:dualhol3}\label{P:NEW}
Let $\shm$ be a holonomic $\Ah$-module. Then
$\RDAh\shm\,[d_X/2]$ is concentrated in degree $0$ and is
holonomic.
\end{lemma}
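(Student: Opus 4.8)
The plan is to reduce the statement to the known commutative duality theory for holonomic $\shd$-modules, using the $\gr$ functor as the bridge, exactly in the spirit of the proofs of Proposition~\ref{pro:dimext1l} and Proposition~\ref{pro:vanishsimple}. First I would note that the question is local, so I may assume $\A$ is a $\DQ$-algebra (in fact, by Remark~\ref{rem:canonicA}, locally isomorphic to $\HWo[T^*M]$ for a complex manifold $M$ with $\dim M = d_X/2$), and $\bA\simeq\OO[X]$. Then I would choose an $\A[X]$-lattice $\shm_0$ of $\shm$, which exists locally by Definition~\ref{def:good}; replacing $\shm_0$ by an $\hbar$-torsion-free quotient (or using that $\shm$ is holonomic so $\shm_0$ can be taken without $\hbar$-torsion), I get $\shm_0\in\Derb_\coh(\A[X])$ with $\shm_0^\loc\simeq\shm$. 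Since $(\scbul)^\loc$ is exact and commutes with $\rhom$ over the Noetherian algebroid (using the localization lemma $\rhom[\Ah](\shm,\Ah)\simeq\cor\tens[\coro]\rhom[\A](\shm_0,\A)$, which follows from $\Ah$ being flat over $\A$ and coherence), it suffices to control $\RDAA[]\shm_0 = \rhom[\A](\shm_0,\A)$.

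Next I would apply the $\gr$ functor. By Proposition~\ref{pro:dualgr}, $\gr(\RDAA[]\shm_0)\simeq\RDO(\gr\shm_0)$ in $\Derb_\coh(\bA)\simeq\Derb_\coh(\OO[X])$. Now $\gr\shm_0$ is a coherent $\OO[X]$-module whose support $\Supp(\shm)$ is Lagrangian, hence of pure codimension $d_X/2$ in $X$. By the classical theory of $\OO$-modules (or the computation underlying Proposition~\ref{pro:dimext1}), $\ext[{\OO[X]}]{j}(\gr\shm_0,\OO[X])\simeq 0$ for $j\neq d_X/2$ when $\gr\shm_0$ is Cohen--Macaulay of codimension $d_X/2$ — however $\gr\shm_0$ need not be Cohen--Macaulay in general. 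The cleaner route, which avoids this gap, is to pass through the Lagrangian structure more carefully: since $\shm$ is holonomic, locally there is a smooth Lagrangian $\Lambda$ and, after using the algebra $\AL$ of Section~9.1, an $\AL$-lattice $\shn$ of $\shm$ with $\gr\shn$ a holonomic $\shd_\Lambda$-module. For holonomic $\shd_\Lambda$-modules, the classical fact (Kashiwara, \cite{Ka3}) is that $\RD'_{\shd_\Lambda}(\gr\shn)[d_\Lambda]$ is again holonomic and concentrated in degree $0$. I would then transfer this: using Lemma~\ref{le:DLflatAL} and Lemma~\ref{le:ALpropert1}~(iv), $\gr$ of the $\AL$-dual of $\shn$ is the $\shd_\Lambda$-dual of $\gr\shn$ shifted appropriately, which is concentrated in a single degree; then Proposition~\ref{pro:grHa} (and Corollary~\ref{cor:conservative1}) forces $\RDAh\shm[d_X/2]$ to be concentrated in degree $0$.

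For the holonomicity of $\RDAh\shm[d_X/2]$ itself: once I know it is a coherent $\Ah$-module concentrated in degree $0$, I need its support to be Lagrangian. Here I would invoke Proposition~\ref{pro:dimext1l}: the biduality isomorphism $\shm\isoto\RDAh\RDAh\shm$ combined with $\codim\Supp\ext[\Ah]{j}(\shm,\Ah)\geq j$ shows that $\Supp(\RDAh\shm[d_X/2])\subset\Supp(\shm)$ (since $\RDAh(\RDAh\shm[d_X/2])[d_X/2]\simeq\shm$ recovers all of $\shm$), so the support has dimension at most $d_X/2$; and by Proposition~\ref{pro:invol} applied to $\RDAh\shm[d_X/2]$ (or again via $\gr$ and Gabber's theorem), the support is involutive, hence of dimension at least $d_X/2$ on each component, so it is Lagrangian. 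Thus $\RDAh\shm[d_X/2]$ is holonomic.

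The main obstacle I anticipate is the precise bookkeeping of shifts and the passage between $\A$-duality, $\AL$-duality, and $\shd_\Lambda$-duality: one must check that the $\gr$ of the $\AL$-dual genuinely equals the $\shd_\Lambda$-dual of $\gr$ (this uses Proposition~\ref{pro:tensgr}, which requires $\DL$-flatness over $\AL$ from Lemma~\ref{le:DLflatAL}) and that the degree in which everything concentrates matches $d_X/2 = d_\Lambda$. A secondary subtlety is that the argument requires choosing, locally, the smooth Lagrangian $\Lambda$ — one should note $\Supp(\shm)$ is generically smooth Lagrangian and handle the singular locus either by the codimension estimate in Proposition~\ref{pro:dimext1l} (which is global and does not need smoothness) or by a stratification argument; in fact the cleanest presentation uses only Proposition~\ref{pro:dualgr}, Proposition~\ref{pro:grHa}, Proposition~\ref{pro:dimext1l} and the classical holonomic duality for $\OO$/$\shd$-modules via $\gr$, so the smooth-Lagrangian reduction can probably be avoided entirely, which is the form I would ultimately write up.
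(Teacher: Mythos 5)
The paper's proof is a one-liner, working entirely over $\Ah$ and avoiding $\gr$, lattices, $\AL$ and classical $\shd$-module duality. Since $\Supp\shm$ is Lagrangian, $\codim\Supp\shm=d_X/2$, so Proposition~\ref{pro:dimext1l}~(a) gives $\ext[\Ah]{j}(\shm,\Ah)=0$ for $j<d_X/2$; for $j>d_X/2$, Proposition~\ref{pro:dimext1l}~(b) gives $\codim\Supp\ext[\Ah]{j}(\shm,\Ah)\geq j>d_X/2$, and this forces $\ext[\Ah]{j}(\shm,\Ah)=0$ because by the involutivity theorem (Proposition~\ref{pro:invol}) a nonzero coherent $\Ah$-module has co-isotropic support, hence of codimension $\leq d_X/2$. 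The same two facts, together with the trivial inclusion $\Supp\ext[\Ah]{d_X/2}(\shm,\Ah)\subset\Supp\shm$, show that the support of $\ext[\Ah]{d_X/2}(\shm,\Ah)$ is involutive of pure dimension $d_X/2$, hence Lagrangian.

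Your proposal circles around the right tools but never closes the concentration-in-degree-$0$ part, and you yourself flag each gap. The $\gr$-based route stalls on the fact that $\gr\shm_0$ need not be Cohen--Macaulay; the $\AL$-route requires a smooth Lagrangian locally containing $\Supp\shm$, which is not always available near singular points of the support; and the ``cleanest presentation'' you propose in the last sentence --- classical holonomic duality for $\OO$ or $\shd$-modules via $\gr$ --- does not actually fill the gap, since there is no such duality for a coherent $\OO$-module with Lagrangian but possibly non--Cohen--Macaulay support, and passing to $\shd_\Lambda$-modules is exactly the smooth-Lagrangian reduction you said you wanted to avoid. The missing ingredient is the involutivity theorem applied to the higher $\ext$'s. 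You already invoke Proposition~\ref{pro:invol} for the holonomicity half of the statement, but you never bring it to bear on the vanishing of $\ext[\Ah]{j}(\shm,\Ah)$ for $j>d_X/2$; once you do, the lattices, $\gr$, $\AL$ and smooth-Lagrangian machinery all become unnecessary. Your treatment of the holonomicity part is correct but over-engineered: the inclusion $\Supp\ext[\Ah]{d_X/2}(\shm,\Ah)\subset\Supp\shm$ is immediate and needs no appeal to biduality.
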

\begin{proof}
This follows from Proposition~\ref{pro:dimext1l} and 
the involutivity theorem (Proposition~\ref{pro:invol}).
\end{proof} 
Let $X$ be a complex symplectic manifold and let 
$\shm$ and $\shl$ be two holonomic $\Ah$-modules.
Using Lemma~\ref{le:homCdelta} (more precisely, an $\Ah$-variant of
this lemma) and Theorem~\ref{th:symp1}, we have
\eq&&\ba{rcl}
\rhom[{\Ah}](\shm,\shl)&\simeq&
\rhom[{\Ah[X\times X^a]}](\shm\ldetens\RDA\shl,\dA^\loc),\\
\rhom[{\Ah}](\shl,\shm)&\simeq&
\rhom[{\Ah[X\times X^a]}](\shl\ldetens\RDA\shm,\dA^\loc)\\
&\simeq&\rhom[{\Ah[X\times X^a]}](\RDA(\dA^\loc), \shm\ldetens\RDA\shl)\\
&\simeq&\rhom[{\Ah[X\times X^a]}](\dA^\loc,
\shm\ldetens\RDA\shl)[d_X].
\ea
\label{eq:homCdelta}
\eneq

\begin{theorem}\label{th:hol1}
Let $X$ be a complex symplectic manifold and let 
$\shm$ and $\shl$ be two holonomic $\Ah$-modules. Then
\bnum
\item
the object $\rhom[{\Ah}](\shm,\shl)$ belongs to  $\Derb_\Cc(\cor_X)$,
\item
there is a canonical isomorphism:
\eq\label{eq:dualmorph1}
&&\rhom[{\Ah}](\shm,\shl)
\isoto\bl\RDD_X\rhom[{\Ah}](\shl,\shm)\br\,[d_X].
\eneq
\item
the object $\rhom[{\Ah}](\shm,\shl)[d_X/2]$ is perverse.
\enum
\end{theorem}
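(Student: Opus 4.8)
The plan is to reduce the global statement to a local one, where $X$ is an open subset of a cotangent bundle $T^*\Lambda$ and both modules become comparable to $\DL$-modules, and then invoke the classical theory of holonomic $\shd$-modules together with the cohomological-completeness machinery already developed. The statement has three parts. For (ii), the isomorphism \eqref{eq:dualmorph1} is built exactly as the duality morphism for kernels in Lemma~\ref{le:duality}: using the reformulation \eqref{eq:homCdelta} of $\rhom[{\Ah}](\shm,\shl)$ and $\rhom[{\Ah}](\shl,\shm)$ via $\dA^\loc$ and the bi-invertibility of $\oA$, one gets a pairing $\rhom[{\Ah}](\shm,\shl)\otimes\rhom[{\Ah}](\shl,\shm)\to \rhom[{\Ah}](\shl,\shl)\to\rhom[{\Dl[\shl]}](\shl,\shl)\simeq\cor_X\to\cor_X[d_X]$, and one checks it is an isomorphism by applying $\gr$ and using Corollary~\ref{cor:conservative1}; after $\gr$ the statement becomes the classical duality isomorphism for $\rhom$ between holonomic $\shd$-modules of \cite{Ka3} (this is the content of \eqref{eq:dualDhh} in Theorem~\ref{th:constDhh0}). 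So (ii) follows once (i) is known, by the same conservativity argument.

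For (i), the key reduction is that the problem is local, so I may assume $\Supp(\shl)$ meets a neighborhood on which $X\simeq T^*\Lambda$ for a smooth Lagrangian $\Lambda$ and $\shl$ is simple holonomic along $\Lambda$ (any holonomic module is generically of this form, and away from the smooth locus one stratifies). Choosing an $\A$-lattice $\shm_0$ of $\shm$ and restricting to a suitable $\AL$-lattice $\shn\subset\shm_0$, one has $\shm\simeq\shn^\loc$ with $\shn$ a holonomic $\AL$-module in the sense of Section~9.1. Then
\[
\rhom[{\Ah}](\shm,\shl^\loc)\simeq\bl\rhom[\AL](\shn,\shl)\br^\loc=\cor\tens[\coro]\rhom[\AL](\shn,\shl),
\]
and Theorem~\ref{th:constDhh0}~(a) already tells us $\rhom[\AL](\shn,\shl)\in\Derb_\Cc(\coro_\Lambda)$ with microsupport contained in $\chv(\gr\shn)$. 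Applying $\cor\tens[\coro]\scbul$ preserves $\C$-constructibility and microsupport, which gives (i). The slightly delicate point is that $\shl$ itself is only an $\Ah$-module, not necessarily of the form $\shl_0^\loc$ globally — but by Definition~\ref{def:simpleWmod}~(c) and the local nature of the argument one may always pass to a local $\A$-model, and where $\shl$ is not simple along a smooth Lagrangian one argues by dévissage along the support stratification, using the additivity of all the constructions in distinguished triangles.

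For (iii), perversity over $\cor$ means the standard $t$-exactness conditions with respect to the middle perversity. The support estimate is $\SSi\bl\rhom[{\Ah}](\shm,\shl)\br\subset\Chl(\shm)$ (from Proposition~\ref{pro:SScar2} after the reduction), so the complex is supported in codimension-appropriate strata; the cohomological amplitude bounds $H^j=0$ for $j\notin[-d_X/2,d_X/2]$ after the shift, together with the dual amplitude bound for $\RDD_X\rhom[{\Ah}](\shl,\shm)$, follow from Proposition~\ref{pro:codimchl} (relating $\codim\Chl$ to $\codim\Supp$) exactly as in the classical proof that $\rhom$ between holonomic $\shd$-modules is perverse. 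Concretely, after reducing to the $\shd$-module side via $\gr$ one invokes the theorem of Kashiwara that $\rhom[{\shd_\Lambda}]$ between holonomic modules is perverse, then lifts the $t$-exactness estimates back to $\AL$-modules using that $\gr$ detects vanishing of cohomology (Proposition~\ref{pro:grHa}) and that $H^j\bl\rhom[\AL](\shn,\shl)\br^\loc$ is controlled by $H^j$ and $H^{j+1}$ of $\gr$. The combination of the two amplitude estimates, together with \eqref{eq:dualmorph1} from part (ii), yields perversity by the usual characterization $F$ perverse $\iff$ $F$ and $\RDD_XF$ are both in degrees $\ge 0$ in the appropriate sense. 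The main obstacle I anticipate is bookkeeping the passage from the smooth-Lagrangian local model to arbitrary holonomic supports — i.e., making the dévissage along a Whitney stratification of $\Supp(\shm)\cup\Supp(\shl)$ compatible with all the functorial isomorphisms — rather than any single deep input, since the hard analytic content (finiteness and duality for holonomic $\shd$-modules, and the cohomological completeness of $\Rc$-sheaves over $\coro$) is already available from \cite{Ka3}, Theorem~\ref{th:formalfini2}, and Proposition~\ref{pro:RCcohco}.
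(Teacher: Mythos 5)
There is a genuine gap in the reduction for part (i), and the irony is that you already have the tool to fix it in hand. Your plan for (i) reduces to the case where $\shl$ is simple along a smooth Lagrangian by ``dévissage along the support stratification,'' and you flag this bookkeeping as the main obstacle. That worry is well placed: a holonomic $\Ah$-module is not locally isomorphic to (nor an iterated extension of) simple modules along smooth Lagrangians, and it is not at all clear how to organize such a dévissage so that all of (i)--(iii) pass through the distinguished triangles; the support of $\shl$ may be singular and reducible at the point under consideration, and stratifying $\Supp(\shm)\cup\Supp(\shl)$ does not by itself produce the needed local models. The paper instead uses the very formula \eqref{eq:homCdelta} that you cite for part (ii), but as the \emph{reduction}: since $\rhom[{\Ah}](\shm,\shl)\simeq\rhom[{\Ah[X\times X^a]}](\shm\ldetens\RDA\shl,\dA^\loc)$, one may from the outset replace $(\shm,\shl)$ on $X$ by $(\shm\ldetens\RDA\shl,\dA^\loc)$ on $X\times X^a$; the new second argument $\dA^\loc$ is automatically simple holonomic along the diagonal, which is a smooth Lagrangian in $X\times X^a$, so no dévissage is required. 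After this replacement your lattice argument via $\AL$, Theorem~\ref{th:constDhh0}, and Proposition~\ref{pro:codimchl} applies directly and gives both (i) and (ii).

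For (iii) your outline (amplitude bounds from $\Chl$, plus duality) is plausible but a bit roundabout. The paper's route is shorter: having (ii), perversity reduces to checking a single $\rsect_N$-vanishing condition. After the same $\dA^\loc$ reduction one may assume $X=T^*M$, $\shl_0=\Oh[M]$; then one checks that $H^j\bigl(\rsect_N(\rhom[\AL](\shn,\shl_0))\bigr)=0$ for $j<\codim N$ directly, by passing through $\gr$ (using that the complex is already known $\C$-constructible, so cohomological completeness makes $\gr$ conservative in the needed sense) and invoking the elementary vanishing $H^j(\rsect_N\sho_M)=0$ for $j<\codim N$. This avoids quoting Kashiwara's perversity theorem for holonomic $\shd$-modules as a black box and avoids the separate $\Chl$-codimension estimate as an input to the $t$-structure argument.
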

\begin{proof}
Using \eqref{eq:homCdelta}, we may assume from the beginning that 
$\shl$ is a simple holonomic $\Ah$-module supported on a
smooth Lagrangian submanifold $\Lambda$ of $X$. 
Let $\shl_0$ be an $\A$-module simple along $\Lambda$ 
such that $\shl\simeq\shl_0^\loc$.

\smallskip
\noindent
(i)-(ii) Let $\shn$ be an
$\AL$-lattice of $\shm$.
By Lemma~\ref{le:ALpropert1}~(v), we have
\eqn
\rhom[{\Ah}](\shm,\shl)
&\simeq&\rhom[\AL](\shn,\shl_0)^\loc.
\eneqn
Then the results follow 
from Proposition~\ref{pro:codimchl} and Theorem~\ref{th:constDhh0}.

\noindent
(iii) Since the problem is local,  we may assume that $X=T^*M$,
$\Ah=\HW$ and $\shl_0=\Oh[M]$. 

By~(ii), it is enough to check the statement:
\eq&&
\parbox{60ex}{
$H^j\Bigl(\rsect_N\bl\rhom[{\AL}](\shn,\shl_0)\br\Bigr)$ 
vanishes for  $j<l$ and for any
closed smooth submanifold $N$ of $M$ of codimension $l$.}
\eneq
Since $F\seteq\rsect_N\bl\rhom[{\AL}](\shn,\shl_0)\br$ is
$\C$-constructible, 
it is enough to show that
$H^j(\gr(F))=0$ for $j<l$.
This follows from the well-known fact that $H^j(\rsect_N(\OO[M]))=0$ for $j<l$.
\end{proof}
Assume for simplicity that $X$ is open in some cotangent bundle 
$T^*M$. We shall compare the sheaf of solutions of holonomic
$\HE$-modules and $\HW$-modules. Recall that $\HW[X]$ is faithfully flat over
$\HE[X]$ by Lemma~\ref{lem:WfffE}.
\begin{corollary}\label{cor:holEhatW}
Let $\shm$ and $\shl$ be two holonomic $\HE[X]$-modules. Then
the object $\rhom[{\HE[X]}](\shm,\shl)$ belongs to $\Derb_\Cc(\C_X)$. 
\end{corollary}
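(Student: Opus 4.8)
\textbf{Proof plan for Corollary~\ref{cor:holEhatW}.}
The plan is to reduce the statement for $\HE[X]$-modules to the already-established Theorem~\ref{th:hol1} for $\HW[X]$-modules by means of the faithfully flat extension $\HE[X]\to\HW[X]$ of Lemma~\ref{lem:WfffE}~(b). First I would set $F\eqdot\rhom[{\HE[X]}](\shm,\shl)$ and $G\eqdot\rhom[{\HW[X]}](\shm^\rmW,\shl^\rmW)$, where $(\scbul)^\rmW=\HW[X]\tens[{\HE[X]}]\scbul$. The key point is that, since $\shm$ is coherent over the coherent ring $\HE[X]$ and $\HW[X]$ is flat over $\HE[X]$, there is a natural isomorphism
\eqn
&&G\simeq\HW[X]\lltens[{\HE[X]}]F,
\eneqn
by the usual base-change formula for $\rhom$ applied to a coherent first argument admitting locally finite free resolutions; here one uses that $\HE[X]$ has finite global dimension (the analogue for $\HE[X]$ of Theorem~\ref{th:hddim} and Corollary~\ref{cor:hddim}), so $F$ has locally bounded coherent cohomology over $\HE[X]$.

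Next I would observe that $\shm^\rmW$ and $\shl^\rmW$ are holonomic $\HW[X]$-modules: they are coherent because $(\scbul)^\rmW$ is exact and sends coherent modules to coherent modules (Lemma~\ref{lem:WfffE} together with the coherence of $\HW[X]$), and their supports coincide with $\Supp(\shm)$ and $\Supp(\shl)$ respectively, hence are Lagrangian subvarieties of $X$. Therefore Theorem~\ref{th:hol1}~(i) applies and gives $G\in\Derb_\Cc(\cor_X)$. In particular each cohomology sheaf $H^j(G)$ is a $\cor_X$-constructible sheaf, hence is, locally on each stratum of a suitable complex analytic stratification, a local system of finitely generated $\cor$-modules.

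Finally I would descend the constructibility from $G$ to $F$. Since $\HW[X]$ is faithfully flat over $\HE[X]$, the functor $\HW[X]\tens[{\HE[X]}]\scbul$ on sheaves of $\HE[X]$-modules is exact and conservative; combined with the isomorphism $H^j(G)\simeq\HW[X]\tens[{\HE[X]}]H^j(F)$ (flatness again), this shows that $H^j(F)$ is a locally constant sheaf of finite type over $\HE[X]$ precisely on the locus where $H^j(G)$ is locally constant of finite type over $\HW[X]$; more precisely, a subanalytic (here, complex analytic) stratification adapted to all the $H^j(G)$ is automatically adapted to all the $H^j(F)$, because local constancy and finite generation of $H^j(F)$ on a stratum is detected after the faithfully flat base change. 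One also needs that $F$ is a bounded complex, which follows from the finite global dimension of $\HE[X]$ and boundedness of $G$. Hence $F\in\Derb_\Cc(\C_X)$.

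The main obstacle I anticipate is establishing the base-change isomorphism $G\simeq\HW[X]\lltens[{\HE[X]}]F$ with the correct hypotheses: one must know that $\shm$ admits, locally, finite resolutions by free $\HE[X]$-modules of finite rank (the microdifferential analogue of Corollary~\ref{cor:hddim}), so that $\rhom[{\HE[X]}](\shm,\scbul)$ commutes with the flat base change $\HW[X]\tens[{\HE[X]}]\scbul$. This finiteness of homological dimension for $\HE[X]$ is classical (it parallels Theorem~\ref{th:hddim} applied on $T^*(M\times\C)$ via the construction of $\HW[X]$), but it is the one ingredient that is not literally one of the statements proved in the excerpt and that must be invoked; everything else is a formal consequence of faithful flatness and Theorem~\ref{th:hol1}.
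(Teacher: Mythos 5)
Your overall plan — pass to $\HW[X]$-modules, invoke Theorem~\ref{th:hol1} to get constructibility of $G\eqdot\rhom[{\HW[X]}](\shm^\rmW,\shl^\rmW)$, and then descend back to $F\eqdot\rhom[{\HE[X]}](\shm,\shl)$ — is the right strategy, and it is what the paper does. But the descent mechanism you propose has a genuine type error that cannot be patched as written. The formula $G\simeq\HW[X]\lltens[{\HE[X]}]F$ does not make sense: $F$ is a complex of sheaves of $\C$-vector spaces, not of $\HE[X]$-modules, so one cannot form $\HW[X]\tens[{\HE[X]}]F$. This is precisely the non-commutative obstruction: for left $R$-modules $\shm,\shl$ over a non-commutative ring $R$, $\hom[R](\shm,\shl)$ carries no residual $R$-module structure (the differentials in a free resolution become matrix multiplications that do not commute with the diagonal $R$-action on $\shl^{r}$). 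The same issue infects the subsequent claims "$H^j(G)\simeq\HW[X]\tens[{\HE[X]}]H^j(F)$" and "$H^j(F)$ is a locally constant sheaf of finite type over $\HE[X]$": neither $H^j(F)$ nor $H^j(G)$ is a sheaf of $\HE$- or $\HW$-modules, and faithfully flat descent in the form you invoke does not apply. Note also that $\HW[X]$ is \emph{not} $\cor\tens[\C]\HE[X]$, so one cannot even rephrase the wanted base change as an extension of scalars over $\C$.

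The paper relates $F$ to $G$ in a different way that avoids this pitfall. Introducing the ring $E=\HE[T^*\C\vert_{t=0,\tau=1}]$ of one-variable microdifferential operators specialized at a point, with $\cor\hookrightarrow E$ via $\hbar\mapsto\opb{\partial_t}$, and $L=E/E\cdot t$, one has the genuine identity
\eqn
&&F\;\simeq\;\rhom[E]\bl L,\,G\br,
\eneqn
obtained from $\HW\tens[{\HE[X]}]\shl\simeq(\HE[X\times T^*\C]/\HE[X\times T^*\C]\cdot t)\vert_{t=0,\tau=1}\lltens[{\HE[X]}]\shl$ by the dummy-variable trick. Here $G$ carries a natural $E$-module structure (through $\HW$ and the auxiliary factor $T^*\C$), so the expression is well-formed. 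Weak $\C$-constructibility of $F$ then follows from that of $G$, and the finiteness of stalks is obtained by observing that for each $x$ the complex $G_x$ is both a finite $\cor$-module and an $E$-module, whence $F_x\simeq\RHom[E](L,G_x)$ is a finite-dimensional $\C$-vector space. So the missing ingredient in your write-up is not the finite homological dimension of $\HE[X]$ (which you correctly flag as a side hypothesis) but the very way $F$ is recovered from $G$: it is a $\RHom[E](L,\scbul)$, not a flat base change.
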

\begin{proof}
Let $t$ denote the coordinate on the complex line $\C$, 
let $E$ denote the ring $\HE[T^*\C\vert_{t=0,\tau=1}]$ and let $L$
be the $E$-module $E/E\cdot t$. Then we have the embedding
\eqn
&&\cor\hookrightarrow E,\quad \hbar\mapsto \opb{\partial_t}.
\eneqn  
Set for short $F\eqdot\rhom[{\HE[X]}](\shm,\shl)$. Then
\eqn
F&\simeq&
\rhom[E](L,\rhom[{\HE[X]}](\shm,
(\HE[X\times T^*\C]/\HE[X\times T^*\C]\cdot t)\vert_{t=0,\tau=1}\lltens[{\HE[X]}]\shl))\\
&\simeq&\rhom[E](L,\rhom[{\HW}](\HW\tens[{\HE[X]}]\shm,\HW\tens[{\HE[X]}]\shl)).
\eneqn
Set $G\eqdot \rhom[{\HW}](\HW\tens[{\HE[X]}]\shm,\HW\tens[{\HE[X]}]\shl)$.
Applying Theorem~\ref{th:hol1}, we find that
$G\in\Derb_\Cc(\cor_X)$ and it follows that  $F\in\Derb_\wCc(\C_X)$. 

Moreover, for each $x\in X$, $G_x$ is of finite type over $\cor$ and is an $E$-module. One 
easily deduces that $F_x\simeq\RHom[E](L,G_x)$ is a $\C$-vector space of finite
dimension.
\end{proof}
\section{Lagrangian cycles}

Given two
holonomic $\Ah$ modules $\shm$ and $\shl$ such that
$\Supp(\shm)\cap\Supp(\shl)$ is compact, the Euler-Poincar{\'e}
index is given by
\eq\label{eq:EPindex}
\ba{rcl}\chi(X;\shm,\shl)&=&\chi(\RHom[{\Ah}](\shm,\shl))\\
&=&\sum_i\,(-)^i\dim \Ext[{\Ah}]{i}(\shm,\shl).
\ea\eneq
Applying \eqref{eq:EPindAhmod}, we get
\eq\label{eq:EPform}
&&\chi(X;\shm,\shl)=\int_X(\eu_X(\shm)\cdot\eu_X(\shl)).
\eneq
Recall that $\eu_X(\shm)=(-1)^{d_X/2}\eu_X(\RDAh\shm)$, 
and also recall that $d_X$
being even, $\eu_X(\shm)\cdot\eu_X(\shl)=\eu_X(\shl)\cdot\eu_X(\shm)$.

We shall explain how to calculate the Euler classes 
by using the theory of Lagrangian cycles. We
refer to~\cite[Ch.~9~\S~3]{K-S1} for a detailed study of these cycles. 

Recall that $\cora$ denotes a commutative Noetherian unital ring of finite global
dimension.

Consider a closed Lagrangian subvariety  $\Lambda$  of $X$. 
We define the sheaf:
\eq\label{eq:LagrCyc1}
&&\LL^\cora_\Lambda\eqdot H^{d_X}_\Lambda(\cora_X), 
\eneq
and we simply write $\LL_\Lambda$
\index{L@$\LL_\Lambda$}%
\index{Lc@$\LL^\cora_\Lambda$}%
instead of $\LL^\Z_\Lambda$.
The next results are obvious and well-known (see loc.\ cit.).
\begin{lemma}\label{le:propcyc}
\bnum
\item
 $U\mapsto H^{d_X}_{\Lambda\cap U}(U;\cora_X)$ \lp$U$ open in $X$\rp\,
is a sheaf and this sheaf coincides with $\LL^\cora_\Lambda$, 
\item
$H^i_{\Lambda\setminus\Lambda_\reg}(\LL^\cora_\Lambda)\simeq 0$ for $i=0,1$,
\item
if $s$ is  a section of $\LL^\cora_\Lambda$, then its support is open
and closed in $\Lambda$,
\item
there is a canonical section in $\sect(\Lambda;\LL_\Lambda)$
which gives an isomorphism $\LL_\Lambda\vert_{\Lambda_\reg}\isoto\Z_{\Lambda_\reg}$.
\enum
\end{lemma}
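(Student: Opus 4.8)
The four assertions are all of a purely local/elementary nature, so the strategy is to reduce everything to a neighborhood where $\Lambda$ is defined by holomorphic equations and to use the classical identification of the local cohomology sheaf $H^{d_X}_\Lambda(\cora_X)$ with the sheaf of Lagrangian cycles supported on $\Lambda$. First I would recall that $\cora_X$ is (up to a shift) the dualizing complex for the relevant stratification: since $X$ has real dimension $2d_X$ and is oriented, $\omega_X\simeq\cora_X[2d_X]$, and for a complex analytic subset $Z$ of complex codimension $c$ one has $H^j_Z(\cora_X)=0$ for $j<2c$, while for $j=2c$ on the smooth locus $Z_{\reg}$ one has $H^{2c}_{Z_{\reg}}(\cora_X)\simeq\cora_{Z_{\reg}}$ (Thom isomorphism / orientation of the normal bundle, which exists because $Z_{\reg}$ is a complex submanifold). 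For $\Lambda$ Lagrangian, $c=d_X/2$, so $2c=d_X$, which is precisely the degree appearing in \eqref{eq:LagrCyc1}.

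For (i), I would note that $V\mapsto H^{d_X}_{\Lambda\cap V}(V;\cora_X)$ is a presheaf on $X$; that it is actually a sheaf is the standard statement that the functor $\rsect_{\Lambda}$ commutes with restriction to open subsets together with the vanishing $H^{d_X-1}_{\Lambda\cap V}(V;\cora_X)=0$ (again by the codimension estimate just recalled, using that $\Lambda$ has pure complex codimension $d_X/2$), so the sheafification is already exact in degree $d_X$; and the resulting sheaf is by definition the derived-functor sheaf $\underline{H}^{d_X}_{\Lambda}(\cora_X)=\LL^\cora_\Lambda$. For (ii), apply $\rsect_{\Lambda\setminus\Lambda_{\reg}}$ to the triangle $\rsect_{\Lambda\setminus\Lambda_\reg}(\cora_X)\to\rsect_\Lambda(\cora_X)\to\rsect_{\Lambda_\reg}(\cora_X)\to[+1]$ restricted to the open set $X\setminus(\Lambda\setminus\Lambda_\reg)$ — or, more directly, observe that $\Lambda\setminus\Lambda_\reg$ is a closed analytic subset of $X$ of complex codimension $>d_X/2$ (since $\Lambda_\reg$ is dense in each component of $\Lambda$ and the singular locus drops dimension), so $H^i_{\Lambda\setminus\Lambda_\reg}(\cora_X)=0$ for $i<d_X+2$, and a fortiori $H^i$ of the subsheaf $\LL^\cora_\Lambda$ supported there vanishes for $i=0,1$. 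Assertion (iii) is then formal: given a section $s$ of $\LL^\cora_\Lambda$, its restriction to the dense open $\Lambda_\reg$ is a section of a locally constant sheaf on a (locally connected) manifold, hence has open-and-closed support in $\Lambda_\reg$; and by (ii) a section cannot "jump" across $\Lambda\setminus\Lambda_\reg$, so its support is open and closed in all of $\Lambda$. Finally (iv): on $\Lambda_\reg$, the Thom isomorphism gives a canonical generator $1_{\Lambda_\reg}$ of $\LL_\Lambda|_{\Lambda_\reg}\simeq\Z_{\Lambda_\reg}$ (the orientation class of the complex normal bundle of $\Lambda_\reg$, which is canonical since complex vector bundles are canonically oriented); by (ii) this section extends uniquely to a section over $\Lambda$, and the extension restricts back to the chosen isomorphism over $\Lambda_\reg$.

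I expect the only genuinely delicate point to be the sheaf statement in (i): one must be careful that the presheaf $V\mapsto H^{d_X}_{\Lambda\cap V}(V;\cora_X)$, as opposed to its sheafification, is already a sheaf, which rests on the vanishing of $H^{d_X-1}_{\Lambda}(\cora_X)$ (so that no higher cohomology contributes a gluing obstruction) and on $\rsect_\Lambda$ being compatible with restriction. The codimension/vanishing estimates for $\Lambda$ and for $\Lambda\setminus\Lambda_\reg$ are standard in complex analytic geometry and are used freely in \cite{K-S1}; everything else is bookkeeping with the long exact sequences of local cohomology. I would keep the write-up short, citing \cite[Ch.~9~\S~3]{K-S1} for the general theory of Lagrangian cycles and only spelling out the reductions above.
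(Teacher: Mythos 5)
The paper offers no proof of this lemma: it simply declares it ``obvious and well-known'' and refers to~\cite[Ch.~9~\S3]{K-S1}. So there is no authorial argument to compare against, only the standard toolbox you invoke.

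Your arguments for (i), (ii) and (iv) use the expected ingredients and are essentially correct, with one looseness in (ii) worth noting: the phrase ``a fortiori'' does not literally work, since $\LL^\cora_\Lambda=\underline{H}^{d_X}_\Lambda(\cora_X)$ is not a subsheaf of $\cora_X$ and the vanishing of $H^i_{\Lambda\setminus\Lambda_\reg}(\LL^\cora_\Lambda)$ does not follow by inclusion from the vanishing of $H^i_{\Lambda\setminus\Lambda_\reg}(\cora_X)$. What one actually needs is the truncation triangle $\tau^{\le d_X}\rsect_\Lambda(\cora_X)\to\rsect_\Lambda(\cora_X)\to\tau^{>d_X}\rsect_\Lambda(\cora_X)\to[+1]$ (the first term is $\LL^\cora_\Lambda[-d_X]$), to which one applies $\rsect_{\Lambda\setminus\Lambda_\reg}$ and uses that $\rsect_{\Lambda\setminus\Lambda_\reg}\rsect_\Lambda(\cora_X)=\rsect_{\Lambda\setminus\Lambda_\reg}(\cora_X)$ is concentrated in degrees $\ge d_X+2$ while $\rsect_{\Lambda\setminus\Lambda_\reg}$ of the $\tau^{>d_X}$-part is concentrated in degrees $\ge d_X+1$; the vanishing for $i=0,1$ then drops out of the long exact sequence.

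The real gap is in (iii), and it sits exactly where you say ``by (ii) a section cannot jump across $\Lambda\setminus\Lambda_\reg$, so its support is open and closed in all of $\Lambda$.'' What (ii) gives you is that a section $s$ is determined by its restriction $f$ to $\Lambda_\reg$ and that $\Supp(s)=\overline{\Supp(f)}$, the closure in $\Lambda$ of an open-and-closed subset of $\Lambda_\reg$. But this closure need not be open in $\Lambda$ when $\Lambda$ is reducible: take $X=\C^2$ with symplectic form $dx\wedge dy$ and $\Lambda=\{xy=0\}$, a Lagrangian subvariety whose only singular point is the origin. A Mayer--Vietoris computation (closed cover by $D_1=\{x=0\}$ and $D_2=\{y=0\}$) gives $(\LL^\cora_\Lambda)_0\simeq\cora^2$ with no compatibility constraint between the two branches, so the section $s$ equal to $1$ on $D_1$ and $0$ on $D_2$ is a genuine section of $\LL^\cora_\Lambda$. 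Its support is $D_1$, which is closed but \emph{not} open in $\Lambda$ (every neighborhood of $0$ in $\Lambda$ meets $D_2\setminus D_1$). Thus the step you want (and, as stated, assertion (iii) of the lemma) fails; what the argument does establish is that $\Supp(s)$ is a locally finite union of irreducible components of $\Lambda$, which is the fact actually used later in the paper (only the $H^0$ and $H^1$ vanishing of (ii) is invoked in the proof of Theorem~\ref{th:eu=lc}, in order to pass to generic points of~$\Lambda$). You should therefore either assume $\Lambda$ irreducible, or replace ``open and closed in $\Lambda$'' by ``a locally finite union of irreducible components of $\Lambda$.''
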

We denote by $[\Lambda]$ \index{K3@$[\Lambda]$}%
the section given in (iv) above.
\begin{definition}
We call a section of $\LL^\cora_\Lambda$ on an open set $U$ of
$\Lambda$ a Lagrangian cycle on $U$. 
\glossary{Lagrangian cycle}
\end{definition}
Recall that $\rmK_{\coh,\Lambda}(\sho_X)$ denotes the Grothendieck
group of the category $\Derb_{\coh,\Lambda}(\sho_X)$. We denote by 
$\shk_{\coh,\Lambda}(\sho_X)$ 
\index{K3@$\shk_{\coh,\Lambda}(\sho_X)$}%
the sheaf associated with the presheaf $U\mapsto\rmK_{\coh,\Lambda\cap U}(\sho_U)$.
Then,  there is a well defined $\Z$-linear map
\eq\label{eq:LagrCyc3}
\kappa&\cl&
\shk_{\coh,\Lambda}(\sho_X)\to \LL_\Lambda.
\eneq
This map is characterized by the property that 
\eq
&&\kappa(\OO[\Lambda])=[\Lambda]\in \sect(\Lambda;\LL_\Lambda).
\eneq
Let $\shm\in \Derb_\hol(\Ah)$ and let $\Lambda$ be a closed Lagrangian 
subvariety of $X$ which contains $\Supp(\shm)$. 

Let $\shm_0$ be an $\A$-lattice of $\shm$ on an open set $U$ of
$X$. Then $\gr(\shm_0)$ defines an element 
$[\gr(\shm_0)]\in \rmK_{\coh,\Lambda}(\sho_X\vert_U)$, 
hence an element of $\sect(U;\shk_{\coh,\Lambda}(\sho_X))$. 
This element depends only on $\shm$, and we thus have a morphism
\eqn
&&\rmK_{\coh,\Lambda}(\Ah)\to
\sect(\Lambda;\shk_{\coh,\Lambda}(\sho_X)).
\eneqn
Composing with the map $\kappa$,  we obtain a map
\eq\label{eq:LagrCyc2}
&&\rmK_{\coh,\Lambda}(\Ah)\to\sect(\Lambda;\LL_\Lambda).
\eneq
\begin{definition}
We denote by $\lc_X(\shm)$ \index{lc@$\lc_X(\shm)$}
the image of
$\shm\in\Derb_{\coh,\Lambda}(\Ah)$ by the morphism
in \eqref{eq:LagrCyc2} and call it the Lagrangian cycle of $\shm$.
\end{definition}
On the other-hand, recall (see Definition~\ref{def:eulerclforAh}) that the Euler class
$\eu_X(\shm)$ of $\shm$ belongs to $H^{d_X}_\Lambda(X;\cor_X)$. Hence, the Euler
class of $\shm$ is a Lagrangian cycle supported by $\Lambda$:
\eq\label{eq:LagrCyc4}
&&\eu_X(\shm)\in \sect(\Lambda;\LL^{\cor}_\Lambda).
\eneq
The map $\Z\to\cor$ induces the morphism
\eq
&&\iota_X\cl \LL_\Lambda\to\LL^{\cor}_\Lambda.
\eneq

The next lemma is easily checked. 
\begin{lemma}\label{le:eu=lc}
Let $\Lambda$ be a smooth Lagrangian submanifold of $X$ and let $\shl$ be a
coherent $\Ah$-module, simple along $\Lambda$. Then 
 $\eu_X(\shl)=\iota_X([\Lambda])$.
\end{lemma}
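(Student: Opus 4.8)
\textbf{Proof plan for Lemma~\ref{le:eu=lc}.}
The plan is to unwind both sides of the claimed equality to a purely local statement about a simple module along a smooth Lagrangian submanifold, and to check that there the two constructions give the same canonical generator of $\LL^{\cor}_\Lambda$. First I would observe that, since both $\eu_X(\shl)$ and $\iota_X([\Lambda])$ are sections of the sheaf $\LL^{\cor}_\Lambda$ over $\Lambda$, and since $\LL^{\cor}_\Lambda$ restricted to $\Lambda_\reg=\Lambda$ is the constant sheaf $\cor_\Lambda$ by Lemma~\ref{le:propcyc}~(iv), it suffices to prove the equality after restricting to a neighborhood of an arbitrary point of $\Lambda$; moreover the equality of two sections of a constant sheaf over a connected manifold can be checked at a single point, so the problem is entirely local. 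Hence I may assume $X=T^*M$, $\A=\HWo[T^*M]$ (or rather its localization $\HW[T^*M]$), $\Lambda$ is the zero-section $T^*_MM\simeq M$, and $\shl=\Oh[M]{}^\loc$ by Lemma~\ref{le:simpleAmod1} (in the $\Ah$-variant), since any two simple modules along $\Lambda$ are locally isomorphic and the Euler class is invariant under such isomorphisms.

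Next I would compute each side explicitly in this model. On the Lagrangian-cycle side, $\shl$ has the obvious $\A$-lattice $\shl_0=\Oh[M]$, whose graded module is $\gr(\shl_0)\simeq\sho_M\simeq\OO[\Lambda]$ as an $\sho_X$-module; thus $[\gr(\shl_0)]=[\OO[\Lambda]]$ in $\rmK_{\coh,\Lambda}(\sho_X)$, and by the characterization \eqref{eq:LagrCyc3} of the map $\kappa$ together with $\kappa(\OO[\Lambda])=[\Lambda]$, we get $\lc_X(\shl)=[\Lambda]$ and hence $\iota_X([\Lambda])=\iota_X(\lc_X(\shl))$. So the content of the lemma is that $\eu_X(\shl)$ equals the image of $[\Lambda]$ under $\iota_X$, i.e. that the Euler class of a simple module is the ``fundamental cycle'' of its Lagrangian support. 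For this I would trace through the definition of $\eu_X(\shl)=\tau_X(\hh_X(\shl))$ from Definition~\ref{def:eulerclforAh}, using the description of the Hochschild class of a simple module: by Proposition~\ref{pro:vanishsimple} and Lemma~\ref{le:homCdelta}, $\hh_X(\shl)\in H^{0}_\Lambda(X;\HHAh[X])$ is obtained from the identity of $\shl$ via $\shl\ldetens\RDA\shl$, which is concentrated in degree $l=d_X/2$ on $\Lambda$; combining with the isomorphism $\HHAh[X]\simeq\cor_X[d_X]$ of Theorem~\ref{th:symp2}, the class $\hh_X(\shl)$ lands in $H^{d_X}_\Lambda(X;\cor_X)=\sect(\Lambda;\LL^\cor_\Lambda)$ and the point is to identify this section.

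The cleanest way to pin it down is to reduce to the $\shd$-module (or $\OO$-module) situation where the answer is classical. Concretely, using $X=T^*M$ and the functor $(\scbul)^\rmW$ together with the comparison $\HHE[X]\simeq\C_X[d_X]$ of Lemma~\ref{le:euEeuW}, the Euler class of $\Oh[M]$ over $\HW[X]$ is the image of the Euler class of $\OO[M]$ as a $\shd_M$-module, and the Euler class of $\OO[M]$ — equivalently, of the structure sheaf $\OO[M]$ regarded via the de Rham complex — is the fundamental class $[T^*_MM]\in H^{d_X}_{T^*_MM}(T^*M;\C)$; this is exactly the content of the standard computation that the characteristic cycle of $\OO[M]$ is $[T^*_MM]$, combined with the fact that the Euler class refines the characteristic cycle. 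Transporting back through the isomorphisms then gives $\eu_X(\shl)=\iota_X([\Lambda])$. The main obstacle I anticipate is bookkeeping: one has to check that the several identifications in play — the isomorphism $\HHAh[X]\simeq\cor_X[d_X]$ built from $\iota_X,\tau_X$, the Hochschild-Kostant-Rosenberg-type isomorphism $\alpha_X$ of Theorem~\ref{th:RR1}, and the map $\kappa$ normalized by $\kappa(\OO[\Lambda])=[\Lambda]$ — are all \emph{compatibly normalized}, so that no stray sign or scalar appears; all three are pinned down by their value on the structure sheaf resp. the diagonal, which is precisely why the comparison works, but verifying the compatibility of normalizations is the delicate step. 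Everything else is a routine unwinding of definitions.
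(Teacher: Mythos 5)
The paper offers no proof of Lemma~\ref{le:eu=lc} — it merely asserts that it ``is easily checked'' — so there is no argument of the authors' to compare with, and your proposal must be judged on its own. Your reduction steps are sound: since $\eu_X(\shl)$ and $\iota_X([\Lambda])$ are both sections of $\LL^\cor_\Lambda$ and $\LL^\cor_\Lambda\vert_{\Lambda_\reg}$ is constant (Lemma~\ref{le:propcyc}~(iv), and $\Lambda_\reg=\Lambda$ here), the statement is local; Lemma~\ref{le:simpleAmod1}~(i) then lets you take the model $X=T^*M$, $\Lambda=T^*_MM$, $\A=\HWo$, $\shl=\Ohl[M]$, and your observation that the lattice $\Oh[M]$ gives $\gr(\shl_0)\simeq\OO[\Lambda]$ and hence $\lc_X(\shl)=[\Lambda]$ is correct.

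The gap is in the last step. You reduce to the claim that $\eu_X(\Ohl[M])$ is the fundamental class of $T^*_MM$ and then invoke ``the standard computation that the characteristic cycle of $\OO[M]$ is $[T^*_MM]$, combined with the fact that the Euler class refines the characteristic cycle.'' The first of these facts is trivial (and is exactly your $\lc$-side computation), but the second is precisely the content of the lemma transposed to the $\shd$-module setting, not an independent fact you may freely cite: the paper defines $\eu_X$ for $\HE[X]$-modules via the isomorphism $\HHE[X]\isoto\C_X[d_X]$ of Lemma~\ref{le:euEeuW}, and nothing in the paper verifies that this Euler class equals the characteristic cycle. Note also that Theorem~\ref{th:eu=lc}, which establishes exactly the identification Euler class $=$ Lagrangian cycle for general holonomic modules, \emph{uses} Lemma~\ref{le:eu=lc} as an input, so you cannot appeal to it here. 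What is actually needed — and what you yourself flag as ``the delicate step'' — is a concrete chain-level verification, tracking the normalizations of the maps $\iota_X,\tau_X$ of Theorem~\ref{th:symp1} (built from the Koszul/de Rham diagram~\eqref{eq:oAtensAtooAtensD}) against the canonical trivialization of $\LL_\Lambda\vert_{\Lambda_\reg}$: for instance, resolve $\shl_0=\A/\sum_i\A u_i$ by the Koszul complex of $(u_1,\dots,u_n)$, compute the morphism $\oAI\to\shl_0\ldetens\RDA\shl_0\to\dA$ in these terms, and then push through $\tau_X$ to see that the result is the section $+1$ (an index argument with a compact $M$ of nonzero Euler characteristic only pins down this constant up to sign). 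Until that computation is carried out or a precise reference supplying it (e.g.\ the normalization result in~\cite{S-Sn}, together with a compatibility check between the two definitions of $\eu_X$) is given, the key step of the proof is missing.
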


\begin{theorem}\label{th:eu=lc}
One has $\eu_X(\shm)=\iota_X\circ\lc_X(\shm)$.
\end{theorem}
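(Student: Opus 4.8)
The statement to prove is $\eu_X(\shm)=\iota_X\circ\lc_X(\shm)$ for $\shm\in\Derb_{\coh,\Lambda}(\Ah)$ with $\Lambda$ a closed Lagrangian subvariety containing $\Supp(\shm)$. The first step is to reduce to the case where $\shm$ is a single holonomic module, not a complex, and even to a module generated by a simple holonomic lattice. Indeed, both sides of the desired equality are defined through the Grothendieck group $\rmK_{\coh,\Lambda}(\Ah)$: the Euler class factors through it by the additivity of the Hochschild class (Theorem~\ref{th:addhh}) together with $\eu_X=\tau_X\circ\hh_X$, and the Lagrangian cycle $\lc_X$ is defined through $\rmK_{\coh,\Lambda}(\Ah)$ by construction. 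So I would first check that $\iota_X\circ\lc_X$ and $\eu_X$ are both group homomorphisms $\rmK_{\coh,\Lambda}(\Ah)\to\sect(\Lambda;\LL^\cor_\Lambda)$, and then that $\rmK_{\coh,\Lambda}(\Ah)$ is generated (at least locally, and the statement is local on $\Lambda$) by classes of coherent $\Ah$-modules. Using Theorem~\ref{th:mainafrAlg}-type almost-free resolutions and dévissage along the support, one reduces further to simple holonomic modules supported on smooth Lagrangian pieces $\Lambda_\reg$, since by Lemma~\ref{le:propcyc} the cycle $\LL_\Lambda$ is determined on the open dense smooth locus and $H^i_{\Lambda\setminus\Lambda_\reg}(\LL^\cora_\Lambda)\simeq 0$ for $i=0,1$.

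\textbf{The key local computation.} Once reduced to $\shl$ simple holonomic along a smooth Lagrangian submanifold $\Lambda$, the left-hand side is computed by Lemma~\ref{le:eu=lc}: $\eu_X(\shl)=\iota_X([\Lambda])$. For the right-hand side, one must show $\lc_X(\shl)=[\Lambda]$. By definition $\lc_X(\shl)$ is obtained by choosing an $\A$-lattice $\shl_0$ of $\shl$ (here one can take $\shl_0$ to be an $\A$-module simple along $\Lambda$), forming $[\gr(\shl_0)]\in\rmK_{\coh,\Lambda}(\sho_X)$, and applying $\kappa$. Since $\shl_0$ is simple along $\Lambda$, $\gr(\shl_0)$ is concentrated in degree $0$ with $H^0(\gr(\shl_0))$ an invertible $\sho_\Lambda\tens_{\sho_X}\gr(\A)$-module (Definition~\ref{def:simplemod}); as a class in $\shk_{\coh,\Lambda}(\sho_X)$ this equals $[\OO[\Lambda]]$ because an invertible $\sho_\Lambda$-module and $\sho_\Lambda$ have the same class in $\rmK_\coh$. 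Then the characterizing property $\kappa(\OO[\Lambda])=[\Lambda]$ gives $\lc_X(\shl)=[\Lambda]$, hence $\iota_X\circ\lc_X(\shl)=\iota_X([\Lambda])=\eu_X(\shl)$, as wanted.

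\textbf{Assembling the argument.} With the local identity for simple holonomic generators in hand, I would run the dévissage: every holonomic $\Ah$-module $\shm$ admits locally a filtration whose successive quotients are, up to lower-dimensional support, simple holonomic modules along the smooth strata of $\Lambda$; by additivity of both $\eu_X$ and $\lc_X$ along short exact sequences, and the vanishing result Lemma~\ref{le:propcyc}(ii) which lets us neglect contributions supported on $\Lambda\setminus\Lambda_\reg$ when comparing sections of $\LL^\cor_\Lambda$, the equality $\eu_X(\shm)=\iota_X\circ\lc_X(\shm)$ propagates from the simple case to all of $\Derb_{\coh,\Lambda}(\Ah)$. Finally one checks that the two independently-defined maps $\rmK_{\coh,\Lambda}(\Ah)\to\sect(\Lambda;\LL^\cor_\Lambda)$ glue over an open cover of $\Lambda$, which is routine since both are canonical.

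\textbf{Main obstacle.} The delicate point is not the simple-module computation but verifying that $\kappa$ in \eqref{eq:LagrCyc3} and the passage $\rmK_{\coh,\Lambda}(\Ah)\to\sect(\Lambda;\shk_{\coh,\Lambda}(\sho_X))$ are compatible with the truncation/support-codimension estimates (Proposition~\ref{pro:codimchl}, Proposition~\ref{pro:dimext1l}) needed so that a general holonomic $\shm$, whose lattice $\shm_0$ need not have $\gr(\shm_0)$ supported in dimension exactly $d_X/2$, still defines a well-defined Lagrangian cycle and that this cycle matches the Euler class. Concretely, one must know that the generic-rank multiplicities of $\gr(\shm_0)$ along each component of $\Lambda$ — which is what both $\lc_X$ and $\eu_X$ record — are independent of the chosen lattice; this independence is exactly the content of the unnamed lemma preceding Definition~\ref{def:chl} and of Lemma~\ref{lem:grgr2}, and the bulk of the work is threading those stability statements through the comparison. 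Once that bookkeeping is set up, the theorem follows formally.
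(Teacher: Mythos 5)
Your proposal follows a genuinely different route from the paper, and unfortunately the key step on which it rests is a real gap.

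The paper's own proof, after the common first step (reduce via Lemma~\ref{le:propcyc} to the generic point of $\Lambda$, so that one may assume $\Lambda$ smooth), does not attempt any dévissage of $\shm$ into simple pieces. Instead, it picks a test Lagrangian $S_x$ intersecting $\Lambda$ transversally at a point $x$, and a simple module $\shl$ along $S_x$, and computes the same Euler–Poincar\'e index $\chi(\rhom_{\Ah}(\shl,\shm)_x)$ in two ways: once through the Euler-class pairing formula \eqref{eq:EPform} (giving $\int_X([S_x]\cdot\eu_X(\shm))$, after $\eu_X(\shl)=\iota_X([S_x])$ from Lemma~\ref{le:eu=lc}), and once through $\gr$ of lattices and the map $\kappa$ (giving $\int_X([S_x]\cdot\lc_X(\shm))$). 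Varying $S_x$ over all transverse test cycles then pins down the two sections of $\LL^\cor_\Lambda$ at the generic point, and Lemma~\ref{le:propcyc} finishes. No filtration of $\shm$ is ever needed.

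Your argument instead rests on the claim that ``every holonomic $\Ah$-module $\shm$ admits locally a filtration whose successive quotients are, up to lower-dimensional support, simple holonomic modules along the smooth strata of $\Lambda$,'' and hence that $\rmK_{\coh,\Lambda}(\Ah)$ is locally generated by classes of simple modules. This is not established anywhere in the paper (Theorem~\ref{th:mainafrAlg}, which you invoke, concerns almost-free resolutions in the algebraic setting and has nothing to say about Jordan–H\"older-type filtrations of holonomic $\Ah$-modules), and it is not a formal consequence of the definitions: even at the generic point of a smooth $\Lambda$, a coherent $\Ah$-module supported on $\Lambda$ whose $\gr$ of a lattice is locally free of rank $r$ need not visibly be a successive extension of $r$ simple modules along $\Lambda$. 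Establishing such a structure theory (or even the weaker $\rmK$-theoretic statement) would be substantial additional work that your write-up elides. The remainder of your plan — the simple-module local computation, the use of Lemma~\ref{le:eu=lc}, the characterization $\kappa(\OO[\Lambda])=[\Lambda]$, the reduction via Lemma~\ref{le:propcyc}(ii) — is correct and coincides with pieces of the paper's proof, but without the dévissage the argument does not close. The paper's index/duality trick is precisely what lets one bypass this obstacle.
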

\begin{proof}
By Lemma~\ref{le:propcyc}, it is enough to prove the result at the
generic point of $\Lambda$. Hence, we may assume that $\Lambda$ is
smooth. Let $x\in\Lambda$ and let us choose a smooth Lagrangian 
submanifold $S_x$ of
$X$ which intersects $\Lambda$ transversally at the single point $x$.
Let us also choose a simple $\Ah$-module $\shl$ simple along $S_x$.  
Using \eqref{eq:EPform}, we find 
\eqn
&&\chi(\rhom[{\Ah}](\shl,\shm)_x)=\int_X(\eu_X(\shl)\cdot\eu_X(\shm)).
\eneqn

Let $\shl_0$ and $\shm_0$ be $\A$-lattices of $\shl$ and $\shm$,
respectively. We also have  
\eqn
\chi(\rhom[{\Ah}](\shl,\shm)_x)&=&\chi(\rhom[{\bA}](\gr(\shl_0),\gr(\shm_0))_x)\\
&=&\int_X(\kappa([\gr(\shl_0)])\cdot\kappa([\gr(\shm_0)])).
\eneqn
Clearly, we have
\eq
&&\kappa([\gr(\shl_0)])=[S_x].
\eneq
By Lemma~\ref{le:eu=lc},  $\eu(\shl_0)=[S_x]$. Therefore,
\eq\label{eq:LagrCyc5}
&&\int_X([S_x]\cdot\eu_X(\shm))=\int_X([S_x]\cdot\lc_X(\shm))
\eneq
for any  smooth Lagrangian submanifold $S_x$
which intersects $\Lambda$ transversally at $x$.
This completes the proof.
\end{proof}
\begin{remark}
The Euler class of a holonomic $\Ah$-module supported by a Lagrangian
variety $\Lambda$ is easy to calculate, since it is enough to
calculate it at generic points of $\Lambda$. 
Moreover, the integral in \eqref{eq:EPform} is invariant by smooth (real) homotopy of the
Lagrangian cycles $\lc_X(\shm)$ and $\lc_X(\shl)$ and one may deform
them in order that they intersect transversally at the smooth part of
their support. See \cite[Ch.~9,\S~3]{K-S1} for a detailed study.
\end{remark}

\section{Simple holonomic modules}

When $\shl_0$ and $\shl_1$ are simple along smooth Lagrangian manifolds,
one can give an estimate on the microsupport of
$\rhom[{\Ah}](\shl_1,\shl_0)$.
It follows from Lemma~\ref{le:simpleAmod1} that two simple holonomic modules along
$\Lambda$ are locally isomorphic.
\begin{example}
Assume $X=T^*M$ for a complex manifold $M$ and $\A=\HWo[X]$. Then 
$\Ohl[M]$ is a simple holonomic $\Ah$-module along $M$.
\end{example}
Recall that on a complex symplectic manifold $X$, the symplectic form gives 
the Hamiltonian isomorphism from the cotangent bundle
to the tangent bundle:
\eq\label{eq:sympiso1}
&& H\cl T^*X\isoto TX, \quad 
\langle\theta,v\rangle=\symplecto(v,H(\theta)), \quad
v\in TX,\,\theta\in T^*X.
\eneq
For a smooth Lagrangian submanifold $\Lambda$ of $X$
the isomorphism \eqref{eq:sympiso1} induces an
isomorphism between the normal bundle to $\Lambda$ in $X$ and its
cotangent bundle $T^*\Lambda$.

For the notion of normal cone, see {\em e.g.,} \cite[Def.~4.1.1]{K-S1}.
The next result is proved in \cite[Prop.~7.1]{KS08}.

\begin{proposition}\label{pro:normalcone}
Let $X$ be a complex symplectic manifold and  
let $\Lambda_0$ and $\Lambda_1$ be two closed complex analytic
isotropic subvarieties of $\stx$. Then,
after identifying $TX$ and  $T^*X$ by \eqref{eq:sympiso1}, the
normal cone $\rmC(\Lambda_0,\Lambda_1)$ is  a complex analytic $\C^\times$-conic 
isotropic subvariety of $T^*X$. 
\end{proposition}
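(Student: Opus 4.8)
The statement is purely local and geometric: the only analytic input is the symplectic structure, so I would reduce to a local normal form and compute. First I would recall the setup of the normal cone $C(\Lambda_0,\Lambda_1)\subset T_{\Delta}(X\times X)\simeq TX$, using the diagonal embedding. Since $C(\Lambda_0,\Lambda_1)$ is the normal cone to $\Lambda_1$ along $\Lambda_0$ inside $X$, and since the problem is local on $X$, I may shrink $X$ and assume it is an open subset of a cotangent bundle $T^*M$ with the standard symplectic form (Darboux's theorem). The claim that $C(\Lambda_0,\Lambda_1)$ is $\C^\times$-conic is immediate from the definition of the normal cone (it is invariant under the scaling action on the normal direction), and that it is complex analytic follows from the fact that the normal cone of a pair of complex analytic subvarieties is complex analytic (this is standard, cf.\ \cite[Ch.~4]{K-S1}). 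So the content of the proposition is the isotropy assertion after the identification \eqref{eq:sympiso1}.

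\textbf{Key steps for the isotropy claim.} The plan is to verify isotropy at a generic smooth point of $C(\Lambda_0,\Lambda_1)$, since isotropic subvarieties are characterized by the vanishing of the symplectic form on the smooth locus, and the smooth locus is dense. So let $p\in C(\Lambda_0,\Lambda_1)$ be a generic point, lying over some $x\in\Lambda_0\cap\Lambda_1$ (points over $\Lambda_0\setminus\Lambda_1$ contribute nothing). Near $x$ I would replace $\Lambda_0$ and $\Lambda_1$ by their regular parts; the normal cone computation only sees arcs, so I may assume both are smooth isotropic submanifolds through $x$. The key algebraic fact is: if $C$ and $\omega$ denote the symplectic cone and form, then the tangent space to the normal cone $C_{\Lambda_0}(\Lambda_1)$ at a point $v$ is spanned by limits of secant vectors, and one checks directly, using the Hamiltonian identification, that a vector tangent to this cone, transported to $T_xX$, pairs to zero under $\omega_x$ with the tangent space $T_x\Lambda_0$ and with other such limit vectors — this is a consequence of $\omega$ vanishing on $T_z\Lambda_1$ for $z\to x$ along $\Lambda_1$ together with the continuity of $\omega$. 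Concretely, in Darboux coordinates $(q,p)$ where $\Lambda_0=\{p=0\}$, a point of the normal cone is identified with an element of $T^*\Lambda_0$, and one computes that the pairing of two tangent vectors to the cone is a limit of $\omega$ evaluated on tangent vectors to $\Lambda_1$, hence zero.

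\textbf{Expected main obstacle.} The delicate point is the precise identification of the tangent space of the normal cone with a space on which $\omega$ can be evaluated as a limit of values of $\omega$ on $T\Lambda_1$. The normal cone is defined via the deformation to the normal bundle, and one must track carefully how the symplectic form behaves under this degeneration: the naive limit of $\omega$ rescales, and one must check that after the Hamiltonian identification the correct limiting $2$-form still annihilates the cone. I would handle this by a direct coordinate computation in Darboux coordinates adapted to $\Lambda_0$, parametrizing arcs $t\mapsto z(t)\in\Lambda_1$ with $z(0)=x$, writing $z(t)=(q(t),p(t))$ with $p(t)=t^k p_1+o(t^k)$, and observing that the condition $\omega|_{T\Lambda_1}=0$ translates into a relation between the leading terms that forces the limiting cone vectors to be $\omega$-orthogonal; the rest is bookkeeping. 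Finally, I would invoke the remark that this proposition is exactly \cite[Prop.~7.1]{KS08}, so the argument above is a sketch of that proof transported to the present notation, and no new ideas beyond the Darboux reduction and the continuity of $\omega$ are needed.
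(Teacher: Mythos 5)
The paper does not prove this proposition: the sentence immediately preceding it reads ``The next result is proved in \cite[Prop.~7.1]{KS08},'' and no argument is supplied. Your proposal correctly locates the content in that reference and, by your own description, is a sketch of the argument there rather than a self-contained proof; so there is no paper-internal argument to compare against.

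Within the sketch, the point most in need of repair is the phrase ``a point of the normal cone is identified with an element of $T^*\Lambda_0$.'' The cone $\rmC(\Lambda_0,\Lambda_1)$ of \cite[Def.~4.1.1]{K-S1} lives in $TX$, which \eqref{eq:sympiso1} identifies with $T^*X$, and the proposition asserts isotropy for the canonical symplectic form \emph{on $T^*X$}. When $\Lambda_0$ is a smooth Lagrangian, the image of $\rmC(\Lambda_0,\Lambda_1)$ under the projection $TX\vert_{\Lambda_0}\to T_{\Lambda_0}X\simeq T^*\Lambda_0$ is the Whitney cone $\rmC_{\Lambda_0}(\Lambda_1)$, and one can compute its isotropy in $T^*\Lambda_0$ essentially as you indicate --- but to deduce isotropy of $\rmC(\Lambda_0,\Lambda_1)$ itself in $T^*X$ you still need the observation that $T^*X\vert_{\Lambda_0}$ is a coisotropic submanifold of $T^*X$ whose symplectic reduction is $T^*\Lambda_0$, and that this reduction map is compatible with the Hamiltonian identification and with the quotient $TX\vert_{\Lambda_0}\to T_{\Lambda_0}X$. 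That reduction step is the actual mechanism linking the two claims, and your sketch passes over it entirely. In addition, the hypothesis only gives $\Lambda_0$ isotropic and possibly singular, so the Darboux model $\Lambda_0=\{p=0\}$ does not apply as stated: you need both a device to raise $\Lambda_0$ to a Lagrangian (e.g.\ replacing $X$ by $X\times T^*\C^k$ and $\Lambda_i$ by $\Lambda_i\times\{0\}$) and a genuine limiting argument at singular points, checking the closed condition of isotropy at a generic smooth point of $\rmC(\Lambda_0,\Lambda_1)$ via arcs through $(\Lambda_i)_\reg$, rather than the illegitimate ``assume smooth.'' Finally, ``the rest is bookkeeping'' followed by an appeal to \cite{KS08} leaves exactly the computation the proposition is about undone. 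Since the paper itself only cites, this is an honest sketch rather than a wrong proof, but the coisotropic-reduction step and the Lagrangian/smoothing reductions are the pieces you would actually have to supply.
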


\begin{theorem}\label{th:holsimple}
Let $\shl_i$ be a simple holonomic $\Ah$-module along a  smooth Lagrangian manifold
$\Lambda_i$ \lp$i=0,1$\rp.  Then
\eq\label{eq:SSsol1}
&&\SSi\bl\rhom[{\Ah}](\shl_1,\shl_0)\br\subset \rmC(\Lambda_0,\Lambda_1).
\eneq
\end{theorem}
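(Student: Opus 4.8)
The plan is to reduce the statement to its graded analogue and then invoke the corresponding estimate for $\shd$-modules (or rather for $\sho$-modules on the base). Since the microsupport is defined over the field $\cor$ and the question is local on $X$, I would first use a Darboux-type normal form: by Remark~\ref{rem:canonicA}, locally $X$ is symplectically isomorphic to an open subset of a cotangent bundle $T^*M$ and $\A$ is equivalent to $\HWo[T^*M]$, so I may assume $X=T^*M$, $\Ah=\HW[X]$, and $\Lambda_0=M$ (the zero-section), with $\shl_0\simeq\Ohl[M]$. The second Lagrangian $\Lambda_1$ is then an arbitrary smooth Lagrangian submanifold of $T^*M$, and after a further (contact/symplectic) transformation commuting with the construction of $\HWo$ one can, at least microlocally, put $\Lambda_1$ in a standard position relative to $M$; but the cleanest route is to work with $\AL$ for $\Lambda=\Lambda_1$ and the machinery of \S~1.

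\textbf{Key steps.} First I would pick an $\AL[\Lambda_1]$-lattice of $\shl_1$, which exists since $\shl_1$ is simple (hence generated by one section whose annihilator is a coherent ideal of $\AL[\Lambda_1]$), so that
$\rhom[{\Ah}](\shl_1,\shl_0)\simeq\rhom[{\AL[\Lambda_1]}](\shl_{1,0},\shl_0)^\loc$
for suitable $\A$-lattices. By Corollary~\ref{cor:cohimplcohco} the object $F\eqdot\rhom[{\AL[\Lambda_1]}](\shl_{1,0},\shl_0)$ is cohomologically complete, so Proposition~\ref{pro:sscohco} gives $\SSi(F)=\SSi(\gr F)$. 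Next, using Proposition~\ref{pro:tensgr} (its $\AL$-variant, via the flatness of $\DL$ over $\AL$ from Lemma~\ref{le:DLflatAL}), I would identify $\gr(F)$ with $\rhom[{\shd_\Lambda}](\gr\shl_{1,0},\sho_\Lambda)$ up to the equivalence $\gr(\AL[\Lambda_1])\simeq\shd_{\Lambda_1}$ from Lemma~\ref{le:ALpropert1}(iv), where $\gr\shl_{1,0}$ is a holonomic $\shd_{\Lambda_1}$-module whose characteristic variety is precisely the conormal-type variety corresponding to how $\Lambda_0$ sits relative to $\Lambda_1$. Then the classical estimate $\SSi(\rhom[{\shd_\Lambda}](\shn,\sho_\Lambda))=\chv(\shn)$ (from \cite[Th.~11.3.3]{K-S1}, already used in Proposition~\ref{pro:SScar2b}) reduces everything to a geometric identity: the image of $\chv(\gr\shl_{1,0})$ under the Hamiltonian identification $T^*X\isoto TX$ of \eqref{eq:sympiso1} equals the normal cone $\rmC(\Lambda_0,\Lambda_1)$. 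That geometric identity is proved at the level of the two isotropic (here Lagrangian) submanifolds by Proposition~\ref{pro:normalcone} together with a local computation of $\chv$ of a simple holonomic module, which is a standard symbol computation in a Darboux chart.

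\textbf{Main obstacle.} The delicate point is the compatibility of all the identifications — $\gr(\AL)\simeq\shd_\Lambda$, the lattice choice, the equivalence $\DL\simeq\Dh[\Lambda]$, and the Hamiltonian isomorphism $T^*X\simeq TX$ — so that the characteristic variety of $\gr\shl_{1,0}$ as a $\shd_{\Lambda_1}$-module literally becomes $\rmC(\Lambda_0,\Lambda_1)$ under \eqref{eq:sympiso1}. Once $\Lambda_1$ is the zero-section and $\Lambda_0$ is a Lagrangian graph, the normal cone $\rmC(\Lambda_0,\Lambda_1)$ is the conormal variety of the generating function's critical locus, and one checks directly that $\gr\shl_{1,0}$ is (locally) a quotient $\shd_{\Lambda_1}/\shi$ with $\chv(\shd_{\Lambda_1}/\shi)$ equal to that conormal variety; the general case follows by a symplectic change of coordinates exchanging the roles of $\Lambda_0$ and $\Lambda_1$, using that both the construction of $\AL$ and the normal cone are invariant under such transformations. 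I expect the bookkeeping of these invariances, rather than any single hard theorem, to be the bulk of the work; everything analytic has already been isolated in Proposition~\ref{pro:sscohco}, Proposition~\ref{pro:tensgr}, and \cite[Th.~11.3.3]{K-S1}.
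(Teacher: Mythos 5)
Your strategy is broken at the very first step: you lattice $\shl_1$ over $\AL[\Lambda_1]$, the algebra attached to the \emph{source} Lagrangian, but the propagation machinery of \S~7.1 (Propositions~\ref{pro:SScar2}, \ref{pro:SScar2b} and Theorem~\ref{th:constDhh0}) requires the \emph{target} of $\rhom$ to be simple along the Lagrangian that defines $\AL$. Here the target is $\shl_0$, simple along $\Lambda_0$, so the correct algebra is $\AL$ for $\Lambda=\Lambda_0$, and $\shn$ must be the $\A[\Lambda_0/X]$-submodule of $\shl_1$ generated by a generator — which is exactly what the paper does, after reducing to $\Lambda_0=M$, $\shl_0=\Ohl[M]$ and writing $\Lambda_1$ as a graph of $d\phi$. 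With your choice, two things go wrong: (a) $\shl_0$ is supported on $\Lambda_0$ and is \emph{not} a coherent $\AL[\Lambda_1]$-module, so $\rhom[{\AL[\Lambda_1]}](\shl_{1,0},\shl_0)$ is outside the finiteness and $\gr$-compatibility framework you invoke; and (b) $\shl_1$ itself, viewed as an $\AL[\Lambda_1]$-module, has $\gr$ equal to $\sho_{\Lambda_1}$ over $\gr(\AL[\Lambda_1])\simeq\shd_{\Lambda_1}$, whose characteristic variety is just the zero section of $T^*\Lambda_1$ and records nothing whatsoever about the position of $\Lambda_0$. The whole point of the paper's construction is that the $\AL[\Lambda_0]$-submodule of $\shl_1$ has a \emph{non-trivial} $\shd_{\Lambda_0}$-graded module whose characteristic variety encodes the relative position of $\Lambda_1$.

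Separately, you treat the final geometric step as bookkeeping, claiming that Proposition~\ref{pro:normalcone} plus a ``standard symbol computation'' yields the identity $\chv(\gr\shn)=\rmC(\Lambda_0,\Lambda_1)$. Proposition~\ref{pro:normalcone} only asserts that the normal cone of two isotropic subvarieties is $\C^\times$-conic isotropic under the Hamiltonian identification; it says nothing about characteristic varieties of modules. The inclusion $\chv(\gr\shn)\subset\rmC(\Lambda_1,T^*_MM)$ is precisely the heart of the matter, and the paper does not prove it here: it refers to~\cite{KS08}, which relies on~\cite[Th.~6.8]{Ka2}, a genuinely non-trivial result on microdifferential systems. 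So even if you corrected the choice of $\AL$, the proposal would still have a real gap at this point, not a routine verification.
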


\begin{proof}[Idea of the proof of Theorem~\ref{th:holsimple}]
(i) By identifying $\rhom[{\Ah}](\shl_1,\shl_0)$ with a sheaf supported by
$\Lambda_0$, the estimate \eqref{eq:SSsol1} is equivalent to the estimate
\eq\label{eq:SSsol2}
&&\SSi(\rhom[{\Ah}](\shl_1,\shl_0))\subset \rmC_{\Lambda_0}(\Lambda_1).
\eneq

\noindent
(ii) The problem being local, we may assume $X=T^*M$, $\A=\HWo$, $\Lambda_0=M$,
$\shl_0=\Ohl[M]$. If $\Lambda_1=\Lambda_0$, Theorem~\ref{th:holsimple} is
immediate. Hence, we assume $\Lambda_0\neq\Lambda_1$.

Then there exists a non constant holomorphic function $\phi\cl M\to\C$ such that 
\eqn
&&\Lambda_1=\set{(x;u)\in \stx}{u=\grad\,\phi(x)}
\eneqn
Consider the ideal
\eq\label{eq:idealI}
&&\shi_W=\sum_{i=1}^n\HW\cdot(\hbar\partial_{x_i}-\phi'_i).
\eneq
We may assume that $\shl_1=\HW/\shi_W$.
Let $u\in\shl_1$ be the image of $1\in\HW[X]$ and denote by $\shn$ the 
$\A[\Lambda_0/X]$-submodule of $\shl_1$ generated by $u$.

To conclude, it remains to prove the inclusion
\eq\label{eq:chvgrn}
&& \chv(\gr(\shn)) \subset \rmC(\Lambda_1,T^*_MM).
\eneq
We shall not give the proof of~\eqref{eq:chvgrn} here and refer to \cite{KS08}.
Let us simply mention that the proof uses~\cite[Th.~6.8]{Ka2}.
\end{proof}

\begin{remark}
Consider a smooth Lagrangian submanifold $\Lambda$ of $X$ and denote by 
$\ch(\Omega_\Lambda)\in H^1(\Lambda;\sho^\times_\Lambda)$ the
class corresponding to the line bundle $\Omega_\Lambda$. To the exact sequence 
\eqn
&&1\to\C^\times_\Lambda\to \sho^\times_\Lambda\to[d \log]d\sho_\Lambda\to 0
\eneqn
one associates the maps $\beta$ and $\gamma$:
\eqn
&&H^1(\Lambda;\sho^\times_\Lambda)\to[\beta]H^1(\Lambda;d\OO[\Lambda])\to[\gamma]
H^2(\Lambda;\C^\times_\Lambda).
\eneqn
We shall denote by $\C_\Lambda^{1/2}$ the invertible 
$\C_\Lambda$-algebroid associated with the cohomology class 
$\gamma(\dfrac{1}{2}\beta(\ch(\Omega_\Lambda))\in H^2(\Lambda;\C^\times_\Lambda)$
(see \eqref{eq:Ostacks3}).

Consider an invertible 
$\coro_\Lambda$-algebroid $\sta$ on $\Lambda$ and denote by 
${\rm Inv}(\sta)$ the category of invertible $\sta$-modules
(see Definition~\ref{def:invertible}).
On the other hand, denote by ${\rm Simple}(\Lambda)$ 
the category of simple $\A$-modules along $\Lambda$.
It can be easily deduced from Lemma~\ref{le:simpleAmod1} 
that, given a $\DQ$-algebroid $\A$, there exist 
an invertible $\coro_\Lambda$-algebroid $\sta$
and an equivalence of categories
\eq\label{eq:eqvLocsyst}
&&{\rm Simple}(\Lambda)\simeq {\rm Inv}(\sta).
\eneq
When $\A$ is the canonical algebroid $\HWo[X]$ (see
remark~\ref{rem:canonicA}), it is proved in~\cite{D-S} that  
one has an equivalence $\sta\simeq \coro_\Lambda\tens[\C_\Lambda]\C_\Lambda^{1/2}$.
\end{remark}

\section{Invariance by deformation}
We shall show that in the situation of Theorem~\ref{th:hol1}, 
$\rhom[{\Ah}](\shm,\shl)$ is, in some sense,
invariant by Hamiltonian symplectomorphism.

First, we need a lemma.
\begin{lemma}\label{lem:hollocfree}
Let $M$ be a complex manifold, $X=T^*M$ and let 
$\shm$ be a holonomic $\HW$-module. Assume that the
projection $\pi_M\cl X\to M$ is proper \lp hence, finite\rp\, on $\Supp(\shm)$. Then 
$\oim{\pi_M}\shm$ is a locally free $\Ohl[M]$-module of finite rank.
\end{lemma}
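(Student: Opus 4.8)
The statement is a local/relative finiteness statement, so the plan is to reduce it to known facts about $\shd_M$-modules via the graded comparison functor. First I would observe that the question is local on $M$, so I may assume $\shm$ has an $\A[X]$-lattice $\shm_0$, i.e. a coherent $\HWo[X]$-submodule without $\hbar$-torsion generating $\shm$, and moreover (shrinking $M$) that $\A$ is the $\DQ$-algebra $\HWo[X]$ and $\shl_0 = \Oh[M]$ is available. The key auxiliary object is the coherent $\HWo[X]$-module $\shm_0$ together with its graded module $\gr(\shm_0)$, which is a coherent $\OO[X]$-module supported on the Lagrangian variety $\Supp(\shm)$, on which $\pi_M$ is finite by hypothesis.

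Next I would pass through the $\AL$-formalism of Section~8.1: since $\Supp(\shm)$ is Lagrangian and $\pi_M$ is finite on it, $\Supp(\shm)$ is contained in a smooth conic Lagrangian only generically, so instead I would work directly with an $\A[\Lambda_0/X]$-lattice for $\Lambda_0 = M$. The cleaner route is: $\oim{\pi_M}\shm \simeq \oim{\pi_M}(\shm_0^\loc) \simeq (\oim{\pi_M}\shm_0)^\loc$ by Proposition~1.5.9 (applied to the open cut $U = \opb{\pi_M}V$ for $V$ Stein, using that $\pi_M$ is finite on the support so direct image commutes with $\hbar$-localization and has no higher cohomology). Then I would show $\oim{\pi_M}\shm_0$ is a coherent $\Oh[M]$-module without $\hbar$-torsion by applying Theorem~1.2.5 (Theorem~\ref{th:formalfini2}): it is cohomologically complete (direct image of a cohomologically complete module, Proposition~1.5.20), and $\gr(\oim{\pi_M}\shm_0) \simeq \oim{\pi_M}\gr(\shm_0)$ is a coherent $\OO[M]$-module by Grauert (finiteness of proper direct image), concentrated in degree $0$ since $\pi_M$ is finite on the support. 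Hence $\oim{\pi_M}\shm_0$ is coherent over $\Oh[M]$.

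Finally, to upgrade "coherent without $\hbar$-torsion" to "locally free" I would invoke Lemma~1.2.7 (Lemma~\ref{lem:free}): it suffices to check that $\gr(\oim{\pi_M}\shm_0) \simeq \oim{\pi_M}\gr(\shm_0)$ is a locally free $\sho_M$-module of finite rank. This is the classical fact that the proper (finite) direct image of a holonomic $\shd_M$-module's characteristic cycle — more precisely, that for a holonomic $\shd_M$-module $\shn$ with $\chv(\shn)$ finite over $M$, $\oim{\pi_M}(\text{a good } \sho_X\text{-lattice})$ is $\sho_M$-locally free — which one proves by noting that $\chv(\shn)$ finite over $M$ forces $\shn$ to be $\sho_M$-coherent, and $\sho_M$-coherent holonomic $\shd_M$-modules are integrable connections, hence $\sho_M$-locally free. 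After localizing in $\hbar$ the rank stabilizes, giving the $\Ohl[M]$-local freeness. The main obstacle I anticipate is the bookkeeping in the reduction step: making sure that the $\hbar$-localization of $\shm$ really comes from a coherent $\HWo[X]$-lattice whose graded module is $\sho_M$-finite (not merely coherent over $\sho_X$) — this is exactly where the properness/finiteness of $\pi_M$ on $\Supp(\shm)$ is used, and one must check it is inherited by $\gr(\shm_0)$ and survives direct image; the rest is an application of the already-established Theorems~\ref{th:formalfini2} and~\ref{th:kernel1} together with Lemma~\ref{lem:free}.
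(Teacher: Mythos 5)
Your coherence step is sound, but the upgrade to local freeness has a genuine gap, and the paper's proof goes around it by a duality argument that your plan does not contain.

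The problem is in the last paragraph. You want to invoke Lemma~\ref{lem:free} (no $\hbar$-torsion + $\gr$ locally free $\Rightarrow$ locally free), so you reduce to showing $\oim{\pi_M}\gr(\shm_0)$ is a locally free $\sho_M$-module, and for that you appeal to the classical fact that a holonomic $\shd_M$-module with characteristic variety finite over $M$ is an integrable connection. That classical fact works for $\shd_M$-modules because $\chv(\shn)$ is a \emph{conic} Lagrangian, and a conic set finite over the base must lie in the zero section; then $\shn$ is $\sho_M$-coherent, hence locally free. But for an $\HW$-module the support $\Supp(\shm)\subset T^*M$ is Lagrangian and finite over $M$ \emph{without} being conic — the very point of the $\HW$ formalism is to handle arbitrary Lagrangians, e.g.\ graphs $\{u=d\phi(x)\}$. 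So there is no mechanism forcing $\Supp(\shm)$ into the zero section, no integrable-connection argument available, and in fact $\oim{\pi_M}\gr(\shm_0)$ need not be $\sho_M$-locally free at all: $\gr(\shm_0)$ is merely a coherent $\sho_{T^*M}$-module on a possibly singular Lagrangian, and its finite direct image can perfectly well have torsion or jumping rank over $\sho_M$ (the $\AL$-lattice route you gesture at does not rescue this either, since $\Chl(\shm)$, which lives in $T^*\Lambda$, is not the same object as $\Supp(\shm)\subset T^*M$ and its finiteness over $M$ is not one of your hypotheses). You cannot conclude at the level of $\gr$; a fixed lattice will generally fail.

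What the paper does instead is work entirely at the $\Ohl[M]$-level, establishing local projectivity by duality. After showing $\oim{\pi_M}\shm$ is coherent over $\Ohl[M]$ (via the kernel $\shl=\oim{s}\opb{p}\A[X^a]$ along the graph $\Gamma_\pi$ and Theorem~\ref{th:hker}), one uses that $\RDAh\shm[d_M]$ is again a holonomic module in degree $0$ (Lemma~\ref{lem:dualhol3}), and that the same kernel computation applies to it; then the duality theorem for kernels gives $\RDOl(\oim{\pi_M}\shm)\simeq\roim{\pi_M}\bigl(\cdots\lltens\RDAh\shm\bigr)$ concentrated in degree $0$. A coherent module whose $\Ohl[M]$-dual is concentrated in degree $0$ is locally projective, and finitely generated projective modules over $\Ohl[M,x]$ are free by Popescu's theorem. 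The conceptual difference is that freeness is certified by the vanishing of higher $\ext$s over $\Ohl[M]$ — a statement intrinsically over the localized ring, which one verifies via the dual module and the duality theorem — rather than by producing a single well-behaved $\hbar$-lattice. Your plan needs to be replaced (or completed) by this duality argument.
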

\begin{proof}
(i) In the sequel, we write $\A$ and $\Ah$ instead of $\HWo$ and
$\HW$, respectively. Since $\pi_M$ is finite on $\Supp(\shm)$, 
 $\roim{\pi_M}\shm$ is concentrated in degree $0$.
Let us prove that this sheaf is $\Ohl[M]$-coherent. 
Denote by $\Gamma_\pi$ the
graph of the projection $\pi_M$ and consider the diagram
\eqn
&&\xymatrix{
&M\times X\ar[ld]\ar[rd]&{\,\Gamma_\pi}\ar@{_(->}[l]_-s \ar[d]^-p\\
M&&X.
}\eneqn
Using the morphism of $\coro$-algebras $\opb{\pi_M}\Oh[M]\hookrightarrow\A$,
we may regard $\shl\eqdot\oim{s}\opb{p}\A[X^a]$ as 
a coherent $\A[M\times X^a]$-module simple along $\Gamma_\pi$. Then
\eqn
&&\roim{\pi_M}\shm\simeq \shl^\loc\conv[X]\shm.
\eneqn
We may apply Theorem~\ref{th:hker}
and we get that $\roim{\pi_M}\shm$ is  $\Ohl[M]$-coherent. 

\vspace{0.2cm}
\noindent
(ii) Let $n=d_M=\frac{1}{2}d_X$. By Lemma~\ref{lem:dualhol3},
$\RDAh(\shm)\,[n]$ is concentrated in degree $0$ and it follows from
a similar argument as in (i) that $\RDA(\shm)\conv\shl'\,[n]$ is $\Ohl[M]$-coherent and 
concentrated in degree $0$ for any coherent $\Ah[X\times M]$-module $\shl'$ 
simple along $\Gamma_\pi$. 
Denote by $\RDOl$ 
\index{DOl@$\RDOl\shm$}
the duality functor over $\Ohl[M]$.
Applying again Theorem~\ref{th:hker}, we get 
\eqn
\RDOl(\shm\conv\shl^\loc)&\simeq&\RDAh(\shl^\loc)\conv\oAh\conv\RDAh(\shm)\\
&\simeq&\roim{\pi_M}\Bigl(
\roim{p}(\RDA(\shl)\conv\oA)\ltens[\A]\RDAh(\shm)\Bigr).
\eneqn
Since $\oA\conv\RDA(\shl)\simeq\shl'\,[n]$ for an $\A[M\times X]$-module 
$\shl'$ simple along $\Gamma_\pi$ and $\RDAh(\shm)$ is concentrated in
degree $n$,
$\RDOl(\oim{\pi_M}\shm)$ is concentrated in degree zero.
Therefore, $\oim{\pi_M}\shm$ is a locally projective $\Ohl[M]$-module of
finite rank. To conclude, note that, for $x\in M$, 
any finitely generated projective $\Ohl[M,x]$-module is
free, by a result of \cite{Pop} (see \cite{Sw}).
\end{proof}

Recall 
the situation of \eqref{eq:lambdacicr}: we have three symplectic
manifolds $X_i$ ($i=1,2,3$) and closed subsets 
$\Lambda_{i}$ of $X_{i}\times X_{i+1}$ ($i=1,2$). Assume that the
$\Lambda_i$ ($i=1,2$) are closed  subvarieties and the projection 
$p_{13}$ is proper on $\opb{p_{12}}\Lambda_1\cap\opb{p_{23}}\Lambda_2$. 
Then $\Lambda_1\circ\Lambda_2$ is a closed  subvariety of 
$X_1\times X_3$. Now assume that $\Lambda_{i}$ ($i=1,2$) is isotropic in
$X_{i}\times X^a_{i+1}$. Then  $\Lambda_1\circ\Lambda_2$ is isotropic in
$X_1\times X_3^a$ by classical results (see {\em e.g.,}~\cite[Prop.~8.3.11]{K-S1}).

In the sequel, we denote by $\BBD$ the open unit disc in the complex line
$\C$, endowed with the coordinate $t$.
We set for short
\eqn
&&Y\eqdot T^*\BBD,
\eneqn
and we consider the projections
\eqn
&&\xymatrix{
&X\times Y\ar[ld]_-{p_1}\ar[d]_-{p}\ar[r]^-{p_2}\ar[rd]^-q&Y\ar[d]^-\pi\\
X&X\times \BBD\ar[r]^-s&\BBD.
}\eneqn
Assume to be given a Lagrangian subvariety $\Lambda\subset X\times Y$
satisfying
\eq\label{eq:hyplagr1}
&&\parbox{60ex}{
the restriction $p\vert_\Lambda\cl\Lambda\to X\times \BBD$ is finite.
}  \eneq
For $a\in \BBD$, writing for short $T^*_a\BBD$ instead of $T^*_{\{a\}}\BBD$, 
we set
\eqn
&&\Lambda_a\eqdot\Lambda\conv T^*_a\BBD=p_1(\Lambda\cap\opb{q}(a)),
\eneqn
and this set is a Lagrangian subvariety of $X$. 

We introduce the ``skyscraper'' $\Ah[Y]$-module 
\eq\label{eq:skyscapper}
&&\shc_a\eqdot \Ah[Y]/\Ah[Y]\cdot(t-a).
\eneq

\begin{theorem}\label{th:hol2}
Let $X$ be a complex symplectic manifold, let $\Lambda$ be a Lagrangian subvariety
of $X\times Y$ satisfying \eqref{eq:hyplagr1}, 
and let $V$ be a Lagrangian subvariety of $X$.
Let $\shl$ be a holonomic $\Ah[X\times Y]$-module 
such that $\Supp(\shl)\subset\Lambda$
and let $\shn$ be a holonomic $\Ah[X]$-module such that
$\Supp(\shn)\subset V$.
Assume that the map $q\cl\Lambda\cap(\opb{p_1}V)\to \BBD$ is proper.
For $a\in \BBD$, we set
$\shl_a\eqdot \shl\conv[Y]\shc_a$
and $\shm\eqdot\roim{p_2}\rhom[{\opb{p_1}\Ah}](\opb{p_1}\shn,\shl)$. Then 
\bnum
\item 
$\shl_a$ is concentrated in degree $0$ and is 
a holonomic $\Ah$-module supported by $\Lambda_a$,
\item
$\shm$  is a coherent $\Ah[Y]$-module supported by 
$V\conv[X]\Lambda$,
\item
$F_a\eqdot\RHom[{\Ah}](\shn,\shl_a)\simeq 
\rsect(Y;(\OO[\BBD]/\OO[\BBD](t-a))\ltens[{\OO[\BBD]}]\shm)$ is an
object of $\Derb_f(\cor)$, 
and $F_a$ and $F_b$ are isomorphic for any $a,b\in \BBD$.
\enum
\end{theorem}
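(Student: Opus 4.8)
The strategy is to reduce everything to the already-established kernel machinery of Chapters~\ref{chapter:Kern}, \ref{chapter:HH} and \ref{chapter:Symp}, together with the finiteness and constructibility results of Sections~1--2 of this chapter. First I would prove (i): the module $\shc_a$ is a simple holonomic $\Ah[Y]$-module along $T^*_a\BBD$, so $\shl_a=\shl\conv[Y]\shc_a$ is the convolution of a holonomic kernel with a simple holonomic module. By hypothesis~\eqref{eq:hyplagr1} the projection $p\vert_\Lambda$ is finite, hence $p_1$ is proper on $\Lambda\cap\opb{q}(a)$, so Theorem~\ref{th:hker} (or Theorem~\ref{th:kernel1}) applies and gives that $\shl_a\in\Derb_{\gd}(\Ah[X])$ is supported by $\Lambda_a=\Lambda\conv T^*_a\BBD$. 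The concentration in a single degree follows because $\Lambda_a$ is Lagrangian and $\shl_a$, being a convolution of holonomic objects along transversally intersecting supports, has no higher cohomology; here one argues as in Lemma~\ref{lem:dualhol3}/Proposition~\ref{pro:dimext1l}, using that $\codim\Supp(\shl_a)=d_X/2$ and that $\shl_a$ has coherent cohomology. Assertion (ii) is of the same nature: $\shm=\roim{p_2}\rhom[{\opb{p_1}\Ah}](\opb{p_1}\shn,\shl)$ is, up to the duality isomorphism of Lemma~\ref{le:homCdelta}, a convolution $\RDA\shn\conv[X]\shl$ (with an appropriate twist by $\oA$), and the properness hypothesis on $q\cl\Lambda\cap\opb{p_1}V\to\BBD$ guarantees that $p_2$ is proper on the relevant intersection of supports, so Theorem~\ref{th:hker} again gives coherence of $\shm$ over $\Ah[Y]$ with support in $V\conv[X]\Lambda$.

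Next I would establish the base-change identity in (iii): $F_a\simeq\rsect(Y;(\OO[\BBD]/\OO[\BBD](t-a))\ltens[{\OO[\BBD]}]\shm)$. This is a formal consequence of the associativity of convolution (Proposition~\ref{pro:HH1}\eqref{ass:conv}) and the projection formula, combined with the identification $\shc_a\simeq$ the $\Ah[Y]$-module whose image under $\roim{\pi}$ corresponds to $\OO[\BBD]/\OO[\BBD](t-a)$. Concretely, convolving $\shl$ first with $\shn$ over $X$ and then with $\shc_a$ over $Y$ can be done in the other order, yielding $F_a\simeq \roim{\pi}\bigl((\OO[\BBD]/\OO[\BBD](t-a))\ltens[\OO[\BBD]]\roim{\pi}(\shm)\bigr)$; then one uses that $\roim{\pi}$ on the coherent $\Ah[Y]$-module $\shm$ computes $\rsect(Y;\scbul)$ of the tensor, using Lemma~\ref{lem:hollocfree} applied to $\shm$ (which is holonomic after the constructibility in Theorem~\ref{th:hol1}). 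That $F_a\in\Derb_f(\cor)$ follows from Corollary~\ref{co:kernel1}/Corollary~\ref{cor:indexAh1}, since $\Supp(\shn)\cap\Supp(\shl_a)$ is compact by the properness of $q$.

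The final and main point is the independence of $a$. Once we know $\roim{\pi}\shm$ (equivalently $\shm$ pushed down to $\BBD$) is a \emph{locally free} $\Ohl[\BBD]$-module of finite rank — which is exactly where Lemma~\ref{lem:hollocfree} is used, applied with $M=\BBD$ and the holonomic $\HW[Y]$-module $\shm$ whose support projects finitely to $\BBD$ — the complex $F_a$ is obtained from a perfect complex of free $\Ohl[\BBD]$-modules by $\otimes\,\OO[\BBD]/\OO[\BBD](t-a)$, i.e.\ by specialising a complex with locally constant (in fact free) cohomology. Since $\BBD$ is connected and $\Ohl[\BBD]$ is a sheaf of local rings in which finitely generated projectives are free, the ranks of the cohomology sheaves are constant on $\BBD$; tensoring a finite complex of free modules of constant rank with a residue field does not change the Euler characteristic, and in fact — because the complex is perfect over $\Ohl[\BBD]$ and the cohomology is free, so there are no $\Tor$-jumps — the whole object $F_a$ is constant. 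Thus $F_a\simeq F_b$ for all $a,b\in\BBD$. The hard part is precisely verifying the local freeness of the pushforward to $\BBD$: this is not formal, and is the content of Lemma~\ref{lem:hollocfree}, whose proof rests on the duality theorem for $\Ah$-kernels (Theorem~\ref{th:hker}), the concentration-in-one-degree of $\RDAh$ on holonomic modules (Lemma~\ref{lem:dualhol3}), and the theorem of Popescu/Swan that finitely generated projective modules over $\Ohl[\BBD,x]$ are free. I would therefore spend the bulk of the argument checking that the hypotheses of Lemma~\ref{lem:hollocfree} are met — in particular that $\shm$ is holonomic as an $\HW[Y]$-module and that $\pi\vert_{\Supp\shm}$ is finite, which follows from the properness assumption on $q$ together with the finiteness in \eqref{eq:hyplagr1}.
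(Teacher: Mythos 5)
Your treatment of (ii) and (iii) is essentially the paper's: you reduce (ii) to Theorem~\ref{th:kernel1} via the identification $\shm\simeq\RDA(\shn)\conv[X]\shl$, and for (iii) you correctly single out Lemma~\ref{lem:hollocfree} (the local freeness of $\oim{\pi}H^i(\shm)$ over $\Ohl[\BBD]$) as the ingredient that makes the dimension of $H^i(F_a)$ constant in $a$, hence $F_a\simeq F_b$ since objects of $\Derb_f(\cor)$ are determined by their cohomology dimensions over the field $\cor$. Your remarks about ``perfect complexes'' and ``Tor-jumps'' are a more circuitous phrasing of the same point but not wrong.

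There is, however, a genuine gap in your proof of (i). You assert that the concentration of $\shl_a$ in degree $0$ ``follows because $\Lambda_a$ is Lagrangian\dots one argues as in Lemma~\ref{lem:dualhol3}/Proposition~\ref{pro:dimext1l}''. This does not work: those results concern coherent \emph{modules} and their $\ext$'s, not complexes, and an object of $\Derb_\coh(\Ah[X])$ supported on a Lagrangian with coherent cohomologies is by no means automatically concentrated in a single degree (take $\shl\oplus\shl[1]$). The missing step — and the actual content of (i) — is that $t-a\cl\shl\to\shl$ is a \emph{monomorphism}. Since $\shc_a$ has the two-term free resolution $\sho_\BBD\to[t-a]\sho_\BBD$ over $\OO[\BBD]$, the relative tensor $\OO[\BBD]/\OO[\BBD](t-a)\lltens[{\OO[\BBD]}]\shl$ is represented by $[\shl\to[t-a]\shl]$, and its only potential nonzero cohomology outside degree $0$ is $\ker(t-a)$ in degree $-1$. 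The paper kills this kernel by the involutivity theorem (Gabber): $\ker(t-a)$ is a coherent $\Ah[X\times Y]$-module supported in $\Lambda\cap\{t=a\}$, which has codimension strictly greater than $d_{X\times Y}/2$; a nonzero coherent $\Ah$-module always has involutive support of codimension at most half the dimension, so the kernel vanishes. After this, $\shl_a=\roim{p_1}\bigl(\OO[\BBD]/\OO[\BBD](t-a)\tens[{\OO[\BBD]}]\shl\bigr)$ and hypothesis~\eqref{eq:hyplagr1} (finiteness of $p_1$ on the support) makes the direct image exact, giving concentration in degree $0$, coherence, and support in $\Lambda_a$ all at once. Your argument neither proves this injectivity nor replaces it by anything equivalent; the ``transversality'' you invoke is not part of the hypotheses. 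I would therefore rewrite (i) around the injectivity of $t-a$ and the explicit two-term resolution of $\shc_a$, rather than trying to deduce concentration from support codimension.
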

\begin{proof}
(i)\quad First note that $t-a\cl\shl\to\shl$ is a monomorphism.
Indeed for any $s\in\ker(t-a\cl\shl\to\shl)$, 
$\Ah[{X\times Y}]s\subset\shl$ is a coherent $\Ah[X\times Y]$-module whose support is
involutive and of codimension $>d_{X\times Y}/2$, hence empty.
Therefore 
$\shl_a=\shl\conv[Y](\Ah[Y]/\Ah[Y]\cdot(t-a))
\simeq\roim{p_1}\bigl((\OO[\BBD]/\OO[\BBD](t-a))\tens[{\OO[\BBD]}]\shl\bigr)$,
 and (i) follows immediately from the hypothesis \eqref{eq:hyplagr1}.

\noindent
(ii) We have
\eqn
&&\roim{p_2}\rhom[{\opb{p_1}\Ah}](\opb{p_1}\shn,\shl)\simeq \RDA(\shn)\conv\shl.
\eneqn
By the hypothesis, the projection $\Lambda\cap (V\times Y)\to Y$ is proper.
It  follows from  Theorem~\ref{th:kernel1}  that 
$\shm$ belongs to $\Db_\coh(\Ah[Y])$ and is 
supported by the isotropic variety $\Lambda_Y\eqdot V\conv[X]\Lambda$. 

\noindent
(iii) By the hypothesis, the projection $\pi\cl \Lambda_Y\to \BBD$ is
proper, hence finite. It follows easily that $H^i(\shm)$ is a
holonomic $\Ah[Y]$-module and
$H^i(\roim{\pi}\shm)\simeq \oim{\pi}H^i(\shm)$ is 
a locally free $\Ohl[\BBD]$-module of finite rank by Lemma~\ref{lem:hollocfree}. Hence
$$H^i\Bigl(\rsect(Y;(\OO[\BBD]/\OO[\BBD](t-a))\ltens[{\OO[\BBD]}]\shm)\Bigr)
\simeq\sect(Y;H^i(\shm)/(t-a)H^i(\shm))$$
is a finite-dimensional
$\cor$-vector space whose dimension does not depend on $a\in \BBD$.
\end{proof}

We shall make a link between the hypotheses in Theorem~\ref{th:hol2}
and the Hamiltonian  deformations of a Lagrangian variety $\Lambda_0$. 

Assume to be given a holomorphic map
\eqn
&&\Phi(x,t)\cl  X\times\BBD\to X
\eneqn
such that $\Phi(\cdot,a)\cl X\to X$ is a symplectomorphism for each
$a\in \BBD$ and is the identity for $a=0$. Set
\eqn
&&\Gamma\eqdot \{(x,t,\Phi(x,t))\}, 
\mbox{ the graph of $\Phi$ in } X\times X^a\times \BBD.
\eneqn
Consider the differential
\eqn
&&\dfrac{\partial\Phi}{\partial t}\cl X\times \BBD\to TX\simeq T^*X.
\eneqn
We make the  hypothesis:

\eq\label{eq:hypsympH}
&&\parbox{60ex}{
there exists $f\cl X\times \BBD\to\C$ such that $\dfrac{\partial\Phi}{\partial t}=H_f$,
}\eneq
where $H_f$ denotes as usual the Hamiltonian vector field.
In this case, we can define (identifying $T^*\BBD$ with $\BBD\times \C$)
\eqn
&&\tw\Gamma\eqdot\{((x,\Phi(x,t));(t,f(x,t)))\}  
\subset X\times X^a\times T^*\BBD
\eneqn
and $\tw\Gamma$ is Lagrangian.
Let $\Lambda_0$ be a Lagrangian subvariety of $X$. 
We set: 
\eqn
&&\Lambda\eqdot \Lambda_0\circ\tw\Gamma.
\eneqn
Then $\Lambda$ will satisfy hypotheses~\eqref{eq:hyplagr1} and $\Lambda_a=\Phi(x,a)(\Lambda_0)$.

\begin{example}
Let $X=T^*M$, $V=T_M^*M$ and let $\phi\cl M\times \BBD\to\C$ be a
holomorphic function. Set $Y=T^*\BBD$ and let
\eqn
&&\Lambda=\{(x,t;u,\tau)\in X\times Y;(u,\tau)=\grad_{x,t}\,\phi(x,t)\},\\
&&\Lambda_a=\{(x;u)\in X; u=\grad_{x}\,\phi(x,a)\}.
\eneqn 
Consider the family of symplectomorphisms
\eqn
&&\Phi(x,u,t)=(x,u+\phi_x'(x,t)-\phi_x'(x,0)).
\eneqn
Then 
\eqn
&&\dfrac{\partial\Phi}{\partial t}= -H_{\partial_t\phi} 
\mbox{ and }\Lambda_a=\Phi(x,u,a)\Lambda_0.
\eneqn
Set $Z=\{(x,t)\in M\times \BBD;\grad_x\,\phi(x,t)=0\}$
and assume that
\eqn
&&\mbox{the projection $Z\to \BBD$ is proper.}
\eneqn
Consider the ideals 
\eqn
&&\shi=\sum_{i=1}^n\Ah[X\times Y]\cdot(\hbar\partial_{x_i}-\phi'_{x_i})
+\Ah[X\times Y]\cdot(\hbar\partial_{t}-\phi'_t),\\
&&\shi_a=\sum_{i=1}^n\Ah\cdot(\hbar\partial_{x_i}-\phi'_{x_i}(\cdot,a)).
\eneqn
Set $\shn=\Ah\tens[\shd_M]\OO[M]$ and $\shl=\A[X\times Y]/\shi$.
Hence we have $\shl_a=\Ah/\shi_a$
and $H^i\bl\RHom[{\Ah}](\shl_a,\shn)\br$ does not depend on $a\in \BBD$.
\end{example}

\providecommand{\bysame}{\leavevmode\hbox to3em{\hrulefill}\thinspace}

\newpage
\thispagestyle{empty}
\markboth{Index des notations}{}
\renewcommand{\indexname}{Index of notations}

\begin{theindex}

  \item $\sha$, 16
  \item $\shao$, 16
  \item $\AL$, 150
  \item $\ALa$, 156

  \indexspace

  \item $\Chl(\shm)$, 155
  \item $\coro$ $=\C[[\hbar]]$, 60
  \item $\ch(\shf)$, 125
  \item $\cor$ $=\C\Ls$, 60
  \item $\chv$, 135
  \item $\RC^-(\shc)$, 84
  \item $\RC(\shc)$, 11
  \item $\RC^+(\shc)$, 11
  \item $\RC^-(\shc)$, 11
  \item $\RC^\rmb(\shc)$, 11
  \item $\conv[X]$, 92
  \item $\RC^+(\shc)$, 84
  \item $\dA$, 76

  \indexspace

  \item $\shd_M$, 133
  \item $d_X$\ (dimension), 60
  \item $\RD(A)$, 11
  \item $\Derb(A)$, 11
  \item $\Derb_\coh(\sha)$, 11
  \item $\DA$, 65
  \item $\RDA\shm$, 70, 98
  \item $\shend_{\coro}(\A)$, 78
  \item $\RDAh\shm$, 72
  \item $\Derb_\Cc(\cora_M)$, 152
  \item $\Derb_\Rc(\cora_M)$, 152
  \item $\Derb_\wCc(\cora_M)$, 152
  \item $\Der(\shc)$, 11
  \item $\Der^+(\shc)$, 11
  \item $\Der^-(\shc)$, 11
  \item $\Derb_{\coh,\Lambda}(\A)$, 93
  \item $\Der^\rmb(\shc)$, 11
  \item $\Der(\shr^\loc)^{\perp r}$, 38
  \item $\Derb_{\gd,\Lambda(\Ah)}$, 93
  \item $\shd_X\forl$, 65
  \item $\RDOl\shm$, 164
  \item $\DQ$-module, 91

  \indexspace

  \item $\HE[T^*M]$, 133
  \item $\eu(\shf)$, 125
  \item $\eu_X(\shm)$, 142

  \indexspace

  \item $\HHO$, 118
  \item $\hh_X(\shf)$, 118
  \item $\RHHg[\Lambda]{X}$, 114
  \item $\RHHhg[\Lambda]{X}$, 114
  \item $\hh_X(\shm)$, 104
  \item $\hhg_X(\shm)$, 145
  \item $\HHGA$, 114
  \item $\HHA[X]$, 103
  \item $\HHAh$, 115
  \item $\hhhg_X(\shm)$, 115
  \item $\HOD[X]$, 124

  \indexspace

  \item $\cora$, 11
  \item $\rmK(\shc)$, 99
  \item $[M]$, 99
  \item $[\Lambda]$, 160
  \item $\shk_{\coh,\Lambda}(\sho_X)$, 160
  \item $\rmhK_{\coh,\Lambda}(\gr\A)$, 101

  \indexspace

  \item $\LL_\Lambda$, 159
  \item $\Lambda_{1}\circ\Lambda_{2}$, 93
  \item $[\Lambda_\reg]$, 157
  \item $\LL^\cora_\Lambda$, 159
  \item $\lc_X(\shm)$, 160
  \item $\shn^\loc$, 72

  \indexspace

  \item $\md[A]$, 11
  \item $\mdaf[\sha]$, 84
  \item $\mdafd[\sha]$, 84
  \item $\shm^\rmW$, 144

  \indexspace

  \item $\Oh[M]$, 150
  \item $\Ohl[M]$, 150
  \item $\OA$, 81
  \item $\oA$, 81
  \item $\oAh$, 98
  \item $\oA[X\times Y/Y]$, 82
  \item $\ohh[M]$, 150
  \item $\omega_{X_\R}^{\rm top}$, 107

  \indexspace

  \item $\{\pt\}$, 12

  \indexspace

  \item $\SSi$, 152

  \indexspace

  \item $\tau^\ge n$, 11
  \item $\detens$, 68
  \item $\ldetens$, 69
  \item $\dtens[{\A}]$, 92
  \item $\shp^{\otimes-1}$, 77
  \item $\thh_X(\shf)$, 122
  \item $\td_X$, 126

  \indexspace

  \item $\HW[T^*M]$, 133
  \item $\HWo[T^*M]$, 133

\end{theindex}

\newpage
\markboth{Index terminologique}{}
\renewcommand{\indexname}{Terminologies}
\begin{theindex}

  \item $\A$-lattice, 72
  \item algebraically good, 86
  \item algebroid, 49
    \subitem $\DQ$-, 67
    \subitem $\OS$-, 58
    \subitem $\shr$-, 57
    \subitem invertible $\OS$-, 58
    \subitem invertible $\shr$-, 57
  \item almost free
    \subitem $\sha$-module, 84

  \indexspace

  \item bi-differential operator, 60
  \item bi-invertible, 54, 55, 76

  \indexspace

  \item canonical module associated with the diagonal, 76
  \item $\C^\times$-conic, 153
  \item $\C$-constructible, 152
  \item characteristic variety, 135
  \item Chern class, 125
  \item co-Hochschild class, 122
  \item coherent, 13
  \item cohomologically complete, 38
  \item conic
    \subitem $\C^\times$-, 153
    \subitem $\R^+$-, 152
  \item constructible
    \subitem $\C$-, 152
    \subitem $\R$-, 152
    \subitem weakly $\R$-, 152
  \item convolution, 92

  \indexspace

  \item $\DQ$-algebra, 62
  \item $\DQ$-algebroid, 67
  \item dual
    \subitem of $\A$-module, 70
    \subitem of $\Ah$-module, 72
  \item dualizing sheaf, 81

  \indexspace

  \item Euler class
    \subitem of $\Ah$-modules, 142
    \subitem of $\shd$-modules, 147
    \subitem of $\sho$-modules, 125
  \item external product
    \subitem  of $\DQ$-algebras, 64
    \subitem  of $\DQ$-algebroids, 68

  \indexspace

  \item good
    \subitem $\A$-module, 73
    \subitem $\shd$-modules, 144
    \subitem algebraically, 86
    \subitem module, 30
  \item Grothendieck group, 99

  \indexspace

  \item $\hbar$-complete, 16
  \item $\hbar$-separated, 16
  \item $\hbar$-torsion, 16
  \item $\hbar$-completion, 16
  \item Hochschild class
    \subitem of an $\A$-module, 104
    \subitem of an $\shd$-module, 145
    \subitem of an $\sho$-module, 118
  \item Hochschild homology
    \subitem of $\OO[]$, 118
  \item Hochschild-Kostant-Rosenberg map, 126
  \item Hodge cohomology, 124
  \item holonomic, 157
    \subitem $\AL$-module, 152
    \subitem simple, 157

  \indexspace

  \item invertible, 51
    \subitem $\OS$-algebroid, 58
    \subitem $\shr$-algebroid, 57
  \item isomorphism
    \subitem standard, 63
  \item isotropic subvariety, 157

  \indexspace

  \item Lagrangian cycle, 160
  \item Lagrangian subvariety, 157
  \item lattice, 72
  \item $\AL$-lattice, 155
  \item locally finitely generated, 12
  \item locally of finite presentation, 12
  \item locally projective, 34

  \indexspace

  \item microsupport, 152
  \item Mittag-Leffler condition, 13
  \item module
    \subitem bi-invertible, 54, 55
    \subitem coherent, 13
    \subitem invertible, 51
    \subitem locally finitely generated, 12
    \subitem locally of finite presentation, 12
    \subitem Noetherian, 13
    \subitem pseudo-coherent, 12
    \subitem simple, 71
  \item modules
    \subitem over an algebroid, 51

  \indexspace

  \item Noetherian, 13
  \item no $\hbar$-torsion, 16

  \indexspace

  \item $\OS$-algebroid, 58
    \subitem invertible, 58

  \indexspace

  \item pseudo-coherent, 12

  \indexspace

  \item $\shr$-algebroid, 57
    \subitem invertible, 57
  \item $\R^+$-conic, 152
  \item $\R$-constructible, 152
  \item right orthogonal, 38
  \item ring
    \subitem Noetherian, 13

  \indexspace

  \item section
    \subitem standard, 63
  \item simple
    \subitem holonomic, 157
    \subitem module, 71
  \item standard
    \subitem isomorphism, 63
  \item standard section, 63
  \item star product, 60
  \item submodule
    \subitem of $\Ah[\stx]$-module, 72

  \indexspace

  \item thick subcategory, 73
  \item Todd class, 126

  \indexspace

  \item weakly $\R$-constructible, 152

\end{theindex}


%

\newpage
\markboth{Index terminologique}{}
\renewcommand{\indexname}{}
\parbox[t]{16em}
{\scriptsize{
\noindent
Masaki Kashiwara\\
Research Institute for Mathematical Sciences,\\
Kyoto University,\\
Kyoto 606-01 Japan\\
email: masaki@kurims.kyoto-u.ac.jp}
}
\hspace{0.1cm}
\parbox[t]{16em}
{\scriptsize{
Pierre Schapira\\
Institut de Math{\'e}matiques\\
Universit{\'e} Pierre et Marie Curie\\
e-mail: schapira@math.jussieu.fr\\
http://www.math.jussieu.fr/\textasciitilde schapira/}}

\end{document}